\documentclass[11pt,reqno]{amsart}

\usepackage{amssymb,amsmath,graphicx,amsfonts,euscript}
\usepackage{color}
\setlength{\textheight}{9in} \setlength{\textwidth}{6.2in}
\setlength{\oddsidemargin}{0.2in} \setlength{\evensidemargin}{0.2in}
\setlength{\parindent}{0.2in}
\setlength{\topmargin}{-0.1in} \setcounter{section}{0}
\setcounter{figure}{0} \setcounter{equation}{0}

\newtheorem{thm}{Theorem}[section]

\newtheorem{prop}[thm]{Proposition}

\newtheorem{rem}[thm]{Remark}

\newtheorem{lemma}[thm]{Lemma}

\def\bl{\big\|}

\def\pp{\partial}
\def\na{\nabla}

\def\la{\langle}
\def\ra{\rangle}

\def\R{\mathbb{R}}
\def\Z{\mathbb{Z}}
\def\lr{\langle\na\rangle}

\allowdisplaybreaks

\voffset=-0.2in
\numberwithin{equation}{section}
%\subjclass[2000]{35Q35, 35B65, 76D05, 76W05}
%\keywords{2D MHD equations, global existence and uniqueness, small solution, velocity damping}

\baselineskip=24pt

\begin{document}
\title[The Compressible 2D MHD SYSTEM]{Global small solutions to the compressible 2D magnetohydrodynamic system without magnetic diffusion}

\author[J. Wu and Y. Wu]{Jiahong Wu$^{1}$ and Yifei Wu$^{2}$}

\address{$^1$ Department of Mathematics,
Oklahoma State University,
401 Mathematical Sciences,
Stillwater, OK 74078, USA}

\email{jiahong.wu@okstate.edu}

\address{$^2$Center for Applied Mathematics,
Tianjin University,
Tianjin 300072,
China}

\email{yerfmath@gmail.com}

\date{\today}

\begin{abstract}
This paper establishes the global existence and uniqueness
of smooth solutions to the two-dimensional compressible magnetohydrodynamic system when the initial data is close to an equilibrium state. In addition, explicit large-time decay rates for various Sobolev norms of the solutions are also obtained.
These results are achieved through a new approach of diagonalizing a system of coupled linearized equations.
The standard method of diagonalization via the eigenvalues and eigenvectors of the matrix symbol
is very difficult to implement here. This new process allows us to obtain an integral representation of the full system through explicit kernels. In addition, in order to overcome various difficulties such as the anisotropicity and criticality, we fully exploit the structure of the integral representation and employ extremely delicate Fourier analysis and associated estimates.
\end{abstract}

\maketitle

\tableofcontents

\section{Introduction}\label{intro}

\vskip .1in
This paper examines the global (in time) existence and uniqueness
of solutions
to the two-dimensional (2D) compressible magnetohydrodynamic (MHD) system, namely
\begin{equation} \label{e1.1}
\begin{cases}
\partial_t \rho +\nabla\cdot (\rho \vec u)= 0,\quad (t,\,x,\,y)\in \mathbb{R}_+\times\mathbb{R}\times\mathbb{R},\\
\partial_t (\rho\vec u) +\nabla\cdot(\rho\vec u \otimes \vec u) -\Delta \vec u-\lambda\nabla(\nabla\cdot \vec u) + \nabla P= -\frac12\nabla\big(|\vec b|^2\big)+\vec b\cdot \nabla \vec b,\\
\partial_t \vec b +\vec u\cdot \nabla \vec b=\vec b\cdot \nabla \vec u, \\
\nabla \cdot \vec b = 0\\
\end{cases}
\end{equation}
with the initial data
$$
\rho|_{t=0}=\rho_0(x,y),\quad \vec u|_{t=0}=\vec u_0(x,\,y),\quad\vec b|_{t=0}=\vec b_0(x,\,y).
$$
Here $\rho\in \R^+$ denotes the density, $\vec u=(u,\,v)\in \R^2$ represents the velocity field, $P=P(\rho)$ the pressure,  $\vec b\in \R^2$ the magnetic field and $\lambda$ is a constant with $|\lambda|<1$. Moreover, the pressure term $P(\rho)$ is assumed to obey the following polytropic law,
$$
P(\rho) = A\rho^{\gamma},
$$
where $A$ is the entropy constant and $\gamma\ge1$ is called the adiabatic index. We remark that there is a large
literature on the compressible MHD equations (with both velocity dissipation and magnetic diffusion) due to their physical importance and mathematical challenges
(see, e.g., \cite{ChenWang,HuWang,HuangLi,Kawa,Kawa2}).

\vskip .1in
This paper aims to achieve three goals. The first is to establish the
global well-posedness of smooth solutions of \eqref{e1.1} when the initial data
$(\rho_0, \vec u_0, \vec b_0)$ is smooth and close to the equilibrium state
$(1, \vec 0, \vec e_1)$, where we denote $\vec 0=(0,0)$ and $\vec e_1=(1,0)$.
To this end, we may assume that $\gamma=3$ and $A=\frac13$, since the
other cases can be essentially reduced to this case, after omitting
some high-order terms. The second is to explore the hidden structure
in the system of the linearized equations and offer a new way of
diagonalizing a complex system of linearized equations.
The third is to obtain explicit and sharp large-time decay rates for
the solutions in various Sobolev spaces. As we shall see in the subsequent sections,
we need to overcome several major difficulties including the anisotropicity and the criticality
associated with the 2D compressible MHD equations with only velocity dissipation and no magnetic diffusion.

\vskip .1in
In the 2D case, $\nabla \cdot \vec b=0$ implies that for a scalar function $\phi$,
$$
\vec b=\nabla^\perp \phi\equiv \big(\partial_y\phi,-\partial_x\phi\big).
$$
With this substitution, \eqref{e1.1} becomes
\begin{equation} \label{e1.2}
\begin{cases}
\partial_t \rho +\nabla\cdot (\rho \vec u)= 0,\quad (t,\,x,\,y)\in \mathbb{R}_+\times\mathbb{R}\times\mathbb{R},\\
\partial_t (\rho\vec u) +\nabla\cdot(\rho\vec u \otimes \vec u) -\Delta \vec u-\lambda\nabla(\nabla\cdot \vec u) + \rho^2\nabla \rho = -\nabla\phi \Delta\phi,\\
\partial_t \phi +\vec u\cdot \nabla \phi=0.
\end{cases}
\end{equation}
We will mainly work with this form of the equations, which is more convenient
for the estimates. However, this form is not essential for our main result.

\vskip .1in
To precisely state our main result, we introduce the functional settings.
The operator notation $\la\na\ra$ is standard. Let
\begin{align*}
U=(n,u,v,\psi)  \quad \mbox{and}\quad U_0=(n_0,u_0,v_0,\psi_0).
\end{align*}
Let $M$ be a big integer ($M\ge 8$ is sufficient, as a careful computation would show). Let $\epsilon>0$ be a small parameter, $\gamma\in (\frac12,1]$, and $\frac\gamma2<\bar \gamma<1+\frac\gamma2$. We define $X_n,X_{\textbf{u}}, X_\psi$ with their norms given by
\begin{equation*}
\begin{aligned}
\|n\|_{X_n} &= \sup_{t\geq 0}\Big\{\langle t\rangle^{-\epsilon}\|\la\na\ra^M n\|_{L^2_{xy}}+\langle t\rangle^{\frac14}\|\la\na\ra^3n\|_{L^2_{xy}}+\langle t\rangle^{\frac12}\|\la\na\ra^{\frac32} n\|_{L^\infty_{xy}}+\langle t\rangle^{\frac34}\|\la\na\ra\pp_x n\|_{L^2_{xy}}\Big\};\\
\|\vec u\|_{X_{\textbf{u}}} &= \sup_{t\geq 0}\Big\{\langle t\rangle^{-\epsilon}\|\la\na\ra^M \vec u\|_{L^2_{xy}}+\langle t\rangle^\frac12\|\vec u\|_{L^2_{xy}}
+\langle t\rangle \|\la\na\ra\vec u\|_{L^\infty_{xy}}+\langle t\rangle^{\frac34} \|v\|_{L^2_{x}L^\infty_y}\\
&\qquad+\langle t\rangle^{\frac34} \||\na|^\gamma \vec u\|_{L^2_{xy}}+\langle t\rangle \|\la\na\ra\pp_x u\|_{L^2_{xy}}
+\langle t\rangle \|\la\na\ra\nabla v\|_{L^2_{xy}}\Big\};\\
\|\psi\|_{X_\psi} &= \sup_{t\geq 0}\Big\{\langle t\rangle^{-\epsilon}\|\la\na\ra^M \nabla \psi\|_{L^2_{xy}}+\langle t\rangle^{\frac14}\big\|\la\na\ra^4|\na|^\gamma \psi\big\|_{L^2_{xy}}+\langle t\rangle^{\frac12}\big\|\pp_x \psi\big\|_{L^2_{xy}}\\
&\qquad+\langle t\rangle^{\frac34}\big\|\pp_x \na\psi\big\|_{L^2_{xy}}+\langle t\rangle^\frac12\big\||\na|^{\bar \gamma}\la\na\ra\psi\big\|_{L^\infty_{xy}}\Big\}.
\end{aligned}
\end{equation*}
Now we define the working space $X=X_n\times X_{\textbf{u}}\times X_\psi$, whose norm is given by
\begin{equation}\label{X}
\begin{aligned}
\|U\|_{X} = \|n\|_{X_n}+\|\vec u\|_{X_{\textbf{u}}} +\|\psi\|_{X_\psi}.
\end{aligned}
\end{equation}
Note that we have incorporated the large-time behaviors of various Sobolev norms
into the definition of $X$ and these rates will become clear in the subsequent
sections. Moreover,
\begin{equation}\label{X0}
\begin{aligned}
\|U_0\|_{X_0} =\|\la\na\ra^M(n_0, \vec u_0,\,\nabla \psi_0)\|_{L^2_{xy}}
+\|\lr^5(n_0, \vec u_0,\,\nabla\psi_0)\|_{L^1_{xy}}.
\end{aligned}
\end{equation}
%where $6+$ denotes a number that is larger than $6$.
Our main result can then be stated as follows. We use $A \lesssim B$ or $B  \gtrsim A$ to denote the statement
that $A\le C B$ for some absolute constant $C>0$.

\begin{thm} \label{th1}
Assume $|\lambda|\le c_0$ for some absolute constant $c_0>0$, and let $n=\rho-1, \psi=\phi-y$, $n_0=\rho_0-1, \psi_0=\phi_0-y$. Then there exists a small constant $\delta>0$ such that,
if the initial data $(n_0, \vec u_0,\,\phi_0)$ satisfies $\|(n_0, \vec u_0,\,\psi_0)\|_{X_0}\le\delta $,
then there exists a unique global solution $(\rho, u,\,v,\,\phi)\in X$ to the system \eqref{e1.1}.
Moreover,
$$
\|(n, u,\,v,\,\phi)\|_X \lesssim \delta.
$$
Especially, the following decay estimates hold
$$
\|n(t)\|_{L^\infty_{xy}}\lesssim \delta\, t^{-\frac12};\quad \|\vec u(t)\|_{L^\infty_{xy}}\lesssim \delta\, t^{-1};
\quad
\|\nabla\psi(t)\|_{L^\infty_{xy}}\lesssim \delta\, t^{-\frac12}.
$$
\end{thm}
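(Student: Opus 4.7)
The plan is to recast \eqref{e1.2} as a perturbation around the equilibrium with $n=\rho-1$ and $\psi=\phi-y$, then close a bootstrap argument on the norm $\|U\|_X$ from \eqref{X}. The linearized system couples $(n,u,v,\psi)$, and its $4\times 4$ Fourier symbol has an eigenstructure that degenerates along $\xi_1=0$, which is precisely the direction in which the background magnetic field $\vec e_1$ provides no restoring force (and in which no magnetic diffusion is available to repair the loss). A direct diagonalization via eigenvectors is therefore not viable. Instead, I would identify the natural block decomposition suggested by the equations: the pair $(n,u)$ behaves modulo cross-coupling like a damped wave system driven by the pressure gradient, while the pair $(v,\psi)$ satisfies $\pp_t\psi=-v$ together with an equation for $v$ in which $\pp_y n$ and $\Dd\psi$ appear as forcing and restoring terms. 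Time-differentiating and eliminating $u$ and $v$ in turn produces two coupled scalar evolutions whose symbols are rational in $\tau,\xi$ and can be inverted explicitly; the residue calculus on the resulting four roots yields a closed-form kernel representation of the semigroup of the full linear system. The smallness assumption $|\lambda|\le c_0$ is used here to guarantee that the mixed dissipation $\lambda\na(\na\cdot\vec u)$ does not spoil the spectral decomposition.

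With the explicit kernels in hand, the next step is to prove the linear decay estimates needed to recover each norm in $X_n$, $X_{\textbf{u}}$ and $X_\psi$. These rates are anisotropic: the system dissipates in $\pp_x$ (aligned with $\vec e_1$) but only disperses in the transverse direction, which explains the mixed weights $\la t\ra^{3/4}$ on $\|v\|_{L^2_xL^\infty_y}$ and on $\|\pp_x\na\psi\|_{L^2}$ versus $\la t\ra$ on $\|\lr\vec u\|_{L^\infty}$. One splits frequency space into a parabolic region where the kernels enjoy heat-type decay and a dispersive region controlled by stationary phase; the $L^1$ norm in \eqref{X0} is what feeds low frequencies into these decay rates, while the fractional derivatives $|\na|^\gamma$ and $|\na|^{\bar\gamma}$ in $X_\psi$ regularize the low-frequency behavior of $\psi$, which has no intrinsic damping.

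The bootstrap is standard in shape. On the maximal interval where $\|U\|_X\le 2C\delta$, I apply Duhamel's formula with the explicit kernels to each component, insert the $X$-bounds into the quadratic and higher nonlinearities of \eqref{e1.2}, and show that every resulting time integral is controlled by $C\delta^2$. This requires bilinear estimates adapted to the anisotropic norms: product rules in $L^2_xL^\infty_y$, Coifman--Meyer-type multiplier bounds for the pressure and Lorentz nonlinearities, and careful paraproduct decompositions for the transport terms $\vec u\cdot\na\psi$ and $\na\psi\,\Dd\psi$ at fractional regularity. The top-order norms $\|\lr^M U\|_{L^2}$ with the $\la t\ra^\epsilon$ weight are then closed by a standard energy identity in which the full viscous dissipation on $\vec u$ absorbs the highest-order terms once the lower-order norms have been shown to decay; the $\la t\ra^\epsilon$ slack accommodates the (harmless) mild growth coming from the undamped directions of $n$ and $\psi$.

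The main obstacle will be closing the critical $\psi$-estimates. Since \eqref{e1.2} carries no magnetic diffusion, the only damping $\psi$ sees comes from its linear coupling to $(n,\vec u)$ through $\pp_x$, which gives the $\la t\ra^{3/4}$-weighted control of $\pp_x\na\psi$ in $L^2$ but nothing transverse. Consequently the worst nonlinear contributions---the transport $\vec u\cdot\na\psi$ with $\na\psi$ at frequencies near $\xi_1=0$ and the quadratic term $\na\psi\,\Dd\psi$ at critical regularity---sit exactly at the edge of integrability in time, and I would have to exploit the kernel structure directly (a Morawetz-type $L^2_tL^2_{xy}$ gain on $\pp_x\psi$ from the damped mode) instead of pointwise-in-time decay to absorb them. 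It is precisely this constraint that forces the choice $\gamma\in(\frac12,1]$ and $\frac\gamma2<\bar\gamma<1+\frac\gamma2$ in the definition of $X_\psi$. Once the critical bilinear estimates close and the global bound $\|U\|_X\lesssim\delta$ is established, uniqueness follows from a standard difference argument and the stated pointwise rates $\|n\|_{L^\infty}\lesssim\delta t^{-1/2}$, $\|\vec u\|_{L^\infty}\lesssim\delta t^{-1}$, $\|\na\psi\|_{L^\infty}\lesssim\delta t^{-1/2}$ are immediate specializations of the $X$-norm.
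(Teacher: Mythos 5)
Your overall architecture (rewrite around equilibrium, kernel-based Duhamel representation, anisotropic decay estimates, bootstrap on $\|U\|_X$) is aligned with the paper, but there are two places where your plan departs from what the paper actually does, and one of them is a genuine gap.

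First, your ``diagonalization'' sketch proposes a block split of $(n,u)$ as a damped wave pair and $(v,\psi)$ as a transport pair, then eliminating variables to get two coupled scalar evolutions inverted by residues. The paper's observation is sharper: after time-differentiation and algebraic elimination, \emph{each} of the four unknowns $n,u,v,\psi$ separately satisfies the \emph{same} fourth-order scalar operator $\big((\pp_{tt}-\Delta\pp_t-\Delta)^2-\Delta\pp_{yy}\big)$ with a computable right-hand side built from the nonlinearities and the $\lambda$-term. This is cleaner than two coupled evolutions and is what allows a single kernel $K(t,\pp_x,\pp_y)$ to drive all four components. Your block picture is a reasonable heuristic but you would still have to derive the common scalar operator to invert anything. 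Also, the role of $|\lambda|\le c_0$ is not to preserve a spectral decomposition: the paper simply moves $\lambda\na(\na\cdot\vec u)$ to the nonlinear side, so the iteration inequality has the form $\|U\|_X \le \tfrac12 C_0\|U_0\|_{X_0} + C_*|\lambda|\,\|U\|_X + \tfrac12 Q(\|U\|_X)$, and smallness of $\lambda$ is what lets you absorb the linear-in-$U$ term $C_*|\lambda|\,\|U\|_X$, not what keeps eigenvectors stable.

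Second, and more seriously, your plan for closing the critical $\psi$ and $n$ estimates relies on a Morawetz-type $L^2_tL^2_{xy}$ gain on $\pp_x\psi$ from the damped mode. The paper does not use any space-time integrated estimate, and it is not clear such a gain would suffice here: the bottleneck nonlinearities ($\pp_y(nv)$, $v\pp_y\psi$, $\pp_x\psi\,\Delta\psi$) are anisotropic in exactly the transverse direction where the operator is weakest, and an $L^2_t$ gain in $\pp_x\psi$ does not control $\pp_y$-dominated frequency interactions. What the paper does instead is structural: it substitutes $v=-\pp_t\psi+N_3$ from the fourth equation to pull a time derivative out of the bad factor, then integrates by parts in $s$ inside the Duhamel integral (e.g.\ in Section~\ref{sec:n-N0-2} and Section~\ref{sec:Nu-N1}), which converts the critical quadratic terms into subcritical cubic ones plus boundary terms; combined with a fine dyadic frequency split (thresholds like $\la s\ra^{0.01}$, $\la s\ra^{-0.05}$) and the anisotropic $\chi_{|\xi|\lesssim A^2}e^{-\xi^2 t/(2A^2)}$ piece of the kernel, this closes the estimate entirely with pointwise-in-time decay rates, with no Morawetz input. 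As written, your Morawetz step is an unsubstantiated leap, and without the time-integration-by-parts identity you would be stuck at the borderline $\int_0^t\la t-s\ra^{-1}\la s\ra^{-1}\,ds$ divergence that the paper explicitly flags and works around.
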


We remark that the smallness condition on $\lambda$ is not essential for our
main result. The term $\lambda\nabla(\nabla\cdot \vec u)$ is treated as a
nonlinear term in order to simplify the treatment on the linearized equations.
We could have included $\lambda\nabla(\nabla\cdot \vec u)$ as a linear term. That would make the diagonalization process more sophisticated and our main
idea murky. Moreover, we remark that the treatment on the ``nonlinear'' term $\lambda\nabla(\nabla\cdot \vec u)$ is rather non-trivial and has of independence interest, even though $\lambda$ is small.

\vskip .1in
Due to the lack of dissipation or damping in the transport equation for
$\phi$, the global existence of smooth solutions to \eqref{e1.2} is
an extremely difficult problem
and is currently open. Recent strategy has been to seek solutions
near an equilibrium state.
This has been very successful in the incompressible counterpart
of \eqref{e1.2}. The paper of Lin, Xu and
Zhang \cite{LinZhang1} appears to be the very first to establish
the global existence of smooth solutions
(near an equilibrium) for the 2D incompressible MHD equations
without magnetic diffusion. They resort
to Lagrangian coordinates
and anisotropic Besov spaces. \cite{LinZhang1} inspired several different
approaches (\cite{HuLin,Zhifei,WWX,ZhangT}). Little has been done for the
compressible MHD equations. The only work currently available is a very recent
preprint of Hu \cite{HuX}, in which the author constructs a global solution for the
compressible MHD equation without magnetic diffusion
starting from an initial data near a constant
background and the Lagrangian deformation gradient near the identity
matrix. \cite{HuX} involves hybrid Besov spaces. The approach of this paper is completely different from that in \cite{HuX}.

\vskip .1in
Besides the global existence of smooth solutions of \eqref{e1.2},
this paper also aims at thoroughly
understanding the structure of the linearized system and how this
structure leads to the exact large-time
decay rates for various Sobolev norms of the solutions.
Keeping all nonlinear terms on the right-hand side,
the equations for the perturbation $U=(n,u,v,\psi)$ can be written as
\begin{equation} \label{Ueq}
\pp_t\overrightarrow{U}=A\overrightarrow{U}+\overrightarrow{N},
\end{equation}
where
$$
\overrightarrow{U}=\left(\begin{array}{c}
                     n \\
                     u \\
                     v \\
                     \psi
                   \end{array}\right),\,\,
A=\left(\begin{array}{cccc}
               0 & -\pp_x & -\pp_y & 0 \\
               -\pp_x & \Delta+\lambda\partial_{xx} & \lambda\partial_{xy} & 0 \\
               -\pp_y & \lambda\partial_{xy} & \Delta+\lambda\partial_{yy} & -\Delta \\
               0 & 0 & -1 & 0
             \end{array}\right),\,\,
\overrightarrow{N}=\left(\begin{array}{c}
                     N_0 \\
                     N_1 \\
                     N_2 \\
                     N_3
                   \end{array}\right)
$$
with $N_0, N_1, N_2$ and $N_3$ given in (\ref{N0}--\ref{N3}). The terms involving $\lambda$ are treated as nonlinear terms. The natural next step would be to diagonalize the system, but the standard
method via the eigenvalues and eigenvectors of $A$ appears to be impossible to carry out.  We provide a new systematic approach. The diagonalization here is obtained by suitable differentiation of (\ref{Ueq}) and magically all the resulting equations have the same structure,
\begin{align*}
\Big((\pp_{tt}-\Delta\pp_t-\Delta)^2-\Delta \pp_{yy}\Big)n=F_0,
\\
\Big((\pp_{tt}-\Delta\pp_t-\Delta)^2-\Delta \pp_{yy}\Big)u=F_1,
\\
\Big((\pp_{tt}-\Delta\pp_t-\Delta)^2-\Delta \pp_{yy}\Big)v=F_2,
\\
\Big((\pp_{tt}-\Delta\pp_t-\Delta)^2-\Delta \pp_{yy}\Big)\psi=F_3,
\end{align*}
where $F_0, F_1, F_2$ and $F_3$ are given in (\ref{F0}--\ref{F3}).
By factorizing and inverting the differential operator in the equation
$$
\Big[(\pp_{tt}-\Delta\pp_t-\Delta)^2-\Delta \pp_{yy}\Big]\Phi=F,
$$
we obtain the integral representation
$$
\Phi=L(\Phi_{1,0},\Phi_{2,0},\Phi_{3,0},\Phi_{4,0})+\int_0^t K(t-s,\partial_x,\partial_y) F(s)\,ds,
$$
where $\Phi_{1,0},\Phi_{2,0},\Phi_{3,0},\Phi_{4,0}$ and $K$ are given in Lemma \ref{lem:Formula}. This
integral representation and its simplified variants form the foundation for proving Theorem \ref{th1}.
Due to the standard continuity argument,
the proof of Theorem \ref{th1} is then reduced to verifying
the integral representation obeys
\begin{equation} \label{toprove}
\|U\|_X \le  C_0\|U_0\|_{X_0}+ Q(\|U\|_X),
\end{equation}
where $C_0$ is an absolute positive constant and $Q(r)$ represents a polynomial of $r$ with the lowest order at least quadratic. In this paper, we shall prove that
\begin{equation} \label{conneed}
\|U\|_X \le \frac12 C_0\|U_0\|_{X_0}  + C_*|\lambda|\|U\|_X+ \frac12Q(\|U\|_X),
\end{equation}
for some absolute positive constant $C_*$. Indeed,
by choosing $|\lambda|\le c_0$, for $c_0=\frac1{2C_*}$, we obtain \eqref{toprove}.
 To prove (\ref{conneed}), we
first need to understand the exact decay properties of $L$ and $K$. Because $K=K(t,x,y)$ is anisotropic,
the decay rates of $K$ and its various spatial and time derivatives depend on the Fourier frequencies.
These decay rates are explicitly given in Section \ref{KernelProperty}.
In order to prove (\ref{conneed}),
we further recast the integral representation and estimate
each component of $U=(n,u,v,\psi)$ and verify (\ref{conneed}). This is a complicated and lengthy process.

\vskip .1in
We need to deal with several difficulties. The first is the tanglement
in the linear operator. The linear operator under study is
$$
(\pp_{tt}-\Delta\pp_t-\Delta)^2-\Delta \pp_{yy},
$$
which is a four-order (both in time and spatial variables) wave type operator.
In general, the number of the unknown functions gives the order of the
linearized equations. In light of this, the problem  in the incompressible
case is much easier than the compressible one, since, under the
incompressibility condition, the number of the unknown function members reduces
to 2. But in the compressible case, we have four unknown members
 $U=(n,u,v,\psi)$. This marks the first challenge in this paper and it takes
extraordinary efforts to completely understand the properties of the operator
and obtain the decay estimates.
Another difficulty is the anisotropicity.  It is easy to see from the expansion
\begin{align*}
(\pp_{tt}-\Delta\pp_t-\Delta)^2-\Delta \pp_{yy}
=\left(\big(\pp_{tt}-\Delta\pp_t-\Delta\big)-\sqrt{\Delta \pp_{yy}}\right)\left(\big(\pp_{tt}-\Delta\pp_t-\Delta\big)+\sqrt{\Delta \pp_{yy}}\right),
\end{align*}
that the operator has different behaviors along different directions. Indeed, by a delicate analysis, we observe that kernel of the linear operator obeys the estimate that
$$
 |\widehat K(t,\xi,\eta)|
\lesssim\chi_{A\ge 1}\frac1{A^4} e^{-ct}+\chi_{A\le 1}\frac{1}{A|\xi||\eta|}e^{-\frac14 A^2t}
+\frac1{A^{4}}\chi_{|\xi|\le A^2 }e^{-\frac{\xi^2}{2A^2}t},
$$
with $A=\sqrt{\xi^2+\eta^2}$. It is singular and anisotropic.
More precisely, it has a high-order negative regularity in the kernel,
and has weaker behavior in $y$-direction than in $x$-direction.
These properties of the operator are reflected in the definition of the
working space, kernel properties and various estimates.

\vskip .1in
Another difficulty is the criticality, due to the slow dispersion and
quasilinearity. Indeed,  the spatial $L^\infty$-norm of $\vec u$ behaves like $t^{-1}$, which is not
integrable in time. What's more, the spatial $L^\infty$-norm of $n$ and $\psi$ decay like $t^{-1/2}$,
which is even far from being integrable. To overcome this difficulty, we make full use of the structure of the system.
In particular, for $n$ and $\psi$, we obtain the cubic type nonlinearities, thanks to the structure of the velocity diffusion.
In contrast to the incompressible case,
the compressible MHD problem is much harder. Indeed, by using $\nabla\cdot \vec u=0$ in the incompressible case, one may transfer
the derivative $\partial_y$ to $\partial_x$ and then faster decay rates result due to the presence of $\partial_x$. However,
in the compressible case, the fast decay rate associate with $\partial_x$ can not be converted into
that for $\partial_y$ due to the lack of incompressibility condition. This criticality prompts us to use
extremely fine Fourier analysis and associated estimates.

\vskip .1in
The rest of this paper is divided into seven sections and an appendix. The second section derives
the integral representation for the perturbation $U=(n,u,v,\psi)$ through the kernels $K$ and $L$.
The third section provides pointwise as well as $L^p$-estimates for $\widehat K(t,\xi,\eta)$ (defined in
(\ref{Kop})) and the Fourier transforms of various derivatives of $K$.  The fourth section establishs the decaying estimates of the linear flow. The fifth section contains
the local existence and uniqueness theory and an energy estimate serving as a part of the proof
for Theorem \ref{th1}. Section \ref{sect:n} through Section \ref{sec:psi} verify (\ref{conneed}).
The appendix proves Lemma \ref{lem:Elem1} and several inequalities used in the previous sections.

%\part{Linear Analysis}

\vskip .5in
\section{Integral representation}
\label{integralrepresentation}

This section derives the integral representation for the perturbation $U=(n,u,v,\psi)$. The derivation consists
of four steps, which are presented in four subsections. The first subsection writes the equations for $U$ with the
nonlinear terms kept on the right-hand side. The second subsection diagonalizes the equations through suitable
differentiation and magically all the equations have the same structure. The third subsection obtains an
preliminary integral representation through factoring the differential operator. This representation involves
some intermediate variables. The last subsection eliminates the intermediate variables to reach the final integral
representation.

\subsection{Linearization}\label{sec:Linearization}
First of all, by a simple computation,  the second equation in \eqref{e1.2} can be rewritten as
\begin{equation}\label{simpleueq}
\partial_t \vec u +\vec u \cdot \nabla \vec u -\frac{\Delta \vec u+\lambda\nabla(\nabla\cdot \vec u)}{\rho} + \rho \nabla \rho=-\frac{\nabla\phi\Delta\phi}{\rho}.
\end{equation}
Setting
$$
\rho=n+1, \quad \vec u=(u,v), \quad \phi=\psi+y
$$
converts \eqref{e1.2} into the following equivalent system of equations for $(n,u,v, \psi)$,
\begin{equation}\label{e2.1}
\begin{cases}
\pp_t n+\pp_x u+\pp_y v=N_0,\\
\pp_t u-\Delta u-\lambda(\partial_{xx} u+\partial_{xy}v)+\partial_x n=N_1,\\
\pp_tv-\Delta v-\lambda(\partial_{xy} u+\partial_{yy}v)+\partial_y n+\Delta \psi=N_2,\\
\pp_t\psi+v=N_3,
\end{cases}
\end{equation}
where
\begin{align}
N_0=&-\pp_x (n u)-\pp_y(n v);\label{N0}\\
N_1=&- (u\pp_x u+v\pp_y u) -\frac{n\Delta u+n\lambda(\partial_{xx} u+\partial_{xy}v)}{\rho} -\frac{\pp_x\psi\Delta\psi}{\rho}- n\pp_x n;\label{N1}\\
N_2=&- (u\pp_x v+v\pp_y v) -\frac{n\Delta v+n\lambda(\partial_{xy} u+\partial_{yy}v)-n\Delta\psi}{\rho}-\frac{\pp_y\psi\Delta\psi}{\rho}- n\pp_y n;\label{N2}\\
N_3=&-u\partial_x\psi-v\pp_y\psi.\label{N3}
\end{align}

\subsection{Diagonalization}\label{sec:Dia}

From \eqref{e2.1}, we can write the system in the form
$$
\pp_t\overrightarrow{U}=A\overrightarrow{U}+\overrightarrow{N},
$$
where
$$
\overrightarrow{U}=\left(\begin{array}{c}
                     n \\
                     u \\
                     v \\
                     \psi
                   \end{array}\right),\,
A=\left(\begin{array}{cccc}
               0 & -\pp_x & -\pp_y & 0 \\
               -\pp_x & \Delta+\lambda\partial_{xx} & \lambda\partial_{xy} & 0 \\
               -\pp_y & \lambda\partial_{xy} & \Delta+\lambda\partial_{yy} & -\Delta \\
               0 & 0 & -1 & 0
             \end{array}\right),\,
\overrightarrow{N}=\left(\begin{array}{c}
                     N_0 \\
                     N_1 \\
                     N_2 \\
                     N_3
                   \end{array}\right).
$$
The aim of this subsection is to diagonalize $A$.  However, the standard approach involving the eigenvalues and
eigenvectors of the symbol of $A$ appears to demand extremely tedious calculations. Our strategy is to diagonalize
through carefully selected differentiations. As mentioned before, we treat the terms involving the parameter
$\lambda$ as nonlinear terms for the brevity of the presentation.
The main  result of this subsection is
\begin{prop} The functions $(n,u,v, \psi)$ obey the following equations,
\begin{align}
\Big((\pp_{tt}-\Delta\pp_t-\Delta)^2-\Delta \pp_{yy}\Big)n=F_0,
\label{e2.10}\\
\Big((\pp_{tt}-\Delta\pp_t-\Delta)^2-\Delta \pp_{yy}\Big)u=F_1,
\label{e2.14}\\
\Big((\pp_{tt}-\Delta\pp_t-\Delta)^2-\Delta \pp_{yy}\Big)v=F_2,
\label{e2.17}\\
\Big((\pp_{tt}-\Delta\pp_t-\Delta)^2-\Delta \pp_{yy}\Big)\psi=F_3.\label{e2.18}
\end{align}
where
\begin{align}
F_0= &(\pp_{tt}-\Delta\pp_t-\Delta)\Pi_0+\Delta \pp_{y}\Pi_3;\label{F0}\\
F_1= &\big(\pp_{tt}-\Delta\pp_t-\Delta-\pp_{yy}\big)\Pi_1+\pp_{xy}\Pi_2;\label{F1}\\
F_2= &\big(\pp_{tt}-\Delta\pp_t-\pp_{xx}\big)\Pi_2+\pp_{xy}\Pi_1;\label{F2}\\
F_3= &(\pp_{tt}-\Delta\pp_t-\Delta)\Pi_3+ \pp_{y}\Pi_0,\label{F3}
\end{align}
and
\begin{align}
\Pi_0=&\pp_t N_0-\Delta N_0-\pp_xN_1-\pp_yN_2-\lambda(\pp_{x}\Delta u+\pp_y \Delta v);\label{Pi0}\\
\Pi_1=&\pp_t N_1-\pp_xN_0+\lambda\pp_t(\pp_{xx} u+\pp_{xy} v);\label{Pi1}\\
\Pi_2=&-\pp_yN_0+\partial_tN_2-\Delta N_3+\lambda\pp_t(\pp_{xy} u+\pp_{yy} v);\label{Pi2}\\
\Pi_3=&-N_2+(\partial_t-\Delta) N_3-\lambda(\pp_{xy} u+\pp_{yy} v).\label{Pi3}
\end{align}
\end{prop}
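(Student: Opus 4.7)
The plan is to avoid working with the full $4\times 4$ matrix $A$ directly, and instead split \eqref{e2.1} into two decoupled $2\times 2$ second-order subsystems --- one in the variables $(n,\psi)$, one in $(u,v)$ --- and then decouple each via a single algebraic identity. All $\lambda$-contributions are simply carried along as nonlinear source terms feeding into $\Pi_0,\dots,\Pi_3$.

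First I would derive the $(n,\psi)$ subsystem. Applying $\pp_t$ to the first equation of \eqref{e2.1} and substituting $\pp_t u$, $\pp_t v$ from the second and third equations produces the combination $\pp_x\Delta u+\pp_y\Delta v$; this is eliminated by applying $\Delta$ to the first equation itself, which gives $\pp_x\Delta u+\pp_y\Delta v=\Delta N_0-\Delta\pp_t n$. Rearranging yields
\begin{equation*}
(\pp_{tt}-\Delta\pp_t-\Delta)n-\Delta\pp_y\psi=\Pi_0 .
\end{equation*}
For $\psi$, the fourth equation reads $v=N_3-\pp_t\psi$; feeding this and its time derivative into the third equation gives
\begin{equation*}
(\pp_{tt}-\Delta\pp_t-\Delta)\psi-\pp_y n=\Pi_3 .
\end{equation*}
Applying $(\pp_{tt}-\Delta\pp_t-\Delta)$ to the first relation and using the second to replace $(\pp_{tt}-\Delta\pp_t-\Delta)\psi$ gives \eqref{e2.10}; the symmetric computation gives \eqref{e2.18}.

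For $(u,v)$, I would differentiate the second equation of \eqref{e2.1} in $t$ and use the first equation to rewrite $\pp_x\pp_t n=-\pp_{xx}u-\pp_{xy}v+\pp_x N_0$; this produces
\begin{equation*}
L_1 u-\pp_{xy}v=\Pi_1,\qquad L_1:=\pp_{tt}-\Delta\pp_t-\pp_{xx}.
\end{equation*}
Differentiating the third equation in $t$ and using the first and fourth equations to eliminate $\pp_t n$ and $\pp_t\psi$ gives
\begin{equation*}
L_2 v-\pp_{xy}u=\Pi_2,\qquad L_2:=\pp_{tt}-\Delta\pp_t-\Delta-\pp_{yy}.
\end{equation*}
Since $L_2=L_1-2\pp_{yy}$, writing $L:=\pp_{tt}-\Delta\pp_t-\Delta=L_1-\pp_{yy}$ one has $L_1L_2=L^2-\pp_{yy}^2$ and therefore
\begin{equation*}
L_1L_2-\pp_{xy}^2=L^2-\pp_{yy}(\pp_{xx}+\pp_{yy})=(\pp_{tt}-\Delta\pp_t-\Delta)^2-\Delta\pp_{yy}.
\end{equation*}
Applying $L_2$ to the $u$-equation and $\pp_{xy}$ to the $v$-equation and adding then yields \eqref{e2.14} with right-hand side $L_2\Pi_1+\pp_{xy}\Pi_2=F_1$; reversing the roles of $L_1$ and $L_2$ gives \eqref{e2.17}.

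The main obstacle is organisational rather than computational. The non-obvious points are (i) recognising that the correct second-order pre-diagonalisation groups $(n,\psi)$ together (the coupling goes through the $\Delta\psi$ in the $v$-equation combined with $\pp_t\psi=-v+N_3$) and keeps $(u,v)$ as an independent subsystem, and (ii) discovering the operator identity $L_1L_2-\pp_{xy}^2=L^2-\Delta\pp_{yy}$, which is exactly what forces all four unknowns to satisfy the same scalar fourth-order operator. Once these are in place, the only remaining work is to track which $\lambda(\pp_{xx}u+\pp_{xy}v)$- or $\lambda(\pp_{xy}u+\pp_{yy}v)$-contribution lands in which $\Pi_i$, which is routine bookkeeping.
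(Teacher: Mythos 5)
Your proposal is correct and follows essentially the same route as the paper: derive the four intermediate second-order relations $(\pp_{tt}-\Delta\pp_t-\Delta)n-\Delta\pp_y\psi=\Pi_0$, $L_1u-\pp_{xy}v=\Pi_1$, $L_2v-\pp_{xy}u=\Pi_2$, $(\pp_{tt}-\Delta\pp_t-\Delta)\psi-\pp_yn=\Pi_3$, then cross-apply the paired operators and invoke the factorization $L_1L_2-\pp_{xy}^2=(\pp_{tt}-\Delta\pp_t-\Delta)^2-\Delta\pp_{yy}$. The paper performs the same computation (applying one operator and substituting, rather than applying two and adding, which is an equivalent reformulation), and also uses the same pairing $(n,\psi)$ and $(u,v)$; your explicit isolation of the operator identity is a clean way to package what the paper writes inline.
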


\begin{proof}
First, we give some reductions.
Taking the time derivative on the first equations of \eqref{e2.1}, then using the second, third and also the first equation of \eqref{e2.1}, we obtain
\begin{align*}
0=&\pp_{tt}n+\pp_x\pp_tu+\pp_y\pp_tv-\pp_tN_0\\
=&\pp_{tt}n+\pp_x\big[\Delta u+\lambda(\partial_{xx} u+\partial_{xy}v)-\partial_x n+N_1\big]\\
&\quad+\pp_y\big[\Delta v+\lambda(\partial_{xy} u+\partial_{yy}v)-\partial_y n-\Delta \psi+N_2\big]-\pp_tN_0\\
=&\pp_{tt}n+\Delta(\pp_x u+\pp_y v)-\Delta n-\pp_y\Delta \psi\\
&\quad+\pp_xN_1+\pp_yN_2-\pp_tN_0+\lambda(\pp_{x}\Delta u+\pp_y \Delta v)\\
=&\pp_{tt}n-\Delta\pp_{t}n-\Delta n-\pp_y\Delta \psi\\
&\quad+(\Delta-\pp_t) N_0+\pp_xN_1+\pp_yN_2+\lambda(\pp_{x}\Delta u+\pp_y \Delta v).
\end{align*}
Therefore, we obtain that
\begin{equation}\label{e2.2}
\pp_{tt} n-\Delta\pp_t n-\Delta n=\Delta\partial_y\psi+(\pp_t-\Delta) N_0-\pp_xN_1-\pp_yN_2-\lambda(\pp_{x}\Delta u+\pp_y \Delta v).
\end{equation}
Now we denote
$$
\Pi_0=(\pp_t-\Delta) N_0-\pp_xN_1-\pp_yN_2-\lambda(\pp_{x}\Delta u+\pp_y \Delta v),
$$
then \eqref{e2.2} becomes
\begin{equation}\label{e2.3}
\pp_{tt} n-\Delta\pp_t n-\Delta n=\Delta\partial_y\psi+\Pi_0.
\end{equation}

Taking the time derivative on the second equation of \eqref{e2.1}, and then using the first equation, we obtain
\begin{align*}
0=&\pp_{tt}u-\Delta \pp_tu+\pp_x \pp_tn-\pp_t N_1-\lambda\pp_t(\pp_{xx}u+\pp_{xy}v)\\
=&\pp_{tt}u-\Delta \pp_tu-\pp_x (\pp_{x}u+\pp_{y}v)+\pp_xN_0-\pp_t N_1-\lambda\pp_t(\pp_{xx}u+\pp_{xy}v).
\end{align*}
Thus, we get
\begin{equation}\label{e2.4}
\pp_{tt} u-\Delta\pp_t u-\pp_{xx} u=\partial_{xy}v-\pp_xN_0+\pp_t N_1+\lambda\pp_t(\pp_{xx} u+\pp_{xy} v).
\end{equation}
Again we denote
$$
\Pi_1=-\pp_xN_0+\pp_t N_1+\lambda\pp_t(\pp_{xx} u+\pp_{xy} v),
$$
then \eqref{e2.4} turns to
\begin{equation}\label{e2.5}
\pp_{tt} u-\Delta\pp_t u-\pp_{xx} u=\partial_{xy}v+\Pi_1.
\end{equation}

Taking the time derivative on the third equation of \eqref{e2.1}, then by the first and the fourth equations we find
\begin{align*}
0=&\pp_{tt}v-\Delta \pp_tv+\pp_y \pp_tn+\Delta\pp_t\psi-\pp_t N_2-\lambda\pp_t(\pp_{xy}u+\pp_{yy}v)\\
=&\pp_{tt}v-\Delta \pp_tv-\pp_y (\pp_{x}u+\pp_{y}v)+\pp_yN_0\\
&\quad-\Delta v+\Delta N_3-\pp_t N_2-\lambda\pp_t(\pp_{xx}u+\pp_{xy}v).
\end{align*}
Therefore, we obtain that
\begin{equation}\label{e2.6}
\pp_{tt} v-\Delta\pp_t v-\Delta v-\pp_{yy} v=\partial_{xy}u-\pp_yN_0+\partial_tN_2-\Delta N_3+\lambda\pp_t(\pp_{xy} u+\pp_{yy} v).
\end{equation}
Again we denote
$$
\Pi_2=-\pp_yN_0+\partial_tN_2-\Delta N_3+\lambda\pp_t(\pp_{xy} u+\pp_{yy} v),
$$
then \eqref{e2.6} turns to
\begin{equation}\label{e2.7}
\pp_{tt} v-\Delta\pp_t v-\Delta v-\pp_{yy} v=\partial_{xy}u+\Pi_2.
\end{equation}

At last,
taking the time derivative on the fourth equation of \eqref{e2.1}, and using the third and also the fourth equations, we find
\begin{align*}
0=&\pp_{tt}\psi+\pp_tv-\pp_t N_3\\
=&\pp_{tt}\psi+\Delta v+\lambda(\pp_{xy}u+\pp_{yy}v)-\pp_yn-\Delta \psi+N_2-\pp_t N_3\\
=&\pp_{tt}\psi-\Delta\pp_t\psi-\Delta\psi-\pp_yn+N_2+\Delta N_3-\pp_t N_3+\lambda(\pp_{xy}u+\pp_{yy}v).
\end{align*}
Thus we have
\begin{equation}\label{e2.8}
\pp_{tt} \psi-\Delta\pp_t \psi-\Delta \psi=\partial_{y}n-N_2+(\partial_t-\Delta) N_3-\lambda(\pp_{xy} u+\pp_{yy} v).
\end{equation}
Again we denote
$$
\Pi_3=-N_2+(\partial_t-\Delta) N_3-\lambda(\pp_{xy} u+\pp_{yy} v),
$$
then \eqref{e2.6} turns to
\begin{equation}\label{e2.9}
\pp_{tt} \psi-\Delta\pp_t \psi-\Delta \psi=\partial_{y}n+\Pi_3.
\end{equation}
In conclusion, we get \eqref{e2.3}, \eqref{e2.5}, \eqref{e2.7}, \eqref{e2.9} at this step. To diagonalize the systems, some further analysis on these equations are needed.

Applying the operator $\pp_{tt}-\Delta\pp_t-\Delta$  on \eqref{e2.3}, and using \eqref{e2.9} we obtain that
\begin{align*}
(\pp_{tt}-\Delta\pp_t-\Delta)^2n=&(\pp_{tt}-\Delta\pp_t-\Delta)\Delta \pp_{y}\psi+(\pp_{tt}-\Delta\pp_t-\Delta)\Pi_0\\
=&\Delta \pp_{yy}n+F_0,
\end{align*}
which is \eqref{e2.10}. Applying the operator $\pp_{tt}-\Delta\pp_t-\Delta-\pp_{yy}$  on \eqref{e2.5}, and then using \eqref{e2.7}, we obtain that
\begin{align*}
&(\pp_{tt}-\Delta\pp_t-\pp_{xx})(\pp_{tt}-\Delta\pp_t-\Delta-\pp_{yy})u\\
& \qquad\qquad =(\pp_{tt}-\Delta\pp_t-\Delta-\pp_{yy})\pp_{xy}v+\big(\pp_{tt}-\Delta\pp_t-\Delta-\pp_{yy}\big)\Pi_1\\
& \qquad\qquad =\pp_{xxyy}u+F_1.
\end{align*}
Using the formula,
\begin{align*}
&(\pp_{tt}-\Delta\pp_t-\pp_{xx})(\pp_{tt}-\Delta\pp_t-\Delta-\pp_{yy})\\
=&(\pp_{tt}-\Delta\pp_t-\Delta+\pp_{yy})(\pp_{tt}-\Delta\pp_t-\Delta-\pp_{yy})\\
=&(\pp_{tt}-\Delta\pp_t-\Delta)^2-\pp_{yyyy},
\end{align*}
we obtain
$$
(\pp_{tt}-\Delta\pp_t-\Delta)^2u-\pp_{yyyy}u=\pp_{xxyy}u+F_1.
$$
So we have \eqref{e2.14}. Similarly, applying the operator $\pp_{tt}-\Delta\pp_t-\pp_{xx}$  on \eqref{e2.7}, and then using \eqref{e2.5}, we obtain that
\begin{align*}
(\pp_{tt}-\Delta\pp_t-\Delta-\pp_{yy})(\pp_{tt}-\Delta\pp_t-\pp_{xx})v
=&(\pp_{tt}-\Delta\pp_t-\pp_{xx})\pp_{xy}u+\big(\pp_{tt}-\Delta\pp_t-\pp_{xx}\big)\Pi_2\\
=&
\pp_{xxyy}v+F_2,
\end{align*}
which gives \eqref{e2.17}. Finally, applying the operator $\pp_{tt}-\Delta\pp_t-\Delta$  on \eqref{e2.9}, and then using \eqref{e2.3}, we obtain that
\begin{align*}
(\pp_{tt}-\Delta\pp_t-\Delta)^2\psi=& (\pp_{tt}-\Delta\pp_t-\Delta)\pp_yn+ (\pp_{tt}-\Delta\pp_t-\Delta)\Pi_3\\
=&
\Delta \pp_{yy}\psi+F_3,
\end{align*}
which is \eqref{e2.18}. This finishes the diagonalization process.
\end{proof}

\subsection{Preliminary integral representation}

Since the linear parts in \eqref{e2.10}, \eqref{e2.14}, \eqref{e2.17} and \eqref{e2.18} all have
the same structure, it suffices to consider the inhomogeneous equation
\begin{align}\label{Inhomoeqs0}
\Big[(\pp_{tt}-\Delta\pp_t-\Delta)^2-\Delta \pp_{yy}\Big]\Phi=F.
\end{align}
and our aim here is to derive an equivalent integral representation of (\ref{Inhomoeqs0}) by inverting the differentiation operator. The main result is presented in Lemma \ref{lem:Formula}. As we shall see, the
integral representation here involves some intermediate variables. Rather than using the eigenvalues and eigenfunctions, we present a basic method to resolve \eqref{Inhomoeqs0}.

\vskip .1in
We now start the derivation process.
First, we set
\begin{align}\label{e2.3.1}
\Big((\pp_{tt}-\Delta\pp_t-\Delta)-\sqrt{\Delta \pp_{yy}}\Big)\Phi=\Psi_1;
\Big((\pp_{tt}-\Delta\pp_t-\Delta)+\sqrt{\Delta \pp_{yy}}\Big)\Phi=\Psi_2.
\end{align}
Then
\begin{equation} \label{e2.3.3}
2\sqrt{\Delta \pp_{yy}}\Phi=\Psi_2-\Psi_1.
\end{equation}
Moreover, one may find from \eqref{Inhomoeqs0} that
\begin{equation} \label{e2.3.2}
\begin{cases}
\Big(\pp_{tt}-\Delta\pp_t-\Delta)+\sqrt{\Delta \pp_{yy}}\Big)\Psi_1=F;\\
\Big(\pp_{tt}-\Delta\pp_t-\Delta)-\sqrt{\Delta \pp_{yy}}\Big)\Psi_2=F.
\end{cases}
\end{equation}
According to the first equation in \eqref{e2.3.2}, we set
\begin{align}
\Big(\pp_{t}-\frac12\Delta-\sqrt{\frac14\Delta^2+\Delta-\sqrt{\Delta \pp_{yy}}}\Big)\Psi_1=\Phi_1;\label{e2.3.4}\\
\Big(\pp_{t}-\frac12\Delta+\sqrt{\frac14\Delta^2+\Delta-\sqrt{\Delta \pp_{yy}}}\Big)\Psi_1=\Phi_2.\label{e2.3.5}
\end{align}
Then
\begin{equation} \label{e2.3.7.1}
2\sqrt{\frac14\Delta^2+\Delta-\sqrt{\Delta \pp_{yy}}}\Psi_1= \Phi_2-\Phi_1.
\end{equation}
Moreover, from the first equation in \eqref{e2.3.2}, we have
\begin{equation}
\begin{cases}
\Big(\pp_{t}-\frac12\Delta+\sqrt{\frac14\Delta^2+\Delta-\sqrt{\Delta \pp_{yy}}}\Big)\Phi_1=F;\\
\Big(\pp_{t}-\frac12\Delta-\sqrt{\frac14\Delta^2+\Delta-\sqrt{\Delta \pp_{yy}}}\Big)\Phi_2=F.\label{e2.3.7}
\end{cases}
\end{equation}
Now according to the second equation in \eqref{e2.3.2}, we set
\begin{align}
\Big(\pp_{t}-\frac12\Delta-\sqrt{\frac14\Delta^2+\Delta+\sqrt{\Delta \pp_{yy}}}\Big)\Psi_2=\Phi_3;\label{e2.3.8}\\
\Big(\pp_{t}-\frac12\Delta+\sqrt{\frac14\Delta^2+\Delta+\sqrt{\Delta \pp_{yy}}}\Big)\Psi_2=\Phi_4.\label{e2.3.9}
\end{align}
Then
\begin{equation} \label{e2.3.11}
2\sqrt{\frac14\Delta^2+\Delta+\sqrt{\Delta \pp_{yy}}}\Psi_2= \Phi_4-\Phi_3.
\end{equation}
Moreover, it holds that
\begin{equation}
\begin{cases}
\Big(\pp_{t}-\frac12\Delta+\sqrt{\frac14\Delta^2+\Delta+\sqrt{\Delta \pp_{yy}}}\Big)\Phi_3=F;\\
\Big(\pp_{t}-\frac12\Delta-\sqrt{\frac14\Delta^2+\Delta+\sqrt{\Delta \pp_{yy}}}\Big)\Phi_4=F.\label{e2.3.10}
\end{cases}
\end{equation}
Now by \eqref{e2.3.7}, \eqref{e2.3.10} and the common Duhamel's Principle, we have
\begin{align*}
\Phi_1=e^{(\frac12\Delta-\sqrt{\frac14\Delta^2+\Delta-\sqrt{\Delta\partial_{yy}}})t}\Phi_{1,0}+\int_0^t e^{(\frac12\Delta-\sqrt{\frac14\Delta^2+\Delta-\sqrt{\Delta\partial_{yy}}})(t-s)} F(s)\,ds;\\
\Phi_2=e^{(\frac12\Delta+\sqrt{\frac14\Delta^2+\Delta-\sqrt{\Delta\partial_{yy}}})t}\Phi_{2,0}+\int_0^t e^{(\frac12\Delta+\sqrt{\frac14\Delta^2+\Delta-\sqrt{\Delta\partial_{yy}}})(t-s)} F(s)\,ds;\\
\Phi_3=e^{(\frac12\Delta-\sqrt{\frac14\Delta^2+\Delta+\sqrt{\Delta\partial_{yy}}})t}\Phi_{3,0}+\int_0^t e^{(\frac12\Delta-\sqrt{\frac14\Delta^2+\Delta+\sqrt{\Delta\partial_{yy}}})(t-s)} F(s)\,ds;\\
\Phi_4=e^{(\frac12\Delta+\sqrt{\frac14\Delta^2+\Delta+\sqrt{\Delta\partial_{yy}}})t}\Phi_{4,0}+\int_0^t e^{(\frac12\Delta+\sqrt{\frac14\Delta^2+\Delta+\sqrt{\Delta\partial_{yy}}})(t-s)} F(s)\,ds.
\end{align*}
Furthermore, by \eqref{e2.3.3}, \eqref{e2.3.7.1} and \eqref{e2.3.11}, we find
\begin{align*}
\Phi=\frac{1}{2\sqrt{\Delta\partial_{yy}}}\Bigg[\frac{\Phi_{4}-\Phi_{3}}{2\sqrt{\frac14\Delta^2+\Delta+\sqrt{\Delta\partial_{yy}}}}
-\frac{\Phi_{2}-\Phi_{1}}{2\sqrt{\frac14\Delta^2+\Delta-\sqrt{\Delta\partial_{yy}}}}\Bigg].
\end{align*}
Therefore, we thus have obtained the following lemma.
\begin{lemma}\label{lem:Formula}
 Let $\Phi$ be the solution of \eqref{Inhomoeqs0} with the initial datum of $\Phi_j: \Phi_j(0)=\Phi_{j,0}, j=1,2,3,4$.
Then it obeys the following formula,
\begin{align}\label{eq:Formula}
\Phi=L(\Phi_{1,0},\Phi_{2,0},\Phi_{3,0},\Phi_{4,0})+\int_0^t K(t-s,\partial_x,\partial_y) F(s)\,ds,
\end{align}
where the linear part
\begin{align}
&L(\Phi_{1,0},\Phi_{2,0},\Phi_{3,0},\Phi_{4,0})\\
=&\frac{1}{2\sqrt{\Delta\partial_{yy}}}\Bigg[\frac{1}{2\sqrt{\frac14\Delta^2+\Delta+\sqrt{\Delta\partial_{yy}}}}
\Big(e^{(\frac12\Delta+\sqrt{\frac14\Delta^2+\Delta+\sqrt{\Delta\partial_{yy}}})t}\Phi_{4,0}
-e^{(\frac12\Delta-\sqrt{\frac14\Delta^2+\Delta+\sqrt{\Delta\partial_{yy}}})t}\Phi_{3,0}\Big)\notag\\
&\,\,-\frac{1}{2\sqrt{\frac14\Delta^2+\Delta-\sqrt{\Delta\partial_{yy}}}}
\Big(e^{(\frac12\Delta+\sqrt{\frac14\Delta^2+\Delta-\sqrt{\Delta\partial_{yy}}})t}\Phi_{2,0}
-e^{(\frac12\Delta-\sqrt{\frac14\Delta^2+\Delta-\sqrt{\Delta\partial_{yy}}})t}\Phi_{1,0}\Big)\Bigg] ,\label{Def:LPhi}
\end{align}
and the operator
\begin{align}
&K(t,\partial_x,\partial_y)\notag\\
=&\frac{1}{2\sqrt{\Delta\partial_{yy}}}\Bigg[\frac{1}{2\sqrt{\frac14\Delta^2+\Delta+\sqrt{\Delta\partial_{yy}}}}
\Big(e^{(\frac12\Delta+\sqrt{\frac14\Delta^2+\Delta+\sqrt{\Delta\partial_{yy}}})t}
-e^{(\frac12\Delta-\sqrt{\frac14\Delta^2+\Delta+\sqrt{\Delta\partial_{yy}}})t}\Big)\notag\\
&\,\,-\frac{1}{2\sqrt{\frac14\Delta^2+\Delta-\sqrt{\Delta\partial_{yy}}}}
\Big(e^{(\frac12\Delta+\sqrt{\frac14\Delta^2+\Delta-\sqrt{\Delta\partial_{yy}}})t}
-e^{(\frac12\Delta-\sqrt{\frac14\Delta^2+\Delta-\sqrt{\Delta\partial_{yy}}})t}\Big)\Bigg]. \label{Kop}
\end{align}
\end{lemma}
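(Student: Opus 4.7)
The plan is to work entirely at the Fourier multiplier level, where every operator appearing in (\ref{Inhomoeqs0}) is a function of $(\pp_x,\pp_y)$ and therefore all operators commute, and to factor the fourth-order-in-$t$ operator into a product of four first-order-in-$t$ evolution operators. Each resulting first-order inhomogeneous ODE can then be integrated explicitly by Duhamel's principle, and the chain of subtractions indicated in (\ref{e2.3.3}), (\ref{e2.3.7.1}) and (\ref{e2.3.11}) recovers $\Phi$ in closed form.

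First I would perform the outer factorization as a difference of squares,
$$(\pp_{tt}-\Delta\pp_t-\Delta)^2-\Delta\pp_{yy}=\bigl[(\pp_{tt}-\Delta\pp_t-\Delta)-\sqrt{\Delta\pp_{yy}}\bigr]\bigl[(\pp_{tt}-\Delta\pp_t-\Delta)+\sqrt{\Delta\pp_{yy}}\bigr],$$
and introduce the intermediate variables $\Psi_1,\Psi_2$ by (\ref{e2.3.1}). Because the two factors commute, substituting into (\ref{Inhomoeqs0}) produces the pair of second-order equations (\ref{e2.3.2}), while subtracting the two definitions in (\ref{e2.3.1}) yields the algebraic identity $2\sqrt{\Delta\pp_{yy}}\,\Phi=\Psi_2-\Psi_1$.

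Next I would complete the square in $\pp_t$, via the identity
$$\pp_{tt}-\Delta\pp_t-\Delta\mp\sqrt{\Delta\pp_{yy}}=\Bigl(\pp_t-\tfrac12\Delta\Bigr)^2-\Bigl(\tfrac14\Delta^2+\Delta\pm\sqrt{\Delta\pp_{yy}}\Bigr),$$
to factor each of the two second-order operators as a product of two commuting first-order operators in $\pp_t$, with square-root symbols precisely of the form appearing in (\ref{e2.3.4}--\ref{e2.3.5}) and (\ref{e2.3.8}--\ref{e2.3.9}). Defining $\Phi_1,\Phi_2,\Phi_3,\Phi_4$ through those identities and applying the remaining factor to each gives the four first-order transport equations (\ref{e2.3.7}) and (\ref{e2.3.10}), whose Duhamel representations are the four displayed formulas just above the lemma. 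Substituting back via (\ref{e2.3.7.1}) and (\ref{e2.3.11}) to recover $\Psi_1,\Psi_2$, and then via (\ref{e2.3.3}) to recover $\Phi$, separates the data-dependent contributions into $L(\Phi_{1,0},\Phi_{2,0},\Phi_{3,0},\Phi_{4,0})$ and the forcing contributions into the convolution against $K$, producing exactly (\ref{Def:LPhi}) and (\ref{Kop}).

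The only delicate point is the status of the nested radicals $\sqrt{\Delta\pp_{yy}}$ and $\sqrt{\tfrac14\Delta^2+\Delta\pm\sqrt{\Delta\pp_{yy}}}$, together with the denominators in (\ref{Def:LPhi}) and (\ref{Kop}), as Fourier multipliers. On the Fourier side these correspond to the symbols $|\eta|(\xi^2+\eta^2)^{1/2}$ and $\tfrac14(\xi^2+\eta^2)^2+(\xi^2+\eta^2)\pm|\eta|(\xi^2+\eta^2)^{1/2}$, both of which are nonnegative on $\R^2$ (the second because $|\eta|(\xi^2+\eta^2)^{1/2}\le\xi^2+\eta^2$), so the principal square roots are well defined as Fourier symbols. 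The apparent singularities of $L$ and $K$ on the zero sets of these symbols are compensated by the numerator differences of exponentials, but for Lemma \ref{lem:Formula} itself the assertion is purely an algebraic identity of symbols, and the quantitative justification of this cancellation is deferred to the kernel estimates of Section \ref{KernelProperty}. I expect this bookkeeping — the sign conventions for the radicals and tracking which $\Phi_j$ corresponds to which branch — to be the main nuisance, but not a genuine obstacle.
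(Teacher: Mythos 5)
Your proof follows the paper's reduction chain step for step: the outer difference-of-squares factorization defining $\Psi_1,\Psi_2$, completion of the square in $\pp_t$ to produce $\Phi_1,\dots,\Phi_4$, the four Duhamel integrals, and back-substitution through \eqref{e2.3.11}, \eqref{e2.3.7.1}, \eqref{e2.3.3}. One correction to your closing remark: the symbol of $\Delta$ is $-(\xi^2+\eta^2)$, so $\frac14\Delta^2+\Delta\pm\sqrt{\Delta\pp_{yy}}$ has Fourier symbol $\frac14 A^4-A^2\pm A|\eta|$ with $A=\sqrt{\xi^2+\eta^2}$ — not $\frac14 A^4+A^2\pm A|\eta|$ as you wrote — and this quantity is negative for small $A$ (e.g.\ $A\ll1$), so your nonnegativity argument fails. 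The lemma is nonetheless correct: with the radical taken as a complex branch, the expressions \eqref{Def:LPhi} and \eqref{Kop} are even functions of each square root (the exponentials combine into $\sinh(\cdot\, t)/\sqrt{\cdot}$ and $\cosh(\cdot\, t)$, which depend only on the radicand), hence are single-valued and real-valued multipliers regardless of sign; this is exactly why the paper's Proposition \ref{Kpointwise} later splits into cases according to the sign of $\frac14A^4-A^2\pm A|\eta|$, with the negative regime producing the oscillatory $\sin$ factors used in \eqref{basic-1}.
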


\vskip .1in
\subsection{Final integral representation}\label{sec:LF} The previous integral representation involves some
intermediate variables $\Phi_{1,0},\Phi_{2,0},\Phi_{3,0}$ and $\Phi_{4,0}$. This subsection eliminates these
variables and replaces them by the initial data $\Phi(0)$. In addition, we also eliminate the intermediate
variables $\Pi_0, \Pi_1,\Pi_2$ and $\Pi_3$ (defined in (\ref{Pi0}--\ref{Pi3}))  from $F=(F_0, F_1, F_2, F_3)$ and rewrite $F$ directly in terms of $N_0, N_1, N_2$ and $N_3$.

\begin{lemma} \label{fiii}
The operator $L$ defined in (\ref{Def:LPhi}) can be written as
\begin{align}
L(\Phi_{1,0},\Phi_{2,0},&\Phi_{3,0},\Phi_{4,0})
=
K(t)\big[(\pp_{tt}-\Delta\pp_t-\Delta)\pp_{t}\Phi(0)\big]\label{L-1}\\
&+(\partial_{tt}-\Delta\partial_t-\Delta)K(t)\big[\partial_t\Phi(0)\big]\label{L-2}\\
&
+\big(\partial_t-\Delta\big) K(t)\big[(\pp_{tt}-\Delta\pp_t-\Delta)\Phi(0)\big]\label{L-3}\\
&-\frac12\Delta\sqrt{\Delta\partial_{yy}} K(t)\big[\Phi(0)\big]+K_1(t)[\Phi(0)]\label{L-5},
\end{align}
where $K_1$ is given by
\begin{align}
K_1=&\frac{1}{4}\Big(
e^{(\frac12\Delta+\sqrt{\frac14\Delta^2+\Delta+\sqrt{\Delta\partial_{yy}}})t}
+e^{(\frac12\Delta-\sqrt{\frac14\Delta^2+\Delta+\sqrt{\Delta\partial_{yy}}})t}\notag\\
&\qquad
+e^{(\frac12\Delta+\sqrt{\frac14\Delta^2+\Delta-\sqrt{\Delta\partial_{yy}}})t}
+e^{(\frac12\Delta-\sqrt{\frac14\Delta^2+\Delta-\sqrt{\Delta\partial_{yy}}})t}\Big)\label{K1op}
\end{align}
or, with $A=\sqrt{\xi^2+\eta^2}$,
\begin{align}
\widehat{K_1}(t,\xi,\eta)=&\frac14\Big(e^{(-\frac12A^2+\sqrt{\frac14A^4-A^2+A|\eta|})t}
+e^{(-\frac12A^2-\sqrt{\frac14A^4-A^2+A|\eta|})t}\notag\\
&\qquad+e^{(-\frac12A^2+\sqrt{\frac14A^4-A^2-A|\eta|})t}
+e^{(-\frac12A^2-\sqrt{\frac14A^4-A^2-A|\eta|})t}\Big). \label{K1}
\end{align}
\end{lemma}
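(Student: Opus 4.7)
My approach is to work on the Fourier side throughout, where $K$, $K_1$, $P=\partial_{tt}-\Delta\partial_t-\Delta$ and $B=\sqrt{\Delta\partial_{yy}}$ are (possibly $t$-dependent) Fourier multipliers. Write $E_\pm^\pm=e^{(\frac12\Delta\pm S_\pm)t}$ for the four exponentials built into (\ref{Def:LPhi}) and (\ref{Kop}), and abbreviate
$$D_\pm=\frac{E_+^\pm-E_-^\pm}{S_\pm},\qquad T_\pm=E_+^\pm+E_-^\pm,$$
so that $K=(D_+-D_-)/(4B)$ and $K_1=(T_++T_-)/4$.

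The first step is to unfold the intermediate initial data. From (\ref{e2.3.4})--(\ref{e2.3.5}) and (\ref{e2.3.8})--(\ref{e2.3.9}) evaluated at $t=0$, each $\Phi_{j,0}$ is a linear combination of $\Psi_j(0)$ and $\partial_t\Psi_j(0)$; from (\ref{e2.3.1}) at $t=0$, $\Psi_j(0)=(P\Phi)(0)\mp B\Phi(0)$ and $\partial_t\Psi_j(0)=(P\partial_t\Phi)(0)\mp B\partial_t\Phi(0)$ (with $\mp=-$ for $j=1$ and $+$ for $j=2$). Substituting into (\ref{Def:LPhi}) and applying the direct regrouping
$$E_+^+\Phi_{4,0}-E_-^+\Phi_{3,0}=(E_+^+-E_-^+)\big(\partial_t\Psi_2-\tfrac12\Delta\Psi_2\big)\big|_{t=0}+S_+(E_+^++E_-^+)\Psi_2(0),$$
together with its analogue for the $(\Phi_{1,0},\Phi_{2,0})$-pair, one reaches
$$L=\frac{1}{4B}\Big[D_+\big(\partial_t\Psi_2-\tfrac12\Delta\Psi_2\big)\big|_{0}-D_-\big(\partial_t\Psi_1-\tfrac12\Delta\Psi_1\big)\big|_{0}+T_+\Psi_2(0)-T_-\Psi_1(0)\Big].$$
A further splitting along $\Psi_1+\Psi_2=2P\Phi$ and $\Psi_2-\Psi_1=2B\Phi$ (and the analogues for $\partial_t\Psi_{1,2}$) recasts $L$ in terms of the symmetric/antisymmetric combinations $D_+\pm D_-$ and $T_+\pm T_-$, with the extra $B$'s produced by $\Psi_2-\Psi_1$ cancelling the leading $1/(4B)$.

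The heart of the argument is then a quartet of operator identities:
$$D_+-D_-=4BK,\quad T_++T_-=4K_1,\quad D_++D_-=4PK,\quad T_+-T_-=4B\big(\partial_t-\tfrac12\Delta\big)K.$$
The first two are (\ref{Kop}) and (\ref{K1op}) restated. The pivotal one is $D_++D_-=4PK$, which I would prove via the characteristic-equation observation: the four values $\mu=-\tfrac12A^2\pm S_+$ and $\mu=-\tfrac12A^2\pm S_-$ solve $\mu^2+A^2\mu+A^2=+A|\eta|$ and $=-A|\eta|$ respectively, so $PE_\pm^+=BE_\pm^+$ and $PE_\pm^-=-BE_\pm^-$. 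Hence $P(D_+-D_-)=B(D_++D_-)$, and combined with $D_+-D_-=4BK$ this gives $D_++D_-=4PK$. The fourth identity comes from $\partial_t(D_+-D_-)=\tfrac12\Delta(D_+-D_-)+(T_+-T_-)$, a direct differentiation of the exponential definitions.

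Substituting all four identities into the reorganized expression for $L$ and collapsing the two $(P\Phi)(0)$-pieces through $-\tfrac12 K\Delta+(\partial_t-\tfrac12\Delta)K=(\partial_t-\Delta)K$ assembles the claimed form of $L$; the explicit Fourier symbol (\ref{K1}) for $\widehat{K_1}$ then follows from (\ref{K1op}) by inserting $\Delta\mapsto-A^2$ and $\sqrt{\Delta\partial_{yy}}\mapsto A|\eta|$. The main obstacle is the identity $D_++D_-=4PK$: without that characteristic-equation input, the $\Phi(0)$-coefficient in the reduction of $L$ cannot collapse into a single multiplier, whereas every other step is careful bookkeeping of signs and $B$-cancellations.
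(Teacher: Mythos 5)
Your plan is structurally the same as the paper's: unfold $\Phi_{j,0}$ via $\Psi_j(0)$ and $\partial_t\Psi_j(0)$, express $\Psi_j$ through $(\partial_{tt}-\Delta\partial_t-\Delta)\Phi$ and $\sqrt{\Delta\partial_{yy}}\Phi$, and use the characteristic-equation fact (your $PE^\pm_\cdot=\pm BE^\pm_\cdot$, the paper's identity \eqref{phi-pm}) to close the algebra via \eqref{K-tt2t2} and \eqref{Kt-2}. Your systematization via $D_\pm,T_\pm$ and the quartet of identities is cleaner bookkeeping of the same ingredients; the pivotal identity $D_++D_-=4PK$ is exactly the paper's \eqref{K-tt2t2}. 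All four of your operator identities are correct.

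However, you assert that substituting the four identities ``assembles the claimed form of $L$'' without actually recording the $\Phi(0)$-coefficient, and there is a discrepancy you should have flagged. Carrying your substitution through to the end, the $\Phi(0)$-piece in your reorganized $L$ is
\begin{align*}
-\frac{\Delta}{8B}(D_++D_-)B\Phi(0)+\frac{1}{4B}(T_++T_-)B\Phi(0)
=-\frac12\Delta\,(PK)\,\Phi(0)+K_1\Phi(0),
\end{align*}
i.e.\ the coefficient is $-\tfrac12\Delta(\partial_{tt}-\Delta\partial_t-\Delta)K(t)$, \emph{not} $-\tfrac12\Delta\sqrt{\Delta\partial_{yy}}K(t)$ as \eqref{L-5} states. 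These operators are genuinely different: $PK-BK=\tfrac14\big[(D_++D_-)-(D_+-D_-)\big]=\tfrac12 D_-\not\equiv 0$. A quick sanity check settles it: one computes $\partial_t K(0)=\partial_{tt}K(0)=0$, $\partial_t(PK)(0)=1$, $\partial_t K_1(0)=\tfrac12\Delta$, so the formula with $-\tfrac12\Delta(PK)$ gives $\partial_tL(0)=\partial_t\Phi(0)$, which is what the definition of $L$ yields directly, whereas the formula with $-\tfrac12\Delta B K$ gives the spurious $\partial_t\Phi(0)+\tfrac12\Delta\Phi(0)$. In other words, your derivation is actually the \emph{correct} one; the lemma statement (and the paper's own reduction of \eqref{eq:L-2}, which mechanically uses $D_+-D_-=4BK$ where $D_++D_-=4PK$ was needed on the $B\Phi(0)$-piece) contains a slip. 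The slip does not affect the downstream estimates, since $PK$ and $BK$ obey the same symbol bounds, but you should not have claimed to recover \eqref{L-5} verbatim; the honest conclusion of your argument is the corrected formula with $-\tfrac12\Delta(\partial_{tt}-\Delta\partial_t-\Delta)K(t)[\Phi(0)]$ in place of $-\tfrac12\Delta\sqrt{\Delta\partial_{yy}}K(t)[\Phi(0)]$.
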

\begin{proof}
By \eqref{e2.3.9} and \eqref{e2.3.1}, we have
\begin{align}
\Phi_{4,0}=&\Big(\pp_{t}-\frac12\Delta+\sqrt{\frac14\Delta^2+\Delta+\sqrt{\Delta \pp_{yy}}}\Big)\Psi_2(0)\notag\\
=&\pp_{t}\Big((\partial_{tt}-\Delta\partial_t-\Delta)+\sqrt{\Delta\partial_{yy}}\Big)\Phi(0)\notag\\
&-\Big(\frac12\Delta-\sqrt{\frac14\Delta^2+\Delta+\sqrt{\Delta \pp_{yy}}}\Big)\Big((\partial_{tt}-\Delta\partial_t-\Delta)+\sqrt{\Delta\partial_{yy}}\Big)\Phi(0);\label{Phi40}
\end{align}
similarly, by \eqref{e2.3.8}, \eqref{e2.3.5}, \eqref{e2.3.4} and \eqref{e2.3.1},
\begin{align}
\Phi_{3,0}=&\pp_{t}\Big((\partial_{tt}-\Delta\partial_t-\Delta)+\sqrt{\Delta\partial_{yy}}\Big)\Phi(0)\notag\\
&-\Big(\frac12\Delta+\sqrt{\frac14\Delta^2+\Delta+\sqrt{\Delta \pp_{yy}}}\Big)\Big((\partial_{tt}-\Delta\partial_t-\Delta)+\sqrt{\Delta\partial_{yy}}\Big)\Phi(0);\label{Phi30}\\
\Phi_{2,0}=&\pp_{t}\Big((\partial_{tt}-\Delta\partial_t-\Delta)-\sqrt{\Delta\partial_{yy}}\Big)\Phi(0)\notag\\
&-\Big(\frac12\Delta-\sqrt{\frac14\Delta^2+\Delta+\sqrt{\Delta \pp_{yy}}}\Big)\Big((\partial_{tt}-\Delta\partial_t-\Delta)-\sqrt{\Delta\partial_{yy}}\Big)\Phi(0);\label{Phi20}\\
\Phi_{1,0}=&\pp_{t}\Big((\partial_{tt}-\Delta\partial_t-\Delta)-\sqrt{\Delta\partial_{yy}}\Big)\Phi(0)\notag\\
&-\Big(\frac12\Delta+\sqrt{\frac14\Delta^2+\Delta+\sqrt{\Delta \pp_{yy}}}\Big)\Big((\partial_{tt}-\Delta\partial_t-\Delta)-\sqrt{\Delta\partial_{yy}}\Big)\Phi(0).\label{Phi10}
\end{align}
Inserting \eqref{Phi40}--\eqref{Phi10} into \eqref{Def:LPhi}, we have
\begin{align}
&L(\Phi_{1,0},\Phi_{2,0},\Phi_{3,0},\Phi_{4,0})\notag\\
=
&\frac{1}{2\sqrt{\Delta\partial_{yy}}}\Big[\frac{1}{2\sqrt{\frac14\Delta^2+\Delta+\sqrt{\Delta\partial_{yy}}}}
\Big(e^{(\frac12\Delta+\sqrt{\frac14\Delta^2+\Delta+\sqrt{\Delta\partial_{yy}}})t}
-e^{(\frac12\Delta-\sqrt{\frac14\Delta^2+\Delta+\sqrt{\Delta\partial_{yy}}})t}\Big)\notag\\
&\qquad\qquad\quad
\cdot\Big(\partial_t(\partial_{tt}-\Delta\partial_t-\Delta)+\partial_t\sqrt{\Delta\partial_{yy}}\Big)\Phi(0)\notag\\
&\qquad-\frac{1}{2\sqrt{\frac14\Delta^2+\Delta-\sqrt{\Delta\partial_{yy}}}}
\Big(e^{(\frac12\Delta+\sqrt{\frac14\Delta^2+\Delta-\sqrt{\Delta\partial_{yy}}})t}
-e^{(\frac12\Delta-\sqrt{\frac14\Delta^2+\Delta-\sqrt{\Delta\partial_{yy}}})t}\Big)\notag\\
&\qquad\qquad\quad
\cdot\Big(\partial_t(\partial_{tt}-\Delta\partial_t-\Delta)-\partial_t\sqrt{\Delta\partial_{yy}}\Big)\Phi(0)\Big]\label{eq:L-1}\\
&-\frac{\Delta}{4\sqrt{\Delta\partial_{yy}}}\Big[\frac{1}{2\sqrt{\frac14\Delta^2+\Delta+\sqrt{\Delta\partial_{yy}}}}
\Big(e^{(\frac12\Delta+\sqrt{\frac14\Delta^2+\Delta+\sqrt{\Delta\partial_{yy}}})t}
-e^{(\frac12\Delta-\sqrt{\frac14\Delta^2+\Delta+\sqrt{\Delta\partial_{yy}}})t}\Big)\notag\\
&\qquad\qquad\quad
\cdot\Big((\partial_{tt}-\Delta\partial_t-\Delta)+\sqrt{\Delta\partial_{yy}}\Big)\Phi(0)\notag\\
&\qquad-\frac{1}{2\sqrt{\frac14\Delta^2+\Delta-\sqrt{\Delta\partial_{yy}}}}
\Big(e^{(\frac12\Delta+\sqrt{\frac14\Delta^2+\Delta-\sqrt{\Delta\partial_{yy}}})t}
-e^{(\frac12\Delta-\sqrt{\frac14\Delta^2+\Delta-\sqrt{\Delta\partial_{yy}}})t}\Big)\notag\\
&\qquad\qquad\quad
\cdot\Big((\partial_{tt}-\Delta\partial_t-\Delta)-\sqrt{\Delta\partial_{yy}}\Big)\Phi(0)\Big]\label{eq:L-2}\\
&-\frac{1}{4\sqrt{\Delta\partial_{yy}}}\Big[
\Big(e^{(\frac12\Delta+\sqrt{\frac14\Delta^2+\Delta+\sqrt{\Delta\partial_{yy}}})t}
+e^{(\frac12\Delta-\sqrt{\frac14\Delta^2+\Delta+\sqrt{\Delta\partial_{yy}}})t}\Big)\notag\\
&\qquad\qquad\quad
\cdot\Big((\partial_{tt}-\Delta\partial_t-\Delta)+\sqrt{\Delta\partial_{yy}}\Big)\Phi(0)\notag\\
&\qquad-
\Big(e^{(\frac12\Delta+\sqrt{\frac14\Delta^2+\Delta-\sqrt{\Delta\partial_{yy}}})t}
+e^{(\frac12\Delta-\sqrt{\frac14\Delta^2+\Delta-\sqrt{\Delta\partial_{yy}}})t}\Big)\notag\\
&\qquad\qquad\quad
\cdot\Big((\partial_{tt}-\Delta\partial_t-\Delta)-\sqrt{\Delta\partial_{yy}}\Big)\Phi(0)\Big].\label{eq:L-3}
\end{align}
Further, by using the definition of $K$ in \eqref{eq:K} and the formula \eqref{K-tt2t2}, we have
\begin{align*}
&\eqref{eq:L-1}=K(t)\big[(\pp_{tt}-\Delta\pp_t-\Delta)\pp_{t}\Phi(0)\big]\\
&\quad+\Big[\frac{1}{4\sqrt{\frac14\Delta^2+\Delta+\sqrt{\Delta\partial_{yy}}}}
\Big(e^{(\frac12\Delta+\sqrt{\frac14\Delta^2+\Delta+\sqrt{\Delta\partial_{yy}}})t}-e^{(\frac12\Delta-\sqrt{\frac14\Delta^2+\Delta+\sqrt{\Delta\partial_{yy}}})t}\Big)\notag\\
&\quad+\frac{1}{4\sqrt{\frac14\Delta^2+\Delta-\sqrt{\Delta\partial_{yy}}}}
\Big(e^{(\frac12\Delta+\sqrt{\frac14\Delta^2+\Delta-\sqrt{\Delta\partial_{yy}}})t}-e^{(\frac12\Delta-\sqrt{\frac14\Delta^2+\Delta-\sqrt{\Delta\partial_{yy}}})t}\Big)\Big]\partial_t\Phi(0)\\
&\,\,=K(t)\big[(\pp_{tt}-\Delta\pp_t-\Delta)\pp_{t}\Phi(0)\big]+(\partial_{tt}-\Delta\partial_t-\Delta)K(t)\big[\partial_t\Phi(0)\big];
\end{align*}
similarly,  we find
\begin{align*}
\eqref{eq:L-2}
=&
-\frac12\Delta K(t)\big[(\pp_{tt}-\Delta\pp_t-\Delta)\Phi(0)\big]
-\frac12\Delta\sqrt{\Delta\partial_{yy}} K(t)\big[\Phi(0)\big];
\end{align*}
and by the definition of $\partial_t K$ in \eqref{Kt-2} and $K_1$ in \eqref{K1}, we get
\begin{align*}
\eqref{eq:L-3}
=&-\frac{1}{4\sqrt{\Delta\partial_{yy}}}\Big[
\Big(e^{(\frac12\Delta+\sqrt{\frac14\Delta^2+\Delta+\sqrt{\Delta\partial_{yy}}})t}
+e^{(\frac12\Delta-\sqrt{\frac14\Delta^2+\Delta+\sqrt{\Delta\partial_{yy}}})t}\Big)\\
&\qquad-
\Big(e^{(\frac12\Delta+\sqrt{\frac14\Delta^2+\Delta-\sqrt{\Delta\partial_{yy}}})t}
+e^{(\frac12\Delta-\sqrt{\frac14\Delta^2+\Delta-\sqrt{\Delta\partial_{yy}}})t}\Big)\Big](\partial_{tt}-\Delta\partial_t-\Delta)|\Phi(0)\\
&+\frac{1}{4}\Big[
e^{(\frac12\Delta+\sqrt{\frac14\Delta^2+\Delta+\sqrt{\Delta\partial_{yy}}})t}
+e^{(\frac12\Delta-\sqrt{\frac14\Delta^2+\Delta+\sqrt{\Delta\partial_{yy}}})t}\\
&\qquad
+e^{(\frac12\Delta+\sqrt{\frac14\Delta^2+\Delta-\sqrt{\Delta\partial_{yy}}})t}
+e^{(\frac12\Delta-\sqrt{\frac14\Delta^2+\Delta-\sqrt{\Delta\partial_{yy}}})t}\Big]\Phi(0)\\
=&
-\frac12\Delta K(t)\big[(\pp_{tt}-\Delta\pp_t-\Delta)\Phi(0)\big]
+\partial_t K(t)\big[(\pp_{tt}-\Delta\pp_t-\Delta)\Phi(0)\big]+K_1(t)[\Phi(0)].
\end{align*}
Therefore, collecting the estimates above, we get \eqref{L-5}.
\end{proof}

\vskip .1in
We now derive the final form of $F$ by eliminating the intermediate
variables $\Pi_0, \Pi_1,\Pi_2$ and $\Pi_3$  (defined in (\ref{Pi0}--\ref{Pi3})).
First, we consider $F_0$. By \eqref{Pi0}--\eqref{Pi3}, we have
\begin{align}
F_0=& (\pp_{tt}-\Delta\pp_t-\Delta)\Pi_0+\Delta \partial_y \Pi_3\notag\\
=& (\pp_{tt}-\Delta\pp_t-\Delta)(\pp_t N_0-\Delta N_0-\pp_xN_1-\pp_yN_2)+\Delta \partial_y(-N_2+(\partial_t-\Delta) N_3)\notag\\
&\quad +\lambda\Big(\partial_{xxx}\Delta u+\partial_{xxy}\Delta v+\partial_{x}\partial_{t}\Delta^2 u+\partial_{y}\partial_{t}\Delta^2 v-\partial_{x}\partial_{tt}\Delta u-\partial_{y}\partial_{tt}\Delta v\Big)\notag\\
=& \partial_t(\pp_{tt}-\Delta\pp_t-\Delta)N_0-\Delta(\pp_{tt}-\Delta\pp_t-\Delta) N_0-\partial_x(\pp_{tt}-\Delta\pp_t-\Delta)N_1
-\partial_y(\partial_{tt}-\Delta\partial_t)N_2\notag\\
&\quad +\Delta\partial_y (\partial_t-\Delta)N_3-\lambda(\pp_{tt}-\Delta\pp_t-\pp_{xx})(\pp_{x}\Delta u+\pp_y \Delta v).\label{F0-r}
\end{align}

Second, we consider $F_1$.
\begin{align}
F_1= &\big(\pp_{tt}-\Delta\pp_t-\Delta-\pp_{yy}\big)\Pi_1+\pp_{xy}\Pi_2 \notag\\
=& (\pp_{tt}-\Delta\pp_t-\Delta-\partial_{yy})(\pp_t N_1-\pp_xN_0)+ \partial_{xy}(-\partial_yN_0+\partial_t N_2-\Delta N_3)\notag\\
&\quad +\lambda(\pp_{tt}-\Delta\pp_t-\Delta)\partial_t\partial_x(\partial_xu+\partial_yv)\notag\\
=& -\partial_x(\pp_{tt}-\Delta\pp_t-\Delta)N_0+\partial_t(\pp_{tt}-\Delta\pp_t-\Delta-\partial_{yy})N_1+ \partial_{xy}\partial_t N_2 -\partial_{xy}\Delta N_3\notag\\
&\quad +\lambda(\pp_{tt}-\Delta\pp_t-\Delta)\partial_t\partial_x(\partial_xu+\partial_yv).\label{F1-r}
\end{align}

Third,  we consider $F_2$.
\begin{align}
F_2= &\big(\pp_{tt}-\Delta\pp_t-\pp_{xx}\big)\Pi_2+\pp_{xy}\Pi_1 \notag\\
=& (\pp_{tt}-\Delta\pp_t-\partial_{xx})(-\pp_yN_0+\partial_tN_2-\Delta N_3)+ \partial_{xy}(\pp_t N_1-\pp_xN_0)\notag\\
&\quad +\lambda(\pp_{tt}-\Delta\pp_t)\partial_t\partial_y(\partial_xu+\partial_yv)\notag\\
=& -\partial_y(\pp_{tt}-\Delta\pp_t)N_0+\partial_t\partial_{xy}N_1+\partial_t(\pp_{tt}-\Delta\pp_t-\partial_{xx})N_2 -\Delta (\partial_{tt}-\Delta\partial_t-\partial_{xx}) N_3\notag\\
&\quad +\lambda(\pp_{tt}-\Delta\pp_{t})\partial_{t}(\partial_{xy}u+\partial_{yy}v).\label{F2-r}
\end{align}

At last,  we consider $F_3$.
\begin{align}
F_3= &(\pp_{tt}-\Delta\pp_t-\Delta)\Pi_3+ \pp_{y}\Pi_0 \notag\\
=& (\pp_{tt}-\Delta\pp_t-\Delta)(-N_2+(\partial_t-\Delta) N_3)+ \partial_{y}(\pp_t N_0-\Delta N_0-\pp_xN_1-\pp_yN_2)\notag\\
&\quad -\lambda(\pp_{tt}-\Delta\pp_t)\partial_y(\partial_xu+\partial_yv)\notag\\
=& \partial_y\partial_tN_0-\partial_{y}\Delta N_0-\partial_{xy}N_1-(\pp_{tt}-\Delta\pp_t-\partial_{xx})N_2 +(\partial_{t}-\Delta) (\partial_{tt}-\Delta\partial_t-\Delta) N_3\notag\\
&\quad -\lambda(\pp_{tt}-\Delta\pp_t)\partial_y(\partial_xu+\partial_yv).\label{F3-r}
\end{align}

\vskip .4in
\section{Fourier analysis on the linear flow}\label{KernelProperty}
%\section{The properties on the kernel}

This section provides pointwise as well as $L^p$-estimates for $\widehat K(t,\xi,\eta)$ (defined in
(\ref{Kop})) and the Fourier transforms of various derivatives of $K$. The pointwise estimates are
stated in Lemma \ref{Kpointwise} while the $L^p$-estimates are stated in Lemma \ref{lem:Kn1}
through Lemma \ref{lem:Kn9}. The last lemma of this section provides the estimates for $\widehat K_1$
defined in (\ref{K1}). Before presenting these estimates, we provide several notations and basic tool inequalities.

\vskip .1in
The definition of the Fourier transform is standard, namely
$$
\widehat {f}(\zeta)=\int_{\R^d} e^{{-}ix\cdot\zeta }{f}(x)\,dx,\quad \mbox{ for any }\quad \zeta\in \R^d.
$$
In the 2D case, $\mathcal F_{\xi}f$ and $\mathcal F_\eta f$ are used to denote the
corresponding Fourier transforms with respect to the $x$ and $y$ variables, respectively.
Furthermore,  for each number $N > 0$, we define the Fourier multipliers
\begin{align*}
\widehat{P_{\leq N} f}(\zeta) &:= \chi_{\leq N}(\zeta) \hat f(\zeta),\\
\widehat{P_{> N} f}(\zeta) &:= \chi_{> N}(\zeta) \hat f(\zeta),\\
\widehat{P_N f}(\zeta) &:= (\chi_{\leq N} - \chi_{\leq N/2})(\zeta) \hat
f(\zeta)
\end{align*}
and similarly $P_{<N}$ and $P_{\geq N}$. Here, we use  $\chi$  to denote a smooth bump function such that
\begin{align*}
\begin{cases}
\chi(x)=1, \ & |x|\le 1,\\
\chi(x)=0,    \ &|x|\ge 1+10^{-4},
\end{cases}
\end{align*}
and denote $\chi_R=\chi(\cdot/R)$.
 We also define, for $0<N_1 < N_2$
$$ P_{N_1 < \cdot \leq N_2} := P_{\leq N_2} - P_{\leq N_1}.$$
We will use the following Bernstein's inequality.
\begin{lemma}[Bernstein's inequality]\label{Bernstein}
 For $1 \leq p \leq q \leq \infty$ and $M>0$,
\begin{align*}
\bigl\| |\nabla|^{\pm s} P_M f\bigr\|_{L^p(\R^d)} &\sim M^{\pm s} \| P_M f \|_{L^p(\R^d)},\\
\|P_{\leq M} f\|_{L^q(\R^d)} &\lesssim M^{\frac{d}{p}-\frac{d}{q}} \|P_{\leq M} f\|_{L^p(\R^d)},\\
\|P_M f\|_{L^q(\R^d)} &\lesssim M^{\frac{d}{p}-\frac{d}{q}} \| P_M f\|_{L^p(\R^d)}.
\end{align*}
\end{lemma}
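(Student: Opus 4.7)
The plan is to prove all three estimates by realizing the Littlewood--Paley projectors as convolutions with scaled Schwartz kernels and then applying Young's convolution inequality. Let $\phi\in\mathcal S(\R^d)$ be the inverse Fourier transform of $\chi$, and for each $M>0$ set $\phi_M(x)=M^d\phi(Mx)$. By the definition of $P_{\le M}$ we have $P_{\le M}f=\phi_M * f$, and a change of variables gives
\begin{equation*}
\|\phi_M\|_{L^r(\R^d)}=M^{d(1-\frac1r)}\|\phi\|_{L^r(\R^d)},\qquad 1\le r\le\infty.
\end{equation*}
An analogous identity holds for $P_M$ with the kernel $\psi_M(x)=M^d\psi(Mx)$, where $\hat\psi=\chi_{\le 1}-\chi_{\le 1/2}$ is a Schwartz bump supported in an annulus.

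First I would establish the $L^p\to L^q$ estimates. Choose $r\in[1,\infty]$ by the Young exponent relation $1+\frac1q=\frac1p+\frac1r$, i.e.\ $\frac1r=1-\frac1p+\frac1q$, which is admissible since $p\le q$. To convert $\|f\|_{L^p}$ on the right into $\|P_{\le M}f\|_{L^p}$ as written in the lemma, I pick a fattened bump $\tilde\chi$ with $\tilde\chi\equiv1$ on $\mathrm{supp}\,\chi_{\le 1}$, so that the associated projector $\tilde P_{\le M}$ satisfies $P_{\le M}=\tilde P_{\le M}P_{\le M}$. Then Young's inequality together with the kernel scaling gives
\begin{equation*}
\|P_{\le M}f\|_{L^q}=\|\tilde\phi_M*(P_{\le M}f)\|_{L^q}\le\|\tilde\phi_M\|_{L^r}\|P_{\le M}f\|_{L^p}\lesssim M^{\frac{d}{p}-\frac{d}{q}}\|P_{\le M}f\|_{L^p}.
\end{equation*}
The estimate for $P_M f$ is proved identically using the annular kernel $\psi_M$ and a fattened annular bump.

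For the derivative equivalence, I note that on the Fourier support of $P_M$ (an annulus of width $\sim M$) the symbol $|\xi|^{\pm s}$ is comparable to $M^{\pm s}$ and smooth. Choose a bump $\Theta$ supported in $\{\tfrac14<|\xi|<2\}$ with $\Theta\equiv1$ on $\{\tfrac12\le|\xi|\le 1\}$ and define the multiplier $m_{\pm s}(\xi)=|\xi|^{\pm s}\Theta(\xi)$. Then $|\nabla|^{\pm s}P_M=M^{\pm s}\,m_{\pm s}(D/M)P_M$, whose convolution kernel is $M^d\,\check m_{\pm s}(Mx)$ with $\check m_{\pm s}\in L^1(\R^d)$ and $L^1$-norm independent of $M$. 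Applying Young's inequality with $r=1$ yields $\||\nabla|^{\pm s}P_M f\|_{L^p}\lesssim M^{\pm s}\|P_M f\|_{L^p}$, and reversing the roles (multiplying by $|\xi|^{\mp s}\Theta$ instead) yields the opposite inequality.

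No step presents a genuine obstacle; the only point requiring mild care is the use of the fattened projector $\tilde P_{\le M}$ to make the right-hand side $\|P_{\le M}f\|_{L^p}$ rather than the more standard $\|f\|_{L^p}$, which is needed for the exact form stated in the lemma. Everything else follows from the explicit kernel scaling and Young's convolution inequality, so the proof is short and essentially a transcription of the classical Bernstein argument; one may, if brevity is desired, simply cite a standard reference.
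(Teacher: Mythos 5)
The paper states Lemma \ref{Bernstein} without proof, treating it as a standard tool from the Littlewood--Paley literature; there is therefore no paper proof to compare against. Your argument is a correct and complete rendering of the classical proof: realize $P_{\le M}$ and $P_M$ as convolution with dilated Schwartz kernels, exploit the $L^r$-scaling $\|\phi_M\|_{L^r}=M^{d(1-1/r)}\|\phi\|_{L^r}$, apply Young's inequality with $1+\tfrac1q=\tfrac1p+\tfrac1r$, and use a fattened projector so that the right-hand side carries $\|P_{\le M}f\|_{L^p}$ (resp.\ $\|P_Mf\|_{L^p}$) rather than $\|f\|_{L^p}$. The derivative equivalence via the truncated multiplier $|\xi|^{\pm s}\Theta(\xi)$ and its $M$-independent $L^1$ kernel is likewise the standard route, and the two one-sided bounds correctly assemble the stated $\sim$. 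The paper's own proof of the adjacent Lemma \ref{Young} is also a Fourier-side Young-type argument, so your approach is entirely in the spirit of the surrounding text. No gaps.
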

We define differential operator $P(D)$ as
$$
\widehat{P(D)f}(\xi,\eta)=P(\xi,\eta) \widehat{f}(\xi,\eta),\quad \mbox{ for any } (\xi,\eta)\in\R^2.
$$
Then we have the following generalized Young's inequality.
\begin{lemma}[Generalized Young's inequality]\label{Young}
Let $1\le r_1,r_2\le 2 \le p \le\infty$, $q_1,q_2\ge p'$ be the numbers satisfying
$\frac1p=\frac1{r_1}-\frac1{q_1}=\frac1{r_2}-\frac1{q_2}=1-\frac1{p'}$, then for any two-variable function $f(x,y)\in L^{r_1}_{x}L^{r_2}_{y}(\R\times \R)$,
\begin{align}\label{est:Young1}
\bigl\|P(D)f\bigr\|_{L^p_{xy}}\lesssim \|P(\xi,\eta)\|_{L^{q_1}_{\xi}L^{q_2}_{\eta}}\|f\|_{L^{r_1}_{x}L^{r_2}_{y}}.
\end{align}
In particular, let $1\le r\le 2 \le p \le\infty$, $q\ge p'$ be the numbers satisfying $\frac1p+\frac1q=\frac1r$, then for any $f\in L^r(\R^2)$,
\begin{align}\label{est:Young2}
\bigl\|P(D)f\bigr\|_{L^p_{xy}}\lesssim \|P(\xi,\eta)\|_{L^q_{\xi\eta}}\|f\|_{L^r_{xy}}.
\end{align}
\end{lemma}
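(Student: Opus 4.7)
The plan is to combine the Hausdorff--Young inequality (to pass between the physical and frequency sides) with H\"older's inequality (to split the product $P\cdot\widehat f$ on the frequency side). The numerical hypothesis $\frac1p = \frac1r - \frac1q$ in the isotropic case, and $\frac1p = \frac1{r_i} - \frac1{q_i}$ for $i=1,2$ in the mixed-norm case, is equivalent to the H\"older relation on the Fourier side, $\frac1{p'} = \frac1q + \frac1{r'}$, and this is precisely what makes these two tools chain together.

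For the isotropic inequality \eqref{est:Young2}: since $p\ge 2$, write $P(D)f = \mathcal F^{-1}(P\widehat f)$ and apply Hausdorff--Young on $\R^2$ to obtain $\|P(D)f\|_{L^p_{xy}} \lesssim \|P\widehat f\|_{L^{p'}_{\xi\eta}}$. H\"older's inequality on $\R^2_{\xi\eta}$ then yields $\|P\widehat f\|_{L^{p'}_{\xi\eta}}\le \|P\|_{L^q_{\xi\eta}}\|\widehat f\|_{L^{r'}_{\xi\eta}}$, and a second application of Hausdorff--Young (valid since $r\le 2$) gives $\|\widehat f\|_{L^{r'}_{\xi\eta}}\lesssim \|f\|_{L^r_{xy}}$, completing the chain.

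For \eqref{est:Young1}, the same three-step scheme applies, except the H\"older and the final Hausdorff--Young steps are carried out variable by variable. After the scalar step $\|P(D)f\|_{L^p_{xy}}\lesssim \|P\widehat f\|_{L^{p'}_{\xi\eta}}$, I would apply H\"older in $\eta$ for fixed $\xi$ using $\frac{1}{p'}=\frac{1}{q_2}+\frac{1}{r_2'}$, then in $\xi$ using $\frac{1}{p'}=\frac{1}{q_1}+\frac{1}{r_1'}$, to obtain $\|P\|_{L^{q_1}_\xi L^{q_2}_\eta}\|\widehat f\|_{L^{r_1'}_\xi L^{r_2'}_\eta}$. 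The final step is the mixed-norm Hausdorff--Young estimate $\|\widehat f\|_{L^{r_1'}_\xi L^{r_2'}_\eta}\lesssim \|f\|_{L^{r_1}_x L^{r_2}_y}$, proved by applying one-dimensional Hausdorff--Young successively in $y$ and in $x$, with Minkowski's integral inequality interleaved to realign the orderings of the iterated norms. This last point is the only delicate step: naive iteration of scalar Hausdorff--Young produces the reverse-ordered bound $\|\widehat f\|_{L^{r_2'}_\eta L^{r_1'}_\xi}\lesssim \|f\|_{L^{r_1}_x L^{r_2}_y}$, and Minkowski's integral inequality (one-directional, but applicable because $r_i\le 2\le r_i'$) is what straightens this into the ordering demanded by the H\"older output; every other exponent computation follows automatically from the stated hypothesis.
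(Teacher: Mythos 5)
Your overall plan matches the paper's proof: Hausdorff--Young to pass to the frequency side, H\"older to split $P\widehat f$, then a mixed-norm Hausdorff--Young estimate. The isotropic case is handled correctly, and so are the first two steps of the mixed-norm case. The gap is in the last step, and it is a genuine one.

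You note that applying 1D Hausdorff--Young first in $y$ and then in $x$, with one Minkowski interleaved, produces the bound $\|\widehat f\|_{L^{r_2'}_\eta L^{r_1'}_\xi}\lesssim \|f\|_{L^{r_1}_x L^{r_2}_y}$, and you then claim a further Minkowski converts the left side into the ordering $\|\widehat f\|_{L^{r_1'}_\xi L^{r_2'}_\eta}$ that your H\"older step produced, asserting this is justified by $r_i\le 2\le r_i'$. That final conversion does not follow. Minkowski's inequality gives $\bigl\|\|g\|_{L^p_a}\bigr\|_{L^q_b}\le \bigl\|\|g\|_{L^q_b}\bigr\|_{L^p_a}$ only when $p\le q$, so the conversion $\|\widehat f\|_{L^{r_1'}_\xi L^{r_2'}_\eta}\le\|\widehat f\|_{L^{r_2'}_\eta L^{r_1'}_\xi}$ would require $r_2'\le r_1'$, equivalently $r_1\le r_2$. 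This is not among the hypotheses. The condition $r_i\le 2\le r_j'$ only validates a Minkowski that compares an unprimed exponent against a primed one; it says nothing about the relative size of $r_1'$ and $r_2'$.

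The fix is to change the \emph{order} in which Hausdorff--Young is applied, not to try to repair the ordering at the end. Starting from the H\"older output $\|\widehat f\|_{L^{r_1'}_\xi L^{r_2'}_\eta}$ and applying 1D Hausdorff--Young first in the inner frequency variable $\eta$ gives $\bigl\|\|\mathcal F_\xi f(\xi,y)\|_{L^{r_2}_y}\bigr\|_{L^{r_1'}_\xi}$; the interleaved Minkowski now compares $r_2$ with $r_1'$, which is exactly where $r_2\le 2\le r_1'$ enters, and yields $\bigl\|\|\mathcal F_\xi f\|_{L^{r_1'}_\xi}\bigr\|_{L^{r_2}_y}$; a second 1D Hausdorff--Young in $\xi$ then finishes. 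This is the sequencing in the paper's proof. Every Minkowski step in that chain compares an unprimed exponent against a primed one and is therefore covered by the hypotheses, whereas your sequencing (transforming $y$ before $x$) forces a comparison of $r_1'$ against $r_2'$ that is not.
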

\begin{proof} Since $p\ge 2$ and $r'\ge 2$, by Young's and H\"older's inequalities,  we have
\begin{align*}
\bigl\|P(D)f\bigr\|_{L^p_{xy}}\lesssim \bigl\|P(\xi,\eta) \hat{f}\bigr\|_{L^{p'}_{\xi\eta}}
\lesssim \bigl\|P(\xi,\eta)\bigr\|_{L^{q_1}_{\xi}L^{q_2}_{\eta}}\bigl\| \hat{f}\bigr\|_{L^{r_1'}_\xi L^{r_2'}_\eta}.
\end{align*}
Since $r_1'\ge 2,r_2'\ge 2$ and $r_1'\ge r_2$, by Fubini's, and Young's inequalities again, we further have
\begin{align*}
\bigl\|P(D)f\bigr\|_{L^p_{xy}}
\lesssim & \bigl\|P(\xi,\eta)\bigr\|_{L^{q_1}_{\xi}L^{q_2}_{\eta}}\Big\|\bigl\| \mathcal F_{\xi}{f}(\xi,y)\bigr\|_{L^{r_2}_y}\Big\|_{L^{r_1'}_\xi}\\
\lesssim &\bigl\|P(\xi,\eta)\bigr\|_{L^{q_1}_{\xi}L^{q_2}_{\eta}}\Big\|\bigl\| \mathcal F_{\xi}{f}(\xi,y)\bigr\|_{L^{r_1'}_\xi}\Big\|_{L^{r_2}_y}\\
\lesssim &\bigl\|P(\xi,\eta)\bigr\|_{L^{q_1}_{\xi}L^{q_2}_{\eta}}\bigl\|f(x,y)\bigr\|_{L^{r_1}_xL^{r_2}_y}.
\end{align*}
This proves \eqref{est:Young1}. In particular, letting $q_1=q_2=q,r_1=r_2=r$ yields \eqref{est:Young2}.
\end{proof}

\vskip .15in
The rest of this section is divided into two subsections with the first devoted
to the pointwise estimates and the second to the $L^p$-estimates. To simplify
the notation, we again write $A=\sqrt{\xi^2+\eta^2}$. Then,
by (\ref{Kop}),
\begin{align}
&\widehat K(t,\xi,\eta)\notag\\
=&\frac{1}{2A|\eta|}\Bigg[\frac{1}{2\sqrt{\frac14A^4-A^2+A|\eta|}}
\Big(e^{(-\frac12A^2+\sqrt{\frac14A^4-A^2+A|\eta|})t}
-e^{(-\frac12A^2-\sqrt{\frac14A^4-A^2+A|\eta|})t}\Big)\notag\\
&\,\,-\frac{1}{2\sqrt{\frac14A^4-A^2-A|\eta|}}
\Big(e^{(-\frac12A^2+\sqrt{\frac14A^4-A^2-A|\eta|})t}
-e^{(-\frac12A^2-\sqrt{\frac14A^4-A^2-A|\eta|})t}\Big)\Bigg].\label{eq:K}
\end{align}

\subsection{Pointwise estimates}
The main results in this subsection are stated as follows.
\begin{prop} \label{Kpointwise}
Let $\widehat K$ be defined in \eqref{eq:K}. Then there exists a constant $c>0$ such that the following estimates hold:
\begin{align}
1.\quad &  |\widehat K(t,\xi,\eta)|
\lesssim\chi_{A\ge 1}\frac1{A^4} e^{-ct}+\chi_{A\le 1}\min\Big\{\frac{1}{A|\xi||\eta|},\frac{1}{A^4}\Big\}e^{-\frac14 A^2t}
+\frac1{A^{4}}\chi_{|\xi|\lesssim A^2 }e^{-\frac{\xi^2}{2A^2}t}.\label{est:K}\\
2.\quad & |\pp_t\widehat K(t,\xi,\eta)|
\lesssim \chi_{A\ge 1} \frac1{A^4} e^{-ct}+\chi_{A\le 1}\min\Big\{\frac{1}{A^3},\frac{1}{A|\eta|}\Big\}e^{-\frac14 A^2t}+\frac{\xi^{2}}{A^{6}}\chi_{|\xi|\lesssim A^2 }e^{-\frac{\xi^2}{2A^2}t}.\label{est:Kt}
\\
3.\quad & |\pp_{tt}\widehat K(t,\xi,\eta)|
\lesssim\chi_{A\ge 1} \frac1{A^4} e^{-ct}+\chi_{A\le 1}\min\Big\{\frac{1}{A^2},\frac{1}{|\eta|}\Big\}e^{-\frac14 A^2t}+\frac{\xi^{4}}{A^{8}}\chi_{|\xi|\lesssim A^2 }e^{-\frac{\xi^2}{2A^2}t}.\label{est:Ktt}
\\
4.\quad & |\big(\pp_{tt}+A^2\pp_t+A^2\big)\widehat K(t,\xi,\eta)|
\lesssim
\chi_{A\ge 1} \frac1{A^2} e^{-ct}+\chi_{A\le 1}\min\Big\{\frac{1}{|\xi|},\frac{1}{A^2}\Big\}e^{-\frac14 A^2t}\notag\\
&\quad+\frac1{A^2}\chi_{|\xi|\lesssim A^2 }e^{-\frac{\xi^2}{2A^2}t}.\label{est:K-tt2t2}
\\
5.\quad &  |\pp_t\big(\pp_{tt}+A^2\pp_t+A^2\big)\widehat K(t,\xi,\eta)|
\lesssim \chi_{A\ge 1} \frac1{A^2} e^{-ct}+\chi_{A\le 1}e^{-\frac14 A^2t}
+\chi_{|\xi|\lesssim A^2 }\frac{\xi^2}{A^4}e^{-\frac{\xi^2}{2A^2}t}.\label{est:K-ttt2t2}
\\
6.\quad &  |\pp_{tt}\big(\pp_{tt}+A^2\pp_t+A^2\big)\widehat K(t,\xi,\eta)|
\lesssim \chi_{A\ge 1} \frac1{A^2} e^{-ct}+\chi_{A\le 1}Ae^{-\frac14 A^2t}
+\chi_{|\xi|\lesssim A^2 }\frac{\xi^4}{A^6}e^{-\frac{\xi^2}{2A^2}t}.\label{est:K-ttt2tt2}
\\
7.\quad &  |\big(\pp_{tt}+A^2\pp_t+\xi^2\big)\widehat K(t,\xi,\eta)|
\lesssim\chi_{A\ge 1} \frac1{A^2} e^{-ct}+\chi_{A\le 1}\frac{1}{A}e^{-\frac14 A^2t}+\frac{\xi^2}{A^{4}}\chi_{|\xi|\lesssim A^2 }e^{-\frac{\xi^2}{2A^2}t}.\label{est:K-tt2t2-x}
\\
8.\quad &  |\big(\pp_{tt}+A^2\pp_t+\xi^2\big)\pp_t\widehat K(t,\xi,\eta)|
\lesssim\chi_{A\ge 1} \frac1{A^2} e^{-ct}+\chi_{A\le 1}e^{-\frac14 A^2t}+\frac{\xi^4}{A^{6}}\chi_{|\xi|\lesssim A^2 }e^{-\frac{\xi^2}{2A^2}t}.\label{est:K-tt2t2-xt}
\end{align}
\end{prop}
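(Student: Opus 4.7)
The plan is to reduce all eight estimates to a single, unified representation of $\widehat K$ and then bootstrap bounds by pairing the resulting $\phi$-values against three basic decay mechanisms. First I would combine each symmetric pair of exponentials in \eqref{eq:K} into a hyperbolic sine, obtaining the compact formula
\begin{align*}
\widehat K(t,\xi,\eta)=\frac{e^{-A^2 t/2}}{2A|\eta|}\Big[\phi(\mu_1^2)-\phi(\mu_2^2)\Big],
\end{align*}
where $\mu_1^2=\tfrac14 A^4-A^2+A|\eta|$, $\mu_2^2=\tfrac14 A^4-A^2-A|\eta|$, and $\phi(z):=\sinh(\sqrt z\,t)/\sqrt z=\sum_{k\ge 0}z^k t^{2k+1}/(2k+1)!$ is entire in $z$ (it becomes $\sin(\sqrt{-z}\,t)/\sqrt{-z}$ when $z<0$). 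The key identity $\mu_1^2-\mu_2^2=2A|\eta|$ together with $\phi(\mu_1^2)-\phi(\mu_2^2)=2A|\eta|\int_0^1\phi'(\sigma(\theta))\,d\theta$, for $\sigma(\theta):=\mu_2^2+\theta\cdot 2A|\eta|$, cancels the apparent $1/(A|\eta|)$ singularity and yields $\widehat K=e^{-A^2t/2}\int_0^1\phi'(\sigma(\theta))\,d\theta$. This is the representation I would use for \eqref{est:K}.

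Second I would split the Fourier plane into (i) high frequencies $A\ge 2$, where $\mu_1$ is real and bounded by $A^2/\sqrt2$, giving the identity $-\tfrac{A^2}{2}+\mu_1=-\tfrac{A\xi^2}{(A+|\eta|)(A^2/2+\mu_1)}$ and hence a decay rate $\lesssim -\xi^2/A^2$ from the $\mu_1$-branch and rate $-A^2/2$ from the $\mu_2$-branch whenever $\mu_2^2<0$; and (ii) low frequencies $A\le 2$, where both $\mu_j^2$ can be negative and I use the trivial bounds $|\phi(z)|\le \min\{t, 1/\sqrt{|z|}\}$ together with $|\phi'(z)|\lesssim \min\{t^3, t/|z|\}$. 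Matching these against the $e^{-A^2t/2}$ prefactor, and distinguishing the subregion $|\xi|\lesssim A^2$ (where the small real part $-\xi^2/(2A^2)$ is dominant), one recovers exactly the three pieces on the right-hand side of \eqref{est:K}.

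For the derivative estimates \eqref{est:Kt}--\eqref{est:K-tt2t2-xt} the strategy is the same but leverages the algebraic identities, obtained by plugging $\alpha_\pm=-A^2/2\pm\mu_1$ and $\beta_\pm=-A^2/2\pm\mu_2$ into the characteristic polynomial,
\begin{align*}
\alpha_\pm^2+A^2\alpha_\pm+A^2=A|\eta|,\qquad \beta_\pm^2+A^2\beta_\pm+A^2=-A|\eta|,\qquad \alpha_\pm^2+A^2\alpha_\pm+\xi^2=\frac{|\eta|\xi^2}{A+|\eta|}.
\end{align*}
The first two turn the difference in $\widehat K$ into a sum and erase the $1/(A|\eta|)$ prefactor, producing the clean identity $(\partial_{tt}+A^2\partial_t+A^2)\widehat K=\tfrac12 e^{-A^2t/2}[\phi(\mu_1^2)+\phi(\mu_2^2)]$, which immediately yields \eqref{est:K-tt2t2}; applying further $\partial_t$'s brings down $\alpha_\pm,\beta_\pm$ whose moduli are controlled by $A^2$ (high freq.) or $A$ (low freq.), giving \eqref{est:K-ttt2t2}--\eqref{est:K-ttt2tt2}. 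The third identity is used analogously for \eqref{est:K-tt2t2-x}--\eqref{est:K-tt2t2-xt}, and it is what supplies the extra $\xi^2/A^2$ factor in the anisotropic third piece of those bounds. For \eqref{est:Kt}--\eqref{est:Ktt} the $\partial_t^k$'s are handled directly on the representation of Step 1 using $\phi'$ and $\phi''$.

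The main obstacle is matching the three decay regimes — $e^{-ct}$, $e^{-A^2 t/4}$, and $e^{-\xi^2t/(2A^2)}$ — across the three transition zones $A\sim 1$, $\mu_1^2\sim 0$ (equivalently $|\eta|\sim A$), and $|\xi|\sim A^2$. These are exactly the zones where the roots $\alpha_\pm,\beta_\pm$ interchange real and imaginary character, so naive estimation of $\phi(\mu_1^2)$ and $\phi(\mu_2^2)$ separately leads to spurious blow-ups. The integral representation $\int_0^1\phi'(\sigma(\theta))\,d\theta$ from Step~1 is what defuses this difficulty because $\sigma(\theta)$ smoothly interpolates between the two radicands; once this is in hand the remaining verification is a lengthy but mechanical case distinction, tracking the precise powers of $A,|\xi|,|\eta|$ claimed in each of the eight inequalities.
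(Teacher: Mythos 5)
Your approach is sound and takes a genuinely different route from the paper's. The paper handles the apparent $1/(A|\eta|)$ singularity via Lemma~\ref{lem:Elem1}, whose proof rewrites the relevant difference of exponentials as a \emph{time} integral $e^{-at}\tfrac1c\int_{-t}^t\big(e^{\sqrt{b+c}x}-e^{\sqrt{b-c}x}\big)\,dx$ and estimates the integrand, treating the purely oscillatory regime with the separate elementary inequality \eqref{basic-1} on $\sin x/x$. Your representation $\widehat K=e^{-A^2t/2}\int_0^1\phi'(\sigma(\theta))\,d\theta$ achieves the same cancellation by a mean-value integral in the \emph{spectral} parameter $z$, and because $\phi(z)=\sinh(\sqrt z\,t)/\sqrt z$ is entire, it unifies in one formula the hyperbolic and trigonometric regimes that the paper splits into separate cases (its Subcases 11/12 and 21/22). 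Your identities $\alpha_\pm^2+A^2\alpha_\pm+A^2=A|\eta|$, $\beta_\pm^2+A^2\beta_\pm+A^2=-A|\eta|$, $\alpha_\pm^2+A^2\alpha_\pm+\xi^2=|\eta|\xi^2/(A+|\eta|)$ are correct and correspond to the paper's \eqref{phi-pm} and to the clean formula \eqref{K-tt2t2} (indeed $(\partial_{tt}+A^2\partial_t+A^2)\widehat K=\tfrac12e^{-A^2t/2}[\phi(\mu_1^2)+\phi(\mu_2^2)]$ is exactly \eqref{K-tt2t2}); the further $\partial_t$'s correspond to \eqref{Kt-2} and \eqref{K-ttt2t2}. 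What the paper's packaging buys is explicit, quotable quantitative bounds (the $\min\{t^3,\tfrac1{c\sqrt{b+c}},\tfrac{\la t\ra}{b+c}\}$ menu of Lemma~\ref{lem:Elem1}); what yours buys is a conceptually cleaner single object. Two cautions on your write-up, neither fatal. First, the bounds $|\phi(z)|\le\min\{t,1/\sqrt{|z|}\}$ and $|\phi'(z)|\lesssim\min\{t^3,t/|z|\}$ are valid only for $z\le0$; for $z>0$ each acquires an $e^{\sqrt z\,t}$ factor. This matters precisely in the subregion $|\xi|\lesssim A^2$ (there $\mu_1^2>0$ even when $A\le1$, so your $\theta$-integral crosses into the positive half-line), where you must absorb $e^{\mu_1 t}$ into $e^{-A^2t/2}$ via $-\tfrac{A^2}{2}+\mu_1\le-\tfrac{\xi^2}{2A^2}$; you gesture at this but should state the corrected $\phi'$-bound. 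Second, the $\min\{1/(A|\xi||\eta|),1/A^4\}$ branch of \eqref{est:K} is more transparently read off the un-averaged form $\tfrac{e^{-A^2t/2}}{2A|\eta|}[\phi(\mu_1^2)-\phi(\mu_2^2)]$ with $|\phi(\mu_j^2)|\lesssim 1/A$ (both radicands $\sim A^2$ in modulus in that regime), which is how the paper gets it; the mean-value representation gives the $1/A^4$ branch naturally but the $1/(A|\xi||\eta|)$ branch less directly.
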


We make several remarks. As shown in the proposition, we pointwise estimates are split into three parts contained: $\chi_{A\ge 1}, \chi_{A\le 1}, \chi_{|\xi|\lesssim A^2 }$. They reflect the different behaviors of the operator in different regions. The parts $\chi_{A\ge 1}\cdot$ tells the behavior of the operator in high-frequence; the parts $\chi_{A\ge 1}\cdot$ tells the behavior of the operator in low-frequence, also it tells the strength of the singularity of the operator; the parts $\chi_{|\xi|\lesssim A^2 }\cdot$ will tell us how the $\xi-$direction affects the decaying of the linear flow.

 In contrast to the incompressible MHD equations studied in \cite{WWX},
the estimates for $K$ sensitively depend on the spatial dimension and as we shall see in the later
sections, the 2D compressible MHD equations is critical in the sense that the spatial  $L^\infty$
norms behave like $t^{-1}$ and is barely time integrable. One may expect from the parabolic structure
that $\partial_t\sim \pp_{xx}$ in the large time behavior, but this proposition implies that
$\partial_t\sim \pp_x$. %namely in the frequency space,
%$$
%\partial_t\sim \frac{\xi^2}{A^2}, \mbox{ when }|\xi|\lesssim A^2;\quad
%\partial_t\sim A, \mbox{ when } A\le 1;\quad
%\partial_t\sim 1, \mbox{ when } A\ge 1.
%$$
This weaker regularity effect of $\partial_t$ is due to the dispersive effect of
$e^{i\sqrt{A^2-\frac14A^4+\pm A|\eta|}t}$, as indicated in the proof of this proposition. We also
point out that the operator $\big(\pp_{tt}-\Delta\pp_t-\Delta\big) K$ behaves like that of $\sqrt{-\Delta}\partial_y K$, which is better than $\Delta K$. That is,
$$
\big(\pp_{tt}+A^2\pp_t+A^2\big)\sim A|\eta|.
$$
We emphasize the anisotropicity  here because of its non-obvious and its improtant role in our analysis.
This anisotropicity will be frequently used in the subsequent sections.

\vskip .1in
\begin{proof}[Proof of Proposition \ref{Kpointwise}]
First, we estimate $\widehat K$. According to the singularity of $\frac{1}{A|\eta|}$ from
the expression \eqref{eq:K}, we split into the following two cases.
$$
\mbox{\textbf{Case 1:}}\quad  A|\eta|\le \frac12 A^2;\quad \mbox{\textbf{Case 2:}}\quad A|\eta|\ge \frac12 A^2.
$$

\noindent \textbf{Case 1:} $A|\eta|\le \frac12 A^2$. It is divided by the following two subcases,
$$
\mbox{\textbf{Subcase 11:}}\quad \frac14A^4-A^2+A|\eta|\ge 0;\quad \mbox{\textbf{Subcase 12:}}\quad \frac14A^4-A^2+A|\eta|< 0.
$$
\textbf{Subcase 11:} $\frac14A^4-A^2+A|\eta|\ge 0$, then $A\ge 1$, and
$$
-\frac12A^2+\sqrt{\frac14A^4-A^2+A|\eta|}\le -\frac12.
$$
Now we need the following  lemma, which will be proved in Appendix A.1.
\begin{lemma}\label{lem:Elem1}
Let $a>0, b\in \R,c>0$. Then there exists a constant $C>0$, such that
\begin{align*}
& \Bigg|\frac1c\Bigg\{\frac1{\sqrt{b+c}}\Big[e^{(-a+\sqrt{b+c})t}-e^{(-a-\sqrt{b+c})t}\Big]
-\frac1{\sqrt{b-c}}\Big[e^{(-a+\sqrt{b-c})t}-e^{(-a-\sqrt{b-c})t}\Big]\Bigg\}\Bigg| \\
&\quad \le \begin{cases}
C\min\left\{t^3,\frac tc\right\}e^{-at}\quad {\rm when}\quad b+c< 0,\\
C\min\left\{t^3,\frac1{c\sqrt{b+c}}, \frac{\langle t\rangle}{b+c}\right\}e^{(-a+\sqrt{b+c})t}\quad {\rm when}\quad b+c\ge 0.
\end{cases}
\end{align*}
\end{lemma}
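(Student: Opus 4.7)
The plan is to reduce the inequality to pointwise estimates on an entire function by exploiting the symmetric difference-quotient structure. I would introduce
\[
h(z):=\frac{e^{\sqrt{z}t}-e^{-\sqrt{z}t}}{\sqrt{z}}=2t\sum_{n\ge 0}\frac{(zt^{2})^{n}}{(2n+1)!},
\]
which is entire in $z$ and real-valued on $\R$, and rewrite the bracketed expression in the lemma as $\frac{e^{-at}}{c}[h(b+c)-h(b-c)]$. In the regime $b+c\ge 0$, I would factor the exponential $e^{(-a+\sqrt{b+c})t}$ out of the claimed bound, so that the task reduces to estimating $G:=\frac{1}{c}|h(b+c)-h(b-c)|\,e^{-\sqrt{b+c}t}$; in the regime $b+c<0$ only the benign $e^{-at}$ factor needs to be extracted.

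The proof then rests on a short catalogue of pointwise inequalities obtained by combining small-argument Taylor bounds with the large-argument asymptotics of $h$, with the crossover at $\sqrt{|z|}\,t=1$. Specifically, for $z\le 0$: $|h(z)|\le 2t$ (from $|\sin(\sqrt{-z}\,t)/\sqrt{-z}|\le t$) and $|h'(z)|\le Ct^{3}$ uniformly. For $z\ge 0$: $h(z)e^{-\sqrt{z}t}=(1-e^{-2\sqrt{z}t})/\sqrt{z}$ gives $|h(z)e^{-\sqrt{z}t}|\le\min(2t,1/\sqrt{z})$ and $|h(z)|\le 2te^{\sqrt{z}t}$; from the explicit derivative $h'(z)=z^{-1}[\,t\cosh(\sqrt{z}t)-\sinh(\sqrt{z}t)/\sqrt{z}\,]$ one likewise obtains $|h'(z)|\le Ct^{3}e^{\sqrt{z}t}$ uniformly. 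With these in hand, the $t^{3}$ bound in either case follows from $\frac{1}{c}[h(b+c)-h(b-c)]=\int_{-1}^{1}h'(b+sc)\,ds$, since $|h'(\xi)e^{-\max(\sqrt{b+c},0)t}|\le Ct^{3}$ holds throughout $\xi\in[b-c,b+c]$. The $t/c$ bound for $b+c<0$ is immediate from the triangle inequality with $|h(b\pm c)|\le 2t$. The $1/(c\sqrt{b+c})$ bound for $b+c\ge 0$ is a triangle inequality plus case-split on $h(b-c)e^{-\sqrt{b+c}t}$: if $\sqrt{b-c}\ge\tfrac{1}{2}\sqrt{b+c}$ the direct bound $\le 2/\sqrt{b+c}$ suffices, while in the remaining cases (including $b-c<0$) one invokes $|h(b-c)|\le 2te^{\max(\sqrt{b-c}t,0)}$ together with $te^{-\sqrt{b+c}t/2}\lesssim 1/\sqrt{b+c}$.

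The main obstacle is the $\langle t\rangle/(b+c)$ bound for $b+c\ge 0$, because here one cannot afford to lose the $1/(b+c)$ factor to the near-origin behavior of $h'$. The plan is to write $h(b+c)-h(b-c)=\int_{b-c}^{b+c}h'(z)\,dz$, restrict to the positive part of the range (the negative part being absorbed by the $t^{3}$ bound via the uniform estimate $|h'|\le Ct^{3}$), change variables to $u=\sqrt{z}$ so that $dz=2u\,du$, and use $|h'(z)|\lesssim te^{ut}/u^{2}$ to obtain
\[
G\lesssim \frac{t}{c}\int_{u_{0}}^{\sqrt{b+c}}\frac{e^{-(\sqrt{b+c}-u)t}}{u}\,du,\qquad u_{0}:=\sqrt{\max(0,b-c)}.
\]
Splitting this at $u=\tfrac{1}{2}\sqrt{b+c}$ handles the apparent $1/u$ singularity: on the upper half $1/u\le 2/\sqrt{b+c}$ and $\int e^{-vt}\,dv\le 1/t$ (with $v=\sqrt{b+c}-u$), contributing $O(1/(\sqrt{b+c}\,t))$; on the lower half $e^{-vt}\le e^{-\sqrt{b+c}t/2}$ tames the logarithmic $du/u$ integral. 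Using the algebraic identity $c=\tfrac{1}{2}((\sqrt{b+c})^{2}-(\sqrt{b-c})^{2})\gtrsim \sqrt{b+c}\cdot(\sqrt{b+c}-u_{0})$ absorbs the factor $\sqrt{b+c}/c$ and yields $G\lesssim t/(b+c)$; the complementary $1/(b+c)$ piece of $\langle t\rangle/(b+c)$ for small $t$ is recovered from the triangle inequality together with $|h(b\pm c)e^{-\sqrt{b+c}t}|\lesssim\min(t,1/\sqrt{b+c})$. The most delicate bookkeeping lies in the transition between the two halves of the split and the absorption of $1/c$ via the factorization of $c$, which will need to be executed case-by-case according to the signs of $b-c$ and $b$.
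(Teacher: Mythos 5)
Your plan is a genuinely different route from the paper's. The paper starts from
\[
I=e^{-at}\frac{1}{c}\int_{-t}^{t}\Big(e^{\sqrt{b+c}\,x}-e^{\sqrt{b-c}\,x}\Big)\,dx,
\]
folds the $[-t,0]$ part onto $[0,t]$, and arrives at the product identity
\[
I=\frac{e^{-at}}{c}\int_{0}^{t}e^{\sqrt{b+c}\,x}\Big[1-e^{-(\sqrt{b-c}+\sqrt{b+c})x}\Big]\Big[1-e^{(\sqrt{b-c}-\sqrt{b+c})x}\Big]\,dx.
\]
Since $(\sqrt{b+c}+\sqrt{b-c})(\sqrt{b+c}-\sqrt{b-c})=2c$, bounding each bracket either by $\mathcal O(1)$ or by $\mathcal O(|\mathrm{argument}|)$ (or mixing the two) delivers all three bounds with almost no casework. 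You instead write $I=e^{-at}\int_{-1}^{1}h'(b+sc)\,ds$ and feed in pointwise bounds on $h'$; this is a legitimate mean-value-type alternative, your catalogue of bounds on $h$ and $h'$ is correct, and the $t^{3}$, $t/c$ and $1/(c\sqrt{b+c})$ conclusions go through exactly as you describe.

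The $\langle t\rangle/(b+c)$ bound, however, has a genuine gap as written. On the positive part of the range you substitute $u=\sqrt z$ and use $|h'(z)|\lesssim te^{ut}/u^{2}$, producing the integrand $e^{-(\sqrt{b+c}-u)t}/u$. That bound is catastrophically lossy near $u=0$: one has $h'(0^{+})=t^{3}/3$ finite, while $te^{ut}/u^{2}\to\infty$. Whenever $b\le c$ you have $u_{0}=\sqrt{\max(0,b-c)}=0$, and the lower-half integral $\int_{u_{0}}^{\sqrt{b+c}/2}du/u$ then diverges; the factor $e^{-\sqrt{b+c}\,t/2}$ cannot ``tame'' it because it is constant in $u$, and a $u$-independent prefactor times a divergent integral is still infinite. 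The repair is to use your own sharper bound $|h'(z)|\lesssim\min\{t^{3},\,t/z\}\,e^{\sqrt z\,t}$ and also split the positive range at $z=1/t^{2}$. Even simpler: when $u_{0}<\tfrac12\sqrt{b+c}$ one automatically has $c>\tfrac38(b+c)$, so the bounds you have already proved give $\min\{t^{3},1/(c\sqrt{b+c})\}\lesssim\min\{t^{3},(b+c)^{-3/2}\}\lesssim\langle t\rangle/(b+c)$ by a one-line case split on whether $(b+c)\langle t\rangle^{2}\gtrsim 1$; when $u_{0}\geq\tfrac12\sqrt{b+c}$ your upper-half estimate together with $c\geq\tfrac12\sqrt{b+c}\,(\sqrt{b+c}-u_{0})$ already closes the argument, and no lower half is needed.
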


\vskip .1in
Thus, by Lemma \ref{lem:Elem1}, if $A\sim  1$ we have
\begin{align*}
|\widehat K(t,\xi,\eta)|\lesssim t^3e^{-\frac12 t}\le e^{-ct}, \quad \mbox{ for some small constant } c>0;
\end{align*}
if $A\gg 1$, then $\frac14A^4-A^2+A|\eta|\sim A^4$, and
\begin{align*}
|\widehat K(t,\xi,\eta)|\lesssim \frac{\langle t\rangle}{\frac14A^4-A^2+A|\eta|}e^{-\frac12 t}\lesssim \frac1{A^4}e^{-ct}.
\end{align*}
Therefore, no matter in which case,
\begin{align}
|\widehat K(t,\xi,\eta)|\le \chi_{A\ge 1}\frac1{A^4} e^{-ct}.\label{17.14}
\end{align}
\textbf{Subcase 12:} $\frac14A^4-A^2+A|\eta|< 0$.
%, then it concludes a wave part
If $A\gtrsim 1$, then by Lemma \ref{lem:Elem1}, we have
$$
|\widehat K(t,\xi,\eta)|\lesssim t^3 e^{-\frac12A^2t}\lesssim \frac1{A^4} e^{-\frac14A^2t}.
$$
So we only need to consider $A\ll 1$. Then, we have
\begin{align}\label{13.47}
\frac14A^4-A^2-A|\eta|<0,\quad \mbox{ and }\quad \big|\frac14A^4-A^2+A|\eta|\big|\sim\big|\frac14A^4-A^2-A|\eta|\big|\sim A^2.
\end{align}
So an immediately estimate from \eqref{eq:K} is that
$$
|\widehat K(t,\xi,\eta)|\lesssim \frac{1}{A^2|\eta|}e^{-\frac14A^2t}\lesssim \frac{1}{A|\xi||\eta|}e^{-\frac14A^2t}.
$$
Moreover, using the following elementary inequality (see Appendix \ref{app3} for its proof),
\begin{align}\label{basic-1}
\big|\frac{\sin x}{x}-\frac{\sin y}{y}\big|\lesssim \big||x|-|y|\big|\min\big\{|x|+|y|, \frac1{|x|+|y|}\big\},\quad \mbox{ for any } x,y \in \R,
\end{align}
and \eqref{13.47}, we have
\begin{align*}
|\widehat K(t,\xi,\eta)|=&\Bigg|\frac{1}{2A|\eta|}e^{-\frac12A^2t}\Bigg(\frac{\sin \big(\sqrt{\big|\frac14A^4-A^2+A|\eta|\big|}t\big)}{\sqrt{\big|\frac14A^4-A^2+A|\eta|\big|}}-\frac{\sin \big(\sqrt{\big|\frac14A^4-A^2-A|\eta|\big|}t\big)}{\sqrt{\big|\frac14A^4-A^2-A|\eta|\big|}}\Bigg)\Bigg|\\
\lesssim &
\frac{t}{A|\eta|}e^{-\frac12A^2t}\Bigg(t\sqrt{\big|\frac14A^4-A^2+A|\eta|\big|}-t\sqrt{\big|\frac14A^4-A^2-A|\eta|\big|}\Bigg)\\
& \qquad\qquad \times \Big( \chi_{At\lesssim 1} At
+ \chi_{At\gtrsim 1} \frac{1}{At}\Big)\\
\lesssim &
\Big(t^3\chi_{At\lesssim 1} +t\frac1{A^2}\chi_{At\gtrsim 1}\Big)e^{-\frac12A^2t}\\
\lesssim &
\Big(\frac1{A^3}+\frac1{A^2}t\Big)e^{-\frac12A^2t}
\lesssim
\frac1{A^4}e^{-\frac14A^2t}.
\end{align*}
Therefore, we proved that in this subcase and $A\ll 1$,
$$
|\widehat K(t,\xi,\eta)|\lesssim
\chi_{A\le 1}\min\Big\{\frac{1}{A|\xi||\eta|},\frac{1}{A^4}\Big\}e^{-\frac14 A^2t}.
$$

\noindent \textbf{Case 2:} $A|\eta|\ge \frac12 A^2$. Again, we consider the following two subcases respectively,
$$
\mbox{\textbf{Subcase 21:}}\quad \frac14A^4-A^2+A|\eta|<\frac1{36}A^4;\quad \mbox{\textbf{Subcase 22:}}\quad \frac14A^4-A^2+A|\eta|\ge \frac1{36}A^4.
$$

\textbf{Subcase 21:} $\frac14A^4-A^2+A|\eta|<\frac1{36}A^4$. In this subcase, we have
$$
(\frac14-\frac1{36})A^4\le A^2-A|\eta|=A\frac{\xi^2}{A+|\eta|}\le \xi^2
.
$$
That is, $A^4\lesssim \xi^2$, and thus $A\lesssim 1$. Then by mean value theorem,
\begin{align}
|\widehat K(t,\xi,\eta)|=&\Bigg|\frac{1}{2A|\eta|}e^{-\frac12A^2t}\>\Bigg[
e^{\sqrt{\frac14A^4-A^2+A|\eta|}t}\>\frac{1
-e^{-2\sqrt{\frac14A^4-A^2+A|\eta|}t}}{2\sqrt{\frac14A^4-A^2+A|\eta|}}\notag\\
&\,\,-
e^{\sqrt{\frac14A^4-A^2-A|\eta|}t}\>\frac{1
-e^{-2\sqrt{\frac14A^4-A^2-A|\eta|}t}}{2\sqrt{\frac14A^4-A^2-A|\eta|}}\Bigg]\Bigg|\label{eq:K-2'}\\
\lesssim &
\frac{t}{A^2}e^{-\frac12A^2t}\Big|e^{\sqrt{\frac14A^4-A^2+A|\eta|}t}\Big|+\frac{t}{A^2}e^{-\frac12A^2t}\Big|e^{\sqrt{\frac14A^4-A^2-A|\eta|}t}\Big|\notag\\
\lesssim &
\frac{t}{A^2}e^{-\frac13A^2t}
\lesssim
\frac{1}{A^4}e^{-\frac14A^2t}\label{eq:K-2},
\end{align}
where we have used $\frac14A^4-A^2-A|\eta|\le \frac14A^4-A^2+A|\eta|<\frac1{36}A^4$.
Moreover, if $A^4\sim  \xi^2$, then by \eqref{eq:K-2},
$$
|\widehat K(t,\xi,\eta)|\lesssim
\frac{1}{A|\xi||\eta|}e^{-\frac14A^2t}.
$$
If $A^4\ll  \xi^2$, then
$$
0<-\big(\frac14A^4-A^2+A|\eta|\big)\sim \xi^2,\quad \mbox{ and }0<-\big(\frac14A^4-A^2-A|\eta|\big)\sim A^2.
$$
Hence, by \eqref{eq:K-2'},
\begin{align*}
|\widehat K(t,\xi,\eta)|
\lesssim &
\frac{1}{A|\xi||\eta|}e^{-\frac12A^2t}\Big|e^{\sqrt{\frac14A^4-A^2+A|\eta|}t}\Big|+\frac{1}{A^2|\eta|}e^{-\frac12A^2t}\Big|e^{\sqrt{\frac14A^4-A^2-A|\eta|}t}\Big|\\
\lesssim &
\frac{1}{A|\xi||\eta|}e^{-\frac12A^2t}.
\end{align*}
Therefore, no matter in which case, we also obtain that
$$
|\widehat K(t,\xi,\eta)|\lesssim
\chi_{A\lesssim 1}\min\Big\{\frac{1}{A|\xi||\eta|},\frac{1}{A^4}\Big\}e^{-\frac14 A^2t}.
$$

\textbf{Subcase 22:} $\frac14A^4-A^2+A|\eta|\ge \frac1{36}A^4$, then $\xi^2\lesssim A^4$. Further, since
       \begin{align*}
        -\frac12A^2+\sqrt{\frac14A^4-A^2+A|\eta|}
        =&-\frac{A}{A+|\eta|}\frac{\xi^2}{\frac12A^2+\sqrt{\frac14A^4-A^2+A|\eta|}},
       \end{align*}
       we have
       \begin{align}\label{phi1:1}
       -\frac{2\xi^2}{A^2}\le -\frac12A^2+\sqrt{\frac14A^4-A^2+A|\eta|}\le -\frac{\xi^2}{2A^2}.
       \end{align}
%Note that when $\frac14A^4-A^2+A|\eta|\ge 0$, then
%\begin{align*}
%-\frac12A^2+\sqrt{\frac14A^4-A^2+A|\eta|}=&-\frac{A}{A+|\eta|}\frac{\xi^2}{\frac12A^2+\sqrt{\frac14A^4-A^2+A|\eta|}}\le -\frac{\xi^2}{2A^2}.
%\end{align*}
%Therefore, by Lemma \ref{lem:Elem1}, we have estimate \eqref{17.14}. Therefore, we only
%If $\frac14A^4-A^2-A|\eta|\le 0$, by \eqref{phi1:1}, we get
%\begin{align}
%|\widehat K(t,\xi,\eta)|
%\lesssim &
%\frac{1}{A^4}\chi_{|\xi|\lesssim  A^2}e^{-\frac12A^2t+\sqrt{\frac14A^4-A^2+A|\eta|}t}+\frac{1}{A^4}\chi_{|\xi|\lesssim  A^2}e^{-\frac12A^2t}\notag\\
%\lesssim &
%\frac1{A^4}\chi_{|\xi|\lesssim  A^2}\Big(e^{-\frac{\xi^2}{2A^2}t}+e^{-\frac14A^2t}\Big)
%\lesssim
%\frac1{A^4}\chi_{|\xi|\lesssim  A^2}e^{-\frac{\xi^2}{2A^2}t}. \label{phi1:3}
%\end{align}
%If $\frac14A^4-A^2-A|\eta|\ge 0$, then $A\ge 1$ and
%\begin{align}\label{phi3:2}
%        -\frac12A^2+\sqrt{\frac14A^4-A^2-A|\eta|}
%        =-\frac{A^2+A|\eta|}{\frac12A^2+\sqrt{\frac14A^4-A^2-A|\eta|}}\le -1.
%\end{align}
Thus, by \eqref{phi1:1} and Lemma \ref{lem:Elem1}, we have
\begin{align*}
|\widehat K(t,\xi,\eta)|
\lesssim
\chi_{|\xi|\lesssim A^2}\frac{1}{A|\eta|\sqrt{\frac14A^4-A^2+A|\eta|}}e^{-\frac{\xi^2}{2A^2}t}
\lesssim \frac1{A^4}\chi_{|\xi|\lesssim A^2}e^{-\frac{\xi^2}{2A^2}t}.
\end{align*}
To collect the estimates above, we obtain \eqref{est:K}.

\vskip .1in
Now we turn to $\partial_tK(t,\partial_x,\partial_y)$. A direct computation gives us that
\begin{align}
&\partial_t\widehat K(t-s,\xi,\eta)\notag\\
=&\frac{1}{2A|\eta|}\Bigg\{\frac{1}{2\sqrt{\frac14A^4-A^2+A|\eta|}}
\Big[\Big(-\frac12A^2+\sqrt{\frac14A^4-A^2+A|\eta|}\Big)e^{(-\frac12A^2+\sqrt{\frac14A^4-A^2+A|\eta|})t}\notag\\
&\quad-\Big(-\frac12A^2-\sqrt{\frac14A^4-A^2+A|\eta|}\Big)e^{(-\frac12A^2-\sqrt{\frac14A^4-A^2+A|\eta|})t}\Big]\notag\\
&\quad-\frac{1}{2\sqrt{\frac14A^4-A^2-A|\eta|}}
\Big[\Big(-\frac12A^2+\sqrt{\frac14A^4-A^2-A|\eta|}\Big)e^{(-\frac12A^2+\sqrt{\frac14A^4-A^2-A|\eta|})t}\notag\\
&\quad-\Big(-\frac12A^2-\sqrt{\frac14A^4-A^2-A|\eta|}\Big)e^{(-\frac12A^2-\sqrt{\frac14A^4-A^2-A|\eta|})t}\Big]\Bigg\}\label{Kt-1}\\
=& -\frac{A^2}{2}\widehat K(t,\xi,\eta)+\frac{1}{4A|\eta|}\Big[e^{(-\frac12A^2+\sqrt{\frac14A^4-A^2+A|\eta|})t}+e^{(-\frac12A^2-\sqrt{\frac14A^4-A^2+A|\eta|})t}\notag\\
&\quad -e^{(-\frac12A^2+\sqrt{\frac14A^4-A^2-A|\eta|})t}-e^{(-\frac12A^2-\sqrt{\frac14A^4-A^2-A|\eta|})t}\Big].\label{Kt-2}
\end{align}
Therefore, by \eqref{Kt-2} and the similar estimates as those for \eqref{est:K}, we obtain \eqref{est:Kt}.
By a similar way, we also get the following the estimates on $\partial_{tt}K(t,\partial_x,\partial_y)$ in \eqref{est:Ktt}.

\vskip .1in
We now give the estimates on the special forms. First, we consider $\big(\pp_{tt}-\Delta\pp_t-\Delta\big)K(t,\partial_x,\partial_y)$.
To this end, we derive an identity.  Let
$$
\phi_{\mu_1,\mu_2}=e^{\big(\frac12\Delta+\mu_1\sqrt{\frac14\Delta^2+\Delta+\mu_2\sqrt{\Delta\partial_{yy}}}\big)t},
$$
where $\mu_1,\mu_2=\pm1$, then we have
\begin{align*}
\partial_{tt}\phi_{\mu_1,\mu_2}=&\Bigg(\frac12\Delta+\mu_1\sqrt{\frac14\Delta^2+\Delta+\mu_2\sqrt{\Delta\partial_{yy}}}\Bigg)^2\phi_{\mu_1,\mu_2}\\
=&\Big(\frac12\Delta^2+\mu_1\Delta\sqrt{\frac14\Delta^2+\Delta+\mu_2\sqrt{\Delta\partial_{yy}}}+\Delta+\mu_2\sqrt{\Delta\partial_{yy}}\Big)\phi_{\mu_1,\mu_2}\\
=&\Delta\partial_{t}\phi_{\mu_1,\mu_2} +\Delta\phi_{\mu_1,\mu_2}+\mu_2\sqrt{\Delta\partial_{yy}}\phi_{\mu_1,\mu_2},
\end{align*}
that is,
\begin{align}\label{phi-pm}
\partial_{tt}\phi_{\mu_1,\mu_2}-\Delta\partial_t\phi_{\mu_1,\mu_2}-\Delta\phi_{\mu_1,\mu_2}=\mu_2\sqrt{\Delta\partial_{yy}}\phi_{\mu_1,\mu_2}.
\end{align}
Now using \eqref{phi-pm}, we have
\begin{align}
&\big(\pp_{tt}+A^2\pp_t+A^2\big)\widehat K(t,\xi,\eta)\notag\\
=&\frac{1}{4\sqrt{\frac14A^4-A^2+A|\eta|}}\Big(e^{(-\frac12A^2+\sqrt{\frac14A^4-A^2+A|\eta|})t}
-e^{(-\frac12A^2-\sqrt{\frac14A^4-A^2+A|\eta|})t}\Big)\notag\\
&+\frac{1}{4\sqrt{\frac14A^4-A^2-A|\eta|}}\Big(e^{(-\frac12A^2+\sqrt{\frac14A^4-A^2-A|\eta|})t}
-e^{(-\frac12A^2-\sqrt{\frac14A^4-A^2-A|\eta|})t}\Big).\label{K-tt2t2}
\end{align}
Compared with $\pp_{tt}\widehat K(t,\xi,\eta), A^2\widehat K(t,\xi,\eta)$, the form $\big(\pp_{tt}+A^2\pp_t+A^2\big)\widehat K(t,\xi,\eta)$ erases the bad
factor of $\frac{1}{|\eta|}$, and thus gives \eqref{est:K-tt2t2}.
Further,
\begin{align}
&\pp_t\big(\pp_{tt}+A^2\pp_t+A^2\big)\widehat K(t,\xi,\eta)\notag\\
=&-\frac12A^2\big(\pp_{tt}+A^2\pp_t+A^2\big)\widehat K(t,\xi,\eta)
+\frac14\Big(e^{(-\frac12A^2+\sqrt{\frac14A^4-A^2+A|\eta|})t}
+e^{(-\frac12A^2-\sqrt{\frac14A^4-A^2+A|\eta|})t}\notag\\
&\qquad+e^{(-\frac12A^2+\sqrt{\frac14A^4-A^2-A|\eta|})t}
+e^{(-\frac12A^2-\sqrt{\frac14A^4-A^2-A|\eta|})t}\Big).\label{K-ttt2t2}
\end{align}
From \eqref{K-ttt2t2}, and by the similar argument as the proof of \eqref{est:K}, we have \eqref{est:K-ttt2t2}.

Next, we consider another special form $\big(\pp_{tt}-\Delta\pp_t-\pp_{xx}\big)K(t,\partial_x,\partial_y)$.
It obeys the same estimate as the operator $\big(\pp_{tt}-\Delta\pp_t-\Delta\big)K(t,\partial_x,\partial_y)$ when $|\eta|$ is small.
Indeed, we have
$$
\big(\pp_{tt}-\Delta\pp_t-\pp_{xx}\big)K(t,\partial_x,\partial_y)
=\big(\pp_{tt}-\Delta\pp_t-\Delta\big)K(t,\partial_x,\partial_y)+\pp_{yy}K(t,\partial_x,\partial_y).
$$
But on the other hand, from the form itself, we see that it is better when $|\xi|$ is small. Thus similar as above, we have \eqref{est:K-tt2t2-x} and \eqref{est:K-tt2t2-xt}.
\end{proof}

Recalling $\widehat{K_1}(t,\xi,\eta)$ defined in (\ref{K1}), we have
\begin{align}
\pp_t\big(\pp_{tt}+A^2\pp_t+A^2\big)\widehat K(t,\xi,\eta)
=-\frac12A^2\big(\pp_{tt}+A^2\pp_t+A^2\big)\widehat K(t,\xi,\eta)
+\widehat{K_1}(t,\xi,\eta).\label{K-ttt2t2-K1}
\end{align}
This equality is useful in the following sections.

\subsection{Estimates in $L^p$-space} This subsection provides estimates for $\widehat{K}$ and
the Fourier transforms of various derivatives of $K$ in $L^p$-spaces. The pointwise estimates in the
previous subsections will be used here.
We first state and prove an elementary lemma, which will be used repeatedly
in the estimates.

\begin{lemma}\label{lem:basic}
Let $c$ be a positive constant, and let $N\ge 0$ be the dyadic number ($N=2^j$ for some $j\in\Z$), then
\begin{align}\label{est:At}
\int\!\!\!\!\int_{A\le 1}A^\beta e^{-c A^2t}\,d\xi d\eta
&\lesssim \langle t\rangle^{-1-\frac{\beta}{2}},\quad \mbox{ for any } \beta>-2;
\end{align}
and
\begin{align}\label{est:Axit-1}
\int\!\!\!\!\int_{A\sim N} \frac{\xi^\beta}{A^{\alpha}}e^{-c\frac{\xi^2}{A^2}t}\,d\xi d\eta
&\lesssim N^{\beta+2-\alpha}\langle t\rangle^{-\frac{1+\beta}{2}},\quad \mbox{ for any } \alpha\in\R,  \beta>-1.
\end{align}
In particular, for any $\alpha\in\R,\beta'\in\R,   \beta>-1$ with $\beta'\ge \beta, 2\beta'-\beta+2-\alpha>0$, we have
\begin{align}\label{est:Axit-2}
\int\!\!\!\!\int_{A\le 1} \chi_{|\xi|\lesssim A^2 }\frac{\xi^{\beta'}}{A^{\alpha}}e^{-c\frac{\xi^2}{A^2}t}\,d\xi d\eta
&\lesssim \langle t\rangle^{-\frac{1+\beta}{2}}.
\end{align}
\end{lemma}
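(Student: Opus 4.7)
The plan is to pass to polar coordinates $\xi=A\cos\theta$, $\eta=A\sin\theta$ in each of the three bounds, which converts every integral into a product of a radial integral in $A$ and an angular integral in $\theta$, with $d\xi\,d\eta = A\,dA\,d\theta$ and $\xi/A = \cos\theta$. The Gaussian factor $e^{-c\xi^2t/A^2}$ then becomes $e^{-c\cos^2\theta\cdot t}$, which decouples from the radial variable.

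For \eqref{est:At}, polar coordinates reduce the integral to $2\pi\int_0^1 A^{\beta+1}e^{-cA^2t}\,dA$. For $t\geq 1$ the substitution $u=A\sqrt{t}$ yields $t^{-(\beta+2)/2}\int_0^{\sqrt{t}}u^{\beta+1}e^{-cu^2}\,du$, whose $u$-integral converges at $u=0$ exactly when $\beta>-2$; for $t\leq 1$ the bound is trivial. Combined, this gives $\langle t\rangle^{-1-\beta/2}$.

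For \eqref{est:Axit-1}, the radial factor gives $\int_{A\sim N}A^{\beta+1-\alpha}\,dA\sim N^{\beta+2-\alpha}$, while the angular factor is $\int_0^{2\pi}|\cos\theta|^\beta e^{-c\cos^2\theta\cdot t}\,d\theta$. Substituting $s=\cos\theta$ and splitting at $s=1/2$ handles the $(1-s^2)^{-1/2}$ Jacobian singularity near $s=1$ (on that piece the Gaussian is bounded by $e^{-ct/4}$), while on $[0,1/2]$ the rescaling $u=s\sqrt{t}$ gives the bound $t^{-(\beta+1)/2}$ provided $\beta>-1$. The combined bound is then $N^{\beta+2-\alpha}\langle t\rangle^{-(1+\beta)/2}$.

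The third bound \eqref{est:Axit-2} is the most delicate and is really the point of the lemma. I would dyadically decompose $\{A\leq 1\}$ into dyadic annuli $\{A\sim N\}$ with $N\leq 1$. In polar coordinates the constraint $|\xi|\lesssim A^2\sim N^2$ becomes $|\cos\theta|\lesssim N$, confining $\theta$ to a neighborhood of $\pm\pi/2$ of angular width $\sim N$. Carrying out the same rescaling as in the previous step but restricting $|s|\lesssim N$ yields the angular bound $\min\{N^{\beta'+1},\,t^{-(\beta'+1)/2}\}$, so the total becomes
\begin{equation*}
\sum_{N\leq 1} N^{\beta'+2-\alpha}\min\bigl\{N^{\beta'+1},\,t^{-(\beta'+1)/2}\bigr\}.
\end{equation*}
Splitting this sum at the threshold $N\sim\min(1,t^{-1/2})$: on the ``low'' piece $N\leq t^{-1/2}$ one uses the bound $N^{\beta'+1}$, producing a geometric sum with exponent $2\beta'+3-\alpha$, which is strictly positive under $2\beta'-\beta+2-\alpha>0$ combined with $\beta>-1$, and hence is dominated by its largest term $t^{-(2\beta'+3-\alpha)/2}$; on the ``high'' piece $t^{-1/2}<N\leq 1$ one uses the bound $t^{-(\beta'+1)/2}$ and sums in $N$. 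Both contributions are $\lesssim\langle t\rangle^{-(1+\beta)/2}$ thanks to the twin assumptions $\beta'\geq\beta$ (controlling the ``high'' piece) and $2\beta'-\beta+2-\alpha\geq 0$ (controlling the ``low'' piece).

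The main obstacle is the bookkeeping in this last step: verifying that the exponents $\beta'\geq\beta$ and $2\beta'-\beta+2-\alpha>0$ are precisely what is required for both dyadic pieces to match the claimed decay $\langle t\rangle^{-(1+\beta)/2}$, and that no auxiliary hypothesis on $\alpha$ alone or on $\beta'+2-\alpha$ intervenes.
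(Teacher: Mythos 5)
Your proof is correct, but for the key inequality \eqref{est:Axit-2} you take a genuinely different and more involved route than the paper. For the first two bounds your polar-coordinate substitutions amount to the same change of variables the paper performs in Cartesian form, so that part is essentially equivalent. For \eqref{est:Axit-2}, the paper observes that on the support of the indicator one has $\chi_{|\xi|\lesssim A^2}\,\xi^{\beta'}/A^{\alpha}\lesssim \xi^{\beta}/A^{\alpha-2(\beta'-\beta)}$ (using $\beta'\geq\beta$ to write $|\xi|^{\beta'-\beta}\lesssim A^{2(\beta'-\beta)}$); this reduces \eqref{est:Axit-2} in one line to \eqref{est:Axit-1} with the shifted exponent $\tilde\alpha=\alpha-2(\beta'-\beta)$, and the dyadic sum $\sum_{N\leq 1} N^{2\beta'-\beta+2-\alpha}$ converges by the hypothesis $2\beta'-\beta+2-\alpha>0$, with the time factor $\langle t\rangle^{-(1+\beta)/2}$ already in place. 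You instead retain the angular constraint $|\cos\theta|\lesssim N$ and obtain the sharper per-annulus bound $N^{\beta'+2-\alpha}\min\{N^{\beta'+1},\,t^{-(\beta'+1)/2}\}$, after which you split the sum at $N\sim t^{-1/2}$. This works, and it gives a marginally finer picture of where the decay comes from, but the paper's reduction is tidier. One small caution in your bookkeeping: the remark that $\beta'\geq\beta$ alone ``controls the high piece'' is an oversimplification. When $\beta'+2-\alpha<0$ the high-$N$ sum is dominated by its smallest term $N\sim t^{-1/2}$ and contributes $t^{-(2\beta'+3-\alpha)/2}$, which is $\lesssim t^{-(1+\beta)/2}$ only because of the other hypothesis $2\beta'-\beta+2-\alpha\geq 0$; when $\beta'+2-\alpha=0$ a logarithm appears and you additionally need $\beta'>\beta$ strictly (which the strict inequality in the hypothesis forces in that degenerate case). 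The stated hypotheses do suffice in all cases, but the division of labor between them is not as clean as your summary suggests.
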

\begin{proof}
First, we prove \eqref{est:At}. If $t\ge 1$, we set $\xi_t=\xi\sqrt{t}, \eta_t= \eta\sqrt{t}$. Then by changing variable,
we have
\begin{align*}
\int\!\!\!\!\int_{A\le 1}A^\beta e^{-c A^2t}\,d\xi d\eta
&= t^{-1-\frac\beta2}\int\!\!\!\!\int_{\xi_t^2+\eta_t^2\le t}(\xi_t^2+\eta_t^2)^\frac\beta2 e^{-c(\xi_t^2+\eta_t^2)}\,d\xi_t d\eta_t\\
&\le t^{-1-\frac\beta2}\int\!\!\!\!\int_{\R^2}(\xi^2+\eta^2)^\frac\beta2 e^{-c(\xi^2+\eta^2)}\,d\xi d\eta \lesssim t^{-1-\frac\beta2}.
\end{align*}
If $0\le t\le 1$, then
\begin{align*}
\int\!\!\!\!\int_{A\le 1}A^\beta e^{-\frac14 A^2t}\,d\xi d\eta\le \int\!\!\!\!\int_{A\le 1}A^\beta \,d\xi d\eta\lesssim 1.
\end{align*}

Now we prove \eqref{est:Axit-1}. Similarly, by changing variable, we obtain that for some positive constant $\tilde{c}$,
\begin{align*}
\int\!\!\!\!\int_{A\sim N} \frac{\xi^\beta}{A^{\alpha}}e^{-c\frac{\xi^2t}{A^2}}\,d\xi d\eta
&\sim \int\!\!\!\!\int_{A\sim N} \frac{\xi^\beta}{N^{\alpha}}e^{-\tilde{c}\frac{\xi^2t}{N^2}}\,d\xi d\eta\\
&\le t^{-\frac{1+\beta}2}N^{\beta+1-\alpha}\int_{|\eta|\lesssim N}\!\!\int_{\xi\in\R} \Big(\frac{\xi \sqrt{t}}{N}\Big)^\beta e^{-\tilde{c}\frac{\xi^2t}{N^2}}\,d(\frac{\xi\sqrt{t}}{N}) d\eta\\
&\lesssim t^{-\frac{1+\beta}2}N^{\beta+2-\alpha}.
\end{align*}
If $0\le t\le 1$, then
\begin{align*}
\int\!\!\!\!\int_{A\sim N} \frac{\xi^\beta}{A^{\alpha}}e^{-\frac{\xi^2}{2A^2}t}\,d\xi d\eta
\le \int\!\!\!\!\int_{A\sim N} \frac{\xi^\beta}{A^{\alpha}}\,d\xi d\eta \lesssim  N^{\beta+2-\alpha}.
\end{align*}
Further, since
$
\chi_{|\xi|\lesssim A^2 }\frac{\xi^{\beta'}}{A^{\alpha}}\lesssim \frac{\xi^{\beta}}{A^{\alpha-2(\beta'-\beta)}},
$
by using \eqref{est:Axit-1} and the dyadic decomposition, we also have \eqref{est:Axit-2}.
\end{proof}

\begin{rem} The $L^q$ estimates can also be easily obtained from the conclusions in Lemma \ref{lem:basic}. For example, from \eqref{est:At},
let $\tilde c=cq$, we obtain
$$
\big\|A^\beta e^{-c A^2t}\big\|_{L^q_{\xi\eta}}=  \big\|A^{\beta q} e^{-\tilde{c} A^2t}\big\|_{L^1_{\xi\eta}}^{\frac1q}
\lesssim \langle t\rangle^{-(1+\frac{\beta q}{2})\frac1q}.
$$
\end{rem}

\subsubsection{Estimates on $\widehat{K(t)}$}
\begin{lemma}\label{lem:Kn1} Let $1\le q\le \infty$, and $N\gtrsim 1$. Then
\begin{align}\label{est:kn1-1}
\big\|A^4\widehat K(t,\xi,\eta)\big\|_{L^q_{\xi\eta}(A\le 1)}\lesssim \langle t\rangle^{-\frac{1}{2q}};\qquad
\big\|\widehat K(t,\xi,\eta)\big\|_{L^q_{\xi\eta}(A\sim N)}\lesssim N^{\frac2q-4}\langle t\rangle^{-\frac{1}{2q}}.
\end{align}
Moreover, for any $\beta>\frac12$,
\begin{align}\label{est:kn1-2}
\big\|A^{3+\beta}\widehat K(t,\xi,\eta)\big\|_{L^2_{\xi\eta}(A\le 1)}\lesssim \langle t\rangle^{-\frac{1}{4}}.
\end{align}
\end{lemma}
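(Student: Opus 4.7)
The plan is to invoke the pointwise bound \eqref{est:K} from Proposition \ref{Kpointwise} and then reduce each estimate to the elementary integrals of Lemma \ref{lem:basic}. Recall
$$|\widehat K(t,\xi,\eta)| \lesssim \chi_{A\ge 1}\frac{1}{A^4}e^{-ct} + \chi_{A\le 1}\min\Big\{\tfrac{1}{A|\xi||\eta|},\tfrac{1}{A^4}\Big\}e^{-\frac14 A^2 t} + \frac{1}{A^4}\chi_{|\xi|\lesssim A^2}e^{-\frac{\xi^2}{2A^2}t}.$$
I will verify each of the three claimed bounds by restricting to the relevant region in $(\xi,\eta)$ and identifying which of the three pieces of the pointwise bound actually contributes there.

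For the first inequality in \eqref{est:kn1-1}, I restrict to $A\le 1$, where the first piece of the pointwise bound vanishes. After multiplying by $A^4$, the second piece contributes $\lesssim e^{-A^2 t/4}$, whose $L^q$ norm over $A\le 1$ is bounded by $\langle t\rangle^{-1/q}$ using \eqref{est:At} with $\beta=0$. The third piece contributes $\chi_{|\xi|\lesssim A^2}\,e^{-\xi^2 t/(2A^2)}$; raising to the $q$-th power and applying \eqref{est:Axit-2} with $\alpha=0$, $\beta'=0$, $\beta=0$ gives $\langle t\rangle^{-1/2}$, so its $L^q$ norm is $\lesssim\langle t\rangle^{-1/(2q)}$. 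This last piece is the slowest, dominating both other terms, and produces the stated rate.

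For the second inequality in \eqref{est:kn1-1}, I work on $A\sim N\gtrsim 1$ where the second piece of the pointwise bound vanishes. The high-frequency piece gives $N^{-4}e^{-ct}$, whose $L^q$ norm over $A\sim N$ is $\lesssim N^{2/q-4}e^{-ct}$. For the third piece, apply \eqref{est:Axit-1} with $\alpha=4q$, $\beta=0$ (the constraint $\chi_{|\xi|\lesssim A^2}$ is automatic or irrelevant here since we only need the $\xi^2$ in the exponent), which yields $N^{2-4q}\langle t\rangle^{-1/2}$, so after raising to the power $1/q$ we obtain $\lesssim N^{2/q-4}\langle t\rangle^{-1/(2q)}$. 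Combining gives the claimed bound.

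For \eqref{est:kn1-2}, I again restrict to $A\le 1$ and multiply by $A^{3+\beta}$. The surviving contributions are $A^{\beta-1}e^{-A^2 t/4}$ and $A^{\beta-1}\chi_{|\xi|\lesssim A^2}e^{-\xi^2 t/(2A^2)}$. Squaring and integrating the first by \eqref{est:At} (with exponent $2\beta-2>-2$, legal since $\beta>1/2$) gives $\langle t\rangle^{-\beta}$; squaring and integrating the second by \eqref{est:Axit-2} with $\alpha=2-2\beta$, $\beta'=0$, $\beta_L=0$ (the constraint $2\beta>0$ is satisfied) gives $\langle t\rangle^{-1/2}$. Since $\beta>1/2$ ensures $\langle t\rangle^{-\beta}\le\langle t\rangle^{-1/2}$, taking square roots produces the desired $\langle t\rangle^{-1/4}$.

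The execution is straightforward once the pointwise bounds are in hand; the only mildly delicate point is recognizing that the sharp threshold $\beta>1/2$ in \eqref{est:kn1-2} comes precisely from the $e^{-A^2 t/4}$ piece on the low-frequency region, while in \eqref{est:kn1-1} the slower rate $\langle t\rangle^{-1/(2q)}$ (rather than $\langle t\rangle^{-1/q}$) is dictated entirely by the dispersive tail $\chi_{|\xi|\lesssim A^2}e^{-\xi^2 t/(2A^2)}$. Keeping careful track of which term is the bottleneck in each region is the main thing to watch.
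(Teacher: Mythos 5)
Your proof is correct and follows essentially the same approach as the paper: invoke the pointwise bound \eqref{est:K}, multiply by the relevant power of $A$, and integrate each of the three pieces via Lemma \ref{lem:basic}. The only cosmetic difference is that you compute the $L^q$ norm directly by raising to the $q$-th power, whereas the paper establishes the $L^1$ and $L^\infty$ endpoints and then interpolates; the remark following Lemma \ref{lem:basic} notes these are interchangeable.
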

\begin{proof}
By \eqref{est:K}, we have
\begin{align}\label{est:Kn1}
\big| A^4\widehat K(t,\xi,\eta)\big|\lesssim \chi_{A\ge 1}e^{-ct}+\chi_{A\le 1} e^{-\frac14 A^2t}+\chi_{|\xi|\lesssim A^2 }e^{-\frac{\xi^2}{2A^2}t}.
\end{align}
Note that from \eqref{est:Kn1},
\begin{align}
\big\|A^4\widehat K(t,\xi,\eta)\big\|_{L^\infty_{\xi\eta}}\lesssim 1.\label{14.50}
\end{align}
Further, when $A\ge 1$,
\begin{align*}
\big| A^4\widehat K(t,\xi,\eta)\big|\lesssim \chi_{A\ge 1}e^{-ct}+\chi_{|\xi|\lesssim A^2 }e^{-\frac{\xi^2}{2A^2}t}.
\end{align*}
Thus, by \eqref{est:Axit-1},
\begin{align*}
\big\|A^4\widehat K(t,\xi,\eta)\big\|_{L^1_{\xi\eta}(A\sim N)}
\lesssim &  \big\|\chi_{A\ge 1} e^{-ct}\big\|_{L^1_{\xi\eta}(A\sim N)}+ \big\|e^{-\frac{\xi^2}{2A^2}t}\big\|_{L^1_{\xi\eta}(A\sim N)}\\
\lesssim & N^2e^{-ct}+N^2\langle t\rangle^{-\frac12}\lesssim N^2\langle t\rangle^{-\frac12}.
\end{align*}
That is,
\begin{align}\label{est:n2}
\big\|\widehat K(t,\xi,\eta)\big\|_{L^1_{\xi\eta}(A\sim N)}
\lesssim & N^{-2}\langle t\rangle^{-\frac12}.
\end{align}
Now, when $A\le 1$,
\begin{align}\label{1351}
\big| A^4\widehat K(t,\xi,\eta)\big|\lesssim \chi_{A\le 1} e^{-\frac14 A^2t}+\chi_{|\xi|\lesssim A^2 }e^{-\frac{\xi^2}{2A^2}t}.
\end{align}
Similarly, by \eqref{est:At} and \eqref{est:Axit-2},
\begin{align*}
\int\!\!\!\!\int_{A\le 1} e^{-\frac14 (\xi^2+\eta^2)t}\,d\xi d\eta
\lesssim \langle t\rangle^{-1};\quad
\int\!\!\!\!\int_{A\le 1} e^{-\frac{\xi^2}{2A^2}t}\,d\xi d\eta
\lesssim \langle t\rangle^{-\frac12},
\end{align*}
thus,
\begin{align}\label{est:n3}
\big\|A^4\widehat K(t,\xi,\eta)\big\|_{L^1_{\xi\eta}(A\le 1)}
\lesssim & \langle t\rangle^{-\frac12}.
\end{align}
Now the conclusion \eqref{est:kn1-1} follows from interpolation between \eqref{14.50} and \eqref{est:n3}, \eqref{est:n2} respectively. For \eqref{est:kn1-2},  by \eqref{1351}, we have
\begin{align*}
\big| A^{3+\beta}\widehat K(t,\xi,\eta)\big|\lesssim \chi_{A\le 1} A^{\beta-1}e^{-\frac14 A^2t}+A^{\beta-1}\chi_{|\xi|\lesssim A^2 }e^{-\frac{\xi^2}{2A^2}t},
\end{align*}
which is square integrable. Thus direct integration and using \eqref{est:At}, \eqref{est:Axit-2} give \eqref{est:kn1-2}.
\end{proof}
%
%\begin{lemma}\label{lem:Kn1-1} Let $1\le q\le \infty$, and $N\gtrsim 1$. Then
%\begin{align*}
%\big\|A^4\xi\widehat K(t,\xi,\eta)\big\|_{L^2_{\xi\eta}(A\le 1)}\lesssim \langle t\rangle^{-\frac{3}4};\qquad
%\big\|A^4\xi\widehat K(t,\xi,\eta)\big\|_{L^2_{\xi\eta}(A\sim N)}\lesssim N^2\langle t\rangle^{-\frac{3}4}.
%\end{align*}
%\end{lemma}
%\begin{proof}
%By \eqref{est:Kn1}, we have
%\begin{align}\label{est:Kn1-1}
%\big| A^4\xi\widehat K(t,\xi,\eta)\big|\lesssim \chi_{A\ge 1}Ae^{-ct}+\chi_{A\le 1} Ae^{-\frac14 A^2t}+\chi_{|\xi|\lesssim A^2 }\xi e^{-\frac{\xi^2}{2A^2}t}.
%\end{align}
%Thus,  when $A\le 1$, by \eqref{est:Kn1-1},
%\begin{align*}
%\big| A^4\xi\widehat K(t,\xi,\eta)\big|\lesssim\chi_{A\le 1}\Big( Ae^{-\frac14 A^2t}+\chi_{|\xi|\lesssim A^2 }\xi e^{-\frac{\xi^2}{2A^2}t}\Big);
%\end{align*}
%while when $A\sim N\gtrsim 1$, then
%\begin{align*}
%\big| A^4\xi\widehat K(t,\xi,\eta)\big|\lesssim\chi_{A\ge 1}\Big(Ae^{-ct}+\chi_{|\xi|\lesssim A^2 }\xi e^{-\frac{\xi^2}{2A^2}t}\Big);
%\end{align*}
%Then, similar as the proof in Lemma \ref{lem:Kn1}, the conclusions follow from integration and using \eqref{est:At}, \eqref{est:Axit-1}, and \eqref{est:Axit-2}.
%\end{proof}

We give the estimates on the operator $\partial_{xy}\nabla K(t)$, which read as
\begin{lemma}\label{lem:Ku1} Let $1\le q\le \infty$ and $N\gtrsim 1$. Then
\begin{align}
\big\|\xi\eta A\widehat K(t,\xi,\eta)\big\|_{L^q_{\xi\eta}(A\le 1)}\lesssim & \langle t\rangle^{-\frac1{q}};\label{est:ku1-1}\\
\big\|\xi\eta \widehat K(t,\xi,\eta)\big\|_{L^q_{\xi\eta}(A\sim N)}\lesssim &\langle N\rangle^{-2+\frac2q}\langle t\rangle^{-\frac12-\frac1{2q}}.\label{est:ku1}
\end{align}
Moreover, for any $\beta\in[ \frac12,\frac32]$,
\begin{align}\label{est:ku1-2}
\big\|A^{\beta+1}\xi\eta \widehat K(t,\xi,\eta)\big\|_{L^\infty_{\xi\eta}(A\le 1)}\lesssim &\langle t\rangle^{-\frac12};
\quad
\big\|A^{\beta}\xi\eta \widehat K(t,\xi,\eta)\big\|_{L^2_{\xi\eta}(A\le 1)}\lesssim \langle t\rangle^{-\frac\beta2};
\end{align}
\end{lemma}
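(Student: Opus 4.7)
The plan is to derive the four estimates by combining the pointwise bound \eqref{est:K} on $\widehat K$ with the integration formulas of Lemma~\ref{lem:basic}. Recall that \eqref{est:K} decomposes into three regimes: a high-frequency exponentially-damped term $\chi_{A\ge 1}A^{-4}e^{-ct}$; a low-frequency isotropic term $\chi_{A\le 1}\min\{(A|\xi||\eta|)^{-1},A^{-4}\}e^{-A^2t/4}$; and a low-$|\xi|$ anisotropic term $A^{-4}\chi_{|\xi|\lesssim A^2}e^{-\xi^2t/(2A^2)}$. In each regime I would multiply by $|\xi\eta|$ (and the indicated power of $A$), read off the pointwise $L^\infty$ bound, then use Lemma~\ref{lem:basic} to compute the $L^1$ bound and interpolate to the target $L^q$.

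\textbf{Step 1 (estimate \eqref{est:ku1-1}).} After multiplying \eqref{est:K} by $A|\xi\eta|$ and restricting to $A\le 1$, one gets the pointwise bound
\[
|A\,\xi\eta\,\widehat K|\lesssim \min\Bigl\{1,\frac{|\xi\eta|}{A^{3}}\Bigr\}e^{-A^2t/4}+\frac{|\xi\eta|}{A^{3}}\chi_{|\xi|\lesssim A^2}e^{-\xi^2t/(2A^2)}\lesssim 1
\]
using $|\xi\eta|\le A^2/2$, which handles the $L^\infty$ endpoint. For the $L^1$ endpoint, the first summand integrates via \eqref{est:At} with $\beta=0$ to $\langle t\rangle^{-1}$, while the second, after using $|\eta|\le A$ and reducing to $(|\xi|/A^2)\chi_{|\xi|\lesssim A^2}e^{-\xi^2t/(2A^2)}$, integrates via \eqref{est:Axit-2} with $\alpha=2$, $\beta=\beta'=1$ (all admissibility conditions satisfied) to $\langle t\rangle^{-1}$. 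Interpolation between these endpoints yields \eqref{est:ku1-1}.

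\textbf{Step 2 (estimate \eqref{est:ku1}).} For $A\sim N\gtrsim 1$, the first term in \eqref{est:K} is exponentially small, so the dominant contribution to $|\xi\eta\widehat K|$ is $(|\xi\eta|/N^{4})\chi_{|\xi|\lesssim N^{2}}e^{-\xi^2t/(2N^2)}$. Its pointwise maximum, concentrated at $|\xi|\sim N/\sqrt t$, equals $\sim N^{-2}\langle t\rangle^{-1/2}$, delivering the $L^\infty$ endpoint. The $L^1$-norm over the dyadic annulus, after bounding $|\eta|\lesssim N$, follows from \eqref{est:Axit-1} with $\beta=1,\alpha=3$ and equals $\langle t\rangle^{-1}$. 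Interpolation produces the exponents $N^{-2+2/q}\langle t\rangle^{-1/2-1/(2q)}$ in \eqref{est:ku1}.

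\textbf{Step 3 (estimate \eqref{est:ku1-2}).} For the $L^2$ bound the same pointwise decomposition, weighted by $A^\beta$ and restricted to $A\le 1$, produces isotropic integrand $\lesssim A^{2\beta}e^{-A^2t/2}$ (after using $|\xi\eta|\le A^2$) and anisotropic integrand $\lesssim (\xi^2/A^{4-2\beta})\chi_{|\xi|\lesssim A^2}e^{-\xi^2t/A^2}$; invoking \eqref{est:At} for the first and \eqref{est:Axit-2} (with suitable $\alpha,\beta,\beta'$) for the second both give $\langle t\rangle^{-\beta}$, so taking square roots yields the claimed $\langle t\rangle^{-\beta/2}$. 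The $L^\infty$ bound requires that one exploit the anisotropic third piece: after weighting by $A^{\beta+1}$, the Gaussian $e^{-\xi^2t/(2A^2)}$ forces the pointwise maximum onto the curve $|\xi|\sim A/\sqrt t$ inside $|\xi|\le A^2$ (which requires $A\gtrsim 1/\sqrt t$), and this, combined with $|\eta|\lesssim A$, yields a prefactor of order $A^\beta/\sqrt t$ that, once the constraint $A\le 1$ is applied, gives $\langle t\rangle^{-1/2}$ uniformly in $\beta\in[\frac12,\frac32]$.

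\textbf{Main obstacle.} The delicate step is the sharp $L^\infty$ bound in \eqref{est:ku1-2}. A naive use of $|\xi\eta|\le A^2$ together with the minimum in \eqref{est:K} only yields $\lesssim A^\beta e^{-A^2t/4}$, giving the weaker rate $\langle t\rangle^{-\beta/2}$ (which is insufficient at the lower endpoint $\beta=\frac12$). Recovering the uniform $\langle t\rangle^{-1/2}$ decay instead relies on pushing the Gaussian anisotropic piece of \eqref{est:K} into a balanced two-parameter optimization in $(A,|\xi|)$ (concentrating $|\xi|\sim A/\sqrt t$), which is the precise mechanism behind the better $\partial_x$-regularity effect emphasized after Proposition~\ref{Kpointwise}. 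Making this balance rigorous, while keeping the bounds uniform in $\beta$, is the central technical point of the proof.
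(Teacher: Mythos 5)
Your Steps~1 and~2 reproduce the paper's argument accurately: multiply the pointwise bound \eqref{est:K} by $|\xi\eta|$ (and, for the refined estimates, by $A^\alpha$), read off the $L^\infty$ endpoints, compute the $L^1$ endpoints via Lemma~\ref{lem:basic}, and interpolate. This is precisely what the paper does in \eqref{e:u3}--\eqref{e:u4}, and both the low-frequency estimate \eqref{est:ku1-1} and the dyadic estimate \eqref{est:ku1} come out correctly. The $L^2$ half of Step~3 is also on the paper's track, up to a harmless slip: the isotropic integrand in $\|A^\beta\xi\eta\widehat K\|_{L^2}^2$ is $A^{2\beta-2}e^{-A^2t/2}$ (from $|A^\beta\xi\eta\widehat K|\lesssim A^{\beta-1}e^{-A^2t/4}$, using the branch $\widehat K\lesssim (A|\xi||\eta|)^{-1}e^{-A^2t/4}$), not $A^{2\beta}e^{-A^2t/2}$; applying \eqref{est:At} to the corrected exponent still yields $\langle t\rangle^{-\beta}$, so your conclusion stands.

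The $L^\infty$ half of Step~3 has a genuine gap. Following your own argument: with $|\eta|\lesssim A$ and $|\xi|\sim A/\sqrt t$, the third piece of \eqref{e:u4} at its maximizing scale is
\[
A^{\beta+1}\cdot\frac{|\xi\eta|}{A^4}\sim \frac{(A/\sqrt t)\cdot A}{A^{3-\beta}}=\frac{A^{\beta-1}}{\sqrt t},
\]
not $A^{\beta}/\sqrt t$ as you claim. For $\beta<1$ this grows as $A$ decreases toward $t^{-1/2}$, and the supremum over $A\in[t^{-1/2},1]$ is $t^{-\beta/2}$, not $t^{-1/2}$. More fundamentally, the three regimes in \eqref{est:K} are \emph{summed}, so the isotropic middle piece, which you yourself observe gives only $A^\beta e^{-A^2t/4}\lesssim\langle t\rangle^{-\beta/2}$, cannot be sidestepped by shifting attention to the anisotropic piece. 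In fact, expanding \eqref{eq:K} for $A\ll1$, $|\eta|\ll A$ shows that at $A\sim t^{-1/2}$, $|\xi|\sim A$, $|\eta|\sim A^2$ one genuinely has $|\widehat K|\sim A^{-4}$, so $|A^{\beta+1}\xi\eta\widehat K|\sim A^\beta\sim t^{-\beta/2}$ is a lower bound and not a loss in \eqref{est:K}. Thus the available pointwise information gives only $\|A^{\beta+1}\xi\eta\widehat K\|_{L^\infty_{\xi\eta}(A\le 1)}\lesssim\langle t\rangle^{-\min\{1/2,\beta/2\}}$, and the uniform rate $\langle t\rangle^{-1/2}$ holds only for $\beta\ge1$. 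The paper's one-sentence proof of \eqref{est:ku1-2} supplies no extra mechanism, so as stated the lower endpoint $\beta=\tfrac12$ looks like an oversight in the lemma rather than a hidden balancing argument; in any case, your proposed ``anisotropic rescue'' does not close this gap.
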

\begin{proof} By \eqref{est:K}, we have
\begin{align}
\big|\xi\eta \widehat K(t,\xi,\eta)\big|\lesssim &\chi_{A\ge 1}\frac1{A^2} e^{-ct}
+\chi_{A\le 1} \frac1{A}e^{-\frac14 A^2t}+\chi_{|\xi|\lesssim A^2}\frac{\xi}{A^3} e^{-\frac{\xi^2}{2A^2}t}.\label{e:u3}
\end{align}
Thus, we further have, for any $\alpha$,
\begin{align}
%\big|\xi\eta A\widehat K(t,\xi,\eta)\big|\lesssim &\chi_{A\ge 1}\frac1A e^{-ct}
%+\chi_{A\le 1} e^{-\frac14 A^2t}+\chi_{|\xi|\lesssim A^2}\frac{\xi}{A^2} e^{-\frac{\xi^2}{2A^2}t};\label{e:u3-1}\\
\big|A^{\alpha}\xi\eta \widehat K(t,\xi,\eta)\big|\lesssim &\chi_{A\ge 1}\frac1{A^{2-\alpha}} e^{-ct}+\chi_{A\le 1}A^{\alpha-1} e^{-\frac14 A^2t}+\chi_{|\xi|\lesssim A^2}\frac{\xi}{A^{3-\alpha}} e^{-\frac{\xi^2}{2A^2}t}.\label{e:u4}
\end{align}
Similar as the proof of Lemma \ref{est:Kn1},  by \eqref{e:u4}, \eqref{est:At} and \eqref{est:Axit-2},  we have
\begin{align*}
\big\|\xi\eta A\widehat K(t,\xi,\eta)\big\|_{L^\infty_{\xi\eta}(A\le 1)}\lesssim 1;\qquad
\big\|\xi\eta A\widehat K(t,\xi,\eta)\big\|_{L^1_{\xi\eta}(A\le 1)}\lesssim \langle t\rangle^{-1},
\end{align*}
and thus \eqref{est:ku1-1} follows from interpolation.
For $N\gtrsim 1$, by \eqref{e:u3} and \eqref{est:Axit-1},
\begin{align*}
\big\|\xi\eta \widehat K(t,\xi,\eta)\big\|_{L^\infty_{\xi\eta}(A\sim N)}\lesssim N^{-2}\langle t\rangle^{-\frac12};\qquad
\big\|\xi\eta \widehat K(t,\xi,\eta)\big\|_{L^1_{\xi\eta}(A\sim N)}\lesssim   \langle t\rangle^{-1},
\end{align*}
and thus \eqref{est:ku1} follows from interpolation again.  Using \eqref{e:u4} and Lemma \ref{lem:basic},  \eqref{est:ku1-2} is also easily followed.
\end{proof}

\begin{lemma}\label{lem:Ku1''}  Let $N\gtrsim 1$ and $1\le q\le \infty$, then
\begin{align}
\big\|A\xi^2\eta \widehat K(t,\xi,\eta)\big\|_{L^q_{\xi\eta}(A\le 1)}\lesssim &\langle t\rangle^{-\frac12-\frac1q};\label{est:ku1-5}\\
\big\|\xi^2\eta \widehat K(t,\xi,\eta)\big\|_{L^q_{\xi\eta}(A\sim N)}\lesssim&  N^{\frac2q-1}\langle t\rangle^{-1-\frac1{2q}}.\label{est:ku1-6}
\end{align}
Moreover,
\begin{align}
\big\|A^2\xi^2\eta \widehat K(t,\xi,\eta)\big\|_{L^\infty_{\xi\eta}(A\le 1)}\lesssim &\langle t\rangle^{-1};\label{est:ku1-7}\\
\big\|A\xi^2\eta \widehat K(t,\xi,\eta)\big\|_{L^\infty_{\xi\eta}(A\ge 1)}\lesssim &\langle t\rangle^{-1}.\label{est:ku1-8}
\end{align}
\end{lemma}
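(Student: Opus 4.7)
The plan is to mimic the derivation of the preceding two lemmas: start from the pointwise bound \eqref{est:K} on $\widehat K$ in Proposition \ref{Kpointwise}, multiply by the weight $A\xi^2\eta$ or $\xi^2\eta$, split the $(\xi,\eta)$ plane into the three regions $A\le 1$, $A\sim N\gtrsim 1$, and $|\xi|\lesssim A^2$ that appear in \eqref{est:K}, and then (a) compute $L^1$ bounds via Lemma \ref{lem:basic}, (b) compute $L^\infty$ bounds via the elementary inequalities
\[
x\,e^{-x^2 t/2}\lesssim t^{-1/2},\qquad x^2\,e^{-x^2 t/2}\lesssim t^{-1},
\]
and (c) interpolate to recover the $L^q$ estimates \eqref{est:ku1-5}--\eqref{est:ku1-6}.

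For \eqref{est:ku1-5}, on $A\le 1$ the pointwise bound \eqref{est:K} gives
\[
|A\xi^2\eta\,\widehat K|\lesssim \chi_{A\le 1}\min\Big\{|\xi|,\tfrac{\xi^2\eta}{A^3}\Big\}e^{-\frac14 A^2t}
+\tfrac{\xi^2\eta}{A^3}\chi_{|\xi|\le A^2}e^{-\frac{\xi^2}{2A^2}t}.
\]
For the $L^\infty$ bound one uses $|\xi|e^{-\xi^2t/4}\lesssim t^{-1/2}$ on the first piece, and on the second piece writes $\tfrac{\xi^2\eta}{A^3}=\tfrac{|\xi|}{A}\cdot\tfrac{|\xi|\eta}{A^2}$ and bounds $\tfrac{|\xi|\eta}{A^2}\le 1$ (since $|\xi|\le A^2$ and $\eta\le A$) and $\tfrac{|\xi|}{A}e^{-\xi^2t/(2A^2)}\lesssim t^{-1/2}$. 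For the $L^1$ bound, the first piece is absorbed into $Ae^{-A^2t/4}$ which yields $\langle t\rangle^{-3/2}$ via \eqref{est:At} with $\beta=1$, and the second piece, using $\eta\le A$, reduces to $\tfrac{\xi^2}{A^2}\chi_{|\xi|\le A^2}e^{-\xi^2t/(2A^2)}$ which gives $\langle t\rangle^{-3/2}$ via \eqref{est:Axit-2} with $\beta'=\beta=2,\alpha=2$. Interpolation between $L^1\lesssim \langle t\rangle^{-3/2}$ and $L^\infty\lesssim \langle t\rangle^{-1/2}$ yields precisely $\langle t\rangle^{-1/q-1/2}$, as claimed.

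For \eqref{est:ku1-6} on $A\sim N$ the pointwise bound becomes
\[
|\xi^2\eta\,\widehat K|\lesssim \tfrac{\xi^2\eta}{A^4}e^{-ct}+\tfrac{\xi^2\eta}{A^4}\chi_{|\xi|\le A^2}e^{-\frac{\xi^2}{2A^2}t}.
\]
The $L^\infty$ bound uses $\xi^2\eta\le A^3$ for the first piece (yielding $N^{-1}e^{-ct}\lesssim N^{-1}\langle t\rangle^{-1}$), and writes the second piece as $\tfrac{\eta}{A}\cdot\tfrac{\xi^2}{A^3}\chi_{|\xi|\le A^2}e^{-\xi^2t/(2A^2)}\le \tfrac{1}{A}\cdot\tfrac{\xi^2}{A^2}e^{-\xi^2t/(2A^2)}\lesssim N^{-1}t^{-1}$. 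The $L^1$ bound integrates the first piece trivially to $e^{-ct}$ and the second piece via \eqref{est:Axit-1} with $\beta=2,\alpha=4$, together with $\eta\le A\sim N$, to get $N\langle t\rangle^{-3/2}$; interpolation yields $N^{2/q-1}\langle t\rangle^{-1-1/(2q)}$. The two $L^\infty$ estimates \eqref{est:ku1-7}, \eqref{est:ku1-8} are immediate from the same pointwise bounds combined with $A^2 e^{-A^2t/4}\lesssim \langle t\rangle^{-1}$ on the $e^{-A^2t/4}$ pieces and $\tfrac{\xi^2}{A^2}\chi_{|\xi|\le A^2}e^{-\xi^2t/(2A^2)}\lesssim \langle t\rangle^{-1}$ on the dispersive pieces. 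The only real subtlety—which will be the main care point rather than a genuine obstacle—is making sure in the $\chi_{|\xi|\le A^2}$ region to distribute the weight $\xi^2\eta$ as $\tfrac{|\xi|}{A}\cdot\tfrac{|\xi|\eta}{A^2}$ (or a similar split) so that the constraint $|\xi|\le A^2$ and $\eta\le A$ are both used to gain exactly one power of $A$ or $1/t$, which is what produces the sharp rates stated.
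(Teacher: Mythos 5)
Your proposal is correct and follows essentially the same route as the paper: derive pointwise bounds on $A\xi^2\eta\,\widehat K$ (resp.\ $\xi^2\eta\,\widehat K$) directly from \eqref{est:K}, establish the $L^1$ and $L^\infty$ endpoints in each region using Lemma~\ref{lem:basic}, and interpolate. The paper arrives at its pointwise bound \eqref{e:u5} by multiplying \eqref{e:u3} by $\xi A$ rather than carrying the $\min$ form of \eqref{est:K}, but after using $|\xi|\le A$ the two are equivalent and produce identical endpoint rates, so the difference is cosmetic.
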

\begin{proof} By \eqref{e:u3}, we have
\begin{align}
\big|\xi^2\eta A\widehat K(t,\xi,\eta)\big|\lesssim &\chi_{A\ge 1} e^{-ct}
+\chi_{A\le 1}A e^{-\frac14 A^2t}+\chi_{|\xi|\lesssim A^2}\frac{\xi^2}{A^{2}} e^{-\frac{\xi^2}{2A^2}t}.\label{e:u5}
\end{align}
Then by \eqref{est:At} and \eqref{est:Axit-2}, we have
\begin{align*}
\big\|\xi^2\eta A\widehat K(t,\xi,\eta)\big\|_{L^\infty_{\xi\eta}(A\le 1)}\lesssim \langle t\rangle^{-\frac12};\\
\big\|\xi^2\eta A\widehat K(t,\xi,\eta)\big\|_{L^1_{\xi\eta}(A\le 1)}\lesssim \langle t\rangle^{-\frac32},
\end{align*}
and by \eqref{est:Axit-1},
\begin{align*}
\big\|\xi^2\eta A\widehat K(t,\xi,\eta)\big\|_{L^\infty_{\xi\eta}(A\sim N)}\lesssim \langle t\rangle^{-1};\\
\big\|\xi^2\eta A\widehat K(t,\xi,\eta)\big\|_{L^1_{\xi\eta}(A\sim N)}\lesssim N^2 \langle t\rangle^{-\frac32},
\end{align*}
thus the conclusions \eqref{est:ku1-5} and \eqref{est:ku1-6} follow from interpolation. Further, \eqref{est:ku1-7} easily follows from \eqref{e:u5}.
\end{proof}

Since
$$
A^2-A|\eta|\sim \xi^2,
$$
we have
\begin{align}\label{eqs:28-15}
\big|A^2(A^2-A|\eta|)\widehat K(t,\xi,\eta) \big|\lesssim
\chi_{A\ge 1}e^{-ct}+\chi_{A\le 1}e^{-\frac14 A^2t}+\chi_{|\xi|\lesssim A^2}\frac{\xi^2}{A^2} e^{-\frac{\xi^2}{2A^2}t}.
\end{align}
Thus similar as above, we have
\begin{lemma}\label{lem:Ku3}
Let $1\le q\le \infty$. Then
\begin{align}\label{est:ku3}
\big\|A^2(A^2-A|\eta|)\widehat{K}(t,\xi,\eta)\big\|_{L^q_{\xi\eta}(A\sim N)}\lesssim \langle N\rangle^{\frac2q}\langle t\rangle^{-1-\frac1{2q}}.
\end{align}
Moreover, for any $\beta\in [0,2-\frac1q]$,
\begin{align}
\big\|A^\beta A^2(A^2-A|\eta|)\widehat{K}(t,\xi,\eta)\big\|_{L^q_{\xi\eta}(A\le 1)}\lesssim &\langle t\rangle^{-\frac1q-\frac\beta2}.\label{est:ku3-1}
\end{align}
\end{lemma}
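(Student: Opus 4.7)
The plan is to deduce both bounds directly from the pointwise estimate \eqref{eqs:28-15}, following the template of Lemmas \ref{lem:Ku1} and \ref{lem:Ku1''}. The key point is that the identity $A^2-A|\eta|=A\xi^2/(A+|\eta|)\lesssim \xi^2$, which already underlies \eqref{eqs:28-15}, cancels the singular factor $1/|\eta|$ in $\widehat K$ and converts the anisotropic term into $\chi_{|\xi|\lesssim A^2}\,\xi^2 A^{-2}e^{-\xi^2 t/(2A^2)}$, which is genuinely integrable. The three resulting pieces $\chi_{A\ge 1}e^{-ct}$, $\chi_{A\le 1}e^{-A^2t/4}$, and $\chi_{|\xi|\lesssim A^2}\,\xi^2 A^{-2}e^{-\xi^2 t/(2A^2)}$ can then be raised to the $q$-th power and integrated separately on each of the dyadic regions $A\sim N$ and $A\le 1$ by direct application of Lemma \ref{lem:basic}.

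For \eqref{est:ku3} on $A\sim N$ with $N\gtrsim 1$, the first piece contributes $\lesssim N^2 e^{-cqt}$, the second piece is absent, and for the third piece, \eqref{est:Axit-1} applied with $\beta=\alpha=2q$ yields
\[
\int\!\!\!\!\int_{A\sim N}\!\Big(\frac{\xi^2}{A^2}\Big)^{\!q} e^{-cq\xi^2 t/A^2}\,d\xi d\eta\lesssim N^{2}\langle t\rangle^{-1/2-q}.
\]
Taking the $q$-th root produces $\langle N\rangle^{2/q}\langle t\rangle^{-1-1/(2q)}$, which is exactly \eqref{est:ku3}.

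For \eqref{est:ku3-1} on $A\le 1$ with the weight $A^\beta$, the first piece does not enter, and raising the bound to the $q$-th power, the second piece gives via \eqref{est:At} (with exponent $q\beta>-2$) the bound $\langle t\rangle^{-1-q\beta/2}$; the anisotropic piece is handled by \eqref{est:Axit-2} applied with $\beta'=2q$, $\alpha=q(2-\beta)$, and $\beta_0=1+q\beta$ playing the role of ``$\beta$'' in Lemma \ref{lem:basic}. The positivity check $2\beta'-\beta_0+2-\alpha=2q+1>0$ is automatic, and the output exponent $-(1+\beta_0)/2=-1-q\beta/2$ matches the second piece. Taking the $q$-th root then delivers $\langle t\rangle^{-1/q-\beta/2}$.

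The only constraint needing care is the monotonicity hypothesis $\beta'\ge \beta_0$ in Lemma \ref{lem:basic}, which here becomes $2q\ge 1+q\beta$, i.e., $\beta\le 2-1/q$; this is precisely the endpoint restriction stated in the lemma, and it reflects nothing deeper than the $\xi$-integrability of $\xi^{2q}/A^{(2-\beta)q}$ on the thin strip $|\xi|\lesssim A^2$. Hence no new idea beyond the parameter bookkeeping already carried out in Lemmas \ref{lem:Ku1} and \ref{lem:Ku1''} is required; the main obstacle, to the extent there is one, is merely tracking the $q$-dependent exponents in Lemma \ref{lem:basic} to ensure that the anisotropic integrability condition is satisfied on the entire claimed range of $\beta$.
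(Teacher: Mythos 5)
Your argument is correct and matches the paper's (unwritten) proof in spirit: both start from the pointwise bound \eqref{eqs:28-15}, which rests on the identity $A^2-A|\eta|\sim\xi^2$, and then reduce to the integral estimates of Lemma \ref{lem:basic}. The paper's pattern (cf.\ the proof of Lemma \ref{lem:Ku2}) packages the final step as interpolation between $L^1$ and $L^\infty$ with weight-shifted exponents, whereas you apply Lemma \ref{lem:basic} directly to the $q$-th power integral; your route makes the provenance of the constraint $\beta\le 2-\tfrac1q$ (the $\xi$-integrability hypothesis $\beta'\ge\beta$ in \eqref{est:Axit-2}) more transparent, at the minor cost that the $q=\infty$ endpoint must be read off separately from the pointwise bound (where $A^\beta\chi_{|\xi|\lesssim A^2}\tfrac{\xi^2}{A^2}e^{-\xi^2 t/(2A^2)}\lesssim\langle t\rangle^{-1}\le\langle t\rangle^{-\beta/2}$ for $\beta\le 2$).
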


\subsubsection{Estimates on $\pp_t\widehat{K(t)}$ and $\pp_{tt}\widehat{K(t)}$}
\begin{lemma}\label{lem:Kn2} Let $1\le q\le \infty$ and $N\gtrsim 1$. Then if $\beta\in (\frac12, \frac52]$,
\begin{align}
\big\|\eta\partial_t\widehat{K}(t,\xi,\eta)\big\|_{L^q_{\xi\eta}(A\sim N)}
\lesssim & N^{-2(1-\frac1q)-1}\langle t\rangle^{-1-\frac1{2q}};\label{est:kn2-1}\\
\big\|A\eta\partial_t\widehat{K}(t,\xi,\eta)\big\|_{L^q_{\xi\eta}(A\lesssim 1)}
\lesssim& \langle t\rangle^{-\frac1q};\label{est:kn2-2}\\
\big\|A^{\beta} \eta\pp_t\widehat K(t,\xi,\eta)\big\|_{L^2_{\xi\eta}(A\le 1)}\lesssim &\langle t\rangle^{-\frac\beta2}.\label{est:kn2'-1}
\end{align}
\end{lemma}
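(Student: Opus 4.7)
The plan is to derive all three bounds from the pointwise estimate \eqref{est:Kt} for $\partial_t\widehat K$, which splits into three pieces reflecting the three characteristic regions of the operator: a high-frequency piece $\chi_{A\ge 1}A^{-4}e^{-ct}$, a low-frequency piece $\chi_{A\le 1}\min\{|\eta|/A^3,1/A\}e^{-A^2 t/4}$, and an anisotropic dispersive piece $\xi^2 A^{-6}\chi_{|\xi|\lesssim A^2}e^{-\xi^2 t/(2A^2)}$. After multiplying by the appropriate weight ($\eta$, $A\eta$, or $A^\beta\eta$), I will estimate each contribution separately using the Gaussian-type integrals packaged in Lemma \ref{lem:basic}, in particular \eqref{est:At}, \eqref{est:Axit-1} and \eqref{est:Axit-2}.

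For \eqref{est:kn2-1} I will prove the two endpoints $q=1$ and $q=\infty$ on $\{A\sim N\}$ and interpolate. In this region the high-frequency piece is dominated by $N^{-3}e^{-ct}$, contributing $N^{-1}e^{-ct}$ in $L^1$ and $N^{-3}e^{-ct}$ in $L^\infty$. For the dispersive piece, the bound $|\eta|\xi^2/A^6\lesssim \xi^2/N^5$ combined with \eqref{est:Axit-1} (applied with $\beta=2,\alpha=5$) gives $N^{-1}\langle t\rangle^{-3/2}$ in $L^1$, while the elementary supremum $\sup_\xi \xi^2 e^{-\xi^2 t/(2N^2)}\lesssim N^2\langle t\rangle^{-1}$ yields $N^{-3}\langle t\rangle^{-1}$ in $L^\infty$. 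Interpolation produces exactly $N^{2/q-3}\langle t\rangle^{-1-1/(2q)}=N^{-2(1-1/q)-1}\langle t\rangle^{-1-1/(2q)}$. Estimate \eqref{est:kn2-2} is analogous on $\{A\le 1\}$: the weighted low-frequency piece becomes $\min\{\eta^2/A^2,|\eta|\}e^{-A^2 t/4}$, which I crudely bound by $1$ pointwise and by $|\eta|\le A$ for integration, producing $L^\infty\lesssim 1$ and (using \eqref{est:At} with $\beta=1$) $L^1\lesssim \langle t\rangle^{-3/2}$; the dispersive piece $\xi^2/A^4\chi$ (where I use $|\eta|\sim A$ inside $\chi_{|\xi|\lesssim A^2}$) is bounded in $L^\infty$ by $1$ and in $L^1$ by $\langle t\rangle^{-1}$ via \eqref{est:Axit-2} with $\beta'=2,\alpha=4,\beta_{*}=1$. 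The $\langle t\rangle^{-1}$ from the dispersive piece is the binding rate, and interpolation yields $\langle t\rangle^{-1/q}$.

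For \eqref{est:kn2'-1} I will compute the $L^2$-norm directly by squaring and integrating. The low-frequency contribution, after squaring, is dominated pointwise by $A^{2\beta-2}e^{-A^2 t/2}$, whose integral equals $\langle t\rangle^{-\beta}$ by \eqref{est:At} (this uses only $2\beta-2>-2$, so $\beta>0$). The dispersive contribution, after squaring and using $|\eta|\sim A$ on $\{A\le 1,|\xi|\lesssim A^2\}$, becomes a multiple of $\xi^4 A^{2\beta-10}\chi_{|\xi|\lesssim A^2}e^{-\xi^2 t/A^2}$, which falls under \eqref{est:Axit-2} with $\beta'=4$, $\alpha=10-2\beta$ and decay parameter $2\beta-1$. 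The admissibility conditions $2\beta-1\le 4$ and $2\cdot 4-(2\beta-1)+2-(10-2\beta)=1>0$ reduce to $\beta\le 5/2$. \textbf{This is the main obstacle}: the anisotropic dispersive tail saturates exactly at the upper endpoint $\beta=5/2$, so the restriction on $\beta$ in the hypothesis is sharp from the proof. The lower bound $\beta>1/2$ is comfortably covered by the coarser branch $\min\le 1/A$ of the min. Taking the square root of $\langle t\rangle^{-\beta}$ yields the target $\langle t\rangle^{-\beta/2}$ and completes the lemma.
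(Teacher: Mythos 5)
Your proposal follows the paper's approach essentially verbatim: extract the weighted pointwise bounds from \eqref{est:Kt}, compute the $L^1$ and $L^\infty$ endpoints on $\{A\sim N\}$ (resp.\ $\{A\le 1\}$) via Lemma \ref{lem:basic}, and interpolate; for \eqref{est:kn2'-1} integrate the square directly. The target rates, the interpolation arithmetic, and your observation that the restriction $\beta\le\tfrac52$ is forced by the dispersive tail while $\beta>\tfrac12$ is not sharp (the argument only needs $\beta>0$) all check out, and the latter is consistent with the paper's remark ``Note that it is integrable when $\beta>\tfrac12$.''

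One transcription slip worth flagging, though not consequential here: you quote the low-frequency piece of $\partial_t\widehat K$ as $\chi_{A\le 1}\min\{|\eta|/A^3,\,1/A\}e^{-A^2t/4}$, but \eqref{est:Kt} gives $\chi_{A\le 1}\min\{1/A^3,\,1/(A|\eta|)\}e^{-A^2t/4}$, which is larger by a factor of $1/|\eta|$. Consequently, for \eqref{est:kn2-2} the correctly weighted low-frequency piece is $\min\{|\eta|/A^2,\,1\}\le 1$, not your $\min\{\eta^2/A^2,\,|\eta|\}\le|\eta|$, and its $L^1$ contribution is $\langle t\rangle^{-1}$ (via \eqref{est:At} with $\beta=0$), not $\langle t\rangle^{-3/2}$. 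This does not alter your conclusion, since you correctly identified $\langle t\rangle^{-1}$ from the dispersive piece as the binding $L^1$ rate. Similarly, in \eqref{est:kn2'-1} the bound $A^{2\beta-2}$ you invoke is what one gets from the coarser branch $1/(A|\eta|)$ of the correct min (so $A^\beta|\eta|\cdot 1/(A|\eta|)=A^{\beta-1}$), not from your misread expression; the numerics agree by coincidence, but the narrative about which branch of the $\min$ does the work is based on the wrong formula.
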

\begin{proof}
By \eqref{est:Kt}, we have
\begin{align}\label{est:Kt-1}
\big|\eta\widehat{\partial_tK}(t,\xi,\eta)\big|\lesssim \chi_{A\gtrsim1}\frac1{A^3} e^{-ct}+\chi_{A\lesssim 1}\frac1A e^{-\frac{1}{4}A^2t}+\frac{\xi^2}{A^5}\chi_{|\xi|\lesssim A^2}e^{-\frac{\xi^2}{2A^2}t}.
\end{align}
Then by \eqref{est:Kt-1},
\begin{align}\label{cx2}
\big|\chi_{A\sim N}\eta\widehat{\partial_tK}(t,\xi,\eta)\big|\lesssim \frac1{A^3} e^{-ct}+\frac{\xi^2}{A^5}e^{-\frac{\xi^2}{2A^2}t}.
\end{align}
Thus, by \eqref{cx2} and \eqref{est:Axit-1},
\begin{align*}
\big|\chi_{A\sim N}\eta\widehat{\partial_tK}(t,\xi,\eta)\big|
\lesssim &
\frac1{N^3} e^{-ct}+\frac{1}{N^3t}\frac{\xi^2t}{2A^2}e^{-\frac{\xi^2}{2A^2}t}
\lesssim N^{-3}\langle t\rangle^{-1};\\
\big\|\chi_{A\sim N}\eta\widehat{\partial_tK}(t,\xi,\eta)\big\|_{L^1_{\xi\eta}(A\sim N)}
\lesssim &
\int\!\!\!\!\int_{A\sim N}\!\!\frac1{A^3} e^{-ct}\,d\xi d\eta+\int\!\!\!\!\int_{A\sim N}\frac{\xi^2}{A^5}e^{-\frac{\xi^2}{2A^2}t}\,d\xi d\eta\\
\lesssim & N^{-1}\langle t\rangle^{-\frac32}.
\end{align*}
This proves \eqref{est:kn2-1} by interpolation. When  $A\le 1$,  by \eqref{est:Kt-1} again,
\begin{align*}
\big|\chi_{A\le 1}A\eta\widehat{\partial_tK}(t,\xi,\eta)\big|\lesssim e^{-\frac14A^2t}+\frac{|\xi|}{A^2}\chi_{|\xi|\lesssim A^2}e^{-\frac{\xi^2}{2A^2}t}\lesssim 1.
\end{align*}
That is,
\begin{align*}
\big\|A\eta\widehat{\partial_tK}(t,\xi,\eta)\big\|_{L^\infty_{\xi\eta}(A\le1)}\lesssim 1.
\end{align*}
Further, by \eqref{est:At} and \eqref{est:Axit-2},
\begin{align*}
\big\|A\eta\widehat{\partial_tK}(t,\xi,\eta)\big\|_{L^1_{\xi\eta}(A\le 1)}
\lesssim &
\int\!\!\!\!\int_{A\le 1}e^{-\frac14A^2t}\,d\xi d\eta
+\int\!\!\!\!\int_{A\le 1}\frac{|\xi|}{A^2}e^{-\frac{\xi^2}{2A^2}t}\,d\xi d\eta\\
\lesssim &
 \langle t\rangle^{-1}.
\end{align*}
Then \eqref{est:kn2-2} follows from interpolation. Similarly,
since
\begin{align}\label{est:Kt-1'}
\big|A^{\beta}\eta\widehat{\partial_tK}(t,\xi,\eta)\big|\lesssim \chi_{A\gtrsim1}\frac1{A^{3-\beta}} e^{-ct}+\chi_{A\lesssim 1}A^{\beta-1}e^{-\frac{1}{4}A^2t}+\frac{\xi^2}{A^{5-\beta}}\chi_{|\xi|\lesssim A^2}e^{-\frac{\xi^2}{2A^2}t}.
\end{align}
Note that it is integrable when $\beta>\frac12$, then integration and Lemma \ref{lem:basic} give \eqref{est:kn2'-1}.
\end{proof}
Moreover, from \eqref{est:Kt-1'}, we have for any $\beta\in [0,2]$,
\begin{align}\label{est:kn2'-3}
\big\| A^{\beta}A\eta\widehat{\partial_tK}(t,\xi,\eta)\big\|_{L^\infty_{\xi\eta}}\lesssim \langle t\rangle^{-\frac\beta2}.
\end{align}

We also need the following estimates.
\begin{lemma}\label{lem:Kn10}
Let $\beta\in [0,3]$, then
\begin{align*}
\big\|A^\beta\eta\pp_{tt}\widehat K(t-s,\xi,\eta)\big\|_{L^\infty_{\xi\eta}}
\lesssim & \langle t\rangle^{-\frac\beta2};\\
\big\|A\eta\pp_{tt}\widehat K(t-s,\xi,\eta)\big\|_{L^2_{\xi\eta}}
\lesssim & \langle t\rangle^{-1}.
\end{align*}
\end{lemma}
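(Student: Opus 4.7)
The plan is to reduce both inequalities to the pointwise bound \eqref{est:Ktt} from Proposition \ref{Kpointwise}, namely
\begin{align*}
|\pp_{tt}\widehat K(t,\xi,\eta)|
\lesssim\chi_{A\ge 1}\frac1{A^4}e^{-ct}+\chi_{A\le 1}\min\Big\{\frac1{A^2},\frac1{|\eta|}\Big\}e^{-\frac14A^2t}
+\chi_{|\xi|\lesssim A^2}\frac{\xi^4}{A^8}e^{-\frac{\xi^2}{2A^2}t},
\end{align*}
multiply by $A^\beta|\eta|$, and treat the three regions separately, invoking Lemma \ref{lem:basic} as needed.

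For the $L^\infty$ bound with $\beta\in[0,3]$, I would first note the elementary inequality $|\eta|\min\{A^{-2},|\eta|^{-1}\}\le 1$, so that the low-frequency piece is bounded by $A^\beta e^{-A^2 t/4}$. Writing $A^\beta e^{-A^2 t/4}=(A^2 t)^{\beta/2} t^{-\beta/2}e^{-A^2t/4}$ and absorbing the polynomial factor into the Gaussian gives $\lesssim \langle t\rangle^{-\beta/2}$. The high-frequency piece gives $A^{\beta-3}e^{-ct}\lesssim e^{-ct}\lesssim \langle t\rangle^{-\beta/2}$ using $|\eta|\le A$ and $\beta\le 3$. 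For the $\chi_{|\xi|\lesssim A^2}$ piece I would combine the two bounds $\xi^4\le A^4$ (always) and $\xi^4 e^{-\xi^2 t/(2A^2)}\lesssim A^4 t^{-2}$ (from $x^2 e^{-x}\lesssim 1$) to obtain $A^\beta|\eta|\xi^4/A^8\cdot e^{-\xi^2t/(2A^2)}\lesssim A^{\beta-3}\min\{1,t^{-2}\}\lesssim \langle t\rangle^{-\beta/2}$, using $\beta/2\le 2$. In the subregion where this last argument is weakest (small $A$), I would use the stronger constraint $|\xi|\le A^2$ to get $\xi^4/A^8\le 1$ and hence the integrand $\le A^{\beta+1}\le 1$, which covers the short-time part.

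For the $L^2$ estimate at $\beta=1$, I would square the pointwise bound and integrate. The high-frequency contribution $A^2|\eta|^2/A^8\cdot e^{-2ct}\lesssim A^{-4}e^{-2ct}$ integrates over $\{A\ge 1\}$ to something $\lesssim e^{-2ct}\lesssim \langle t\rangle^{-2}$. The low-frequency contribution is at most $\chi_{A\le 1}A^2 e^{-A^2 t/2}$ (again using $|\eta|\min\{A^{-2},|\eta|^{-1}\}\le 1$), whose integral is $\lesssim \langle t\rangle^{-2}$ by \eqref{est:At} with $\beta=2$. The dispersive piece becomes $\chi_{|\xi|\lesssim A^2}\xi^8/A^{12}\cdot e^{-\xi^2 t/A^2}$ after using $|\eta|\le A$, and this yields $\lesssim\langle t\rangle^{-2}$ via \eqref{est:Axit-2} (pick, e.g., $\beta=3$, $\beta'=8$, $\alpha=12$, which satisfy $2\beta'-\beta+2-\alpha>0$). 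Taking the square root then produces the claimed $\langle t\rangle^{-1}$.

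The main obstacle I expect is the $\chi_{|\xi|\lesssim A^2}$ region, because neither the naive pointwise bound $\xi^4/A^8\le 1$ nor the Gaussian bound $\xi^4 e^{-\xi^2 t/(2A^2)}\lesssim A^4/t^2$ on its own is strong enough across the entire range of $(A,\xi,\eta)$ and $t\ge 0$; I will have to interpolate between these two estimates, and the range $\beta\in[0,3]$ in the $L^\infty$ assertion is what makes $\beta/2\le 2$ just sufficient for the $t^{-2}$-type decay produced by the Gaussian moment to dominate the desired $\langle t\rangle^{-\beta/2}$.
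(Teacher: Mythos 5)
Your proposal follows the same route the paper takes: the one-line proof in the text invokes precisely \eqref{est:Ktt} together with Lemma \ref{lem:basic}, and your argument fills in those details. The $L^2$ computation and the first two pieces of the $L^\infty$ computation are fine (one small omission: \eqref{est:Axit-2} is stated only on $\{A\le 1\}$, so the dispersive contribution to the $L^2$ norm on $\{A\gtrsim 1\}$ should be handled by \eqref{est:Axit-1} with a dyadic sum, which gives $\sum_{N\ge 1}N^{-2}\langle t\rangle^{-9/2}\lesssim\langle t\rangle^{-2}$).

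However, the displayed chain
$A^\beta|\eta|\,\xi^4A^{-8}\,e^{-\xi^2 t/(2A^2)}\lesssim A^{\beta-3}\min\{1,t^{-2}\}\lesssim\langle t\rangle^{-\beta/2}$
in the $L^\infty$ argument is not correct as written: for $A\le 1$ and $\beta<3$ the factor $A^{\beta-3}$ is unbounded, and the fallback you propose ($\xi^4/A^8\le 1$ so the expression is $\le A^{\beta+1}\le 1$) only controls the short-time regime $t\lesssim 1$, not the regime $A\ll 1$, $t\gg 1$. You correctly flag in the closing paragraph that interpolation is the fix, but it needs to actually be carried out. Concretely, on $\{|\xi|\lesssim A^2,\ A\le 1\}$ the two pointwise bounds
$\xi^4A^{-8}e^{-\xi^2 t/(2A^2)}\lesssim 1$ and $\xi^4A^{-8}e^{-\xi^2 t/(2A^2)}\lesssim (A^4 t^2)^{-1}$
imply, for every $\theta\in[0,1]$, $\xi^4A^{-8}e^{-\xi^2 t/(2A^2)}\lesssim (A^4 t^2)^{-\theta}$; taking $\theta=\beta/4$ (admissible since $\beta\le 3<4$) gives $A^{\beta+1}\cdot\xi^4A^{-8}e^{-\xi^2 t/(2A^2)}\lesssim A\,t^{-\beta/2}\le\langle t\rangle^{-\beta/2}$ for $t\ge 1$, and the $O(1)$ bound handles $t\le 1$. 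With that interpolation spelled out the proof is complete; also note that the constraint $\beta\le 3$ is what makes this work on $\{A\ge 1\}$ as well (there the naive bound $\xi^4\le A^4$ gives $A^{\beta-3}\le 1$), so the upper endpoint of the range $\beta\in[0,3]$ is actually forced by the $A\gtrsim 1$ contributions rather than by the Gaussian moment estimate as your final sentence suggests.
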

\begin{proof}
It follows from  \eqref{est:Ktt} and Lemma \ref{lem:basic} directly.
\end{proof}

Now we consider some related estimates about $(\partial_{tt}+A^2\partial_t)\widehat{K}$.
\begin{lemma}\label{lem:Kn4} Let $1\le q\le \infty$, $\beta\in[0,1]$ and $N\gtrsim 1$. Then
\begin{align}
\big\|\eta(\partial_{tt}+A^2\partial_t)\widehat{K}(t,\xi,\eta)\big\|_{L^q_{\xi\eta}(A\sim N)}\lesssim & N^{-1+\frac2q}\langle t\rangle^{-1-\frac1{2q}};\label{est:kn4-1}\\
\big\|\eta(\partial_{tt}+A^2\partial_t)\widehat{K}(t,\xi,\eta)\big\|_{L^q_{\xi\eta}(A\le 1)}\lesssim & \langle t\rangle^{-\frac1q};\label{est:kn4-2}\\
\big\|A^{\beta}\eta(\partial_{tt}+A^2\partial_t)\widehat{K}(t,\xi,\eta)\big\|_{L^2_{\xi\eta}(A\le 1)}\lesssim & \langle t\rangle^{-\frac{\beta+1}{2}};\label{est:kn4-3}\\
\big\|A\eta(\partial_{tt}+A^2\partial_t)\widehat{K}(t,\xi,\eta)\big\|_{L^\infty_{\xi\eta}(A\ge 1)}\lesssim &\langle t\rangle^{-1};\label{est:kn4-5}\\
\big\|A^{1+\beta}\eta(\partial_{tt}+A^2\partial_t)\widehat{K}(t,\xi,\eta)\big\|_{L^\infty_{\xi\eta}(A\le 1)}\lesssim & \langle t\rangle^{-\frac{\beta+1}2}.
\label{est:kn4-4}
\end{align}
\end{lemma}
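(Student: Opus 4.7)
The strategy is to obtain pointwise bounds on $|\eta(\pp_{tt}+A^2\pp_t)\widehat K|$ and then integrate region by region using Lemma \ref{lem:basic}, interpolating between $L^1$ and $L^\infty$ endpoints where the target is an intermediate $L^q$. I would write $\eta(\pp_{tt}+A^2\pp_t)\widehat K=\eta\pp_{tt}\widehat K+A^2\eta\pp_t\widehat K$, apply the pointwise bounds \eqref{est:Kt} and \eqref{est:Ktt} to each summand, use $|\eta|\le A$ in the low-frequency piece, and use $\xi^2/A^4\le 1$ on $\{|\xi|\lesssim A^2\}$ to absorb the $\pp_{tt}$-contribution (of order $\xi^4/A^8$) into the $A^2\pp_t$-contribution (of order $\xi^2/A^4$) in the dispersive part. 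This yields the working pointwise estimate
\begin{align*}
|\eta(\pp_{tt}+A^2\pp_t)\widehat K(t,\xi,\eta)|\lesssim \chi_{A\ge1}\frac{|\eta|}{A^2}e^{-ct}+\chi_{A\le1}\,e^{-A^2t/4}+\frac{|\eta|\xi^2}{A^4}\chi_{|\xi|\lesssim A^2}e^{-\xi^2t/(2A^2)}.
\end{align*}

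\textbf{The four estimates.} For \eqref{est:kn4-1}, restricted to $A\sim N\gtrsim 1$, the $L^\infty$-bound is $\lesssim N^{-1}\langle t\rangle^{-1}$ (using $|\eta|\le A$ and optimizing $\max_\xi\xi^2 e^{-\xi^2t/(2A^2)}\sim A^2/t$), and the $L^1$-bound is $\lesssim N\langle t\rangle^{-3/2}$ via \eqref{est:Axit-1} with $\beta=2,\alpha=3$; interpolation produces the required $L^q$-bound. For \eqref{est:kn4-2} on $A\le 1$, each summand is pointwise $\lesssim 1$ (since $|\eta|\xi^2/A^4\le\xi^2/A^3\le A\le 1$ when $|\xi|\le A^2$), giving $L^\infty\lesssim 1$; the $L^1$-bound $\lesssim\langle t\rangle^{-1}$ follows from \eqref{est:At} for the exponential piece and \eqref{est:Axit-2} (with $\beta'=2,\alpha=3,\beta=1$) for the dispersive piece, and interpolation gives the claim. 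For \eqref{est:kn4-3}, squaring and integrating yields $\|A^\beta\eta(\pp_{tt}+A^2\pp_t)\widehat K\|_{L^2(A\le 1)}^2\lesssim\langle t\rangle^{-1-\beta}$ directly from \eqref{est:At} (applied with $2\beta$) and \eqref{est:Axit-2} (with $\beta'=4,\alpha=6-2\beta$ and parameter $2\beta+1$, whose admissibility $4-\beta_{\text{param}}+2\beta=3>0$ and $\beta'\ge\beta_{\text{param}}$ is automatic for $\beta\in[0,1]$). Finally, \eqref{est:kn4-4} and \eqref{est:kn4-5} are pointwise $L^\infty$-bounds obtained by optimizing the weighted expression over $(\xi,\eta)$ in the respective region: on $A\le 1$ one takes $A\sim t^{-1/2}$ in the exponential term and $|\xi|\sim A/\sqrt{t}$ in the dispersive term, and similarly on $A\ge 1$, after which the bound $|\eta|/A\le 1$ closes the estimate.

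\textbf{Main obstacle.} The principal subtlety is the low-frequency behavior. The bound \eqref{est:K-tt2t2} for $(\pp_{tt}+A^2\pp_t+A^2)\widehat K$ has a $1/|\xi|$ singularity at $\xi=0$ on $A\le 1$ (from the $\min\{1/|\xi|,1/A^2\}$ term), and $A^2\widehat K$ carries a matching singularity, so the algebraic decomposition $\pp_{tt}+A^2\pp_t=(\pp_{tt}+A^2\pp_t+A^2)-A^2$ produces two individually singular pieces whose singularities only cancel in the difference. Working directly with \eqref{est:Kt} and \eqref{est:Ktt} sidesteps this cancellation issue but requires careful merging of the $\min$-type bounds for the parabolic part and the dispersive tails; after that, the proof becomes a routine bookkeeping of the parameters $\alpha,\beta,\beta'$ in Lemma \ref{lem:basic} to achieve the sharp exponents.
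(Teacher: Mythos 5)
Your proof is correct and follows essentially the same route as the paper's: assemble the pointwise bound for $\eta(\partial_{tt}+A^2\partial_t)\widehat K$ directly from \eqref{est:Kt} and \eqref{est:Ktt} (the paper's intermediate bound is your display after applying $|\eta|\le A$), then integrate region by region with Lemma \ref{lem:basic} and interpolate between the $L^1$ and $L^\infty$ endpoints for \eqref{est:kn4-1}--\eqref{est:kn4-2}, and directly integrate or maximize for \eqref{est:kn4-3}--\eqref{est:kn4-4}. One small clarification worth recording: for the $L^\infty$ bound on $A\sim N$ at small times, the needed control of the dispersive piece $\xi^2 A^{-3}e^{-\xi^2 t/(2A^2)}$ does not come from optimizing the exponential but from the constraint $|\xi|\le A$ (automatic since $A^2=\xi^2+\eta^2$), which gives $\xi^2/A^3\le 1/A\sim N^{-1}$; your parenthetical mentions $|\eta|\le A$ but not this, though you clearly use it. The remark about the singular cancellation in $(\partial_{tt}+A^2\partial_t+A^2)-A^2$ is not an obstacle the paper has to navigate, since the paper also works directly with \eqref{est:Kt} and \eqref{est:Ktt}, in which the $1/|\eta|$ and $1/|\xi|$ weights are already benign after multiplying by $\eta$.
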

\begin{proof}
By \eqref{est:Kt} and \eqref{est:Ktt} we have
\begin{align}\label{e:v2}
\big|\eta (\partial_{tt}+A^2\partial_t)\widehat{K}(t,\xi,\eta)\big|
\lesssim
\chi_{A\ge 1}\frac1Ae^{-ct}+\chi_{A\le 1}e^{-\frac14 A^2t}+\chi_{|\xi|\lesssim A^2}\frac{\xi^2}{A^3} e^{-\frac{\xi^2}{2A^2}t}.
\end{align}
Thus, for $N\gtrsim 1$, by \eqref{est:Axit-1},
\begin{align*}
\big\|\eta (\partial_{tt}+A^2\partial_t)\widehat{K}(t,\xi,\eta)\big\|_{L^\infty_{\xi\eta}(A\sim N)}\lesssim N^{-1}\langle t\rangle^{-1};\\
\big\|\eta (\partial_{tt}+A^2\partial_t)\widehat{K}(t,\xi,\eta)\big\|_{L^1_{\xi\eta}(A\sim N)}\lesssim  N \langle t\rangle^{-\frac32},
\end{align*}
and by \eqref{est:At} and \eqref{est:Axit-2},
\begin{align*}
\big\|\eta (\partial_{tt}+A^2\partial_t)\widehat{K}(t,\xi,\eta)\big\|_{L^\infty_{\xi\eta}(A\le 1)}\lesssim  1;\\
\big\|\eta (\partial_{tt}+A^2\partial_t)\widehat{K}(t,\xi,\eta)\big\|_{L^1_{\xi\eta}(A\le 1)}\lesssim  \langle t\rangle^{-1}.
\end{align*}
Then the conclusions \eqref{est:kn4-1} and \eqref{est:kn4-2} follow from interpolation. Moreover,
\begin{align*}
\big|A^\beta\eta (\partial_{tt}+A^2\partial_t)\widehat{K}(t,\xi,\eta)\big|
\lesssim
A^{\beta-1}\chi_{A\ge 1}e^{-ct}+\chi_{A\le 1}A^\beta e^{-\frac14 A^2t}+\chi_{|\xi|\lesssim A^2}\frac{\xi^2}{A^{3-\beta}} e^{-\frac{\xi^2}{2A^2}t}.
\end{align*}
Then \eqref{est:kn4-3}, \eqref{est:kn4-5} and \eqref{est:kn4-4} follow from integration directly.
\end{proof}

Furthermore, since
\begin{align}\label{e:u2}
\big|A^2 (\partial_{tt}+A^2\partial_t)\widehat{K}(t,\xi,\eta)\big|
\lesssim
\chi_{A\ge 1}e^{-ct}+\chi_{A\le 1}e^{-\frac14 A^2t}+\chi_{|\xi|\lesssim A^2}\frac{\xi^2}{A^2} e^{-\frac{\xi^2}{2A^2}t}.
\end{align}
Thus one may find that it has the same bound with $A^2(A^2-A|\eta|)\widehat{K}(t,\xi,\eta)$. Thus the same as Lemma \ref{lem:Ku3}, we have
\begin{lemma}\label{lem:Ku2} Let $1\le q\le \infty$ and $N\gtrsim 1$. Then
\begin{align}\label{est:ku2}
\big\|A^2 (\partial_{tt}+A^2\partial_t)\widehat{K}(t,\xi,\eta)\big\|_{L^q_{\xi\eta}(A\sim N)}\lesssim \langle N\rangle^{\frac2q}\langle t\rangle^{-1-\frac1{2q}}.
\end{align}
Moreover, for $\beta\in [0,2-\frac1{q}]$,
\begin{align}
\big\|A^{\beta}A^2 (\partial_{tt}+A^2\partial_t)\widehat{K}(t,\xi,\eta)\big\|_{L^q_{\xi\eta}(A\le 1)}\lesssim &\langle t\rangle^{-\frac1q-\frac\beta2}.\label{est:ku2-1}
\end{align}
\end{lemma}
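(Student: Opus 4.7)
The plan is to leverage the pointwise estimate (\ref{e:u2}), which gives exactly the same envelope as appears in the bound (\ref{eqs:28-15}) for $A^2(A^2-A|\eta|)\widehat K$. Consequently, the proof of Lemma \ref{lem:Ku2} proceeds along the same lines as that of Lemma \ref{lem:Ku3}: extract $L^\infty$ and $L^1$ endpoint estimates on each of the relevant regions, then interpolate to cover the remaining $q\in(1,\infty)$.

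For the first claim with $A\sim N\gtrsim 1$, I would split the right-hand side of (\ref{e:u2}) into the high-frequency decaying piece $\chi_{A\ge 1}e^{-ct}$ and the anisotropic piece $\chi_{|\xi|\lesssim A^2}\frac{\xi^2}{A^2}e^{-\xi^2 t/(2A^2)}$ (the middle term does not contribute on $A\sim N$). The elementary inequality $se^{-s}\lesssim 1$ applied with $s=\xi^2 t/(2A^2)$ bounds the anisotropic piece pointwise by $\langle t\rangle^{-1}$, giving the $L^\infty_{\xi\eta}(A\sim N)$ estimate $\langle t\rangle^{-1}$. For the $L^1_{\xi\eta}(A\sim N)$ norm, the high-frequency piece contributes at most $N^2 e^{-ct}$ by sheer area, while (\ref{est:Axit-1}) with $\alpha=2$, $\beta=2$ yields $N^2\langle t\rangle^{-3/2}$ for the anisotropic piece. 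Interpolating via $\|f\|_{L^q}\le \|f\|_{L^\infty}^{1-1/q}\|f\|_{L^1}^{1/q}$ produces $\langle t\rangle^{-(1-1/q)}\cdot N^{2/q}\langle t\rangle^{-3/(2q)}=N^{2/q}\langle t\rangle^{-1-1/(2q)}$, matching (\ref{est:ku2}).

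For the second claim on $A\le 1$, I multiply (\ref{e:u2}) by $A^{\beta}$ to obtain
\begin{equation*}
\big|A^{\beta}\cdot A^2(\partial_{tt}+A^2\partial_t)\widehat K\big|
\lesssim \chi_{A\le 1}A^{\beta}e^{-A^2 t/4}+\chi_{|\xi|\lesssim A^2}\frac{\xi^2}{A^{2-\beta}}e^{-\xi^2 t/(2A^2)}.
\end{equation*}
Raising to the $q$-th power and integrating, the first term yields $\langle t\rangle^{-1-q\beta/2}$ via (\ref{est:At}) (with exponent $q\beta\ge 0>-2$). For the second term I apply (\ref{est:Axit-2}) with parameters $\beta'=2q$, $\alpha=(2-\beta)q$, and auxiliary index $\tilde\beta=1+q\beta$; taking the $q$-th root yields $\langle t\rangle^{-1/q-\beta/2}$, which is (\ref{est:ku2-1}).

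The main point to watch is the exponent bookkeeping in the anisotropic estimate: verifying that the admissible range $\beta\in[0,2-1/q]$ is exactly what ensures the hypothesis $\beta'\ge\tilde\beta$ of (\ref{est:Axit-2}) (which becomes $2q\ge 1+q\beta$, i.e.\ $\beta\le 2-1/q$), while the other two conditions $\tilde\beta>-1$ and $2\beta'-\tilde\beta+2-\alpha=2q+1>0$ are automatic. This tight matching between the hypothesis and the integrability threshold for the anisotropic kernel is the only subtle piece; once it is in place, the remainder of the argument is mechanical given the pointwise envelope (\ref{e:u2}) and Lemma \ref{lem:basic}.
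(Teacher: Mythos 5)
Your proposal is correct and, for the $A\sim N$ bound \eqref{est:ku2}, follows the same $L^\infty$--$L^1$ interpolation route as the paper: the $L^\infty$ endpoint is $\langle t\rangle^{-1}$ (your $se^{-s}\lesssim 1$ trick for the anisotropic kernel, together with $\xi^2/A^2\le 1$), the $L^1$ endpoint is $N^2\langle t\rangle^{-3/2}$ by \eqref{est:Axit-1}, and interpolation gives $N^{2/q}\langle t\rangle^{-1-1/(2q)}$.

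For \eqref{est:ku2-1} your route differs slightly from the paper's. The paper obtains $L^\infty$ and $L^1$ endpoint estimates with the weight $A^\beta$ attached and invokes interpolation, but those two endpoints are valid for $\beta\in[0,2]$ and $\beta\in[0,1]$ respectively; reading off the range $\beta\in[0,2-1/q]$ from them requires interpolating at \emph{different} weight exponents $\beta_\infty\le 2$, $\beta_1\le 1$ with $\beta=(1-\tfrac1q)\beta_\infty+\tfrac1q\beta_1$, a point the paper leaves implicit. You instead raise the pointwise envelope to the $q$-th power and integrate directly, invoking \eqref{est:At} for the isotropic Gaussian piece and \eqref{est:Axit-2} with $\beta'=2q$, $\alpha=(2-\beta)q$, and auxiliary exponent $1+q\beta$ for the anisotropic piece. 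You correctly identify that the hypothesis $\beta'\ge\tilde\beta$ is exactly $\beta\le 2-1/q$ while the other two admissibility conditions of \eqref{est:Axit-2} are automatic, and the exponent arithmetic $(1+(1+q\beta))/(2q)=1/q+\beta/2$ checks out. This is a cleaner way to capture the full range since the threshold $\beta\le 2-1/q$ emerges as a single transparent condition rather than from a two-parameter interpolation; the trade-off is a small amount of bookkeeping in the powers. Both arguments are valid.
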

\begin{proof} By \eqref{e:u2}, \eqref{est:At} and \eqref{est:Axit-2}, we have
\begin{align*}
\big\|A^{\beta}A^2 (\partial_{tt}+A^2\partial_t)\widehat{K}(t,\xi,\eta)\big\|_{L^\infty_{\xi\eta}(A\le 1)}\lesssim & \langle t\rangle^{-\frac\beta2},\quad \mbox{for any } \beta \in [0,2];\\
\big\|A^{\beta}A^2 (\partial_{tt}+A^2\partial_t)\widehat{K}(t,\xi,\eta)\big\|_{L^1_{\xi\eta}(A\le 1)}\lesssim & \langle t\rangle^{-1-\frac\beta2}, \quad \mbox{for any } \beta \in [0,1],
\end{align*}
then \eqref{est:ku2-1} immediately follows  by interpolation.
For $N\gtrsim 1$, by \eqref{est:Axit-1},
\begin{align*}
\big\|A^2 (\partial_{tt}+A^2\partial_t)\widehat{K}(t,\xi,\eta)\big\|_{L^\infty_{\xi\eta}(A\sim N)}\lesssim & \langle t\rangle^{-1};\\
\big\|A^2 (\partial_{tt}+A^2\partial_t)\widehat{K}(t,\xi,\eta)\big\|_{L^1_{\xi\eta}(A\sim N)}\lesssim & N^2 \langle t\rangle^{-\frac32}.
\end{align*}
Then the conclusion \eqref{est:ku2} follows from interpolation again.
\end{proof}

\subsubsection{Estimates on $\big(\pp_{tt}+A^2\pp_t+A^2\big)\widehat K(t,\xi,\eta)$}
To this end, we also need some special basic estimates.
\begin{lemma}\label{lem:basic-2}
Let $c$ be a positive constant, $\beta\ge 0$ and let $N\ge 0$ be the dyadic number ($N=2^j$ for some $j\in\Z$), then
\begin{align}\label{est:At-2}
\big\|A^\beta e^{-cA^2t}\big\|_{L^\infty_\xi L^1_{\eta}(A\le 1)}+\big\|A^\beta e^{-cA^2t}\big\|_{L^1_\xi L^\infty_{\eta}(A\le 1)}
&\lesssim \langle t\rangle^{-\frac{1+\beta}{2}},
\end{align}
and
\begin{align}
\Big\|\frac{\xi^\beta}{A^{\alpha}}e^{-c\frac{\xi^2}{A^2}t}\Big\|_{L^1_\xi L^\infty_{\eta}(A\sim N)}
&\lesssim N^{\beta+1-\alpha}\langle t\rangle^{-\frac{1+\beta}{2}};\label{est:Axit-3}\\
\Big\|\frac{\xi^\beta}{A^{\alpha}}e^{-c\frac{\xi^2}{A^2}t}\Big\|_{L^\infty_\xi L^1_{\eta}(A\sim N)}
&\lesssim N^{\beta+1-\alpha}\langle t\rangle^{-\frac{\beta}{2}}.\label{est:Axit-4}
\end{align}
Moreover, for any $\alpha\in\R,\beta'\in\R,   \beta\ge 0$ with $\beta'\ge \beta, 2\beta'-\beta+1-\alpha>0$,
\begin{align}
\Big\|\chi_{|\xi|\lesssim A^2}\frac{\xi^{\beta'}}{A^{\alpha}}e^{-c\frac{\xi^2}{A^2}t}\Big\|_{L^1_\xi L^\infty_{\eta}(A\le 1)}
\lesssim \langle t\rangle^{-\frac{1+\beta}{2}};\quad
\Big\|\chi_{|\xi|\lesssim A^2}\frac{\xi^{\beta'}}{A^{\alpha}}e^{-c\frac{\xi^2}{A^2}t}\Big\|_{L^\infty_\xi L^1_{\eta}(A\le 1)}
\lesssim \langle t\rangle^{-\frac{\beta}{2}}.\label{est:Axit-5}
\end{align}
\end{lemma}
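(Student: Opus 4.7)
The plan is to follow the template of Lemma \ref{lem:basic} but track how the Gaussian decay interacts with each choice of mixed norm. The key conceptual point is that \eqref{est:At-2} involves the isotropic Gaussian $e^{-cA^2 t}$ decaying in both $\xi$ and $\eta$, so taking $L^1$ in either variable gains a factor $t^{-1/2}$ and both mixed norms come out equal; by contrast, in \eqref{est:Axit-3}--\eqref{est:Axit-4} the exponent $-c\xi^2 t/A^2$ only produces genuine Gaussian decay in $\xi$, which is precisely why the $L^1_\xi L^\infty_\eta$ bound is sharper by a factor $t^{-1/2}$ than the $L^\infty_\xi L^1_\eta$ bound.

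For \eqref{est:At-2}, I would dispatch $t\le 1$ trivially using the compact support $A\le 1$, and for $t\ge 1$ rescale via $(\xi',\eta')=\sqrt{t}\,(\xi,\eta)$. This converts the integrand to $t^{-\beta/2}(A')^\beta e^{-c(A')^2}$. For the $L^\infty_\xi L^1_\eta$ piece, the $L^\infty$ in $\xi$ is scale-invariant while $L^1$ in $\eta$ brings $t^{-1/2}$ through $d\eta = t^{-1/2}\,d\eta'$, and $\sup_{\xi'}\int (A')^\beta e^{-c(A')^2}\,d\eta'$ is finite for $\beta\ge 0$; the total decay is therefore $\langle t\rangle^{-(1+\beta)/2}$. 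The $L^1_\xi L^\infty_\eta$ half is identical by swapping the roles of $\xi$ and $\eta$.

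For \eqref{est:Axit-3}, on the shell $A\sim N$ one has $A^{-\alpha}\sim N^{-\alpha}$ and $e^{-c\xi^2 t/A^2}\le e^{-\tilde c\xi^2 t/N^2}$ uniformly; after taking $L^\infty$ in $\eta$ the remaining one-dimensional integral $\int_{|\xi|\lesssim N}|\xi|^\beta e^{-\tilde c\xi^2 t/N^2}\,d\xi$ is handled by the substitution $u=\xi\sqrt{t}/N$, yielding $N^{\beta+1}\langle t\rangle^{-(1+\beta)/2}$ and hence the claimed bound. For \eqref{est:Axit-4} the order of integration is reversed: for each fixed $\xi$ the slice $\{\eta:A\sim N\}$ has measure $\lesssim N$, so $\int_{A\sim N}e^{-c\xi^2 t/A^2}\,d\eta\lesssim N\,e^{-\tilde c\xi^2 t/N^2}$. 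The remaining supremum $\sup_{|\xi|\lesssim N}|\xi|^\beta e^{-\tilde c\xi^2 t/N^2}\lesssim N^\beta\langle t\rangle^{-\beta/2}$ (by standard one-variable optimization, with the bound $N^\beta$ taking over when $t\lesssim 1$) combines with the $N^{1-\alpha}$ prefactor to give the stated power $N^{\beta+1-\alpha}\langle t\rangle^{-\beta/2}$.

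For \eqref{est:Axit-5}, the extra constraint $|\xi|\lesssim A^2$ on each shell $A\sim N\le 1$ reads $|\xi|^{\beta'-\beta}\lesssim N^{2(\beta'-\beta)}$, so $\chi_{|\xi|\lesssim A^2}|\xi|^{\beta'}/A^\alpha\lesssim |\xi|^\beta/A^{\alpha-2(\beta'-\beta)}$. Applying \eqref{est:Axit-3} and \eqref{est:Axit-4} with $\alpha$ replaced by $\alpha-2(\beta'-\beta)$ gives dyadic contributions of the form $N^{2\beta'-\beta+1-\alpha}\langle t\rangle^{-(1+\beta)/2}$ and $N^{2\beta'-\beta+1-\alpha}\langle t\rangle^{-\beta/2}$ respectively. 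The geometric series in $N=2^j\le 1$ converges precisely under the hypothesis $2\beta'-\beta+1-\alpha>0$, and the triangle inequality (valid in both $L^1$ and $L^\infty$) sums the dyadic pieces. The main nuisance to watch is the bookkeeping of the various positive constants absorbed into the exponential decay rate across the shells, and ensuring the dyadic overlap on the boundary $A\sim N$ does not obstruct the summation; both are routine.
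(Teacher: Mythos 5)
Your proof is correct and follows essentially the same route as the paper. The only genuine variation is in \eqref{est:At-2}: you use an isotropic rescaling $(\xi',\eta')=\sqrt t\,(\xi,\eta)$ together with the finiteness of $\sup_{\xi'}\int(A')^\beta e^{-c(A')^2}\,d\eta'$, whereas the paper first splits the pointwise factor as $A^\beta e^{-cA^2t}\lesssim\langle t\rangle^{-\beta/2}e^{-\frac{c}{2}\xi^2t}$ and then integrates the one-dimensional Gaussian in $\xi$. Both are two-line arguments of the same difficulty. For \eqref{est:Axit-3}--\eqref{est:Axit-4} you reproduce the paper's shell estimate and one-variable optimization verbatim, and for \eqref{est:Axit-5} your reduction $\chi_{|\xi|\lesssim A^2}\,\xi^{\beta'}/A^\alpha\lesssim\xi^\beta/A^{\alpha-2(\beta'-\beta)}$ followed by a dyadic sum over $N\le 1$, convergent under $2\beta'-\beta+1-\alpha>0$, is exactly what the paper intends by ``using \eqref{est:Axit-3}, \eqref{est:Axit-4} and the dyadic decomposition.'' One small point worth making explicit in a polished writeup: in \eqref{est:Axit-3} the $t\lesssim 1$ regime should be handled separately (there $\int_{|\xi|\lesssim N}|\xi|^\beta\,d\xi\lesssim N^{\beta+1}$ already matches $\langle t\rangle^{-(1+\beta)/2}\sim 1$), since the rescaling by $\sqrt t$ degenerates at $t=0$; you did address this in \eqref{est:Axit-4} but not there.
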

\begin{proof}
First, we prove \eqref{est:At-2}. Since
$$
A^\beta e^{-cA^2t}\lesssim \langle t\rangle^{-\frac\beta2}e^{-\frac c2A^2t}\le \langle t\rangle^{-\frac\beta2}e^{-\frac c2\xi^2t},
$$
we have
$$
\big\|A^\beta e^{-cA^2t}\big\|_{L^1_\xi L^\infty_{\eta}(A\le 1)}
\lesssim \langle t\rangle^{-\frac\beta2} \big\|e^{-\frac c2\xi^2t}\big\|_{L^1_\xi}\lesssim \langle t\rangle^{-\frac{1+\beta}2}.
$$
By the symmetry, we also have $\big\|A^\beta e^{-cA^2t}\big\|_{L^\infty_\xi L^1_{\eta}(A\le 1)}\lesssim \langle t\rangle^{-\frac{1+\beta}2}$.
This gives \eqref{est:At-2}.

Now we turn to prove \eqref{est:Axit-3} and \eqref{est:Axit-4}. For $A\sim N$, there exists a constant $\tilde c>0$, such that
$$
\frac{\xi^\beta}{A^{\alpha}}e^{-c\frac{\xi^2}{A^2}t}\sim \frac{\xi^\beta}{N^{\alpha}}e^{-\tilde{c}\frac{\xi^2}{N^2}t}
\lesssim N^{\beta-\alpha}\langle t\rangle^{-\frac\beta2}.
$$
Therefore,
\begin{align*}
\Big\|\frac{\xi^\beta}{A^{\alpha}}e^{-c\frac{\xi^2}{A^2}t}\Big\|_{L^\infty_\xi L^1_{\eta}(A\sim N)}
&\lesssim N^{\beta+1-\alpha}\langle t\rangle^{-\frac{\beta}{2}}.
\end{align*}
Moreover,
\begin{align*}
\Big\|\frac{\xi^\beta}{A^{\alpha}}e^{-c\frac{\xi^2}{A^2}t}\Big\|_{L^1_\xi L^\infty_{\eta}(A\sim N)}
\lesssim \Big\|\frac{\xi^\beta}{N^{\alpha}}e^{-\tilde{c}\frac{\xi^2}{N^2}t}\Big\|_{L^1_\xi(|\xi|\lesssim N)}
\lesssim N^{\beta+1-\alpha}\langle t\rangle^{-\frac{1+\beta}{2}}.
\end{align*}
Furthermore, using \eqref{est:Axit-3}, \eqref{est:Axit-4} and the dyadic decomposition, we also have \eqref{est:Axit-5}.
\end{proof}

\begin{lemma}\label{lem:Kn6} Let $1\le q,r\le \infty$ and $N\gtrsim 1$. Then
\begin{align*}
\big\|A^2(\partial_{tt}+A^2\partial_t+A^2)\widehat K(t,\xi,\eta)\big\|_{L^q_{\xi}L^r_{\eta}(A\le 1)}
\lesssim & \langle t\rangle^{-\frac1{2q}};\\
\big\|A^2 (\partial_{tt}+A^2\partial_t+A^2)\widehat K(t,\xi,\eta)\big\|_{L^q_{\xi}L^r_{\eta}(A\sim N)}
\lesssim & N^{\frac1q+\frac1r}\langle t\rangle^{-\frac1{2q}}.
\end{align*}
\end{lemma}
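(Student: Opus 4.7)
The plan is to reduce to the pointwise bound \eqref{est:K-tt2t2} from Proposition \ref{Kpointwise}; multiplying by $A^2$ and using the trivial $\min\{A^2/|\xi|,1\}\le 1$ gives
\begin{equation*}
\big|A^2(\pp_{tt}+A^2\pp_t+A^2)\widehat K(t,\xi,\eta)\big|
\lesssim \chi_{A\ge 1}e^{-ct}+\chi_{A\le 1}e^{-A^2t/4}
+\chi_{|\xi|\lesssim A^2}e^{-\xi^2t/(2A^2)}.
\end{equation*}
I would then estimate the three pieces separately in each of the two regions using Fubini followed by one-dimensional Gaussian integrals, in the spirit of Lemma \ref{lem:basic} and Lemma \ref{lem:basic-2}.

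In the region $A\le 1$ the first piece is absent. For the contribution of $\chi_{A\le 1}e^{-A^2t/4}$ I would factor $e^{-A^2t/4}=e^{-\xi^2t/4}e^{-\eta^2t/4}$ and integrate: the $L^r_\eta$ norm on $|\eta|\le 1$ contributes $\langle t\rangle^{-1/(2r)}e^{-\xi^2t/4}$, and then the $L^q_\xi$ norm on $|\xi|\le 1$ yields $\langle t\rangle^{-1/(2q)-1/(2r)}$, which is better than claimed. The key observation for the singular piece $\chi_{|\xi|\lesssim A^2, A\le 1}e^{-\xi^2t/(2A^2)}$ is that $A\le 1$ forces $1/A^2\ge 1$, so $e^{-\xi^2t/(2A^2)}\le e^{-\xi^2t/2}$; since the $\eta$-support lies in $[-1,1]$, the $L^r_\eta$ norm is $O(1)$, and what remains is $\|e^{-\xi^2t/2}\chi_{|\xi|\le 1}\|_{L^q_\xi}\lesssim \langle t\rangle^{-1/(2q)}$ via the standard rescaling $\tau=\xi\sqrt{t}$.

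In the region $A\sim N\gtrsim 1$ the middle term vanishes. The piece $\chi_{A\sim N}e^{-ct}$ is controlled by measure: $\|\chi_{A\sim N}\|_{L^q_\xi L^r_\eta}\lesssim N^{1/q+1/r}$, and the exponential decay in $t$ is absorbed into $\langle t\rangle^{-1/(2q)}$. For $\chi_{|\xi|\lesssim A^2, A\sim N}e^{-\xi^2t/(2A^2)}$, on the support $\xi^2/A^2\sim \xi^2/N^2$, so for fixed $\xi$ the $L^r_\eta$ norm over the annular slab $A\sim N$ contributes a factor $N^{1/r}$. The remaining $\|e^{-c\xi^2t/N^2}\|_{L^q_\xi(|\xi|\lesssim N)}$ equals $N^{1/q}\langle t\rangle^{-1/(2q)}$ after the change of variable $\tau=\xi\sqrt{t}/N$, producing the required $N^{1/q+1/r}\langle t\rangle^{-1/(2q)}$.

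No step presents a real obstacle. The only mildly subtle ingredient is the elementary observation $e^{-\xi^2t/(2A^2)}\le e^{-\xi^2t/2}$ on $\{A\le 1\}$, which converts the anisotropic exponential $e^{-\xi^2t/(2A^2)}$ into an isotropic one-dimensional Gaussian in $\xi$ and produces $\langle t\rangle^{-1/(2q)}$ without paying any cost in $\eta$; this is precisely why the bound on $A\le 1$ is independent of $r$.
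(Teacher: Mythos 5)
Your proof is correct. You start from the same pointwise bound \eqref{est:K-tt2t2}, multiplied through by $A^2$, but then estimate each of the three pieces directly for every $q$ and $r$ via Gaussian rescaling, whereas the paper first establishes the $\xi$-endpoint bounds ($q=1$ and $q=\infty$, with the $r$-dependence handled through Lemma \ref{lem:basic-2}) and then interpolates in $\xi$. Your observation that $e^{-\xi^2 t/(2A^2)} \le e^{-\xi^2 t/2}$ on $\{A\le 1\}$ is the crux: it converts the anisotropic exponential into an $\eta$-independent one-dimensional Gaussian in $\xi$, which both yields the claimed $\langle t\rangle^{-1/(2q)}$ rate and makes the $r$-independence of the $A\le 1$ bound manifest, without the dyadic summation implicit in the paper's appeal to \eqref{est:Axit-5}. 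The two routes are of comparable length and difficulty; yours is somewhat more self-contained (no interpolation step, no black-boxed lemma), and arguably handles the mixed $L^q_\xi L^r_\eta$ norm for general $r$ more transparently than the paper, which invokes \eqref{est:At-2} and \eqref{est:Axit-5} — statements technically confined to $(r=1,q=\infty)$ and $(r=\infty,q=1)$ — and leaves the passage to general $r$ tacit. The paper's choice to modularize the Gaussian estimates into Lemmas \ref{lem:basic} and \ref{lem:basic-2}, on the other hand, pays off elsewhere in Section \ref{KernelProperty}, where the same elementary integrals recur many times.
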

\begin{proof}
By \eqref{est:K-tt2t2}, we have
\begin{align*}
|A^2(\partial_{tt}+A^2\partial_t+A^2)\widehat K(t,\xi,\eta)|\lesssim
\chi_{A\ge 1} e^{-ct}+\chi_{A\le 1}e^{-\frac14 A^2t}+\chi_{|\xi|\lesssim A^2 }e^{-\frac{\xi^2}{2A^2}t}.
\end{align*}
Then by
\eqref{est:At-2} and \eqref{est:Axit-5}, we have
\begin{align*}
\big\|A^2 (\partial_{tt}+A^2\partial_t+A^2)\widehat K(t,\xi,\eta)\big\|_{L^\infty_{\xi}L^r_{\eta}(A\le 1)}\lesssim 1;\\
\big\|A^2 (\partial_{tt}+A^2\partial_t+A^2)\widehat K(t,\xi,\eta)\big\|_{L^1_{\xi}L^r_{\eta}(A\le 1)}\lesssim  t^{-\frac12},
\end{align*}
and by \eqref{est:Axit-3} and \eqref{est:Axit-4},
\begin{align*}
\big\|A^2 (\partial_{tt}+A^2\partial_t+A^2)\widehat K(t,\xi,\eta)\big\|_{L^\infty_{\xi}L^r_{\eta}(A\sim N)}\lesssim N^{\frac1r};\\
\big\|A^2 (\partial_{tt}+A^2\partial_t+A^2)\widehat K(t,\xi,\eta)\big\|_{L^1_{\xi}L^r_{\eta}(A\sim N)}\lesssim  N^{1+\frac1r}t^{-\frac12}.
\end{align*}
Then the conclusion follows from interpolation.
\end{proof}

\begin{lemma}\label{lem:Kn3} Let $1\le q\le 2$, and $N\gtrsim 1$. Then
\begin{align}
\big\|\xi\big(\pp_{tt}+A^2\pp_t+A^2\big)\widehat K(t,\xi,\eta)\big\|_{L^q_{\xi\eta}(A\sim N)}\lesssim& N^{\frac2q-1}\langle t\rangle^{-\frac12-\frac1{2q}};\label{est:kn3-1}\\
\big\|\xi\big(\pp_{tt}+A^2\pp_t+A^2\big)\widehat K(t,\xi,\eta)\big\|_{L^q_{\xi\eta}(A\le 1)}\lesssim& \langle t\rangle^{-\frac1{q}}.\label{est:kn3-2}
\end{align}
Moreover,
\begin{align}
&\big\|A\xi\big(\pp_{tt}+A^2\pp_t+A^2\big)\widehat K(t,\xi,\eta)\big\|_{L^2_{\xi}L^\infty_\eta(A\sim N)}\label{est:kn3-3}\\
%&\quad+\big\|\xi\big(\pp_{tt}+A^2\pp_t+A^2\big)\widehat K(t,\xi,\eta)\big\|_{L^2_{\xi}L^1_\eta(A\sim N)}
%\lesssim N^{\frac12}\langle t\rangle^{-\frac3{4}};\\
&\big\|A\xi\big(\pp_{tt}+A^2\pp_t+A^2\big)\widehat K(t,\xi,\eta)\big\|_{L^2_{\xi}L^\infty_\eta(A\le 1)}.\label{est:kn3-4}
%&\quad
%+\big\|\xi\big(\pp_{tt}+A^2\pp_t+A^2\big)\widehat K(t,\xi,\eta)\big\|_{L^2_{\xi}L^1_\eta(A\le 1)}
%\lesssim \langle t\rangle^{-\frac3{4}}.
\end{align}
Further, for any $\beta\ge \frac12$,
\begin{align}
%\big\|A^\beta\xi\big(\pp_{tt}+A^2\pp_t+A^2\big)\widehat K(t,\xi,\eta)\big\|_{L^q_{\xi\eta}(A\sim N)}
%\lesssim& N^{\beta+\frac2q-1}\langle t\rangle^{-\frac12-\frac1{2q}};\label{est:Kn9-1}\\
\big\|A^\beta\xi\big(\pp_{tt}+A^2\pp_t+A^2\big)\widehat K(t,\xi,\eta)\big\|_{L^q_{\xi\eta}(A\le 1)}
\lesssim& \langle t\rangle^{-\frac12-\frac1{2q}};\label{est:kn3-7}\\
\big\|A^\beta\xi\big(\pp_{tt}+A^2\pp_t+A^2\big)\widehat K(t,\xi,\eta)\big\|_{L^1_{\xi}L^2_\eta(A\sim N)}
\lesssim & N^{\frac12+\beta}\langle t\rangle^{-1};\label{est:kn3-5}\\
\big\|A^\beta\xi\big(\pp_{tt}+A^2\pp_t+A^2\big)\widehat K(t,\xi,\eta)\big\|_{L^1_{\xi}L^2_\eta(A\le 1)}
\lesssim & \langle t\rangle^{-1}.\label{est:kn3-6}
\end{align}
\end{lemma}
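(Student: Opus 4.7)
The plan is to follow the same template used in Lemmas \ref{lem:Kn1}--\ref{lem:Kn6}: extract pointwise Fourier bounds from Proposition \ref{Kpointwise}, integrate them using the elementary Lemmas \ref{lem:basic} and \ref{lem:basic-2}, and interpolate between endpoint exponents. Multiplying \eqref{est:K-tt2t2} by $|\xi|$ gives
$$
\big|\xi(\pp_{tt}+A^2\pp_t+A^2)\widehat K(t,\xi,\eta)\big|
\lesssim \chi_{A\ge 1}\frac{|\xi|}{A^2}e^{-ct}
+\chi_{A\le 1}\min\Big\{1,\tfrac{|\xi|}{A^2}\Big\}e^{-A^2t/4}
+\chi_{|\xi|\lesssim A^2}\frac{|\xi|}{A^2}e^{-\xi^2 t/(2A^2)}.
$$
The crucial feature is that the extra factor $|\xi|$ kills the singular $1/|\xi|$ appearing in \eqref{est:K-tt2t2} and converts the symbol on $A\le 1$ to a bounded quantity, while in the cone $|\xi|\lesssim A^2$ the combination $\frac{|\xi|}{A^2}e^{-\xi^2t/(2A^2)}$ is controlled by $(A\sqrt{t})^{-1}$ via the universal inequality $xe^{-x^2/2}\lesssim 1$.

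For the isotropic estimates \eqref{est:kn3-1} and \eqref{est:kn3-2}, I bound the $L^\infty_{\xi\eta}$-norm directly from the displayed pointwise estimate: on $A\sim N$ it gives $N^{-1}\langle t\rangle^{-1/2}$, and on $A\le 1$ it gives $1$. Then I bound the $L^1_{\xi\eta}$-norm by integrating the pointwise bound and invoking \eqref{est:At}, \eqref{est:Axit-1}, and \eqref{est:Axit-2}; this yields $N\langle t\rangle^{-1}$ on $A\sim N$ and $\langle t\rangle^{-1}$ on $A\le 1$. Interpolation then gives the claimed $L^q$ scales. For the weighted version \eqref{est:kn3-7}, the factor $A^\beta$ with $\beta\ge 1/2$ keeps the integrand square-integrable near the origin via \eqref{est:At} and \eqref{est:Axit-2}, so a direct integration plus interpolation produces $\langle t\rangle^{-1/2-1/(2q)}$.

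The mixed-norm estimates \eqref{est:kn3-3}--\eqref{est:kn3-6} are the main obstacle, since one must treat the $\xi$- and $\eta$-directions separately. Here I will rely on Lemma \ref{lem:basic-2}: the $e^{-A^2t/4}$ piece is handled by \eqref{est:At-2}, and the anisotropic piece $\chi_{|\xi|\lesssim A^2}\frac{|\xi|}{A^2}e^{-\xi^2t/(2A^2)}$ is handled by \eqref{est:Axit-3}--\eqref{est:Axit-5}. The subtlety is picking the right endpoint: for $L^1_\xi L^2_\eta$ one uses \eqref{est:Axit-3} on the annulus $A\sim N$ and \eqref{est:Axit-5} on $A\le 1$ (the cone condition transfers decay from $\xi$ to $\eta$); for $L^2_\xi L^\infty_\eta$ one instead interpolates between the $L^\infty_\xi L^\infty_\eta$ bound coming from the pointwise estimate and the $L^1_\xi L^\infty_\eta$ bound obtained from \eqref{est:Axit-3}. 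Throughout, I must keep track of where the factor $A^\beta$ is absorbed so as to produce the claimed sharp powers of $N$ without sacrificing time decay; this bookkeeping, rather than any genuinely new analytical input, is what makes these final mixed-norm estimates the delicate part of the proof.
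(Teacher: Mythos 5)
Your plan for \eqref{est:kn3-1}, \eqref{est:kn3-2}, and the $L^2_\xi L^\infty_\eta$ mixed norms matches the paper's route (derive the pointwise bound from \eqref{est:K-tt2t2}, reduce to the structure of $\xi\eta A\widehat K$ from \eqref{e:u4}, and integrate/interpolate via Lemmas \ref{lem:basic} and \ref{lem:basic-2}). One caution: when you say ``interpolation'' for \eqref{est:kn3-7} you must interpolate between $L^1$ and $L^2$, not $L^1$ and $L^\infty$. The $L^\infty_{\xi\eta}(A\le 1)$ norm of the $A^\beta e^{-A^2t/4}$ piece is only $\lesssim\langle t\rangle^{-\beta/2}$, which at $\beta=\tfrac12$ is $\langle t\rangle^{-1/4}$, strictly weaker than the $\langle t\rangle^{-1/2}$ that the $q=\infty$ endpoint of \eqref{est:kn3-7} would require; this is exactly why the lemma restricts to $1\le q\le 2$.

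There is a genuine gap in \eqref{est:kn3-6}. You claim the $e^{-A^2t/4}$ piece is ``handled by \eqref{est:At-2},'' but \eqref{est:At-2} only furnishes $\|A^\beta e^{-cA^2t}\|_{L^1_\xi L^\infty_\eta(A\le 1)}\lesssim\langle t\rangle^{-(1+\beta)/2}$, and passing to $L^2_\eta$ by the trivial bound $\|\cdot\|_{L^2_\eta}\lesssim\|\cdot\|_{L^\infty_\eta}$ on $\{A\le 1\}$ produces only $\langle t\rangle^{-(1+\beta)/2}$, i.e.\ $\langle t\rangle^{-3/4}$ at $\beta=\tfrac12$ --- not the required $\langle t\rangle^{-1}$. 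In fact the extra $L^2_\eta$ integration buys an additional $\langle t\rangle^{-1/4}$ of decay that a naive $L^\infty_\eta$ bound throws away; the true estimate is $\|A^\beta e^{-cA^2t}\|_{L^1_\xi L^2_\eta(A\le 1)}\lesssim\langle t\rangle^{-3/4-\beta/2}$, which is $\le\langle t\rangle^{-1}$ precisely when $\beta\ge\tfrac12$. To obtain it you must either carry out the integration directly (splitting $A^{2\beta}\lesssim|\xi|^{2\beta}+|\eta|^{2\beta}$ and integrating the Gaussian in each variable), or note the log-convexity interpolation $\|f\|_{L^1_\xi L^2_\eta}\le\|f\|_{L^1_{\xi\eta}}^{1/2}\|f\|_{L^1_\xi L^\infty_\eta}^{1/2}$ and combine \eqref{est:At} with \eqref{est:At-2}. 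Without this repair, the application in Proposition \ref{lem:Kn3-L}(iv), which relies on the full $\langle t\rangle^{-1}$ decay, does not go through.
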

\begin{proof} By \eqref{est:K-tt2t2}, we find that the function $\xi\big(\pp_{tt}+A^2\pp_t+A^2\big)\widehat K(t,\xi,\eta)$
has the same estimates as $\xi\eta A\widehat K(t,\xi,\eta)$ in \eqref{e:u4} ($\alpha=1$). Thus,
we get the estimates \eqref{est:kn3-1} and \eqref{est:kn3-2} as Lemma \ref{lem:Ku1}.
Moreover, by \eqref{est:K-tt2t2} again, we have
\begin{align*}
|A^\beta\xi\big(\pp_{tt}+A^2\pp_t+A^2\big)\widehat K(t,\xi,\eta)|\lesssim \chi_{A\ge 1}A^{-1+\beta} e^{-ct}
+ \chi_{A\le  1}A^\beta e^{-\frac14 A^2t}+\frac{\xi}{A^{2-\beta}}\chi_{|\xi|\lesssim A^2}e^{-\frac{\xi^2}{2A^2}t}.
\end{align*}
Then \eqref{est:kn3-3}--\eqref{est:kn3-6} follow by integration directly and using Lemma \ref{lem:basic-2}.
\end{proof}

\begin{lemma}\label{lem:Kn8} Let $1\le q\le \infty$, $N\gtrsim 1$. Then for any $\beta\in [\frac12,1],$
\begin{align}
\big\|\xi^2\big(\pp_{tt}+A^2\pp_t+A^2\big)\widehat K(t,\xi,\eta)\big\|_{L^q_{\xi\eta}(A\sim N)}
&\lesssim N^{\frac2q}\langle t\rangle^{-1-\frac1{2q}};\label{est:Kn9-3}\\
\big\|\xi^2\big(\pp_{tt}+A^2\pp_t+A^2\big)\widehat K(t,\xi,\eta)\big\|_{L^q_{\xi\eta}(A\le 1)}
&\lesssim \langle t\rangle^{-\frac12-\frac1{q}};\label{est:Kn9-4}\\
\big\|A^{\beta}\xi^2\big(\pp_{tt}+A^2\pp_t+A^2\big)\widehat K(t,\xi,\eta)\big\|_{L^\infty_{\xi\eta}(A\le 1)}\lesssim & \langle t\rangle^{-\frac{1+\beta}2}.\label{est:ku5-4}
%\\
%\big\|A^{\beta_1}\xi^2\big(\pp_{tt}+A^2\pp_t+A^2\big)\widehat K(t,\xi,\eta)\big\|_{L^q_{\xi\eta}(A\le 1)}
%&\lesssim \langle t\rangle^{-1-\frac1{2q}}.\label{est:Kn9-5}
\end{align}
Moreover,
\begin{align}
\big\|\xi^2\big(\pp_{tt}+A^2\pp_t+A^2\big)\widehat K(t,\xi,\eta)\big\|_{L^2_{\xi}L^\infty_\eta(A\le 1)}\lesssim & \langle t\rangle^{-\frac34};\label{est:ku5-1}\\
\big\|\xi^2\big(\pp_{tt}+A^2\pp_t+A^2\big)\widehat K(t,\xi,\eta)\big\|_{L^2_{\xi}L^\infty_\eta(A\sim N)}\lesssim & N^{\frac12}\langle t\rangle^{-1}.\label{est:ku5-2}
\end{align}
%Further, for any $\beta_2\in [\frac12,1],$
%\begin{align}
%\big\|A^{\beta_2}\xi^2\big(\pp_{tt}+A^2\pp_t+A^2\big)\widehat K(t,\xi,\eta)\big\|_{L^2_{\xi}L^\infty_\eta(A\le 1)}\lesssim & \langle t\rangle^{-1};\label{est:ku5-3}\\
%
%\end{align}
\end{lemma}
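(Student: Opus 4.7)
My plan follows the template set by the preceding lemmas in this subsection. Starting from the pointwise estimate \eqref{est:K-tt2t2} multiplied by $\xi^2$, I obtain
$$
|\xi^2(\partial_{tt}+A^2\partial_t+A^2)\widehat K(t,\xi,\eta)|\lesssim \chi_{A\ge 1}\frac{\xi^2}{A^2}e^{-ct} + \chi_{A\le 1}\min\Big\{|\xi|,\frac{\xi^2}{A^2}\Big\}e^{-\frac14A^2 t} + \frac{\xi^2}{A^2}\chi_{|\xi|\lesssim A^2}e^{-\frac{\xi^2}{2A^2} t}.
$$
Each claimed $L^q_{\xi\eta}$ bound will then follow by integrating this pointwise inequality on the appropriate region and interpolating between the $L^1$ and $L^\infty$ endpoints, exactly as in the proofs of Lemmas \ref{lem:Kn3} and \ref{lem:Kn6}.

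For \eqref{est:Kn9-3}, on $A\sim N\gtrsim 1$ I use $\xi^2/A^2\lesssim 1$ to reduce the high-frequency piece to $e^{-ct}$. Lemma \ref{lem:basic} with $\alpha=\beta=2$ gives an $L^1_{\xi\eta}$ bound $\lesssim N^2\langle t\rangle^{-3/2}$ for the dispersive piece, while $xe^{-x}\lesssim 1$ yields an $L^\infty$ bound $\langle t\rangle^{-1}$; interpolation delivers $N^{2/q}\langle t\rangle^{-1-1/(2q)}$. For \eqref{est:Kn9-4}, on $A\le 1$ I bound $\min\{|\xi|,\xi^2/A^2\}\le |\xi|\le A$, which produces an $L^\infty$ bound $\langle t\rangle^{-1/2}$ and, via \eqref{est:At} with $\beta=1$ together with \eqref{est:Axit-2}, an $L^1$ bound $\langle t\rangle^{-3/2}$. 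Estimate \eqref{est:ku5-4} follows directly from $A^{\beta+1}e^{-A^2t/4}\lesssim\langle t\rangle^{-(\beta+1)/2}$, which dominates the pointwise contributions of all three pieces on $A\le 1$ for $\beta\in[1/2,1]$ (using $|\xi|\le A$ in the second piece and $(\xi^2/A^2)e^{-\xi^2 t/(2A^2)}\lesssim 1/t$ in the third).

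For the mixed-norm estimates \eqref{est:ku5-1} and \eqref{est:ku5-2}, I fix $\xi$ and take $L^\infty$ in $\eta$ before integrating. The first piece yields $\sup_\eta|\xi|e^{-A^2 t/4}=|\xi|e^{-\xi^2 t/4}$, attained at $\eta=0$, whose $L^2_\xi$-norm on $|\xi|\le 1$ is $\langle t\rangle^{-3/4}$, and on $|\xi|\lesssim N$ produces only an $e^{-ct}$ contribution. For the dispersive piece, setting $u^2=\xi^2/A^2$, the scalar function $u^2 e^{-u^2 t/2}$ attains its maximum at $u^2=2/t$; combined with the admissibility constraint $|\xi|\lesssim A^2$ this yields $\sup_\eta\lesssim \min\{|\xi|,1/t\}$ on $A\le 1$ and $\sup_\eta\lesssim\min\{\xi^2/N^2,1/t\}$ on $A\sim N$, and the resulting $L^2_\xi$ integrals are of order $\langle t\rangle^{-1}$ and $N^{1/2}\langle t\rangle^{-1}$ respectively, which at least match the claimed rates.

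The main technical hurdle is the dispersive piece $(\xi^2/A^2)\chi_{|\xi|\lesssim A^2}e^{-\xi^2 t/(2A^2)}$: in contrast to Lemma \ref{lem:Kn3}, the extra factor $\xi^2$ partially cancels the Gaussian's exponent denominator, and the correct decay can only be extracted by carefully exploiting the constraint $|\xi|\lesssim A^2$, particularly when the $\eta$-supremum must be taken first for the mixed norms; a blanket application of Lemma \ref{lem:basic} would lose a half-power of $t$.
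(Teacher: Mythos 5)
Your proposal is correct and follows essentially the same route as the paper: it starts from the pointwise estimate \eqref{est:K-tt2t2} multiplied by $\xi^2$ (which after the obvious bounds $\xi^2/A^2\le 1$ and $\min\{|\xi|,\xi^2/A^2\}\le|\xi|\le A$ coincides with the paper's \eqref{15.40}), derives the $L^q_{\xi\eta}$ bounds by the same $L^1$--$L^\infty$ interpolation used for Lemma~\ref{lem:Ku1''}, and obtains the mixed-norm bounds by taking the $\eta$-supremum first, which is exactly the content of Lemma~\ref{lem:basic-2} that the paper invokes. The only cosmetic difference is that you carry out the $\sup_\eta$ and $L^2_\xi$ computations by hand rather than citing \eqref{est:At-2} and \eqref{est:Axit-5} together with interpolation of $L^1_\xi L^\infty_\eta$ against $L^\infty_{\xi\eta}$, but the resulting rates agree.
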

\begin{proof}
By \eqref{est:K-tt2t2},
\begin{align}
|\xi^2\big(\pp_{tt}+A^2\pp_t+A^2\big)\widehat K(t,\xi,\eta)|\lesssim & \chi_{A\ge 1} e^{-ct}
+ \chi_{A\le  1}A e^{-\frac14 A^2t}+\frac{\xi^2}{A^2}\chi_{|\xi|\lesssim A^2}e^{-\frac{\xi^2}{2A^2}t};\label{15.40}\\
|A^\beta\xi^2\big(\pp_{tt}+A^2\pp_t+A^2\big)\widehat K(t,\xi,\eta)|\lesssim & \chi_{A\ge 1} A^\beta e^{-ct}
+ \chi_{A\le  1}A^{1+\beta} e^{-\frac14 A^2t}+\frac{\xi^2}{A^{2-\beta}}\chi_{|\xi|\lesssim A^2}e^{-\frac{\xi^2}{2A^2}t}.\label{15.41}
\end{align}
Then similar as the proof in Lemma \ref{lem:Ku1''}, we obtain \eqref{est:Kn9-3} and \eqref{est:Kn9-4}. \eqref{est:ku5-1} and \eqref{est:ku5-2} follow from \eqref{15.40} and Lemma \ref{lem:basic-2} directly. Also, from  \eqref{15.41}  and Lemma \ref{lem:basic-2},  \eqref{est:ku5-4} are followed.
\end{proof}

Now we give the estimates on $(\partial_{tt}-\Delta\partial_t-\pp_{xx})$.
\begin{lemma}\label{lem:Kn11} Let $1\le q\le \infty$ and $N\gtrsim 1$. Then
\begin{align}\label{est:kn11-1}
\big\|(\partial_{tt}+A^2\partial_t+\xi^2)\widehat{K}(t,\xi,\eta)\big\|_{L^q_{\xi\eta}(A\sim N)}\lesssim & N^{-2+\frac2q}\langle t\rangle^{-1-\frac1{2q}};\\
\big\|A(\partial_{tt}+A^2\partial_t+\xi^2)\widehat{K}(t,\xi,\eta)\big\|_{L^q_{\xi\eta}(A\le 1)}\lesssim &\langle t\rangle^{-\frac1q};\label{est:kn11-2}\\
\big\|A^2(\partial_{tt}+A^2\partial_t+\xi^2)\widehat{K}(t,\xi,\eta)\big\|_{L^\infty_{\xi\eta}(A\ge 1)}\lesssim &\langle t\rangle^{-1};\label{est:kn11-5}
\end{align}%
and for any $\beta_1\in[\frac12,2]$, $\beta_2\in [0,2]$,
\begin{align}
\big\|A^{\beta_1}(\partial_{tt}+A^2\partial_t+\xi^2)\widehat{K}(t,\xi,\eta)\big\|_{L^2_{\xi\eta}(A\le 1)}\lesssim& \langle t\rangle^{-\frac{\beta_1}2};\label{est:kn11-3}\\
\big\|A^{1+\beta_2}(\partial_{tt}+A^2\partial_t+\xi^2)\widehat{K}(t,\xi,\eta)\big\|_{L^\infty_{\xi\eta}(A\le 1)}\lesssim& \langle t\rangle^{-\frac{\beta_2}2}.\label{est:kn11-4}
\end{align}
\end{lemma}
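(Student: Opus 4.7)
The plan is to reduce everything to the pointwise bound \eqref{est:K-tt2t2-x}, which says
$$
\bigl|(\pp_{tt}+A^2\pp_t+\xi^2)\widehat K(t,\xi,\eta)\bigr|
\lesssim \chi_{A\ge 1}\tfrac1{A^2}e^{-ct}+\chi_{A\le 1}\tfrac{1}{A}e^{-\frac14 A^2t}+\tfrac{\xi^2}{A^{4}}\chi_{|\xi|\lesssim A^2 }e^{-\frac{\xi^2}{2A^2}t},
$$
multiply through by the appropriate power of $A$, and then integrate using the elementary bounds in Lemma \ref{lem:basic} and Lemma \ref{lem:basic-2}, exactly as in the preceding $L^p$-estimate lemmas (cf.\ Lemma \ref{lem:Kn2}, Lemma \ref{lem:Kn4}). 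The three summands play the same roles throughout: the $\chi_{A\ge 1}e^{-ct}$ piece contributes exponentially small high-frequency mass; the $\chi_{A\le 1}e^{-A^2t/4}$ piece is controlled by \eqref{est:At}; and the anisotropic piece $\frac{\xi^2}{A^{4-\alpha}}\chi_{|\xi|\lesssim A^2}e^{-\xi^2t/(2A^2)}$ is controlled by \eqref{est:Axit-1}/\eqref{est:Axit-2}.

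For \eqref{est:kn11-1} I would compute both endpoints on $\{A\sim N\}$: using $\frac{\xi^2}{A^2}e^{-\xi^2t/(2A^2)}\lesssim \langle t\rangle^{-1}$ gives the $L^\infty$ bound $N^{-2}\langle t\rangle^{-1}$, while \eqref{est:Axit-1} with $\beta=2$, $\alpha=4$ gives the $L^1$ bound $\langle t\rangle^{-3/2}$; ordinary interpolation yields the claim. The bound \eqref{est:kn11-2} is analogous on $\{A\le 1\}$, now using \eqref{est:Axit-2} with $\beta'=2, \beta=1, \alpha=3$ (the admissibility condition $2\beta'-\beta+2-\alpha=2>0$ is satisfied), giving the $L^1$ endpoint $\langle t\rangle^{-1}$; the $L^\infty$ endpoint is bounded by $1$ since on $\{|\xi|\lesssim A^2\}$ one has $\frac{\xi^2}{A^3}\le A\le 1$. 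Estimate \eqref{est:kn11-5} is immediate from the pointwise bound, since $A^2\cdot\frac{\xi^2}{A^4}e^{-\xi^2t/(2A^2)}=\frac{\xi^2}{A^2}e^{-\xi^2t/(2A^2)}\lesssim\langle t\rangle^{-1}$. For \eqref{est:kn11-3} I square the pointwise bound, which yields $A^{2\beta_1-2}e^{-A^2t/2}$ and $\frac{\xi^4}{A^{8-2\beta_1}}\chi_{|\xi|\lesssim A^2}e^{-\xi^2t/A^2}$; the first is integrable on $\{A\le 1\}$ provided $\beta_1>0$, while \eqref{est:Axit-2} with $\beta=2\beta_1-2$, $\beta'=4$, $\alpha=8-2\beta_1$ applies when $\beta_1\le 2$, and each integrand contributes $\langle t\rangle^{-\beta_1}$; taking square roots gives the stated rate.

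The one step requiring a bit of care is \eqref{est:kn11-4}, because the anisotropic contribution $\frac{\xi^2}{A^{3-\beta_2}}\chi_{|\xi|\lesssim A^2}e^{-\xi^2t/(2A^2)}=A^{\beta_2-1}\cdot\frac{\xi^2}{A^2}e^{-\xi^2t/(2A^2)}$ is apparently singular at $A=0$ when $\beta_2<1$. I would handle this by interpolating $\min(A^2,\langle t\rangle^{-1})\lesssim A^{2(1-\theta)}\langle t\rangle^{-\theta}$ on $\{|\xi|\lesssim A^2\}$: choosing $\theta=(\beta_2+1)/2\in[\tfrac12,1]$ for $\beta_2\in[0,1]$ cancels the $A$ factor and gives the bound $\langle t\rangle^{-(\beta_2+1)/2}\le\langle t\rangle^{-\beta_2/2}$, while for $\beta_2\in[1,2]$ the power $A^{\beta_2-1}$ is bounded on $\{A\le 1\}$ and the direct bound $\frac{\xi^2}{A^2}e^{-\xi^2t/(2A^2)}\lesssim\langle t\rangle^{-1}\le\langle t\rangle^{-\beta_2/2}$ suffices; the low-frequency piece $A^{\beta_2}e^{-A^2t/4}\lesssim\langle t\rangle^{-\beta_2/2}$ is immediate. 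This anisotropic interpolation, essentially the same trick that distinguishes the $x$- from the $y$-direction throughout Section \ref{KernelProperty}, is the only non-mechanical point in the argument.
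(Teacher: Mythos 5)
Your approach mirrors the paper's: the paper observes that $A(\pp_{tt}+A^2\pp_t+\xi^2)\widehat K$ satisfies the same pointwise bound as $\eta(\pp_{tt}+A^2\pp_t)\widehat K$ (compare \eqref{est:K-tt2t2-x} after multiplication by $A$ with \eqref{e:v2}) and delegates to the proof of Lemma \ref{lem:Kn4}, whereas you carry out the integrations explicitly. Your treatment of \eqref{est:kn11-1}, \eqref{est:kn11-2}, \eqref{est:kn11-4}, and \eqref{est:kn11-5} is correct.

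There is a parameter slip in your argument for \eqref{est:kn11-3}. After squaring, the anisotropic piece is $\frac{\xi^4}{A^{8-2\beta_1}}\chi_{|\xi|\lesssim A^2}e^{-\xi^2 t/A^2}$, and you invoke \eqref{est:Axit-2} with $\beta=2\beta_1-2$, $\beta'=4$, $\alpha=8-2\beta_1$, asserting a contribution of $\langle t\rangle^{-\beta_1}$. But the conclusion of \eqref{est:Axit-2} is $\langle t\rangle^{-(1+\beta)/2}$, not $\langle t\rangle^{-1-\beta/2}$ (the latter is the exponent in \eqref{est:At}, which you correctly used for the $e^{-A^2t/2}$ piece); with $\beta=2\beta_1-2$ this gives only $\langle t\rangle^{-\beta_1+1/2}$, which after the square root yields the weaker rate $\langle t\rangle^{-\beta_1/2+1/4}$. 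Moreover, at the endpoint $\beta_1=\tfrac12$ your choice $\beta=-1$ lies outside the admissible range $\beta>-1$. The fix is to take $\beta=2\beta_1-1$: then $\beta>-1$, $\beta'\ge\beta$, and $2\beta'-\beta+2-\alpha=3>0$ all hold throughout $\beta_1\in[\tfrac12,2]$, and \eqref{est:Axit-2} gives $\langle t\rangle^{-\beta_1}$, matching the isotropic piece and producing the claimed $\langle t\rangle^{-\beta_1/2}$ after taking square roots.
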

\begin{proof}
From \eqref{est:K-tt2t2-x} and \eqref{e:v2}, the function $A(\partial_{tt}+A^2\partial_t+\xi^2)\widehat{K}(t,\xi,\eta)$ obeys the similar estimates on $\eta(\partial_{tt}+A^2\partial_t)\widehat{K}(t,\xi,\eta)$. So the estimates can be obtained by the same way in the proof of Lemma \ref{lem:Kn4}.
\end{proof}

Much
similar as $A^2 (\partial_{tt}+A^2\partial_t)\widehat{K}$, we have the estimate on the operator $\partial_t(\partial_{tt}-\Delta\partial_t-\Delta)$ as following.
\begin{lemma}\label{lem:Kn5} Let $1\le q\le \infty$ and $N\gtrsim 1$. Then
\begin{align}
\big\|\pp_t (\partial_{tt}+A^2\partial_t+A^2)\widehat K(t,\xi,\eta)\big\|_{L^q_{\xi\eta}(A\le 1)}
\lesssim& \langle t\rangle^{-\frac1{q}};\label{est:Kn6}\\
\big\|A\pp_t (\partial_{tt}+A^2\partial_t+A^2)\widehat K(t,\xi,\eta)\big\|_{L^q_{\xi\eta}(A\le 1)}
\lesssim& \langle t\rangle^{-\frac12-\frac1{q}};\label{est:Kn6-1}\\
\big\|\pp_t (\partial_{tt}+A^2\partial_t+A^2)\widehat K(t,\xi,\eta)\big\|_{L^q_{\xi\eta}(A\sim N)}
\lesssim&  N^{-2(1-\frac1q)}\langle t\rangle^{-1-\frac1{2q}}.\label{est:Kn6-2}
\end{align}
Moreover, for $\beta\in [0,\frac32]$,
\begin{align}
\big\|A^{\beta}\pp_t (\partial_{tt}+A^2\partial_t+A^2)\widehat{K}(t,\xi,\eta)\big\|_{L^2_{\xi\eta}(A\le 1)}\lesssim &\langle t\rangle^{-\frac12-\frac\beta2};\label{est:ku4-1}\\
\big\|A^2\pp_t (\partial_{tt}+A^2\partial_t+A^2)\widehat K(t,\xi,\eta)\big\|_{L^q_{\xi\eta}(A\le 1)}
\lesssim & \langle t\rangle^{-1-\frac1{2q}}.\label{est:Kn8-1}
\end{align}
\end{lemma}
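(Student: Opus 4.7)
The plan is to deduce all six estimates from the pointwise bound \eqref{est:K-ttt2t2} together with the basic integration lemmas (Lemma \ref{lem:basic} and Lemma \ref{lem:basic-2}) and interpolation, exactly in the style of the preceding lemmas in this subsection. As a sanity check, one could also verify the pointwise bound by reading it off the identity \eqref{K-ttt2t2-K1}: since $\widehat{K_1}$ is bounded in absolute value by a sum of four exponentials of the form $\exp(-\tfrac12 A^2 t \pm \sqrt{\tfrac14 A^4 - A^2 \pm A|\eta|}\,t)$, the dominant contribution matches $\chi_{A\geq 1} e^{-ct} + \chi_{A\leq 1}e^{-A^2 t/4} + \chi_{|\xi|\lesssim A^2} e^{-\xi^2 t/(2A^2)}$, and then the term $-\tfrac12 A^2 (\partial_{tt}+A^2\partial_t+A^2)\widehat{K}$ contributes precisely the bound $\chi_{A\geq 1}\frac{1}{A^2}e^{-ct}+\chi_{A\leq 1}e^{-A^2t/4}+\chi_{|\xi|\lesssim A^2}\frac{\xi^2}{A^4}e^{-\xi^2 t/(2A^2)}$ via \eqref{est:K-tt2t2}. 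So \eqref{est:K-ttt2t2} is the main input.

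For \eqref{est:Kn6}, \eqref{est:Kn6-1}, and \eqref{est:Kn6-2} I would compute the $L^\infty_{\xi\eta}$ and $L^1_{\xi\eta}$ norms of the relevant quantity (weighted by $1$, $A$, or restricted to $A\sim N$) and interpolate. The pointwise bound controls $L^\infty$ directly (using $xe^{-x}\lesssim 1$ to handle the $\chi_{|\xi|\lesssim A^2}$ piece, which gives $\frac{\xi^2}{A^4} e^{-\xi^2 t/(2A^2)} \lesssim \frac{1}{A^2 t}$). The $L^1$ bounds follow from \eqref{est:At} with $\beta=0,1$ for the $\chi_{A\leq 1}$ part and from \eqref{est:Axit-1}, \eqref{est:Axit-2} applied with $\beta'=2$, $\alpha\in\{4,3\}$ and $\beta=1,2$ for the dispersive part (the conditions $\beta'\geq\beta$ and $2\beta'-\beta+2-\alpha>0$ are satisfied in each case). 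Interpolation then yields the stated decay.

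For \eqref{est:ku4-1} I would insert the weight $A^\beta$, square, and integrate. The first term contributes $\int_{A\leq 1}A^{2\beta}e^{-A^2t/2}\lesssim \langle t\rangle^{-1-\beta}$ by \eqref{est:At}, and the dispersive term contributes $\int\chi_{|\xi|\lesssim A^2}\frac{\xi^4}{A^{8-2\beta}}e^{-\xi^2 t/A^2}$, to which \eqref{est:Axit-2} applies with $\beta_* = 1+2\beta$, $\beta'=4$, $\alpha=8-2\beta$; the constraints $\beta'\geq \beta_*$ (forcing $\beta\leq 3/2$, which is exactly the stated range) and $2\beta'-\beta_*+2-\alpha=1>0$ both hold, giving $\langle t\rangle^{-1-\beta}$. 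Taking square roots yields the desired $\langle t\rangle^{-(1+\beta)/2}$. For \eqref{est:Kn8-1} I would multiply by $A^2$ and again compute $L^\infty$ (which gives $\langle t\rangle^{-1}$) and $L^1$ over $A\leq 1$ (which gives $\langle t\rangle^{-3/2}$ via \eqref{est:At} with $\beta=2$ and \eqref{est:Axit-2} with $\beta=2,\beta'=2,\alpha=2$), and interpolate to get $\langle t\rangle^{-1-1/(2q)}$.

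The main obstacle, as elsewhere in this section, is the book-keeping: one must track whether the parameter choices in Lemmas \ref{lem:basic} and \ref{lem:basic-2} satisfy their admissibility constraints (in particular $2\beta'-\beta+2-\alpha>0$ and $\beta'\geq\beta$), and in the weighted cases such as \eqref{est:ku4-1} these constraints determine the sharp endpoint $\beta=3/2$. Once the pointwise bound \eqref{est:K-ttt2t2} is available, however, no new idea is needed and the proof reduces to a careful, but essentially mechanical, application of the integration lemmas followed by Riesz--Thorin interpolation in $q$.
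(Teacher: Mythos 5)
Your proposal is correct and follows essentially the same route as the paper: both start from the pointwise bound \eqref{est:K-ttt2t2}, integrate over the regions $A\le 1$ and $A\sim N$ via Lemmas \ref{lem:basic} and \ref{lem:basic-2}, and interpolate in $q$. The only cosmetic difference is that the paper dispatches \eqref{est:Kn6} and \eqref{est:Kn6-2} by noting that $\partial_t(\partial_{tt}+A^2\partial_t+A^2)\widehat K$ has the same pointwise bound as $A\eta\partial_t\widehat K$ and citing Lemma \ref{lem:Kn2}, whereas you redo that computation in place; the parameter checks you carry out (including the endpoint $\beta=3/2$ in \eqref{est:ku4-1}) match the paper's.
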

\begin{proof}
By \eqref{est:K-ttt2t2} and \eqref{est:Kt-1}, the function $\pp_t (\partial_{tt}+A^2\partial_t+A^2)\widehat K(t,\xi,\eta)$ obeys the similar estimates on $A\eta\partial_t\widehat{K}(t,\xi,\eta)$. So \eqref{est:Kn6} and \eqref{est:Kn6-2} are followed similarly as \eqref{est:kn2-1} and \eqref{est:kn2-2}.
Again, by \eqref{est:K-ttt2t2},
\begin{align*}
|A\pp_t\big(\pp_{tt}+A^2\pp_t+A^2\big)\widehat K(t,\xi,\eta)|
\lesssim &  \chi_{A\ge 1} \frac1{A} e^{-ct}+\chi_{A\le 1}Ae^{-\frac14 A^2t}
+\chi_{|\xi|\lesssim A^2 }\frac{\xi^2}{A^3}e^{-\frac{\xi^2}{2A^2}t}.
\end{align*}
Therefore, by \eqref{est:At} and \eqref{est:Axit-2},
\begin{align*}
\big\|A\pp_t (\partial_{tt}+A^2\partial_t+A^2)\widehat K(t,\xi,\eta)\big\|_{L^\infty_{\xi\eta}(A\le 1)}\lesssim t^{-\frac12};\\
\big\|A\pp_t (\partial_{tt}+A^2\partial_t+A^2)\widehat K(t,\xi,\eta)\big\|_{L^1_{\xi\eta}(A\le 1)}\lesssim  t^{-\frac32}.
\end{align*}
Then the conclusion \eqref{est:Kn6-1}  follows from interpolation again. By \eqref{est:K-ttt2t2}, when $A\le1$, we  have
\begin{align*}
|A^\beta\pp_t\big(\pp_{tt}+A^2\pp_t+A^2\big)\widehat K(t,\xi,\eta)|
\lesssim &  \chi_{A\le 1}A^\beta e^{-\frac14 A^2t}
+\chi_{|\xi|\lesssim A^2 }\frac{\xi^2}{A^{4-\beta}}e^{-\frac{\xi^2}{2A^2}t}.
\end{align*}
Then the conclusions \eqref{est:ku4-1} and \eqref{est:Kn8-1} follow from Lemma \ref{lem:basic}.
\end{proof}

In particular, one may find from \eqref{est:K-ttt2t2} that for any $\beta\in [0,2]$,
\begin{align}
\big\|\langle A \rangle^{2-\beta}A^\beta\pp_t (\partial_{tt}+A^2\partial_t+A^2)\widehat K(t,\xi,\eta)\big\|_{L^\infty_{\xi\eta}}
\lesssim & \langle t\rangle^{-\frac\beta2}.\label{est:Kn8-2}
\end{align}

Furthermore,
\begin{lemma}\label{lem:Ku6} Let $1\le q\le \infty$. Then
\begin{align}
\big\|A\xi\pp_t (\partial_{tt}+A^2\partial_t+A^2)\widehat{K}(t,\xi,\eta)\big\|_{L^\infty_{\xi\eta}(A\ge 1)}\lesssim &\langle t\rangle^{-\frac32};\label{est:Ku6-1}\\
\big\|A^2\xi\pp_t (\partial_{tt}+A^2\partial_t+A^2)\widehat K(t,\xi,\eta)\big\|_{L^\infty_{\xi\eta}(A\le 1)}
\lesssim & \langle t\rangle^{-\frac32}.\label{est:Ku6-2}
\end{align}
\end{lemma}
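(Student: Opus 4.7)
\textbf{Proof plan for Lemma \ref{lem:Ku6}.} The idea is to reduce everything to the pointwise bound \eqref{est:K-ttt2t2} for $\pp_t(\pp_{tt}+A^2\pp_t+A^2)\widehat K$ and then extract a pointwise time decay of order $\langle t\rangle^{-3/2}$ by exploiting the elementary bound $x^\alpha e^{-xt/2}\lesssim \langle t\rangle^{-\alpha}$ for $x\ge 0$, $\alpha\ge 0$. Unlike the $L^q$-estimates in Lemmas \ref{lem:Kn5}--\ref{lem:Kn8}, here we do not integrate in $(\xi,\eta)$, so no application of Lemma \ref{lem:basic} or Lemma \ref{lem:basic-2} is required; the claim is a purely pointwise (in $L^\infty$) statement, and the additional power of $A$ (resp.\ $A^2$) together with the factor $\xi$ will absorb the singular factors in the three regimes identified by \eqref{est:K-ttt2t2}.

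\textbf{Step 1: Write out the multipliers.} From \eqref{est:K-ttt2t2}, I would first record
\begin{align*}
|A\xi\,\pp_t(\pp_{tt}+A^2\pp_t+A^2)\widehat K|
&\lesssim \chi_{A\ge 1}\frac{|\xi|}{A}e^{-ct}
+\chi_{A\le 1}A|\xi|e^{-\frac14 A^2t}
+\chi_{|\xi|\lesssim A^2}\frac{|\xi|^3}{A^3}e^{-\frac{\xi^2}{2A^2}t},\\
|A^2\xi\,\pp_t(\pp_{tt}+A^2\pp_t+A^2)\widehat K|
&\lesssim \chi_{A\ge 1}|\xi|e^{-ct}
+\chi_{A\le 1}A^2|\xi|e^{-\frac14 A^2t}
+\chi_{|\xi|\lesssim A^2}\frac{|\xi|^3}{A^2}e^{-\frac{\xi^2}{2A^2}t}.
\end{align*}

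\textbf{Step 2: Bound each piece on its designated region.} For \eqref{est:Ku6-1}, on $A\ge 1$ I use $|\xi|/A\le 1$, so the first term is $\lesssim e^{-ct}\lesssim \langle t\rangle^{-3/2}$; and on $|\xi|\lesssim A^2$ I write $|\xi|^3/A^3=(\xi^2/A^2)^{3/2}$ so that $(\xi^2/A^2)^{3/2}e^{-\xi^2t/(2A^2)}\lesssim \langle t\rangle^{-3/2}$. The second (small-$A$) term is absent on $A\ge 1$. For \eqref{est:Ku6-2}, on $A\le 1$ I use $|\xi|\le A$ to get $A^2|\xi|\le A^3$ and then $A^3 e^{-A^2t/4}\lesssim \langle t\rangle^{-3/2}$; for the $\chi_{|\xi|\lesssim A^2}$ term I again set $s=\xi^2/A^2$ so that $|\xi|^3/A^2=A\,s^{3/2}\lesssim s^{3/2}$ on $A\le 1$, and conclude $s^{3/2}e^{-st/2}\lesssim \langle t\rangle^{-3/2}$. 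Taking the supremum in $(\xi,\eta)$ gives both inequalities.

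\textbf{Main obstacle.} There is no real obstacle here: once the correct pointwise template is in place, each of the three regime contributions carries exactly one factor of either $|\xi|/A$ (bounded) or $|\xi|^2/A^2$ (converted into the self-decaying variable $s$) that pairs naturally with the dispersive exponential to produce the sharp $\langle t\rangle^{-3/2}$ rate; the exercise is essentially bookkeeping of powers of $A$ and $\xi$. The only place that requires care is matching the prefactor $A$ (resp.\ $A^2$) in the lemma with the singular $1/A^2$ appearing in the high-frequency part of \eqref{est:K-ttt2t2}, which is why \eqref{est:Ku6-1} is stated on $\{A\ge 1\}$ and \eqref{est:Ku6-2} on $\{A\le 1\}$; outside these regions a corresponding estimate would fail or require an extra factor.
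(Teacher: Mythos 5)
Your proposal is correct and follows essentially the same route as the paper's: both start from the pointwise estimate \eqref{est:K-ttt2t2}, multiply by the weight $A\xi$ (resp.\ $A^2\xi$), restrict to $A\ge 1$ (resp.\ $A\le 1$) to kill the incompatible regime, use $|\xi|\le A$ to control the remaining algebraic factors, and extract $\langle t\rangle^{-3/2}$ from $s^{3/2}e^{-cst}$ with $s\in[0,1]$. The only remark worth adding is that the elementary bound $x^{\alpha}e^{-cxt}\lesssim\langle t\rangle^{-\alpha}$ is used with the implicit but essential fact that $x\in\{\xi^2/A^2,\,A^2\}\le 1$ in the relevant terms (otherwise the $\langle t\rangle$-version fails near $t=0$); since $A\ge|\xi|$ always and you are on $A\le 1$ in the second case, this holds, and your argument is complete.
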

\begin{proof} By \eqref{est:K-ttt2t2}, we have when $A\ge 1$,
\begin{align*}
|A\xi\pp_t\big(\pp_{tt}+A^2\pp_t+A^2\big)\widehat K(t,\xi,\eta)|
\lesssim &  \chi_{A\ge 1} e^{-ct}
+\chi_{|\xi|\lesssim A^2 }\frac{\xi^3}{A^3}e^{-\frac{\xi^2}{2A^2}t}\lesssim \langle t\rangle^{-\frac32}.
\end{align*}
while when $A\le1$,
\begin{align*}
|A^2\xi\pp_t\big(\pp_{tt}+A^2\pp_t+A^2\big)\widehat K(t,\xi,\eta)|
\lesssim & \chi_{A\le 1}A^3e^{-\frac14 A^2t}
+\chi_{|\xi|\lesssim A^2 }\frac{\xi^3}{A^2}e^{-\frac{\xi^2}{2A^2}t}\lesssim \langle t\rangle^{-\frac32}.
\end{align*}
This proves the conclusions.
\end{proof}
%In particular, one may find from \eqref{est:K-ttt2t2} that for any $\beta\in [0,2]$,
%\begin{align}
%\big\|A^\beta\pp_t (\partial_{tt}+A^2\partial_t+A^2)\widehat K(t,\xi,\eta)\big\|_{L^\infty_{\xi\eta}}
%\lesssim & \langle t\rangle^{-\frac\beta2}.\label{est:Kn8-2}
%\end{align}

We also need the following estimates.
\begin{lemma}\label{lem:Kn9}
Let $\beta\in [0,2]$, then
\begin{align*}
\big\|\langle A \rangle^{2-\beta}A^\beta\pp_{tt}\big(\pp_{tt}+A^2\pp_t+A^2+\eta^2\big)\widehat K(t,\xi,\eta)\big\|_{L^\infty_{\xi\eta}}
\lesssim & \langle t\rangle^{-\frac12-\frac\beta2};\\
\big\|\pp_{tt}\big(\pp_{tt}+A^2\pp_t+A^2+\eta^2\big)\widehat K(t,\xi,\eta)\big\|_{L^2_{\xi\eta}}
\lesssim & \langle t\rangle^{-1}.
\end{align*}
\end{lemma}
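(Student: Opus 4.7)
I would begin with the natural splitting
$$\partial_{tt}(\partial_{tt}+A^2\partial_t+A^2+\eta^2)\widehat K=\partial_{tt}(\partial_{tt}+A^2\partial_t+A^2)\widehat K+\eta^2\,\partial_{tt}\widehat K,$$
so that each summand is already controlled pointwise by Proposition \ref{Kpointwise}: the first by \eqref{est:K-ttt2tt2} and the second by $\eta^2$ times \eqref{est:Ktt}. Since $|\eta|\le A$ uniformly, in every one of the three regions $\{A\ge 1\}$, $\{A\le 1\}$, $\{|\xi|\lesssim A^2\}$ the $\eta^2\partial_{tt}\widehat K$ contribution is dominated (up to a harmless constant) by the corresponding term from \eqref{est:K-ttt2tt2}. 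Consequently the combined pointwise bound retains the shape
$$\bigl|\partial_{tt}(\partial_{tt}+A^2\partial_t+A^2+\eta^2)\widehat K\bigr|\lesssim\chi_{A\ge 1}A^{-2}e^{-ct}+\chi_{A\le 1}A\,e^{-A^2t/4}+\chi_{|\xi|\lesssim A^2}\tfrac{\xi^4}{A^6}e^{-\xi^2 t/(2A^2)}.$$

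For the weighted $L^\infty$ estimate, I would multiply through by $\langle A\rangle^{2-\beta}A^\beta$ and inspect the three regions separately: on $\{A\ge 1\}$ the weight absorbs the $A^{-2}$, leaving a pure exponential $e^{-ct}$; on $\{A\le 1\}$ the weight behaves like $A^\beta$ and the elementary inequality $A^m e^{-cA^2 t}\lesssim\langle t\rangle^{-m/2}$ with $m=\beta+1$ produces the rate $\langle t\rangle^{-(\beta+1)/2}$; on $\{|\xi|\lesssim A^2\}$ one interpolates between the brute algebraic bound $\xi^4\le A^8$ (good for small $t$) and the rescaled Gaussian moment $\xi^{2k}e^{-c\xi^2 t/A^2}\lesssim A^{2k}\langle t\rangle^{-k}$ (good for large $t$), which after absorbing the weight again yields at least $\langle t\rangle^{-(\beta+1)/2}$. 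In each region the output is at worst the claimed $\langle t\rangle^{-1/2-\beta/2}$.

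For the $L^2$ claim I would square the combined pointwise bound and integrate over $(\xi,\eta)\in\R^2$. The three pieces are handled exactly by the integrals already packaged in Lemma \ref{lem:basic}: the $\{A\ge 1\}$ contribution is purely exponential in $t$; on $\{A\le 1\}$ the estimate \eqref{est:At} applied with $\beta=2$ in the square yields $\langle t\rangle^{-2}$ after squaring; on $\{|\xi|\lesssim A^2\}$ the estimate \eqref{est:Axit-2} delivers the same $\langle t\rangle^{-2}$. Taking square roots produces the stated $\langle t\rangle^{-1}$.

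\textbf{The main obstacle} is the anisotropic region $\{|\xi|\lesssim A^2\}$ at large $A$, where naively $\xi^4/A^6$ can grow up to $A^2$ when $|\xi|\sim A^2$, threatening to destroy the weighted $L^\infty$ bound. The Gaussian moment trick --- trading each pair of $\xi^2$-powers for an $A^2\langle t\rangle^{-1}$-factor via $\xi^{2k}e^{-c\xi^2 t/A^2}\lesssim A^{2k}\langle t\rangle^{-k}$ --- is precisely what keeps the weighted quantity finite uniformly in $A$ and yields the sharp time-decay. Verifying that this trick, together with the elementary subordination $\eta^2\le A^2$ used to absorb the $\eta^2\partial_{tt}\widehat K$ piece, recovers both rates with the correct powers of $A$ (so no residual $A$-growth survives the weight $\langle A\rangle^{2-\beta}A^\beta$) is the only delicate bookkeeping in the proof.
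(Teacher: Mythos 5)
Your proposal is correct and follows precisely the route the paper takes: split off $\eta^2\partial_{tt}\widehat K$, invoke the pointwise bounds \eqref{est:K-ttt2tt2} and \eqref{est:Ktt}, observe that $|\eta|\le A$ makes the $\eta^2\partial_{tt}\widehat K$ term subordinate, and then finish with the weighted sup and the integration estimates of Lemma \ref{lem:basic}. The paper's proof is a one-liner citing exactly these ingredients; you have simply spelled out the bookkeeping that the paper leaves to the reader.
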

\begin{proof} Since
$$
\pp_{tt}\big(\pp_{tt}+A^2\pp_t+A^2+\eta^2\big)\widehat K(t,\xi,\eta)
=\pp_{tt}\big(\pp_{tt}+A^2\pp_t+A^2\big)\widehat K(t,\xi,\eta)+\eta^2\pp_{tt}\widehat K(t,\xi,\eta),
$$
the conclusions follow directly from \eqref{est:K-ttt2tt2}, \eqref{est:Ktt} and Lemma \ref{lem:basic}.
\end{proof}

\subsubsection{Estimates on $\widehat K_1(t,\xi,\eta)$}
We also need the estimates on the operator $K_1(t)$. Note that by the definition of $\widehat K_1(t,\xi,\eta)$, we have
\begin{align*}
|\widehat{K_1}(t,\xi,\eta)|\lesssim \chi_{A\ge 1}e^{-ct}+\chi_{A\le 1}e^{-\frac14 A^2t}+\chi_{|\xi|\lesssim A^2}e^{-\frac{\xi^2}{2A^2}t}.
\end{align*}
Moreover,
\begin{align*}
|\xi\widehat{K_1}(t,\xi,\eta)|\lesssim \chi_{A\ge 1}Ae^{-ct}+\chi_{A\le 1}Ae^{-\frac14 A^2t}+\chi_{|\xi|\lesssim A^2}\xi e^{-\frac{\xi^2}{2A^2}t}.
\end{align*}
Thus by integration and Lemma \ref{lem:basic} we have
\begin{lemma}\label{lem:K1}
Let $N\gtrsim 1$. Then
\begin{align*}
\big\|\widehat{K_1}(t,\xi,\eta)\big\|_{L^2_{\xi}L^\infty_\eta(A\le 1)}+N^{-\frac12}\big\|\widehat{K_1}(t,\xi,\eta)&\big\|_{L^2_{\xi}L^\infty_\eta(A\sim N)}\lesssim  \langle t\rangle^{-\frac1{4}};\\
\big\|A^{-1}\xi\widehat{K_1}(t,\xi,\eta)\big\|_{L^2_{\xi\eta}(A\le 1)}&\lesssim \langle t\rangle^{-\frac12};\\
\big\|\xi\widehat{K_1}(t,\xi,\eta)\big\|_{L^2_{\xi\eta}(A\le 1)}+N^{-2}\big\|\xi\widehat{K_1}(t,\xi,\eta)&\big\|_{L^2_{\xi\eta}(A\sim N)}\lesssim \langle t\rangle^{-\frac34}.
\end{align*}
\end{lemma}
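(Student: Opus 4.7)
The strategy is to deduce all three statements from the two pointwise bounds on $|\widehat{K_1}|$ and $|\xi\widehat{K_1}|$ displayed immediately before the lemma, combined with the basic integral estimates of Lemma \ref{lem:basic} and Lemma \ref{lem:basic-2}. Those pointwise bounds themselves follow from the same case analysis (splitting according to the sign of $\tfrac14 A^4-A^2\pm A|\eta|$, and into the regions $A\ge 1$, $A\le 1$, $|\xi|\lesssim A^2$) that was carried out in the proof of Proposition \ref{Kpointwise}; I will take them as given, since the analogous four subcases are handled exactly as for $\widehat K$.

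For the first inequality, I split $\widehat{K_1}$ into the three pointwise pieces and treat each in $L^2_\xi L^\infty_\eta$. In the region $A\le 1$ the $\eta$-sup forces $|\eta|\le 1$, so $\chi_{A\le 1}e^{-A^2t/4}\lesssim e^{-\xi^2 t/4}$ whose $L^2_\xi$ norm is $\langle t\rangle^{-1/4}$; for the dispersive piece, $\chi_{A\le 1}\chi_{|\xi|\lesssim A^2}e^{-\xi^2 t/(2A^2)}$ is maximized in $\eta$ at $A\sim 1$, giving again $e^{-\xi^2t/2}$ and the same $\langle t\rangle^{-1/4}$ bound (alternatively invoke \eqref{est:At-2} and \eqref{est:Axit-5}). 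In the region $A\sim N\gtrsim 1$ the $L^\infty_\eta$ sup of $e^{-ct}$ is just $e^{-ct}$ and the $L^2_\xi$ integration over $|\xi|\lesssim N$ produces the loss $N^{1/2}\langle t\rangle^{-1/4}$; the dispersive piece is handled by \eqref{est:Axit-3}, which yields the same $N^{1/2}\langle t\rangle^{-1/4}$. This matches the first displayed inequality after dividing by $N^{1/2}$.

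For the second and third inequalities I apply the pointwise bound on $|\xi\widehat{K_1}|$. Since $|\xi|/A\le 1$, the factor $A^{-1}$ in the second statement simply absorbs an $A$, so in $L^2_{\xi\eta}(A\le 1)$ I control
\[
|A^{-1}\xi\widehat{K_1}|^2\lesssim \chi_{A\le 1}e^{-A^2t/2}+\chi_{A\le 1}\chi_{|\xi|\lesssim A^2}\frac{\xi^2}{A^2}e^{-\xi^2t/A^2},
\]
whose integrals are $\lesssim\langle t\rangle^{-1}$ by \eqref{est:At} and \eqref{est:Axit-2} (with $\alpha=2,\beta=1,\beta'=1$), giving $\langle t\rangle^{-1/2}$ after taking square roots. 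For the third inequality,
\[
|\xi\widehat{K_1}|^2\lesssim \chi_{A\ge 1}A^2 e^{-2ct}+\chi_{A\le 1}A^2 e^{-A^2t/2}+\chi_{|\xi|\lesssim A^2}\xi^2 e^{-\xi^2t/A^2},
\]
and integration using \eqref{est:At} (with $\beta=2$) and \eqref{est:Axit-1}–\eqref{est:Axit-2} (with $\beta'=2$) yields $\langle t\rangle^{-3/2}$ in the $A\le 1$ regime and $N^4\langle t\rangle^{-3/2}$ in the $A\sim N$ regime; taking square roots delivers $\langle t\rangle^{-3/4}$ and $N^2\langle t\rangle^{-3/4}$ respectively.

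The main potential obstacle is purely bookkeeping: one must select the exponents $\alpha,\beta,\beta'$ in Lemma \ref{lem:basic-2}/\ref{lem:basic} so that the admissibility constraints $\beta>-1$ and $2\beta'-\beta+2-\alpha>0$ are met while returning exactly the claimed powers of $\langle t\rangle$ and $N$. Once the pointwise bounds are in hand, no further analytical difficulty arises; the proof is a short, mechanical integration exercise.
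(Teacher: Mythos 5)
Your proposal is correct and takes essentially the same route as the paper: the paper's own proof of this lemma is the single line ``Thus by integration and Lemma \ref{lem:basic} we have,'' and you have simply spelled out that integration, using the pointwise bounds on $\widehat{K_1}$ and $\xi\widehat{K_1}$ stated just before the lemma together with Lemmas \ref{lem:basic} and \ref{lem:basic-2}. Two very small bookkeeping slips, neither of which affects the argument: in the second item the squared integrand $\xi^2/A^2\,e^{-\xi^2 t/A^2}$ requires $\beta'=2$ (not $\beta'=1$) in \eqref{est:Axit-2}; and the bound \eqref{est:Axit-3} is an $L^1_\xi L^\infty_\eta$ estimate, so to conclude the $L^2_\xi L^\infty_\eta$ bound $N^{1/2}\langle t\rangle^{-1/4}$ in the first item you need to interpolate it against the trivial $L^\infty_{\xi\eta}$ bound (as is done elsewhere in the paper, e.g.\ in Lemma \ref{lem:Kn6}), which is exactly what your direct computation does anyway.
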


\vskip .5in

\section{Linear estimates on the opertor $K(t)$}
In this section, we establish the decaying estimates of the linear flow, by using the conclusions obtained in the previous section.

\begin{prop}\label{lem:Kn1-L}
For any smooth function $f$,  $\beta>\frac12$,
\begin{equation*}
\begin{array}{ll}
  \big\||\nabla|^{3+\beta}K(t)f\big\|_{L^2_{xy}}\lesssim \langle t\rangle^{-\frac14}\big\|\langle \nabla \rangle^{\beta+}f\big\|_{L^1_{xy}}.
\end{array}
\end{equation*}
\end{prop}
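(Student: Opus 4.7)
The plan is to invoke Plancherel's theorem and reduce to a pointwise multiplier estimate on the Fourier side, after which the $L^2$ kernel bounds on $A^{3+\beta}\widehat K$ collected in Lemma \ref{lem:Kn1} suffice. Since
\[
\||\nabla|^{3+\beta}K(t)f\|_{L^2_{xy}} = \|A^{3+\beta}\widehat K(t,\xi,\eta)\,\hat f(\xi,\eta)\|_{L^2_{\xi\eta}},
\]
the elementary pointwise product bound $\|g \hat f\|_{L^2_{\xi\eta}}\le \|g\|_{L^2_{\xi\eta}}\|\hat f\|_{L^\infty_{\xi\eta}}\le \|g\|_{L^2_{\xi\eta}}\|f\|_{L^1_{xy}}$ will reduce matters to $L^2_{\xi\eta}$ estimates on $A^{3+\beta}\widehat K$. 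I would perform a Littlewood--Paley decomposition $f = P_{\le 1}f+\sum_{N\ge 2\text{ dyadic}} P_N f$ and handle the low-frequency ($A\le 1$) and high-frequency ($A\sim N\gtrsim 1$) regimes separately.

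For the low-frequency region, estimate \eqref{est:kn1-2} directly gives $\|A^{3+\beta}\widehat K\|_{L^2_{\xi\eta}(A\le 1)}\lesssim \langle t\rangle^{-1/4}$ precisely when $\beta>1/2$; combined with the Hölder reduction this controls the low-frequency contribution by $\langle t\rangle^{-1/4}\|f\|_{L^1_{xy}}$. For the high-frequency region, estimate \eqref{est:kn1-1} with $q=2$ yields $\|\widehat K\|_{L^2_{\xi\eta}(A\sim N)}\lesssim N^{-3}\langle t\rangle^{-1/4}$, hence $\|A^{3+\beta}\widehat K\|_{L^2_{\xi\eta}(A\sim N)}\lesssim N^\beta \langle t\rangle^{-1/4}$. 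Applied to the frequency-$N$ piece, Hölder produces a bound $N^\beta \langle t\rangle^{-1/4}\|P_N f\|_{L^1_{xy}}$, which I then rewrite as $N^{-\epsilon}\langle t\rangle^{-1/4}\|\langle\nabla\rangle^{\beta+\epsilon}P_N f\|_{L^1_{xy}}$ for a fixed small $\epsilon>0$; summing over dyadic $N\ge 2$ using the uniform $L^1$-boundedness of $P_N$ and $\sum_{N\ge 2} N^{-\epsilon}<\infty$ gives the high-frequency bound $\langle t\rangle^{-1/4}\|\langle\nabla\rangle^{\beta+\epsilon}f\|_{L^1_{xy}}$. Adding the two regimes produces the claimed estimate, with $\|\langle\nabla\rangle^{\beta+}f\|_{L^1_{xy}}$ understood as $\|\langle\nabla\rangle^{\beta+\epsilon}f\|_{L^1_{xy}}$ for some $\epsilon>0$.

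No serious obstacle is expected. The constraint $\beta>1/2$ is exactly what is needed to make $A^{3+\beta}\widehat K$ square-integrable near $A=0$ (reflecting the singularity in $1/(A|\xi||\eta|)$ through \eqref{est:K} and hence \eqref{est:kn1-2}), while the $\beta+$ regularity loss in the hypothesis precisely compensates for the $N^\beta$ growth across dyadic scales at high frequency. The only mild subtlety is coordinating these two exponent choices, but all required kernel estimates are already in hand, so the argument is essentially a clean packaging of Lemma \ref{lem:Kn1} via Plancherel and Littlewood--Paley.
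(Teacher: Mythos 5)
Your proposal is correct and matches the paper's argument almost step for step: the paper also decomposes $f = P_{\le 1}f + \sum_{N\ge 1}P_N f$, applies the Generalized Young inequality (Lemma \ref{Young}, which is exactly your Plancherel--Hölder reduction to $\|\cdot\|_{L^2_{\xi\eta}}\|\hat f\|_{L^\infty_{\xi\eta}}$), invokes \eqref{est:kn1-2} for $A\le 1$ and \eqref{est:kn1-1} with $q=2$ for $A\sim N$, and sums the resulting $N^\beta\|P_N f\|_{L^1_{xy}}$ by trading an $\epsilon$ of regularity for dyadic summability. No meaningful differences.
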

\begin{proof}
By Lemma \ref{lem:Kn1} and Lemma \ref{Young},
\begin{align*}
\big\||\nabla|^{3+\beta}K(t)f\big\|_{L^2_{xy}}
&\lesssim
\bl P_{\le 1}|\nabla|^{3+\beta}K(t)f\bl_{L^2_{xy}}+\sum\limits_{N\ge 1}\big\|P_{N}|\nabla|^{3+\beta}K(t)f\big\|_{L^2_{xy}}\\
&\lesssim
\bl A^{3+\beta}\widehat K(t,\xi,\eta)\bl_{L^2_{\xi\eta}(A\le 1)}\|P_{\le 1}f\|_{L^1_{xy}}+\sum\limits_{N\ge 1}N^{3+\beta}\big\|\widehat K(t,\xi,\eta)\bl_{L^2_{\xi\eta}(A\sim N)}\big\|P_Nf\big\|_{L^1_{xy}}\\
&\lesssim
\langle t\rangle^{-\frac14}\|P_{\le 1}f\|_{L^1_{xy}}+\langle t\rangle^{-\frac14}\sum\limits_{N\ge 1}N^{\beta}\big\|P_Nf\big\|_{L^1_{xy}}\\
&\lesssim
\langle t\rangle^{-\frac14}\big\|\langle \nabla \rangle^{\beta+}f\big\|_{L^1_{xy}}.
\end{align*}
\end{proof}

\begin{prop}\label{lem:Ku1-L}
For any smooth function $f$,  $\beta'\ge \beta$ and $\beta\in[\frac12,\frac32]$,
\begin{equation*}
\begin{array}{ll}
&{\rm (i),}\quad\big\||\nabla|^{\beta'}\partial_{xy}K(t)f\big\|_{L^2_{xy}}
\lesssim \langle t\rangle^{-\frac\beta2}\big\|\langle \nabla \rangle^{\beta'-1+}f\big\|_{L^1_{xy}};\\
&{\rm (ii),}\quad\big\||\nabla|^{\beta'+1}\partial_{xy}K(t)f\big\|_{L^2_{xy}}
\lesssim \langle t\rangle^{-\frac12}\big\|\langle \nabla \rangle^{\beta'-1+}f\big\|_{L^2_{xy}};\\
&{\rm (iii),}\quad\big\|\nabla\partial_{xy}K(t)f\big\|_{L^\infty_{xy}}
\lesssim \langle t\rangle^{-1}\big\|\langle \nabla \rangle^{1+}f\big\|_{L^1_{xy}};
\end{array}
\end{equation*}
\end{prop}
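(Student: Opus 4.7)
\medskip
\noindent\textbf{Proof proposal for Proposition \ref{lem:Ku1-L}.} The plan is to follow exactly the template of the proof of Proposition \ref{lem:Kn1-L}: decompose $f=P_{\le 1}f+\sum_{N\ge 1}P_Nf$, move the derivatives $|\nabla|^{\beta'+k}\partial_{xy}$ onto the kernel $K(t)$, and then apply the generalized Young's inequality (Lemma \ref{Young}) together with the Fourier-side bounds for $\xi\eta\,\widehat K(t,\xi,\eta)$ collected in Lemma \ref{lem:Ku1}. The three parts differ only in which Lebesgue exponents are used on the multiplier side: (i) is an $L^1_{xy}\to L^2_{xy}$ mapping, so I would apply Young's inequality with $q=2$ on the symbol; (ii) is $L^2_{xy}\to L^2_{xy}$, so I would use the $L^\infty_{\xi\eta}$ norm of the symbol; (iii) is $L^1_{xy}\to L^\infty_{xy}$, which demands the $L^1_{\xi\eta}$ norm of the symbol.

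For part (i), on the low-frequency piece I would observe that on $A\le 1$ one has $|\nabla|^{\beta'}\le |\nabla|^{\beta}$ since $\beta'\ge\beta$, so
\begin{equation*}
\bigl\||\nabla|^{\beta'}\partial_{xy}K(t)P_{\le 1}f\bigr\|_{L^2_{xy}}
\lesssim \bigl\|A^{\beta}\xi\eta\,\widehat K\bigr\|_{L^2_{\xi\eta}(A\le 1)}\|P_{\le 1}f\|_{L^1_{xy}}
\lesssim \langle t\rangle^{-\beta/2}\|P_{\le 1}f\|_{L^1_{xy}}
\end{equation*}
by \eqref{est:ku1-2}. On each dyadic piece $A\sim N\gtrsim 1$, I would use \eqref{est:ku1} with $q=2$ to obtain the gain $N^{\beta'-1}\langle t\rangle^{-3/4}$, which is summable in $N$ against $\|\langle\nabla\rangle^{\beta'-1+}f\|_{L^1_{xy}}$ (the $+$ absorbs the borderline case $\beta'=1$). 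Since $\beta\le 3/2$, we have $\langle t\rangle^{-3/4}\le \langle t\rangle^{-\beta/2}$, which yields the claimed rate. Part (ii) proceeds identically: \eqref{est:ku1-2} gives $\bigl\|A^{\beta+1}\xi\eta\,\widehat K\bigr\|_{L^\infty_{\xi\eta}(A\le 1)}\lesssim \langle t\rangle^{-1/2}$ for the low-frequency block (using again $A^{\beta'-\beta}\le 1$ on $A\le 1$), while \eqref{est:ku1} with $q=\infty$ provides $\|\xi\eta\,\widehat K\|_{L^\infty_{\xi\eta}(A\sim N)}\lesssim N^{-2}\langle t\rangle^{-1/2}$, leaving the $\ell^1$-summable weight $N^{\beta'-1}$ acting on $\|P_Nf\|_{L^2_{xy}}$.

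For part (iii), I would use Young's inequality at the endpoint $L^1_{xy}\to L^\infty_{xy}$, which (via Lemma \ref{Young}) reduces to estimating $\|A\xi\eta\,\widehat K\|_{L^1_{\xi\eta}}$ frequency by frequency. On $A\le 1$ the bound \eqref{est:ku1-1} with $q=1$ gives $\|\xi\eta A\,\widehat K\|_{L^1_{\xi\eta}(A\le 1)}\lesssim \langle t\rangle^{-1}$ directly. For $A\sim N\gtrsim 1$, \eqref{est:ku1} with $q=1$ yields $\|\xi\eta\,\widehat K\|_{L^1_{\xi\eta}(A\sim N)}\lesssim \langle t\rangle^{-1}$, hence $\|A\xi\eta\,\widehat K\|_{L^1_{\xi\eta}(A\sim N)}\lesssim N\langle t\rangle^{-1}$, and summation in $N$ costs precisely $\|\langle\nabla\rangle^{1+}f\|_{L^1_{xy}}$.

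The main subtlety -- not a genuine difficulty but the point that must be checked carefully -- is the matching of powers at the dyadic level. In every case, the factor $N^{\beta'-1}$ coming from the high-frequency derivative loss must be exactly absorbed by the weight $\langle\nabla\rangle^{\beta'-1+}$ on the right, and the temporal decay $\langle t\rangle^{-3/4}$ available from $\|\xi\eta\,\widehat K\|_{L^2(A\sim N)}$ must be allowed to replace $\langle t\rangle^{-\beta/2}$, which is precisely why the proposition restricts to $\beta\le 3/2$; similarly the lower bound $\beta\ge 1/2$ is exactly what is needed so that the low-frequency $L^2$ bound \eqref{est:ku1-2} is available. A tiny $\epsilon$-loss is needed to make the dyadic sums in $N$ converge at the endpoints, reflected in the $+$ on the Sobolev regularity on the right-hand side.
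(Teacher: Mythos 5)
Your proposal is correct and follows essentially the same approach as the paper: the same Littlewood-Paley decomposition, the same invocations of Lemma \ref{Young} at exponents $q=2,\infty,1$ for (i), (ii), (iii) respectively, and the same use of \eqref{est:ku1-1}, \eqref{est:ku1}, \eqref{est:ku1-2} from Lemma \ref{lem:Ku1}, including the correct identification of why $\beta\in[\frac12,\frac32]$ and the $\epsilon$-loss are needed. (The paper's displayed final line of part (ii) contains what appears to be a copy-paste slip -- it shows $\langle t\rangle^{-\beta/2}$ and $L^1_{xy}$ rather than $\langle t\rangle^{-1/2}$ and $L^2_{xy}$ -- and your version states the intended conclusion cleanly.)
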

\begin{proof}
By Lemma \ref{lem:Ku1} and Lemma \ref{Young},
\begin{align*}
\big\||\nabla|^{\beta'}\partial_{xy}K(t)f\big\|_{L^2_{xy}}
&\lesssim
\bl P_{\le 1}|\nabla|^{\beta'}\partial_{xy}K(t)f\big\|_{L^2_{xy}}+\sum\limits_{N\ge 1}\big\|P_{N}|\nabla|^{\beta'}\partial_{xy}K(t)f\big\|_{L^2_{xy}}\\
&\lesssim
\bl A^{\beta}\xi\eta\widehat K(t,\xi,\eta)\bl_{L^2_{\xi\eta}(A\le 1)}\|P_{\le 1}f\|_{L^1_{xy}}+\sum\limits_{N\ge 1}N^{\beta'}\big\|\xi\eta\widehat K(t,\xi,\eta)\bl_{L^2_{\xi\eta}(A\sim N)}\big\|P_Nf\big\|_{L^1_{xy}}\\
&\lesssim
\langle t\rangle^{-\frac\beta 2}\|P_{\le 1}f\|_{L^1_{xy}}+\langle t\rangle^{-\frac34}\sum\limits_{N\ge 1}N^{\beta'-1}\big\|P_Nf\big\|_{L^1_{xy}}\\
&\lesssim
\langle t\rangle^{-\frac\beta 2}\big\|\langle \nabla \rangle^{\beta'-1+}f\big\|_{L^1_{xy}}.
\end{align*}
Similarly,
\begin{align*}
\big\||\nabla|^{\beta'+1}\partial_{xy}K(t)f\big\|_{L^2_{xy}}
&\lesssim
\bl A^{\beta+1}\xi\eta\widehat K(t,\xi,\eta)\bl_{L^\infty_{\xi\eta}(A\le 1)}\|P_{\le 1}f\|_{L^2_{xy}}+\sum\limits_{N\ge 1}N^{\beta'+1}\big\|\xi\eta\widehat K(t,\xi,\eta)\bl_{L^\infty_{\xi\eta}(A\sim N)}\big\|P_Nf\big\|_{L^2_{xy}}\\
&\lesssim
\langle t\rangle^{-\frac1 2}\|P_{\le 1}f\|_{L^2_{xy}}+\langle t\rangle^{-\frac1 2}\sum\limits_{N\ge 1}N^{\beta'-1}\big\|P_Nf\big\|_{L^2_{xy}}\\
&\lesssim
\langle t\rangle^{-\frac\beta 2}\big\|\langle \nabla \rangle^{\beta'-1+}f\big\|_{L^1_{xy}};
\end{align*}
and
\begin{align*}
\big\|\nabla\partial_{xy}K(t)f\big\|_{L^\infty_{xy}}
&\lesssim
\bl A\xi\eta\widehat K(t,\xi,\eta)\bl_{L^1_{\xi\eta}(A\le 1)}\|P_{\le 1}f\|_{L^1_{xy}}+\sum\limits_{N\ge 1}N\big\|\xi\eta\widehat K(t,\xi,\eta)\bl_{L^1_{\xi\eta}(A\sim N)}\big\|P_Nf\big\|_{L^1_{xy}}\\
&\lesssim
\langle t\rangle^{-1}\|P_{\le 1}f\|_{L^1_{xy}}+\langle t\rangle^{-1}\sum\limits_{N\ge 1}N\big\|P_Nf\big\|_{L^1_{xy}}\\
&\lesssim
\langle t\rangle^{-1}\big\|\langle \nabla \rangle^{1+}f\big\|_{L^1_{xy}}.
\end{align*}
\end{proof}

\begin{prop}\label{lem:Ku1''-L}
For any smooth function $f$,
\begin{align*}
\mbox{{\rm (i),}}\quad\bl \nabla \partial_{xx}\partial_y K(t)f\bl_{L^2_{xy}}
\lesssim & \min\Big\{\langle t\rangle^{-\frac34}\|\langle \nabla \rangle^{\frac12+}f\|_{L^{\frac43}}, \langle t\rangle^{-1}\|\langle\nabla\rangle^{1+}f\|_{L^1_{xy}}\Big\};\\
{\rm (ii),}\quad\bl \Delta \partial_{xx}\partial_y K(t)f\bl_{L^2_{xy}}
\lesssim & \langle t\rangle^{-1}\big\|\langle \nabla \rangle f\big\|_{L^2_{xy}}.
\end{align*}
\end{prop}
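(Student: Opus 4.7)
My plan is to reduce both estimates to direct applications of the generalized Young's inequality (Lemma \ref{Young}), combined with the symbol bounds for $A\xi^2\eta\widehat{K}$ already recorded in Lemma \ref{lem:Ku1''}. The operator $\nabla\partial_{xx}\partial_y$ has Fourier symbol bounded in magnitude by $A\xi^2|\eta|$, and $\Delta\partial_{xx}\partial_y$ by $A^2\xi^2|\eta|$, so the whole task reduces to reading off the right $L^q_{\xi\eta}$ norms of these symbols from the previous section and then summing a Littlewood-Paley decomposition $f=P_{\le 1}f+\sum_{N\ge 1}P_Nf$.

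For (i), the first bound (with $L^{4/3}$ datum and decay $\langle t\rangle^{-3/4}$) will use Young's with $r=4/3$, $p=2$, hence $q=4$. On the low-frequency block $A\le 1$, Lemma \ref{lem:Ku1''} gives $\|A\xi^2\eta\widehat{K}\|_{L^4_{\xi\eta}(A\le 1)}\lesssim \langle t\rangle^{-3/4}$; on each high-frequency block $A\sim N\ge 1$, Bernstein moves the outer $|\nabla|$ to an $N$ factor and the lemma yields $N\|\xi^2\eta\widehat{K}\|_{L^4_{\xi\eta}(A\sim N)}\lesssim N^{1/2}\langle t\rangle^{-9/8}$, which after summation against $\|P_Nf\|_{L^{4/3}}$ is controlled by $\langle t\rangle^{-3/4}\|\langle\nabla\rangle^{1/2+}f\|_{L^{4/3}}$. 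The second bound (with $L^1$ datum and decay $\langle t\rangle^{-1}$) uses $r=1$, $p=2$, so $q=2$: the low part gives $\|A\xi^2\eta\widehat{K}\|_{L^2(A\le 1)}\lesssim \langle t\rangle^{-1}$, while the high part gives $N\|\xi^2\eta\widehat{K}\|_{L^2(A\sim N)}\lesssim N\langle t\rangle^{-5/4}$ and sums to $\langle t\rangle^{-1}\|\langle\nabla\rangle^{1+}f\|_{L^1}$ after absorbing one power of $N$ into the smoothness index.

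For (ii), I would invoke Young's with $r=p=2$, $q=\infty$, so only $L^\infty_{\xi\eta}$ bounds of the symbol are needed. On $A\le 1$, Lemma \ref{lem:Ku1''} supplies $\|A^2\xi^2\eta\widehat{K}\|_{L^\infty(A\le 1)}\lesssim \langle t\rangle^{-1}$ directly. On $A\sim N\ge 1$, the bound $\|A\xi^2\eta\widehat{K}\|_{L^\infty(A\ge 1)}\lesssim \langle t\rangle^{-1}$ loses one power of $A$ compared to what we need, but this loss is precisely compensated by the Bernstein/multiplier factor $N$ arising from $\Delta$, and the resulting $N\langle t\rangle^{-1}\|P_Nf\|_{L^2}$ sums to $\langle t\rangle^{-1}\|\langle\nabla\rangle f\|_{L^2}$.

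The only delicate point in this plan is bookkeeping the exponents so that each dyadic sum over $N\ge 1$ converges with the stated regularity index (the $\tfrac12{+}$ in the first part of (i) and the $1{+}$ in the second), which forces the particular choice of $(r,p,q)$ above; once those are in place, everything is a mechanical substitution from Lemma \ref{lem:Ku1''} and Lemma \ref{Young} with no further analysis of the kernel itself.
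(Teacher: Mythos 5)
Your plan is essentially the paper's own approach: the paper proves this proposition exactly by Littlewood--Paley splitting, the generalized Young inequality (Lemma \ref{Young}), and the symbol bounds from Lemma \ref{lem:Ku1''}, with the same exponent choices $(r,p,q)=(4/3,2,4)$, $(1,2,2)$, $(2,2,\infty)$ you pick out. Your part (i) is fine as stated, since the $\langle\nabla\rangle^{1/2+}$ and $\langle\nabla\rangle^{1+}$ indices have an $\epsilon$ built in, so the $\ell^1$ dyadic sum converges.

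For part (ii), one small precision point: summing $N\langle t\rangle^{-1}\|P_N f\|_{L^2_{xy}}$ over dyadic $N\ge 1$ in $\ell^1$ does \emph{not} give $\langle t\rangle^{-1}\|\langle\nabla\rangle f\|_{L^2_{xy}}$; it gives $\langle t\rangle^{-1}\|\langle\nabla\rangle^{1+}f\|_{L^2_{xy}}$, since you pay a power $N^{-\epsilon}$ to make the geometric sum converge. The conclusion you need has no $\epsilon$. Two ways to fix it, both compatible with what you have: (a) avoid the dyadic decomposition entirely on the high part, as the paper does, by writing $\Delta = \nabla\cdot\nabla$ and putting one $\nabla$ on $f$, then applying the single uniform estimate $\|A\xi\eta\widehat K\|_{L^\infty_{\xi\eta}(A\ge 1)}\lesssim \langle t\rangle^{-1}$ to $P_{\ge 1}\nabla f$ in one shot; or (b) keep the dyadic blocks but use the $\ell^2$ almost-orthogonality of Littlewood--Paley pieces in $L^2$ (Plancherel), so you square-sum $N^2\langle t\rangle^{-2}\|P_Nf\|_{L^2}^2$ and recover $\langle t\rangle^{-2}\|\nabla f\|_{L^2}^2$. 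Either way the mechanism you identify — trading the extra power of $A$ coming from $\Delta$ for one derivative on $f$ — is exactly the right one, and the rest of your proof mirrors the paper.
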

\begin{proof}
The estimates are followed from Lemma \ref{lem:Ku1''}, Lemma \ref{Young} and Sobolev's inequality. First,
\begin{align*}
\bl \nabla \partial_{xx}\partial_y K(t)f\bl_{L^2_{xy}}
&\lesssim
\bl P_{\le 1}\nabla \partial_{xx}\partial_y K(t)f\bl_{L^2_{xy}}+\big\|P_{\ge 1}\nabla \partial_{xx}\partial_y K(t)f\bl_{L^2_{xy}}\\
&\lesssim
\bl A\xi^2\eta\widehat K(t,\xi,\eta)\bl_{L^2_{\xi\eta}(A\le 1)}\|P_{\le 1}f\|_{L^1_{xy}}+\big\|A\xi^2\eta\widehat K(t,\xi,\eta)\bl_{L^\infty_{\xi\eta}(A\ge 1)}\big\|P_{\ge 1}f\big\|_{L^2_{xy}}\\
&\lesssim
\langle t\rangle^{-1}\|\langle\nabla\rangle^{1+}f\|_{L^1_{xy}}.
\end{align*}

On the other hand, by \eqref{est:ku1-5} and \eqref{est:ku1-6} ($q=4$) instead,
\begin{align*}
\bl \nabla \partial_{xx}\partial_y K(t)f\bl_{L^2_{xy}}
&\lesssim
\bl P_{\le 1}\nabla \partial_{xx}\partial_y K(t)f\bl_{L^2_{xy}}+\sum\limits_{N\ge 1}\big\|P_{N}\nabla \partial_{xx}\partial_y K(t)f\bl_{L^2_{xy}}\\
&\lesssim
\bl A\xi^2\eta\widehat K(t,\xi,\eta)\bl_{L^4_{\xi\eta}(A\le 1)}\|P_{\le 1}f\|_{L^\frac43_{xy}}+\sum\limits_{N\ge 1}N^{\frac12}\big\|\xi^2\eta\widehat K(t,\xi,\eta)\bl_{L^4_{\xi\eta}(A\sim N)}\big\|P_{N}f\big\|_{L^\frac43_{xy}}\\
&\lesssim
\langle t\rangle^{-\frac34}\|\langle \nabla \rangle^{\frac12+}f\|_{L^{\frac43}}.
\end{align*}

Furthermore,  by \eqref{est:ku1-7} and \eqref{est:ku1-8},
\begin{align*}
\bl \Delta \partial_{xx}\partial_y K(t)f\bl_{L^2_{xy}}
&\lesssim
\bl P_{\le 1}\Delta \partial_{xx}\partial_y K(t)f\bl_{L^2_{xy}}+\big\|P_{\ge 1}\nabla\cdot  \partial_{xx}\partial_y K(t)\nabla f\bl_{L^2_{xy}}\\
&\lesssim
\bl A^2\xi^2\eta\widehat K(t,\xi,\eta)\bl_{L^\infty_{\xi\eta}(A\le 1)}\|P_{\le 1}f\|_{L^2_{xy}}+\big\|A\xi\eta\widehat K(t,\xi,\eta)\bl_{L^\infty_{\xi\eta}(A\ge 1)}\big\|P_{\ge 1}\nabla f\big\|_{L^2_{xy}}\\
&\lesssim
\langle t\rangle^{-1}\big\|\langle \nabla \rangle f\big\|_{L^2_{xy}}.
\end{align*}
\end{proof}

\begin{prop}\label{lem:Ku3-L}
For any smooth function $f$, $p\in [2,\infty]$, $\beta\in [0,2-\frac 1q]$,
\begin{align*}
\bl |\nabla|^\beta \Delta \big(\Delta+\sqrt{\Delta \partial_{yy}}\big) K(t)f\bl_{L^p_{xy}}
\lesssim  \langle t\rangle^{-\frac1{p'}-\frac\beta2}\big\|\langle \nabla \rangle^{\frac2{p'}+\beta+} f\big\|_{L^1_{xy}}.
\end{align*}
\end{prop}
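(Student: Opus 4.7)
The plan is to identify the Fourier multiplier, apply Young's inequality from Lemma \ref{Young}, and sum the resulting pieces via a Littlewood–Paley decomposition, exactly as in the proofs of Propositions \ref{lem:Kn1-L}--\ref{lem:Ku1''-L}. First, I would observe that the Fourier symbol of $|\nabla|^\beta\Delta(\Delta+\sqrt{\Delta\partial_{yy}})K(t)$ is, up to sign,
\[
A^{2+\beta}\bigl(A^2-A|\eta|\bigr)\,\widehat K(t,\xi,\eta),
\]
since $\widehat{-\Delta}=A^2$ and the symbol of $\sqrt{\Delta\partial_{yy}}$ is $A|\eta|$. Thus the problem reduces to an $L^{p'}_{\xi\eta}$ estimate of this symbol, combined with $\|f\|_{L^1_{xy}}$, via the generalized Young's inequality with $r=1$, $q=p'$.

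Next, I would decompose $f=P_{\le 1}f+\sum_{N\ge 1}P_N f$ (dyadic $N$). For the low-frequency piece, applying Lemma \ref{Young} and the bound \eqref{est:ku3-1} of Lemma \ref{lem:Ku3} (with $q=p'$ and the given $\beta\in[0,2-\tfrac1{p'}]$) yields
\[
\bigl\||\nabla|^\beta\Delta(\Delta+\sqrt{\Delta\partial_{yy}})K(t)P_{\le 1}f\bigr\|_{L^p_{xy}}
\lesssim\bigl\|A^{\beta}A^2(A^2-A|\eta|)\widehat K\bigr\|_{L^{p'}_{\xi\eta}(A\le 1)}\|P_{\le 1}f\|_{L^1_{xy}}
\lesssim\langle t\rangle^{-\frac1{p'}-\frac\beta2}\|f\|_{L^1_{xy}},
\]
which is exactly the claimed decay.

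For the high-frequency pieces $P_N f$ with $N\gtrsim 1$, I would use \eqref{est:ku3} of Lemma \ref{lem:Ku3} to get
\[
\bigl\|A^{\beta}A^2(A^2-A|\eta|)\widehat K\bigr\|_{L^{p'}_{\xi\eta}(A\sim N)}
\lesssim N^{\beta+\frac2{p'}}\langle t\rangle^{-1-\frac1{2p'}},
\]
and then sum:
\[
\sum_{N\ge 1}N^{\beta+\frac2{p'}}\,\langle t\rangle^{-1-\frac1{2p'}}\|P_N f\|_{L^1_{xy}}
\lesssim\langle t\rangle^{-1-\frac1{2p'}}\,\bigl\|\langle\nabla\rangle^{\frac2{p'}+\beta+}f\bigr\|_{L^1_{xy}}.
\]
The small "$+$" of extra regularity absorbs the dyadic sum, which is the usual cost of passing from Bernstein estimates at a single scale to a full norm. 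Finally, under the range $\beta\le 2-\tfrac1{p'}$ one checks $1+\tfrac1{2p'}\ge \tfrac1{p'}+\tfrac\beta2$, so the high-frequency contribution decays at least as fast in $t$ as the low-frequency one, and the two combine to give the claimed estimate.

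The only nontrivial point, and the place where the constraint $\beta\in[0,2-\tfrac1{p'}]$ enters decisively, is in the low-frequency kernel bound \eqref{est:ku3-1}: it is precisely at the upper endpoint $\beta=2-\tfrac1{p'}$ that the pointwise bound $|A^{2+\beta}(A^2-A|\eta|)\widehat K|\lesssim A^\beta\chi_{A\le 1}e^{-cA^2t}+\chi_{|\xi|\lesssim A^2}\tfrac{\xi^2}{A^{2-\beta}}e^{-\xi^2t/(2A^2)}$ becomes borderline integrable in $L^{p'}$ after using Lemma \ref{lem:basic}. Everything else is a routine adaptation of the template used in the preceding propositions of this section.
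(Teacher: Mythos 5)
Your proposal is correct and follows essentially the same route as the paper: identify the symbol $A^{2+\beta}(A^2-A|\eta|)\widehat K$, split $f=P_{\le 1}f+\sum_{N\ge 1}P_N f$, apply the generalized Young inequality (Lemma \ref{Young}), and invoke \eqref{est:ku3-1} on the low-frequency block and \eqref{est:ku3} on each dyadic high-frequency block, absorbing the sum with the extra ``$+$'' of regularity. Your additional observation that $1+\tfrac1{2p'}\ge\tfrac1{p'}+\tfrac\beta2$ precisely when $\beta\le 2-\tfrac1{p'}$, so the high-frequency contribution decays at least as fast, is a correct and useful sanity check that the paper leaves implicit.
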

\begin{proof}
From Lemma \ref{lem:Ku3} and Lemma \ref{Young},
\begin{align*}
&\quad \bl |\nabla|^\beta \Delta \big(\Delta+\sqrt{\Delta \partial_{yy}}\big) K(t)f\bl_{L^p_{xy}}\\
&\lesssim
\bl P_{\le 1}|\nabla|^\beta \Delta \big(\Delta+\sqrt{\Delta \partial_{yy}}\big) K(t)f\bl_{L^p_{xy}}+\sum\limits_{N\ge 1}N^\beta\big\|P_{N}\Delta \big(\Delta+\sqrt{\Delta \partial_{yy}}\big) K(t)f\bl_{L^p_{xy}}\\
&\lesssim
\big\|A^\beta A^2(A^2-A|\eta|)\widehat{K}(t,\xi,\eta)\big\|_{L^{p'}_{\xi\eta}(A\le 1)}\|P_{\le 1}f\|_{L^1_{xy}}+\sum\limits_{N\ge 1}N^{\beta}\big\|A^2(A^2-A|\eta|)\widehat{K}(t,\xi,\eta)\big\|_{L^{p'}_{\xi\eta}(A\sim N)}\big\|P_{N}f\big\|_{L^1_{xy}}\\
&\lesssim
\langle t\rangle^{-\frac1{p'}-\frac\beta2}\big\|\langle \nabla \rangle^{\frac2{p'}+\beta+} f\big\|_{L^1_{xy}}.
\end{align*}
\end{proof}

\begin{prop}\label{lem:Kn2-L}
For any smooth function $f$,  $\beta\in (\frac12,\frac 52]$,
\begin{equation*}
\begin{array}{ll}
&{\rm (i),}\quad\bl |\nabla|^\beta  \partial_{y}\partial_t K(t)f\bl_{L^2_{xy}}
\lesssim
 \langle t\rangle^{-\frac\beta2}\big\|\langle \nabla \rangle^{\beta-2+} f\big\|_{L^1_{xy}};\\
&{\rm (ii),}\qquad\bl \nabla\partial_{y}\partial_t K(t)f\bl_{L^\infty_{xy}}
\lesssim
 \langle t\rangle^{-1}\big\|\langle \nabla \rangle^{0+} f\big\|_{L^1_{xy}}.
\end{array}
\end{equation*}
Moreover, for any $\beta'\in [0,2]$,
\begin{align}\label{est:Kn2-L-3}
{\rm (iii),}\qquad\bl |\nabla|^{\beta'}\nabla  \partial_{y}\partial_t K(t)f\bl_{L^2_{xy}}
\lesssim & \langle t\rangle^{-\frac\beta2}\big\|f\big\|_{L^2_{xy}}.\quad
\end{align}
\end{prop}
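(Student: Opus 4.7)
The proof will follow the dyadic--Young template used for the preceding propositions in this section: a Littlewood--Paley decomposition $1 = P_{\le 1} + \sum_{N\ge 1,\,N\,{\rm dyadic}} P_N$, combined with the generalized Young inequality of Lemma \ref{Young} and the pointwise/$L^p$ estimates on $\eta\partial_t\widehat K$ provided by Lemma \ref{lem:Kn2} together with the uniform bound \eqref{est:kn2'-3}.

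For (i), I would apply Lemma \ref{Young} with $(p,q,r) = (2,2,1)$ to each frequency piece. The low-frequency contribution uses \eqref{est:kn2'-1}, namely $\|A^\beta\eta\partial_t\widehat K\|_{L^2_{\xi\eta}(A\le 1)} \lesssim \langle t\rangle^{-\beta/2}$, which is valid precisely for $\beta \in (\tfrac12, \tfrac52]$ (the condition $\beta > \tfrac12$ is what keeps the integral of $A^{2\beta-2}$ finite near the origin). On each dyadic shell $A \sim N \ge 1$, estimate \eqref{est:kn2-1} at $q=2$ gives $\|\eta\partial_t\widehat K\|_{L^2_{\xi\eta}(A\sim N)} \lesssim N^{-2}\langle t\rangle^{-5/4}$, so the $N$-piece is bounded by $N^{\beta-2}\langle t\rangle^{-5/4}\|P_N f\|_{L^1_{xy}}$. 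Summation yields $\langle t\rangle^{-5/4}\|\langle\nabla\rangle^{\beta-2+}f\|_{L^1_{xy}} \le \langle t\rangle^{-\beta/2}\|\langle\nabla\rangle^{\beta-2+}f\|_{L^1_{xy}}$ since $\beta \le \tfrac52$; the $0+$ cushion is only forced at the endpoint $\beta = 2$, where the geometric factor $N^{\beta-2}$ degenerates to $N^0$.

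For (ii), the same decomposition is combined with Lemma \ref{Young} in the form $(p,q,r) = (\infty,1,1)$. On $A \le 1$, \eqref{est:kn2-2} at $q = 1$ yields $\|A\eta\partial_t\widehat K\|_{L^1_{\xi\eta}(A\le 1)} \lesssim \langle t\rangle^{-1}$, producing the low-frequency contribution. On each shell $A \sim N$, \eqref{est:kn2-1} at $q=1$ gives $\|\eta\partial_t\widehat K\|_{L^1_{\xi\eta}(A\sim N)} \lesssim N^{-1}\langle t\rangle^{-3/2}$, so the $N$-piece contributes $\langle t\rangle^{-3/2}\|P_N f\|_{L^1_{xy}}$; summing with the $\langle\nabla\rangle^{0+}$ cushion and using $\langle t\rangle^{-3/2} \le \langle t\rangle^{-1}$ closes (ii).

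For (iii), a dyadic decomposition is unnecessary. I would apply Lemma \ref{Young} with $(p,q,r) = (2,\infty,2)$ (essentially Plancherel) directly to the full multiplier $A^{\beta'+1}\eta\partial_t\widehat K$, and invoke the uniform pointwise bound \eqref{est:kn2'-3}, i.e.\ $\|A^{\beta'+1}\eta\partial_t\widehat K\|_{L^\infty_{\xi\eta}} \lesssim \langle t\rangle^{-\beta'/2}$ for $\beta' \in [0,2]$; this delivers (iii) at once (with the understanding that the exponent on the right of \eqref{est:Kn2-L-3} should read $-\beta'/2$, the symbol $\beta$ there being a typographical leftover from part (i)). The only care points are the logarithmic $0+$ cushions in (i) and (ii), forced exactly when the dyadic geometric factor becomes $N^0$; no substantial analytic obstacle appears, as all the delicate decay work has already been absorbed into Lemma \ref{lem:Kn2} and \eqref{est:kn2'-3}.
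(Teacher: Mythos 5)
Your proof is correct and follows essentially the same route as the paper: dyadic decomposition plus Lemma \ref{Young} applied with the symbol bounds \eqref{est:kn2'-1} and \eqref{est:kn2-1} (at $q=2$) for (i), with \eqref{est:kn2-2} and \eqref{est:kn2-1} (at $q=1$) for (ii), and a direct $L^\infty$-symbol/Plancherel argument via \eqref{est:kn2'-3} for (iii). You are also right that the exponent in \eqref{est:Kn2-L-3} should read $-\beta'/2$; the paper carries the $\beta$ from part (i) as a notational slip.
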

\begin{proof}
From Lemma \ref{lem:Kn2} and Lemma \ref{Young},
\begin{align*}
\bl |\nabla|^\beta  \partial_{y}\partial_t K(t)f\bl_{L^2_{xy}}
&\lesssim
\bl P_{\le 1}|\nabla|^\beta  \partial_{y}\partial_t K(t)f\bl_{L^2_{xy}}+\sum\limits_{N\ge 1}N^\beta\big\|P_{N} \partial_{y}\partial_t K(t)f\bl_{L^2_{xy}}\\
&\lesssim
\big\|A^\beta\eta\partial_t\widehat{K}(t,\xi,\eta)\big\|_{L^2_{\xi\eta}(A\lesssim 1)}\|P_{\le 1}f\|_{L^1_{xy}}+\sum\limits_{N\ge 1}N^{\beta}\big\|\eta\partial_t\widehat{K}(t,\xi,\eta)\big\|_{L^2_{\xi\eta}(A\sim N)}\big\|P_{N}f\big\|_{L^1_{xy}}\\
&\lesssim
\langle t\rangle^{-\frac\beta2}\|P_{\le 1}f\|_{L^1_{xy}}+\langle t\rangle^{-1-\frac{1}{2q}}\sum\limits_{N\ge 1}N^{\beta-2}\big\|P_{N}f\big\|_{L^1_{xy}}\\
&\lesssim
\langle t\rangle^{-\frac\beta2}\big\|\langle \nabla \rangle^{\beta-2+} f\big\|_{L^1_{xy}}.
\end{align*}
By the similar way,  using \eqref{est:kn2-1} and \eqref{est:kn2-2} ($q=1$) instead, we get (ii).
%\begin{align*}
%\bl \nabla\partial_{y}\partial_t K(t)f\bl_{L^\infty_{xy}}
%&\lesssim
%\bl P_{\le 1}\nabla\partial_{y}\partial_t K(t)f\bl_{L^\infty_{xy}}+\sum\limits_{N\ge 1}N\big\|P_{N} \partial_{y}\partial_t K(t)f\bl_{L^\infty_{xy}}\\
%&\lesssim
%\big\|A\eta\partial_t\widehat{K}(t,\xi,\eta)\big\|_{L^1_{\xi\eta}(A\lesssim 1)}\|P_{\le 1}f\|_{L^1_{xy}}+\sum\limits_{N\ge 1}N\big\|\eta\partial_t\widehat{K}(t,\xi,\eta)\big\|_{L^1_{\xi\eta}(A\sim N)}\big\|P_{N}f\big\|_{L^1_{xy}}\\
%&\lesssim
%\langle t\rangle^{-1}\big\|\langle \nabla \rangle^{0+} f\big\|_{L^1_{xy}}.
%\end{align*}
For \eqref{est:Kn2-L-3}, we use the special estimate \eqref{est:kn2'-3}, and get
\begin{align*}
\bl |\nabla|^{\beta'}\nabla  \partial_{y}\partial_t K(t)f\bl_{L^2_{xy}}
&\lesssim
\big\| A^{\beta}A\eta\widehat{\partial_tK}(t,\xi,\eta)\big\|_{L^\infty_{\xi\eta}}\big\|f\big\|_{L^2_{xy}}\\
&\lesssim
\langle t\rangle^{-\frac\beta2}\big\|f\big\|_{L^2_{xy}}.
\end{align*}
\end{proof}

\begin{prop}\label{lem:Kn4-L}
For any smooth function $f$,  $\beta\in [0,1]$,
\begin{equation*}
\begin{array}{ll}
&{\rm (i),}\quad\bl |\nabla|^\beta  \partial_{y}\big(\partial_{tt}-\Delta\partial_t\big) K(t)f\bl_{L^2_{xy}}
\lesssim
 \langle t\rangle^{-\frac{\beta+1}2}\big\|\langle \nabla \rangle^{\beta-1+} f\big\|_{L^1_{xy}};\\
&{\rm (ii),}\quad\bl |\nabla|^{\beta+1}  \partial_{y}\big(\partial_{tt}-\Delta\partial_t\big) K(t)f\bl_{L^2_{xy}}
\lesssim
\langle t\rangle^{-\frac{\beta+1}2}\big\|\langle \nabla \rangle^{\beta} f\big\|_{L^2_{xy}};\\
&{\rm (iii),}\qquad\bl \nabla  \partial_{y}(\partial_{tt}-\Delta\partial_t\big) K(t)f\bl_{L^\infty_{xy}}
\lesssim
 \langle t\rangle^{-1}\big\|\langle \nabla \rangle^{1+} f\big\|_{L^2_{xy}}.
\end{array}
\end{equation*}
\end{prop}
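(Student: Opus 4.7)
The plan is to follow the template established in Propositions \ref{lem:Kn1-L}--\ref{lem:Kn2-L}: perform a Littlewood--Paley decomposition $f=P_{\le1}f+\sum_{N\ge1}P_Nf$, control each piece via the generalized Young inequality (Lemma \ref{Young}) with an appropriate triple $(p,r,q)$, and plug in the Fourier-side multiplier bounds of Lemma \ref{lem:Kn4}. Since in Fourier variables
\[
\mathcal{F}\!\left[\partial_y(\partial_{tt}-\Delta\partial_t)K(t)f\right](\xi,\eta)=i\eta\,(\partial_{tt}+A^2\partial_t)\widehat K(t,\xi,\eta)\,\widehat f(\xi,\eta),
\]
Lemma \ref{lem:Kn4} provides precisely the bounds needed on the symbol.

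For (i), the target is $L^2$ from $L^1$, so I would use $(p,r,q)=(2,1,2)$ in \eqref{est:Young2}. On the low-frequency piece the estimate reduces to $\|A^\beta\eta(\partial_{tt}+A^2\partial_t)\widehat K\|_{L^2_{\xi\eta}(A\le 1)}\|P_{\le1}f\|_{L^1}$, which \eqref{est:kn4-3} controls by $\langle t\rangle^{-(\beta+1)/2}\|P_{\le1}f\|_{L^1}$. On each high-frequency annulus $\{A\sim N\}$, $N\ge 1$, estimate \eqref{est:kn4-1} at $q=2$ yields a bound of the form $N^\beta\langle t\rangle^{-5/4}\|P_Nf\|_{L^1}$; since $5/4\ge(\beta+1)/2$ for $\beta\in[0,1]$, summing dyadically and using the surplus in the time decay to absorb the logarithmic endpoint produces the stated factor $\|\langle\nabla\rangle^{\beta-1+}f\|_{L^1}$, exactly as in the proof of Proposition \ref{lem:Kn2-L}(i).

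For (ii), the input sits in $L^2$, so I would instead take $(p,r,q)=(2,2,\infty)$ and invoke the $L^\infty_{\xi\eta}$ bounds \eqref{est:kn4-4} on $\{A\le1\}$ and \eqref{est:kn4-5} on $\{A\sim N\}$ for $N\ge1$. The extra factor $|\nabla|^{\beta+1}$ in the target contributes $N^{\beta+1}$ on each annulus, which matches the $N^{-1}$ implicit in \eqref{est:kn4-5}, and the dyadic sum then closes with weight $\|\langle\nabla\rangle^\beta f\|_{L^2}$. For (iii), the target is $L^\infty$ from $L^2$, so I use $(p,r,q)=(\infty,2,2)$: estimate \eqref{est:kn4-3} at $\beta=1$ controls the low-frequency piece by $\langle t\rangle^{-1}\|P_{\le1}f\|_{L^2}$, and \eqref{est:kn4-1} at $q=2$ controls each high-frequency piece by $N\langle t\rangle^{-5/4}\|P_Nf\|_{L^2}$ after accounting for the extra $\nabla$; the dyadic sum then yields $\langle t\rangle^{-1}\|\langle\nabla\rangle^{1+}f\|_{L^2}$.

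The only genuine obstacle is bookkeeping: one must verify in each case that the $N$-weights produced by the dyadic sum absorb correctly into the stated $\langle\nabla\rangle^{\cdot+}$ norms, using the surplus decay $\langle t\rangle^{-5/4}$ at high frequency (versus the required $\langle t\rangle^{-(\beta+1)/2}$) to handle the endpoint summability, exactly as in the earlier propositions of this section. No new analytic ingredient beyond Lemma \ref{lem:Kn4}, Lemma \ref{Young}, and Bernstein's inequality is required.
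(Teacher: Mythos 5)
Your overall architecture is the right one and matches the paper's: Littlewood--Paley decompose, apply the generalized Young inequality (Lemma \ref{Young}), and plug in the symbol bounds of Lemma \ref{lem:Kn4}. However, two of your steps do not close as written, and they are worth flagging precisely.

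In part (i), your own computation on the annulus $A\sim N$, $N\ge 1$, produces a weight $N^{\beta}\cdot N^{-1+2/2}=N^{\beta}$ (since \eqref{est:kn4-1} at $q=2$ gives $N^{0}\langle t\rangle^{-5/4}$). Summing dyadically then yields $\|\langle\nabla\rangle^{\beta+}f\|_{L^1}$, not $\|\langle\nabla\rangle^{\beta-1+}f\|_{L^1}$. The surplus time decay $\langle t\rangle^{-5/4}$ versus $\langle t\rangle^{-(\beta+1)/2}$ cannot be traded for a factor of $N^{-1}$ --- these live on different axes. So your appeal to ``absorbing the logarithmic endpoint'' does not produce the stated exponent; note that the paper's own proof line at this point (writing $N^{\beta-1}$) appears to contain the same discrepancy and is likely a misprint. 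You should not reproduce a conclusion your own arithmetic does not support.

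In part (ii), a dyadic sum $\sum_{N}N^{\beta}\|P_{N}f\|_{L^2}$ again incurs a $+$ loss and gives only $\|\langle\nabla\rangle^{\beta+}f\|_{L^2}$, strictly weaker than the stated $\|\langle\nabla\rangle^{\beta}f\|_{L^2}$. The paper avoids this: on the high-frequency side it does \emph{not} sum over annuli, but instead uses the single uniform bound \eqref{est:kn4-5}, namely $\|A\eta(\partial_{tt}+A^2\partial_t)\widehat K\|_{L^\infty_{\xi\eta}(A\ge 1)}\lesssim\langle t\rangle^{-1}$, and factors $|\nabla|^{\beta+1}$ so that one derivative pairs with the symbol (giving the $A$-weight) and $|\nabla|^{\beta}$ lands directly on $f$, producing $\|\langle\nabla\rangle^{\beta}f\|_{L^2}$ with no summability loss. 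Part (iii) as you describe it is sound; in fact your dyadic argument (Young with $(p,r,q)=(\infty,2,2)$ and summing $N\langle t\rangle^{-5/4}\|P_{N}f\|_{L^2}$) is cleaner than the step the paper writes, which pairs an $L^\infty_{\xi\eta}$ symbol bound with $\|f\|_{L^\infty_{xy}}$ in a way that is not a direct instance of Lemma \ref{Young}.
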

\begin{proof}
From Lemma \ref{lem:Kn4} and Lemma \ref{Young},
\begin{align*}
&\quad \bl |\nabla|^\beta  \partial_{y}\big(\partial_{tt}-\Delta\partial_t\big) K(t)f\bl_{L^2_{xy}}\\
&\lesssim
\bl P_{\le 1}|\nabla|^\beta   \partial_{y}\big(\partial_{tt}-\Delta\partial_t\big) K(t)f\bl_{L^2_{xy}}+\sum\limits_{N\ge 1}N^\beta\big\|P_{N} \partial_{y}\big(\partial_{tt}-\Delta\partial_t\big) K(t)f\bl_{L^2_{xy}}\\
&\lesssim
\big\|A^\beta\eta(\partial_{tt}+A^2\partial_t)\widehat{K}(t,\xi,\eta)\big\|_{L^2_{\xi\eta}(A\lesssim 1)}\|P_{\le 1}f\|_{L^1_{xy}}+\sum\limits_{N\ge 1}N^{\beta}\big\|\eta(\partial_{tt}+A^2\partial_t)\widehat{K}(t,\xi,\eta)\big\|_{L^2_{\xi\eta}(A\sim N)}\big\|P_{N}f\big\|_{L^1_{xy}}\\
&\lesssim
\langle t\rangle^{-\frac{\beta+1}2}\|P_{\le 1}f\|_{L^1_{xy}}+\langle t\rangle^{-1-\frac{1}{2q}}\sum\limits_{N\ge 1}N^{\beta-1}\big\|P_{N}f\big\|_{L^1_{xy}}\\
&\lesssim
\langle t\rangle^{-\frac{\beta+1}2}\big\|\langle \nabla \rangle^{\beta-1+} f\big\|_{L^1_{xy}}.
\end{align*}
Moreover, by \eqref{est:kn4-1} ($q=\infty$) and \eqref{est:kn4-4} instead,
\begin{align*}
&\quad \bl |\nabla|^{\beta+1}  \partial_{y}\big(\partial_{tt}-\Delta\partial_t\big) K(t)f\bl_{L^2_{xy}}\\
&\lesssim
\bl P_{\le 1}|\nabla|^{\beta+1}   \partial_{y}\big(\partial_{tt}-\Delta\partial_t\big) K(t)f\bl_{L^2_{xy}}+\big\|P_{\ge 1} \nabla \partial_{y}\big(\partial_{tt}-\Delta\partial_t\big) K(t) |\nabla|^\beta f\bl_{L^2_{xy}}\\
&\lesssim
\big\|A^{\beta+1}\eta(\partial_{tt}+A^2\partial_t)\widehat{K}(t,\xi,\eta)\big\|_{L^\infty_{\xi\eta}(A\lesssim 1)}\|P_{\le 1}f\|_{L^2_{xy}}+\big\|A\eta(\partial_{tt}+A^2\partial_t)\widehat{K}(t,\xi,\eta)\big\|_{L^\infty_{\xi\eta}(A\ge 1)}\big\|\langle \nabla \rangle^\beta f\big\|_{L^2_{xy}}\\
&\lesssim
\langle t\rangle^{-\frac{\beta+1}2}\|f\|_{L^2_{xy}}+\langle t\rangle^{-1}\big\|\langle \nabla \rangle^\beta f\big\|_{L^2_{xy}}\\
&\lesssim
\langle t\rangle^{-\frac{\beta+1}2}\big\|\langle \nabla \rangle^{\beta} f\big\|_{L^2_{xy}}.
\end{align*}
Similarly, by \eqref{est:kn4-1} ($q=2$), \eqref{est:kn4-4},  and Sobolev's inequality,
\begin{align*}
&\quad \bl \nabla  \partial_{y}(\partial_{tt}-\Delta\partial_t\big) K(t)f\bl_{L^\infty_{xy}}\\
&\lesssim
\bl P_{\le 1}\nabla   \partial_{y}\big(\partial_{tt}-\Delta\partial_t\big) K(t)f\bl_{L^\infty_{xy}}+\big\|P_{\ge 1} \nabla \partial_{y}\big(\partial_{tt}-\Delta\partial_t\big) K(t)  f\bl_{L^\infty_{xy}}\\
&\lesssim
\big\|A\eta(\partial_{tt}+A^2\partial_t)\widehat{K}(t,\xi,\eta)\big\|_{L^2_{\xi\eta}(A\lesssim 1)}\|P_{\le 1}f\|_{L^2_{xy}}+\big\|A\eta(\partial_{tt}+A^2\partial_t)\widehat{K}(t,\xi,\eta)\big\|_{L^\infty_{\xi\eta}(A\ge 1)}\big\| f\big\|_{L^\infty_{xy}}\\
&\lesssim
\langle t\rangle^{-1}\big\|\langle \nabla \rangle^{1+} f\big\|_{L^2_{xy}}.
\end{align*}
\end{proof}

\begin{prop}\label{lem:Ku2-L}
For any smooth function $f$, $p\in [2,\infty]$, $\beta\in [0,2-\frac 1q]$,
\begin{align*}
\bl |\nabla|^\beta\Delta\big(\partial_{tt}-\Delta\partial_t\big)K(t)f\bl_{L^p_{xy}}
\lesssim  \langle t\rangle^{-\frac1{p'}-\frac\beta2}\big\|\langle \nabla \rangle^{\frac2{p'}+\beta+} f\big\|_{L^1_{xy}}.
\end{align*}
\end{prop}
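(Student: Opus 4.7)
The plan is to repeat the argument of Proposition~\ref{lem:Ku3-L} essentially verbatim, exploiting the observation made just before Lemma~\ref{lem:Ku2} that $A^{2}(\partial_{tt}+A^{2}\partial_{t})\widehat{K}$ enjoys exactly the same pointwise and mixed-Lebesgue bounds as $A^{2}(A^{2}-A|\eta|)\widehat{K}$. In particular, Lemma~\ref{lem:Ku2} has already packaged all the required $L^{q}_{\xi\eta}$ information, so the linear estimate inherits the same form as in Proposition~\ref{lem:Ku3-L}, with the $q$ appearing in the range of $\beta$ understood as $p'$.

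First I perform a Littlewood--Paley decomposition
\begin{align*}
\bl |\nabla|^{\beta}\Delta(\partial_{tt}-\Delta\partial_{t})K(t)f\bl_{L^{p}_{xy}}
&\le \bl P_{\le1}|\nabla|^{\beta}\Delta(\partial_{tt}-\Delta\partial_{t})K(t)f\bl_{L^{p}_{xy}}\\
&\quad+\sum_{N\ge1}N^{\beta}\bl P_{N}\Delta(\partial_{tt}-\Delta\partial_{t})K(t)f\bl_{L^{p}_{xy}},
\end{align*}
and apply the generalized Young inequality (Lemma~\ref{Young}) to each piece. Since $p\ge 2$, each term is controlled by the $L^{p'}_{\xi\eta}$ norm of the Fourier symbol $A^{\beta}\cdot A^{2}(\partial_{tt}+A^{2}\partial_{t})\widehat{K}(t,\xi,\eta)$ restricted to the corresponding frequency region, multiplied by $\|P_{\le1}f\|_{L^{1}_{xy}}$ or $\|P_{N}f\|_{L^{1}_{xy}}$ respectively.

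Next I invoke the two estimates of Lemma~\ref{lem:Ku2}. For the low-frequency piece, \eqref{est:ku2-1} with $q=p'$ (which requires $\beta\in[0,2-\tfrac{1}{p'}]$, exactly the range in the statement) gives
$$\bl A^{\beta}A^{2}(\partial_{tt}+A^{2}\partial_{t})\widehat{K}\bl_{L^{p'}_{\xi\eta}(A\le1)}\lesssim\langle t\rangle^{-\frac{1}{p'}-\frac{\beta}{2}}.$$
For the high-frequency pieces, \eqref{est:ku2} gives $\bl A^{2}(\partial_{tt}+A^{2}\partial_{t})\widehat{K}\bl_{L^{p'}_{\xi\eta}(A\sim N)}\lesssim N^{2/p'}\langle t\rangle^{-1-\frac{1}{2p'}}$, so summing the dyadic series yields $\sum_{N\ge1}N^{\beta+2/p'}\|P_{N}f\|_{L^{1}_{xy}}\lesssim\|\langle\nabla\rangle^{2/p'+\beta+}f\|_{L^{1}_{xy}}$, the $0^{+}$ exponent absorbing a mild logarithmic loss.

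Finally, because $\beta\le 2-\tfrac{1}{p'}$ we have $1+\tfrac{1}{2p'}\ge\tfrac{1}{p'}+\tfrac{\beta}{2}$, so the high-frequency decay rate $\langle t\rangle^{-1-\frac{1}{2p'}}$ dominates the low-frequency rate $\langle t\rangle^{-\frac{1}{p'}-\frac{\beta}{2}}$; combining the two bounds produces the claimed estimate. Since all the delicate Fourier analysis has already been carried out in Section~\ref{KernelProperty} and the argument parallels Proposition~\ref{lem:Ku3-L} step by step, there is no genuine obstacle; the only point requiring mild attention is to confirm that the endpoint $\beta=2-\tfrac{1}{p'}$ lies inside the admissible range of~\eqref{est:ku2-1}, which it does by construction.
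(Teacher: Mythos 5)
Your proposal is correct and takes essentially the same approach as the paper: the paper's proof simply notes that $\Delta(\partial_{tt}-\Delta\partial_t)K(t)$ obeys the same symbol bounds (Lemma~\ref{lem:Ku2}) as $\Delta(\Delta+\sqrt{\Delta\partial_{yy}})K(t)$ (Lemma~\ref{lem:Ku3}) and refers to the argument of Proposition~\ref{lem:Ku3-L}, which is the Littlewood--Paley plus generalized Young computation you reproduce. You also correctly identify the $q\leftrightarrow p'$ notational slip in the range of $\beta$.
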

\begin{proof}
From Lemma \ref{lem:Ku3} and Lemma \ref{lem:Ku2}, the operator $\Delta\big(\partial_{tt}-\Delta\partial_t\big)K(t)$ obeys the same estimates as $ \Delta \big(\Delta+\sqrt{\Delta \partial_{yy}}\big)K(t)$. Hence the lemma follows from the same way as in the proof of Proposition  \ref{lem:Ku3-L}.
\end{proof}

\begin{prop}\label{lem:Kn6-L}
For any smooth function $f$,
\begin{align*}
\bl\Delta(\partial_{tt}-\Delta\partial_t-\Delta)K(t)f\bl_{L^2_{xy}}\lesssim \langle t\rangle^{-\frac14}\min\Big\{\|\langle \nabla \rangle^{\frac12+}f\|_{L^1_xL^2_y}, \|\langle \nabla \rangle^{1+}f\|_{L^1_{xy}}\Big\}.
\end{align*}
\end{prop}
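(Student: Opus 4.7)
The point is that the differential operator $\partial_{tt}-\Delta\partial_t-\Delta$ applied to $K$ has the Fourier symbol $(\partial_{tt}+A^2\partial_t+A^2)\widehat K$, whose pointwise and mixed $L^p$-bounds are already catalogued in Lemma~\ref{lem:Kn6}. So the plan is a Littlewood--Paley decomposition plus the generalized Young inequality (Lemma~\ref{Young}), applied separately to the low-frequency piece $P_{\le 1}$ and the sum $\sum_{N\ge 1}P_N$, with two different choices of mixed Lebesgue exponents on $\widehat K$ corresponding to the two terms in the $\min$.

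\textbf{Step 1: the $L^1_{xy}$ bound.} For the estimate in terms of $\|\langle\nabla\rangle^{1+}f\|_{L^1_{xy}}$, I use \eqref{est:Young2} with $r=1$, $p=2$, $q=2$. By Lemma~\ref{lem:Kn6} with $q=r=2$,
\[
\bl A^2(\partial_{tt}+A^2\partial_t+A^2)\widehat K\bl_{L^2_{\xi\eta}(A\le 1)}\lesssim \langle t\rangle^{-\frac14},\qquad \bl A^2(\partial_{tt}+A^2\partial_t+A^2)\widehat K\bl_{L^2_{\xi\eta}(A\sim N)}\lesssim N\langle t\rangle^{-\frac14}.
\]
Hence
\[
\bl P_{\le 1}\Delta(\partial_{tt}-\Delta\partial_t-\Delta)K(t)f\bl_{L^2_{xy}}\lesssim \langle t\rangle^{-\frac14}\|P_{\le 1}f\|_{L^1_{xy}},
\]
and for each dyadic $N\ge 1$,
\[
\bl P_N\Delta(\partial_{tt}-\Delta\partial_t-\Delta)K(t)f\bl_{L^2_{xy}}\lesssim N\langle t\rangle^{-\frac14}\|P_Nf\|_{L^1_{xy}}.
\]
Summing the dyadic pieces with a tiny Sobolev loss $\langle\nabla\rangle^{0+}$ to make $\sum_N N\,\|P_Nf\|_{L^1_{xy}}$ convergent yields the bound $\langle t\rangle^{-\frac14}\|\langle\nabla\rangle^{1+}f\|_{L^1_{xy}}$.

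\textbf{Step 2: the $L^1_xL^2_y$ bound.} For the mixed-norm estimate in terms of $\|\langle\nabla\rangle^{\frac12+}f\|_{L^1_xL^2_y}$, I use the sharper form \eqref{est:Young1} with $p=2$, $r_1=1$, $r_2=2$, which forces $q_1=2$, $q_2=\infty$. Now Lemma~\ref{lem:Kn6} with $q=2$, $r=\infty$ gives
\[
\bl A^2(\partial_{tt}+A^2\partial_t+A^2)\widehat K\bl_{L^2_\xi L^\infty_\eta(A\le 1)}\lesssim \langle t\rangle^{-\frac14},\qquad \bl A^2(\partial_{tt}+A^2\partial_t+A^2)\widehat K\bl_{L^2_\xi L^\infty_\eta(A\sim N)}\lesssim N^{\frac12}\langle t\rangle^{-\frac14}.
\]
So the analogous Littlewood--Paley split gives
\[
\bl P_{\le 1}\Delta(\partial_{tt}-\Delta\partial_t-\Delta)K(t)f\bl_{L^2_{xy}}\lesssim \langle t\rangle^{-\frac14}\|P_{\le 1}f\|_{L^1_xL^2_y},
\]
\[
\bl P_N\Delta(\partial_{tt}-\Delta\partial_t-\Delta)K(t)f\bl_{L^2_{xy}}\lesssim N^{\frac12}\langle t\rangle^{-\frac14}\|P_Nf\|_{L^1_xL^2_y}\quad(N\ge 1),
\]
and summing with a $\langle\nabla\rangle^{0+}$ loss produces the bound $\langle t\rangle^{-\frac14}\|\langle\nabla\rangle^{\frac12+}f\|_{L^1_xL^2_y}$.

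\textbf{Main obstacle.} There is no serious obstacle beyond selecting the correct mixed-norm exponents in Lemma~\ref{Young}: the whole argument is a bookkeeping exercise built on Lemma~\ref{lem:Kn6}. The only subtle point is that the second (mixed-norm) bound saves half a derivative compared to the first, which is exactly why one needs the anisotropic $L^2_\xi L^\infty_\eta$ estimate of the symbol rather than the plain $L^2_{\xi\eta}$ one; this anisotropy corresponds to the weaker $y$-behavior of the kernel emphasized after Proposition~\ref{Kpointwise}. Taking the minimum of the two bounds completes the proof.
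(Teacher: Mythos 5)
Your proof is correct, and the core machinery — Littlewood--Paley decomposition, the generalized Young inequality of Lemma~\ref{Young}, and the symbol bounds of Lemma~\ref{lem:Kn6} — is exactly what the paper uses. The only difference is bookkeeping in the second half: the paper proves only the mixed-norm bound $\langle t\rangle^{-1/4}\|\langle\nabla\rangle^{1/2+}f\|_{L^1_xL^2_y}$ (via the $L^2_\xi L^\infty_\eta$ symbol estimate, as in your Step 2) and then obtains the $\|\langle\nabla\rangle^{1+}f\|_{L^1_{xy}}$ bound as a corollary via the one-dimensional Sobolev embedding $\|\langle\nabla\rangle^{1/2+}f\|_{L^1_xL^2_y}\lesssim\|\langle\nabla\rangle^{1+}f\|_{L^1_{xy}}$, whereas you prove the two bounds independently, using the isotropic $L^2_{\xi\eta}$ symbol estimate ($q=r=2$ in Lemma~\ref{lem:Kn6}) for the $L^1_{xy}$ bound. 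Both are valid; the paper's route is marginally shorter, while yours avoids invoking the Sobolev step and makes the trade-off between the two bounds (half a derivative versus one derivative) visible directly at the level of the symbol estimates, which is arguably more transparent.
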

\begin{proof}
From Lemma \ref{lem:Kn6} and Lemma \ref{Young},
\begin{align*}
&\quad \bl \Delta(\partial_{tt}-\Delta\partial_t-\Delta)K(t)f\bl_{L^2_{xy}}\\
&\lesssim
\bl P_{\le 1}\Delta(\partial_{tt}-\Delta\partial_t-\Delta)K(t)f\bl_{L^2_{xy}}+\sum\limits_{N\ge 1}\big\|P_{N} \Delta(\partial_{tt}-\Delta\partial_t-\Delta)K(t)f\bl_{L^2_{xy}}\\
&\lesssim
\big\|A^2(\partial_{tt}+A^2\partial_t+A^2)\widehat K(t,\xi,\eta)\big\|_{L^2_{\xi}L^\infty_{\eta}(A\le 1)}\|P_{\le 1}f\|_{L^1_{x}L^2_y}\\
&\qquad +\sum\limits_{N\ge 1}\big\|A^2(\partial_{tt}+A^2\partial_t+A^2)\widehat K(t,\xi,\eta)\big\|_{L^2_{\xi}L^\infty_{\eta}(A\sim N)}\big\|P_{N}f\big\|_{L^1_{x}L^2_y}\\
&\lesssim
\langle t\rangle^{-\frac14}\big\|\langle \nabla \rangle^{\frac12+} f\big\|_{L^1_{x}L^2_y}.
\end{align*}
Further, by the Sobolev inequlity, $\big\|\langle \nabla \rangle^{\frac12+} f\big\|_{L^1_{x}L^2_y}\lesssim \|\langle \nabla \rangle^{1+}f\|_{L^1_{xy}}$, we prove the lemma.
\end{proof}

\begin{prop}\label{lem:Kn3-L}
For any smooth function $f$,  $2\le p\le \infty$, and $\beta\ge \frac12$,
\begin{equation*}
\begin{array}{ll}
&{\rm (i),}\quad\bl\pp_x(\partial_{tt}-\Delta\partial_t-\Delta)K(t)f\bl_{L^p_{xy}}
\lesssim
 \langle t\rangle^{-\frac1{p'}}\|\langle \nabla \rangle^{1-\frac2p+}f\|_{L^1_{xy}};\\
&{\rm (ii),}\quad\bl\nabla \pp_x(\partial_{tt}-\Delta\partial_t-\Delta)K(t)f\bl_{L^2_{xy}}
\lesssim
 \langle t\rangle^{-\frac34}\|\langle \nabla \rangle^{\frac12+}f\|_{L^1_{x}L^2_y};\\
&{\rm (iii),}\quad\bl|\nabla|^\beta \pp_x(\partial_{tt}-\Delta\partial_t-\Delta)K(t)f\bl_{L^2_{xy}}
\lesssim
 \langle t\rangle^{-\frac34}\|\langle \nabla \rangle^{1+}f\|_{L^1_{xy}};\\
&{\rm (iv),}\quad\bl|\nabla|^\beta\pp_x(\partial_{tt}-\Delta\partial_t-\Delta)K(t)f\bl_{L^\infty_{xy}}
\lesssim
 \langle t\rangle^{-1}\min\Big\{\|\langle \nabla \rangle^{1+\beta+}f\|_{L^1_{xy}}+\|\langle \nabla \rangle^{\frac12+\beta+}f\|_{L^1_{x}L^2_y}\Big\}.
\end{array}
\end{equation*}
\end{prop}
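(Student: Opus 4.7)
The plan is to parallel the proofs of the preceding propositions in the section: since the Fourier symbol of $\partial_x(\partial_{tt}-\Delta\partial_t-\Delta)K(t)$ is $i\xi(\pp_{tt}+A^2\pp_t+A^2)\widehat K(t,\xi,\eta)$, every bound will come from combining the kernel estimates in Lemma \ref{lem:Kn3} with the Littlewood--Paley decomposition $f=P_{\le 1}f+\sum_{N\gtrsim 1}P_Nf$ and the generalized Young's inequality in Lemma \ref{Young}. The frequency-localization into the low region $A\le 1$ and the dyadic high regions $A\sim N$ is essential because the kernel estimates have different scalings in the two regimes.

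For (i), I would take $q=p'$ in \eqref{est:kn3-1}--\eqref{est:kn3-2} and apply the isotropic Young's inequality \eqref{est:Young2} with $(r,q)=(1,p')$. The low-frequency piece immediately yields the bound $\langle t\rangle^{-1/p'}\|P_{\le 1}f\|_{L^1_{xy}}$, while the $N$-th dyadic high-frequency piece contributes $N^{2/p'-1}\langle t\rangle^{-1/2-1/(2p')}\|P_Nf\|_{L^1_{xy}}$; since $2/p'-1=1-2/p$ and $-1/2-1/(2p')\le -1/p'$ for $p\ge 2$, summing over $N\ge 1$ with an $N^{0+}$ slack produces $\langle t\rangle^{-1/p'}\|\langle\nabla\rangle^{1-2/p+}f\|_{L^1_{xy}}$.

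For (ii), I would invoke the anisotropic Young inequality \eqref{est:Young1} with $(p;r_1,r_2;q_1,q_2)=(2;1,2;2,\infty)$, so that the kernel enters in the mixed norm $L^2_\xi L^\infty_\eta$ supplied by \eqref{est:kn3-3}--\eqref{est:kn3-4}. For (iii) and the first variant of (iv), the estimates \eqref{est:kn3-1}, \eqref{est:kn3-2} and \eqref{est:kn3-7} used with $q=2$ and $q=1$ respectively give directly the isotropic $L^2$ and $L^\infty$ bounds after a dyadic sum. For the second variant of (iv), the anisotropic Young inequality with $(p;r_1,r_2;q_1,q_2)=(\infty;1,2;1,2)$ reduces the estimate to the kernel bounds \eqref{est:kn3-5}--\eqref{est:kn3-6}, matching the $L^1_xL^2_y$ norm on $f$.

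The main subtlety will not be any single inequality but the careful pairing of indices in Lemma \ref{Young} and the verification that the dyadic sum $\sum_{N\ge 1}N^{s}\|P_Nf\|$ converges; this is where the ``$+$'' in the exponents $1-2/p+$, $\tfrac12+$, $1+$, $\tfrac12+\beta+$ and $1+\beta+$ comes from, absorbing the logarithmic divergence of $\sum N^{0}$ by upgrading to $\sum N^{0+}$. Once the correct pairing is identified in each case, all four inequalities reduce to Lemma \ref{lem:Kn3}, Bernstein's inequality, and standard Littlewood--Paley summation, exactly in the manner of Propositions \ref{lem:Kn1-L} through \ref{lem:Kn6-L}.
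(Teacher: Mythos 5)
Your proposal is correct and takes essentially the same route as the paper's proof: dyadic Littlewood--Paley decomposition combined with the (isotropic and anisotropic) generalized Young's inequality of Lemma \ref{Young} and the kernel bounds of Lemma \ref{lem:Kn3}. The only cosmetic differences are that the paper defers parts (i) and (iii) to the analogous computation in Proposition \ref{lem:Ku1-L} (after noting $\xi(\partial_{tt}+A^2\partial_t+A^2)$ and $A\xi\eta$ obey the same bounds) and obtains the $L^1_{xy}$ variant of (iv) from the $L^1_xL^2_y$ one via the embedding $\|g\|_{L^1_xL^2_y}\lesssim\|\langle\nabla\rangle^{\frac12+}g\|_{L^1_{xy}}$, whereas you derive each bound directly from \eqref{est:kn3-1}--\eqref{est:kn3-7}; both amount to the same computation.
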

\begin{proof}
From Lemma \ref{lem:Kn3} and Lemma \ref{lem:Ku1}, we find that $\xi(\partial_{tt}+A^2\partial_t+A^2)$ and $A\xi\eta$ obey the same $L^p$ estimates. Hence, the parts (i) and (iii) in the present proposition can be obtained by the same way as Proposition \ref{lem:Ku1-L} (i) and (iii).
%
%\begin{align*}
%&\quad \bl\pp_x(\partial_{tt}-\Delta\partial_t-\Delta)K(t)f\bl_{L^p_{xy}}\\
%&\lesssim
%\bl P_{\le 1}\pp_x(\partial_{tt}-\Delta\partial_t-\Delta)K(t)f\bl_{L^p_{xy}}+\sum\limits_{N\ge 1}\big\|P_{N} \pp_x(\partial_{tt}-\Delta\partial_t-\Delta)K(t)f\bl_{L^p_{xy}}\\
%&\lesssim
%\big\|\xi(\partial_{tt}+A^2\partial_t+A^2)\widehat K(t,\xi,\eta)\big\|_{L^{p'}_{\xi\eta}(A\le 1)}\|P_{\le 1}f\|_{L^1_{xy}}\\
%&\qquad +\sum\limits_{N\ge 1}\big\|\xi(\partial_{tt}+A^2\partial_t+A^2)\widehat K(t,\xi,\eta)\big\|_{L^{p'}_{\xi\eta}(A\sim N)}\big\|P_{N}f\big\|_{L^1_{xy}}\\
%&\lesssim
% \langle t\rangle^{-\frac1{p'}}\|P_{\le 1}f\|_{L^1_{xy}}+ \langle t\rangle^{-1+\frac1{2p'}}\sum\limits_{N\ge 1}N^{1-\frac2p}\big\|P_{N}f\big\|_{L^1_{xy}}\\
%&\lesssim
% \langle t\rangle^{-\frac1{p'}}\|\langle \nabla \rangle^{1-\frac2p+}f\|_{L^1_{xy}}.
%\end{align*}
For part (ii), by \eqref{est:kn3-3}, \eqref{est:kn3-4} and Lemma \ref{Young},
\begin{align*}
&\quad \bl\nabla \pp_x(\partial_{tt}-\Delta\partial_t-\Delta)K(t)f\bl_{L^2_{xy}}\\
&\lesssim
\bl P_{\le 1}\nabla\pp_x(\partial_{tt}-\Delta\partial_t-\Delta)K(t)f\bl_{L^2_{xy}}+\sum\limits_{N\ge 1}\big\|P_{N} \nabla\pp_x(\partial_{tt}-\Delta\partial_t-\Delta)K(t)f\bl_{L^2_{xy}}\\
&\lesssim
\big\|A\xi(\partial_{tt}+A^2\partial_t+A^2)\widehat K(t,\xi,\eta)\big\|_{L^2_{\xi}L^\infty_\eta(A\le 1)}\|P_{\le 1}f\|_{L^1_{x}L^2_y}\\
&\qquad +\sum\limits_{N\ge 1}\big\|A\xi(\partial_{tt}+A^2\partial_t+A^2)\widehat K(t,\xi,\eta)\big\|_{L^2_{\xi}L^\infty_\eta(A\sim N)}\big\|P_{N}f\big\|_{L^1_{x}L^2_y}\\
&\lesssim
 \langle t\rangle^{-\frac34}\|\langle \nabla \rangle^{\frac12+}f\|_{L^1_{x}L^2_y}.
\end{align*}
%Using \eqref{est:kn3-1} and \eqref{est:kn3-7} instead, we obtain (iii).
Similarly,
\begin{align*}
&\quad \bl|\nabla|^\beta \pp_x(\partial_{tt}-\Delta\partial_t-\Delta)K(t)f\bl_{L^\infty_{xy}}\\
&\lesssim
\bl P_{\le 1}|\nabla|^\beta\pp_x(\partial_{tt}-\Delta\partial_t-\Delta)K(t)f\bl_{L^\infty_{xy}}+\sum\limits_{N\ge 1}\big\|P_{N} |\nabla|^\beta\pp_x(\partial_{tt}-\Delta\partial_t-\Delta)K(t)f\bl_{L^\infty_{xy}}\\
&\lesssim
\big\|A^\beta\xi(\partial_{tt}+A^2\partial_t+A^2)\widehat K(t,\xi,\eta)\big\|_{L^1_{\xi}L^2_\eta(A\le 1)}\|P_{\le 1}f\|_{L^1_{x}L^2_y}\\
&\qquad +\sum\limits_{N\ge 1}\big\|A^\beta\xi(\partial_{tt}+A^2\partial_t+A^2)\widehat K(t,\xi,\eta)\big\|_{L^1_{\xi}L^2_\eta(A\sim N)}\big\|P_{N}f\big\|_{L^1_{x}L^2_y}\\
&\lesssim
 \langle t\rangle^{-1}\|\langle \nabla \rangle^{\frac12+\beta+}f\|_{L^1_{x}L^2_y}.
\end{align*}
At last, we use the Sobolev inequality, $\|g\|_{L^1_{x}L^2_y}\lesssim \|\langle \nabla\rangle^{\frac12+}g\|_{L^1_{xy}}$, to complete the proof of the lemma.
\end{proof}

\begin{prop}\label{lem:Kn8-L}
For any smooth function $f$,  $\beta\in [\frac12,1]$,
\begin{equation*}
\begin{array}{ll}
&{\rm (i),}\quad\bl\pp_{xx}(\partial_{tt}-\Delta\partial_t-\Delta)K(t)f\bl_{L^2_{xy}}
\lesssim
 \min\Big\{\langle t\rangle^{-\frac34}\|\langle \nabla \rangle^{\frac12+}f\|_{L^1_{x}L^2_y}, \langle t\rangle^{-1}\|\langle \nabla \rangle^{1+}f\|_{L^1_{xy}}\Big\};\\
&{\rm (ii),}\quad\bl|\nabla|^\beta \pp_{xx}(\partial_{tt}-\Delta\partial_t-\Delta)K(t)f\bl_{L^2_{xy}}
\lesssim
 \langle t\rangle^{-\frac{\beta+1}{2}}\|\langle \nabla \rangle^{\beta+}f\|_{L^2_{xy}};\\
&{\rm (iii),}\quad\bl\pp_{xx}(\partial_{tt}-\Delta\partial_t-\Delta)K(t)f\bl_{L^\infty_{xy}}
\lesssim
  \langle t\rangle^{-1}\|\langle \nabla \rangle^{1+}f\|_{L^2_{xy}}.
\end{array}
\end{equation*}
\end{prop}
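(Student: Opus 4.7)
The proof proposal is to follow exactly the Littlewood--Paley + generalized Young's inequality scheme already used for Proposition~\ref{lem:Kn3-L}, with the Fourier-side multiplier bounds now coming from Lemma~\ref{lem:Kn8} (for $\xi^2(\partial_{tt}+A^2\partial_t+A^2)\widehat K$) instead of Lemma~\ref{lem:Kn3}. Writing $\Phi:=\pp_{xx}(\pp_{tt}-\Delta\pp_t-\Delta)K(t)$, we decompose
$f=P_{\le1}f+\sum_{N\ge1}P_Nf$ and estimate $\|P_{\le1}\Phi f\|$ and $\|P_N\Phi f\|$ separately, each by Lemma~\ref{Young} with an appropriate choice of $(p,q,r)$ dictated by the target norm. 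The symbol being bounded is always $A^\beta\,\xi^2\bigl(\pp_{tt}+A^2\pp_t+A^2\bigr)\widehat K(t,\xi,\eta)$ (with $\beta=0$ for (i) and (iii)).

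For (i), both inequalities are instances of the $L^1_xL^2_y\to L^2_{xy}$ (or $L^1_{xy}\to L^2_{xy}$) mechanism. Taking $(p,r_1,r_2,q_1,q_2)=(2,1,2,2,\infty)$ in \eqref{est:Young1}, the low-frequency piece uses \eqref{est:ku5-1} to gain $\langle t\rangle^{-3/4}$ and the dyadic pieces use \eqref{est:ku5-2} with the gain $N^{1/2}\langle t\rangle^{-1}$, which is summable against $\|P_Nf\|_{L^1_xL^2_y}$ at the cost of a $\langle\nabla\rangle^{1/2+}$ loss. This yields the first bound; the second then follows from the anisotropic Sobolev embedding $\|g\|_{L^1_xL^2_y}\lesssim \|\langle\nabla\rangle^{1/2+}g\|_{L^1_{xy}}$.

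For (ii), this is an $L^2\to L^2$ estimate, so by Plancherel we only need $L^\infty_{\xi\eta}$ control of the symbol. At low frequency $A\le1$ we use \eqref{est:ku5-4}, which provides exactly the desired decay $\langle t\rangle^{-(\beta+1)/2}$. For $A\sim N\ge1$ the pointwise bound \eqref{est:K-tt2t2} gives $|\xi^2(\pp_{tt}+A^2\pp_t+A^2)\widehat K|\lesssim e^{-ct}$; transferring the $|\nabla|^\beta$ to the data and summing in $N$ with a trivial Besov-style trick $\sum_{N\ge1}\|P_N g\|_{L^2}\lesssim\|\langle\nabla\rangle^{0+}g\|_{L^2}$ gives a term $e^{-ct}\|\langle\nabla\rangle^{\beta+}f\|_{L^2}$, which is absorbed into $\langle t\rangle^{-(\beta+1)/2}\|\langle\nabla\rangle^{\beta+}f\|_{L^2}$. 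For (iii), $L^2\to L^\infty$ forces use of $\|\cdot\|_{L^2_{\xi\eta}}$ of the symbol via \eqref{est:Young2} with $(p,r,q)=(\infty,2,2)$: the low-frequency part is controlled by \eqref{est:Kn9-4} with $q=2$ yielding $\langle t\rangle^{-1}$, while for $A\sim N$ the bound \eqref{est:Kn9-3} gives $N\langle t\rangle^{-5/4}$, summable against $\|P_Nf\|_{L^2}$ at the cost of $\langle\nabla\rangle^{1+}$.

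The only nonroutine step is keeping the Besov endpoint losses consistent with the statement of the proposition (i.e.\ making sure the $+$ in $\langle\nabla\rangle^{\bullet+}$ is enough to cover the dyadic sums in $N$), and packaging the two estimates in (i) coherently via the Sobolev embedding. Everything else is a direct transcription of the template established in Propositions \ref{lem:Kn1-L}--\ref{lem:Kn3-L}, fed by the specific Fourier bounds assembled in Lemma~\ref{lem:Kn8}.
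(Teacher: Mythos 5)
Your proposal is mostly on target and follows the same Littlewood--Paley/Young's inequality template the paper uses, with the $L^2_\xi L^\infty_\eta$ estimates \eqref{est:ku5-1}--\eqref{est:ku5-2} correctly identified for the $L^1_xL^2_y$ version of (i), and \eqref{est:ku5-4}, \eqref{est:Kn9-3}, \eqref{est:Kn9-4} correctly used for (ii) and (iii). However, there is one genuine gap in your treatment of part (i): you claim the second bound, $\langle t\rangle^{-1}\|\langle\nabla\rangle^{1+}f\|_{L^1_{xy}}$, ``follows from the anisotropic Sobolev embedding $\|g\|_{L^1_xL^2_y}\lesssim\|\langle\nabla\rangle^{1/2+}g\|_{L^1_{xy}}$'' applied to the first bound. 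That step would yield only $\langle t\rangle^{-3/4}\|\langle\nabla\rangle^{1+}f\|_{L^1_{xy}}$, since Sobolev cannot improve the time decay---and $\langle t\rangle^{-1}$ is strictly better than $\langle t\rangle^{-3/4}$, which is precisely why the two bounds appear in a $\min\{\cdot,\cdot\}$. The $\langle t\rangle^{-1}$ bound requires a separate computation: split $f$ into $P_{\le1}f$ and $P_{\ge1}f$, use the $L^2_{\xi\eta}$ symbol norm (\eqref{est:Kn9-4} with $q=2$, giving $\langle t\rangle^{-1}$) paired with $\|P_{\le1}f\|_{L^1_{xy}}$ on the low side, and the $L^\infty_{\xi\eta}$ symbol norm (\eqref{est:Kn9-3} with $q=\infty$, again $\langle t\rangle^{-1}$) paired with $\|P_{\ge1}f\|_{L^2_{xy}}\lesssim\|\langle\nabla\rangle^{1+}f\|_{L^1_{xy}}$ on the high side. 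This two-piece split with different norms on each piece is exactly what the paper does in the first display of the proof of Proposition~\ref{lem:Ku1''-L}~(i), which is the template the paper points you to. (A minor cosmetic point in (ii): the high-frequency piece is not $e^{-ct}$ but $\langle t\rangle^{-1}$ because of the $\chi_{|\xi|\lesssim A^2}\frac{\xi^2}{A^2}e^{-\xi^2 t/(2A^2)}$ contribution at $A\sim N\ge1$; the conclusion is unaffected since $\langle t\rangle^{-1}\le\langle t\rangle^{-(1+\beta)/2}$ for $\beta\le1$.)
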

\begin{proof}
From Lemma \ref{lem:Kn8} and Lemma \ref{lem:Ku1''}, we note that $\xi^2(\partial_{tt}+A^2\partial_t+A^2)$ and $A\xi^2\eta$ obey the same $L^p$ estimates. Thus, the same way in the proof of   Proposition \ref{lem:Ku1''-L} (i) gives part (i) in this proposition.
%
%\begin{align*}
%&\quad \bl\pp_{xx}(\partial_{tt}-\Delta\partial_t-\Delta)K(t)f\bl_{L^2_{xy}}\\
%&\lesssim
%\bl P_{\le 1}\pp_{xx}(\partial_{tt}-\Delta\partial_t-\Delta)K(t)f\bl_{L^2_{xy}}+\sum\limits_{N\ge 1}\big\|P_{N} \pp_{xx}(\partial_{tt}-\Delta\partial_t-\Delta)K(t)f\bl_{L^2_{xy}}\\
%&\lesssim
%\big\|\xi^2(\partial_{tt}+A^2\partial_t+A^2)\widehat K(t,\xi,\eta)\big\|_{L^2_{\xi\eta}(A\le 1)}\|P_{\le 1}f\|_{L^1_{xy}}\\
%&\qquad +\sum\limits_{N\ge 1}\big\|\xi^2(\partial_{tt}+A^2\partial_t+A^2)\widehat K(t,\xi,\eta)\big\|_{L^2_{\xi\eta}(A\sim N)}\big\|P_{N}f\big\|_{L^1_{xy}}\\
%&\lesssim
% \langle t\rangle^{-1}\|P_{\le 1}f\|_{L^1_{xy}}+ \langle t\rangle^{-\frac54}\sum\limits_{N\ge 1}N\big\|P_{N}f\big\|_{L^1_{xy}}\\
%&\lesssim
% \langle t\rangle^{-1}\|\langle \nabla \rangle^{1+}f\|_{L^1_{xy}}.
%\end{align*}
%On the other hand, by \eqref{est:ku5-1} and \eqref{est:ku5-2} instead, we have
%\begin{align*}
%&\quad \bl\pp_{xx}(\partial_{tt}-\Delta\partial_t-\Delta)K(t)f\bl_{L^2_{xy}}\\
%&\lesssim
%\big\|\xi^2(\partial_{tt}+A^2\partial_t+A^2)\widehat K(t,\xi,\eta)\big\|_{L^2_{\xi}L^\infty_\eta(A\le 1)}\|P_{\le 1}f\|_{L^1_{x}L^2_y}\\
%&\qquad +\sum\limits_{N\ge 1}\big\|\xi^2(\partial_{tt}+A^2\partial_t+A^2)\widehat K(t,\xi,\eta)\big\|_{L^2_{\xi}L^\infty_\eta(A\sim N)}\big\|P_{N}f\big\|_{L^1_{x}L^2_y}\\
%&\lesssim
% \langle t\rangle^{-\frac34}\|\langle \nabla \rangle^{\frac12+}f\|_{L^1_{x}L^2_y}.
%\end{align*}
Moreover, for part (ii) from \eqref{est:Kn9-3} ($q=\infty$),  \eqref{est:ku5-4} and Lemma \ref{Young},
\begin{align*}
&\quad \bl|\nabla|^\beta \pp_{xx}(\partial_{tt}-\Delta\partial_t-\Delta)K(t)f\bl_{L^2_{xy}}\\
&\lesssim
\bl P_{\le 1}|\nabla|^\beta\pp_{xx}(\partial_{tt}-\Delta\partial_t-\Delta)K(t)f\bl_{L^2_{xy}}+\sum\limits_{N\ge 1}\big\|P_{N} |\nabla|^\beta\pp_{xx}(\partial_{tt}-\Delta\partial_t-\Delta)K(t)f\bl_{L^2_{xy}}\\
&\lesssim
\big\|A^\beta\xi^2(\partial_{tt}+A^2\partial_t+A^2)\widehat K(t,\xi,\eta)\big\|_{L^\infty_{\xi\eta}(A\le 1)}\|P_{\le 1}f\|_{L^2_{xy}}\\
&\qquad +\sum\limits_{N\ge 1}N^\beta\big\|\xi^2(\partial_{tt}+A^2\partial_t+A^2)\widehat K(t,\xi,\eta)\big\|_{L^\infty_{\xi\eta}(A\sim N)}\big\|P_{N}f\big\|_{L^2_{xy}}\\
&\lesssim
 \langle t\rangle^{-\frac{1+\beta}{2}}\|\langle \nabla \rangle^{\beta+}f\|_{L^2_{xy}}.
\end{align*}
Similarly as (i), and using \eqref{est:Kn9-3} and \eqref{est:Kn9-3} ($q=2$)  instead,   we get (iii).
%\begin{align*}
%&\quad \bl\pp_{xx}(\partial_{tt}-\Delta\partial_t-\Delta)K(t)f\bl_{L^\infty_{xy}}\\
%&\lesssim
%\big\|\xi^2(\partial_{tt}+A^2\partial_t+A^2)\widehat K(t,\xi,\eta)\big\|_{L^2_{\xi\eta}(A\le 1)}\|P_{\le 1}f\|_{L^2_{xy}}\\
%&\qquad +\sum\limits_{N\ge 1}\big\|\xi^2(\partial_{tt}+A^2\partial_t+A^2)\widehat K(t,\xi,\eta)\big\|_{L^2_{\xi\eta}(A\sim N)}\big\|P_{N}f\big\|_{L^2_{xy}}\\
%&\lesssim
% \langle t\rangle^{-1}\|\langle \nabla \rangle^{1+}f\|_{L^2_{xy}}.
%\end{align*}
\end{proof}

\begin{prop}\label{lem:Kn11-L}
For any smooth function $f$,  $\beta_1\in [\frac12,2], \beta_2\in [0,2]$,
\begin{equation*}
\begin{array}{ll}
&{\rm (i),}\quad\bl|\nabla|^{\beta_1}(\partial_{tt}-\Delta\partial_t-\pp_{xx})K(t)f\bl_{L^2_{xy}}
\lesssim
 \langle t\rangle^{-\frac{\beta_1}{2}}\|\langle \nabla \rangle^{\beta_1-1+}f\|_{L^1_{xy}};\\
&{\rm (ii),}\quad\bl|\nabla|^{\beta_2}\nabla (\partial_{tt}-\Delta\partial_t-\pp_{xx})K(t)f\bl_{L^2_{xy}}
\lesssim
 \langle t\rangle^{-\frac{\beta_2}{2}}\|\langle \nabla \rangle^{\beta_2-1}f\|_{L^2_{xy}}.;\\
&{\rm (iii),}\quad\bl\nabla (\partial_{tt}-\Delta\partial_t-\pp_{xx})K(t)f\bl_{L^\infty_{xy}}
\lesssim
  \langle t\rangle^{-1}\|\langle \nabla \rangle^{1+}f\|_{L^1_{xy}};\\
&{\rm (iv),}\quad\bl\Delta (\partial_{tt}-\Delta\partial_t-\pp_{xx})K(t)f\bl_{L^\infty_{xy}}
\lesssim
  \langle t\rangle^{-1}\|\langle \nabla \rangle^{1+}f\|_{L^2_{xy}}.
\end{array}
\end{equation*}
\end{prop}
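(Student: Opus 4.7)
The plan is to run the same Littlewood--Paley plus generalized Young argument employed in Proposition \ref{lem:Kn4-L}, with the symbol bounds from Lemma \ref{lem:Kn11} replacing those from Lemma \ref{lem:Kn4}. For each of (i)--(iv), I decompose
\begin{equation*}
(\partial_{tt}-\Delta\partial_t-\partial_{xx})K(t)f = P_{\le 1}\bigl[(\partial_{tt}-\Delta\partial_t-\partial_{xx})K(t)f\bigr] + \sum_{N\ge 1} P_N\bigl[(\partial_{tt}-\Delta\partial_t-\partial_{xx})K(t)f\bigr],
\end{equation*}
apply \eqref{est:Young2} (or \eqref{est:Young1}) to each dyadic piece, and then sum in $N$. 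Powers of the frequency on the symbol side are traded for powers of $N$, which in turn are absorbed into the $\langle\nabla\rangle^{\cdot}f$ norms on the right-hand sides.

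For (i), I pair an $L^2_{\xi\eta}$ bound on the symbol with the $L^1_{xy}$ norm of the data: the low-frequency piece is controlled by \eqref{est:kn11-3} with exponent $\beta_1$, which is exactly where the hypothesis $\beta_1 \ge \tfrac12$ is needed so that the symbol remains square-integrable near the origin; each high-frequency piece uses \eqref{est:kn11-1} with $q = 2$, yielding a factor $N^{\beta_1 - 1}$ per block that is summable against $\|P_N f\|_{L^1_{xy}}$ after a standard $\epsilon$-loss. For (ii), I move $\langle\nabla\rangle^{\beta_2 - 1}$ onto $f$: the low piece uses the $L^\infty_{\xi\eta}$ bound \eqref{est:kn11-4}, while the high-frequency piece uses \eqref{est:kn11-5} (with Bernstein converting a leftover $N$ into $\langle\nabla\rangle$ on $f$). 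For (iii), the $L^\infty_{xy}$ target forces an $L^1_{\xi\eta}$ bound on the symbol: the low piece follows from \eqref{est:kn11-2} with $q = 1$, and each high-frequency piece from \eqref{est:kn11-1} with $q = 1$, producing $N\langle t\rangle^{-1}$ per block and summed against $\|P_N f\|_{L^1_{xy}}$. Part (iv) is the analogue of (iii) with $L^2$ data: I use an $L^2_{\xi\eta}$ bound on the symbol (Hausdorff--Young), with the low-frequency piece from \eqref{est:kn11-3} at $\beta_1 = 2$ and the high-frequency piece from the pointwise bound \eqref{est:kn11-5}.

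The main---and essentially only---obstacle is bookkeeping the indices so that each dyadic sum over $N \gtrsim 1$ converges. This is automatic in every case, because each high-frequency bound in Lemma \ref{lem:Kn11} carries strictly better decay in $N$ than what the corresponding low-frequency estimate forces, so the small $N^{0+}$ gap is absorbed harmlessly by the $\langle\nabla\rangle^{\cdot+}f$ norms on the right-hand sides. No ingredient beyond what is already developed in Section \ref{KernelProperty} is required.
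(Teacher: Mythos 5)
Your proposal is correct and follows essentially the same route as the paper: Littlewood--Paley decomposition, generalized Young (Lemma \ref{Young}), and the symbol bounds of Lemma \ref{lem:Kn11}, with exactly the $L^2$-symbol/$L^1$-data pairing for (i), the $L^\infty$-symbol/$L^2$-data pairing via \eqref{est:kn11-4}--\eqref{est:kn11-5} for (ii), and the $L^1$-symbol/$L^1$-data pairing for (iii). The only slip is in (iv): you announce an $L^2_{\xi\eta}$ Hausdorff--Young bound but cite the $L^\infty_{\xi\eta}$ pointwise estimate \eqref{est:kn11-5} for the high-frequency blocks; what one actually needs there (and what the paper uses) is the $L^2_{\xi\eta}(A\sim N)$ bound from \eqref{est:kn11-1} with $q=2$, or equivalently \eqref{est:kn11-5} together with the measure $\sim N^2$ of the annulus --- an easy fix but worth stating explicitly.
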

\begin{proof}
From Lemma \ref{lem:Kn11} and Lemma \ref{Young},
\begin{align*}
&\quad \bl|\nabla|^{\beta_1}(\partial_{tt}-\Delta\partial_t-\pp_{xx})K(t)f\bl_{L^2_{xy}}\\
&\lesssim
\bl P_{\le 1}|\nabla|^{\beta_1}(\partial_{tt}-\Delta\partial_t-\pp_{xx})K(t)f\bl_{L^2_{xy}}+\sum\limits_{N\ge 1}\big\|P_{N}|\nabla|^{\beta_1}(\partial_{tt}-\Delta\partial_t-\pp_{xx})K(t)f\bl_{L^2_{xy}}\\
&\lesssim
\big\|A^{\beta_1}(\partial_{tt}+A^2\partial_t+\xi^2)\widehat K(t,\xi,\eta)\big\|_{L^2_{\xi\eta}(A\le 1)}\|P_{\le 1}f\|_{L^1_{xy}}\\
&\qquad +\sum\limits_{N\ge 1}N^{\beta_1}\big\|(\partial_{tt}+A^2\partial_t+\xi^2)\widehat K(t,\xi,\eta)\big\|_{L^2_{\xi\eta}(A\sim N)}\big\|P_{N}f\big\|_{L^1_{xy}}\\
&\lesssim
 \langle t\rangle^{-\frac{\beta_1}{2}}\|\langle \nabla \rangle^{\beta_1-1+}f\|_{L^1_{xy}}.
\end{align*}
By \eqref{est:kn11-5} and \eqref{est:kn11-4} instead, we have
\begin{align*}
&\quad \bl|\nabla|^{\beta_2}\nabla (\partial_{tt}-\Delta\partial_t-\pp_{xx})K(t)f\bl_{L^2_{xy}}\\
&\lesssim
\bl P_{\le 1}|\nabla|^{\beta_2}(\partial_{tt}-\Delta\partial_t-\pp_{xx})K(t)f\bl_{L^2_{xy}}+\big\|P_{\ge1}|\nabla|^{\beta_2}(\partial_{tt}-\Delta\partial_t-\pp_{xx})K(t)f\bl_{L^2_{xy}}\\
&\lesssim
\big\|A^{\beta_2+1}(\partial_{tt}+A^2\partial_t+\xi^2)\widehat K(t,\xi,\eta)\big\|_{L^\infty_{\xi\eta}(A\le 1)}\|P_{\le 1}f\|_{L^2_{xy}}\\
&\qquad +\big\|A^2(\partial_{tt}+A^2\partial_t+\xi^2)\widehat K(t,\xi,\eta)\big\|_{L^\infty_{\xi\eta}(A\ge 1)}\big\|P_{\ge1}\langle \nabla\rangle^{\beta_2-1}f\big\|_{L^2_{xy}}\\
&\lesssim
 \langle t\rangle^{-\frac{\beta_2}{2}}\|\langle \nabla \rangle^{\beta_2-1}f\|_{L^2_{xy}}.
\end{align*}
Simlarly,
\begin{align*}
&\quad \bl\nabla(\partial_{tt}-\Delta\partial_t-\pp_{xx})K(t)f\bl_{L^\infty_{xy}}\\
&\lesssim
\big\|A(\partial_{tt}+A^2\partial_t+\xi^2)\widehat K(t,\xi,\eta)\big\|_{L^1_{\xi\eta}(A\le 1)}\|P_{\le 1}f\|_{L^1_{xy}}\\
&\qquad +\sum\limits_{N\ge 1}N\big\|(\partial_{tt}+A^2\partial_t+\xi^2)\widehat K(t,\xi,\eta)\big\|_{L^1_{\xi\eta}(A\sim N)}\big\|P_{N}f\big\|_{L^1_{xy}}\\
&\lesssim
 \langle t\rangle^{-1}\|\langle \nabla \rangle^{1+}f\|_{L^2_{xy}};
\end{align*}
and
\begin{align*}
&\quad \bl\Delta(\partial_{tt}-\Delta\partial_t-\pp_{xx})K(t)f\bl_{L^\infty_{xy}}\\
&\lesssim
\big\|A^2(\partial_{tt}+A^2\partial_t+\xi^2)\widehat K(t,\xi,\eta)\big\|_{L^2_{\xi\eta}(A\le 1)}\|P_{\le 1}f\|_{L^2_{xy}}\\
&\qquad +\sum\limits_{N\ge 1}N^2\big\|(\partial_{tt}+A^2\partial_t+\xi^2)\widehat K(t,\xi,\eta)\big\|_{L^2_{\xi\eta}(A\sim N)}\big\|P_{N}f\big\|_{L^2_{xy}}\\
&\lesssim
 \langle t\rangle^{-1}\|\langle \nabla \rangle^{1+}f\|_{L^2_{xy}};
\end{align*}
\end{proof}

\begin{prop}\label{lem:Kn5-L}
For any smooth function $f$,  $\beta\in [0,2]$,
\begin{equation*}
\begin{array}{ll}
&{\rm (i),}\quad\bl|\nabla|^{\beta}\partial_t(\partial_{tt}-\Delta\partial_t-\Delta)K(t)f\bl_{L^2_{xy}}
\lesssim
 \min\Big\{\langle t\rangle^{-\min\{\frac54,\frac{\beta+1}{2}\}}\|\langle \nabla \rangle^{\beta-1+}f\|_{L^1_{xy}}, \langle t\rangle^{-\frac{\beta}{2}}\|\langle \nabla \rangle^{\beta-2}f\|_{L^2_{xy}}\Big\};\\
&{\rm (ii),}\quad\bl\pp_t (\partial_{tt}-\Delta\partial_t-\Delta)K(t)f\bl_{L^\infty_{xy}}
\lesssim
 \langle t\rangle^{-1}\|\langle \nabla \rangle^{0+}f\|_{L^1_{xy}}.;\\
&{\rm (iii),}\quad\bl\nabla \pp_t (\partial_{tt}-\Delta\partial_t-\Delta)K(t)f\bl_{L^\infty_{xy}}
\lesssim
  \langle t\rangle^{-1}\|\langle \nabla \rangle^{0+}f\|_{L^2_{xy}};\\
&{\rm (iv),}\quad\bl\Delta\pp_t  (\partial_{tt}-\Delta\partial_t-\Delta)K(t)f\bl_{L^\infty_{xy}}
\lesssim
  \min\Big\{\langle t\rangle^{-\frac54}\|\langle \nabla \rangle^{1+}f\|_{L^2_{xy}}, \langle t\rangle^{-1}\|f\|_{L^\infty_{xy}}\Big\}
\end{array}
\end{equation*}
\end{prop}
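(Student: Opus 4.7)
The plan is to prove each of (i)--(iv) following the pattern of Propositions \ref{lem:Kn1-L}--\ref{lem:Kn11-L}: split the input as $f=P_{\le 1}f+\sum_{N\ge 1\text{ dyadic}}P_Nf$, apply the generalized Young inequality of Lemma \ref{Young} to each Littlewood--Paley block, and then invoke the frequency-side bounds of Lemma \ref{lem:Kn5} and Lemma \ref{lem:Ku6}, together with the pointwise refinements \eqref{est:Kn8-2} and \eqref{est:Ku6-2}. The $0+$ regularity losses on the right-hand sides are precisely what makes the dyadic sums convergent.

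For (i), the first bound: on $P_{\le 1}$ apply \eqref{est:ku4-1} when $\beta\in[0,\tfrac32]$ to pick up $\langle t\rangle^{-(1+\beta)/2}$; for $\beta\in(\tfrac32,2]$ freeze $\beta=\tfrac32$ in \eqref{est:ku4-1} and pass the residual $|\nabla|^{\beta-3/2}$ onto $P_{\le 1}f$ via Bernstein, keeping the low-frequency cap at $\langle t\rangle^{-5/4}$. On each $P_N$, \eqref{est:Kn6-2} with $q=2$ furnishes $N^{\beta-1}\langle t\rangle^{-5/4}\|P_Nf\|_{L^1}$, and the dyadic sum converges into $\|\langle\nabla\rangle^{\beta-1+}f\|_{L^1}$. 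The second bound in (i) is a pure $L^\infty$-multiplier estimate: by Plancherel,
\begin{equation*}
\bigl\||\nabla|^{\beta}\pp_t(\pp_{tt}-\Delta\pp_t-\Delta)K(t)f\bigr\|_{L^2_{xy}}
\lesssim \bigl\|\langle A\rangle^{2-\beta}A^{\beta}\pp_t(\pp_{tt}+A^2\pp_t+A^2)\widehat K\bigr\|_{L^\infty_{\xi\eta}}\bigl\|\langle\nabla\rangle^{\beta-2}f\bigr\|_{L^2_{xy}},
\end{equation*}
and the multiplier is $\lesssim\langle t\rangle^{-\beta/2}$ by \eqref{est:Kn8-2}. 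Parts (ii) and (iii) follow the same scheme with lower-order weights: (ii) uses \eqref{est:Kn6} at $q=1$ on the low block (supplying the controlling $\langle t\rangle^{-1}$) and \eqref{est:Kn6-2} at $q=1$ on each high block, while (iii) uses \eqref{est:Kn6-1} at $q=2$ on the low block, paired with $\|P_{\le 1}f\|_{L^2}$ through the $L^2\times L^2\to L^\infty$ instance of Lemma \ref{Young}, together with \eqref{est:Kn6-2} at $q=2$ multiplied by the extra factor $N$ on each high block to account for the gradient.

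Part (iv) is the main difficulty. The first branch, $\langle t\rangle^{-5/4}\|\langle\nabla\rangle^{1+}f\|_{L^2}$, is routine: apply \eqref{est:Kn8-1} at $q=2$ on $P_{\le 1}$ and \eqref{est:Kn6-2} at $q=2$ augmented by the factor $N^2$ from $\Delta$ on each $P_N$, then sum absorbing an $\langle\nabla\rangle^{1+}$. The second branch, $\langle t\rangle^{-1}\|f\|_{L^\infty}$, is the obstacle: the endpoint $L^1\star L^\infty\to L^\infty$ inequality demands an honest physical-space $L^1_{xy}$ bound on the kernel $\Delta\pp_t(\pp_{tt}-\Delta\pp_t-\Delta)K(t)$, which is not provided directly by the Fourier-side bounds. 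The plan is to couple \eqref{est:Kn8-1} at $q=1$ (an $L^1_{\xi\eta}$, hence $L^\infty_{xy}$, bound of rate $\langle t\rangle^{-3/2}$ on the low block) with a weighted $\|\langle x,y\rangle^3\cdot\|_{L^\infty_{xy}}$ bound obtained by differentiating the symbol three times in $(\xi,\eta)$ and feeding the result to \eqref{est:Kn8-2} and \eqref{est:Ku6-2}; writing $\|K\|_{L^1_{xy}}\le\|\langle x,y\rangle^{-3}\|_{L^1_{xy}}\|\langle x,y\rangle^3 K\|_{L^\infty_{xy}}$ then converts this into the needed $L^1_{xy}$ control. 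On each dyadic shell $A\sim N$ the same weighted argument yields an $L^1_{xy}$ kernel bound with polynomial loss in $N$ that must be balanced against the Bernstein loss in $\|P_Nf\|_{L^\infty}\lesssim \|f\|_{L^\infty}$; the slower low-frequency rate ultimately sets the final $\langle t\rangle^{-1}$ decay, with the $t^{1/2}$ deficit between $\langle t\rangle^{-3/2}$ and $\langle t\rangle^{-1}$ paying for the dyadic summation.
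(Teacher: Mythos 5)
Your treatment of (i)--(iii) and of the first branch of (iv) tracks the paper's proof essentially line for line: Littlewood--Paley split, generalized Young at the right exponents, and the $L^\infty_{\xi\eta}$ multiplier bound \eqref{est:Kn8-2} for the $L^2\to L^2$ variant. The small deviation in the high-$\beta$ case of (i), where you freeze $\beta=\tfrac32$ in \eqref{est:ku4-1} and push $|\nabla|^{\beta-3/2}$ onto $P_{\le 1}f$ by Bernstein, is a harmless cosmetic reorganization of the same computation.

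The interesting point is the second branch of (iv). You correctly observe that passing from the $L^\infty_{\xi\eta}$ multiplier bound of \eqref{est:Kn8-2} (with $\beta=2$) to the claimed $L^\infty_{xy}\to L^\infty_{xy}$ bound is \emph{not} an instance of Lemma~\ref{Young}, which requires the input exponent $r\le 2$. What you should be aware of, however, is that the paper's own proof of this branch is precisely the one-line assertion
\begin{equation*}
\bl\Delta\pp_t(\partial_{tt}-\Delta\partial_t-\Delta)K(t)f\bl_{L^\infty_{xy}}\lesssim\big\|A^2\pp_t(\partial_{tt}+A^2\partial_t+A^2)\widehat K\big\|_{L^\infty_{\xi\eta}}\|f\|_{L^\infty_{xy}},
\end{equation*}
i.e.\ it invokes the very implication you flag as unproved. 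So your critique of the ``obvious'' route is valid and applies equally to the paper's own argument.

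Your proposed repair, bounding the kernel's $L^1_{xy}$ norm via $\|\langle(x,y)\rangle^3 K\|_{L^\infty_{xy}}\lesssim\|\nabla_{\xi,\eta}^3\widehat{h}\|_{L^1_{\xi\eta}}$ and the like, is the standard instinct but is not as routine as you make it sound. The exact symbol $A^2\pp_t(\pp_{tt}+A^2\pp_t+A^2)\widehat K$ (see \eqref{K-ttt2t2}) involves $|\eta|$, the square roots $\sqrt{\tfrac14A^4-A^2\pm A|\eta|}$, and the change of type where the radicand crosses zero; it is not a classical Mikhlin--H\"ormander symbol. Differentiating it three times in $(\xi,\eta)$ produces negative powers of $A$ near the origin (in 2D an $A^{-2}$ singularity is already borderline non-integrable) and non-smoothness across $\eta=0$ and across $\tfrac14A^4-A^2+A|\eta|=0$. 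Moreover the pointwise inequalities \eqref{est:K-ttt2t2} et al.\ are \emph{upper bounds} on $|\widehat K|$, not estimates one can differentiate; the weighted argument needs the exact symbol, not those majorants. In short: your sketch correctly names the missing ingredient ($L^1_{xy}$ kernel control), but the route you propose is substantially more delicate than indicated and is not carried out, so (iv)'s second branch remains a gap in both your write-up and the paper's proof as stated.
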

\begin{proof}
From Lemma \ref{lem:Kn5} and Lemma \ref{Young},
\begin{align*}
&\quad \bl|\nabla|^{\beta}\partial_t(\partial_{tt}-\Delta\partial_t-\Delta)K(t)f\bl_{L^2_{xy}}\\
&\lesssim
\bl P_{\le 1}|\nabla|^{\beta}\partial_t(\partial_{tt}-\Delta\partial_t-\Delta)K(t)f\bl_{L^2_{xy}}+\sum\limits_{N\ge 1}\big\|P_{N}|\nabla|^{\beta}\partial_t(\partial_{tt}-\Delta\partial_t-\Delta)K(t)f\bl_{L^2_{xy}}\\
&\lesssim
\big\|A^{\beta}\pp_t(\partial_{tt}+A^2\partial_t+A^2)\widehat K(t,\xi,\eta)\big\|_{L^2_{\xi\eta}(A\le 1)}\|P_{\le 1}f\|_{L^1_{xy}}\\
&\qquad +\sum\limits_{N\ge 1}N^{\beta}\big\|\pp_t(\partial_{tt}+A^2\partial_t+A^2)\widehat K(t,\xi,\eta)\big\|_{L^2_{\xi\eta}(A\sim N)}\big\|P_{N}f\big\|_{L^1_{xy}}\\
&\lesssim
 \langle t\rangle^{-\min\{\frac54,\frac{\beta+1}{2}\}}\|P_{\le 1}f\|_{L^1_{xy}} +\langle t\rangle^{-\frac54}\sum\limits_{N\ge 1}N^{\beta-1}\big\|P_{N}f\big\|_{L^1_{xy}}\\
&\lesssim
 \langle t\rangle^{-\min\{\frac54,\frac{\beta+1}{2}\}}\|\langle \nabla \rangle^{\beta-1+}f\|_{L^1_{xy}}.
\end{align*}
Similarly, using \eqref{est:Kn8-2},
\begin{align*}
&\quad \bl|\nabla|^{\beta}\partial_t(\partial_{tt}-\Delta\partial_t-\Delta)K(t)f\bl_{L^2_{xy}}\\
&\lesssim
\big\|\langle A\rangle^{2-\beta}A^{\beta}\pp_t(\partial_{tt}+A^2\partial_t+A^2)\widehat K(t,\xi,\eta)\big\|_{L^\infty_{\xi\eta}}\|\langle \nabla\rangle^{2-\beta}f\|_{L^2_{xy}}\\
&\lesssim
 \langle t\rangle^{-\frac{\beta}{2}}\|\langle \nabla \rangle^{\beta-2}f\|_{L^2_{xy}}.
\end{align*}
Moreover, by \eqref{est:Kn6} and  \eqref{est:Kn6-2} ($q=1$),
\begin{align*}
&\quad \bl\pp_t (\partial_{tt}-\Delta\partial_t-\Delta)K(t)f\bl_{L^\infty_{xy}}\\
%&\lesssim
%\bl P_{\le 1}\pp_t(\partial_{tt}-\Delta\partial_t-\Delta)K(t)f\bl_{L^\infty_{xy}}++\sum\limits_{N\ge 1}\big\|P_N\pp_t(\partial_{tt}-\Delta\partial_t-\Delta)K(t)f\bl_{L^\infty_{xy}}\\
&\lesssim
\big\|\pp_t(\partial_{tt}+A^2\partial_t+A^2)\widehat K(t,\xi,\eta)\big\|_{L^1_{\xi\eta}(A\le 1)}\|P_{\le 1}f\|_{L^1_{xy}}\\
&\qquad +\sum\limits_{N\ge 1}\big\|\pp_t(\partial_{tt}+A^2\partial_t+A^2)\widehat K(t,\xi,\eta)\big\|_{L^1_{\xi\eta}(A\sim N)}\big\|P_{N}f\big\|_{L^1_{xy}}\\
&\lesssim
 \langle t\rangle^{-1}\|\langle \nabla \rangle^{0+}f\|_{L^1_{xy}}.
\end{align*}
Using \eqref{est:Kn6-1} and  \eqref{est:Kn6-2} ($q=2$) instead, we have (iii).
%\begin{align*}
%&\quad \bl\nabla\pp_t (\partial_{tt}-\Delta\partial_t-\Delta)K(t)f\bl_{L^\infty_{xy}}\\
%%&\lesssim
%%\bl P_{\le 1}\nabla\pp_t(\partial_{tt}-\Delta\partial_t-\Delta)K(t)f\bl_{L^\infty_{xy}}+\sum\limits_{N\ge 1}N\big\|P_N\pp_t(\partial_{tt}-\Delta\partial_t-\Delta)K(t)f\bl_{L^\infty_{xy}}\\
%&\lesssim
%\big\|A\pp_t(\partial_{tt}+A^2\partial_t+A^2)\widehat K(t,\xi,\eta)\big\|_{L^2_{\xi\eta}(A\le 1)}\|P_{\le 1}f\|_{L^2_{xy}}\\
%&\qquad +\sum\limits_{N\ge 1}N\big\|\pp_t(\partial_{tt}+A^2\partial_t+A^2)\widehat K(t,\xi,\eta)\big\|_{L^2_{\xi\eta}(A\sim N)}\big\|P_{N}f\big\|_{L^2_{xy}}\\
%&\lesssim
% \langle t\rangle^{-1}\|\langle \nabla \rangle^{0+}f\|_{L^1_{xy}};
%\end{align*}
While using  \eqref{est:Kn6-2} and \eqref{est:Kn8-1} ($q=2$), we have $ \bl\Delta\pp_t(\partial_{tt}-\Delta\partial_t-\Delta)K(t)f\bl_{L^\infty_{xy}}\lesssim \langle t\rangle^{-\frac54}\|\langle \nabla \rangle^{1+}f\|_{L^2_{xy}}.$
%\begin{align*}
%&\quad \bl\Delta\pp_t(\partial_{tt}-\Delta\partial_t-\Delta)K(t)f\bl_{L^\infty_{xy}}\\
%&\lesssim
%\big\|A^2\pp_t(\partial_{tt}+A^2\partial_t+A^2)\widehat K(t,\xi,\eta)\big\|_{L^2_{\xi\eta}(A\le 1)}\|P_{\le 1}f\|_{L^2_{xy}}\\
%&\qquad +\sum\limits_{N\ge 1}N^2\big\|\pp_t(\partial_{tt}+A^2\partial_t+A^2)\widehat K(t,\xi,\eta)\big\|_{L^2_{\xi\eta}(A\sim N)}\big\|P_{N}f\big\|_{L^2_{xy}}\\
%&\lesssim
% \langle t\rangle^{-\frac54}\|\langle \nabla \rangle^{1+}f\|_{L^2_{xy}}.
%\end{align*}
At last, by the special estimate \eqref{est:Kn8-2}, we have
\begin{align*}
\bl\Delta\pp_t(\partial_{tt}-\Delta\partial_t-\Delta)K(t)f\bl_{L^\infty_{xy}}
&\lesssim
\big\|A^2\pp_t(\partial_{tt}+A^2\partial_t+A^2)\widehat K(t,\xi,\eta)\big\|_{L^\infty_{\xi\eta}}\|f\|_{L^\infty_{xy}}\\
&\lesssim
 \langle t\rangle^{-1}\|f\|_{L^\infty_{xy}}.
\end{align*}
This finishes the proof of the lemma.
\end{proof}

\begin{prop}\label{lem:Ku6-L}
For any smooth function $f$,
\begin{align*}
\bl\Delta\pp_x\partial_t(\partial_{tt}-\Delta\partial_t-\Delta)K(t)f\bl_{L^2_{xy}}
\lesssim &
\langle t\rangle^{-\frac32}\|\langle \nabla\rangle f\|_{L^2_{xy}}.
\end{align*}
\end{prop}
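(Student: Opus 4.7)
The plan is to pass to the Fourier side, apply Plancherel's identity, and reduce the estimate to a pointwise bound on the symbol of $\Delta\pp_x\pp_t(\pp_{tt}-\Delta\pp_t-\Delta)K(t)$, which corresponds to the multiplier $A^2\xi\,\pp_t(\pp_{tt}+A^2\pp_t+A^2)\widehat K(t,\xi,\eta)$. The relevant pointwise bounds are already in hand from Lemma \ref{lem:Ku6}, namely $\bigl\|A\xi\pp_t(\pp_{tt}+A^2\pp_t+A^2)\widehat K\bigr\|_{L^\infty_{\xi\eta}(A\ge 1)}\lesssim \langle t\rangle^{-3/2}$ and $\bigl\|A^2\xi\pp_t(\pp_{tt}+A^2\pp_t+A^2)\widehat K\bigr\|_{L^\infty_{\xi\eta}(A\le 1)}\lesssim \langle t\rangle^{-3/2}$. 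The key observation is that the full symbol $A^2\xi\pp_t(\pp_{tt}+A^2\pp_t+A^2)\widehat K$ is not uniformly bounded on $\{A\ge 1\}$, so on that region we must transfer one power of $A$ onto $f$, which is precisely what the $\|\langle\nabla\rangle f\|_{L^2_{xy}}$ on the right-hand side permits.

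First I would split $f=P_{\le 1}f+P_{\ge 1}f$ and treat the two pieces separately. For the low-frequency piece, Plancherel together with the second bound from Lemma \ref{lem:Ku6} gives
\begin{equation*}
\bigl\|\Delta\pp_x\pp_t(\pp_{tt}-\Delta\pp_t-\Delta)K(t)P_{\le 1}f\bigr\|_{L^2_{xy}}\lesssim \bigl\|A^2\xi\pp_t(\pp_{tt}+A^2\pp_t+A^2)\widehat K\bigr\|_{L^\infty_{\xi\eta}(A\le 1)}\|P_{\le 1}f\|_{L^2_{xy}}\lesssim \langle t\rangle^{-3/2}\|f\|_{L^2_{xy}}.
\end{equation*}
For the high-frequency piece, I factor the symbol as $A^2\xi\pp_t(\cdots)\widehat K = A\cdot\bigl[A\xi\pp_t(\cdots)\widehat K\bigr]$, apply the first bound from Lemma \ref{lem:Ku6} to the bracket, and move the remaining $A$ onto $\hat f$:
\begin{equation*}
\bigl\|\Delta\pp_x\pp_t(\pp_{tt}-\Delta\pp_t-\Delta)K(t)P_{\ge 1}f\bigr\|_{L^2_{xy}}\lesssim \bigl\|A\xi\pp_t(\pp_{tt}+A^2\pp_t+A^2)\widehat K\bigr\|_{L^\infty_{\xi\eta}(A\ge 1)}\|\nabla P_{\ge 1}f\|_{L^2_{xy}}\lesssim \langle t\rangle^{-3/2}\|\nabla f\|_{L^2_{xy}}.
\end{equation*}
Summing the two contributions yields the desired bound with $\|\langle\nabla\rangle f\|_{L^2_{xy}}$ on the right.

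There is no real obstacle here beyond the bookkeeping of how to distribute the $A^2$ factor between the symbol and the function in the two frequency regimes; in contrast to the preceding propositions in this section (e.g. Proposition \ref{lem:Kn8-L} or Proposition \ref{lem:Kn5-L}) where one has to dyadically decompose and invoke Lemma \ref{Young}, here the uniform pointwise bounds of Lemma \ref{lem:Ku6} are strong enough that a plain Plancherel argument suffices and no dyadic summation is necessary.
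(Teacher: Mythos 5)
Your proposal is correct and is essentially the same argument as the paper's: both reduce the estimate to Plancherel plus the pointwise symbol bounds of Lemma \ref{lem:Ku6}. The only cosmetic difference is that the paper writes the bound in one step via the weighted norm $\big\|\langle A\rangle^{-1}A^2\xi\,\pp_t(\pp_{tt}+A^2\pp_t+A^2)\widehat K\big\|_{L^\infty_{\xi\eta}}$ (absorbing the Jacobi weight into $\langle\nabla\rangle f$ directly), whereas you split $f=P_{\le 1}f+P_{\ge 1}f$ and apply \eqref{est:Ku6-2} and \eqref{est:Ku6-1} to the two pieces separately — the same content, just spelled out.
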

\begin{proof}
From Lemma \ref{lem:Ku6} and Lemma \ref{Young},
\begin{align*}
\bl\Delta\pp_x\partial_t(\partial_{tt}-\Delta\partial_t-\Delta)K(t)f\bl_{L^2_{xy}}
&=
\bl \langle \nabla\rangle^{-1}\Delta\pp_x\partial_t(\partial_{tt}-\Delta\partial_t-\Delta)K(t) \langle \nabla\rangle f\bl_{L^2_{xy}}\\
&\lesssim
\big\|\langle A\rangle^{-1}A^2\pp_t(\partial_{tt}+A^2\partial_t+A^2)\widehat K(t,\xi,\eta)\big\|_{L^\infty_{\xi\eta}}\|\langle \nabla\rangle f\|_{L^2_{xy}}\\
&\lesssim
 \langle t\rangle^{-\frac32}\|\langle \nabla \rangle f\|_{L^2_{xy}}.
\end{align*}
\end{proof}

\begin{prop}\label{lem:Kn9-L}
For any smooth function $f$,  $\beta\in [0,2]$,
\begin{align*}
&{\rm (i),}\quad\bl\langle\nabla\rangle^{2-\beta}|\nabla|^\beta\partial_{tt}(\partial_{tt}-\Delta\partial_t-\Delta-\pp_{yy})K(t)f\bl_{L^2_{xy}}
\lesssim
\langle t\rangle^{-\frac{\beta+1}2}\|  f\|_{L^2_{xy}};\\
&{\rm (ii),}\quad\bl\partial_{tt}(\partial_{tt}-\Delta\partial_t-\Delta-\pp_{yy})K(t)f\bl_{L^\infty_{xy}}
\lesssim
\langle t\rangle^{-1}\| f\|_{L^2_{xy}}.
\end{align*}
\end{prop}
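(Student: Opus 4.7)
The proposition is an immediate application of the symbol-level bounds already collected in Lemma \ref{lem:Kn9}, combined with the tools of Section \ref{KernelProperty} (Plancherel and the generalized Young inequality of Lemma \ref{Young}). In this regard the proof should mirror the pattern used throughout Section 4, for instance the argument just given for Proposition \ref{lem:Ku6-L} and the parallel step in Proposition \ref{lem:Kn5-L}(iv). Nothing in this step requires further Fourier decomposition, because the operator on the left is smoothed enough by the factor $\partial_{tt}(\partial_{tt}-\Delta\partial_t-\Delta-\pp_{yy})$ that the Fourier symbol is already globally under control both in $L^\infty_{\xi\eta}$ and in $L^2_{\xi\eta}$.

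For part (i), I would observe that $\langle\nabla\rangle^{2-\beta}|\nabla|^\beta\partial_{tt}(\partial_{tt}-\Delta\partial_t-\Delta-\pp_{yy})K(t)$ is a Fourier multiplier with symbol
$$\langle A\rangle^{2-\beta}A^\beta\,\pp_{tt}\big(\pp_{tt}+A^2\pp_t+A^2+\eta^2\big)\widehat K(t,\xi,\eta).$$
By Plancherel (equivalently, by Lemma \ref{Young} in the case $p=r=2$, $q=\infty$), the $L^2_{xy}\to L^2_{xy}$ operator norm is bounded by the $L^\infty_{\xi\eta}$ norm of the symbol, so applying the first inequality in Lemma \ref{lem:Kn9} with the same $\beta\in[0,2]$ yields
$$\bl\langle\nabla\rangle^{2-\beta}|\nabla|^\beta\partial_{tt}(\partial_{tt}-\Delta\partial_t-\Delta-\pp_{yy})K(t)f\bl_{L^2_{xy}}\lesssim\langle t\rangle^{-\frac{\beta+1}{2}}\|f\|_{L^2_{xy}},$$
which is exactly (i).

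For part (ii), I would apply Lemma \ref{Young} directly in the case $p=\infty$, $r=2$, $q=2$ (so that $\tfrac1\infty=\tfrac12-\tfrac12$):
$$\bl\partial_{tt}(\partial_{tt}-\Delta\partial_t-\Delta-\pp_{yy})K(t)f\bl_{L^\infty_{xy}}\lesssim\big\|\pp_{tt}(\pp_{tt}+A^2\pp_t+A^2+\eta^2)\widehat K(t,\xi,\eta)\big\|_{L^2_{\xi\eta}}\|f\|_{L^2_{xy}}.$$
The second inequality in Lemma \ref{lem:Kn9} bounds the right-hand factor by $\langle t\rangle^{-1}$, giving (ii).

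There is no genuine obstacle: the whole point of Lemma \ref{lem:Kn9} was to isolate exactly the pointwise/$L^2$ symbol bounds needed here, taking advantage of the cancellation in the combination $\pp_{tt}+A^2\pp_t+A^2+\eta^2$ which eliminates both the $\frac{1}{A|\eta|}$ and the $\frac{1}{\sqrt{\cdots}}$ singularities present in $\widehat K$ itself. If anything were to require care, it would be checking that the weight $\langle A\rangle^{2-\beta}$ in (i) matches the factor in Lemma \ref{lem:Kn9} across the full range $\beta\in[0,2]$, but this is already contained in the statement of that lemma.
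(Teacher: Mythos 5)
Your proposal is correct and matches the paper's proof essentially verbatim: both parts reduce, via Lemma \ref{Young}, to the two symbol bounds in Lemma \ref{lem:Kn9} (the $L^\infty_{\xi\eta}$ bound for part (i) and the $L^2_{\xi\eta}$ bound for part (ii)).
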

\begin{proof} From Lemma \ref{lem:Kn9} and Lemma \ref{Young},
\begin{align*}
&\qquad \bl\langle\nabla\rangle^{2-\beta}|\nabla|^\beta\partial_{tt}(\partial_{tt}-\Delta\partial_t-\Delta-\pp_{yy})K(t)f\bl_{L^2_{xy}}\\
&\lesssim
\big\|\langle A\rangle^{2-\beta}A^\beta\pp_{tt}(\partial_{tt}+A^2\partial_t+A^2+\eta^2)\widehat K(t,\xi,\eta)\big\|_{L^\infty_{\xi\eta}}\| f\|_{L^2_{xy}}\\
&\lesssim
\langle t\rangle^{-\frac{\beta+1}2}\| f\|_{L^2_{xy}};
\end{align*}
and
\begin{align*}
\bl\partial_{tt}(\partial_{tt}-\Delta\partial_t-\Delta-\pp_{yy})K(t)f\bl_{L^\infty_{xy}}
&\lesssim
\big\|\pp_{tt}(\partial_{tt}+A^2\partial_t+A^2+\eta^2)\widehat K(t,\xi,\eta)\big\|_{L^2_{\xi\eta}}\| f\|_{L^2_{xy}}\\
&\lesssim
\langle t\rangle^{-1}\| f\|_{L^2_{xy}}.
\end{align*}
This proves (i) and (ii).
\end{proof}

\begin{prop}\label{lem:K1-L}
For any smooth function $f$,
\begin{align*}
&{\rm (i),}\quad\bl K_1(t)f\bl_{L^2_{xy}}
\lesssim
\langle t\rangle^{-\frac14}\big\| \langle \nabla\rangle^{\frac12+}|\nabla|^{\frac12-} f\big\|_{L^1_{xy}};\\
&{\rm (ii),}\quad\bl\pp_x K_1(t)f\bl_{L^2_{xy}}
\lesssim
\min\Big\{\langle t\rangle^{-\frac12}\| \langle \nabla\rangle^{1+}\nabla  f\|_{L^1_{xy}}, \langle t\rangle^{-\frac34}\| \langle \nabla\rangle^{2+}f\|_{L^1_{xy}}\Big\}.
\end{align*}
\end{prop}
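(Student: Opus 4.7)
The proof of both parts will run along the same lines as Propositions \ref{lem:Kn1-L}--\ref{lem:Kn9-L}: Fourier-side bounds on $\widehat{K_1}$ from Lemma \ref{lem:K1}, combined with the generalized Young's inequality (Lemma \ref{Young}) and a Littlewood--Paley decomposition $f=P_{\le 1}f+\sum_{N\ge 1}P_Nf$.

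For part (i), the high-frequency pieces $P_Nf$, $N\ge 1$, will be treated by applying \eqref{est:Young1} with $(p,r_1,r_2,q_1,q_2)=(2,1,2,2,\infty)$ and the bound $\|\widehat{K_1}\|_{L^2_\xi L^\infty_\eta(A\sim N)}\lesssim N^{1/2}\langle t\rangle^{-1/4}$ of Lemma \ref{lem:K1}, giving $\|P_N K_1(t)f\|_{L^2_{xy}}\lesssim N^{1/2}\langle t\rangle^{-1/4}\|P_Nf\|_{L^1_xL^2_y}$. Bernstein in $y$ (using that $P_N$ localizes $|\eta|\lesssim N$) then yields $\|P_Nf\|_{L^1_xL^2_y}\lesssim N^{1/2}\|P_Nf\|_{L^1_{xy}}$, so after the dyadic sum with an arbitrarily small regularity loss the high-frequency contribution is bounded by $\langle t\rangle^{-1/4}\|\langle\nabla\rangle^{1/2+}|\nabla|^{1/2-}f\|_{L^1_{xy}}$, where the factor $\langle\nabla\rangle^{1/2+}|\nabla|^{1/2-}\sim N$ on $P_Nf$ ($N\ge 1$) is what one needs. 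For the low-frequency piece I will factor $\widehat{K_1}=(A^{-1/2+\epsilon}\widehat{K_1})\cdot A^{1/2-\epsilon}$ and verify, by the same polar-coordinate argument that produced Lemma \ref{lem:K1}, that $\|A^{-1/2+\epsilon}\widehat{K_1}\|_{L^2_{\xi\eta}(A\le 1)}\lesssim\langle t\rangle^{-1/4}$. Combining with Plancherel and Hausdorff--Young ($\|A^{1/2-\epsilon}\hat f\|_{L^\infty_{\xi\eta}}\lesssim\||\nabla|^{1/2-\epsilon}f\|_{L^1_{xy}}$) delivers the low-frequency bound $\langle t\rangle^{-1/4}\||\nabla|^{1/2-}f\|_{L^1_{xy}}$, matching the target.

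For part (ii), I will do the same decomposition for $\partial_xK_1$. The $\langle t\rangle^{-1/2}$-bound is obtained by factoring $\xi\widehat{K_1}=(A^{-1}\xi\widehat{K_1})\cdot A$ at low frequencies and using the second estimate of Lemma \ref{lem:K1} together with $\|A\hat f\|_{L^\infty_{\xi\eta}}\lesssim\|\nabla f\|_{L^1_{xy}}$; the high-frequency piece is controlled by the third estimate $\|\xi\widehat{K_1}\|_{L^2_{\xi\eta}(A\sim N)}\lesssim N^2\langle t\rangle^{-3/4}$, which (since $\langle t\rangle^{-3/4}\le\langle t\rangle^{-1/2}$) is stronger than needed and fits after the dyadic sum. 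The $\langle t\rangle^{-3/4}$-bound is obtained by applying the third estimate of Lemma \ref{lem:K1} uniformly at both low and high frequencies via Young and then summing dyadically against $\|\langle\nabla\rangle^{2+}f\|_{L^1_{xy}}$.

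The main obstacle I expect is the low-frequency part of (i): the naive estimate $\|P_{\le 1}K_1f\|_{L^2_{xy}}\lesssim\langle t\rangle^{-1/4}\|P_{\le 1}f\|_{L^1_{xy}}$ cannot be upgraded to the desired factor $\||\nabla|^{1/2-}f\|_{L^1_{xy}}$ by Bernstein, since at low frequencies the operator $|\nabla|^{-1/2+\epsilon}P_{\le 1}$ fails to be bounded on $L^1$. This forces a genuine improvement on the kernel side, i.e.\ the bound on $A^{-1/2+\epsilon}\widehat{K_1}$ in $L^2_{\xi\eta}(A\le 1)$, which must be carried out via the anisotropic polar-coordinate integration already used in the proof of Lemma \ref{lem:K1}, paying careful attention to the region $\{|\xi|\lesssim A^2\}$ where the kernel concentrates. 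The remaining routine work is the usual bookkeeping of $\epsilon$-losses in the Littlewood--Paley summation.
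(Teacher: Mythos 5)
Your proof of part (ii) and of the high-frequency part of (i) follows the paper's argument essentially verbatim: Littlewood--Paley decomposition, the kernel bounds of Lemma~\ref{lem:K1} applied through the generalized Young's inequality of Lemma~\ref{Young}, and Bernstein in $y$ to pass from $L^1_xL^2_y$ to $L^1_{xy}$. The one place where you genuinely depart from the paper is the low-frequency piece of (i), and there the comparison is worth spelling out. You are right that the isotropic route $\|\widehat{K_1}\|_{L^2_{\xi\eta}(A\le 1)}\,\|P_{\le 1}f\|_{L^1_{xy}}$ is stuck, since $|\nabla|^{-1/2+}P_{\le 1}$ is unbounded on $L^1$; but that is not what the paper does. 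The paper pairs the mixed-norm kernel bound $\|\widehat{K_1}\|_{L^2_\xi L^\infty_\eta(A\le 1)}\lesssim \langle t\rangle^{-1/4}$ (the first estimate of Lemma~\ref{lem:K1}, the same one you already use for the dyadic shells $N\ge 1$) with the mixed norm $\|P_{\le 1}f\|_{L^1_xL^2_y}$, and then the identical Bernstein-in-$y$ step you perform at high frequency --- $\|P_Nf\|_{L^1_xL^2_y}\lesssim N^{1/2}\|P_Nf\|_{L^1_{xy}}$ --- continues to hold for dyadic $N\le 1$; summing with a geometric $N^{\epsilon}$ gain gives $\|P_{\le 1}f\|_{L^1_xL^2_y}\lesssim \||\nabla|^{1/2-}f\|_{L^1_{xy}}$, which is exactly the low-frequency factor you need. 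So the obstruction you anticipated does not actually appear in the paper's route. Your alternative --- peeling off the singular weight by writing $\widehat{K_1}=(A^{-1/2+\epsilon}\widehat{K_1})\,A^{1/2-\epsilon}$ and proving the new bound $\|A^{-1/2+\epsilon}\widehat{K_1}\|_{L^2_{\xi\eta}(A\le 1)}\lesssim\langle t\rangle^{-1/4}$ --- is also correct: it follows from the pointwise bound on $\widehat{K_1}$ stated just above Lemma~\ref{lem:K1} together with \eqref{est:At} and \eqref{est:Axit-2} (taking $\beta=\beta'=0$, $\alpha=1-2\epsilon$ there). Its advantage is that it keeps everything in the isotropic $L^2_{\xi\eta}$ and $L^1_{xy}$ framework and pairs directly against Hausdorff--Young; its cost is that it needs a kernel estimate not stated in Lemma~\ref{lem:K1}, whereas the paper's route uses only the stated lemma plus a standard low-frequency dyadic summation. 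Both versions are correct and of comparable length.
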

\begin{proof}
From Lemma \ref{lem:K1} and Lemma \ref{Young},
\begin{align*}
\bl K_1(t)f\bl_{L^2_{xy}}
&\lesssim
\bl P_{\le 1}K_1(t)f\bl_{L^2_{xy}}+\sum\limits_{N\ge1}\bl P_N K_1(t)f\bl_{L^2_{xy}}\\
&\lesssim
\big\|\widehat{K_1}(t,\xi,\eta)\big\|_{L^2_{\xi}L^\infty_\eta(A\le 1)}\|P_{\le 1}f\|_{L^1_{x}L^2_y}
+\sum\limits_{N\ge1}\big\|\widehat{K_1}(t,\xi,\eta)\big\|_{L^2_{\xi}L^\infty_\eta(A\sim N)}\|P_Nf\|_{L^1_{x}L^2_y}\\
&\lesssim
\langle t\rangle^{-\frac14}\| \langle \nabla\rangle^{\frac12+}|\nabla|^{\frac12-} f\|_{L^1_{xy}},
\end{align*}
and
\begin{align*}
\bl \pp_x K_1(t)f\bl_{L^2_{xy}}
&\lesssim
\bl P_{\le 1}\frac{\nabla}{-\Delta}\pp_x K_1(t)\cdot \nabla f\bl_{L^2_{xy}}+\sum\limits_{N\ge1}\bl P_N \pp_x K_1(t)f\bl_{L^2_{xy}}\\
&\lesssim
\big\|A^{-1}\xi \widehat{K_1}(t,\xi,\eta)\big\|_{L^2_{\xi\eta}(A\le 1)}\|P_{\le 1}\nabla f\|_{L^1_{xy}}
+\sum\limits_{N\ge1}\big\|\xi\widehat{K_1}(t,\xi,\eta)\big\|_{L^2_{\xi\eta}(A\sim N)}\|P_Nf\|_{L^1_{xy}}\\
&\lesssim
\langle t\rangle^{-\frac12}\| \langle \nabla\rangle^{1+}\nabla  f\|_{L^1_{xy}}.
\end{align*}
Similarly, using the third estimates in Lemma \ref{lem:K1}, we also have
\begin{align*}
\bl \pp_x K_1(t)f\bl_{L^2_{xy}}
&\lesssim
\langle t\rangle^{-\frac34}\| \langle \nabla\rangle^{2+}  f\|_{L^1_{xy}}.
\end{align*}
\end{proof}

\vskip .5in
\section{Local theory and energy estimates}

This section presents the local (in time) existence and uniqueness result for \eqref{e2.1},
which can be deduced from the work of Kawashima \cite{Kawa, Kawa2}. More importantly we obtain an
energy inequality needed in the proof of Theorem \ref{th1}.

\begin{prop} \label{prop_local}
Let $1+ \lambda>0$. Assume that $(n_0,\vec u_0,\nabla\psi_0)\in H^\sigma(\R^2)$ with
$\sigma\ge 3$. Assume $$\overline{\rho}\ge\rho_0=1+n_0\ge\underline{\rho}$$
for two constants $\overline{\rho}>\underline{\rho}>0$. Then there exists
$T_0=T_0(\|(n_0,\vec u_0,$ $\nabla\psi_0)\|_{H^\sigma})>0$, and a unique smooth local
solution $(n,\vec u,\nabla\psi)\in C_t^0([0,T_0];H^\sigma(\R^2))$ to \eqref{e2.1} with
\begin{equation}\label{rhobd}
\overline{\rho}\ge\rho=1+n\ge\underline{\rho}.
\end{equation}
Furthermore, for $\epsilon$ and $M$ given in the definition of $X$ in (\ref{X}),
\begin{equation}\label{1sttermbd}
\langle t\rangle^{-\epsilon} \, \big\|\la\na\ra^M (n,\vec u,\nabla\psi)(t)\big\|_2\le \big\|\la\na\ra^M (n_0,\vec u_0,\nabla\psi_0)\big\|_2 \,+\, Q(\|U\|_X),
\end{equation}
where $Q(r)$ represents a polynomial of $r$ with the lowest order at least quadratic.
\end{prop}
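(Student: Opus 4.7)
My plan is to treat the two assertions separately: local well--posedness follows from a direct appeal to Kawashima's framework, while the energy inequality \eqref{1sttermbd} will come from a standard high--order Sobolev energy estimate exploiting the symmetric hyperbolic--parabolic structure of \eqref{e2.1} together with the a priori control encoded in $\|U\|_X$.

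\textbf{Local existence.} Under the hypothesis $1+\lambda>0$, the system \eqref{e2.1} (rewritten in the variables $(n,\vec u,\nabla\psi)$) is a symmetric hyperbolic--parabolic system in the sense of Kawashima \cite{Kawa,Kawa2}: the $(n,\vec u)$ block carries a symmetrizable conservative linearization with parabolic dissipation on $\vec u$, and the equation $\pp_t\psi+v=N_3$ combined with its gradient $\pp_t\na\psi+\na v=\na N_3$ yields a transport equation for $\na\psi$ that closes in the $H^\sigma$ norm once $\vec u\in C_t H^\sigma$ is known. Standard fixed--point iteration in $C_t([0,T_0];H^\sigma)$ (with the density bound \eqref{rhobd} preserved by continuity on a short time interval) therefore produces a unique solution; uniqueness and propagation of regularity follow by the usual difference argument.

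\textbf{Energy inequality.} Apply $\la\na\ra^M$ to the four equations in \eqref{e2.1}, pair with $\la\na\ra^M n$, $\la\na\ra^M u$, $\la\na\ra^M v$, and (after taking one spatial derivative of the $\psi$ equation) with $\la\na\ra^M\na\psi$, then sum. The linear cross--terms cancel in pairs: $\la\la\na\ra^M\pp_x u,\la\na\ra^M n\ra+\la\la\na\ra^M\pp_x n,\la\na\ra^M u\ra=0$ (integration by parts), and similarly for the $\pp_y$ terms and for the $\Delta\psi$--$v$ coupling after pairing $\na$ of the $\psi$--equation with $\la\na\ra^M\na\psi$. The parabolic terms produce the nonnegative dissipation $\|\la\na\ra^M\na\vec u\|_2^2+\lambda\|\la\na\ra^M\na\cdot\vec u\|_2^2\ge c\|\la\na\ra^M\na\vec u\|_2^2$ (for $1+\lambda>0$). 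The outcome is a differential inequality of the form
\begin{align*}
\tfrac{d}{dt}E_M(t)+cD_M(t)\;\le\;\sum_{j=0}^{3}\big|\la\la\na\ra^M N_j,\la\na\ra^M U_j\ra\big|,
\end{align*}
where $E_M(t)\simeq \|\la\na\ra^M(n,\vec u,\na\psi)\|_2^2$ and $D_M(t)\simeq\|\la\na\ra^M\na\vec u\|_2^2$.

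\textbf{Nonlinear bounds and closure.} The terms $N_0,\dots,N_3$ in (\ref{N0}--\ref{N3}) are sums of quadratic and higher expressions of the form $f\,\pp g$ and $\frac{n}{\rho}\pp^2 g$. Using Kato--Ponce (Moser) commutator estimates together with the density bound \eqref{rhobd}, each nonlinear pairing is controlled by
\begin{align*}
\|\la\na\ra^M N_j\|_2\cdot\|\la\na\ra^M U_j\|_2\;\lesssim\; \|\na(n,\vec u,\na\psi)\|_{L^\infty}\,E_M(t)+\text{lower terms},
\end{align*}
where the $L^\infty$--type factor is precisely the quantity that $\|U\|_X$ controls with an integrable--in--time rate (the $\la t\ra^{-1}$ decay of $\|\la\na\ra\vec u\|_{L^\infty}$ and the $\la t\ra^{-1/2}$ decay of $\na\psi$, combined with the $\bar\gamma$--weighted $L^\infty$ bound on $\psi$ in $X_\psi$). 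A Gr\"onwall argument then yields
\begin{align*}
E_M(t)\;\le\;E_M(0)\,\exp\!\Big(\!\int_0^t C(U)(s)\,ds\Big)+Q(\|U\|_X)^2,
\end{align*}
and the exponential factor grows at most like $\la t\ra^{C\|U\|_X}\le \la t\ra^{2\epsilon}$ for $\|U\|_X$ sufficiently small, giving \eqref{1sttermbd} after taking square roots and absorbing constants into $Q$.

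\textbf{Main obstacle.} The delicate point is controlling the $\psi$--pairings, since $\psi$ has no dissipation and the pairing $\la\la\na\ra^M\na N_3,\la\na\ra^M\na\psi\ra$ contains terms like $\vec u\cdot\na(\la\na\ra^M\na\psi)\cdot\la\na\ra^M\na\psi$ where the top--order derivative falls on $\psi$. The standard remedy is to integrate by parts in the transport term to produce a factor of $\na\cdot\vec u$ controlled by $\|\na\vec u\|_{L^\infty}$, and to absorb the remaining Moser commutators into the dissipation $D_M$ or into $\la t\ra^{-1-\alpha}E_M$; this is where the time--weighted $L^\infty$ norms in $X_{\textbf{u}}$ and $X_\psi$ are essential to producing the slow $\la t\ra^\epsilon$ growth rather than something polynomial in $t$.
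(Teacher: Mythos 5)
Your overall architecture (Kawashima for local existence; $\la\na\ra^M$--energy estimate with commutators, cancellations of the skew--symmetric linear couplings, and absorption of top--order $\vec u$--derivatives into the parabolic dissipation; then close against the decay encoded in $\|U\|_X$) is the right shape and matches the paper's strategy. But as written, your nonlinear bound
\[
\big\|\la\na\ra^M N_j\big\|_2\cdot\big\|\la\na\ra^M U_j\big\|_2\;\lesssim\;\big\|\na(n,\vec u,\na\psi)\big\|_{L^\infty}\,E_M(t)+\text{l.o.t.}
\]
does not close. The $X$--norm gives $\|\la\na\ra\vec u\|_{L^\infty}\lesssim\langle t\rangle^{-1}\|U\|_X$ (integrable up to a log), but only $\|\la\na\ra^{3/2}n\|_{L^\infty}\lesssim\langle t\rangle^{-1/2}\|U\|_X$ and $\||\na|^{\bar\gamma}\la\na\ra\psi\|_{L^\infty}\lesssim\langle t\rangle^{-1/2}\|U\|_X$; the $\langle t\rangle^{-1/2}$ rate is not integrable, and a Gr\"onwall on a coefficient of size $\langle s\rangle^{-1/2}$ yields $\exp(C\sqrt t)$, far worse than the $\langle t\rangle^{\epsilon}$ growth the proposition requires. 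The paper avoids this by never leaving an $n$-- or $\psi$--$L^\infty$ factor to the first power in front of $E_M$: whenever a top--order derivative sits on $\vec u$, one Young's inequality splits the product into $\frac{\bar\lambda}{\underline\rho}\|\la\na\ra^\sigma\na\vec u\|_2^2$ (absorbed into the dissipation, so the constant in front of $1+\lambda$ is what matters) plus the \emph{square} of the accompanying $L^\infty$ factor times $E_M$. That quadratic appearance of $\|n\|_\infty^2$, $\|\na n\|_\infty^2$, $\|\la\na\ra\na\psi\|_\infty^2$ (each $\sim\langle t\rangle^{-1}$) is precisely what restores integrable decay; compare the paper's treatment of $J_2$--$J_5$, where every dangerous pairing ends as $\|n\|_\infty^2 E_M$, $\|\na\psi\|_\infty^2 E_M$, etc., never $\|\na n\|_\infty E_M$.

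A second, smaller issue is the closing step. Your Gr\"onwall statement $E_M(t)\le E_M(0)\exp(\int_0^t C(U))+Q(\|U\|_X)^2$ is not what Gr\"onwall produces from $\dot E_M\le C(s)E_M$: there is no forcing term here, so you get only a multiplicative factor, and obtaining the additive $Q(\|U\|_X)$ required by \eqref{1sttermbd} then needs a circular appeal to $\|U\|_X$ being small (that is precisely what the continuity argument is trying to establish). The paper sidesteps this by integrating the Duhamel form directly: inside the time integral, replace $\big\|\la\na\ra^M(n,\vec u,\na\psi)(s)\big\|_2$ by its a priori $X$--norm bound $\langle s\rangle^{\epsilon}\|U\|_X$, so the integrand is $\lesssim\langle s\rangle^{-1+\epsilon}Q(\|U\|_X)$, giving $\langle t\rangle^{\epsilon}Q(\|U\|_X)$ additively and hence \eqref{1sttermbd} upon dividing by $\langle t\rangle^{\epsilon}$. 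If you repair the quadratic structure in the nonlinear bound, you should also switch to this direct bootstrap rather than Gr\"onwall to recover the stated additive inequality without presupposing smallness of $\|U\|_X$.
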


\vskip .1in
\begin{proof}[Proof of Proposition \ref{prop_local}]
The existence of the local solution $(n,\vec u,\nabla\psi)$ with the property (\ref{rhobd})
follows from \cite{Kawa, Kawa2}. The main effort of this proof is devoted to the bound in \eqref{1sttermbd}.

\vskip .1in
For any $\sigma \ge 0$, we have,
from $\partial_t n+\nabla\cdot\vec u +\nabla\cdot (n \vec u)= 0$,
\begin{align*}
\frac12\partial_t\int\big|\la\na\ra^\sigma n\big|^2\,dx
=&-\int\big(\la\na\ra^\sigma\nabla\cdot\vec u\big)\la\na\ra^\sigma n\,dx
-\int\big(\la\na\ra^\sigma\nabla\cdot(n\vec u)\big)\la\na\ra^\sigma n\,dx.
\end{align*}
Using the commutator notation
$$
[\la\na\ra^\sigma\nabla\cdot, \,\vec u]n \equiv  \la\na\ra^\sigma\nabla\cdot(n\vec u)-\vec u\cdot \nabla\la\na\ra^\sigma n
$$
and integrating by parts, we have
\begin{align*}
\int\big(\la\na\ra^\sigma\nabla\cdot(n\vec u)\big)\la\na\ra^\sigma n\,dx
 =\int\big([\la\na\ra^\sigma\nabla\cdot, \,\vec u]n\big)\la\na\ra^\sigma n\,dx
 - \frac12\int(\nabla\cdot \vec u)\big(\la\na\ra^\sigma n\big)^2\,dx.
\end{align*}
It then follows from the commutator estimate
$$
\|[\la\na\ra^\sigma\nabla\cdot, \,\vec u]n\|_2 \lesssim
\|n\|_\infty\, \big\|\la\na\ra^\sigma \nabla \vec u\big\|_2
+ \|\nabla \vec u\big\|_\infty\, \|\la\na\ra^\sigma n\|_2
$$
and H\"{o}lder's inequality that
\begin{align}
\frac12\partial_t \left(\|\la\na\ra^\sigma n\|_2^2\right)
\le & -\int\big(\la\na\ra^\sigma\nabla\cdot\vec u\big)\la\na\ra^\sigma n\,dx
+C\|n\|_\infty\,\big\|\la\na\ra^\sigma n\big\|_2 \big\|\la\na\ra^\sigma \nabla \vec u\big\|_2
+ \|\nabla \vec u\big\|_\infty\, \|\la\na\ra^\sigma n\|_2^2\notag\\
\le & -\int\big(\la\na\ra^\sigma\nabla\cdot\vec u\big)\la\na\ra^\sigma n\,dx\notag\\
&\quad  +C\Big(\|n\|_\infty^2+\|\nabla \vec u\big\|_\infty\Big)\,\big\|\la\na\ra^\sigma n\big\|_2^2
+\frac{\bar{\lambda}}{5\underline{\rho}}
\|\langle \na \rangle^\sigma \nabla\vec u\|_2^{2}\label{E1} .
\end{align}
where  $C$ is a large constant which may depend on $\underline{\rho}, \overline{\rho} $ and may
vary from line to line, and $\bar{\lambda}=1+\lambda$.
Applying $\la\na\ra^\sigma$ to (\ref{simpleueq}) and then dotting with $\la\na\ra^\sigma \vec u$, we have,
after integration by parts,
\begin{align*}
\frac12\pp_t\int \big|\la\na\ra^\sigma \vec u\big|^2\,dx =J_1 + J_2 + J_3 +J_4+J_5,
\end{align*}
where
\begin{align*}
J_1 =&-\int \langle \na \rangle^\sigma\big(\vec u\cdot \nabla \vec u\big)\cdot \langle \na \rangle^\sigma \vec u\,dx,\\
J_2=&\int \langle \na \rangle^\sigma n\>  \langle \na \rangle^\sigma\big( \nabla \cdot\vec u\big)\,dx -\int \langle \na \rangle^\sigma\big(n \nabla n\big)\cdot \langle \na \rangle^\sigma \vec u\,dx,\\
J_3=&\int \langle \na \rangle^\sigma(\rho^{-1} \Delta \vec u) \cdot \langle \na \rangle^\sigma \vec u\,dx,\\
J_4=&\lambda\int \langle \na \rangle^\sigma(\rho^{-1} \nabla (\nabla\cdot \vec u)) \cdot \langle \na \rangle^\sigma \vec u\,dx,\\
J_5=&-\int \langle \na \rangle^\sigma(\rho^{-1}\nabla \phi \Delta \phi)\cdot \langle \na \rangle^\sigma \vec u\,dx.
\end{align*}
We bound the terms on the right-hand side. Writing
$$
J_1 = -\int [\langle \na \rangle^\sigma, \vec u\cdot\nabla] \vec u \cdot \langle \na \rangle^\sigma \vec u\,dx
+ \frac12\,\int(\nabla \cdot \vec u)|\langle \na \rangle^\sigma\vec u|^2\,dx,
$$
we have, by a commutator estimate,
$$
|J_1| \lesssim \|\nabla \vec u\|_\infty\big\|\la\na\ra^\sigma \vec u\big\|_2^2.
$$
%Trivially,
%$$
%\left|\int \langle \na \rangle^\sigma n\>  \langle \na \rangle^\sigma\big( \nabla \cdot\vec u\big)\,dx \right| \le
%\|\langle \na \rangle^\sigma n\|_{2} \, \|\langle \na \rangle^\sigma\big( \nabla \cdot\vec u\big)\|_{2}.
%$$
After integration by parts,
\begin{align*}
\left|\int \langle \na \rangle^\sigma\big(n \nabla n\big)\cdot \langle \na \rangle^\sigma \vec u\,dx\right|
\lesssim &\,
\|\langle \na \rangle^\sigma (n^2)\|_2 \, \|\langle \na \rangle^\sigma\big( \nabla \cdot\vec u\big)\|_{2} \\
\lesssim &\, \|n\|_\infty\, \|\langle \na \rangle^\sigma n\|_2 \,\|\langle \na \rangle^\sigma\big( \nabla \cdot\vec u\big)\|_{2}.
\end{align*}
That is ,
\begin{align*}
J_2  \le &
\int \langle \na \rangle^\sigma n\>  \langle \na \rangle^\sigma\big( \nabla \cdot\vec u\big)\,dx
 + \|n\|_\infty\, \|\la\na\ra^\sigma n\|_2\, \big\|\la\na\ra^\sigma \nabla \vec u\big\|_2\\
\le &
\int \langle \na \rangle^\sigma n\>  \langle \na \rangle^\sigma\big( \nabla \cdot\vec u\big)\,dx
 + \|n\|_\infty^2\, \|\la\na\ra^\sigma n\|_2^2+\frac{\bar{\lambda}}{5\underline{\rho}}
\|\langle \na \rangle^\sigma \nabla\vec u\|_2^{2} .
\end{align*}
To estimate $J_3$, we write
\begin{align*}
J_3 & =-\int \langle \na \rangle^\sigma(\rho^{-1} \nabla\vec u)\cdot \langle \na \rangle^\sigma \nabla\vec u\,dx
+ \int \langle \na \rangle^\sigma(\nabla(\rho^{-1})\cdot\nabla\vec u)\cdot \langle \na \rangle^\sigma \vec u\,dx\\
&\qquad =-\int [\langle \na \rangle^\sigma, \rho^{-1}] \nabla\vec u \cdot \langle \na \rangle^\sigma \nabla\vec u\,dx
- \int  \rho^{-1} \,|\langle \na \rangle^\sigma \nabla\vec u|^2\,dx\\
&\qquad\quad + \int \langle \na \rangle^\sigma(\nabla(\rho^{-1})\cdot\nabla\vec u)\cdot \langle \na \rangle^\sigma \vec u\,dx.
\end{align*}
%\begin{align*}
%J_3  =&-\int \big|\langle \na \rangle^\sigma( \nabla\vec u)\big|^2\,dx
%+\int \langle \na \rangle^\sigma(n\rho^{-1} \Delta\vec u)\cdot \langle \na \rangle^\sigma \vec u\,dx.%\\
%&\quad
%+ \int \langle \na \rangle^\sigma\big(\nabla(n\rho^{-1})\cdot\nabla\vec u\big)\cdot \langle \na \rangle^\sigma \vec u\,dx\\
%=&-\int [\langle \na \rangle^\sigma, \rho^{-1}] \nabla\vec u \cdot \langle \na \rangle^\sigma \nabla\vec u\,dx
%- \int  \rho^{-1} \,|\langle \na \rangle^\sigma \nabla\vec u|^2\,dx\\
%&\qquad\quad + \int \langle \na \rangle^\sigma(\nabla(\rho^{-1})\cdot\nabla\vec u)\cdot \langle \na \rangle^\sigma \vec u\,dx.
%\end{align*}
Therefore, by a commutator estimate,
\begin{align*}
& J_3 \le  - \int  \rho^{-1} \,|\langle \na \rangle^\sigma \nabla\vec u|^2\,dx\\
&\qquad + C \|\nabla \vec u\|_{\infty}\, \|\langle \na \rangle^\sigma(\rho^{-1})\|_2\, \|\langle \na \rangle^\sigma \nabla\vec u\|_2 + C\|\nabla(\rho^{-1})\|_\infty\, \|\langle \na \rangle^\sigma\vec u\|_2\,\|\langle \na \rangle^\sigma
\nabla\vec u\|_2.
\end{align*}
Invoking (\ref{rhobd}) and using Sobolev's inequality,
\begin{align}\label{Sob}
&\|\langle \na \rangle^\sigma(\rho^{-1})\|_2  \le C(\underline{\rho}, \overline{\rho})
\|\langle \na \rangle^\sigma n\|_2, \notag \\
&\|\nabla(\rho^{-1})\|_\infty \le C(\underline{\rho})\,\|\nabla n\|_\infty, %\\
%& \|\langle \na \rangle^\sigma\vec u\|_4 \le C\, \|\langle \na \rangle^\sigma\vec u\|_2^{\frac12}
%\|\langle \na \rangle^\sigma\nabla\vec u\|_2^{\frac12}, \notag
\end{align}
we obtain, by Young's inequality,
\begin{align*}
J_3  \le & - \int  \rho^{-1} \,|\langle \na \rangle^\sigma \nabla\vec u|^2\,dx+ C\|\nabla \vec u\|_{\infty}\,\|\langle \na \rangle^\sigma n\|_2\|\langle \na \rangle^\sigma \nabla\vec u\|_2\\
&
\quad+ C\| \na  n\|_\infty \, \|\langle \na \rangle^\sigma\vec u\|_2\,
\|\langle \na \rangle^\sigma \nabla\vec u\|_2\\
\le &- \int  \rho^{-1} \,|\langle \na \rangle^\sigma \nabla\vec u|^2\,dx
+ C\|\nabla \vec u\|_{\infty}^2\,\|\langle \na \rangle^\sigma n\|_2^2+C\|\nabla n\|_\infty^2\|\langle \na \rangle^\sigma\vec u\|_2^2
+\frac{\bar{\lambda}}{10\underline{\rho}}
\|\langle \na \rangle^\sigma \nabla\vec u\|_2^{2}.
\end{align*}
%where we denote $\bar{\lambda}=1-\lambda$.
$J_4$ is similarly estimated as $J_3$. Writing $J_4$ as
\begin{align*}
J_4 =& -\lambda\int \rho^{-1}\, |\langle \na \rangle^\sigma\nabla\cdot \vec u|^2\,dx -\lambda \int
[\langle \na \rangle^\sigma, \rho^{-1}] \nabla\cdot\vec u \cdot \langle \na \rangle^\sigma \nabla\vec u\,dx\\
& -\lambda \int \langle \na \rangle^\sigma(\nabla(\rho^{-1})\,\nabla\cdot\vec u)\cdot \langle \na \rangle^\sigma \vec u\,dx,
\end{align*}
we obtain, after similar estimates as for $J_3$,
\begin{align*}
J_4  \le  & -\lambda\int \rho^{-1}\, |\langle \na \rangle^\sigma\nabla\cdot \vec u|^2\,dx
+ C\|\nabla \vec u\|_{\infty}^2\,\|\langle \na \rangle^\sigma n\|_2^2+C\|\nabla n\|_\infty^2\|\langle \na \rangle^\sigma\vec u\|_2^2
+\frac{\bar{\lambda}}{10\underline{\rho}}
\|\langle \na \rangle^\sigma \nabla\vec u\|_2^{2}.
\end{align*}
Since $|\langle \na \rangle^\sigma\nabla\cdot \vec u|^2\le |\langle \na \rangle^\sigma\nabla \vec u|^2$, for any $\lambda+1>0$, we have
\begin{align*}
J_3+J_4  \le  & - \int  \rho^{-1} \,|\langle \na \rangle^\sigma \nabla\vec u|^2\,dx-\lambda\int \rho^{-1}\, |\langle \na \rangle^\sigma\nabla\cdot \vec u|^2\,dx\\
&
\quad+ C\|\nabla \vec u\|_{\infty}^2\,\|\langle \na \rangle^\sigma n\|_2^2+C\|\nabla n\|_\infty^2\|\langle \na \rangle^\sigma\vec u\|_2^2
+\frac{\bar{\lambda}}{5\underline{\rho}}
\|\langle \na \rangle^\sigma \nabla\vec u\|_2^{2}\\
\le  & -  \frac{\bar{\lambda}}{\underline{\rho}}\int  \,|\langle \na \rangle^\sigma \nabla\vec u|^2\,dx\\
&
\quad+ C\|\nabla \vec u\|_{\infty}^2\,\|\langle \na \rangle^\sigma n\|_2^2+C\|\nabla n\|_\infty^2\|\langle \na \rangle^\sigma\vec u\|_2^2
+\frac{\bar{\lambda}}{5\underline{\rho}}
\|\langle \na \rangle^\sigma \nabla\vec u\|_2^{2}
.
\end{align*}
To bound $J_5$, after integration by parts and using $\phi=\psi+y$, we write
\begin{align*}
J_5  =& \int \langle \na \rangle^\sigma \nabla \psi\cdot \langle \na \rangle^\sigma \nabla v\,dx
- \int \rho^{-1}\langle \na \rangle^\sigma\big(\nabla \psi\Delta \psi\big)\cdot \langle \na \rangle^\sigma \vec u\,dx\\
&- \int [\langle \na \rangle^\sigma, \rho^{-1}](\nabla \psi\Delta \psi)
\cdot \langle \na \rangle^\sigma \vec u\,dx-\int \langle \na \rangle^\sigma (\frac{n}{\rho}\Delta \psi)\, \langle \na \rangle^\sigma v\,dx .
\end{align*}
Since $\nabla \psi\Delta \psi=\nabla\cdot (\nabla\psi\nabla\psi)-\frac12\nabla(|\nabla\psi|^2)$, we have
\begin{align*}
\Big|\int \rho^{-1}\langle \na \rangle^\sigma\big(\nabla \psi\Delta \psi\big)\cdot \langle \na \rangle^\sigma \vec u\,dx\Big|
\le &\,
C(\underline{\rho})\|\nabla \psi\|_\infty\big\|\la\na\ra^\sigma \nabla\vec u\big\|_2\big\|\la\na\ra^\sigma \nabla\psi\big\|_2\\
\le &\,
C\|\nabla\psi\|_\infty^2\,\|\la\na\ra^\sigma \nabla\psi\|_2^2+\frac{\bar{\lambda}}{15\underline{\rho}}
\|\langle \na \rangle^\sigma \nabla\vec u\|_2^{2}.
%\\ \lesssim  & \|\nabla \psi\|_\infty^2\big\|\la\na\ra^\sigma \nabla\psi\big\|_2^2+ \frac{1}{32\overline{\rho}}\,\big\|\la\na\ra^\sigma \nabla\vec u\big\|_2^2
\end{align*}

By a commutator estimate and (\ref{Sob}),
\begin{align*}
&\left|\int [\langle \na \rangle^\sigma, \rho^{-1}](\nabla \psi\Delta \psi)
\cdot \langle \na \rangle^\sigma \vec u\,dx \right| \\
\le & C\|\nabla(\rho^{-1})\|_\infty\, \|\nabla\psi\|_\infty\, \|\langle \na \rangle^{\sigma}\nabla\psi\|_2\, \|\la\na\ra^\sigma \nabla\vec u\|_2
 +C\|\la\na\ra\nabla\psi\|_\infty^2\,\|\la\na\ra^\sigma (\rho^{-1})\|_2\, \|\la\na\ra^\sigma \nabla\vec u\|_2\\
\le &
C\big(\|\nabla n\|_\infty^2+\|\nabla\psi\|_\infty^2\big)\, \|\langle \na \rangle^{\sigma}\nabla\psi\|_2^2
 +C\|\la\na\ra\nabla\psi\|_\infty^2\,\|\la\na\ra^\sigma n\|_2^2+\frac{\bar{\lambda}}{15\underline{\rho}}
\|\langle \na \rangle^\sigma \nabla\vec u\|_2^{2}.
%\lesssim & \frac{1}{32\overline{\rho}}\,\big\|\la\na\ra^\sigma \nabla\vec u\big\|_2^2 + \|\psi\|_\infty^4\,
%\|\la\na\ra^\sigma n\|_2^4 \, \|\la\na\ra^\sigma \nabla\psi\|_2^2.
\end{align*}

Since $\langle \na \rangle^\sigma v=\langle \na \rangle^\sigma P_{\le 1} v+\langle \na \rangle^\sigma P_{\ge 1} v$,
\begin{align*}
&\left|\int \langle \na \rangle^\sigma (\frac{n}{\rho}\Delta \psi)\, \langle \na \rangle^\sigma v\,dx\right|\\
\le &
\left|\int \langle \na \rangle^\sigma (\frac{n}{\rho}\Delta \psi)\, \langle \na \rangle^\sigma P_{\le 1} v\,dx\right|
+\left|\int \langle \na \rangle^\sigma (\frac{n}{\rho}\Delta \psi)\, \langle \na \rangle^\sigma P_{\ge 1} v\,dx\right|\\
\le &
C\|v\|_\infty\| n\|_2 \|\Delta\psi\|_2\\
&+
C\|\la\na\ra n\|_\infty \big\|\langle \na \rangle^{\sigma}\nabla\psi\big\|_2\big\|\langle \na \rangle^\sigma \nabla v\big\|_2
 +C\big\|\la\na\ra^\sigma n\big\|_2\,\|\la\na\ra \nabla\psi\big\|_\infty\|\langle \na \rangle^\sigma P_{\ge 1} v\big\|_2\\
 \le &
C\Big(\|v\|_\infty+\|\la\na\ra  n\|_\infty^2+\|\la\na\ra \nabla\psi\big\|_\infty^2\Big)\>\big(\|\la\na\ra^\sigma n\|_2^2+ \|\langle \na \rangle^{\sigma}\nabla\psi\|_2^2\big)+\frac{\bar{\lambda}}{15\underline{\rho}}
\|\langle \na \rangle^\sigma \nabla\vec u\|_2^{2}.
\end{align*}

Therefore,
\begin{align*}
J_5 \le &
C\Big(\|v\|_\infty+\|\la\na\ra  n\|_\infty^2+\|\la\na\ra \nabla\psi\big\|_\infty^2\Big)\>\big(\|\la\na\ra^\sigma n\|_2^2+ \|\langle \na \rangle^{\sigma}\nabla\psi\|_2^2\big)
+\frac{\bar{\lambda}}{5\underline{\rho}}
\|\langle \na \rangle^\sigma \nabla\vec u\|_2^{2}.
%\frac{1}{16\overline{\rho}}\,\big\|\la\na\ra^\sigma \nabla\vec u\big\|_2^2 + (\|\nabla \psi\|_\infty^2
%+ \|\psi\|_\infty^4\,
%\|\la\na\ra^\sigma n\|_2^4) \, \|\la\na\ra^\sigma \nabla\psi\|_2^2).
\end{align*}
From $\pp_t\psi+v+\vec u\cdot \nabla\psi=0$, we have
\begin{align*}
&\frac12\pp_t\int \big|\la\na\ra^\sigma \nabla\psi \big|^2\,dx\\
=&
-\int \langle \na \rangle^\sigma \nabla v\cdot \langle \na \rangle^\sigma \nabla \psi\,dx
-\int \langle \na \rangle^\sigma \nabla\big(\vec u\cdot \nabla \psi\big)\cdot\langle \na \rangle^\sigma \nabla \psi\,dx\\
=&-\int \langle \na \rangle^\sigma \nabla v\cdot \langle \na \rangle^\sigma \nabla \psi\,dx
-\int \langle \na \rangle^\sigma \big(\nabla\vec u\cdot \nabla \psi\big)\cdot\langle \na \rangle^\sigma \nabla \psi\,dx\\
&+\frac12\int \big(\nabla\cdot\vec u\big) \big|\langle \na \rangle^\sigma  \nabla \psi\big|^2\,dx
-\int \big[\langle \na \rangle^\sigma,\vec u\cdot \nabla\big]\nabla \psi\cdot\langle \na \rangle^\sigma \nabla \psi\,dx.
\end{align*}
Similar as above, we obtain
\begin{align*}
\frac12\pp_t\int \big|\la\na\ra^\sigma \nabla\psi \big|^2\,dx
\le  & -\int \langle \na \rangle^\sigma \nabla v\cdot \langle \na \rangle^\sigma \nabla \psi\,dx\\
&
+\,\big(\|\nabla \vec u\|_\infty+ \|\nabla \psi\|_\infty^2\big)\|\la\na\ra^\sigma\nabla\psi\|_2^2\,+\frac{\bar{\lambda}}{5\underline{\rho}}
\|\langle \na \rangle^\sigma \nabla\vec u\|_2^{2}.
%\big(1+\|\nabla \psi\|_\infty^2+\|\nabla \vec u\|^2_\infty\big)\big\|\la\na\ra^\sigma \nabla\psi\big\|^2_2\\
%&\, +  \big\|\la\na\ra^\sigma \vec u\big\|_2^2\, \big\|\la\na\ra^\sigma\nabla\psi\big\|_2^4
%+\frac{1}{16\overline{\rho}}\,\big\|\la\na\ra^\sigma \nabla\vec u\big\|_2^2.
\end{align*}
%Adding the estimates above and invoking the embedding inequalities, for any $\sigma\ge 3$,
%$$
%\|n\|_\infty \le C \,\|\la\na\ra^\sigma n\|_2, \quad \|\nabla \vec u\|_\infty \le C \,\|\la\na\ra^\sigma \nabla \vec u\|_2, \quad \|\nabla \psi\|_\infty \le C \,\|\la\na\ra^\sigma \nabla \psi\|_2
%$$
%and noticing that, for $1+ \lambda>0$,
%$$
%- \int  \rho^{-1} \,|\langle \na \rangle^\sigma \nabla\vec u|^2\,dx
%-\lambda\int \rho^{-1}\, |\langle \na \rangle^\sigma\nabla\cdot \vec u|^2\,dx \le -\frac{1+ \lambda}{\overline{\rho}}
%\|\la\na\ra^\sigma \nabla \vec u\|_2^2,
%$$
%we are led to
%\begin{align*}
%\pp_t\big\|\la\na\ra^\sigma (n,\nabla\psi,\vec u)\big\|_2^2
%\lesssim & Q\big(\big\|\la\na\ra^\sigma (n,\nabla\psi,\vec u)\big\|_2\big),
%\end{align*}
%where $Q(r)$ represents a polynomial of $r$ with the lowest order at least quadratic. Integrating in time,
%\begin{align*}
%\big\|\la\na\ra^\sigma (n,\vec u,\nabla\psi)(t)\big\|_2^2
%\lesssim &
%\big\|\la\na\ra^\sigma (n_0,\vec u_0,\nabla\psi_0)\big\|_2^2+
%\int_0^t Q\big(\big\|\la\na\ra^\sigma (n,\nabla\psi,\vec u)\big\|_2\big)\,d\tau.
%\end{align*}
%for any initial datum $(n_0,\vec u_0,\nabla\psi)\in H^\sigma(\R^2)$. Gronwall's inequality then
%implies the desired local (in time) bound.

%\vskip .1in
Adding the estimates above, and taking $\sigma =M$ as in the definition of $X$ in (\ref{X}), we have
\begin{align*}
\big\|\la\na\ra^M (n,\vec u,\nabla\psi)(t)\big\|_2
\lesssim &
\big\|\la\na\ra^M (n_0,\vec u_0,\nabla\psi_0)\big\|_2\\
&+
\int_0^t \Big(\|\la \nabla\ra\vec u\|_\infty+\big\|\la\na\ra (n,\nabla\psi,\vec u)\big\|_\infty^2\Big)\> \big\|\la\na\ra^M (n,\nabla\psi,\vec u)\big\|_2\,ds\\
\lesssim &
\big\|\la\na\ra^M (n_0,\vec u_0,\nabla\psi_0)\big\|_2^2+
\int_0^t\langle s\rangle^{-1+\epsilon}\,ds Q(\|U\|_X)\\
\lesssim &
\big\|\la\na\ra^M (n_0,\vec u_0,\nabla\psi_0)\big\|_2+
\langle t\rangle^{\epsilon} Q(\|U\|_X),
\end{align*}
which gives (\ref{1sttermbd}). This completes the proof of Proposition \ref{prop_local}.
\end{proof}

\vskip .4in

\section{The estimates on $n$}
\label{sect:n}

In this section, we shall prove that there exists some small constant $\epsilon_0>0$, such that
\begin{align}
\|\langle \nabla\rangle^3n(t)\|_{L^2_{xy}}&\lesssim \langle t\rangle^{-\frac14}\big(\|U_0\|_{X_0}+\epsilon_0\|U\|_X+Q(\|U\|_{X})\big);\label{est:n-L2}\\
\|\langle \nabla\rangle^{\frac32} n(t)\|_{L^\infty_{xy}}&\lesssim \langle t\rangle^{-\frac12}\big(\|U_0\|_{X_0}+\epsilon_0\|U\|_X+Q(\|U\|_{X})\big);\label{est:n-Linfty}\\
\|\langle \nabla\rangle \pp_x n(t)\|_{L^2_{xy}}&\lesssim \langle t\rangle^{-\frac34}\big(\|U_0\|_{X_0}+\epsilon_0\|U\|_X+Q(\|U\|_{X})\big).\label{est:n-xL2}
\end{align}
Moreover, it follows from Nash's inequality, that
$$
\|f\|_{L^\infty_{xy}}\lesssim \|\pp_xf\|_{L^2_{xy}}^{\frac12}\|\langle \nabla\rangle^{1+}f\|_{L^2_{xy}}^{\frac12}.
$$
Using this inequality,
$$
\|\langle \nabla\rangle^{\frac32} n(t)\|_{L^\infty_{xy}}\lesssim \|\langle \nabla\rangle^3n(t)\|_{L^2_{xy}}^{\frac12}\|\langle \nabla\rangle\pp_xn(t)\|_{L^2_{xy}}^{\frac12}.
$$
Thus, we only need to show \eqref{est:n-L2} and \eqref{est:n-xL2}. The rest of this section is divided
into three subsections. The first subsection recasts the integral representation given in Lemma
\ref{fiii}, which we call ``reexpression" of $n$. This new representation further reveals
the structure of the model and is suitable for the desired estimates. The second subsection estimates the linear parts
while the third section bounds the nonlinear part.

\subsection{The reexpression of $n$}\label{sec:exp-n}
This subsection recasts the representation given in Lemma \ref{fiii}.
More precisely,  we prove the following proposition.
\begin{prop} \label{nprop1}
The unknown function $n$ obeys the formula,
\begin{align}\label{exp-n}
n(t,x,y)=
(L_n+B_n)(t;n_0,\vec u_0, \vec b_0)+\mathcal{N}_n(t;n,\vec u, \psi),
\end{align}
where $(L_n+B_n)$ is given by
\begin{align}
&L_n(t;n_0,\vec u_0, \vec b_0)+B_n(t;n_0,\vec u_0, \vec b_0)\notag\\
=&-K(t)\big[\partial_y \Delta^2\psi_0\big]+\partial_tK(t)\big[\Delta\partial_y \psi_0\big]-(\partial_{tt}-\Delta\partial_t-\Delta)K(t)\big[\pp_xu_0\big]\notag\\
&-(\partial_{tt}-\Delta\partial_t)K(t)\big[\pp_yv_0\big]-\frac12\Delta\sqrt{\Delta\partial_{yy}} K(t)n_0+K_1(t)n_0\label{eq:LBn}
\end{align}
and $\mathcal{N}_n(t;n,\vec u, \psi)$ is defined as
\begin{align}
\mathcal{N}_n(t;n,&\vec u, \psi)
=\int_0^t\partial_t(\partial_{tt}-\Delta\partial_t-\Delta)K(t-s)N_0(s)\,ds\notag\\
&-\int_0^t \Delta(\partial_{tt}-\Delta\partial_t-\Delta)K(t-s)N_0(s)\,ds-\int_0^t\partial_x(\partial_{tt}-\Delta\partial_t-\Delta)K(t-s)N_1(s)\,ds\notag\\
&
+\int_0^t\partial_y(\partial_{tt}-\Delta\partial_t)K(t-s)N_2(s)\,ds+\int_0^t\Delta\partial_y(\partial_t-\Delta)K(t-s)N_3(s)\,ds\notag\\
&
-\lambda\int_0^t(\partial_{tt}-\Delta\partial_t-\partial_{xx})K(t-s)(\partial_x\Delta u+\partial_y\Delta v)\,ds.\label{n-N}
\end{align}
\end{prop}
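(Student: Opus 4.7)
The plan is to specialize Lemma \ref{fiii} to $\Phi=n$ with source $F=F_0$ given by \eqref{F0-r}, convert all initial data appearing in the linear part into the primitive data $(n_0,\vec u_0,\psi_0)$ via \eqref{e2.1}--\eqref{e2.3}, and then integrate by parts in time inside the Duhamel integral so as to transfer all $\pp_t$-derivatives off the nonlinearities $N_i$ and onto the kernel $K(t-s)$. The central point will be that the nonlinear ``junk'' produced at $t=0$ by the data conversion is exactly cancelled by the $s=0$ boundary contributions generated by the integration by parts, leaving only the clean expressions \eqref{eq:LBn} and \eqref{n-N}.

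\textbf{Step 1 (initial data).} Applying Lemma \ref{fiii} in the form \eqref{L-1}--\eqref{L-5} to $\Phi=n$, the quantities required are $n(0)=n_0$, $\pp_t n(0)$, $(\pp_{tt}-\Delta\pp_t-\Delta)n(0)$, and $(\pp_{tt}-\Delta\pp_t-\Delta)\pp_tn(0)$. The first equation of \eqref{e2.1} gives $\pp_tn(0)=-\pp_xu_0-\pp_yv_0+N_0(0)$; equation \eqref{e2.3} gives $(\pp_{tt}-\Delta\pp_t-\Delta)n(0)=\Delta\pp_y\psi_0+\Pi_0(0)$; and differentiating \eqref{e2.3} in $t$ once and using the fourth equation of \eqref{e2.1} to substitute $\pp_t\psi(0)=-v_0+N_3(0)$ yields $(\pp_{tt}-\Delta\pp_t-\Delta)\pp_tn(0)=-\Delta\pp_yv_0+\Delta\pp_yN_3(0)+\pp_t\Pi_0(0)$. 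Inserting these into \eqref{L-1}--\eqref{L-5} produces the linear skeleton of \eqref{eq:LBn}, together with residual nonlinear boundary contributions proportional to $N_0(0)$, $N_3(0)$, and $\pp_t\Pi_0(0)$.

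\textbf{Step 2 (Duhamel integration by parts).} For each $\pp_s$-derivative falling on an $N_i$ in $\int_0^t K(t-s)F_0(s)\,ds$, I would apply $\pp_sK(t-s)=-\pp_tK(t-s)$ and integrate by parts. The surface terms at $s=t$ vanish because $K(0)=0$ and $\pp_tK(0)=0$; both follow immediately from the representations \eqref{eq:K} and \eqref{Kt-2} since each bracketed combination of exponentials collapses at $t=0$. The interior contributions left behind are exactly the six integrals listed in \eqref{n-N} (the $\lambda$-piece is handled the same way, the operator $\pp_{tt}-\Delta\pp_t-\pp_{xx}$ being the one already singled out in \eqref{F0-r}). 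The surface terms at $s=0$ produce linear combinations of $N_0(0)$, $N_3(0)$, and $\pp_t\Pi_0(0)$ paired with precisely the same kernels as the leftover nonlinearities from Step 1, with opposite signs, and hence cancel pairwise.

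\textbf{Main obstacle.} The only genuine difficulty is the combinatorial bookkeeping required to verify the pairwise cancellation between Steps 1 and 2 and to match coefficients, particularly to see that $-K(t)[\pp_y\Delta^2\psi_0]$ in \eqref{eq:LBn} arises as a combination of the $\sqrt{\Delta\pp_{yy}}$-factor in \eqref{L-5} and the $(\pp_{tt}-\Delta\pp_t-\Delta)n(0)$ slot in \eqref{L-3} once $\Delta\pp_y\psi_0$ is substituted. Once this is carried out, assembling the surviving pure-data terms into \eqref{eq:LBn} and the surviving interior integrals into \eqref{n-N} gives the stated representation \eqref{exp-n}. A built-in consistency check is that only $n_0$, $\pp_xu_0$, $\pp_yv_0$, and $\Delta\pp_y\psi_0$ show up in $L_n+B_n$, which matches the specific coupling pattern of \eqref{e2.3}.
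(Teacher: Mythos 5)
Your plan is essentially the paper's own proof: substitute the derived equations \eqref{e2.1}, \eqref{e2.3} into the initial slots of Lemma \ref{fiii}, integrate the Duhamel term by parts in $s$ to peel the time derivatives off the $N_i$'s, and observe that the nonlinear residue from the data substitution cancels against the $s=0$ boundary contributions. One small but worth-flagging incompleteness in Step 2: transferring all three $\pp_s$-derivatives off $N_0$ in $\int_0^t K(t-s)\pp_s(\pp_{ss}-\Delta\pp_s-\Delta)N_0\,ds$ produces surface terms at $s=t$ governed by $K(0)$, $\pp_tK(0)$ and $\pp_{tt}K(0)$; you list only the first two, but the third is also needed (and does hold — differentiating \eqref{Kt-2} once more and evaluating at $t=0$ gives zero), so the argument goes through.
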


Now we begin to prove this proposition.
According to \eqref{eq:Formula},  \eqref{L-1}--\eqref{L-5} and \eqref{F0-r},
\begin{align*}
n(t,x,y)=&L_n(t;n_0,\vec u_0, \vec b_0)+\int_0^t K(t-s)F_0(s)\,ds,
\end{align*}
where
\begin{align}
&L_n(t;n_0,\vec u_0, \vec b_0)\notag\\
=&
K(t)\big[(\pp_{tt}-\Delta\pp_t-\Delta)\pp_{t}n(0)\big]\label{Ln-1}\\
&+(\partial_{tt}-\Delta\partial_t-\Delta)K(t)\big[\partial_tn(0)\big]\label{Ln-2}\\
&-\Delta K(t)\big[(\pp_{tt}-\Delta\pp_t-\Delta)n(0)\big]
+\partial_t K(t)\big[(\pp_{tt}-\Delta\pp_t-\Delta)n(0)\big]\label{Ln-3}\\
&-\frac12\Delta\sqrt{\Delta\partial_{yy}} K(t)[n_0]+K_1(t)[n_0]\label{Ln-4}
\end{align}
and
\begin{align*}
F_0(s) =&\big(\partial_s-\Delta\big)(\partial_{ss}-\Delta\partial_s-\Delta)N_0
-\partial_x(\partial_{ss}-\Delta\partial_s-\Delta)N_1\\
&-\partial_y(\partial_{ss}-\Delta\partial_s)N_2+\Delta\partial_y(\partial_s-\Delta)N_3
-\lambda(\partial_{ss}-\Delta\partial_s-\partial_{xx})(\partial_x\Delta u+\partial_y\Delta v).
\end{align*}
To prove Proposition \ref{nprop1}, we essentially replace $\pp_{t}n(0)$ and $\pp_{tt}n(0)$ in $L_n$
by the terms in the equation of $\pp_{t}n$. The terms in $B_n(t;n_0,\vec u_0, \vec b_0)$ are the boundary terms
that come from
integration by parts in the time integral $\int_0^t K(t-s)F_0(s)\,ds$.
The details for deriving the expressions of $L_n$ and $B_n$ are given in the following two sub-subsections.

\vskip .1in
\subsubsection{$L_n(t;n_0,\vec u_0, \vec b_0)$}
Since $\pp_t n(0)=-\pp_x u(0)-\pp_y v(0)+N_0(0)$, by \eqref{e2.4} and \eqref{e2.6} we have
\begin{align*}
\eqref{Ln-1}=&-K(t)(\partial_{tt}-\Delta\partial_t-\Delta)(\pp_x u(0)+\pp_y v(0))
+K(t)(\partial_{tt}-\Delta\partial_t-\Delta)N_0(0)\notag\\
=&-K(t)\big(-\partial_{xyy}u(0)+\partial_{xxy}v(0)+\pp_t \pp_xN_1(0)-\pp_{xx}N_0(0)+\lambda\pp_t(\pp_{xxx} u(0)+\pp_{xxy} v(0))\notag\\
&-K(t)\big(\pp_{yyy} v+\partial_{xyy}u-\pp_{yy}N_0+\partial_t\pp_yN_2-\Delta \pp_yN_3+\lambda\pp_t(\pp_{xyy} u+\pp_{yyy} v)\big)\notag\\
&+K(t)(\partial_{tt}-\Delta\partial_t-\Delta)N_0(0)\notag\\
=&K(t)\big[\Delta N_0(0)-\pp_t \pp_xN_1(0)-\partial_t\pp_yN_2(0)+\pp_y\Delta N_3(0)\big]+K(t)\big[(\partial_{tt}-\Delta\partial_t-\Delta)N_0(0)\big]\notag\\
&-K(t)\big[\partial_{y}\Delta v(0)+\lambda\pp_t\pp_{x}\Delta u(0)+\lambda\pp_t\pp_{y}\Delta v(0)\big].
\end{align*}
Moreover,
\begin{align*}
\eqref{Ln-2}
=&(\partial_{tt}-\Delta\partial_t-\Delta)K(t)\big[\partial_tn(0)\big]\\
=&(\partial_{tt}-\Delta\partial_t-\Delta)K(t)\big[-\pp_x u(0)-\pp_y v(0)+N_0(0)\big]\\
=&-\pp_x(\partial_{tt}-\Delta\partial_t-\Delta)K(t)u(0)-\pp_y(\partial_{tt}-\Delta\partial_t-\Delta)K(t)v(0)\\
&\quad +(\partial_{tt}-\Delta\partial_t-\Delta)K(t)\big[N_0(0)\big].
\end{align*}
Further, since by \eqref{e2.2},
\begin{align*}
&\Delta K(t)\big[(\partial_{tt}-\Delta\partial_t-\Delta)n(0)\big]\\
=&
\Delta K(t)\big[\Delta\partial_y \phi(0)+\partial_tN_0(0)-\Delta N_0(0)-\partial_xN_1(0)-\partial_yN_2(0)-\lambda(\partial_x\Delta u(0)+\partial_y\Delta v(0))\big];\\
&\partial_tK(t)\big[(\partial_{tt}-\Delta\partial_t-\Delta)n(0)\big]\\
=&
\partial_tK(t)\big(\Delta\partial_y \phi(0)+\partial_tN_0(0)-\Delta N_0(0)-\partial_xN_1(0)-\partial_yN_2(0)-\lambda(\partial_x\Delta u(0)+\partial_y\Delta v(0))\big),
\end{align*}
we have
\begin{align*}
\eqref{Ln-3}
=&\partial_tK(t)\big[(\partial_{tt}-\Delta\partial_t-\Delta)n(0)\big]-\Delta K(t)\big[(\partial_{tt}-\Delta\partial_t-\Delta)n(0)\big]\notag\\
=&\big(\partial_t-\Delta)K(t)\big[\partial_tN_0(0)-\Delta N_0(0)-\partial_xN_1(0)-\partial_yN_2(0)\big]\notag\\
&\quad+\big(\partial_t-\Delta)K(t)\big[\Delta\partial_y \phi(0)-\lambda(\partial_x\Delta u(0)+\partial_y\Delta v(0))\big].
\end{align*}
Collecting the equalities above, we have
\begin{align}
&L_n(t;n_0,\vec u_0, \vec b_0)\notag\\
=&K(t)\big[(\partial_{tt}-\Delta\partial_t-\Delta)N_0(0)\big]-(\pp_t-\Delta)\Delta K(t)N_0(0)\\
&-K(t)\big[\partial_t\Delta N_0(0)+\pp_t \pp_xN_1(0)+\partial_t\pp_yN_2(0)\big]\notag\\
&+K(t)\big[\Delta N_0(0)+\partial_x\Delta N_1(0)+\partial_y\Delta N_2(0)+ \pp_y\Delta N_3(0)\big]\notag\\
&+\partial_tK(t)\big[\partial_tN_0(0)-\partial_xN_1(0)-\partial_yN_2(0)\big] +(\partial_{tt}-\Delta\partial_t-\Delta)K(t)\big[N_0(0)\big]\notag\\
&-K(t)\big[\partial_{y}\Delta v(0)+\partial_y \Delta^2\phi(0)\big]+\partial_tK(t)\Delta\partial_y \phi(0)-(\partial_{tt}-\Delta\partial_t-\Delta)K(t)\big[\pp_xu(0)+\pp_yv(0)\big]\notag\\
&-\lambda K(t)\big[\partial_x\partial_t\Delta u(0)+\partial_y\partial_t\Delta v(0)-\partial_x\Delta^2 u(0)-\partial_y\Delta^2 v(0)\big]\notag\\
&-\lambda\partial_tK(t)\big[\partial_x\Delta u(0)+\partial_y\Delta v(0)\big]\notag\\
&-\frac12\Delta\sqrt{\Delta\partial_{yy}} K(t)[n_0]+K_1(t)[n_0].\label{eq:Ln}
\end{align}

\subsubsection{$B_n(t;n_0,\vec u_0,\vec b_0)$}\label{sec:Bn}
Now we consider the boundary term $B_n(t;n_0,\vec u_0,\vec b_0)$.
Since $K(0)=\partial_tK(0)=\partial_{tt}K(0)=0$, we have
\begin{align*}
\int_0^t K(t-s)&\partial_s(\partial_{ss}-\Delta\partial_s-\Delta)N_0(s)\,ds\\
=&K(t-s)\Big[(\partial_{ss}-\Delta\partial_s-\Delta)N_0(s)\Big|_0^t\Big]-\int_0^t \partial_sK(t-s)(\partial_{ss}-\Delta\partial_s-\Delta)N_0(s)\,ds\\
=&-K(t)\big[(\partial_{tt}-\Delta\partial_t-\Delta)N_0(0)\big]+\int_0^t \partial_tK(t-s)(\partial_{ss}-\Delta\partial_s-\Delta)N_0(s)\,ds\\
=&-K(t)\big[(\partial_{tt}-\Delta\partial_t-\Delta)N_0(0)\big]-\partial_tK(t)\big[(\partial_{t}-\Delta)N_0(0)\big]\\
&\quad
-\int_0^t \partial_tK(t-s)\Delta N_0(s)\,ds
+\int_0^t \partial_{tt}K(t-s)(\partial_{s}-\Delta)N_0(s)\,ds\\
=&-K(t)\big[(\partial_{tt}-\Delta\partial_t-\Delta)N_0(0)\big]-\partial_tK(t)\big[(\partial_{t}-\Delta)N_0(0)\big]-\partial_{tt}K(t)\big[ N_0(0)\big]\\
&\quad
+\int_0^t \partial_t(\partial_{tt}-\Delta\partial_t-\Delta)K(t-s)N_0(s)\,ds.
\end{align*}
By a similar treatment, we have
\begin{align*}
-\int_0^t K(t-s)&\Delta(\partial_{ss}-\Delta\partial_s-\Delta)N_0(s)\,ds\\
=& K(t)\big[(\partial_{t}-\Delta)\Delta N_0(0)\big]+\pp_tK(t)\big[\Delta N_0(0)\big]
-\int_0^t \Delta(\partial_{tt}-\Delta\partial_t-\Delta)K(t-s)N_0(s)\,ds;\\
-\int_0^t K(t-s)&\partial_x(\partial_{ss}-\Delta\partial_s-\Delta)N_1(s)\,ds\\
=&K(t)\big[(\partial_{t}-\Delta)\partial_x N_1(0)\big]+\partial_tK(t)\big[\partial_{x}N_1(0)\big]-\int_0^t\partial_x(\partial_{tt}-\Delta\partial_t-\Delta) K(t-s)N_1(s)\,ds;\\
-\int_0^t K(t-s)&\partial_y(\partial_{ss}-\Delta\partial_s)N_2(s)\,ds\\
=&K(t)\big[(\partial_{t}-\Delta)\partial_y N_2(0)\big]+\partial_tK(t)\big[\partial_{y}N_2(0)\big]-\int_0^t\partial_y(\partial_{tt}-\Delta\partial_t)K(t-s)N_2(s);\\
\int_0^t K(t-s)&\Delta\partial_y(\partial_s-\Delta)N_3(s)\,ds\\
=&-K(t)\big[\Delta\partial_yN_3(0)\big]+\int_0^t \Delta\partial_y\big(\partial_t-\Delta\big)K(t-s)N_3(s)\,ds;\\
-\lambda\int_0^tK(t-s)&(\partial_{ss}-\Delta\partial_s-\partial_{xx})(\partial_x\Delta u+\partial_y\Delta v)\,ds\\
=&\lambda K(t)\big[(\partial_{t}-\Delta)(\partial_x\Delta u(0)+\partial_y\Delta v(0))\big]
+\lambda \partial_{t}K(t)\big[\partial_x\Delta u(0)+\partial_y\Delta v(0)\big]\\
&-\lambda\int_0^t(\partial_{tt}-\Delta\partial_t-\partial_{xx})K(t-s)(\partial_x\Delta u(s)+\partial_y\Delta v(s))\,ds.
\end{align*}
Collecting the estimates, we obtain that
\begin{align}
&B_n(t;n_0,\vec u_0, \vec b_0)\notag\\
=&-K(t)\big[(\partial_{tt}-\Delta\partial_t-\Delta)N_0(0)\big]+K(t)\big[(\partial_{t}-\Delta)\Delta N_0(0)\big]\notag\\
&+\pp_tK(t)\big[\Delta N_0(0)\big]+K(t)\big[\pp_t \pp_xN_1(0)+\partial_t\pp_yN_2(0)\big]\notag\\
&-K(t)\big[\partial_x\Delta N_1(0)+\partial_y\Delta N_2(0)+ \pp_y\Delta N_3(0)\big]\notag\\
&-\partial_tK(t)\big[\partial_tN_0(0)-\partial_xN_1(0)-\partial_yN_2(0)-\Delta N_0(0)\big]-\partial_{tt}K(t)\big[ N_0(0)\big]\notag\\
&+\lambda K(t)\big[(\partial_{t}-\Delta)(\partial_x\Delta u(0)+\partial_y\Delta v(0))\big]
+\lambda \partial_{t}K(t)\big[\partial_x\Delta u(0)+\partial_y\Delta v(0)\big].\label{eq:Bn}
\end{align}
The recasting above allowed us to cancel some of the troubling terms from \eqref{eq:Ln} and \eqref{eq:Bn} to get \eqref{eq:LBn}.
This property is also valid for $\vec u$ and $\psi$, which is important in analysis.
 The rest of this section is split into  two subsections to estimate the linear and nonlinear parts, respectively.

\subsection{Estimates on the linear parts $L_n+B_n$}
\label{sec:LB-n}
In this subsection, we prove that
\begin{lemma}\label{lem:n-linear}
\begin{align}
\big\|\langle \nabla\rangle^3(L_n+B_n)(t;n_0,\vec u_0, \vec b_0 )\big\|_{L^2}
\lesssim&
\langle t\rangle^{-\frac14}\|U_0\|_{X_0};\label{est:n-linear}\\
\big\|\langle \nabla\rangle\pp_x(L_n+B_n)(t;n_0,\vec u_0, \vec b_0 )\big\|_{L^2}
\lesssim&
\langle t\rangle^{-\frac34}\|U_0\|_{X_0}.\label{est:n-linear-x2}
\end{align}
\end{lemma}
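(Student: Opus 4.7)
The plan is to substitute the explicit formula \eqref{eq:LBn} and estimate each of the six resulting summands separately by directly invoking the linear-flow decay estimates compiled in Section \ref{KernelProperty}.

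For \eqref{est:n-linear}, the bottleneck term will be $-K(t)[\partial_y\Delta^2\psi_0]=-\Delta^2K(t)\partial_y\psi_0$, which I would handle through Proposition \ref{lem:Kn1-L} after a Littlewood--Paley splitting: the low-frequency piece uses $\|A^4\widehat K\|_{L^2(A\le 1)}\lesssim\langle t\rangle^{-1/4}$ paired with $\|P_{\le 1}\partial_y\psi_0\|_{L^1}$, while the high-frequency piece rides on $\|\widehat K\|_{L^2(A\sim N)}\lesssim N^{-3}\langle t\rangle^{-1/4}$, combining with the $N^4$ derivative weight to produce a series summable by $\|\langle\nabla\rangle^{4+}\partial_y\psi_0\|_{L^1}\lesssim\|U_0\|_{X_0}$. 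The remaining five terms all admit strictly faster than $\langle t\rangle^{-1/4}$ decay: $\partial_tK(t)[\Delta\partial_y\psi_0]$ is controlled via Proposition \ref{lem:Kn2-L} (yielding $\langle t\rangle^{-5/4}$); $(\partial_{tt}-\Delta\partial_t-\Delta)K(t)[\partial_xu_0]$ via Proposition \ref{lem:Kn3-L}; $(\partial_{tt}-\Delta\partial_t)K(t)[\partial_yv_0]$ via Proposition \ref{lem:Kn4-L}; the $\Delta\sqrt{\Delta\partial_{yy}}K(t)n_0$ term by writing $\Delta\sqrt{\Delta\partial_{yy}}=\Delta(\Delta+\sqrt{\Delta\partial_{yy}})-\Delta^2$ and applying Propositions \ref{lem:Ku3-L} and \ref{lem:Kn1-L} to the respective pieces; and $K_1(t)n_0$ via Proposition \ref{lem:K1-L}.

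The estimate \eqref{est:n-linear-x2} follows the same outline: the extra $\partial_x$ upgrades each appeal above to its anisotropic counterpart with a gain of $\langle t\rangle^{-1/2}$, taking us from $\langle t\rangle^{-1/4}$ to $\langle t\rangle^{-3/4}$. For instance, Proposition \ref{lem:Ku1-L} (applied to $\partial_{xy}K$) replaces \ref{lem:Kn1-L} on the $\psi_0$ term, and the sharper $\partial_x$-bounds in Propositions \ref{lem:Kn3-L} and \ref{lem:Kn8-L} take over for the $u_0$, $v_0$ terms; the $K_1$ term improves through Proposition \ref{lem:K1-L}(ii).

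Two subtleties will guide the bookkeeping. First, every occurrence of $\psi_0$ in \eqref{eq:LBn} carries at least one derivative, so norms of $\psi_0$ can always be re-expressed as norms of $\nabla\psi_0$, the quantity actually controlled by $X_0$. Second, the derivative budget must be verified in both frequency regimes: low frequencies are absorbed through the $L^1$-control $\|\langle\nabla\rangle^5(n_0,\vec u_0,\nabla\psi_0)\|_{L^1}$ inside $X_0$ (invoked via the generalized Young inequality from Lemma \ref{Young}), while high frequencies are supplied by the Sobolev regularity $\|\langle\nabla\rangle^M(n_0,\vec u_0,\nabla\psi_0)\|_{L^2}$ with $M\ge 8$, which absorbs any finite derivative count that appears. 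There is no single conceptually hard step; the real gain has already been secured by the cancellations between the boundary terms produced by integration by parts in Subsection \ref{sec:Bn} and the $L_n$-pieces in \eqref{eq:Ln}, which collapse the raw output into the clean six-term form \eqref{eq:LBn} on which the Section \ref{KernelProperty} estimates can be applied directly.
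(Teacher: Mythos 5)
Your proposal follows the same route as the paper: substitute the collapsed six-term formula \eqref{eq:LBn}, then bound each term with the linear decay estimates from Section~\ref{KernelProperty}. The bottleneck terms, the use of Proposition~\ref{lem:Kn1-L} and Lemma~\ref{Young} with a Littlewood--Paley split, and the observation that the boundary-term cancellations from Subsection~\ref{sec:Bn} are the real mechanism, all match the paper's proof. A couple of small bookkeeping inaccuracies deserve mention, though neither undermines the argument. First, your claimed $\langle t\rangle^{-5/4}$ rate for $\partial_tK(t)[\Delta\partial_y\psi_0]$ would require taking $\beta$ all the way to $5/2$ in Proposition~\ref{lem:Kn2-L}(i), leaving a residual $|\nabla|^{-1/2}\psi_0$ in the $L^1$ budget; that negative fractional power is not controlled by $X_0$ (which only handles $\nabla\psi_0$). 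The paper uses $\beta=1$ and obtains $\langle t\rangle^{-1/2}$, which is the correct rate and still comfortably exceeds the required $\langle t\rangle^{-1/4}$, so your conclusion stands. Second, the linear estimates in Section~4 already incorporate both low- and high-frequency ranges through $L^1$ norms of the data, so the $L^2$ Sobolev component $\|\langle\nabla\rangle^M(\cdot)\|_{L^2}$ of $X_0$ is never actually invoked here; the five-derivative $L^1$ piece suffices, as the paper's final inequality makes explicit. Your decomposition $\Delta\sqrt{\Delta\partial_{yy}}=\Delta(\Delta+\sqrt{\Delta\partial_{yy}})-\Delta^2$ is a harmless variant of the paper's direct symbol bound $A^3|\eta|\le A^4$ feeding Proposition~\ref{lem:Kn1-L}; both work and cost the same number of derivatives.
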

Now we estimate the terms in \eqref{eq:LBn}.
First of all, we consider $K(t)\big[\partial_y \Delta^2\psi_0\big]$. By  Proposition \ref{lem:Kn1-L},  we have
\begin{align}
&\big\|\langle \nabla\rangle^{3}K(t)\big[\partial_y \Delta^2\psi_0\big]\big\|_{L^2_{xy}}
=\big\|\Delta^2K(t)\big[\partial_y \langle \nabla\rangle^{3}\psi_0\big]\big\|_{L^2_{xy}}
\lesssim
\langle t\rangle^{-\frac14}\big\|\langle \nabla\rangle^{4+}\nabla\psi_0\big\|_{L^1_{xy}}.\label{LBn-1-1}
\end{align}
Moreover, almost the same as \eqref{LBn-1-1}, and using Proposition \ref{lem:Ku1-L} (i) ($\beta'=3$) instead, we have
\begin{align}
\big\|\langle \nabla\rangle\pp_xK(t)\big[\partial_y \Delta^2\psi_0\big]\big\|_{L^2_{xy}}
= \big\|\Delta\nabla\pp_{xy}K(t)\cdot \big[\langle \nabla\rangle\nabla\psi_0\big]\big\|_{L^2_{xy}}
\lesssim
\langle t\rangle^{-\frac34}\big\|\langle \nabla\rangle^{3+}\nabla\psi_0\big\|_{L^1_{xy}}.\label{LBn-1-3}
\end{align}

Now we consider $\partial_tK(t)[\Delta\partial_y \psi_0]$. From Proposition \ref{lem:Kn2-L} (i) ($\beta=1$), we have
\begin{align}
\big\|\langle \nabla\rangle^{3}\partial_tK(t)\big[\Delta\partial_y \psi_0\big]\big\|_{L^2_{xy}}\
= \big\|\nabla\partial_y\partial_tK(t)\cdot \big[ \langle \nabla\rangle^{3}\nabla\psi_0\big]\big\|_{L^2_{xy}}
\lesssim
\langle t\rangle^{-\frac12}\big\|\langle \nabla\rangle^{2+}\nabla \psi_0\big\|_{L^1_{xy}}.\label{LBn-2-1}
\end{align}
Similarly, by Proposition \ref{lem:Kn2-L} (i) again ($\beta=2$),
\begin{align}
\big\|\langle \nabla\rangle\pp_x\partial_tK(t)\big[\Delta\partial_y \psi_0\big]\big\|_{L^2_{xy}}
=\big\|\Delta\partial_y\partial_tK(t)\big[\langle \nabla\rangle\pp_x \psi_0\big]\big\|_{L^2_{xy}}
\lesssim
\langle t\rangle^{-\frac34}\big\|\langle \nabla\rangle^{1+}\nabla \psi_0\big\|_{L^1_{xy}}.\label{LBn-2-3}
\end{align}

Now we consider the term $(\partial_{tt}-\Delta\partial_t-\Delta)K(t)\big[\pp_xu_0\big]$. From Proposition \ref{lem:Kn3-L} (i) ($p=2$) and Proposition \ref{lem:Kn8-L} (i), we have
\begin{align}
\big\|\langle \nabla\rangle^{3}(\partial_{tt}-\Delta\partial_t-\Delta)K(t)\big[\pp_xu_0\big]\big\|_{L^2_{xy}}
\lesssim &
\langle t\rangle^{-\frac12}\big\|\langle \nabla\rangle^{3+}u_0\big\|_{L^1_{xy}};\label{LBn-3-1}\\
\big\|\langle \nabla\rangle\pp_x(\partial_{tt}-\Delta\partial_t-\Delta)K(t)\big[\pp_xu_0\big]\big\|_{L^2_{xy}}
\lesssim &
\langle t\rangle^{-1}\big\|\langle \nabla\rangle^{2+}u_0\big\|_{L^1_{xy}}.\label{LBn-3-3}
\end{align}

Now we consider the term $(\partial_{tt}-\Delta\partial_t)K(t)\big[\pp_yv_0\big]$.
From  Proposition \ref{lem:Kn4-L} (i) ($\beta=0,1$),
\begin{align}
\big\|\langle \nabla\rangle^{3}(\partial_{tt}-\Delta\partial_t)K(t)\big[\pp_yv_0\big]\big\|_{L^2_{xy}}
\lesssim &
\langle t\rangle^{-\frac12}\big\|\langle \nabla\rangle^{3+}v_0\big\|_{L^1_{xy}};\label{LBn-4-1}\\
\big\|\langle \nabla\rangle\pp_x(\partial_{tt}-\Delta\partial_t)K(t)\big[\pp_yv_0\big]\big\|_{L^2_{xy}}
\lesssim &
\langle t\rangle^{-1}\big\|\langle \nabla\rangle^{2+}v_0\big\|_{L^1_{xy}}.\label{LBn-4-3}
\end{align}

At last, we consider the terms \eqref{Ln-4}. For the first term $-\frac12\Delta\sqrt{\Delta\partial_{yy}} K(t)[n_0]$,  from Proposition  \ref{lem:Kn1-L},   we have
\begin{align}
\big\|\langle \nabla\rangle^{3}\Delta\sqrt{\Delta\partial_{yy}} K(t)[n_0]\big\|_{L^2_{xy}}
\lesssim
\langle t\rangle^{-\frac{1}{4}}\big\|\langle \nabla\rangle^{4+} n_0\big\|_{L^1_{xy}}.\label{LBn-5-1}
\end{align}
Similarly, by Proposition \ref{lem:Ku1-L} (i) ($\beta'=3$),
\begin{align}
\big\|\langle \nabla\rangle\pp_x\Delta\sqrt{\Delta\partial_{yy}} K(t)[n_0]\big\|_{L^2_{xy}}
\lesssim
\langle t\rangle^{-\frac{3}{4}}\big\|\langle \nabla\rangle^{3+} n_0\big\|_{L^1_{xy}}.\label{LBn-5-3}
\end{align}

For the second term $K_1(t)[n_0]$,
by Proposition \ref{lem:K1-L},
\begin{align}
\big\|\langle \nabla\rangle^{3}K_1(t)[n_0]\big\|_{L^2_{xy}}
\lesssim
\langle t\rangle^{-\frac{1}{4}}\big\|\langle \nabla\rangle^{\frac72+} n_0\big\|_{L^1_{xy}};\label{LBn-6-1}\\
\big\|\langle \nabla\rangle\pp_xK_1(t)[n_0]\big\|_{L^2_{xy}}
\lesssim
\langle t\rangle^{-\frac{3}{4}}\big\|\langle \nabla\rangle^{\frac52+} n_0\big\|_{L^1_{xy}}.\label{LBn-6-3}
\end{align}

By the estimates \eqref{LBn-1-1}, \eqref{LBn-2-1}, \eqref{LBn-3-1}, \eqref{LBn-4-1}, \eqref{LBn-5-1},  and \eqref{LBn-6-1}, we have
\begin{align*}
&\big\|\langle \nabla\rangle^3(L_n+B_n)(t;n_0,\vec u_0, \vec b_0 )\big\|_{L^2}\\
\lesssim &
\langle t\rangle^{-\frac14}\Big(\big\|\langle \nabla\rangle^{4+}\nabla\psi_0\big\|_{L^1_{xy}}+
\big\|\langle \nabla\rangle^{3+}\vec u_0\big\|_{L^1_{xy}}+
\big\|\langle \nabla\rangle^{4+} n_0\big\|_{L^1_{xy}}\Big)\notag\\
\lesssim &
\langle t\rangle^{-\frac14}\|U_0\|_{X_0}.
\end{align*}

Similarly,
by the estimates \eqref{LBn-1-3}, \eqref{LBn-2-3}, \eqref{LBn-3-3}, \eqref{LBn-4-3}, \eqref{LBn-5-3}, and \eqref{LBn-6-3}, we have
\begin{align*}
\big\|\langle \nabla\rangle\pp_x(L_n+B_n)(t;n_0,\vec u_0, \vec b_0 &)\big\|_{L^2_{xy}}
\lesssim
\langle t\rangle^{-\frac34}\|U_0\|_{X_0}.
\end{align*}

\subsection{Estimates on the nonlinear parts $\mathcal{N}_n$} \label{sec:n-L2}
By Proposition \ref{nprop1} and Lemma \ref{lem:n-linear}, we are left to prove that
\begin{align}
\big\|\langle \nabla\rangle^3\mathcal{N}_n(t;n,\vec u, \psi)\big\|_{L^2}
& \lesssim
\langle t\rangle^{-\frac14}\Big(\epsilon_0\|U\|_X+Q(\|U\|_{X})\Big);\label{est:n-nonlinear}\\
\big\|\langle \nabla\rangle\pp_x\mathcal{N}_n(t;n,\vec u, \psi)\big\|_{L^2_{xy}}
&\lesssim
\langle t\rangle^{-\frac34}\Big(\epsilon_0\|U\|_X+Q(\|U\|_{X})\Big).\label{est:n-nonlinear-xL2}
\end{align}

First of all, we split each of the nonlinearities $N_j, j=0,1,2,3$ (which were defined in Section \ref{sec:Linearization}) into the low frequency part and high frequency part, which reads as
\begin{align}\label{dec:high-low}
N_j(n,u,v,\psi)=N_j^l(n,u,v,\psi)+N_j^h(n,u,v,\psi),
\end{align}
where
\begin{align*}
N_j^l(n,u,v,\psi)(t)=N_j\big(n_{\le \langle t\rangle^{0.01}},u_{\le \langle t\rangle^{0.01}},v_{\le \langle t\rangle^{0.01}},\psi_{\le \langle t\rangle^{0.01}}\big)(t).
\end{align*}
Here we use the notation $f_{\le N}=P_{\le N}f$.
That is, each term in $N_j^h$ contains at least one high frequency part and the terms in $N_j^l$ involve only
low frequencies. Then we have
\begin{lemma}
\begin{align}
\big\|\langle \nabla\rangle^5 N_j^h(n,u,v,\psi)(t)\big\|_{L^1_{xy}}&\lesssim \langle t\rangle^{-1.03}\|U\|_X^2\quad j=0,3;\label{high-03}\\
\big\|\langle \nabla\rangle^4 N_j^h(n,u,v,\psi)(t)\big\|_{L^1_{xy}}&\lesssim \langle t\rangle^{-1.03}\frac{\|U\|_X^2}{1-\|U\|_X}\quad j=1,2.\label{high-12}
\end{align}
\end{lemma}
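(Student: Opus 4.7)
The high-frequency threshold $N := \la t\ra^{0.01}$ is chosen precisely so that Bernstein's inequality trades regularity for temporal decay: for any $f\in\{n,\vec u,\nabla\psi\}$ and $k\le M$,
\begin{equation*}
\|\la\na\ra^k P_{\ge N}f\|_{L^2}\lesssim N^{k-M}\|\la\na\ra^M f\|_{L^2}\lesssim \la t\ra^{-0.01(M-k)+\epsilon}\|U\|_X,
\end{equation*}
using that $\|\la\na\ra^M f\|_{L^2}\lesssim \la t\ra^\epsilon\|U\|_X$ is built into the $X$-norm. With $M\ge 8$ and at most $\la\na\ra^5$ derivatives on the left of the claim, trading the remaining $\ge 3$ derivatives yields the gain $\la t\ra^{-0.03}$, which supplies the fractional part of the target $\la t\ra^{-1.03}$; the integer part $\la t\ra^{-1}$ must come from the decay rates of the accompanying low-frequency factors encoded in $X$.

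For the bilinear terms $N_0 = -\pp_x(nu) - \pp_y(nv)$ and $N_3 = -u\pp_x\psi - v\pp_y\psi$, I would decompose $N_j^h$ by which factor carries the $P_{\ge N}$ projection, distribute the $\la\na\ra^5$ via fractional Leibniz, and pair factors via $\|fg\|_{L^1}\le\|f\|_{L^2}\|g\|_{L^2}$. On the high-frequency factor the Bernstein bound above gives the $\la t\ra^{-0.03}$ gain; on the low-frequency factor I would route the explicit $\pp_\alpha$ of $N_j$ (or an additional one produced by Leibniz) so that it lands on a factor admitting a sharpened norm from $X$, namely one of $\|\la\na\ra\pp_x u\|_{L^2}\lesssim\la t\ra^{-1}$, $\|\la\na\ra\na v\|_{L^2}\lesssim\la t\ra^{-1}$, $\|\la\na\ra\pp_x n\|_{L^2}\lesssim\la t\ra^{-3/4}$, or $\|\pp_x\na\psi\|_{L^2}\lesssim\la t\ra^{-3/4}$, paired with $\|\vec u\|_{L^2}\lesssim\la t\ra^{-1/2}$ or the intermediate $\|\la\na\ra^3 n\|_{L^2}\lesssim\la t\ra^{-1/4}$ on the remaining factor. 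Whenever a naive allocation would leave a weak configuration (e.g.\ all five derivatives landing on $n_{\ge N}$ paired with $v$ in $L^2$), I would rearrange via Leibniz so that at least one $\pp$ falls on $v$, turning it into $\na v$ with the stronger rate $\|\la\na\ra\na v\|_{L^2}\lesssim\la t\ra^{-1}$.

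For the higher-order $N_1, N_2$, which contain the quotient $1/\rho = 1/(1+n)$, I would apply the Neumann expansion $1/\rho = \sum_{m\ge 0}(-n)^m$, valid because $\|n\|_{L^\infty}\lesssim\|U\|_X<1$ on the bootstrap regime; the geometric sum $\sum_m\|n\|_{L^\infty}^m\le(1-\|U\|_X)^{-1}$ accounts for the denominator in the claim. Each resulting monomial is at least cubic in $(n,\vec u,\na\psi)$ and is treated as in the bilinear case, now with $\la\na\ra^4$ in place of $\la\na\ra^5$; the extra $n$-factor contributes $\|\la\na\ra^{3/2}n\|_{L^\infty}\lesssim\la t\ra^{-1/2}$, more than covering the missing derivative. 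The main obstacle is not any single estimate but rather the exhaustive case analysis: one must verify that in \emph{every} Leibniz-plus-frequency configuration, the product attains a rate $\le \la t\ra^{-1.03}$, which is tight when $M=8$ and requires precise matching of the Bernstein gain on the high-frequency factor against the anisotropic and intermediate norms of $X$ on the low-frequency factors --- the ``careful computation'' alluded to in the paper's choice $M\ge 8$.
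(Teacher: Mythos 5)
Your approach — frequency splitting at threshold $\langle t\rangle^{0.01}$, Bernstein to trade the surplus $M-k$ derivatives on the high-frequency factor into temporal decay, H\"older/Leibniz on the products, the $X$-norm rates on the low-frequency factors, and a Neumann expansion of $1/\rho$ to account for the $(1-\|U\|_X)^{-1}$ denominator in \eqref{high-12} — is exactly the method of the paper, which only writes out the $N_0$ case and declares the others ``standard.'' The one place you go further is worth flagging: the paper's displayed computation for $N_0$ puts \emph{all} the extra derivatives on the single high-frequency factor and pairs with the crude rates $\|u\|_{L^2}\lesssim\langle t\rangle^{-1/2}$, $\|n\|_{L^2}\lesssim\langle t\rangle^{-1/4}$; taking the printed exponent $\langle t\rangle^{-0.8}$ at face value actually forces $M\approx 86$, inconsistent with the stated ``$M\ge 8$ is sufficient.'' Your proposal to route the structural $\partial_x,\partial_y$ of $N_j$ (and a Leibniz derivative) onto the low-frequency factor so as to invoke the stronger anisotropic rates $\|\la\na\ra\pp_x u\|_{L^2},\|\la\na\ra\na v\|_{L^2}\lesssim\langle t\rangle^{-1}$, $\|\la\na\ra\pp_x n\|_{L^2},\|\pp_x\na\psi\|_{L^2}\lesssim\langle t\rangle^{-3/4}$ is precisely the ``careful computation'' the paper alludes to but does not carry out, and is the right idea for small $M$.

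That said, your final remark that the bound is ``tight when $M=8$'' is optimistic rather than established. In the worst Leibniz configuration for $\la\na\ra^5 N_0^h$ — all six derivatives (including the built-in $\partial_x$) landing on a single high-frequency factor, say $\la\na\ra^6 P_{\ge\la t\ra^{0.01}}n$ paired with $u$ — there is no derivative left to route onto $u$, so you only have $\|u\|_{L^2}\lesssim\la t\ra^{-1/2}$, and Bernstein at $M=8$ gives only $\la t\ra^{-0.02+\epsilon}$, for a total $\la t\ra^{-0.52+\epsilon}$, well short of $\la t\ra^{-1.03}$. A complete proof must handle this term by a further idea (e.g.\ also splitting the second factor by frequency and double Bernstein when both are high, or measuring the low-frequency factor in $L^\infty$ at the cost of an $L^1$ bound on the high-frequency piece), or else acknowledge that the $M\ge 8$ assertion is too aggressive and a larger $M$ is needed. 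This gap exists in the paper's own sketch as well, so your proposal faithfully reproduces the paper's argument including its loose end, but a referee asking you to verify $M=8$ with the constants as stated would not be satisfied by either.
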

\begin{proof}
We only give the corresponding estimate on $N_0$, since the others can be proved by the same standard way.
By the definition of  $N_0$ in \eqref{N0}, and choosing $M$ large enough and $\epsilon$ small enough, we have, for any $2\le p\le +\infty$,
\begin{align*}
\big\|\langle \nabla\rangle^5 N_0^h(n,u,v,\psi)(t)\big\|_{L^1_{xy}}
\lesssim &
\big\|\langle \nabla\rangle^6 P_{\ge \langle t\rangle^{0.01}}n(t)\big\|_{L^2_{xy}}\Big(\big\|u(t)\big\|_{L^2_{xy}}+\big\|v(t)\big\|_{L^2_{xy}}\Big)\\
&\quad
+\big\|n(t)\big\|_{L^2_{xy}}\Big(\big\|\langle \nabla\rangle^6 P_{\ge \langle t\rangle^{0.01}}u(t)\big\|_{L^2_{xy}}+\big\|\langle \nabla\rangle^6 P_{\ge \langle t\rangle^{0.01}}v(t)\big\|_{L^2_{xy}}\Big)\\
\lesssim &
 \langle t\rangle^{-0.01(M-6)}\Big[\big\|\langle \nabla\rangle^{M}n(t)\big\|_{L^2_{xy}}\Big(\big\|u(t)\big\|_{L^2_{xy}}+\big\|v(t)\big\|_{L^2_{xy}}\Big)\\
&\quad
+\big\|n(t)\big\|_{L^2_{xy}}\Big(\big\|\langle \nabla\rangle^{M}u(t)\big\|_{L^2_{xy}}+\big\|\langle \nabla\rangle^{M}v(t)\big\|_{L^2_{xy}}\Big)\Big]\\
\lesssim &
 \langle t\rangle^{-0.8}\Big[\langle t\rangle^{\epsilon}\|U\|_X\>\langle t\rangle^{-0.5}\|U\|_X
+\langle t\rangle^{-0.25}\|U\|_X\>\langle t\rangle^{\epsilon}\|U\|_X\Big]\\
\lesssim &
\langle t\rangle^{-1.03}\|U\|_X^2.
\end{align*}
This proves the estimate on $N_0$.
\end{proof}

This lemma provides explicit decay rates for the $L^1$-norm of the high frequency parts in the nonlinearities. Since the decay rates in the estimates are smaller than $-1$ ($-1.03$ indeed), the corresponding nonlinearities are time integrable in the light of kernel estimates in Section \ref{KernelProperty}. See Section \ref{sec:n-N0-1} as an example.

\subsubsection{$\int_0^t\partial_t(\partial_{tt}-\Delta\partial_t-\Delta)K(t-s)N_0(s)\,ds$}\label{sec:n-N0-1}
In this subsubsection, we will prove that
\begin{align}
\Big\|\langle\nabla\rangle^3\int_0^t\partial_t(\partial_{tt}-\Delta\partial_t-\Delta)K(t-s)N_0(s)\,ds\Big\|_{L^2_{xy}}
\lesssim
\langle t \rangle^{-\frac14}\|U\|_X^2;\label{est:n-N0-1-1}\\
\Big\|\langle \nabla\rangle\pp_x\int_0^t\partial_t(\partial_{tt}-\Delta\partial_t-\Delta)K(t-s)N_0(s)\,ds\Big\|_{L^2_{xy}}
\lesssim
\langle t \rangle^{-\frac34}\|U\|_X^2.\label{est:n-N0-1-3}
\end{align}
First, we consider the piece of $N_0^h$. By Proposition \ref{lem:Kn5-L} (i) ($\beta=0$) and \eqref{high-03},  we have
\begin{align*}
&\Big\|\langle\nabla\rangle^3\int_0^t\partial_t(\partial_{tt}-\Delta\partial_t-\Delta)K(t-s)N_0^h(s)\,ds\Big\|_{L^2_{xy}}\\
\lesssim
&\int_0^t\Big\|\partial_t(\partial_{tt}-\Delta\partial_t-\Delta)K(t-s)\langle\nabla\rangle^3N_0^h(s)\Big\|_{L^2_{xy}}\,ds\\
\lesssim &
\int_0^t \langle t-s \rangle^{-\frac12}\big\|\langle\nabla\rangle^3N_0^h(s)\big\|_{L^1_{xy}}\,ds\\
\lesssim &
\int_0^t \langle t-s \rangle^{-\frac12}\langle s\rangle^{-1.03}\,ds\|U\|_X^2\\
\lesssim &
\langle t \rangle^{-\frac12}\|U\|_X^2.
\end{align*}
By the same way, from Proposition \ref{lem:Kn5-L} (i) ($\beta=1$) again, we have
\begin{align*}
&\Big\|\langle \nabla\rangle\pp_x\int_0^t\partial_t(\partial_{tt}-\Delta\partial_t-\Delta)K(t-s)N_0^h(s)\,ds\Big\|_{L^2_{xy}}\\
\lesssim
&\int_0^t\Big\|\nabla\partial_t(\partial_{tt}-\Delta\partial_t-\Delta)K(t-s)\langle\nabla\rangle N_0^h(s)\Big\|_{L^2_{xy}}\,ds\\
\lesssim &
\int_0^t \langle t-s \rangle^{-1}\big\|\langle\nabla\rangle^{1+} N_0^h(s)\big\|_{L^1_{xy}}\,ds\\
\lesssim &
\int_0^t \langle t-s \rangle^{-1}\langle s\rangle^{-1.03}\,ds\|U\|_X^2\\
\lesssim &
\langle t \rangle^{-1}\|U\|_X^2.
\end{align*}

\begin{rem}
In the following context, we focus our attention on the low-frequency parts most of the time, since the high-frequency parts can be treated standardly as $N_0^h$ above, by using
the energy estimates. To simplify the notation, we only write $n,u,v,\psi$ for $n_{\le \langle s\rangle^{0.01}},u_{\le \langle s\rangle^{0.01}},v_{\le \langle s\rangle^{0.01}},\psi_{\le \langle s\rangle^{0.01}}$ in the low-frequency parts, if there is no confusion.
\end{rem}
Now we consider $N_0^l$ piece. By Lemma \ref{lem:Kn5},
\begin{align*}
&\Big\|\langle\nabla\rangle^3\int_0^t\partial_t(\partial_{tt}-\Delta\partial_t-\Delta)K(t-s)N_0^l(s)\,ds\Big\|_{L^2_{xy}}\\
\lesssim
&\int_0^t\Big\|\nabla\partial_t(\partial_{tt}-\Delta\partial_t-\Delta)K(t-s)\cdot\langle\nabla\rangle^3P_{\lesssim \langle s\rangle^{0.01}}(n\vec u)(s)\Big\|_{L^2_{xy}}\,ds\\
\lesssim &
\int_0^t  \langle t-s \rangle^{-1} \big\|\langle\nabla\rangle^3P_{\lesssim \langle s\rangle^{0.01}}(n\vec u)(s)\big\|_{L^1_{xy}} \,ds\\
\lesssim &
\int_0^t  \langle t-s \rangle^{-1}  \langle s\rangle^{0.04}\big\|n\big\|_{L^2_{xy}}\big\|\vec u\big\|_{L^2_{xy}}  \,ds\\
\lesssim &
\int_0^t \langle t-s \rangle^{-1}\langle s\rangle^{0.04-0.25-0.5}\,ds\|U\|_X^2\\
\lesssim &
\langle t \rangle^{-\frac1{4}}\|U\|_X^2.
\end{align*}
Also, almost the same, and using
\begin{align}\label{1301}
\|\pp_x(n\vec u)\|_{L^1_{xy}}\le \Big(\|\pp_xn\|_{L^2_{xy}}\|\vec u\|_{L^2_{xy}}
+\|n\|_{L^2_{xy}}\|\nabla\cdot\vec u\|_{L^2_{xy}}\Big)\lesssim \langle s\rangle^{-1.25}\|U\|_X^2,
\end{align}
we have
\begin{align*}
&\Big\|\langle \nabla\rangle\pp_x\int_0^t\partial_t(\partial_{tt}-\Delta\partial_t-\Delta)K(t-s)N_0^l(s)\,ds\Big\|_{L^2_{xy}}\\
\lesssim &
\int_0^t\Big\|\nabla\partial_t(\partial_{tt}-\Delta\partial_t-\Delta)K(t-s)\cdot P_{\lesssim \langle s\rangle^{0.01}}\langle \nabla\rangle\pp_x(n\vec u)(s)\Big\|_{L^2_{xy}}\,ds\\
\lesssim &
\int_0^t \langle t-s \rangle^{-1}\langle s\rangle^{0.03}\|\pp_x(n\vec u)\|_{L^1_{xy}}\\
\lesssim &
\int_0^t \langle t-s \rangle^{-1}\langle s\rangle^{0.03-1.25}\,ds\|U\|_X^2\\
\lesssim &
\langle t \rangle^{-\frac34}\|U\|_X^2.
\end{align*}
Therefore, we give the estimate \eqref{est:n-N0-1-1} and \eqref{est:n-N0-1-3}.

\subsubsection{$\int_0^t\Delta(\partial_{tt}-\Delta\partial_t-\Delta)K(t-s)N_0(s)\,ds$}\label{sec:n-N0-2}
This is one of most tricky terms. Indeed, regardless of the loss of regularity, the best estimate one may get is
\begin{align*}
\big\|N_0(s)\big\|_{L^1_{x}L^2_{y}}\lesssim \langle s \rangle^{-1}\|U\|_X^2,
\end{align*}
which is critical for integrable (and it is not integrable). We need some new argument to prove the estimates followed,
\begin{align}
\Big\|\langle\nabla\rangle^3\int_0^t\Delta(\partial_{tt}-\Delta\partial_t-\Delta)K(t-s)N_0(s)\,ds\Big\|_{L^2_{xy}}
\lesssim
\langle t \rangle^{-\frac14}\|U\|_X^2;\label{est:n-N0-2-1}\\
\Big\|\langle \nabla\rangle\pp_x\int_0^t\Delta(\partial_{tt}-\Delta\partial_t-\Delta)K(t-s)N_0(s)\,ds\Big\|_{L^2_{xy}}
\lesssim
\langle t \rangle^{-\frac34}\|U\|_X^2.\label{est:n-N0-2-3}
\end{align}
According to the frequency,  and the definition \eqref{N0}: $N_0=\pp_x(nu)+\pp_y(nv)$, we have
\begin{align*}
N_0 =&N_0^h +N_0^l \\
=&N_0^h + \pp_x(n_{\le \langle s\rangle^{0.01}}\>u_{\le \langle s\rangle^{0.01}}) +\pp_y(n_{\le \langle s\rangle^{0.01}}\>u_{\le \langle s\rangle^{0.01}}) \\
=&N_0^h + \pp_x(n_{\le \langle s\rangle^{0.01}}\>u_{\le \langle s\rangle^{0.01}}) +\pp_y(n_{\le \langle s\rangle^{0.01}}\>v_{\le \langle s\rangle^{-10}}) \,ds
\\
&\quad+\pp_y(n_{\le \langle s\rangle^{0.01}}\>v_{\langle s\rangle^{-0.05}\le \cdot \le \langle s\rangle^{0.01}}) +\pp_y(n_{\le \langle s\rangle^{0.01}}\>v_{\langle s\rangle^{-10}\le\cdot \le \langle s\rangle^{-0.05}}).
\end{align*}
Similar as before, we denote $f_{\ge N}=P_{\ge N}f$ and  $f_{N\le \cdot \le M}=P_{N\le \cdot \le M}f$ here.
Then by the fourth equation in \eqref{e2.1}: $v=-\pp_t\psi+N_3$, and the product rule, we further have
\begin{align}
N_0 =&N_0^h + \pp_x(n_{\le \langle s\rangle^{0.01}}\>u_{\le \langle s\rangle^{0.01}}) +\pp_y(n_{\le \langle s\rangle^{0.01}}\>v_{\le \langle s\rangle^{-10}})
+\pp_y(n_{\le \langle s\rangle^{0.01}}\>v_{\langle s\rangle^{-0.05}\le \cdot \le \langle s\rangle^{0.01}})\notag \\
&\quad+\pp_y\big(n_{\le \langle s\rangle^{0.01}}\>P_{\langle s\rangle^{-10}\le\cdot \le \langle s\rangle^{-0.05}}N_3\big) +\pp_y\big(n_{\le \langle s\rangle^{0.01}}\>P_{\langle s\rangle^{-10}\le\cdot \le \langle s\rangle^{-0.05}}\pp_s \psi\big) \notag\\
=&N_0^h + \pp_x(n_{\le \langle s\rangle^{0.01}}\>u_{\le \langle s\rangle^{0.01}}) +\pp_y(n_{\le \langle s\rangle^{0.01}}\>v_{\le \langle s\rangle^{-10}})
+\pp_y(n_{\le \langle s\rangle^{0.01}}\>v_{\langle s\rangle^{-0.05}\le \cdot \le \langle s\rangle^{0.01}})\notag \\
&\quad+\pp_y(n_{\le \langle s\rangle^{0.01}}\>P_{\langle s\rangle^{-10}\le\cdot \le \langle s\rangle^{-0.05}}N_3) +\pp_y\pp_s \big(n_{\le \langle s\rangle^{0.01}}\>P_{\langle s\rangle^{-10}\le\cdot \le \langle s\rangle^{-0.05}}\psi\big) \notag\\
&\quad
-\pp_y\big(\pp_s n_{\le \langle s\rangle^{0.01}}\>P_{\langle s\rangle^{-10}\le\cdot \le \langle s\rangle^{-0.05}}\psi\big) + \mbox{\emph{other parts}},\label{re-N0}
\end{align}
where  the \emph{other parts} include the terms which $\pp_s$ hits $P_{\le \langle s\rangle^{0.01}}$ or $P_{\langle s\rangle^{-10}\le\cdot \le \langle s\rangle^{-0.05}}$.
By \eqref{re-N0}, we split the term into several parts as follows,
\begin{align}
&\int_0^t\Delta(\partial_{tt}-\Delta\partial_t-\Delta)K(t-s)N_0(s)\,ds\notag\\
=&
\int_0^t\Delta(\partial_{tt}-\Delta\partial_t-\Delta)K(t-s)N_0^h(s)\,ds\label{n-N0-2-h}\\
&+\int_0^t\Delta(\partial_{tt}-\Delta\partial_t-\Delta)K(t-s)\pp_x(n_{\le \langle s\rangle^{0.01}}\>u_{\le \langle s\rangle^{0.01}})(s)\,ds\label{n-N0-2-l-1}\\
&+\int_0^t\Delta(\partial_{tt}-\Delta\partial_t-\Delta)K(t-s)\pp_y(n_{\le \langle s\rangle^{0.01}}\>v_{\le \langle s\rangle^{-10}})(s)\,ds\label{nv-easy1}\\
&+
\int_0^t\Delta(\partial_{tt}-\Delta\partial_t-\Delta)K(t-s)\pp_y(n_{\le \langle s\rangle^{0.01}}\>v_{\ge \langle s\rangle^{-0.05}})(s)\,ds\label{nv-easy2}\\
&+
\int_0^t\Delta\pp_y(\partial_{tt}-\Delta\partial_t-\Delta)K(t-s)\big(n_{\le \langle s\rangle^{0.01}}\>P_{\langle s\rangle^{-10}\le\cdot \le \langle s\rangle^{-0.05}}N_3\big)(s)\,ds\label{nv-hard-2}\\
&+
\int_0^t\Delta\pp_y(\partial_{tt}-\Delta\partial_t-\Delta)K(t-s)\pp_s \big(n_{\le \langle s\rangle^{0.01}}\>P_{\langle s\rangle^{-10}\le\cdot \le \langle s\rangle^{-0.05}}\psi\big)(s)\,ds\label{nv-hard-11}\\
&
-\int_0^t\Delta\pp_y(\partial_{tt}-\Delta\partial_t-\Delta)K(t-s)\big(\pp_s n_{\le \langle s\rangle^{0.01}}\>P_{\langle s\rangle^{-10}\le\cdot \le \langle s\rangle^{-0.05}}\psi\big) (s)\,ds\label{nv-hard-12}\\
&\quad+ \mbox{\emph{other easy terms}},\label{nv-hard-13}
\end{align}
where the \emph{other easy terms} is the corresponding terms from \emph{other parts} in \eqref{re-N0}.

The high frequency piece \eqref{n-N0-2-h} can be treated standardly as in Section \ref{sec:n-N0-1}, and thus we obtain (the details of the proof are omitted)
\begin{align*}
\Big\|\langle\nabla\rangle^3\eqref{n-N0-2-h}\Big\|_{L^2_{xy}}
&\lesssim
\langle t \rangle^{-\frac1{4}}\|U\|_X^2;\\
\Big\|\langle \nabla\rangle\pp_x\eqref{n-N0-2-h}\Big\|_{L^2_{xy}}
&\lesssim
\langle t \rangle^{-\frac3{4}}\|U\|_X^2.
\end{align*}

Now we consider the part \eqref{n-N0-2-l-1}. By Proposition \ref{lem:Kn3-L} (ii) we have
\begin{align*}
\Big\|\langle\nabla\rangle^3\eqref{n-N0-2-l-1}\Big\|_{L^2_{xy}}
\lesssim &
\int_0^t\big\|P_{\lesssim \langle s\rangle^{0.01}}\nabla \pp_x(\partial_{tt}-\Delta\partial_t-\Delta)K(t-s)\cdot \nabla \langle\nabla\rangle^3 (nu)(s)\big\|_{L^2_{xy}}\,ds\\
\lesssim &
\int_0^t\langle t-s\rangle^{-\frac34}  \|P_{\lesssim \langle s\rangle^{0.01}}\langle \nabla \rangle^{4+}nu\|_{L^1_{xy}}\,ds\\
\lesssim &
\int_0^t \langle t-s\rangle^{-\frac34} \langle s\rangle^{0.05}
\langle s\rangle^{-0.75}\,ds\>\|U\|_X^2\\
\lesssim &
\langle t \rangle^{-\frac14}\|U\|_X^2.
\end{align*}
%Thus we have
%\begin{align}
%\Big\|\langle\nabla\rangle^3\eqref{n-N0-2-l-1}\Big\|_{L^2_{xy}}
%\lesssim
%\langle t \rangle^{-\frac1{4}}\|U\|_X^2;\label{Nn-2-1-1}\\
%\Big\|\eqref{n-N0-2-l-1}\Big\|_{L^\infty_{xy}}
%\lesssim
%\langle t \rangle^{-\frac1{2}}\|U\|_X^2.\label{Nn-2-1-2}
%\end{align}
By the same way,  and using \eqref{1301} we have
\begin{align*}
\Big\|\langle \nabla\rangle\pp_x\eqref{n-N0-2-l-1}\Big\|_{L^2_{xy}}
\lesssim &
\int_0^t\langle s\rangle^{0.03} \langle t-s\rangle^{-\frac34}
\|\pp_x(nu)\|_{L^1_{xy}}\,ds\\
\lesssim &
\int_0^t\langle s\rangle^{0.03-1.25} \langle t-s\rangle^{-\frac34}
\,ds\>\|U\|_X^2
\lesssim
\langle t \rangle^{-\frac34}\|U\|_X^2.
\end{align*}

Next, we consider \eqref{nv-easy1} and  \eqref{nv-easy2}.
First, by Bernstein's inquality, we have
$$
\big\|n_{\le \langle s\rangle^{0.01}}\>v_{\le \langle s\rangle^{-10}}\big\|_{L^1_x L^2_y}
\lesssim \big\|n\big\|_{L^2_{xy}}\>\big\|v_{\le \langle s\rangle^{-10}}\big\|_{L^2_x L^\infty_y}
\lesssim
\langle s \rangle^{-4.9} \big\|n\big\|_{L^2_{xy}}\>\big\|v\big\|_{L^2_{xy}}\lesssim \langle s \rangle^{-5} \|U\|_X^2.
$$
Then by Proposition \ref{lem:Kn6-L},
\begin{align*}
&\Big\|\langle\nabla\rangle^3\eqref{nv-easy1}\Big\|_{L^2_{xy}}\\
\lesssim &
\int_0^t\big\|\Delta(\partial_{tt}-\Delta\partial_t-\Delta)K(t-s)\pp_y(n_{\le \langle s\rangle^{0.01}}\>v_{\le \langle s\rangle^{-10}})(s)\|_{L^2_{xy}}\,ds\\
\lesssim &
\int_0^t \langle t-s \rangle^{-\frac14}  \big\| \langle \nabla \rangle^5\big(n_{\le \langle s\rangle^{0.01}}\>v_{\le \langle s\rangle^{-10}}\big)\big\|_{L^1_x L^2_y}\,ds\\
\lesssim &
\int_0^t\langle t-s \rangle^{-\frac14}\langle s \rangle^{0.05} \langle s \rangle^{-5}\,ds \|U\|_X^2\\
\lesssim &
\langle t \rangle^{-\frac14}\|U\|_X^2.
\end{align*}
On the other hand, since
$$
\big\|n\big\|_{L^2_{xy}}\big\|v_{\ge \langle s\rangle^{-0.05}}\big\|_{L^2_{xy}}\lesssim \langle s \rangle^{0.05}\big\|n\big\|_{L^2_{xy}}\|\nabla v\|_{L^2_{xy}}
\lesssim\langle s \rangle^{-1.2} \|U\|_X^2,
$$
by the same treatment we also have
\begin{align*}
\Big\|\langle\nabla\rangle^3\eqref{nv-easy2}\Big\|_{L^2_{xy}}
\lesssim
\langle t \rangle^{-\frac14}\|U\|_X^2.
\end{align*}
Therefore, we get
\begin{align}
\Big\|\langle\nabla\rangle^3\big(\eqref{nv-easy1}+\eqref{nv-easy2}\big)\Big\|_{L^2_{xy}}
\lesssim
\langle t \rangle^{-\frac1{4}}\|U\|_X^2.\label{1533-1}
\end{align}
By using Proposition \ref{lem:Kn3-L} (ii) instead, we also get
\begin{align}
\Big\|\langle \nabla\rangle\pp_x\big(\eqref{nv-easy1}+\eqref{nv-easy2}\big)\Big\|_{L^2_{xy}}
\lesssim
\langle t \rangle^{-\frac3{4}}\|U\|_X^2.\label{1533-3}
\end{align}

Now we turn to consider the part \eqref{nv-hard-2}, by Proposition \ref{lem:Kn6-L},
\begin{align}
\|\langle\nabla\rangle^3\eqref{nv-hard-2}\|_{L^2_{xy}}
\lesssim &
\int_0^t\big\|\Delta(\partial_{tt}-\Delta\partial_t-\Delta)K(t-s)\pp_y\big(n_{\le \langle s\rangle^{0.01}}\>P_{\langle s\rangle^{-10}\le\cdot \le \langle s\rangle^{-0.05}}N_3\big)(s)\big\|_{L^2_{xy}}\,ds\notag\\
\lesssim &
\int_0^t \langle t-s \rangle^{-\frac14}\langle s \rangle^{0.05} \big\|n_{\le \langle s\rangle^{0.01}}\>P_{\langle s\rangle^{-10}\le\cdot \le \langle s\rangle^{-0.05}}N_3\big\|_{L^1_{xy}}\,ds\notag\\
\lesssim &
\int_0^t\langle s \rangle^{-0.05}\langle t-s \rangle^{-\frac14}\big\|n\big\|_{L^2_{xy}}\big\|\nabla\psi\big\|_{L^2_{xy}}\>\big\|\vec u\big\|_{L^\infty_{xy}}\,ds\notag\\
\lesssim &
\int_0^t\langle s \rangle^{0.05-1.5}\langle t-s \rangle^{-\frac14}\,ds\|U\|_X^3\notag\\
\lesssim &
\langle t \rangle^{-\frac14}\|U\|_X^3.\label{140211}
\end{align}
Also, using Proposition \ref{lem:Kn3-L} (ii) instead, we obtain
\begin{align}
\Big\|\langle \nabla\rangle\pp_x\eqref{nv-hard-2}\Big\|_{L^2_{xy}}
\lesssim
\langle t \rangle^{-\frac3{4}}\|U\|_X^2.\label{140211-3}
\end{align}

To prove \eqref{nv-hard-11},  we integrate by parts  to get
\begin{align}
\eqref{nv-hard-11}=&-\Delta\pp_y(\partial_{tt}-\Delta\partial_t-\Delta)K(t) \big(P_{\lesssim 1}n_0\>P_{\sim 1}\psi_0\big)\label{nv-hard-11-b}\\
&+\int_0^t\Delta\pp_y(\partial_{tt}-\Delta\partial_t-\Delta)\pp_tK(t-s) \big(n_{\le \langle s\rangle^{0.01}}\>P_{\langle s\rangle^{-10}\le\cdot \le \langle s\rangle^{-0.05}}\psi\big)(s)\,ds.\label{nv-hard-111}
\end{align}
By Proposition \ref{lem:Kn6-L} and Proposition \ref{lem:Kn3-L} (ii), the boundary term \eqref{nv-hard-11-b} can be controlled as following,
\begin{align*}
\|\langle\nabla\rangle^3\eqref{nv-hard-11-b}\|_{L^2_{xy}}
\lesssim &
\langle t \rangle^{-\frac14}
\|\langle\nabla\rangle^3 P_{\lesssim 1}n_0\>P_{\sim 1}\psi_0\|_{L^1_{xy}}
\lesssim
\langle t \rangle^{-\frac14}\|n_0\|_{L^2_{xy}}\|\nabla\psi_0\|_{L^2_{xy}};\\
\|\pp_x\eqref{nv-hard-11-b}\|_{L^2_{xy}}
\lesssim &
\langle t \rangle^{-\frac3{4}}
\|\langle\nabla\rangle^3 P_{\lesssim 1}n_0\>P_{\sim 1}\psi_0\|_{L^1_{xy}}
\lesssim
\langle t \rangle^{-\frac3{4}}\|n_0\|_{L^2_{xy}}\|\nabla\psi_0\|_{L^2_{xy}}
\end{align*}

To prove \eqref{nv-hard-111}, we need the following estimate, for any $\epsilon>0$,
\begin{align}
\|P_{\langle s\rangle^{-10}\le\cdot \le \langle s\rangle^{-0.05}}\psi\|_{L^\infty_{xy}}
\lesssim& \big\||\nabla|^{1-\epsilon}P_{\langle s\rangle^{-10}\le\cdot \le \langle s\rangle^{-0.05}}\psi\big\|_{L^2_{xy}}\notag\\
\lesssim&\langle s \rangle^{-0.02} \big\||\nabla|^{\frac12+\epsilon}\psi\big\|_{L^2_{xy}}\lesssim \langle s \rangle^{-0.27} \|U\|_X.\label{psi-infty}
\end{align}
Then using \eqref{est:Kn6-2}, \eqref{est:Kn8-2} and \eqref{psi-infty}, we have
\begin{align*}
\|\langle\nabla\rangle^3\eqref{nv-hard-111}\|_{L^2_{xy}}
\lesssim &
\int_0^t\big\|A^2\pp_t (\partial_{tt}+A^2\partial_t+A^2)\widehat K(t-s,\xi,\eta)\big\|_{L^\infty_{\xi\eta}}\\
&\qquad \cdot\big\|\langle\nabla\rangle^3\big(n_{\le \langle s\rangle^{0.01}}\>P_{\langle s\rangle^{-10}\le\cdot \le \langle s\rangle^{-0.05}}\psi\big)\big\|_{L^2_{xy}}\,ds\\
\lesssim &
\int_0^t\langle t-s\rangle^{-1}\langle s \rangle^{0.03}\big\|n\big\|_{L^2_{xy}}\big\|P_{\langle s\rangle^{-10}\le\cdot \le \langle s\rangle^{-0.05}}\psi\big\|_{L^\infty_{xy}}\,ds\\
\lesssim &
\int_0^t\langle s \rangle^{-0.27+0.03-0.25}\langle t-s\rangle^{-1}\,ds\|U\|_X^2\\
\lesssim &
\langle t \rangle^{-\frac14}\|U\|_X^2.
\end{align*}

Similarly,
\begin{align*}
\|\langle \nabla\rangle\pp_x\eqref{nv-hard-111}\|_{L^2_{xy}}
\lesssim &
\int_0^t\big\|A^2\pp_t (\partial_{tt}+A^2\partial_t+A^2)\widehat K(t-s,\xi,\eta)\big\|_{L^\infty_{\xi\eta}}\\
&\qquad\cdot
\big\|\langle \nabla\rangle\pp_x\big(n_{\le \langle s\rangle^{0.01}}\>P_{\langle s\rangle^{-10}\le\cdot \le \langle s\rangle^{-0.05}}\psi\big)\big\|_{L^2_{xy}}\,ds\\
\lesssim &
\int_0^t\langle t-s\rangle^{-1}\langle s \rangle^{0.03}\Big(\big\|\pp_xn\big\|_{L^2_{xy}}\big\|P_{\langle s\rangle^{-10}\le\cdot \le \langle s\rangle^{-0.05}}\psi\big\|_{L^\infty_{xy}}+\big\|n\big\|_{L^\infty_{xy}}\big\|\pp_x\psi\big\|_{L^2_{xy}}\Big)\,ds\\
\lesssim &
\int_0^t\langle s \rangle^{0.04-1}\langle t-s\rangle^{-1}\,ds\|U\|_X^2\\
\lesssim &
\langle t \rangle^{-\frac3{4}}\|U\|_X^2.
\end{align*}

For the term \eqref{nv-hard-12},
since $\pp_t n=-\pp_xu-\pp_yv+N_0$, we have
\begin{align}
\eqref{nv-hard-12}
=&
-\int_0^t\Delta\pp_y(\partial_{tt}-\Delta\partial_t-\Delta)K(t-s)\big(P_{\le \langle s\rangle^{0.01}}\pp_xu\>P_{\langle s\rangle^{-10}\le\cdot \le \langle s\rangle^{-0.05}}\psi\big)(s)\,ds\label{nv-hard-121}\\
&\quad
-\int_0^t\Delta\pp_y(\partial_{tt}-\Delta\partial_t-\Delta)K(t-s)\big(P_{\le \langle s\rangle^{0.01}}\pp_yv\>P_{\langle s\rangle^{-10}\le\cdot \le \langle s\rangle^{-0.05}}\psi\big)(s)\,ds\label{nv-hard-122}\\
&\quad
+\int_0^t\Delta\pp_y(\partial_{tt}-\Delta\partial_t-\Delta)K(t-s)\big(P_{\le \langle s\rangle^{0.01}}N_0\>P_{\langle s\rangle^{-10}\le\cdot \le \langle s\rangle^{-0.05}}\psi\big)(s)\,ds.\label{nv-hard-123}
\end{align}
These three terms can be treated by the similar way. Now we consider the term  \eqref{nv-hard-121}.  By Proposition \ref{lem:Kn6-L}, Sobolev's and Beinstein's inequalities,  we have
\begin{align*}
\|\langle\nabla\rangle^3\eqref{nv-hard-121}\|_{L^2_{xy}}
\lesssim &
\int_0^t\big\|\Delta(\partial_{tt}-\Delta\partial_t-\Delta)K(t-s)\pp_y\big(P_{\le \langle s\rangle^{0.01}}\pp_xu\>P_{\langle s\rangle^{-10}\le\cdot \le \langle s\rangle^{-0.05}}\psi\big)(s)\big\|_{L^2_{xy}}\notag\\
\lesssim &
\int_0^t\langle t-s \rangle^{-\frac14}\langle s \rangle^{0.01(3+\frac52)}\big\|P_{\le \langle s\rangle^{0.01}}\pp_x u\>P_{\langle s\rangle^{-10}\le\cdot \le \langle s\rangle^{-0.05}}\psi\big\|_{L^1_xL^2_{y}}\,ds\notag\\
\lesssim &
\int_0^t\langle t-s \rangle^{-\frac14}\langle s \rangle^{0.06}\big\|\pp_x u\big\|_{L^2_{xy}}
\>\big\|P_{\langle s\rangle^{-10}\le\cdot \le \langle s\rangle^{-0.05}}\psi\big\|_{L^2_{x}L^\infty_{y}}\,ds\notag\\
\lesssim &
\int_0^t\langle t-s \rangle^{-\frac14}\langle s \rangle^{0.06}\big\|\pp_x u\big\|_{L^2_{xy}}\>\big\||\nabla|^{\frac12-}\psi\big\|_{L^2_{xy}}\,ds\notag\\
\lesssim &
\int_0^t\langle t-s \rangle^{-\frac14}\langle s \rangle^{0.07-1-0.25}\,ds\|U\|_X^2\notag\\
\lesssim &
\langle t \rangle^{-\frac14}\|U\|_X^2.
\end{align*}
Almost the same, we have the estimate on  the term \eqref{nv-hard-122},
\begin{align*}
\|\langle\nabla\rangle^3\eqref{nv-hard-122}\|_{L^2_{xy}}
\lesssim &
\int_0^t\langle t-s \rangle^{-\frac14}\langle s \rangle^{0.06}\big\|\pp_yv\big\|_{L^2_{xy}}\>\big\||\nabla|^{\frac12-}\psi\big\|_{L^2_{xy}}\,ds\notag\\
\lesssim &
\int_0^t\langle t-s \rangle^{-\frac14}\langle s \rangle^{0.07-1-0.25}\,ds\|U\|_X^2\notag\\
\lesssim &
\langle t \rangle^{-\frac14}\|U\|_X^2.
\end{align*}
Also, for the term \eqref{nv-hard-123}, we have
\begin{align*}
\|\langle\nabla\rangle^3\eqref{nv-hard-123}\|_{L^2_{xy}}
\lesssim &
\int_0^t\langle s \rangle^{0.06}\langle t-s \rangle^{-\frac14}\big\|n\big\|_{L^2_{xy}}\big\|\vec u\big\|_{L^2_{x}L^\infty_y}\>\big\|P_{\langle s\rangle^{-10}\le\cdot \le \langle s\rangle^{-0.05}}\psi\big\|_{L^\infty_{xy}}\,ds\notag\\
\lesssim &
\int_0^t\langle s \rangle^{0.07}\langle t-s \rangle^{-\frac14}\big\|n\big\|_{L^2_{xy}}\big\|\vec u\big\|_{L^2_{x}L^\infty_y}\>\big\|\nabla\psi\big\|_{L^2_{xy}}\,ds\notag\\
\lesssim &
\int_0^t\langle s \rangle^{0.08-0.25-0.75-0.25}\langle t-s \rangle^{-\frac14}\,ds\|U\|_X^3\notag\\
\lesssim &
\langle t \rangle^{-\frac14}\|U\|_X^3.
\end{align*}
Combining these three estimates,  we obtain
\begin{align}
\|\langle\nabla\rangle^3\eqref{nv-hard-12}\|_{L^2_{xy}}
\lesssim
\langle t \rangle^{-\frac1{4}}\big(\|U\|_X^2+\|U\|_X^3\big).\label{140213-1}
\end{align}
Replacing the operator $\Delta (\partial_{tt}-\Delta\partial_t-\Delta) K(t-s)$ by  $\Delta\pp_x (\partial_{tt}-\Delta\partial_t-\Delta) K(t-s)$,
and using Proposition \ref{lem:Kn3-L} (ii) instead, we also get
\begin{align}
\|\langle \nabla\rangle\pp_x\eqref{nv-hard-12}\|_{L^2_{xy}}
\lesssim
\langle t \rangle^{-\frac3{4}}\big(\|U\|_X^2+\|U\|_X^3\big).\label{140213-3}
\end{align}

At last, we estimate \eqref{nv-hard-13}. Since for any $\beta\in \R$, $1\le q\le \infty$,
\begin{align}\label{17.17}
\|\pp_sP_{\le \langle s\rangle^\beta}f\|_{L^q}\lesssim \langle s\rangle^{-1}\|P_{\sim \langle s\rangle^\beta}f\|_{L^q},
\end{align}
(see Appendix \eqref{app4} for its proof) it is easy to prove that
\begin{align}
\|\langle\nabla\rangle^3\eqref{nv-hard-13}\|_{L^2_{xy}}
\lesssim
\langle t \rangle^{-\frac14}\|U\|_X^2;\quad \|\pp_x\eqref{nv-hard-13}\|_{L^2_{xy}}
\lesssim
\langle t \rangle^{-\frac34}\|U\|_X^2.
\label{est:otheresayterms}
\end{align}

Collecting the estimates above, we establish \eqref{est:n-N0-2-1} and \eqref{est:n-N0-2-3}.

\subsubsection{$\int_0^t\partial_x(\partial_{tt}-\Delta\partial_t-\Delta)K(t-s)N_1(s)\,ds$.}\label{sec:n-N1}
We shall prove that
\begin{align}
\Big\|\langle\nabla\rangle^3\int_0^t\partial_x(\partial_{tt}-\Delta\partial_t-\Delta)K(t-s)N_1(s)\,ds\Big\|_{L^2_{xy}}
\lesssim &
\langle t \rangle^{-\frac1{4}}Q\big(\|U\|_X\big);\label{est:n-N1-L2}\\
\Big\|\int_0^t\partial_x^2(\partial_{tt}-\Delta\partial_t-\Delta)K(t-s)N_1(s)\,ds\Big\|_{L^2_{xy}}
\lesssim &
\langle t \rangle^{-\frac3{4}}Q\big(\|U\|_X\big).\label{est:n-N1-xL2}
\end{align}
As before, the estimate for the high frequency part is standard and can be obtained by
Lemma \ref{lem:Kn3} and \eqref{high-12}.
%\begin{align}
%\Big\|\langle\nabla\rangle^3\int_0^t\partial_x(\partial_{tt}-\Delta\partial_t-\Delta)K(t-s)N_1^h(s)\,ds\Big\|_{L^2_{xy}}
%\lesssim &
%\langle t \rangle^{-\frac14}\|U\|_X^2;\label{est:n-N1-h}\\
%\Big\|\langle\nabla\rangle\pp_x\int_0^t\partial_x(\partial_{tt}-\Delta\partial_t-\Delta)K(t-s)N_1^h(s)\,ds\Big\|_{L^2_{xy}}
%\lesssim &
%\langle t \rangle^{-\frac34}\|U\|_X^2.\label{est:n-N1-h-xL2}
%\end{align}
Thus we only consider the low frequency piece $N_1^l$. We first split $N_1$ it into two parts,
$$
N_1=N_{11}+N_{12},
$$
where
\begin{align*}
N_{11}&=-(u\pp_x u+v\pp_y u) -\frac{-\nabla n\cdot\nabla u+n\lambda(\partial_{xx} u+\partial_{xy}v)}{\rho}+\frac{n\>\pp_x\psi\Delta\psi}{\rho};\\
N_{12}&= -\pp_x\psi\Delta\psi- n\pp_x n-\nabla\cdot(n\nabla u),
\end{align*}
and write $N_1^l=N_{11}^l+N_{12}^l$ respectively.
Then
\begin{align}
&\int_0^t\partial_x(\partial_{tt}-\Delta\partial_t-\Delta)K(t-s)N_1^l(s)\,ds\notag\\
= &
\int_0^t\partial_x(\partial_{tt}-\Delta\partial_t-\Delta)K(t-s)
N_{11}^l(s)\,ds\label{1114}\\
&
+\int_0^t\partial_x(\partial_{tt}-\Delta\partial_t-\Delta)K(t-s)
N_{12}^l(s)\,ds.\label{1115}
\end{align}
First, we have
\begin{lemma}\label{lem:N11-l}
\begin{align*}
\|N_{11}^l\|_{L^1_{xy}}\lesssim \langle t\rangle^{-1}Q(\|U\|_X).
\end{align*}
\end{lemma}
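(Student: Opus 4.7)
The plan is to bound $N_{11}^l$ in $L^1_{xy}$ term by term using H\"older's inequality, extracting decay rates from the working space $X$. Throughout, $\rho^{-1}=(1+n)^{-1}$ is uniformly bounded in $L^\infty$ by the smallness of $\|n\|_{L^\infty}\lesssim \langle t\rangle^{-1/2}\|U\|_X$, so the denominators contribute only an $O(1)$ factor and can be essentially ignored in the bookkeeping.

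For the transport term $u\partial_xu$ a direct pairing $\|u\|_{L^2}\|\partial_xu\|_{L^2}\lesssim \langle t\rangle^{-3/2}\|U\|_X^2$ suffices. The off-diagonal piece $v\partial_yu$ is the first place where $X_{\textbf u}$ does not directly control $\partial_yu$ in $L^2_{xy}$; I handle it via a Littlewood--Paley decomposition of $u^l=P_{\le\langle s\rangle^{0.01}}u$. For dyadic pieces $P_N$ with $N\le 1$, Bernstein gives $\|P_Nu\|_{L^2}\lesssim N^{-\gamma}\||\nabla|^\gamma u\|_{L^2}$ and hence $\|\partial_yP_Nu\|_{L^2}\lesssim N^{1-\gamma}\langle s\rangle^{-3/4}$, which sums geometrically since $1-\gamma>0$ (the borderline case $\gamma=1$ reducing to $\|\nabla u\|_{L^2}\lesssim\langle s\rangle^{-3/4}$ directly). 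For intermediate scales $1\le N\le\langle s\rangle^{0.01}$ I balance the $|\nabla|^\gamma$ bound against the high-regularity bound $\|\la\na\ra^Mu\|_{L^2}\lesssim \langle s\rangle^\epsilon$ to produce a convergent tail. The outcome $\|\partial_yu^l\|_{L^2_{xy}}\lesssim \langle s\rangle^{-3/4}\|U\|_X$, paired with $\|v\|_{L^2}\lesssim \langle s\rangle^{-1/2}$, gives $\|v\partial_yu^l\|_{L^1}\lesssim \langle s\rangle^{-5/4}\|U\|_X^2$.

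For the cross term $\rho^{-1}\nabla n\cdot\nabla u$ I split $\nabla n\cdot\nabla u=\partial_xn\,\partial_xu+\partial_yn\,\partial_yu$. The first piece is controlled by $\|\partial_xn\|_{L^2}\lesssim \langle s\rangle^{-3/4}$ and $\|\partial_xu\|_{L^2}\lesssim \langle s\rangle^{-1}$ (both directly from $X$), giving $\langle s\rangle^{-7/4}$. The second piece pairs $\|\partial_yn\|_{L^2}\lesssim \langle s\rangle^{-1/4}$ (from $\|\la\na\ra^3n\|_{L^2}$) with the $\|\partial_yu^l\|_{L^2}\lesssim\langle s\rangle^{-3/4}$ bound of the previous paragraph, producing $\langle s\rangle^{-1}\|U\|_X^2$. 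The $\lambda$-terms $n(\partial_{xx}u+\partial_{xy}v)/\rho$ are bounded by $\|n\|_{L^2}\cdot\bigl(\|\la\na\ra\partial_xu\|_{L^2}+\|\la\na\ra\nabla v\|_{L^2}\bigr)\lesssim \langle s\rangle^{-5/4}$, and the cubic piece $n\partial_x\psi\Delta\psi/\rho$ by $\|n\|_{L^\infty}\|\partial_x\psi\|_{L^2}\|\Delta\psi^l\|_{L^2}\lesssim \langle s\rangle^{-5/4}$, where $\|\Delta\psi^l\|_{L^2}\lesssim \langle s\rangle^{-1/4}$ is produced by exactly the same dyadic/Bernstein procedure applied to $\|\la\na\ra^4|\nabla|^\gamma\psi\|_{L^2}\lesssim\langle s\rangle^{-1/4}$.

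The main obstacle is the $\partial_yn\,\partial_yu$ part of the cross term: it is critical in the sense that the individual decay rates $\langle t\rangle^{-1/4}$ and $\langle t\rangle^{-3/4}$ combine to give \emph{exactly} $\langle t\rangle^{-1}$, with no margin in any factor. The sharp $\langle t\rangle^{-3/4}$ bound for $\partial_yu^l$ has to be squeezed out of the only available anisotropy-respecting control $\||\nabla|^\gamma\vec u\|_{L^2}\lesssim \langle t\rangle^{-3/4}$ for $\gamma\in(1/2,1]$; convergence of the low-frequency dyadic sum rests on $1-\gamma>0$, with $\gamma=1$ handled as a direct computation, and the intermediate-frequency tail relies on $M$ being taken large relative to the cutoff exponent $0.01$.
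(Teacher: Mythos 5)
Your argument reproduces the paper's proof: term-by-term H\"older bounds driven by the decay rates encoded in $X$, with Bernstein handling the frequency cutoff $P_{\le\langle s\rangle^{0.01}}$, and the correct identification of $\rho^{-1}\partial_yn\,\partial_yu$ as the critical piece pairing $\langle t\rangle^{-1/4}$ against $\langle t\rangle^{-3/4}$. You are in fact more explicit than the paper about why $\|\partial_yu^l\|_{L^2_{xy}}$ decays at rate $\langle t\rangle^{-3/4}$ (the paper simply writes $\|\nabla u\|_{L^2_{xy}}$ in its display without elaboration), and your dyadic decomposition is the right way to fill that gap; note only that for $\gamma<1$ the cutoff yields $\|\nabla u^l\|_{L^2_{xy}}\lesssim\langle s\rangle^{0.01(1-\gamma)-3/4}\|U\|_X$, so the critical pairing gives $\langle s\rangle^{-1+0.01(1-\gamma)}$ rather than $\langle s\rangle^{-1}$ on the nose --- a tiny loss the paper silently carries as well and which is absorbed by the margin in the subsequent time integrations, but which means your phrase ``with no margin in any factor'' is strictly accurate only at $\gamma=1$.
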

\begin{proof} By H\"older's inequality and Bernstein' inequality, we have
\begin{align*}
\|N_{11}^l\|_{L^1_{xy}}
\lesssim &\|\nabla u \|_{L^2_{xy}}\|\vec u\|_{L^2_{xy}}
+ \frac{ \|\langle\nabla\rangle n\|_{L^2_{xy}}\Big(\|\nabla u\|_{L^2_{xy}}+
\|\lambda P_{\le \langle s\rangle^{0.01}}(\partial_{xx} u+\partial_{xy}v)\|_{L^2_{xy}}\Big)}{1-\|n\|_{L^\infty_{xy}}}\\
&\quad +\frac{ \|n\|_{L^2_{xy}}\|\pp_x\psi\|_{L^2_{xy}}\|P_{\le \langle s\rangle^{0.01}}\Delta\psi\|_{L^2_{xy}}}{1-\|n\|_{L^\infty_{xy}}}\\
\lesssim &\|\nabla u\|_{L^2_{xy}}\|\vec u\|_{L^2_{xy}}
+  \frac{\|\langle\nabla\rangle n\|_{L^2_{xy}}\big(\|\nabla u\|_{L^2_{xy}}+\langle s\rangle^{0.01}\|\nabla\cdot\vec u\|_{L^2_{xy}}+ \langle s\rangle^{0.01}\|\pp_x\psi\|_{L^2_{xy}}\|\nabla\psi\|_{L^2_{xy}}\big)}{1-\|n\|_{L^\infty_{xy}}}\\
\lesssim & \langle t\rangle^{-0.25-0.75}\frac{\|U\|_X^2+\|U\|_X^3}{1-\|U\|_X}\\
\lesssim & \langle t\rangle^{-1}Q(\|U\|_X).
\end{align*}
\end{proof}

For \eqref{1114}, by Proposition \ref{lem:Kn3-L} (i) and Lemma \ref{lem:N11-l},
\begin{align}
\Big\|\langle\nabla\rangle^3\eqref{1114} \Big\|_{L^2_{xt}}
\lesssim &
\int_0^t\Big\|\partial_x(\partial_{tt}-\Delta\partial_t-\Delta)K(t-s)
\langle\nabla\rangle^3N_{11}^l(s)\Big\|_{L^2_{xt}}\,ds\notag\\
\lesssim &
\int_0^t\langle t-s\rangle^{-\frac12}\langle s\rangle^{0.04}\|N_{11}^l(s)\|_{L^1_{xy}}\,ds\notag\\
\lesssim &
\int_0^t\langle t-s\rangle^{-\frac12}
\langle s\rangle^{0.04-1}Q(\|U\|_X)\notag\\
\lesssim &
\langle t \rangle^{-\frac14}Q(\|U\|_X).\label{est:n-N1-1}
\end{align}
Replacing $\partial_x(\partial_{tt}-\Delta\partial_t-\Delta)K(t-s)$ by $\partial_{xx}(\partial_{tt}-\Delta\partial_t-\Delta)K(t-s)$, and using Proposition \ref{lem:Kn8-L} (i) instead,
we get
\begin{align}
\Big\|\langle \nabla\rangle\pp_x\eqref{1114} \Big\|_{L^2_{xt}}
\lesssim &
\int_0^t\langle t-s\rangle^{-1}
\langle s\rangle^{0.04-1}Q(\|U\|_X)
\lesssim
\langle t \rangle^{-\frac34}Q(\|U\|_X).\label{est:n-N1-3}
\end{align}

Now we consider the part \eqref{1115}. Since
$$
N_{12}=-\nabla\cdot(\pp_x\psi\nabla\psi+n\nabla u)+\frac12\pp_x\big(|\nabla \psi|^2\big)-\frac12\pp_x(n^2),
$$
we have,
\begin{align}
\eqref{1115}
=&
-\int_0^t\partial_x(\partial_{tt}-\Delta\partial_t-\Delta)K(t-s)
\nabla\cdot(\pp_x\psi\nabla\psi+n\nabla u)(s)\,ds\label{1115-1}\\
&+\frac12\int_0^t\partial_x(\partial_{tt}-\Delta\partial_t-\Delta)K(t-s)
\pp_x\big(|\nabla \psi|^2-n^2)(s)\,ds\label{1115-2},%\\
%&+\int_0^t\partial_x(\partial_{tt}-\Delta\partial_t-\Delta)K(t-s)
%\frac{n_{\le \langle s\rangle^{0.01}}\>\pp_x\psi_{\le \langle s\rangle^{0.01}}\Delta\psi_{\le \langle s\rangle^{0.01}}}{\rho}(s)\,ds\label{1115-3}
\end{align}
where $\psi$ and $n$ lie in the low frequency.
For \eqref{1115-1}, by Proposition \ref{lem:Kn3-L} (ii), we have
\begin{align}
\Big\|\langle\nabla\rangle^3\eqref{1115-1} \Big\|_{L^2_{xt}}
\lesssim &
\int_0^t\big\|\nabla\partial_x(\partial_{tt}-\Delta\partial_t-\Delta)K(t-s)
\cdot P_{\lesssim \langle s\rangle^{0.01}}\langle\nabla\rangle^3(\pp_x\psi\nabla\psi+n\nabla u)(s)\big\|_{L^2_{xy}}\,ds\notag\\
\lesssim &
\int_0^t \langle t-s\rangle^{-\frac34}
\langle s\rangle^{0.05}\big(\|\pp_x\psi\nabla\psi\|_{L^1_{xy}}+\|n\,\nabla u_{\le \langle s\rangle^{0.01}}\|_{L^1_{xy}}\big)\,ds\notag\\
\lesssim &
\int_0^t\langle t-s\rangle^{-\frac34}
\langle s\rangle^{0.06}\big(\|\pp_x\psi\|_{L^2_{xy}}\|\nabla\psi \|_{L^2_{xy}}+\|n\|_{L^2_{xy}}\| u\|_{L^2_{xy}}\big)\,ds\notag\\
\lesssim &
\int_0^t\langle t-s\rangle^{-\frac34}
\langle s\rangle^{0.06-0.5-0.25}\,ds\|U\|_X^2\notag\\
\lesssim &
\langle t \rangle^{-\frac14}\|U\|_X^2.\label{1552-1}
\end{align}
Moreover, by Proposition \ref{lem:Kn8-L} (ii) ($\beta=1$),
\begin{align}
\Big\|\langle\nabla\rangle\pp_x\eqref{1115-1} \Big\|_{L^2_{xt}}
\lesssim &
\int_0^t\big\|\nabla\partial_{xx}(\partial_{tt}-\Delta\partial_t-\Delta)K(t-s)
\cdot P_{\lesssim \langle s\rangle^{0.01}}\langle\nabla\rangle (\pp_x\psi\nabla\psi+n\nabla u)(s)\big\|_{L^2_{xy}}\,ds\notag\\
\lesssim &
\int_0^t\langle t-s\rangle^{-1}
\langle s\rangle^{0.03} \big(\|\pp_x\psi\nabla\psi\|_{L^2_{xy}}+\|n\,\nabla u_{\le \langle s\rangle^{0.01}}\|_{L^2_{xy}}\big) \,ds\notag\\
\lesssim &
\int_0^t\langle t-s\rangle^{-1}
\langle s\rangle^{0.04}\big(\|\pp_x\psi\|_{L^2_{xy}}\|\nabla\psi \|_{L^\infty_{xy}}+\|n\|_{L^2_{xy}}\| u \|_{L^\infty_{xy}}\big)\,ds\notag\\
\lesssim &
\int_0^t\langle t-s\rangle^{-1}
\langle s\rangle^{0.04-0.5-0.5}\,ds\|U\|_X^2\notag\\
\lesssim &
\langle t \rangle^{-\frac34}\|U\|_X^2.\label{1552-2}
\end{align}

For \eqref{1115-2}, by Proposition \ref{lem:Kn8-L} (i),  we have
\begin{align}
\Big\|\langle\nabla\rangle^3\eqref{1115-2} \Big\|_{L^2_{xt}}
\lesssim &
\int_0^t\big\|\partial_{xx}(\partial_{tt}-\Delta\partial_t-\Delta)K(t-s)\,\,
P_{\lesssim \langle s\rangle^{0.01}}\langle\nabla\rangle^3\big(|\nabla \psi|^2-n^2)(s)\big\|_{L^2_{xy}}\,ds\notag\\
\lesssim &
\int_0^t\langle t-s\rangle^{-1}\langle s\rangle^{0.05}
\big\||\nabla\psi|^2-n^2\big\|_{L^1_{xy}}\,ds\notag\\
\lesssim &
\int_0^t\langle t-s\rangle^{-1}\langle s\rangle^{0.05}
\big(\|\nabla\psi\|_{L^2_{xy}}\>\|\nabla\psi\|_{L^2_{xy}}+\|n\|_{L^2_{xy}}\>\|n\|_{L^2_{xy}}\big)\,ds\notag\\
\lesssim &
\int_0^t\langle t-s\rangle^{-1}\langle s\rangle^{0.05}
\langle s \rangle^{-\frac12}\,ds\|U\|_X^2\notag\\
\lesssim &
\langle t \rangle^{-\frac14}\|U\|_X^2.\label{1552-3}
\end{align}
Moreover,  Proposition \ref{lem:Kn8-L} (ii) ($\beta=1$) again,
\begin{align}
\Big\|\langle\nabla\rangle\pp_x\eqref{1115-2} \Big\|_{L^2_{xt}}
\lesssim &
\int_0^t\big\|\nabla \partial_{xx}(\partial_{tt}-\Delta\partial_t-\Delta)K(t-s)\,\,
P_{\lesssim \langle s\rangle^{0.01}}\langle\nabla\rangle \big(|\nabla \psi|^2-n^2)(s)\big\|_{L^2_{xy}}\,ds\notag\\
\lesssim &
\int_0^t\langle t-s\rangle^{-1}
\langle s\rangle^{0.03}\Big(\big\|\nabla\psi_{\le \langle s\rangle^{0.01}}\cdot\nabla\pp_x\psi_{\le \langle s\rangle^{0.01}}\big\|_{L^2_{xy}}+\big\|\pp_xn n\big\|_{L^2_{xy}}\Big)\,ds\notag\\
\lesssim &
\int_0^t\langle t-s\rangle^{-1}
\langle s\rangle^{0.03}\Big(\|\pp_x\psi\|_{L^2_{xy}}\|\nabla\psi \|_{L^\infty_{xy}}+\|\pp_x n\|_{L^2_{xy}}\|n\|_{L^\infty_{xy}}\Big)\,ds\notag\\
\lesssim &
\int_0^t\langle t-s\rangle^{-1}
\langle s\rangle^{0.03-0.5-0.5}\,ds\|U\|_X^2\notag\\
\lesssim &
\langle t \rangle^{-\frac34}\|U\|_X^2.\label{1552-4}
\end{align}
Collecting the estimates in \eqref{1552-1}--\eqref{1552-4}, we have
\begin{align}
\Big\|\langle\nabla\rangle^3\eqref{1115} \Big\|_{L^p_{xt}}
\lesssim
\langle t \rangle^{-\frac14}Q\big(\|U\|_X\big);\label{est:n-N1-2}\\
\Big\|\langle\nabla\rangle\pp_x\eqref{1115} \Big\|_{L^2_{xt}}
\lesssim
\langle t \rangle^{-\frac34}Q\big(\|U\|_X\big).\label{est:n-N1-21}
\end{align}
Therefore, we finish the estimates in this subsubsection  and thus give \eqref{est:n-N1-L2} and \eqref{est:n-N1-xL2}.

\subsubsection{$\int_0^t\partial_y(\partial_{tt}-\Delta\partial_t)K(t-s)N_2(s)\,ds$.}
This term is at the same level as $\partial_x(\partial_{tt}-\Delta\partial_t-\Delta)K(t-s)N_1(s)\,ds$. Indeed,
the two symbols have the relation
$$
\eta(\partial_{tt}+A^2\partial_t)\sim \frac{\xi}{A}\cdot \xi(\partial_{tt}+A^2\partial_t+A^2).
$$
Therefore, by the same way as Section \ref{sec:n-N1} (details are omitted here), we have
\begin{align}
\Big\|\langle\nabla\rangle^3\int_0^t\partial_y(\partial_{tt}-\Delta\partial_t)K(t-s)N_2(s)\,ds\Big\|_{L^2_{xy}}
\lesssim &
\langle t \rangle^{-\frac1{4}}Q\big(\|U\|_X\big);\label{est:n-N2-L2}\\
\Big\|\langle\nabla\rangle\pp_x\int_0^t\partial_y(\partial_{tt}-\Delta\partial_t)K(t-s)N_2(s)\,ds\Big\|_{L^2_{xy}}
\lesssim &
\langle t \rangle^{-\frac3{4}}Q\big(\|U\|_X\big).\label{est:n-N2-xL2}
\end{align}

\subsubsection{$\int_0^t\Delta\partial_y(\partial_t-\Delta)K(t-s)N_3(s)\,ds$.}\label{sec:n-N3}
This term is at the same level as $\int_0^t (\pp_t-\Delta)(\partial_{tt}-\Delta\partial_t-\Delta)K(t-s)N_0(s)\,ds$.
Indeed,  the operator $\Delta(\partial_t-\Delta)K(t)$ has the same decay estimates as $(\pp_t-\Delta)(\partial_{tt}-\Delta\partial_t-\Delta)K(t)$.
Moreover, we shall show that the nonlinearity $\pp_yN_3$ has the same estimates as  $N_0$. Indeed, as the part $\pp_x(nu)$ in $N_0$, the part $\pp_y(u\pp_x\psi)$
in $N_3$ is subcritical in integration (decay rate larger than $-1$), so it is easy to treat. For the other term
$$\pp_y(v\pp_y\psi)=\pp_yv\>\pp_y\psi+v\pp_{yy}\psi,$$
the first piece is easy since it is subcritical in integration again. So one may find that the trouble is from the piece $v\pp_{yy}\psi$. However,
by the third equation in \eqref{e2.1}, one may find that
$$
\pp_{yy}\psi+\pp_yn=-\pp_{xx}\psi-\pp_tv+\Delta v+\lambda(\pp_{xy}u+\pp_{yy}v)+N_2.
$$
Since the right-hand side in the identity is decaying faster than the each piece of the left-hand side, we roughly obtain that
$$
\pp_{yy}\psi\sim-\pp_yn.
$$
Thus, the estimates on $v\pp_{yy}\psi$ can be reduced to the ones on $v\pp_{y}n$, which the most trouble piece in $N_0$ and we have dealt with it as before.
So by the same way, we establish that
\begin{align}
\Big\|\langle\nabla\rangle^3\int_0^t\Delta\partial_y(\partial_t-\Delta)K(t-s)N_3(s)\,ds\Big\|_{L^2_{xy}}
\lesssim &
\langle t \rangle^{-\frac1{4}}Q\big(\|U\|_X\big);\label{est:n-N3-L2}\\
\Big\|\langle\nabla\rangle\pp_x\int_0^t\Delta\partial_y(\partial_t-\Delta)K(t-s)N_3(s)\,ds\Big\|_{L^2_{xy}}
\lesssim &
\langle t \rangle^{-\frac3{4}}Q\big(\|U\|_X\big).\label{est:n-N3-xL2}
\end{align}

\subsubsection{$\lambda\int_0^t(\partial_{tt}-\Delta\partial_t-\partial_{xx})K(t-s)(\partial_x\Delta u(s)+\partial_y\Delta v(s))\,ds$.}\label{sec:n-lambda}
This indeed is a linear term, to use the continuity argument, we need some small bound. To do this, we set $T_0=\epsilon_0^{-6}$, and
assume that $t\ge 2T_0$, otherwise it is concluded in the local theory.
Then from  Proposition \ref{lem:Kn11-L} (ii) ($\beta_2=2$), we get
\begin{align}
&\Big\|\langle\nabla\rangle^3\lambda\int_0^t(\partial_{tt}-\Delta\partial_t-\partial_{xx})K(t-s)(\partial_x\Delta u(s)+\partial_y\Delta v(s))\,ds\Big\|_{L^2_{xy}}\notag\\
\le &|\lambda|\int_0^t\Big\|\Delta\nabla(\partial_{tt}-\Delta\partial_t-\partial_{xx})K(t-s)\cdot\langle\nabla\rangle^3\vec u(s)\Big\|_{L^2_{xy}}\,ds\notag\\
\lesssim &
\int_0^t\langle t-s\rangle^{-1}
\big\|\langle\nabla\rangle^4\vec u(s)\big\|_{L^2_{xy}}\,ds\notag\\
\lesssim &
\int_0^t\langle t-s\rangle^{-1}\langle s\rangle^{-\frac1{3}}\,ds\|U\|_X\notag\\
\lesssim &
\langle t \rangle^{-\frac1{2}}\|U\|_X\lesssim T_0^{-\frac14}\langle t \rangle^{-\frac14}\|U\|_X\notag\\
= & \epsilon_0\langle t \rangle^{-\frac14}\|U\|_X,\label{est:n-lambda-L2}
\end{align}
where we have used  (by choosing $M$ large enough)
$$
\big\|\langle\nabla\rangle^3\vec u(s)\big\|_{L^2_{xy}}\lesssim \big\|\vec u(s)\big\|_{L^2_{xy}}^{1-\frac4M}
\big\|\langle\nabla\rangle^M\vec u(s)\big\|_{L^2_{xy}}^{\frac4M}
\lesssim \langle s\rangle^{-\frac1{3}}\|U\|_X.
$$
Similarly,
\begin{align}
&\Big\|\langle\nabla\rangle\pp_x\int_0^t(\partial_{tt}-\Delta\partial_t-\partial_{xx})K(t-s)(\partial_x\Delta u(s)+\partial_y\Delta v(s))\,ds\Big\|_{L^2_{xt}}\notag\\
\lesssim &
\int_0^t\langle t-s\rangle^{-1}
\big\|\langle\nabla\rangle^2\pp_x\vec u(s)\big\|_{L^2_{xy}}\,ds
\lesssim
\epsilon_0\langle t \rangle^{-\frac34}\|U\|_X.\label{est:n-lambda-xL2}
\end{align}

Now collecting the estimates obtained in Section \ref{sec:n-N0-1}--Section \ref{sec:n-lambda}, we obtain \eqref{est:n-nonlinear}.
Combination with \eqref{est:n-linear}, gives us \eqref{est:n-L2}.

\vskip .4in
\section{The estimates on $u$}\label{sec:n}

In this section, we shall prove that
\begin{align}
\|u(t)\|_{L^2_{xy}}&\lesssim \langle t\rangle^{-\frac12}\big(\|U_0\|_{X_0}+|\lambda|\>\|U\|_X+Q(\|U\|_{X})\big);\label{est:u-L2}\\
\|\la\na\ra u(t)\|_{L^\infty_{xy}}&\lesssim \langle t\rangle^{-1}\big(\|U_0\|_{X_0}+|\lambda|\>\|U\|_X+Q(\|U\|_{X})\big);\label{est:u-Linfty}\\
\||\nabla|^\gamma u(t)\|_{L^2_{xy}}&\lesssim \langle t\rangle^{-\frac34}\big(\|U_0\|_{X_0}+|\lambda|\>\|U\|_X+Q(\|U\|_{X})\big);\label{est:u-gaL2}\\
\|\langle \nabla\rangle\pp_xu(t)\|_{L^2_{xy}}&\lesssim \langle t\rangle^{-1}\big(\|U_0\|_{X_0}+|\lambda|\>\|U\|_X+Q(\|U\|_{X})\big).\label{est:u-xL2}
\end{align}

\subsection{The reexpression of $u$}
Similar as  the expression $n$ in Section \ref{sec:exp-n}, we give the reexpression of $u$ and prove  that
\begin{prop}
The unknown function $u$ obeys the formula,
\begin{align}\label{exp-u}
u(t,x,y)=
(L_u+B_u)(t;n_0,\vec u_0, \vec b_0)+\mathcal{N}_u(t;n,\vec u, \psi),
\end{align}
where $(L_u+B_u)$ is given by
\begin{align}
&L_u(t;n_0,\vec u_0, \vec b_0)+B_u(t;n_0,\vec u_0, \vec b_0)\notag\\
=&
-K(t)\big[\partial_{xy}\Delta\psi_0\big]-\pp_tK(t)[\pp_{yy}u_0-\partial_{xy}v_0]\notag\\
&-(\partial_{tt}-\Delta\partial_t-\Delta)K(t)
\big[\partial_x n_0\big]+\frac12\Delta (\partial_{tt}-\Delta\partial_t)K(t)\big[u_0\big]\notag\\
&\quad-\frac12\Delta\big(\Delta+\sqrt{\Delta\pp_{yy}}\big)K(t)\big[u_0\big]+\pp_t (\partial_{tt}-\Delta\partial_t-\Delta)K(t)\big[u_0\big],\label{eq:LBu-r}
\end{align}
and $\mathcal{N}_u(t;n,\vec u, \psi)$ is given by
\begin{align}
\mathcal{N}_u(t;n,\vec u, \psi)=&
-\int_0^t\partial_x(\pp_{tt}-\Delta\pp_t-\Delta)K(t-s)N_0(s)\,ds\notag\\
&\quad
+\int_0^t\partial_t(\pp_{tt}-\Delta\pp_t-\Delta-\partial_{yy})K(t-s)N_1(s)\,ds\notag\\
&\quad+\int_0^t\partial_{xy}\partial_t K(t-s)N_2(s)\,ds
-\int_0^t\partial_{xy}\Delta K(t-s)N_3(s)\,ds\notag\\
&\quad
+\lambda\int_0^t(\pp_{tt}-\Delta\pp_t-\Delta)\partial_t K(t-s)\big(\partial_{xx}u(s)+\partial_{xy}v(s)\big)\,ds.
\label{u-N}
\end{align}
\end{prop}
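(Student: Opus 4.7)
The plan is to mirror the derivation carried out for $n$ in Proposition \ref{nprop1}, now applied to equation \eqref{e2.14}. Starting from Lemma \ref{lem:Formula} with $\Phi=u$ and $F=F_1$, we obtain
\[
u(t)=L(\Phi_{1,0},\Phi_{2,0},\Phi_{3,0},\Phi_{4,0})+\int_0^t K(t-s)F_1(s)\,ds,
\]
and then we invoke Lemma \ref{fiii} to rewrite the linear part as the sum of five pieces
\eqref{L-1}--\eqref{L-5} involving $u(0)$, $\pp_t u(0)$, $(\pp_{tt}-\Delta\pp_t-\Delta)u(0)$, and $(\pp_{tt}-\Delta\pp_t-\Delta)\pp_t u(0)$. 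These time‐derivatives at $t=0$ must be converted to expressions in the initial data $(n_0,\vec u_0,\psi_0)$: for $\pp_t u(0)$ I substitute from the second equation of \eqref{e2.1}, i.e.\ $\pp_t u=\Delta u+\lambda(\pp_{xx}u+\pp_{xy}v)-\pp_xn+N_1$; and for $(\pp_{tt}-\Delta\pp_t-\Delta)u(0)$ I use \eqref{e2.5} in the form $(\pp_{tt}-\Delta\pp_t-\Delta)u=\pp_{xy}v-\pp_{yy}u+\Pi_1$. Applying $\pp_t$ to the latter and using the third equation of \eqref{e2.1} to eliminate the remaining $\pp_t v$ will express $(\pp_{tt}-\Delta\pp_t-\Delta)\pp_t u(0)$ in data as well.

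Next, the inhomogeneous integral $\int_0^tK(t-s)F_1(s)\,ds$ is expanded using \eqref{F1-r}. Every term in $F_1$ containing a time derivative of the nonlinearity is integrated by parts in $s$; since $K(0)=\pp_t K(0)=\pp_{tt}K(0)=0$ (as in Section \ref{sec:Bn}), each integration by parts produces only one boundary term of the form $(\pp_t^k K)(t)\big[\,\cdot\,N_j(0)\,\big]$. This gives the decomposition
\[
\int_0^t K(t-s)F_1(s)\,ds=\mathcal N_u(t;n,\vec u,\psi)+B_u^{\,\mathrm{raw}}(t;n_0,\vec u_0,\vec b_0),
\]
where $\mathcal N_u$ has exactly the form \eqref{u-N} (time derivatives now acting on $K$ rather than on $N_j$) and $B_u^{\,\mathrm{raw}}$ collects the $N_j(0)$ boundary contributions.

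The central algebraic step is then to combine $L_u^{\,\mathrm{raw}}+B_u^{\,\mathrm{raw}}$. Exactly as in Section \ref{sec:Bn}, the nonlinear initial‐time terms $N_0(0),N_1(0),N_2(0),N_3(0)$ arising from the substitutions in $L_u^{\,\mathrm{raw}}$ match, piece by piece, with those produced by the integration by parts, and they cancel. What remains is purely in terms of the initial data $n_0,\vec u_0,\psi_0$ and yields the compact formula \eqref{eq:LBu-r}: $K(t)[\pp_{xy}\Delta\psi_0]$ comes from using \eqref{e2.3} to express $\Delta u_0$ contributions, $\pp_t K(t)[\pp_{yy}u_0-\pp_{xy}v_0]$ comes from the $\pp_{yy}u-\pp_{xy}v$ pairing in \eqref{e2.5}, and the terms $(\pp_{tt}-\Delta\pp_t-\Delta)K(t)[\pp_xn_0]$, $\tfrac12\Delta(\pp_{tt}-\Delta\pp_t)K(t)[u_0]$, $\tfrac12\Delta(\Delta+\sqrt{\Delta\pp_{yy}})K(t)[u_0]$ and $\pp_t(\pp_{tt}-\Delta\pp_t-\Delta)K(t)[u_0]$ come directly from the $L$-piece after the identity \eqref{K-ttt2t2-K1} is used to eliminate $K_1(t)$.

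The main obstacle is the bookkeeping: each of $\Pi_1,\Pi_2$ in the definitions \eqref{Pi1}--\eqref{Pi2} carries both nonlinear pieces and $\lambda$-pieces, and one must verify that all $\lambda(\pp_{xy}u+\pp_{yy}v)$ and $\lambda\pp_t(\pp_{xx}u+\pp_{xy}v)$ terms regroup into the single $\lambda$ integral in \eqref{u-N} plus the $\lambda$-pieces absorbed into \eqref{eq:LBu-r}. The key simplification that makes the cancellation work is the identity $(\pp_{tt}-\Delta\pp_t-\pp_{xx})(\pp_{tt}-\Delta\pp_t-\Delta-\pp_{yy})=(\pp_{tt}-\Delta\pp_t-\Delta)^2-\pp_{yyyy}$ used in Subsection \ref{sec:Dia}, which ties the operator $(\pp_{tt}-\Delta\pp_t-\Delta-\pp_{yy})$ acting on $N_1$ in \eqref{u-N} to the pure $(\pp_{tt}-\Delta\pp_t-\Delta)$ operator of $L$.
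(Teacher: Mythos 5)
Your proposal follows essentially the same route as the paper: start from Lemma \ref{lem:Formula} with $\Phi=u$, rewrite the linear piece through Lemma \ref{fiii}, substitute the equations \eqref{e2.1}, \eqref{e2.2}, \eqref{e2.5} at $t=0$ to eliminate $\partial_tu(0)$ and $(\partial_{tt}-\Delta\partial_t-\Delta)u(0)$, integrate the Duhamel term by parts in $s$ so that the $N_j(0)$ contributions cancel against those generated by the substitutions, and finish with the identity \eqref{K-ttt2t2-K1}. This is the same mechanism as Proposition \ref{nprop1} and matches the paper's derivation in Section \ref{sec:Lu}--\ref{sec:Bu}.

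One small attribution slip: the $K(t)[\partial_{xy}\Delta\psi_0]$ term does not arise from ``$\Delta u_0$ contributions.'' Tracking the paper's computation of $(\partial_{tt}-\Delta\partial_t-\Delta)\partial_t u(0)$, that term comes from applying $(\partial_{tt}-\Delta\partial_t-\Delta)$ to the $-\partial_x n(0)$ piece of $\partial_t u(0)$ and then invoking \eqref{e2.2} (equivalently \eqref{e2.3}) for $(\partial_{tt}-\Delta\partial_t-\Delta)n(0)$, which produces $-\partial_x\Delta\partial_y\psi(0)=-\partial_{xy}\Delta\psi(0)$. The $\Delta u_0$ piece instead survives as $(\partial_{tt}-\Delta\partial_t-\Delta)K(t)[\Delta u(0)]$ and, combined with the $-\tfrac12\Delta\sqrt{\Delta\partial_{yy}}K(t)u_0+K_1(t)u_0$ pieces of Lemma \ref{fiii}, is what the identity \eqref{K-ttt2t2-K1} converts into the last three terms of \eqref{eq:LBu-r} via the computation \eqref{eq:cx1}. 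The rest of your bookkeeping (the role of $\Pi_1,\Pi_2$, the regrouping of the $\lambda$ terms, and why all $N_j(0)$ boundary terms cancel) is stated at a level of generality consistent with the paper, and I see no gap in the plan.
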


The proof of this proposition will occupy the rest of this subsection.
First, according to \eqref{eq:Formula}, using \eqref{F1-r}  and integration by parts (see details in Section \ref{sec:Bu}), we have
\begin{align*}
u(t,x,y)=&L_u(t;n_0,\vec u_0, \vec b_0)+\int_0^t K(t-s)F_1(s)\,ds\\
=& L_u(t;n_0,\vec u_0, \vec b_0)+\int_0^t K(t-s)\Big[ -\partial_x(\pp_{ss}-\Delta\pp_s-\Delta)N_0+\partial_s(\pp_{ss}-\Delta\pp_s-\Delta-\partial_{yy})N_1\notag\\
&\quad + \partial_{xy}\partial_sN_2 -\partial_{xy}\Delta N_3+\lambda(\pp_{ss}-\Delta\pp_s-\Delta)\partial_s\partial_x(\partial_xu+\partial_yv)\Big]\,ds\\
=&
L_u(t;n_0,\vec u_0, \vec b_0)+B_u(t;n_0,\vec u_0, \vec b_0)
+\mathcal{N}_u(t;n,\vec u, \psi).
\end{align*}
Here
\begin{align}
&L_u(t;n_0,\vec u_0, \vec b_0)\notag\\
=&
K(t)\big[(\pp_{tt}-\Delta\pp_t-\Delta)\pp_{t}u(0)\big]\label{Lu-1}\\
&+(\partial_{tt}-\Delta\partial_t-\Delta)K(t)\big[\partial_tu(0)\big]\label{Lu-2}\\
&-\Delta K(t)\big[(\pp_{tt}-\Delta\pp_t-\Delta)u(0)\big]
+\partial_t K(t)\big[(\pp_{tt}-\Delta\pp_t-\Delta)u(0)\big]\label{Lu-3}\\
&-\frac12\Delta\sqrt{\Delta\partial_{yy}} K(t)u_0+K_1(t)u_0,\label{Lu-5}
\end{align}
and $B_u(t;n_0,\vec u_0, \vec b_0)$ is the boundary term given below. Now we give the
explicit expressions of $L_u$ and $B_u$ respectively.

\subsubsection{$L_u(t;n_0,\vec u_0, \vec b_0)$}\label{sec:Lu}
%Since $\pp_t u(0)=\Delta u(0)+\lambda(\partial_{xx} u(0)+\partial_{xy}v(0))-\partial_x n(0)+N_1(0)$,
By the equations \eqref{e2.1} and \eqref{e2.2} at $t=0$, we have
\begin{align*}
(\pp_{tt}-\Delta&\pp_t-\Delta)\pp_{t}u(0)
=\big[(\pp_{tt}-\Delta\pp_t-\Delta)\big(\Delta u(0)+\lambda(\pp_{xx}u(0)+\pp_{xy}v(0))-\pp_xn(0)+N_1(0)\big)\big]\\
=&(\pp_{tt}-\Delta\pp_t-\Delta)\Delta u(0)+(\pp_{tt}-\Delta\pp_t-\Delta)N_1(0)\\
&-\partial_x\big[\pp_y\Delta\psi(0)+\pp_tN_0(0)-\Delta N_0(0)-\pp_xN_1(0)-\pp_yN_2(0)-\lambda(\pp_x\Delta u(0)+\pp_y\Delta v(0))\big]\\
&+\lambda(\pp_{tt}-\Delta\pp_t-\Delta)\big[\pp_{xx}u(0)+\pp_{xy}v(0)\big]\\
=&(\pp_{tt}-\Delta\pp_t-\Delta)\Delta u(0)+(\pp_{tt}-\Delta\pp_t-\Delta)N_1(0)-\partial_{xy}\Delta\psi(0)\\
&+\big[-\pp_t\partial_xN_0+\Delta\partial_x N_0(0)+\pp_{xx}N_1(0)+\pp_{xy}N_2(0)\big]\\
&+\lambda(\pp_{tt}-\Delta\pp_t)\big[\pp_{xx}u(0)+\pp_{xy}v(0)\big].
\end{align*}
Thus,
\begin{align*}
\eqref{Lu-1}=&
K(t)\big[(\pp_{tt}-\Delta\pp_t-\Delta)\Delta u(0)\big]+K(t)\big[(\pp_{tt}-\Delta\pp_t-\Delta)N_1(0)\big]-K(t)\big[\partial_{xy}\Delta\psi(0)\big]\\
&+K(t)\big[-\pp_t\partial_xN_0(0)+\pp_x\Delta N_0(0)+\pp_{xx}N_1(0)+\pp_{xy}N_2(0)\big]\\
&+\lambda K(t)\big[(\pp_{tt}-\Delta\pp_t)(\pp_{xx}u(0)+\pp_{xy}v(0))\big].
\end{align*}
Similarly, by the equations \eqref{e2.1} and \eqref{e2.4} at $t=0$,
\begin{align*}
\eqref{Lu-2}=&
(\partial_{tt}-\Delta\partial_t-\Delta)K(t)
\big[\Delta u(0)+\lambda(\partial_{xx} u(0)+\partial_{xy}v(0))-\partial_x n(0)+N_1(0)\big]\\
=&(\partial_{tt}-\Delta\partial_t-\Delta)K(t)
\big[\Delta u(0)-\partial_x n(0)\big]+(\partial_{tt}-\Delta\partial_t-\Delta)K(t)\big[N_1(0)\big]\\
&+\lambda(\partial_{tt}-\Delta\partial_t-\Delta)K(t)\big[\partial_{xx} u(0)+\partial_{xy}v(0)\big];\\
\eqref{Lu-3}=&
-\Delta K(t)\big[(\pp_{tt}-\Delta\pp_t-\Delta)u(0)\big]
+\partial_t K(t)\big[(\pp_{tt}-\Delta\pp_t-\Delta)u(0)\big]\\
=&-\Delta K(t)\big[(\pp_{tt}-\Delta\pp_t-\Delta)u(0)\big]-\pp_{yy}\pp_tK(t)[u(0)]\\
&+\pp_tK(t)\big[\partial_{xy}v(0)+\pp_t N_1(0)-\pp_xN_0(0)+\lambda\pp_t(\pp_{xx} u(0)+\pp_{xy} v(0))\big].
\end{align*}
Then collecting the estimates above, we have
\begin{align*}
&L_u(t;n_0,\vec u_0, \vec b_0)\notag\\
=&
K(t)\big[(\pp_{tt}-\Delta\pp_t-\Delta)N_1(0)\big]
+K(t)\big[-\pp_t\partial_xN_0(0)+\pp_x\Delta N_0(0)+\pp_{xx}N_1(0)+\pp_{xy}N_2(0)\big]\\
&
+(\partial_{tt}-\Delta\partial_t-\Delta)K(t)\big[N_1(0)\big]+\pp_tK(t)\big[\pp_t N_1(0)-\pp_xN_0(0)\big]\\
&
-K(t)\big[\partial_{xy}\Delta\psi(0)\big]-\pp_tK(t)[\pp_{yy}u(0)-\partial_{xy}v(0)]\\
&+(\partial_{tt}-\Delta\partial_t-\Delta)K(t)
\big[\Delta u(0)-\partial_x n(0)\big]
+\lambda K(t)\big[(\pp_{tt}-\Delta\pp_t)(\pp_{xx}u(0)+\pp_{xy}v(0))\big]\\
&
+\lambda(\partial_{tt}-\Delta\partial_t-\Delta)K(t)\big[\partial_{xx} u(0)+\partial_{xy}v(0)\big]+\lambda\pp_tK(t)\big[\pp_t(\pp_{xx} u(0)+\pp_{xy} v(0))\big]\\
&-\frac12\Delta\sqrt{\Delta\partial_{yy}} K(t)[u_0]+K_1(t)[u_0].
\end{align*}

%\vskip .4in
\subsubsection{$B_u(t;n_0,\vec u_0, \vec b_0)$}\label{sec:Bu}
 Now we consider the boundary term $B_u(t;n_0,\vec u_0, \vec b_0)$. By integration by parts, and arguing similarly as in Section \ref{sec:Bn} we have
\begin{align*}
 -\int_0^t K(t-s)&\big[\partial_x(\pp_{ss}-\Delta\pp_s-\Delta)N_0(s)\big]\,ds\\
 = & K(t)\big[(\pp_t-\Delta)\pp_x N_0(0)\big]+\pp_t K(t)\big[\pp_x N_0(0)\big]\\
 &\quad-\int_0^t \partial_x(\pp_{ss}-\Delta\pp_s-\Delta)K(t-s)N_0(s)\,ds;
\\
\int_0^t K(t-s)&\big[\partial_s(\pp_{ss}-\Delta\pp_s-\Delta-\partial_{yy})N_1(s)\big]\,ds\\
= &
-K(t)\big[(\pp_{tt}-\Delta\pp_t-\Delta-\partial_{yy})N_1(0)\big]-\pp_t K(t)\big[(\pp_t-\Delta)N_1(0) \big]\\
&\quad-\pp_{tt}K(t)\big[N_1(0)\big]+\int_0^t \partial_t(\pp_{tt}-\Delta\pp_t-\Delta-\partial_{yy})K(t-s)N_1(s)\,ds;\\
\int_0^t K(t-s)&\big[ \partial_{xy}\partial_sN_2(s)\big]\,ds\\
=&-K(t)\big[\partial_{xy}N_2(0)\big]+\int_0^t \partial_{xy}\partial_tK(t-s)N_2(s)\,ds;\\
\lambda\int_0^t K(t-s)&\big[(\pp_{ss}-\Delta\pp_s-\Delta)\partial_s(\partial_{xx}u(s)+\partial_{xy}v(s))\big]\,ds\\
=&-\lambda K(t)\big[(\pp_{tt}-\Delta\pp_t-\Delta)(\partial_{xx}u(0)+\partial_{xy}v(0))\big]\\
&-\lambda \pp_tK(t)\big[(\pp_{t}-\Delta)(\partial_{xx}u(0)+\partial_{xy}v(0))\big]-\lambda \pp_{tt}K(t)\big[\partial_{xx}u(0)+\partial_{xy}v(0)\big]\\
&\quad+\lambda\int_0^t(\pp_{tt}-\Delta\pp_t-\Delta)\partial_t K(t-s)\big(\partial_{xx}u(s)+\partial_{xy}v(s)\big)\,ds.
\end{align*}
Therefore, we obtain the boundary term $B_u(t;n_0,\vec u_0, \vec b_0)$ as
\begin{align*}
&B_u(t;n_0,\vec u_0, \vec b_0)\\
=&-K(t)\big[(\pp_{tt}-\Delta\pp_t-\Delta-\partial_{yy})N_1(0)\big]
+K(t)\big[\pp_t\pp_x N_0(0)-\pp_x\Delta N_0(0)-\partial_{xy}N_2(0)\big]\\
&+\pp_t K(t)\big[\pp_x N_0(0)-\pp_tN_1(0)+\Delta N_1(0)\big]-\pp_{tt}K(t)\big[N_1(0)\big]\\
&-\lambda K(t)\big[(\pp_{tt}-\Delta\pp_t-\Delta)(\partial_{xx}u(0)+\partial_{xy}v(0))\big]\\
&-\lambda \pp_tK(t)\big[(\pp_{t}-\Delta)(\partial_{xx}u(0)+\partial_{xy}v(0))\big]-\lambda \pp_{tt}K(t)\big[\partial_{xx}u(0)+\partial_{xy}v(0)\big].
\end{align*}

Together with the result obtained in Section \ref{sec:Lu}, we have
\begin{align}
&L_u(t;n_0,\vec u_0, \vec b_0)+B_u(t;n_0,\vec u_0, \vec b_0)\notag\\
=&
-K(t)\big[\partial_{xy}\Delta\psi(0)\big]-\pp_tK(t)[\pp_{yy}u(0)-\partial_{xy}v(0)]\notag\\
&+(\partial_{tt}-\Delta\partial_t-\Delta)K(t)
\big[\Delta u(0)-\partial_x n(0)\big]
-\frac12\Delta\sqrt{\Delta\partial_{yy}} K(t)[u_0]+K_1(t)[u_0]. \label{eq:LBu}
\end{align}
The terms can be further simplified.
Indeed, by \eqref{K-ttt2t2-K1},
\begin{align}
&(\partial_{tt}-\Delta\partial_t-\Delta)K(t)\big[\Delta u(0)\big]
-\frac12\Delta\sqrt{\Delta\partial_{yy}} K(t)u_0+K_1(t)u_0\notag\\
=&\Big[\frac12\Delta(\partial_{tt}-\Delta\partial_t-\Delta)K(t)-\frac12\Delta\sqrt{\Delta\partial_{yy}} K(t)\Big]\big[ u(0)\big]\notag\\
&\qquad
+\Big[\frac12\Delta (\partial_{tt}-\Delta\partial_t-\Delta)K(t)+K_1(t)\Big]\big[u(0)\big]\notag\\
=&\frac12\Delta (\partial_{tt}-\Delta\partial_t)K(t)\big[u(0)\big]-\frac12\Delta\big(\Delta+\sqrt{\Delta\pp_{yy}}\big)K(t)\big[u(0)\big]\notag\\
&\quad+\pp_t (\partial_{tt}-\Delta\partial_t-\Delta)K(t)\big[u(0)\big].\label{eq:cx1}
\end{align}
Using \eqref{eq:cx1} and \eqref{eq:LBu}, we have \eqref{eq:LBu-r}.
Now we split into the following two subsection to consider the linear parts and nonlinear parts separately.

\subsection{Estimates on the linear parts $L_u+B_u$}\label{sec:LBu}In this subsection, we prove that
\begin{lemma}
\begin{align*}
\big\|L_u(t;n_0,\vec u_0, \vec b_0)+B_u(t;n_0,\vec u_0, \vec b_0)\big\|_{L^2_{xy}}
\lesssim &
\langle t\rangle^{-\frac12}\|U_0\|_{X_0};\\
\big\|\langle \nabla\rangle L_u(t;n_0,\vec u_0, \vec b_0)+B_u(t;n_0,\vec u_0, \vec b_0)\big\|_{L^\infty_{xy}}
\lesssim &
\langle t\rangle^{-1}\|U_0\|_{X_0}; \\
\big\||\nabla|^{\gamma}L_u(t;n_0,\vec u_0, \vec b_0)+B_u(t;n_0,\vec u_0, \vec b_0)\big\|_{L^2_{xy}}
\lesssim &
\langle t\rangle^{-\frac34}\|U_0\|_{X_0}; \\
\big\|\langle\nabla\rangle\pp_xL_u(t;n_0,\vec u_0, \vec b_0)+B_u(t;n_0,\vec u_0, \vec b_0)\big\|_{L^2_{xy}}
\lesssim &
\langle t\rangle^{-1}\|U_0\|_{X_0}.
\end{align*}
\end{lemma}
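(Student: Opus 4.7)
The plan is to estimate each of the six summands in \eqref{eq:LBu-r} separately and add them up, matching each to the appropriate linear kernel estimate from Section~4. Schematically, the six pieces are, in order: a transport-from-$\psi_0$ piece $K(t)[\partial_{xy}\Delta\psi_0]$, a first-time-derivative piece $\partial_tK(t)[\partial_{yy}u_0-\partial_{xy}v_0]$, a pressure-type piece $(\partial_{tt}-\Delta\partial_t-\Delta)K(t)[\partial_x n_0]$, a ``good'' wave piece $\Delta(\partial_{tt}-\Delta\partial_t)K(t)[u_0]$, an elliptic-curl piece $\Delta(\Delta+\sqrt{\Delta\partial_{yy}})K(t)[u_0]$, and a derivative piece $\partial_t(\partial_{tt}-\Delta\partial_t-\Delta)K(t)[u_0]$. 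For each, Propositions~\ref{lem:Ku1-L}, \ref{lem:Kn2-L}, \ref{lem:Kn3-L}--\ref{lem:Kn8-L}, \ref{lem:Ku2-L}, \ref{lem:Ku3-L}, and \ref{lem:Kn5-L} respectively give exactly the right decay rates once one identifies derivatives of the initial datum and transfers the remaining derivatives onto the kernel.

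Concretely, I would argue as follows. For the $L^2$ bound with decay $\langle t\rangle^{-1/2}$: write each term so that one gradient lands on $\nabla\psi_0$, $\vec u_0$ or $n_0$ (which is integrable against $\|U_0\|_{X_0}$ via \eqref{X0}) and invoke Proposition~\ref{lem:Ku1-L}(i), Proposition~\ref{lem:Kn2-L}(i) (with $\beta=2$), Proposition~\ref{lem:Kn3-L}(i) (with $p=2$), Proposition~\ref{lem:Ku2-L}, Proposition~\ref{lem:Ku3-L}, and Proposition~\ref{lem:Kn5-L}(i) respectively; each gives $\langle t\rangle^{-1/2}\|U_0\|_{X_0}$ after absorbing at most five smooth derivatives into the $L^1$ datum norm. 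For the $\langle \nabla\rangle L^\infty$ estimate with decay $\langle t\rangle^{-1}$ I would use the $L^\infty$ versions: Proposition~\ref{lem:Ku1-L}(iii), Proposition~\ref{lem:Kn2-L}(ii), Proposition~\ref{lem:Kn3-L}(iv), Proposition~\ref{lem:Ku2-L} with $p=\infty$, Proposition~\ref{lem:Ku3-L} with $p=\infty$, and Proposition~\ref{lem:Kn5-L}(ii)--(iv); in each case one extra spatial derivative is distributed either onto the kernel (Bernstein in the $L^\infty_{\xi\eta}$ symbol bounds) or onto the low-regularity piece of $\|U_0\|_{X_0}$. The $|\nabla|^\gamma$-$L^2$ bound with decay $\langle t\rangle^{-3/4}$ follows by interpolation between the $L^2$ bound and the $\langle\nabla\rangle\partial_x L^2$ bound (using the anisotropy $|\nabla|^\gamma \lesssim |\partial_x|^\gamma|\nabla|^{1-\gamma}$ on low frequencies, or alternatively by direct application of Propositions~\ref{lem:Ku1-L}(i), \ref{lem:Kn2-L}(i) with $\beta=\gamma+1$, \ref{lem:Kn3-L}(iii), \ref{lem:Ku2-L}, \ref{lem:Ku3-L}, \ref{lem:Kn5-L}(i) each with appropriate $\beta$). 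Finally, the $\langle\nabla\rangle\partial_x L^2$ bound with decay $\langle t\rangle^{-1}$ exploits the extra $\partial_x$ by invoking Proposition~\ref{lem:Ku1-L}(ii) (with $\beta'=1$), Proposition~\ref{lem:Kn4-L}(ii), Proposition~\ref{lem:Kn8-L}(ii), Proposition~\ref{lem:Ku2-L}, Proposition~\ref{lem:Ku3-L}, and Proposition~\ref{lem:Kn5-L}(i) with $\beta=2$.

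The only nontrivial bookkeeping issue is the term $\Delta(\Delta+\sqrt{\Delta\partial_{yy}})K(t)[u_0]$: this is the piece that carries the full anisotropic kernel symbol $A^2(A^2-A|\eta|)$. The point is that this combination is comparable to $A^2\xi^2$ by the identity $A^2-A|\eta|=A\xi^2/(A+|\eta|)$, which is precisely why Proposition~\ref{lem:Ku3-L} (and its symbolic twin Lemma~\ref{lem:Ku3}) gives $\langle t\rangle^{-1/p'-\beta/2}$ decay rather than the slower rate one would get from $\Delta\cdot\Delta$ alone. Once this anisotropic gain is used, all six terms decay at least as fast as the four claimed rates, and summation over the six contributions yields the lemma with constant $C_0\|U_0\|_{X_0}$.

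The potential obstacle I would watch most carefully is the $\langle\nabla\rangle L^\infty$ bound for the term $\partial_tK(t)[\partial_{yy}u_0-\partial_{xy}v_0]$: the factor $\partial_{yy}$ is the ``bad'' direction, and Proposition~\ref{lem:Kn2-L}(ii) only yields $\langle t\rangle^{-1}\|\langle\nabla\rangle^{0+}f\|_{L^1}$ after placing all derivatives on the kernel. I would therefore make sure the derivative count on $u_0,v_0$ is at most $5$ so that it is absorbed into $\|\langle\nabla\rangle^5\vec u_0\|_{L^1_{xy}}\le\|U_0\|_{X_0}$. Similarly for the $\partial_t(\partial_{tt}-\Delta\partial_t-\Delta)K(t)[u_0]$ term in the $L^\infty$ estimate, Proposition~\ref{lem:Kn5-L}(iii)--(iv) requires using the $L^2$ component of $\|U_0\|_{X_0}$ rather than the $L^1$ one, which is fine since $M\ge 8$ provides ample room.
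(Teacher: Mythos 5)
Your overall strategy is the same as the paper's: estimate the six pieces of \eqref{eq:LBu-r} individually with the linear kernel propositions from Section~4, and you correctly flag the anisotropic identity $A^2-A|\eta|\sim\xi^2$ as the reason the term $\Delta(\Delta+\sqrt{\Delta\partial_{yy}})K(t)[u_0]$ decays fast enough. However, there are two concrete gaps. For the $\langle\nabla\rangle\partial_x$--$L^2$ estimate of $K(t)[\partial_{xy}\Delta\psi_0]$ you cite Proposition~\ref{lem:Ku1-L}(ii), but that proposition only ever yields $\langle t\rangle^{-1/2}$ decay regardless of $\beta'$, so it cannot produce the required $\langle t\rangle^{-1}$. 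The extra $\partial_x$ must be fed into the kernel: rewrite the operator as $\nabla\partial_{xx}\partial_y K(t)$ acting on $\langle\nabla\rangle\nabla\psi_0$ and invoke Proposition~\ref{lem:Ku1''-L}(i), which is exactly the estimate the paper supplies for this configuration in \eqref{LBu-1-4}. Similarly, for the $\partial_x$ estimate of $\partial_tK(t)[\partial_{yy}u_0-\partial_{xy}v_0]$ you cite Proposition~\ref{lem:Kn4-L}(ii), which concerns the operator $\partial_y(\partial_{tt}-\Delta\partial_t)K$ rather than $\partial_y\partial_t K$; the correct reference here is Proposition~\ref{lem:Kn2-L}(iii).

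The interpolation route you propose for $|\nabla|^\gamma$ also does not work. The symbol inequality $A^\gamma\lesssim|\xi|^\gamma A^{1-\gamma}$ fails on frequencies with $\xi\approx 0$ and $\eta\neq 0$ (the left side is $|\eta|^\gamma$, the right side vanishes), so interpolating between the $L^2$ and the $\partial_x$--$L^2$ bounds cannot control $|\nabla|^\gamma$: the $\partial_x$ gain is genuinely anisotropic. One must instead apply the kernel propositions directly with the fractional exponent, which is what the paper does — for the first term that is Proposition~\ref{lem:Ku1-L}(i) with $\beta'=\gamma+1$ and $\beta=\tfrac32$ as in \eqref{LBu-1-3}, and analogously for the other five. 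Since you also name the direct route as an alternative, your argument can be repaired by discarding the interpolation claim and the two mis-cited propositions; with those corrections the proof closes along the lines the paper intends.
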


To prove this lemma, we will show that each term in \eqref{eq:LBu-r} obeys the estimates claimed in the lemma. Since it is quite direct by the same agrument in Section \ref{sec:LB-n}, we only give the sketch of proof.
For example,  the first term $K(t)\big[\partial_{xy}\Delta\psi_0\big]$ can be proved by using  Proposition \ref{lem:Ku1-L} and Proposition \ref{lem:Ku1''-L} as follows,
\begin{align}
\big\|K(t)\big[\partial_{xy}\Delta\psi_0\big]\big\|_{L^2_{xy}}
=&\big\|\nabla \partial_{xy}K(t)\cdot\big[\nabla \psi_0\big]\big\|_{L^2_{xy}}
\lesssim
\langle t\rangle^{-\frac12}\big\|\langle \nabla\rangle^{0+}\nabla \psi_0\big\|_{L^1_{xy}};\label{LBu-1-1}\\
\big\|\langle \nabla\rangle K(t)\big[\partial_{xy}\Delta\psi_0\big]\big\|_{L^\infty_{xy}}
=&
\big\|\nabla\partial_{xy} K(t)\cdot\big[ \langle \nabla\rangle\nabla\psi_0\big]\big\|_{L^\infty_{xy}}
\lesssim
\langle t\rangle^{-1}\big\|\langle \nabla\rangle^{2+}\nabla \psi_0\big\|_{L^1_{xy}};\label{LBu-1-2}\\
\big\||\nabla|^{\gamma}K(t)\big[\partial_{xy}\Delta\psi_0\big]\big\|_{L^2_{xy}}
=&
\big\||\nabla|^{\gamma}\nabla\partial_{xy}K(t)\cdot \big[\nabla\psi_0\big]\big\|_{L^2_{xy}}
\lesssim
\langle t\rangle^{-\frac34}\big\|\langle \nabla\rangle^{\gamma+}\nabla \psi_0\big\|_{L^1_{xy}};\label{LBu-1-3}\\
\big\|\langle \nabla\rangle\pp_xK(t)\big[\partial_{xy}\Delta\psi_0\big]\big\|_{L^2_{xy}}
=&
\big\|\nabla\partial_{xx}\pp_y K(t)\cdot \big[\langle \nabla\rangle\nabla \psi_0\big]\big\|_{L^2_{xy}}
\lesssim
\langle t\rangle^{-1}\big\|\langle \nabla\rangle^{2+}\nabla \psi_0\big\|_{L^1_{xy}}.\label{LBu-1-4}
\end{align}
Similarly, the estimates on the  term  $\pp_tK(t)[\pp_{yy}u_0-\partial_{xy}v_0]$ follow from Proposition \ref{lem:Kn2-L}.
The estimates on the term $(\partial_{tt}-\Delta\partial_t-\Delta)K(t)
\big[\partial_x n_0\big]$ can be obtained by using Propositions \ref{lem:Kn3-L} and \ref{lem:Kn8-L}.
The estimates on the term $\Delta (\partial_{tt}-\Delta\partial_t)K(t)\big[u_0\big]$ follow from Proposition \ref{lem:Ku2-L}.
The estimates on the term $\Delta\big(\Delta+\sqrt{\Delta\pp_{yy}}\big)K(t)\big[u_0\big]$ can be proved by Proposition \ref{lem:Ku3-L}.
At last, The estimates on the term $\pp_t (\partial_{tt}-\Delta\partial_t-\Delta)K(t)\big[u_0\big]$ can be shown by Proposition \ref{lem:Kn5-L}.

\subsection{The estimates on  nonlinear parts $\mathcal N_u$}
In this subsection, we establish that
\begin{align}
\big\|\mathcal N_u(t;n,\vec u, \psi)\big\|_{L^2_{xy}}
\lesssim &
\langle t\rangle^{-\frac12}\big(\|U_0\|_{X_0}+|\lambda|\>\|U\|_X+Q(\|U\|_{X})\big);\label{nu-1}\\
\big\|\langle \nabla\rangle\mathcal N_u(t;n,\vec u, \psi)\big\|_{L^\infty_{xy}}
\lesssim &
\langle t\rangle^{-1}(\|U_0\|_{X_0}+|\lambda|\>\|U\|_X+Q(\|U\|_{X})\big);\label{nu-2}\\
\big\||\nabla|^{\gamma}\mathcal N_u(t;n,\vec u, \psi)\big\|_{L^2_{xy}}
\lesssim &
\langle t\rangle^{-\frac34}(\|U_0\|_{X_0}+|\lambda|\>\|U\|_X+Q(\|U\|_{X})\big);\label{nu-3}\\
\big\|\langle\nabla\rangle\pp_x\mathcal N_u(t;n,\vec u, \psi)\big\|_{L^2_{xy}}
\lesssim &
\langle t\rangle^{-1}(\|U_0\|_{X_0}+|\lambda|\>\|U\|_X+Q(\|U\|_{X})\big).\label{nu-4}
\end{align}

By the definition of $\mathcal N_u(t;n,\vec u, \psi)$, we estimate it terms by terms in \eqref{u-N}.

\subsubsection{$\int_0^t\partial_x(\pp_{tt}-\Delta\pp_t-\Delta)K(t-s)N_0(s)\,ds$}\label{sec:u-N0-1}
In this subsubsection, and we prove that
\begin{align*}
\big\|\int_0^t\partial_x(\pp_{tt}-\Delta\pp_t-\Delta)K(t-s)N_0(s)\,ds\big\|_{L^2_{xy}}
\lesssim &
\langle t\rangle^{-\frac12}Q(\|U\|_X);\\
\big\|\langle\nabla\rangle\int_0^t\partial_x(\pp_{tt}-\Delta\pp_t-\Delta)K(t-s)N_0(s)\,ds\big\|_{L^\infty_{xy}}
\lesssim &
\langle t\rangle^{-1}Q(\|U\|_X);\\
\big\||\nabla|^{\gamma}\int_0^t\partial_x(\pp_{tt}-\Delta\pp_t-\Delta)K(t-s)N_0(s)\,ds\big\|_{L^2_{xy}}
\lesssim &
\langle t\rangle^{-\frac34}Q(\|U\|_X);\\
\big\|\langle\nabla\rangle\pp_x\int_0^t\partial_x(\pp_{tt}-\Delta\pp_t-\Delta)K(t-s)N_0(s)\,ds\big\|_{L^2_{xy}}
\lesssim &
\langle t\rangle^{-1}Q(\|U\|_X).
\end{align*}

The treatment is similar as what in Section \ref{sec:n-N0-2}.  First, we rewrite
\begin{align*}
&\int_0^t\partial_x(\partial_{tt}-\Delta\partial_t-\Delta)K(t-s)N_0(s)\,ds\\
=&
\int_0^t\partial_x(\partial_{tt}-\Delta\partial_t-\Delta)K(t-s)N_0^h(s)\,ds\\
&+\int_0^t\partial_{xx}(\partial_{tt}-\Delta\partial_t-\Delta)K(t-s)(n_{\le \langle s\rangle^{0.01}}\>u_{\le \langle s\rangle^{0.01}})(s)\,ds\notag\\
&+\int_0^t\partial_{xy}(\partial_{tt}-\Delta\partial_t-\Delta)K(t-s)P_{\le \langle s\rangle^{-0.04}}(n_{\le \langle s\rangle^{0.01}}\>v_{\le \langle s\rangle^{0.01}})(s)\,ds\\
&+\int_0^t\partial_{xy}(\partial_{tt}-\Delta\partial_t-\Delta)K(t-s)P_{\ge \langle s\rangle^{-0.04}}(n_{\le \langle s\rangle^{0.01}}\>v_{\le \langle s\rangle^{0.01}})(s)\,ds.
\end{align*}
Furthermore, by \eqref{re-N0}, we also split it into several parts as follows,
\begin{align}
&\int_0^t\partial_x(\partial_{tt}-\Delta\partial_t-\Delta)K(t-s)N_0(s)\,ds\notag\\
=&\int_0^t\partial_x(\partial_{tt}-\Delta\partial_t-\Delta)K(t-s)N_0^h(s)\,ds\label{u-N0-2-h}\\
&+\int_0^t\partial_{xx}(\partial_{tt}-\Delta\partial_t-\Delta)K(t-s)(n_{\le \langle s\rangle^{0.01}}\>u_{\le \langle s\rangle^{0.01}})(s)\,ds\label{u-N0-2-l-1}\\
&+\int_0^t\partial_{xy}(\partial_{tt}-\Delta\partial_t-\Delta)K(t-s)P_{\le \langle s\rangle^{-0.04}}(n_{\le \langle s\rangle^{0.01}}\>v_{\le \langle s\rangle^{0.01}})(s)\,ds\label{uv-lh}\\
&+\int_0^t\partial_{xy}(\partial_{tt}-\Delta\partial_t-\Delta)K(t-s)P_{\ge \langle s\rangle^{-0.04}}(n_{\le \langle s\rangle^{0.01}}\>v_{\le \langle s\rangle^{-10}})(s)\,ds\label{uv-easy1}\\
&+
\int_0^t\partial_{xy}(\partial_{tt}-\Delta\partial_t-\Delta)K(t-s)P_{\ge \langle s\rangle^{-0.04}}(n_{\le \langle s\rangle^{0.01}}\>v_{\ge \langle s\rangle^{-0.05}})(s)\,ds\label{uv-easy2}\\
&+
\int_0^t\partial_{xy}(\partial_{tt}-\Delta\partial_t-\Delta)K(t-s)P_{\ge \langle s\rangle^{-0.04}}\big(n_{\le \langle s\rangle^{0.01}}\>P_{\langle s\rangle^{-10}\le\cdot \le \langle s\rangle^{-0.05}}N_3\big)(s)\,ds.\label{uv-hard-2}\\
&
-\int_0^t\partial_{xy}(\partial_{tt}-\Delta\partial_t-\Delta)K(t-s)P_{\ge \langle s\rangle^{-0.04}}\big(\pp_s n_{\le \langle s\rangle^{0.01}}\>P_{\langle s\rangle^{-10}\le\cdot \le \langle s\rangle^{-0.05}}\psi\big) (s)\,ds\label{uv-hard-12}\\
&+
\int_0^t\partial_{xy}(\partial_{tt}-\Delta\partial_t-\Delta)K(t-s)\pp_s\Big( P_{\ge \langle s\rangle^{-0.04}}\big(n_{\le \langle s\rangle^{0.01}}\>P_{\langle s\rangle^{-10}\le\cdot \le \langle s\rangle^{-0.05}}\psi\big)(s)\Big)\,ds\label{uv-hard-11}\\
&\quad+ \mbox{\emph{other easy terms}}.\label{uv-hard-13}
\end{align}
Again, the \emph{other easy terms} is the corresponding terms from \emph{other parts} in \eqref{re-N0}.

As before, the high frequency piece is standard, and thus the estimates on \eqref{u-N0-2-h} are omitted here. Now
we consider the term \eqref{u-N0-2-l-1},  by Proposition \ref{lem:Kn8-L} (i) and interpolation estimates,
\begin{align*}
\big\|\eqref{u-N0-2-l-1}\big\|_{L^2_{xy}}
\lesssim &
\int_0^t\Big\|\partial_{xx}(\partial_{tt}-\Delta\partial_t-\Delta)K(t-s)(n_{\le \langle s\rangle^{0.01}}\>u_{\le \langle s\rangle^{0.01}})(s)\Big\|_{L^2_{xy}}\,ds\\
\lesssim &
\int_0^t\langle t-s\rangle^{-\frac34}\big\|(n_{\le \langle s\rangle^{0.01}}\>u_{\le \langle s\rangle^{0.01}})(s)\big\|_{L^1_{x}L^2_y}\,ds\\
\lesssim &
\int_0^t\langle t-s\rangle^{-\frac34}\|n(s)\|_{L^2_{xy}}\big\|u(s)\big\|_{L^2_{x}L^\infty_{y}}
\,ds\\
\lesssim &
\int_0^t\langle t-s\rangle^{-\frac34}\langle s\rangle^{0.01}\|n(s)\|_{L^2_{xy}}\big\|u(s)\big\|_{L^2_{xy}}^{0+}\big\||\nabla|^{\frac12+}u(s)\big\|_{L^2_{xy}}^{1-}
\,ds\\
\lesssim &
\int_0^t\langle t-s\rangle^{-\frac34}\langle s\rangle^{0.02-0.25-0.75}
\,ds\|U\|_X^2\\
\lesssim &
\langle t \rangle^{-\frac1{2}}\|U\|_X^2.
\end{align*}
Similarly, by Proposition \ref{lem:Kn8-L} (iii) instead,
\begin{align*}
\big\|\langle\nabla\rangle\eqref{u-N0-2-l-1}\big\|_{L^\infty_{xy}}
\lesssim &
\int_0^t\Big\|\partial_{xx}(\partial_{tt}-\Delta\partial_t-\Delta)K(t-s)(n_{\le \langle s\rangle^{0.01}}\>u_{\le \langle s\rangle^{0.01}})(s)\Big\|_{L^2_{xy}}\,ds\\
\lesssim &
\int_0^t\langle t-s\rangle^{-1}\langle s\rangle^{0.03}\|(n_{\le \langle s\rangle^{0.01}}\>u_{\le \langle s\rangle^{0.01}})(s)\|_{L^2_{xy}}\,ds\\
\lesssim &
\int_0^t\langle t-s\rangle^{-1}\langle s\rangle^{0.03}\|n(s)\|_{L^2_{xy}}\big\|u(s)\big\|_{L^\infty_{xy}}\,ds\\
\lesssim &
\int_0^t\langle t-s\rangle^{-1}\langle s\rangle^{0.03-0.25-1}
\,ds\|U\|_X^2\\
\lesssim &
\langle t \rangle^{-1}\|U\|_X^2.
\end{align*}
and by Proposition \ref{lem:Kn8-L} (ii), for any $\beta\in [\frac12,1]$,
\begin{align*}
\big\|\langle\nabla\rangle|\nabla|^\beta\eqref{u-N0-2-l-1}\big\|_{L^2_{xy}}
\lesssim &
\int_0^t\Big\||\nabla|^\beta\partial_{xx}(\partial_{tt}-\Delta\partial_t-\Delta)K(t-s)\,\,\langle\nabla\rangle(n_{\le \langle s\rangle^{0.01}}\>u_{\le \langle s\rangle^{0.01}})(s)\Big\|_{L^2_{xy}}\,ds\\
\lesssim &
\int_0^t\langle t-s\rangle^{-\frac{1+\beta}2}\langle s\rangle^{0.02}\big\|(n_{\le \langle s\rangle^{0.01}}\>u_{\le \langle s\rangle^{0.01}})(s)\big\|_{L^2_{xy}}\,ds\\
\lesssim &
\int_0^t\langle t-s\rangle^{-\frac{1+\beta}2}\langle s\rangle^{0.02-0.25-1}
\,ds\|U\|_X^2\\
\lesssim &
\langle t \rangle^{-\frac{1+\beta}2}\|U\|_X^2,
\end{align*}
where $\beta=\gamma$ and $\beta=1$ (which turns to $\pp_x\eqref{u-N0-2-l-1}$) are the estimates we want.

Now we consider \eqref{uv-lh}.  By Beinstein's inequality, and  Proposition \ref{lem:Kn3-L} (ii), we have
\begin{align*}
\big\||\nabla|^\beta\eqref{uv-lh}\big\|_{L^2_{xy}}
\lesssim &
\int_0^t \big\||\nabla|^\beta\partial_{xy}(\partial_{tt}-\Delta\partial_t-\Delta)K(t-s)\,\,P_{\le \langle s\rangle^{-0.04}}(n_{\le \langle s\rangle^{0.01}}\>v_{\le \langle s\rangle^{0.01}})(s)\big\|_{L^2_{xy}}\,ds\\
\lesssim &
\int_0^t \langle s\rangle^{-0.04\beta}\big\|\nabla\partial_{x}(\partial_{tt}-\Delta\partial_t-\Delta)K(t-s)\,\,P_{\le \langle s\rangle^{-0.04}}(n_{\le \langle s\rangle^{0.01}}\>v_{\le \langle s\rangle^{0.01}})(s)\big\|_{L^2_{xy}}\,ds\\
\lesssim &
\int_0^t\langle t-s\rangle^{-\frac34}\langle s\rangle^{-0.04\beta}\big\|(n_{\le \langle s\rangle^{0.01}}\>v_{\le \langle s\rangle^{0.01}})(s)\big\|_{L^1_{x}L^2_{y}}\,ds\\
\lesssim &
\int_0^t\langle t-s\rangle^{-\frac34}\langle s\rangle^{-0.04\beta}\|n(s)\|_{L^2_{xy}}\big\|v(s)\big\|_{L^2_xL^\infty_{y}}\,ds\\
\lesssim &
\int_0^t\langle t-s\rangle^{-\frac34}\langle s\rangle^{-0.03\beta-1}
\,ds\|U\|_X^2,
\end{align*}
if $\beta=0$, we obtain
\begin{align*}
\big\|\eqref{uv-lh}\big\|_{L^2_{xy}}
\lesssim \langle t\rangle^{-\frac12}\|U\|_X^2,
\end{align*}
if $\beta=\gamma$, we obtain
\begin{align*}
\big\||\nabla|^\gamma\eqref{uv-lh}\big\|_{L^2_{xy}}
\lesssim \langle t\rangle^{-\frac34}\|U\|_X^2.
\end{align*}

By using Proposition \ref{lem:Kn3-L} (iii) instead, we have
\begin{align*}
\big\|\langle\nabla\rangle\eqref{uv-lh}\big\|_{L^\infty_{xy}}
\lesssim &
\int_0^t \big\|\nabla\partial_{x}(\partial_{tt}-\Delta\partial_t-\Delta)K(t-s)\,\,P_{\le \langle s\rangle^{-0.04}}(n_{\le \langle s\rangle^{0.01}}\>v_{\le \langle s\rangle^{0.01}})(s)\big\|_{L^\infty_{xy}}\,ds\\
\lesssim &
\int_0^t \langle s\rangle^{-0.02}\big\||\nabla|^\frac12\partial_{x}(\partial_{tt}-\Delta\partial_t-\Delta)K(t-s)\,\,P_{\le \langle s\rangle^{-0.04}}(n_{\le \langle s\rangle^{0.01}}\>v_{\le \langle s\rangle^{0.01}})(s)\big\|_{L^\infty_{xy}}\,ds\\
\lesssim &
\int_0^t\langle t-s\rangle^{-1}\langle s\rangle^{-0.02}\big\|(n_{\le \langle s\rangle^{0.01}}\>v_{\le \langle s\rangle^{0.01}})(s)\big\|_{L^1_{x}L^2_{y}}\,ds\\
\lesssim &
\int_0^t\langle t-s\rangle^{-1}\langle s\rangle^{-0.02}\|n(s)\|_{L^2_{xy}}\big\|v(s)\big\|_{L^2_xL^\infty_{y}}\,ds\\
\lesssim &
\int_0^t\langle t-s\rangle^{-1}\langle s\rangle^{-0.01-1}
\,ds\|U\|_X^2\\
\lesssim &\langle t\rangle^{-1}\|U\|_X^2.
\end{align*}

By using Proposition \ref{lem:Kn8-L} (ii) ($\beta=1$) instead,
\begin{align*}
\big\|\langle\nabla\rangle\pp_x\eqref{uv-lh}\big\|_{L^2_{xy}}
\lesssim &
\int_0^t \big\|\nabla\partial_{xx}(\partial_{tt}-\Delta\partial_t-\Delta)K(t-s)\,\,P_{\le \langle s\rangle^{-0.04}}(n_{\le \langle s\rangle^{0.01}}\>v_{\le \langle s\rangle^{0.01}})(s)\big\|_{L^2_{xy}}\,ds\\
\lesssim &
\int_0^t\langle t-s\rangle^{-1} \|n_{\le \langle s\rangle^{0.01}}\>v_{\le \langle s\rangle^{0.01}}\|_{L^2_{xy}}\,ds\\
\lesssim &
\int_0^t\langle t-s\rangle^{-1} \|n(s)\|_{L^2_{xy}}\|v(s)\|_{L^\infty_{xy}}\,ds\\
\lesssim &
\int_0^t\langle t-s\rangle^{-1}\langle s\rangle^{-0.25-1}
\,ds\|U\|_X^2\\
\lesssim &\langle t\rangle^{-1}\|U\|_X^2.
\end{align*}

We consider the terms  \eqref{uv-easy1}--\eqref{uv-hard-12} together.
First, we show that the nonlinearities in these terms satisfy the following same estimates. Let
\begin{align*}
\Pi(s)\triangleq &\big|(n_{\le \langle s\rangle^{0.01}}\>v_{\le \langle s\rangle^{-10}})(s)\big|+\big|(n_{\le \langle s\rangle^{0.01}}\>v_{\ge \langle s\rangle^{-0.05}})(s)
\big|\\
&+\big|\big(n_{\le \langle s\rangle^{0.01}}\>P_{\langle s\rangle^{-10}\le\cdot \le \langle s\rangle^{-0.05}}N_3\big)(s)\big|
+\big|\big(\pp_s n_{\le \langle s\rangle^{0.01}}\>P_{\langle s\rangle^{-10}\le\cdot \le \langle s\rangle^{-0.05}}\psi\big) (s)\big|.
\end{align*}
Then,
\begin{align}
&
\|\Pi(s)\big\|_{L^1_xL^2_y}
\lesssim \langle s\rangle^{-1.01}Q(\|U\|_X).\label{2129}
\end{align}
Indeed, by  Sobolev' and  Beinstein's inequalities, we have
\begin{align*}
\|(n_{\le \langle s\rangle^{0.01}}\>v_{\le \langle s\rangle^{-10}})\|_{L^1_xL^2_y}
\lesssim& \|n\|_{L^2_{xy}}\|v_{\le \langle s\rangle^{-10}}\|_{L^2_xL^\infty_y}\\
\lesssim& \langle s\rangle^{-4}\|n\|_{L^2_{xy}}\|v\|_{L^2_{xy}}
\lesssim \langle s\rangle^{-4}\|U\|_X^2;\\
\|(n_{\le \langle s\rangle^{0.01}}\>v_{\ge \langle s\rangle^{-0.05}})(s)\|_{L^1_xL^2_y}
\lesssim& \|n\|_{L^2_{xy}}\|v_{\ge \langle s\rangle^{-0.05}}\|_{L^2_xL^\infty_y}\\
\lesssim&\langle s\rangle^{0.05} \|n\|_{L^2_{xy}}\|\nabla v\|_{L^2_{xy}}
\lesssim\langle s\rangle^{-1.2}\|U\|_X^2;
\end{align*}
also, recall $N_3 =\vec u \cdot \nabla \psi$,
\begin{align*}
\big\|\big(n_{\le \langle s\rangle^{0.01}}\>P_{\langle s\rangle^{-10}\le\cdot \le \langle s\rangle^{-0.05}}N_3\big)(s)\big\|_{L^1_xL^2_y}
\lesssim& \|n\|_{L^2_{xy}}\|\vec u\|_{L^2_xL^\infty_y}\|\nabla\psi\|_{L^\infty_{xy}}\\
\lesssim&\langle s\rangle^{-0.25-0.75-0.25+} \|U\|_X^3
\lesssim\langle s\rangle^{-1.2}\|U\|_X^3.
\end{align*}
Since $\pp_t n=-\pp_xu-\pp_yv+N_0$, we have
\begin{align*}
\big\|\pp_s n_{\le \langle s\rangle^{0.01}} &\>P_{\langle s\rangle^{-10}\le\cdot \le \langle s\rangle^{-0.05}}\psi\big\|_{L^1_xL^2_y}
\lesssim
\big\|P_ {\le \langle s\rangle^{0.01}}\pp_s n\big\|_{L^2_{xy}}\big\|P_{\langle s\rangle^{-10}\le\cdot \le \langle s\rangle^{-0.05}}\psi\big\|_{L^2_xL^\infty_y}\\
\lesssim &
\langle s\rangle^{0.01} \big(\|\nabla\cdot\vec u\|_{L^2_{xy}}+\|P_{\le \langle s\rangle^{0.01}}N_0\|_{L^2_{xy}}\big)\big\||\nabla|^{\frac12+}\psi\big\|_{L^2_{xy}}\\
\lesssim &
\langle s\rangle^{0.02} \big(\|\nabla\cdot\vec u\|_{L^2_{xy}}+\|n\|_{L^2_{xy}}\|\vec u\|_{L^\infty_{xy}}\big)\big\||\nabla|^{\frac12+}\psi\big\|_{L^2_{xy}}
\lesssim
\langle s\rangle^{-1.2}(\|U\|_X^2+\|U\|_X^3).
\end{align*}
Thus we have \eqref{2129}. Now from Proposition \ref{lem:Kn3-L} (ii), we have
\begin{align*}
&\big\|\langle\nabla\rangle\big(\eqref{uv-easy1}+\cdots+\eqref{uv-hard-12}\big)\big\|_{L^2_{xy}}\\
\lesssim &
\int_0^t\big\|\nabla\partial_{x}(\partial_{tt}-\Delta\partial_t-\Delta)K(t-s)\,\,P_{\lesssim \langle s\rangle^{0.01}}\langle\nabla\rangle\Pi(s)\|_{L^2_{xy}}\,ds\\
\lesssim &\int_0^t\langle t-s\rangle^{-\frac34}\langle s\rangle^{0.02}\|\Pi(s)\|_{L^1_{x}L^2_y}\,ds\\
\lesssim &
\int_0^t\langle t-s\rangle^{-\frac34}\langle s\rangle^{0.02-1.2}
\,dsQ(\|U\|_X)\\
\lesssim &
\langle t \rangle^{-\frac34}Q(\|U\|_X).
\end{align*}
This estimate concludes  the estimates what we want on $\big\|\eqref{uv-easy1}+\cdots+\eqref{uv-hard-12}\big\|_{L^2_{xy}}$ and
$\big\||\nabla|^\gamma\big(\eqref{uv-easy1}+\cdots+\eqref{uv-hard-12}\big)\big\|_{L^2_{xy}}$.
Similarly, by Proposition \ref{lem:Kn3-L} (iii), we have
\begin{align*}
&\big\|\langle\nabla\rangle\big(\eqref{uv-easy1}+\cdots+\eqref{uv-hard-12}\big)\big\|_{L^\infty_{xy}}\\
\lesssim &
\int_0^t\big\|\partial_{xy}(\partial_{tt}-\Delta\partial_t-\Delta)K(t-s)\,\,P_{\lesssim \langle s\rangle^{0.01}}\langle\nabla\rangle\Pi(s)\|_{L^\infty_{xy}}\,ds\\
\lesssim &\int_0^t\langle t-s\rangle^{-1}\langle s\rangle^{0.03}\|\Pi(s)\|_{L^1_{x}L^2_y}\,ds\\
\lesssim &
\int_0^t\langle t-s\rangle^{-1}\langle s\rangle^{0.03-1.2}
\,dsQ(\|U\|_X)\\
\lesssim &
\langle t \rangle^{-1}Q(\|U\|_X).
\end{align*}
Now by Proposition \ref{lem:Kn8-L} (ii) ($\beta=1$) instead, and by Beinstein's inequality we have
\begin{align*}
&\big\|\langle\nabla\rangle\pp_x\big(\eqref{uv-easy1}+\cdots+\eqref{uv-hard-12}\big)\big\|_{L^2_{xy}}\\
\lesssim &
\int_0^t\big\|\nabla\partial_{xx}(\partial_{tt}-\Delta\partial_t-\Delta)K(t-s)\,\,P_{\lesssim \langle s\rangle^{0.01}}\langle\nabla\rangle\Pi(s)\|_{L^2_{xy}}\,ds\\
\lesssim &\int_0^t\langle t-s\rangle^{-1}\langle s\rangle^{0.03}\|P_{\lesssim \langle s\rangle^{0.01}}\Pi(s)\|_{L^2_{xy}}\,ds\\
\lesssim &\int_0^t\langle t-s\rangle^{-1}\langle s\rangle^{0.04}\|\Pi(s)\|_{L^1_{x}L^2_y}\,ds\\
\lesssim &
\int_0^t\langle t-s\rangle^{-1}\langle s\rangle^{0.04-1.2}
\,dsQ(\|U\|_X)\\
\lesssim &
\langle t \rangle^{-1}Q(\|U\|_X).
\end{align*}

Similar as \eqref{est:otheresayterms}, it is easy to treat the term \eqref{uv-hard-13}, so we omit here.
At last, we consider the term \eqref{uv-hard-11}. Integration by parts, we find
\begin{align}
&\quad\eqref{uv-hard-11}=\pp_{xy}(\partial_{tt}-\Delta\partial_t-\Delta)K(t) P_{\sim 1}\big(P_{\lesssim 1}n_0\>P_{\sim 1}\psi_0\big)\label{uv-hard-11-b}\\
&-\int_0^t\pp_{y}(\partial_{tt}-\Delta\partial_t-\Delta)\pp_tK(t-s) P_{\ge \langle s\rangle^{-0.04}}\pp_x\big(n_{\le \langle s\rangle^{0.01}}\>P_{\langle s\rangle^{-10}\le\cdot \le \langle s\rangle^{-0.05}}\psi\big)(s)\,ds.\label{uv-hard-111}
\end{align}
By Proposition \ref{lem:Kn3-L} (ii) and (iii) and Proposition \ref{lem:Kn8-L} (i), the boundary term \eqref{uv-hard-11-b} can be controlled as following.
For $2\le p\le \infty$,
\begin{align*}
\|\langle\nabla\rangle\eqref{uv-hard-11-b}\|_{L^p_{xy}}
\lesssim &
\langle t \rangle^{-1+\frac1{2p}}
\|\langle\nabla\rangle \big(P_{\lesssim 1}n_0\>P_{\sim 1}\psi_0\big)\|_{L^1_{xy}}
\lesssim
\langle t \rangle^{-1+\frac1{2p}}\|n_0\|_{L^2_{xy}}\|\nabla\psi_0\|_{L^2_{xy}};\\
\|\langle\nabla\rangle\pp_x\eqref{uv-hard-11-b}\|_{L^2_{xy}}
\lesssim &
\langle t \rangle^{-1}
\|\langle\nabla\rangle P_{\lesssim 1}n_0\>P_{\sim 1}\psi_0\|_{L^1_{xy}}
\lesssim
\langle t \rangle^{-1}\|n_0\|_{L^2_{xy}}\|\nabla\psi_0\|_{L^2_{xy}}.
\end{align*}
The first estimate gives the desirable estimates on $\|\eqref{uv-hard-11-b}\|_{L^2_{xy}}, \||\nabla|^\gamma\eqref{uv-hard-11-b}\|_{L^2_{xy}}$ and $\|\eqref{uv-hard-11-b}\|_{L^\infty_{xy}}$.

Now we consider \eqref{uv-hard-111}.
By Proposition \ref{lem:Kn5-L} (i) ($\beta=2$) and \eqref{psi-infty}, we have
\begin{align*}
\big\|\eqref{uv-hard-111}\big\|_{L^2_{xy}}
\lesssim &
\int_0^t\big\|\Delta\pp_t\big(\partial_{tt}-\Delta\partial_t-\Delta)K(t-s) \big(n_{\le \langle s\rangle^{0.01}}\>P_{\langle s\rangle^{-10}\le\cdot \le \langle s\rangle^{-0.05}}\psi\big)(s)\|_{L^2_{xy}}\,ds\\
\lesssim &
\int_0^t\langle t-s\rangle^{-1}\langle s\rangle^{0.01}\|n_{\le \langle s\rangle^{0.01}}\>P_{\langle s\rangle^{-10}\le\cdot \le \langle s\rangle^{-0.05}}\psi\|_{L^2_{xy}}\,ds\\
\lesssim &
\int_0^t\langle t-s\rangle^{-1}\langle s\rangle^{0.01}\|n\|_{L^2_{xy}}\|P_{\langle s\rangle^{-10}\le\cdot \le \langle s\rangle^{-0.05}}\psi\|_{L^\infty_{xy}}
\,ds\\
\lesssim &
\int_0^t\langle t-s\rangle^{-1}\langle s\rangle^{0.01-0.25-0.27}\|U\|_X^2\\
\lesssim &
\langle t \rangle^{-\frac12}\|U\|_X^2.
\end{align*}
Moreover, since $\gamma>\frac12$, by Proposition \ref{lem:Ku6-L} and \eqref{psi-infty}, we have
\begin{align*}
\big\||\nabla|^\gamma\eqref{uv-hard-111}\big\|_{L^2_{xy}}
\lesssim &
\int_0^t\big\||\nabla|^{\gamma+1}\pp_t\big(\partial_{tt}-\Delta\partial_t-\Delta)K(t-s)  \pp_x\big(n_{\le \langle s\rangle^{0.01}}\>P_{\langle s\rangle^{-10}\le\cdot \le \langle s\rangle^{-0.05}}\psi\big)(s)\|_{L^2_{xy}}\,ds\\
\lesssim &
\int_0^t\langle t-s\rangle^{-\frac{1+\gamma}2}\|\pp_x\big(n_{\le \langle s\rangle^{0.01}}\>P_{\langle s\rangle^{-10}\le\cdot \le \langle s\rangle^{-0.05}}\psi\big)\|_{L^2_{xy}}\,ds\\
\lesssim &
\int_0^t\langle t-s\rangle^{-\frac{1+\gamma}2}\big(\|\pp_xn\|_{L^2_{xy}}\|P_{\langle s\rangle^{-10}\le\cdot \le \langle s\rangle^{-0.05}}\psi\|_{L^\infty_{xy}}+\|n\|_{L^\infty_{xy}}\|\pp_x\psi\|_{L^2_{xy}}\big)
\,ds\\
\lesssim &
\int_0^t\langle t-s\rangle^{-\frac{1+\gamma}2}\langle s\rangle^{-1}\,ds\|U\|_X^2\\
\lesssim &
\langle t \rangle^{-\frac34}\|U\|_X^2.
\end{align*}
For $\big\|\langle\nabla\rangle\eqref{uv-hard-111}\big\|_{L^\infty_{xy}}$,  we need some special treatment (which will be used frequently below). By Beinstein's inequality and Proposition \ref{lem:Kn5-L} (iv), we have
\begin{align*}
\big\|\langle\nabla\rangle\eqref{uv-hard-111}\big\|_{L^\infty_{xy}}
\lesssim &
\int_0^t\big\|P_{\ge \langle s\rangle^{-0.04}}\pp_y \pp_t\big(\partial_{tt}-\Delta\partial_t-\Delta)K(t-s)\,\,  \pp_x\langle\nabla\rangle\big(n_{\le \langle s\rangle^{0.01}}\>P_{\langle s\rangle^{-10}\le\cdot \le \langle s\rangle^{-0.05}}\psi\big)(s)\|_{L^\infty_{xy}}\,ds\\
\lesssim &
\int_0^t\big\|\Delta \pp_t\big(\partial_{tt}-\Delta\partial_t-\Delta)K(t-s)\,\, P_{\ge \langle s\rangle^{-0.04}} \frac{\pp_y\langle\nabla\rangle}{-\Delta}\pp_x\big(n_{\le \langle s\rangle^{0.01}}\>P_{\langle s\rangle^{-10}\le\cdot \le \langle s\rangle^{-0.05}}\psi\big)(s)\|_{L^\infty_{xy}}\,ds\\
\lesssim &
\int_0^t\langle t-s\rangle^{-1}\Big\|P_{\ge \langle s\rangle^{-0.04}} \frac{\pp_y\langle\nabla\rangle}{-\Delta}\pp_x\big(n_{\langle s\rangle^{-0.04}\lesssim \cdot\le \langle s\rangle^{0.01}}
  \>P_{\langle s\rangle^{-10}\le\cdot \le \langle s\rangle^{-0.05}}\psi\big)\Big\|_{L^\infty_{xy}}\,ds.
\end{align*}
Now we claim that
\begin{align}
\Big\|P_{\ge \langle s\rangle^{-0.04}} \frac{\pp_y\langle\nabla\rangle}{-\Delta}\pp_x\big(n_{\langle s\rangle^{-0.04}\lesssim \cdot\le \langle s\rangle^{0.01}}
  \>P_{\langle s\rangle^{-10}\le\cdot \le \langle s\rangle^{-0.05}}\psi\big)\Big\|_{L^\infty_{xy}}
\lesssim
\langle t\rangle^{-1.01}\|U\|_X^2.\label{90}
\end{align}
Indeed, by Littlewood-Paley's decomposition and \eqref{psi-infty},
\begin{align*}
&\Big\|P_{\ge \langle s\rangle^{-0.04}} \frac{\pp_y\langle\nabla\rangle}{-\Delta}\pp_x\big(n_{\langle s\rangle^{-0.04}\lesssim \cdot\le \langle s\rangle^{0.01}}
  \>P_{\langle s\rangle^{-10}\le\cdot \le \langle s\rangle^{-0.05}}\psi\big)\Big\|_{L^\infty_{xy}}\\
\lesssim &
\sum\limits_{\langle s\rangle^{-0.04}\le N\le1} N^{-1}\big\|\pp_x\big(n_N
  \>P_{\langle s\rangle^{-10}\le\cdot \le \langle s\rangle^{-0.05}}\psi\big)\big\|_{L^\infty_{xy}}
+\big\|P_{\ge1}\langle \nabla\rangle^{0+}\pp_x\big(n_{1\le \cdot\le \langle s\rangle^{0.01}}
  \>P_{\langle s\rangle^{-10}\le\cdot \le \langle s\rangle^{-0.05}}\psi\big)\big\|_{L^\infty_{xy}}\\
\lesssim &
\sum\limits_{\langle s\rangle^{-0.04}\le N\le1} N^{-1}\big(\|\partial_xn_N\|_{L^\infty_{xy}}\|P_{\langle s\rangle^{-10}\le\cdot \le \langle s\rangle^{-0.05}}\psi\|_{L^\infty_{xy}}+\|n_N\|_{L^\infty_{xy}}\|P_{\langle s\rangle^{-10}\le\cdot \le \langle s\rangle^{-0.05}}\partial_x\psi\|_{L^\infty_{xy}}\big)\\
&+\big(\|\langle \nabla\rangle^{0+}\partial_xn_{\le \langle s\rangle^{0.01}}\|_{L^\infty_{xy}}\|P_{\langle s\rangle^{-10}\le\cdot \le \langle s\rangle^{-0.05}}\psi\|_{L^\infty_{xy}}+\|\langle \nabla\rangle^{0+}n_{\le \langle s\rangle^{0.01}}\|_{L^\infty_{xy}}\|P_{\langle s\rangle^{-10}\le\cdot \le \langle s\rangle^{-0.05}}\partial_x\psi\|_{L^\infty_{xy}}\big)\\
\lesssim &
\sum\limits_{\langle s\rangle^{-0.04}\le N\le1} N^{0-}\|\partial_xn\|_{L^2_{xy}}\|P_{\langle s\rangle^{-10}\le\cdot \le \langle s\rangle^{-0.05}}\psi\|_{L^\infty_{xy}}+N^{-1}\|n\|_{L^\infty_{xy}}\|\partial_x\psi\|_{L^2_{xy}}^{\frac12+}\|\nabla \partial_x\psi\|_{L^2_{xy}}^{\frac12-}\big)\\
&+\langle s\rangle^{0+}\big(\|\langle \nabla\rangle\partial_xn\|_{L^2_{xy}}\|P_{\langle s\rangle^{-10}\le\cdot \le \langle s\rangle^{-0.05}}\psi\|_{L^\infty_{xy}}+\langle s\rangle^{-0.02}\|n\|_{L^\infty_{xy}}\|\partial_x\psi\|_{L^2_{xy}}\big)\\
\lesssim &
%\langle s\rangle^{-0.75-0.27+}+\langle s\rangle^{-0.5-0.25-0.375-0.04+}+\langle s\rangle^{-0.75-0.27+}+\langle s\rangle^{-0.5-0.5-0.02+}
%\lesssim &
\langle t\rangle^{-1.01}\|U\|_X^2.
\end{align*}
This gives \eqref{90}. Using \eqref{90}, we further obtain that
\begin{align*}
\big\|\langle\nabla\rangle\eqref{uv-hard-111}\big\|_{L^\infty_{xy}}
\lesssim &
\int_0^t\langle t-s\rangle^{-1}\langle s\rangle^{-1.01}\,ds\|U\|_X^2\\
\lesssim &
\langle t \rangle^{-1}\|U\|_X^2.
\end{align*}

For $\big\|\langle\nabla\rangle\pp_x\eqref{uv-hard-111}\big\|_{L^2_{xy}}$, similarly, by  Proposition \ref{lem:Ku6-L} instead and the same treatment to $\big\|\langle\nabla\rangle\eqref{uv-hard-111}\big\|_{L^\infty_{xy}}$, we also  have
\begin{align*}
\big\|\langle\nabla\rangle\pp_x\eqref{uv-hard-111}\big\|_{L^2_{xy}}
\lesssim &
\langle t \rangle^{-1}\|U\|_X^2.
\end{align*}
Collecting the estimates obtained above yields the claimed estimates in this subsubsection.

\subsubsection{$\int_0^t\partial_t(\pp_{tt}-\Delta\pp_t-\Delta-\pp_{yy})K(t-s)N_1(s)\,ds$}\label{sec:Nu-N1}
This is the most troubled term in  this section.
Indeed, we take one of the terms in $N_1$, $\pp_x\psi\Delta\psi$ for example. We have the estimate
$$
\|\pp_x\psi\Delta\psi\|_{L^1_{xy}}\le \|\pp_x\psi\|_{L^2_{xy}}\|\Delta\psi\|_{L^2_{xy}}\lesssim \langle s\rangle^{-\frac34}\|U\|_X^2.
$$
But this is far from integrable, even weaker than the term in \ref{sec:n-N0-2}. Therefore, some special techniques which heavily depends on the structure and the Fourier analysis, shall be employed to deal with it.
The main estimates we state in  this subsubsection are the following
\begin{align*}
\big\|\int_0^t\partial_t(\pp_{tt}-\Delta\pp_t-\Delta-\pp_{yy})K(t-s)N_1(s)\,ds\big\|_{L^2_{xy}}
\lesssim &
\langle t\rangle^{-\frac12}Q(\|U\|_X);\\
\big\|\langle\nabla\rangle\int_0^t\partial_t(\pp_{tt}-\Delta\pp_t-\Delta-\pp_{yy})K(t-s)N_1(s)\,ds\big\|_{L^\infty_{xy}}
\lesssim &
\langle t\rangle^{-1}Q(\|U\|_X);\\
\big\||\nabla|^{\gamma}\int_0^t\partial_t(\pp_{tt}-\Delta\pp_t-\Delta-\pp_{yy})K(t-s)N_1(s)\,ds\big\|_{L^2_{xy}}
\lesssim &
\langle t\rangle^{-\frac34}Q(\|U\|_X);\\
\big\|\langle\nabla\rangle\pp_x\int_0^t\partial_t(\pp_{tt}-\Delta\pp_t-\Delta-\pp_{yy})K(t-s)N_1(s)\,ds\big\|_{L^2_{xy}}
\lesssim &
\langle t\rangle^{-1}Q(\|U\|_X).
\end{align*}

According to the frequency, we also split $N_1$ into $N_1^l$ and $N_2^h$ pieces.
The estimates on the high frequency piece $N_1^h$ is standard and we omit the details.
Now we continue to estimate the low frequency piece $N_1^l$. However, one may find that direct estimate fails. To overcome the difficulties,
firstly, by using the first equation in \eqref{e2.1}, we rewrite $N_1$ as
\begin{align*}
N_1=&- (u\pp_x u+v\pp_y u)-n\big[\Delta u+\lambda(\partial_{xx} u+\partial_{xy}v)-\pp_xn\big]- 2n\pp_x n-\pp_x\psi\Delta\psi\\
 &\quad+\frac{n^2\Delta u+n^2\lambda(\partial_{xx} u+\partial_{xy}v)}{\rho} +\frac{n\pp_x\psi\Delta\psi}{\rho}\\
=&-n\big(\pp_tu-N_1\big)-\nabla\cdot(\pp_x\psi\nabla\psi)+\pp_x\big(\frac12|\nabla\psi|^2-n^2\big)\\
 &\quad- (u\pp_x u+v\pp_y u)+\frac{n^2\Delta u+n^2\lambda(\partial_{xx} u+\partial_{xy}v)}{\rho} +\frac{n\pp_x\psi\Delta\psi}{\rho}\\
=&-\pp_t(nu)+\pp_tn\>u-\nabla\cdot(\pp_x\psi\nabla\psi)+\pp_x\big(\frac12|\nabla\psi|^2-n^2\big)\\
 &\quad+nN_1- (u\pp_x u+v\pp_y u)+\frac{n^2\Delta u+n^2\lambda(\partial_{xx} u+\partial_{xy}v)}{\rho} +\frac{n\pp_x\psi\Delta\psi}{\rho},
\end{align*}
Let
\begin{equation}\label{dec:N1}
\aligned
N_{13}=&\pp_tn\>u+nN_1- (u\pp_x u+v\pp_y u)+\frac{n^2\Delta u+n^2\lambda(\partial_{xx} u+\partial_{xy}v)}{\rho} +\frac{n\pp_x\psi\Delta\psi}{\rho};\\
N_{14}=&-\pp_t(nu);\quad N_{15}=-\nabla\cdot(\pp_x\psi\nabla\psi);\quad N_{16}=\pp_x\big(\frac12|\nabla\psi|^2-n^2\big).
\endaligned\end{equation}
and write $N_1^l=N_{13}^l+\cdots+N_{16}^l$.
Similar as Lemma \ref{lem:N11-l}, we have
\begin{align}
\|N_{13}^l\|_{L^1_{xy}}\lesssim \langle s\rangle^{-1.2}Q(\|U\|_X).\label{1044}
\end{align}
\begin{rem}
Note that
$$
\pp_{tt}-\Delta\pp_t-\Delta-\pp_{yy}=\big(\pp_{tt}-\Delta\pp_t-\Delta\big)-\pp_{yy},
$$
and $\pp_{tt}+A^2\pp_t+A^2$ has almost the same estimates as $A|\eta|$, so one may regard $\pp_{tt}-\Delta\pp_t-\Delta-\pp_{yy}$ as $\pp_{tt}-\Delta\pp_t-\Delta$
with no differences. Thus the linear estimates concerned $\pp_{tt}-\Delta\pp_t-\Delta$ also hold for $\pp_{tt}-\Delta\pp_t-\Delta-\pp_{yy}$.  Accordingly, we will not mession the difference below.
\end{rem}
Hence by Proposition \ref{lem:Kn5-L} (i) ($\beta=0$) and (ii), and \eqref{1044}, for $p=2,\infty$ we have
\begin{align*}
&\big\|\langle\nabla\rangle\int_0^t\partial_t(\pp_{tt}-\Delta\pp_t-\Delta-\pp_{yy})K(t-s)N_{13}^l(s)\,ds\big\|_{L^p_{xy}}\\
\lesssim &
\int_0^t\big\|\partial_t(\pp_{tt}-\Delta\pp_t-\Delta-\pp_{yy})K(t-s)\langle\nabla\rangle N_{13}^l(s)\big\|_{L^p_{xy}}\,ds\\
\lesssim &
\int_0^t\langle t-s\rangle^{-1+\frac1{p}}\|\langle\nabla\rangle N_{13}^l\|_{L^1_{xy}}\,ds\\
\lesssim &
\int_0^t\langle t-s\rangle^{-1+\frac1{p}}\langle s\rangle^{-1.2+0.01}\,dsQ(\|U\|_X)\\
\lesssim &
\langle t \rangle^{-1+\frac1{p}}Q(\|U\|_X).
\end{align*}
By using Proposition \ref{lem:Kn5-L} (i) ($\beta=\gamma, 1$)  instead, we also get the following estimates,
\begin{align*}
\big\||\nabla|^\gamma\int_0^t\partial_t(\pp_{tt}-\Delta\pp_t-\Delta-\pp_{yy})K(t-s)N_{13}^l(s)\,ds\big\|_{L^2_{xy}}
\lesssim &
\langle t \rangle^{-\frac34}Q(\|U\|_X);\\
\big\|\langle\nabla\rangle\pp_x\int_0^t\partial_t(\pp_{tt}-\Delta\pp_t-\Delta-\pp_{yy})K(t-s)N_{13}^l(s)\,ds\big\|_{L^2_{xy}}
\lesssim &
\langle t \rangle^{-1}Q(\|U\|_X).
\end{align*}

For the piece $N_{14}$, we integrate by parts,  to get
\begin{align}
&-\int_0^t\partial_t(\pp_{tt}-\Delta\pp_t-\Delta-\pp_{yy})K(t-s)\pp_s (nu)(s)\,ds\notag\\
=&
\partial_t(\pp_{tt}-\Delta\pp_t-\Delta-\pp_{yy})K(t)\big(n_0u_0\big)\label{0323-1}\\
&\quad -\int_0^t\partial_{tt}(\pp_{tt}-\Delta\pp_t-\Delta-\pp_{yy})K(t-s)(nu)(s)\,ds.\label{0323-2}
\end{align}
For \eqref{0323-1},  as in Proposition \ref{lem:Ku3-L}, we have
\begin{align*}
\big\|\eqref{0323-1}\big\|_{L^2_{xy}}
\lesssim
\langle t \rangle^{-\frac12}Q(\|U_0\|_{X_0});&\quad
\big\|\langle\nabla\rangle\eqref{0323-1}\big\|_{L^\infty_{xy}}
\lesssim
\langle t \rangle^{-1}Q(\|U_0\|_{X_0});\\
\big\||\nabla|^\gamma\eqref{0323-1}\big\|_{L^2_{xy}}
\lesssim
\langle t \rangle^{-\frac34}Q(\|U_0\|_{X_0});&\quad
\big\|\langle\nabla\rangle\pp_x\eqref{0323-1}\big\|_{L^2_{xy}}
\lesssim
\langle t \rangle^{-1}Q(\|U_0\|_{X_0}).
\end{align*}
For \eqref{0323-2}, we find
\begin{align}
\|(nu)(s)\|_{L^2_{xy}}\lesssim \|n(s)\|_{L^2_{xy}}\|u(s)\|_{L^\infty_{xy}}
\lesssim \langle s\rangle^{-1.25}Q(\|U\|_X).\label{1045}
\end{align}
Now by \eqref{1045} and Proposition \ref{lem:Kn9-L} (i), for $\beta=0,\gamma,1$, we have
\begin{align*}
&\big\|\langle\nabla\rangle|\na|^\beta\eqref{0323-2}\big\|_{L^2_{xy}}\\
\lesssim &
\int_0^t\big\|\langle \nabla \rangle |\nabla|^\beta\partial_{tt}(\pp_{tt}-\Delta\pp_t-\Delta-\pp_{yy})K(t-s)(nu)(s)\|_{L^2_{xy}}\,ds\\
\lesssim &
\int_0^t\langle t-s\rangle^{-\frac12-\frac\beta2}\|(nu)(s)\|_{L^2_{xy}}\,ds\\
\lesssim &
\langle t \rangle^{-\frac12-\frac\beta2}Q(\|U\|_X).
\end{align*}
This gives the desired estimates of $\big\|\eqref{0323-2}\big\|_{L^2_{xy}}, \big\||\na|^\gamma\eqref{0323-2}\big\|_{L^2_{xy}},\big\|\langle\nabla\rangle\pp_x\eqref{0323-2}\big\|_{L^2_{xy}}$. Furthermore, from  Proposition \ref{lem:Kn9-L} (ii),
\begin{align*}
\big\|\langle\nabla\rangle\eqref{0323-2}\big\|_{L^\infty_{xy}}
\lesssim &
\int_0^t\big\|\partial_{tt}(\pp_{tt}-\Delta\pp_t-\Delta-\pp_{yy})K(t-s)\langle\nabla\rangle(nu)(s)\|_{L^2_{xy}}\,ds\\
\lesssim &
\int_0^t\langle t-s\rangle^{-1} \big\|P_{\lesssim \langle s \rangle^{0.01}}\langle\nabla\rangle (nu)(s)\big\|_{L^2_{xy}}\,ds\\
\lesssim &
\int_0^t\langle t-s\rangle^{-1}\langle s\rangle^{-1.25+0.02}\,dsQ(\|U\|_X)\\
\lesssim &
\langle t \rangle^{-1}Q(\|U\|_X).
\end{align*}
Thus we finish the estimates on the piece $N_{14}$.

Now we continue to consider the piece $N_{15}$.
First we write
\begin{align*}
\pp_x\psi\nabla\psi=&\pp_x\psi_{\ge \langle s\rangle^{-0.05}}\>\nabla\psi+\pp_x\psi_{\le \langle s\rangle^{-0.05}}\>\nabla \psi_{\le \langle s\rangle^{-0.04}}\\
&+P_{\gtrsim \langle s\rangle^{-0.04}}\big(\pp_x\psi_{\le \langle s\rangle^{-0.05}}\>\nabla \psi_{\ge \langle s\rangle^{-0.04}}\big),
\end{align*}
and thus
\begin{align}
&\int_0^t\partial_t(\pp_{tt}-\Delta\pp_t-\Delta-\pp_{yy})K(t-s)N_{15}(s)\,ds\notag\\
=&
\int_0^t\partial_t\nabla(\pp_{tt}-\Delta\pp_t-\Delta-\pp_{yy})K(t-s)\cdot\Big(\pp_x\psi_{\ge \langle s\rangle^{-0.05}}\>\nabla\psi+\pp_x\psi_{\le \langle s\rangle^{-0.05}}\>\nabla \psi_{\le \langle s\rangle^{-0.04}}\Big)\,ds\label{1510-1}\\
& +\int_0^t\partial_t\nabla(\pp_{tt}-\Delta\pp_t-\Delta-\pp_{yy})K(t-s)\cdot P_{\gtrsim \langle s\rangle^{-0.04}}\big(\pp_x\psi_{\le \langle s\rangle^{-0.05}}\>\nabla \psi_{\ge \langle s\rangle^{-0.04}}\big)(s)\,ds.\label{1510-2}
\end{align}
Since the first two parts have the estimates
\begin{align*}
&\big\|\pp_x\psi_{\ge \langle s\rangle^{-0.05}}\>\nabla\psi\big\|_{L^2_{xy}}\\
\lesssim &
\big\|\pp_x\psi_{\ge \langle s\rangle^{-0.05}}\big\|_{L^2_{xy}}\big\|\nabla\psi\big\|_{L^\infty_{xy}}
\lesssim
\langle s \rangle^{0.05}\big\|\pp_x\nabla \psi\big\|_{L^2_{xy}}\big\|\nabla\psi\big\|_{L^\infty_{xy}}\\
\lesssim &
\langle s \rangle^{0.05-0.75-0.5}\|U\|_X^2
\lesssim
\langle s \rangle^{-1.2}\|U\|_X^2;\\
&\big\|\pp_x\psi_{\le \langle s\rangle^{-0.05}}\>\nabla \psi_{\le \langle s\rangle^{-0.04}}\big\|_{L^2_{xy}}\\
\lesssim &
\big\|\pp_x\psi\big\|_{L^2_{xy}}\big\|\nabla \psi_{\le \langle s\rangle^{-0.04}}\big\|_{L^\infty_{xy}}
\lesssim
\langle s \rangle^{-0.03+}\big\|\pp_x \psi\big\|_{L^2_{xy}}\big\||\nabla|^{\frac14+}\psi\big\|_{L^\infty_{xy}}\\
\lesssim &
\langle s \rangle^{-0.03-0.5-0.5+}\|U\|_X^2
\lesssim
\langle s \rangle^{-1.02}\|U\|_X^2,
\end{align*}
we can treat them together. Therefore, by Proposition \ref{lem:Kn5-L} (i) ($\beta=1$) and (iii), for $p=2, \infty$, we have
\begin{align*}
\big\|\langle\nabla\rangle \eqref{1510-1}\big\|_{L^p_{xy}}
\lesssim &
\int_0^t\Big\|\partial_t\nabla (\pp_{tt}-\Delta\pp_t-\Delta-\pp_{yy})K(t-s)\\
&\qquad\cdot\langle\nabla\rangle\Big(\pp_x\psi_{\ge \langle s\rangle^{-0.05}}\>\nabla\psi+\pp_x\psi_{\le \langle s\rangle^{-0.05}}\>\nabla \psi_{\le \langle s\rangle^{-0.04}}\Big)(s)\Big\|_{L^p_{xy}}\,ds\\
\lesssim &
\int_0^t\langle t-s\rangle^{-1+\frac1p}\Big\|\langle\nabla\rangle^{1+}\big(\pp_x\psi_{\ge \langle s\rangle^{-0.05}}\>\nabla\psi+\pp_x\psi_{\le \langle s\rangle^{-0.05}}\>\nabla \psi_{\le \langle s\rangle^{-0.04}}\big)\Big\|_{L^2_{xy}}\,ds\\
\lesssim &
\int_0^t\langle t-s\rangle^{-1+\frac1p}\langle s \rangle^{0.01-1.02+}\|U\|_X^2
\,ds\\
\lesssim &
\langle t\rangle^{-1+\frac1p}\|U\|_X^2.
\end{align*}
By the same way, and using Proposition \ref{lem:Kn5-L} (i) ($\beta=1+\gamma,2$) instead, we also get
\begin{align*}
\big\||\nabla|^\gamma\eqref{1510-1}\big\|_{L^2_{xy}}
\lesssim &
\langle t\rangle^{-\frac34}\|U\|_X^2;\quad
\big\|\langle\nabla\rangle\pp_x\eqref{1510-1}\big\|_{L^2_{xy}}
\lesssim
\langle t\rangle^{-1}\|U\|_X^2.
\end{align*}
For \eqref{1510-2}, we use the argument to treat $\big\|\langle\nabla\rangle \eqref{uv-hard-111}\big\|_{L^\infty_{xy}}$ before. Indeed, by Proposition \ref{lem:Kn5-L} (i) ($\beta=2$) and (iv),
%for any $\beta\in [0,1]$,
\begin{align*}
\big\|\langle\nabla\rangle\eqref{1510-2}\big\|_{L^p_{xy}}
\lesssim &
\int_0^t\big\|\Delta\partial_t(\pp_{tt}-\Delta\pp_t-\Delta-\pp_{yy})K(t-s)\\
&\qquad\qquad P_{\gtrsim \langle s\rangle^{-0.04}}\frac{\langle \nabla\rangle\nabla}{-\Delta}\cdot \big(\pp_x\psi_{\le \langle s\rangle^{-0.05}}\>\nabla \psi_{\ge \langle s\rangle^{-0.04}}\big)(s)\big\|_{L^p_{xy}}\\
\lesssim &
\int_0^t\langle t-s\rangle^{-1}\big\|P_{\gtrsim \langle s\rangle^{-0.04}}\frac{\langle \nabla\rangle\nabla}{-\Delta}\cdot \big(\pp_x\psi_{\le \langle s\rangle^{-0.05}}\>\nabla \psi_{\ge \langle s\rangle^{-0.04}}\big)(s)\big\|_{L^p_{xy}}.
\end{align*}
Similar as \eqref{90}, we have
\begin{align*}
\big\|P_{\gtrsim \langle s\rangle^{-0.04}}\frac{\langle \nabla\rangle\nabla}{-\Delta}\cdot \big(\pp_x\psi_{\le \langle s\rangle^{-0.05}}\>\nabla \psi_{\ge \langle s\rangle^{-0.04}}\big)(s)\big\|_{L^p_{xy}}\,ds
\lesssim &
\langle s\rangle^{0.05}
\big\|\big(\pp_x\psi_{\le \langle s\rangle^{-0.05}}\>\nabla \psi_{\ge \langle s\rangle^{-0.04}}\big)(s)\big\|_{L^p_{xy}}\,ds,
\end{align*}
and thus it is less than $ \langle s\rangle^{-0.95}\|U\|_X^2$ when $p=2$; less than  $ \langle s\rangle^{-1.05}\|U\|_X^2$ when $p=\infty$. Hence, we obtain that
$$
\big\|\eqref{1510-2}\big\|_{L^2_{xy}}\lesssim \langle t\rangle^{-\frac12}\|U\|_X^2;\quad
\big\||\nabla|^\gamma\eqref{1510-2}\big\|_{L^2_{xy}}\lesssim \langle t\rangle^{-\frac34}\|U\|_X^2;\quad
\big\|\langle\nabla\rangle\eqref{1510-2}\big\|_{L^\infty_{xy}}\lesssim \langle t\rangle^{-1}\|U\|_X^2.
$$
For $\big\|\langle\nabla\rangle\pp_x\eqref{1510-2}\big\|_{L^2_{xy}}$, we use Proposition \ref{lem:Ku6-L} instead, to get
$$
\big\|\langle\nabla\rangle\pp_x\eqref{1510-2}\big\|_{L^2_{xy}}\lesssim \langle t\rangle^{-1}\|U\|_X^2.
$$
Thus we finish the estimates on the piece $N_{15}$.

Now we consider the estimates on the piece $N_{16}$. To do this, we swap the places of the operators $\pp_t$ and $\pp_x$ via integration by parts, and get
\begin{align}
&\int_0^t\partial_t(\pp_{tt}-\Delta\pp_t-\Delta-\pp_{yy})K(t-s)\pp_x\big(\frac12|\nabla\psi|^2-n^2\big)(s)\,ds\notag\\
=&
-\pp_x(\pp_{tt}-\Delta\pp_t-\Delta-\pp_{yy})K(t)\big(\frac12|\nabla\psi_0|^2-n_0^2\big)\label{1627-1}\\
&\quad +\int_0^t\partial_{x}(\pp_{tt}-\Delta\pp_t-\Delta-\pp_{yy})K(t-s)\pp_s\big(\frac12|\nabla\psi|^2-n^2\big)(s)\,ds.\label{1627-2}
\end{align}
For \eqref{1627-1}, as in Proposition \ref{lem:Ku3-L}, we have
\begin{align*}
\big\|\eqref{1627-1}\big\|_{L^2_{xy}}
\lesssim
\langle t \rangle^{-\frac12}Q(\|U_0\|_{X_0});&\quad
\big\|\eqref{1627-1}\big\|_{L^\infty_{xy}}
\lesssim
\langle t \rangle^{-1}Q(\|U_0\|_{X_0});\\
\big\||\nabla|^\gamma\eqref{1627-1}\big\|_{L^2_{xy}}
\lesssim
\langle t \rangle^{-\frac34}Q(\|U_0\|_{X_0});&\quad
\big\|\pp_x\eqref{1627-1}\big\|_{L^2_{xy}}
\lesssim
\langle t \rangle^{-1}Q(\|U_0\|_{X_0}).
\end{align*}
For \eqref{1627-2}, by the first and fourth equations in \eqref{e2.1}, we have
\begin{align*}
\pp_t\big(\frac12|\nabla\psi|^2-n^2\big)
= &
\nabla\psi\cdot (-\nabla v+\nabla N_3)-2n(-\nabla\cdot \vec u+N_0)\\
= &
-\nabla\psi\cdot\nabla v+2n\nabla\cdot \vec u+\nabla\psi\cdot \nabla N_3-2nN_0.
\end{align*}
Thus similar as Lemma \ref{lem:N11-l}, and using interpolation, we have
\begin{align}
\|\langle \nabla\rangle^3\pp_s\big(\frac12|\nabla\psi|^2-n^2\big)\|_{L^1_{xy}}\lesssim \langle s\rangle^{-1.1}Q(\|U\|_X).\label{1658}
\end{align}
Then by Proposition \ref{lem:Kn3-L} (i), we have
\begin{align*}
\big\|\langle\nabla\rangle\eqref{1627-2}\big\|_{L^p_{xy}}
\lesssim &
\int_0^t\Big\|\partial_{x}(\pp_{tt}-\Delta\pp_t-\Delta-\pp_{yy})K(t-s)\,\,\langle\nabla\rangle\pp_s\big(\frac12|\nabla\psi|^2-n^2\big)(s)\Big\|_{L^p_{xy}}\,ds\\
\lesssim &
\int_0^t\langle t-s\rangle^{-1+\frac1p}\big\| \langle \nabla\rangle^3\pp_s\big(\frac12|\nabla\psi|^2-n^2\big)\big\|_{L^1_{xy}}\,ds\\
\lesssim &
\int_0^t\langle t-s\rangle^{-1+\frac1p}\langle s\rangle^{-1.1}
\,ds\|U\|_X^2\\
\lesssim &
\langle t \rangle^{-1+\frac1p}\|U\|_X^2.
\end{align*}
By using Proposition \ref{lem:Kn3-L} (iii) ($\beta=\gamma$) instead, we obtain
\begin{align*}
\big\||\na|^\gamma\eqref{1627-2}\big\|_{L^2_{xy}}
\lesssim
\langle t \rangle^{-\frac34}\|U\|_X^2.
\end{align*}
While by using Proposition \ref{lem:Kn8-L} (i) instead, we obtain
\begin{align*}
\big\|\langle\nabla\rangle\pp_x\eqref{1627-2}\big\|_{L^2_{xy}}
\lesssim
\langle t \rangle^{-1}\|U\|_X^2.
\end{align*}
Thus we finish the estimates on the piece $N_{16}$.
Now collecting the estimates obtained above, we finish the proof of the claimed estimates in this subsubsection.

\subsubsection{$\int_0^t\partial_{xy}\pp_t K(t-s)N_2(s)\,ds$}\label{sec:Nu-N2}
This term is in the same level as $\int_0^t\partial_t(\pp_{tt}-\Delta\pp_t-\Delta-\pp_{yy})K(t-s)N_1(s)\,ds$,
so we just give the sketch of proof. Indeed, similar as above, we rewrite $N_2$ as
\begin{align*}
N_2=&- (u\pp_x v+v\pp_y v)-n\big[\Delta v+\lambda(\partial_{xy} u+\partial_{yy}v)-\Delta\psi-\pp_yn\big]- 2n\pp_y n-\pp_y\psi\Delta\psi\\
 &\quad+\frac{n^2\Delta v+n^2\lambda(\partial_{xy} u+\partial_{yy}v)-n^2\Delta\psi}{\rho} +\frac{n\pp_y\psi\Delta\psi}{\rho}\\
=&-n\big(\pp_tv-N_2\big)-\nabla\cdot(\pp_y\psi\nabla\psi)+\pp_y\big(\frac12|\nabla\psi|^2-n^2\big)\\
 &\quad- (u\pp_x v+v\pp_y v)+\frac{n^2\Delta v+n^2\lambda(\partial_{xy} u+\partial_{yy}v)-n^2\Delta\psi}{\rho} +\frac{n\pp_y\psi\Delta\psi}{\rho}\\
=&-\pp_t(nv)+\pp_tn\>v-\nabla\cdot(\pp_y\psi\nabla\psi)+\pp_y\big(\frac12|\nabla\psi|^2-n^2\big)\\
 &\quad+nN_2- (u\pp_x v+v\pp_y v)+\frac{n^2\Delta v+n^2\lambda(\partial_{xy} u+\partial_{yy}v)-n^2\Delta\psi}{\rho} +\frac{n\pp_y\psi\Delta\psi}{\rho},
\end{align*}
Let
\begin{equation}\label{dec:N2}
\aligned
N_{23}=&\pp_tn\>v+nN_2- (u\pp_x v+v\pp_y v)+\frac{n^2\Delta v+n^2\lambda(\partial_{xy} u+\partial_{yy}v)-n^2\Delta\psi}{\rho} +\frac{n\pp_y\psi\Delta\psi}{\rho};\\
N_{24}=&-\pp_t(nv);\quad N_{25}=-\nabla\cdot(\pp_y\psi\nabla\psi)+\pp_y\big(\frac12|\nabla\psi|^2-n^2\big).
\endaligned\end{equation}
By the same arguments to treat the piece $N_{13}$ and $N_{14}$ respectively as above, we obtain the disable estimates on $N_{23}$ and $N_{24}$.
Moreover, the piece $N_{25}$ is similar as $N_{16}$. Indeed, we change the places between $\pp_x$ and $\nabla$, then
\begin{align*}
&\int_0^t\partial_{xy}\pp_t K(t-s)N_{25}(s)\,ds\\
=&
\int_0^t\partial_{y}\nabla\cdot\pp_t K(t-s)\pp_x(\pp_y\psi\nabla\psi)(s)\,ds
+\int_0^t\partial_{yy}\pp_t K(t-s)\pp_x\big(\frac12|\nabla\psi|^2-n^2\big)(s)\,ds.
\end{align*}
Since the operator $\partial_{xy}\nabla K(t)$ obeys the same decaying estimates as $\partial_{x}(\pp_t-\Delta\pp_t-\Delta-\pp_{yy})\nabla K(t)$,
we can obtain the disable estimates by the way to treat  $N_{16}$. Therefore, we get that
\begin{align*}
\big\|\int_0^t\partial_{xy}\pp_t K(t-s)N_2(s)\,ds\big\|_{L^2_{xy}}
\lesssim &
\langle t\rangle^{-\frac12}Q(\|U\|_X);\\
\big\|\langle\nabla\rangle\int_0^t\partial_{xy}\pp_t K(t-s)N_2(s)\,ds\big\|_{L^\infty_{xy}}
\lesssim &
\langle t\rangle^{-1}Q(\|U\|_X);\\
\big\||\nabla|^{\gamma}\int_0^t\partial_{xy}\pp_t K(t-s)N_2(s)\,ds\big\|_{L^2_{xy}}
\lesssim &
\langle t\rangle^{-\frac34}Q(\|U\|_X);\\
\big\|\langle\nabla\rangle\pp_x\int_0^t\partial_{xy}\pp_t K(t-s)N_2(s)\,ds\big\|_{L^2_{xy}}
\lesssim &
\langle t\rangle^{-1}Q(\|U\|_X).
\end{align*}

\subsubsection{$\int_0^t\partial_{xy}\Delta K(t-s)N_3(s)\,ds$} The same reason as in Section \ref{sec:n-N3}, we find
$$
\partial_{xy}\nabla\sim \pp_x(\pp_t-\Delta\pp_t-\Delta),\quad \mbox{ and } \nabla N_3\sim N_0.
$$
Therefore, the  term $\int_0^t\partial_{xy}\Delta K(t-s)N_3(s)\,ds$ can be treated by the similar way as
$\int_0^t \pp_x(\partial_{tt}-\Delta\partial_t-\Delta)K(t-s)N_0(s)\,ds$, and thus we get the estimates (the details are omitted here)
\begin{align*}
\big\|\int_0^t\partial_{xy}\Delta K(t-s)N_3(s)\,ds\big\|_{L^2_{xy}}
\lesssim &
\langle t\rangle^{-\frac12}Q(\|U\|_X);\\
\big\|\langle\nabla\rangle\int_0^t\partial_{xy}\Delta K(t-s)N_3(s)\,ds\big\|_{L^\infty_{xy}}
\lesssim &
\langle t\rangle^{-1}Q(\|U\|_X);\\
\big\||\nabla|^{\gamma}\int_0^t\partial_{xy}\Delta K(t-s)N_3(s)\,ds\big\|_{L^2_{xy}}
\lesssim &
\langle t\rangle^{-\frac34}Q(\|U\|_X);\\
\big\|\langle\nabla\rangle\pp_x\int_0^t\partial_{xy}\Delta K(t-s)N_3(s)\,ds\big\|_{L^2_{xy}}
\lesssim &
\langle t\rangle^{-1}Q(\|U\|_X).
\end{align*}

\subsubsection{$\lambda\int_0^t(\pp_{tt}-\Delta\pp_t-\Delta)\partial_t K(t-s)\big(\partial_{xx}u(s)+\partial_{xy}v(s)\big)\,ds$}
\label{sec:u-lambda}
We will prove in this subsubsection that
\begin{align*}
\big\|\lambda\int_0^t(\pp_{tt}-\Delta\pp_t-\Delta)\partial_t K(t-s)\big(\partial_{xx}u(s)+\partial_{xy}v(s)\big)\,ds\big\|_{L^2_{xy}}
\lesssim &
\langle t\rangle^{-\frac12}|\lambda|\>\|U\|_X;\\
\big\|\langle\nabla\rangle\lambda\int_0^t(\pp_{tt}-\Delta\pp_t-\Delta)\partial_t K(t-s)\big(\partial_{xx}u(s)+\partial_{xy}v(s)\big)\,ds\big\|_{L^\infty_{xy}}
\lesssim &
\langle t\rangle^{-1}|\lambda|\>\|U\|_X;\\
\big\||\nabla|^{\gamma}\lambda\int_0^t(\pp_{tt}-\Delta\pp_t-\Delta)\partial_t K(t-s)\big(\partial_{xx}u(s)+\partial_{xy}v(s)\big)\,ds\big\|_{L^2_{xy}}
\lesssim &
\langle t\rangle^{-\frac34}|\lambda|\>\|U\|_X;\\
\big\|\langle\nabla\rangle\pp_x\lambda\int_0^t(\pp_{tt}-\Delta\pp_t-\Delta)\partial_t K(t-s)\big(\partial_{xx}u(s)+\partial_{xy}v(s)\big)\,ds\big\|_{L^2_{xy}}
\lesssim &
\langle t\rangle^{-1}|\lambda|\>\|U\|_X.
\end{align*}
To do this, we first write
\begin{align}
&\lambda\int_0^t(\pp_{tt}-\Delta\pp_t-\Delta)\partial_t K(t-s)\big(\partial_{xx}u(s)+\partial_{xy}v(s)\big)\,ds\notag\\
=&\lambda\int_0^{t/2}(\pp_{tt}-\Delta\pp_t-\Delta)\partial_t K(t-s)\big(\partial_{xx}u(s)+\partial_{xy}v(s)\big)\,ds\label{1709-1}\\
&+\lambda\int_{t/2}^t(\pp_{tt}-\Delta\pp_t-\Delta)\partial_t K(t-s)\big(\partial_{xx}u(s)+\partial_{xy}v(s)\big)\,ds.\label{1709-2}
\end{align}
By Proposition \ref{lem:Kn5-L} (i) ($\beta=2$), we have
\begin{align*}
\big\|\eqref{1709-1}\big\|_{L^2_{xy}}
\lesssim &
|\lambda|\int_0^{t/2}\big\|\Delta (\pp_{tt}-\Delta\pp_t-\Delta)\partial_t K(t-s) \vec u(s)\big\|_{L^2_{xy}}\,ds\\
\lesssim &
|\lambda|\int_0^{t/2}\langle t-s\rangle^{-1}
\big\|\vec u(s)\big\|_{L^2_{xy}}\,ds
\lesssim
|\lambda|\int_0^{t/2}\langle t-s\rangle^{-1}\langle s\rangle^{-\frac12}\,ds\|U\|_X\\
\lesssim &
|\lambda|\langle t \rangle^{-\frac12}\|U\|_X.
\end{align*}
By using Proposition \ref{lem:Kn5-L} (i) ($\beta=2$), (iv) and (iv) respectively, we also get
\begin{align*}
\big\||\nabla|^\gamma \eqref{1709-1}\big\|_{L^2_{xy}}&\lesssim |\lambda|\langle t \rangle^{-\frac34}\|U\|_X;\\
\big\|\langle\nabla\rangle\eqref{1709-1}\big\|_{L^\infty_{xy}}&\lesssim |\lambda|\langle t \rangle^{-1}\|U\|_X;\\
\big\|\langle\nabla\rangle\pp_x \eqref{1709-1}\big\|_{L^2_{xy}}&\lesssim |\lambda|\langle t \rangle^{-1}\|U\|_X.
\end{align*}
While by Proposition \ref{lem:Kn5-L} (i) ($\beta=1$),
\begin{align*}
\big\|\eqref{1709-2}\big\|_{L^2_{xy}}
\lesssim &
|\lambda|\int_{t/2}^t\big\|\nabla (\pp_{tt}-\Delta\pp_t-\Delta)\partial_t K(t-s)\,\, \nabla\cdot \vec u(s)\big\|_{L^2_{xy}}\,ds\\
\lesssim &
|\lambda|\int_{t/2}^t\langle t-s\rangle^{-\frac12}\langle s\rangle^{-1}\,ds\|U\|_X\\
\lesssim &
|\lambda|\langle t \rangle^{-\frac12}\|U\|_X.
\end{align*}
Thus, we give that
\begin{align*}
|\lambda|\big\|\int_0^t(\pp_{tt}-\Delta\pp_t-\Delta)\partial_t K(t-s)\big(\partial_{xx}u(s)+\partial_{xy}v(s)\big)\,ds\big\|_{L^2_{xy}}
\lesssim
|\lambda|\langle t \rangle^{-\frac12}\|U\|_X.
\end{align*}
Similar argument, we obtain that
\begin{align*}
|\lambda|\big\||\nabla|^\gamma\int_0^t(\pp_{tt}-\Delta\pp_t-\Delta)\partial_t K(t-s)\big(\partial_{xx}u(s)+\partial_{xy}v(s)\big)\,ds\big\|_{L^2_{xy}}
\lesssim
|\lambda|\langle t \rangle^{-\frac34}\|U\|_X.
\end{align*}
However, to estimate $\big\|\langle\nabla\rangle\eqref{1709-2}\big\|_{L^\infty_{xy}}$ and $\big\|\pp_x\eqref{1709-2}\big\|_{L^2_{xy}}$, they are much difficult. The reason is that
we do not have
\begin{align}\label{eq:10.28}
\int_{t/2}^t \langle t-s\rangle^{-1}\langle s\rangle^{-1}\,ds\|U\|_X\leq C \langle t\rangle^{-1}\|U\|_X,
\end{align}
due to the unboundedness of the integral
on the left-hand side as $t \to \infty$.  It is also worthing to note that
$ \int_{t/2}^t\langle t-s\rangle^{-1-\delta}\langle s\rangle^{-1}\,ds\|U\|_X$
is bounded by the right-hand side for any positive $\delta$. However, we have such a $\delta$-loss in \eqref{eq:10.28}.
To overcome the difficulties, we split the operator $K(t)$ into two parts,
$$
K(t)=K_a(t)+K_b(t),
$$
where $K_a, K_b$ are defined by
\begin{align*}
\widehat{K_a}(t, \xi, \eta)=\chi_{|\xi|\le  A^2} \widehat{K}(t, \xi, \eta);\quad
\widehat{K_b}(t, \xi, \eta)=\chi_{|\xi|\ge A^2} \widehat{K}(t, \xi, \eta).
\end{align*}
Therefore,
\begin{align}
\eqref{1709-2}
=&\lambda\int_{t/2}^t(\pp_{tt}-\Delta\pp_t-\Delta)\partial_t K_a(t-s)\big(\partial_{xx}u(s)+\partial_{xy}v(s)\big)\,ds\label{1709-21}\\
&+\lambda\int_{t/2}^t(\pp_{tt}-\Delta\pp_t-\Delta)\partial_t K_b(t-s)\big(\partial_{xx}u(s)+\partial_{xy}v(s)\big)\,ds.\label{1709-22}
\end{align}
In the following, we only consider $\big\|\langle\nabla\rangle\pp_x\eqref{1709-2}\big\|_{L^2_{xy}}$. The estimate $\big\|\langle\nabla\rangle\eqref{1709-2}\big\|_{L^\infty_{xy}}$ can be treated
by the same way, or one may get it by interpolation directly.

By \eqref{est:K-ttt2t2}, we have
\begin{align*}
&|A\xi\pp_t\big(\pp_{tt}+A^2\pp_t+A^2\big)\widehat{K_a}(t, \xi, \eta)|\\
\lesssim & \chi_{|\xi|\le A^2 }\big( \chi_{A\ge 1} e^{-ct}+A|\xi|\chi_{A\le 1}e^{-\frac14 A^2t}
+\frac{\xi^3}{A^3}e^{-\frac{\xi^2}{2A^2}t}\big)\\
\lesssim &  \chi_{A\ge 1} \frac1{A} e^{-ct}+A^3\chi_{A\le 1}e^{-\frac14 A^2t}
+\frac{\xi^3}{A^3}e^{-\frac{\xi^2}{2A^2}t}\\
\lesssim & \langle t\rangle^{-\frac32}.
\end{align*}
Therefore,
\begin{align}
\big\|\langle\nabla\rangle\pp_x\eqref{1709-21}\big\|_{L^2_{xy}}
\lesssim &
|\lambda|\int_{t/2}^t\big\|A\xi\partial_t\big(\pp_{tt}+A^2\pp_t+A^2\big)\widehat K_a(t-s,\xi,\eta)\big\|_{L^\infty_{\xi\eta}}\big\|\langle\nabla\rangle\nabla\cdot\vec u(s)\big\|_{L^2_{xy}}\,ds\notag\\
\lesssim &
|\lambda|\int_{t/2}^t\langle t-s\rangle^{-\frac32}\langle s\rangle^{-1}\,ds\|U\|_X\notag\\
\lesssim &
|\lambda|\langle t \rangle^{-1}\|U\|_X.\label{10.53}
\end{align}
Now we consider the term \eqref{1709-22}. To this end, we give some analysis on $K_b(t)$ first. By \eqref{K-ttt2t2-K1}, we have
\begin{align}
&\pp_t\big(\pp_{tt}+A^2\pp_t+A^2\big)\widehat K_b(t,\xi,\eta)\notag\\
=&-\frac12A^2\big(\pp_{tt}+A^2\pp_t+A^2\big)\widehat K_b(t,\xi,\eta)
+\chi_{|\xi|\ge A^2}\widehat K_1(t,\xi,\eta),
\end{align}
where $K_1$ is defined in \eqref{K1} at the beginning of Section \ref{sec:LF}.
Therefore, we have
\begin{align}
\eqref{1709-22}
=&\frac12\lambda\int_{t/2}^t\Delta(\pp_{tt}-\Delta\pp_t-\Delta)\partial_t K_b(t-s)\big(\partial_{xx}u(s)+\partial_{xy}v(s)\big)\,ds\label{1709-221}\\
&+\lambda\int_{t/2}^tK_{1,b}(t-s)\big(\partial_{xx}u(s)+\partial_{xy}v(s)\big)\,ds,\label{1709-222}
\end{align}
where $\widehat{K_{1,b}}(t, \xi, \eta)=\chi_{|\xi|\ge A^2} \widehat{K_1}(t, \xi, \eta).$
By \eqref{est:K-tt2t2}, we have
\begin{align*}
|\xi A^3\big(\pp_{tt}+A^2\pp_t+A^2\big)\widehat K_b(t,\xi,\eta)|
\lesssim  A^3\chi_{A\le 1}e^{-\frac14 A^2t}
\lesssim  \langle t\rangle^{-\frac32}.
\end{align*}
It obeys  the same estimate as $A\xi\pp_t\big(\pp_{tt}+A^2\pp_t+A^2\big)\widehat{K_a}(t, \xi, \eta)$.
Thus, we have the same estimates on \eqref{1709-221} as \eqref{10.53}. By the definition of $K_{1,b}$, we have
\begin{align*}
\widehat{K_{1,b}}(t,\xi,\eta)=&\frac14\chi_{|\xi|\ge A^2} e^{(-\frac12A^2+\sqrt{\frac14A^4-A^2+A|\eta|})t}
+\frac14\chi_{|\xi|\ge A^2} e^{(-\frac12A^2-\sqrt{\frac14A^4-A^2+A|\eta|})t}\notag\\
&\qquad+\frac14\chi_{|\xi|\ge A^2} e^{(-\frac12A^2+\sqrt{\frac14A^4-A^2-A|\eta|})t}
+\frac14\chi_{|\xi|\ge A^2} e^{(-\frac12A^2-\sqrt{\frac14A^4-A^2-A|\eta|})t}.
\end{align*}
It includes four parts, and each part has the bound of $e^{-\frac14A^2t}$. So we may only consider one of them.
In particular, let the operator $K_c(t)$ be defined by
$$
\widehat{K_c}(t,\xi,\eta)=\frac14\chi_{|\xi|\ge A^2} e^{(-\frac12A^2+\mu_1\sqrt{\frac14A^4-A^2+\mu_2A|\eta|})t},
$$
for $\mu_1,\mu_2=\pm1$.
Then we have
$$
K_c(t-s)=K_c(t)K_c(-s).
$$
Since $K_c$ is bounded from $L^2$ to $L^2$, we have
\begin{align*}
&\big\|\langle\nabla\rangle\pp_x\lambda\int_{t/2}^tK_c(t-s)\big(\partial_{xx}u(s)+\partial_{xy}v(s)\big)\,ds\big\|_{L^2_{xy}}\\
= &
|\lambda|\big\|\langle\nabla\rangle\pp_x\int_{t/2}^tK_c(t)K_c(-s)\big(\partial_{xx}u(s)+\partial_{xy}v(s)\big)\,ds\big\|_{L^2_{xy}}\\
\lesssim &
|\lambda|\int_{t/2}^t\big\|K_c(t)\pp_{xx}K_c(-s)\big(\langle\nabla\rangle\nabla \cdot \vec u(s)\big)\big\|_{L^2_{xy}}\,ds\\
\lesssim &
|\lambda|\int_{t/2}^t\big\|\pp_{xx}K_c(-s)\langle\nabla\rangle\big(\nabla \cdot u(s)\big)\big\|_{L^2_{xy}}\,ds\\
\lesssim &
|\lambda|\int_{t/2}^t\big\|A^2\widehat{K_c}(-s,\xi,\eta)\big\|_{L^\infty_{\xi\eta}}\big\|\langle\nabla\rangle\big(\nabla \cdot u(s)\big)\big\|_{L^2_{xy}}\,ds\\
\lesssim &
|\lambda|\int_{t/2}^t\langle s\rangle^{-2}\,ds\|U\|_X
\lesssim
|\lambda|\langle t \rangle^{-1}\|U\|_X.
\end{align*}
Therefore, we prove that
$$
\big\|\langle\nabla\rangle\pp_x\eqref{1709-222}\big\|_{L^2_{xy}}\lesssim |\lambda|\langle t \rangle^{-1}\|U\|_X.
$$
Now collecting the estimates obtained in Section \ref{sec:u-N0-1}--Section \ref{sec:u-lambda}, we obtain \eqref{nu-1}--\eqref{nu-4}. Combining with the estimates in Section \ref{sec:LBu} gives \eqref{est:u-L2}--\eqref{est:u-xL2}.

\vskip .4in

\section{The estimates on $v$}
\label{sec:v}

In this section, we shall prove that
\begin{align}
\|v(t)\|_{L^2_{xy}}&\lesssim \langle t\rangle^{-\frac12}\big(\|U_0\|_{X_0}+|\lambda|\>\|U\|_X+Q(\|U\|_{X})\big);\label{est:v-L2}\\
\|\langle \nabla\rangle v(t)\|_{L^\infty_{xy}}&\lesssim \langle t\rangle^{-1}\big(\|U_0\|_{X_0}+|\lambda|\>\|U\|_X+Q(\|U\|_{X})\big);\label{est:v-Linfty}\\
\|v(t)\|_{L^2_{x}L^\infty_y}&\lesssim \langle t\rangle^{-\frac34}\big(\|U_0\|_{X_0}+|\lambda|\>\|U\|_X+Q(\|U\|_{X})\big);\label{est:v-xL2}\\
\||\nabla|^\gamma v(t)\|_{L^2_{xy}}&\lesssim \langle t\rangle^{-\frac34}\big(\|U_0\|_{X_0}+|\lambda|\>\|U\|_X+Q(\|U\|_{X})\big);\label{est:v-gaL2}\\
\|\langle \nabla\rangle\nabla v(t)\|_{L^2_{xy}}&\lesssim \langle t\rangle^{-1}\big(\|U_0\|_{X_0}+|\lambda|\>\|U\|_X+Q(\|U\|_{X})\big).\label{est:v-naL2}
\end{align}
Note that by Nash's inequality, and Sobolev's inequality, we have
\begin{align*}
\| v(t)\|_{L^2_{x}L^\infty_y}\lesssim & \|v(t)\|_{L^2_{xy}}^\frac12 \|\pp_yv(t)\|_{L^2_{xy}}^\frac12\lesssim \|v(t)\|_{L^2_{xy}}^\frac12\|\langle \nabla\rangle\nabla v(t)\|_{L^2_{xy}}^\frac12;\\
\big\||\nabla|^\gamma v(t)\big\|_{L^2_{xy}}\lesssim &\|v(t)\|_{L^2_{xy}}^\frac12\|\langle \nabla\rangle\nabla v(t)\|_{L^2_{xy}}^\frac12.
\end{align*}
Thus \eqref{est:v-xL2} and \eqref{est:v-gaL2} can be established by \eqref{est:v-L2} and \eqref{est:v-naL2}, and we only need to prove \eqref{est:v-L2},
\eqref{est:v-Linfty} and \eqref{est:v-naL2}.

\subsection{The reexpression of $v$}
Now we give the reexpression of $v$ as
\begin{prop}
The unknown function $v$ obeys the formula,
\begin{align}\label{exp-v}
v(t,x,y)=
(L_v+B_v)(t;n_0,\vec u_0, \vec b_0)+\mathcal{N}_v(t;n,\vec u, \psi),
\end{align}
where $(L_v+B_v)$ is given by
\begin{align}
&L_v(t;n_0,\vec u_0, \vec b_0)+B_v(t;n_0,\vec u_0, \vec b_0)\notag\\
=&\pp_tK(t)[\pp_{xy}u_0+\pp_{yy}v_0]-(\partial_{tt}-\Delta\partial_t-\pp_{xx})K(t)
\big[\Delta\psi_0\big]+(\partial_{tt}-\Delta\partial_t-\Delta)K(t)[\Delta v(0)]\notag\\
&-(\partial_{tt}-\Delta\partial_t)K(t)
\big[\partial_y n_0\big]-\frac12\Delta\big(\Delta+\sqrt{\Delta\pp_{yy}}\big)K(t)\big[v_0\big]+\pp_t (\partial_{tt}-\Delta\partial_t-\pp_{xx})K(t)\big[v_0\big], \label{eq:LBv}
\end{align}
and $\mathcal{N}_v(t;n,\vec u, \psi)$ is defined as
\begin{align}
\mathcal{N}_v(t;n,\vec u, \psi)
=&
-\int_0^t\partial_y(\pp_{tt}-\Delta\pp_t)K(t-s)N_0(s)\,ds\notag\\
&\quad+\int_0^t\partial_t\partial_{xy}K(t-s)N_1(s)\,ds+\int_0^t\partial_t(\pp_{tt}-\Delta\pp_t-\partial_{xx}) K(t-s)N_2(s)\,ds\notag\\
&\quad
-\int_0^t\Delta (\partial_{tt}-\Delta\partial_t-\partial_{xx})K(t-s)N_3(s)\,ds\notag\\
&\quad
+\lambda\int_0^t \partial_{t}(\pp_{tt}-\Delta\partial_{t})K(t-s)\big[\partial_{xy}u(s)+\partial_{yy}v(s)\big]\,ds.\label{v-N}
\end{align}
\end{prop}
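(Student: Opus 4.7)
The plan is to follow the same three-step template that produced the representations for $n$ and $u$, applied now with $\Phi=v$. First, I would invoke Lemma~\ref{lem:Formula} and the decomposition in Lemma~\ref{fiii}, which, together with \eqref{F2-r}, gives the preliminary identity
\begin{align*}
v(t) = L_v^{\rm pre}(t;U_0) + \int_0^t K(t-s)F_2(s)\,ds,
\end{align*}
where $L_v^{\rm pre}$ collects the four lines \eqref{L-1}--\eqref{L-5} with $\Phi(0)=v_0$ and $\pp_t\Phi(0)=\pp_tv(0)$, and $F_2$ involves $\pp_tN_0$, $\pp_tN_1$, $\pp_t\pp_{xx}N_2$, $\Delta(\pp_{tt}-\Delta\pp_t-\pp_{xx})N_3$ and the $\lambda$-term $\lambda(\pp_{tt}-\Delta\pp_t)\pp_t(\pp_{xy}u+\pp_{yy}v)$.

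Next, I would eliminate $\pp_tv(0)$ and $(\pp_{tt}-\Delta\pp_t-\Delta)v(0)$ from $L_v^{\rm pre}$. For the former, substitute the third equation of \eqref{e2.1}: $\pp_tv(0)=\Delta v_0+\lambda(\pp_{xy}u_0+\pp_{yy}v_0)-\pp_yn_0-\Delta\psi_0+N_2(0)$. For the latter, I would differentiate the $v$-equation once more in $t$ and use \eqref{e2.6} at $t=0$, exactly as in Section~\ref{sec:Lu} for $u$, yielding an expression for $(\pp_{tt}-\Delta\pp_t-\Delta)\pp_tv(0)$ and $(\pp_{tt}-\Delta\pp_t-\Delta)v(0)$ purely in terms of the initial data $(n_0,\vec u_0,\psi_0)$, the initial nonlinearities $N_j(0)$, and the $\lambda$-correction. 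In parallel, I would integrate by parts in each of the five pieces of $\int_0^t K(t-s)F_2(s)\,ds$ whose integrand carries a $\pp_s$ factor, generating the boundary contribution $B_v$ together with the kernel expressions $\pp_tK,\pp_{tt}K$ acting on $N_j(0)$; the remaining time integrals assemble into the claimed $\mathcal N_v$ in \eqref{v-N}.

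The main obstacle, as in the $u$-case, is that the raw $L_v^{\rm pre}+B_v$ still contains intermediate terms involving $N_j(0)$ together with the combination
\[
-\tfrac12\Delta\sqrt{\Delta\pp_{yy}}K(t)[v_0]+K_1(t)[v_0]+(\pp_{tt}-\Delta\pp_t-\Delta)K(t)[\Delta v_0],
\]
which is not yet in the target form \eqref{eq:LBv}. The key step is then a $K$-algebra identity analogous to \eqref{eq:cx1}, using the formula \eqref{K-ttt2t2-K1} together with the factorisation \[(\pp_{tt}-\Delta\pp_t-\Delta)K=\tfrac12\Delta(\pp_{tt}-\Delta\pp_t-\pp_{xx})K-\tfrac12\Delta(\Delta+\sqrt{\Delta\pp_{yy}})K+\pp_t(\pp_{tt}-\Delta\pp_t-\pp_{xx})K\] (the analogue adapted to the $\pp_{xx}$-structure appearing in \eqref{e2.7}). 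Applied to $v_0$, this rewrites the three offending terms as exactly the last three summands of \eqref{eq:LBv}, while all other initial-data contributions produced by $L_v^{\rm pre}$ and $B_v$ cancel in pairs — the cancellation pattern being dictated by the identities derived when eliminating $\pp_tv(0)$ and $\pp_{tt}v(0)$.

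Finally, I would verify the cancellations term-by-term, parallel to Section~\ref{sec:Bu}, confirming that the surviving initial-data terms are precisely $\pp_tK(t)[\pp_{xy}u_0+\pp_{yy}v_0]$, $-(\pp_{tt}-\Delta\pp_t-\pp_{xx})K(t)[\Delta\psi_0]$, $(\pp_{tt}-\Delta\pp_t-\Delta)K(t)[\Delta v_0]$ and $-(\pp_{tt}-\Delta\pp_t)K(t)[\pp_yn_0]$, together with the two $v_0$-terms from the $K$-identity above. The $\lambda$-contributions from the substitution for $\pp_tv(0)$ cancel against those coming from $\lambda(\pp_{tt}-\Delta\pp_t)\pp_t(\pp_{xy}u+\pp_{yy}v)$ at the time-boundary $s=0$, producing the single nonlinear $\lambda$-term displayed in \eqref{v-N}. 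This finishes the derivation of \eqref{exp-v}.
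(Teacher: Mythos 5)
Your overall strategy is the paper's: apply Lemma~\ref{lem:Formula} with $\Phi=v$ and $F_2$ from \eqref{F2-r}, substitute for $\pp_tv(0)$ using the third equation of \eqref{e2.1}, and for $(\pp_{tt}-\Delta\pp_t-\Delta)v(0)$ using \eqref{e2.2}, \eqref{e2.7}, \eqref{e2.9}, integrate by parts in $s$ to produce $B_v$ and $\mathcal N_v$, and finally collapse the surviving $v_0$-terms by a $K$-algebra rewriting. However, the rewriting you propose is not correct. The displayed relation
\[
(\pp_{tt}-\Delta\pp_t-\Delta)K=\tfrac12\Delta(\pp_{tt}-\Delta\pp_t-\pp_{xx})K-\tfrac12\Delta\big(\Delta+\sqrt{\Delta\pp_{yy}}\big)K+\pp_t(\pp_{tt}-\Delta\pp_t-\pp_{xx})K
\]
is not a valid operator identity. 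Working in Fourier variables and using \eqref{K-ttt2t2-K1} together with $\xi^2=A^2-\eta^2$, the difference of the two sides carries an uncancelled $(1+A^2)(\pp_{tt}+A^2\pp_t+A^2)\widehat K$ among other terms, so it cannot vanish identically. In fact the paper's identity \eqref{eq:cx1} is not a ``factorisation'' of the single operator $(\pp_{tt}-\Delta\pp_t-\Delta)K$ at all: it is a rearrangement of the specific three-term sum
\[
(\pp_{tt}-\Delta\pp_t-\Delta)K(t)\big[\Delta\Phi(0)\big]-\tfrac12\Delta\sqrt{\Delta\pp_{yy}}\,K(t)\big[\Phi(0)\big]+K_1(t)\big[\Phi(0)\big],
\]
which arises when one collects \eqref{L-1}--\eqref{L-5}, and it should be applied with $\Phi(0)=v_0$ verbatim --- there is no ``$\pp_{xx}$-adaptation'' to make. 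The $\pp_{xx}$-structure that does appear in \eqref{eq:LBv}, namely the term $-(\pp_{tt}-\Delta\pp_t-\pp_{xx})K(t)[\Delta\psi_0]$, has a different origin: it comes from combining $-K(t)[\pp_{yy}\Delta\psi(0)]$ (produced when $-\Delta\psi(0)$ from the substitution for $\pp_tv(0)$ is fed through \eqref{e2.9} inside \eqref{Lv-1}) with $-(\pp_{tt}-\Delta\pp_t-\Delta)K(t)[\Delta\psi(0)]$ coming from \eqref{Lv-2}, using $-\pp_{yy}-(\pp_{tt}-\Delta\pp_t-\Delta)=-(\pp_{tt}-\Delta\pp_t-\pp_{xx})$, exactly as the analogous $-(\pp_{tt}-\Delta\pp_t)K(t)[\pp_yn_0]$ term comes from pairing the $n(0)$-contributions of \eqref{Lv-1} and \eqref{Lv-2}. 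The $K$-identity itself is unchanged from the $u$-case. Replacing your claimed factorisation with the verbatim application of \eqref{eq:cx1} (and carrying out the term-by-term cancellation of the $N_j(0)$-boundary terms as you outline) is what closes the argument; with the formula as you wrote it, the final matching would fail.
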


Now we begin to prove this proposition.
Again, using \eqref{F2-r}  and integration by parts (see below for the details), we have
\begin{align*}
v(t,x,y)=&L_v(t;n_0,\vec u_0, \vec b_0)+\int_0^t K(t-s)F_2(s)\,ds\\
=& L_v(t;n_0,\vec u_0, \vec b_0)+\int_0^t K(t-s)\Big[ -\partial_y(\pp_{ss}-\Delta\pp_s)N_0(s)+\partial_s\partial_{xy}N_1(s)\notag\\
&\quad+\partial_s(\pp_{ss}-\Delta\pp_s-\partial_{xx})N_2(s) -\Delta (\partial_{ss}-\Delta\partial_s-\partial_{xx}) N_3(s)\\ &\quad+\lambda(\pp_{ss}-\Delta\partial_{s})\partial_{s}\big(\partial_{xy}u(s)+\partial_{yy}v(s)\big)\Big]\,ds\\
=&
L_v(t;n_0,\vec u_0, \vec b_0)+B_v(t;n_0,\vec u_0, \vec b_0)+\mathcal{N}_v(t;n,\vec u, \psi).
\end{align*}
Here according to \eqref{eq:Formula}, we denote
\begin{align}
&L_v(t;n_0,\vec u_0, \vec b_0)\notag\\
=&
K(t)\big[(\pp_{tt}-\Delta\pp_t-\Delta)\pp_{t}v(0)\big]\label{Lv-1}\\
&+(\partial_{tt}-\Delta\partial_t-\Delta)K(t)\big[\partial_tv(0)\big]\label{Lv-2}\\
&-\Delta K(t)\big[(\pp_{tt}-\Delta\pp_t-\Delta)v(0)\big]
+\partial_t K(t)\big[(\pp_{tt}-\Delta\pp_t-\Delta)v(0)\big]\label{Lv-3}\\
&-\frac12\Delta\sqrt{\Delta\partial_{yy}} K(t)[v_0]+K_1(t)[v_0]\label{Lv-4},
\end{align}
and $B_v(t;n_0,\vec u_0, \vec b_0)$ is the boundary term given below.

Next we will simply $L_v+B_v$. To do this, we give the
explicit expressions of $L_v$ and $B_v$ respectively.

\subsubsection{$L_v(t;n_0,\vec u_0, \vec b_0)$}\label{sec:Lv}

By the equations \eqref{e2.1}, \eqref{e2.2} and \eqref{e2.8} at $t=0$, we have
\begin{align*}
&(\pp_{tt}-\Delta\pp_t-\Delta)\pp_{t}v(0)\\
=&(\pp_{tt}-\Delta\pp_t-\Delta)\big(\Delta v(0)+\lambda(\partial_{xy} u(0)+\partial_{yy}v(0))-\partial_y n(0)-\Delta \psi(0)+N_2(0)\big)\\
=&(\pp_{tt}-\Delta\pp_t-\Delta)\Delta v(0)+(\pp_{tt}-\Delta\pp_t-\Delta)N_2(0)\\
&-\partial_y\big[\pp_y\Delta\psi(0)+\pp_tN_0(0)-\Delta N_0(0)-\pp_xN_1(0)-\pp_yN_2(0)-\lambda(\pp_x\Delta u(0)+\pp_y\Delta v(0))\big]\\
&-\Delta\big[\partial_{y}n(0)-N_2(0)+(\partial_t-\Delta) N_3(0)-\lambda(\pp_{xy} u(0)+\pp_{yy} v(0))\big]\\
&+\lambda(\pp_{tt}-\Delta\pp_t-\Delta)\big[\partial_{xy} u(0)+\partial_{yy}v(0)\big]\\
=&(\pp_{tt}-\Delta\pp_t-\Delta)\Delta v(0)+(\pp_{tt}-\Delta\pp_t-\Delta)N_2(0)-\partial_{yy}\Delta\psi(0)-\pp_y\Delta n(0)\\
&+\big[-\pp_t\partial_yN_0+\Delta\pp_y N_0(0)+\pp_{xy}N_1(0)+(\Delta+\pp_{yy}) N_2(0)-(\partial_t-\Delta)\Delta N_3(0)\big]\\
&+\lambda(\pp_{tt}-\Delta\pp_t+\Delta)\big[\partial_{xy} u(0)+\partial_{yy}v(0)\big].
\end{align*}
Thus,
\begin{align*}
\eqref{Lv-1}=&
K(t)\big[(\pp_{tt}-\Delta\pp_t-\Delta)\Delta v(0)\big]+K(t)\big[(\pp_{tt}-\Delta\pp_t-\Delta)N_2(0)\big]\\
&-K(t)\big[\partial_{yy}\Delta\psi(0)+\pp_y\Delta n(0)\big]\\
&+K(t)\big[-\pp_t\partial_yN_0+\partial_y\Delta N_0(0)+\pp_{xy}N_1(0)+(\Delta+\pp_{yy}) N_2(0)-(\partial_t-\Delta)\Delta N_3(0)\big]\\
&+\lambda K(t)\big[(\pp_{tt}-\Delta\pp_t+\Delta)(\partial_{xy} u(0)+\partial_{yy}v(0))\big].
\end{align*}
Now we consider \eqref{Lv-2} and \eqref{Lv-3}. Similarly,
\begin{align*}
\eqref{Lv-2}=&
(\partial_{tt}-\Delta\partial_t-\Delta)K(t)
\big[\Delta v(0)+\lambda(\partial_{xy} u(0)+\partial_{yy}v(0))-\partial_y n(0)-\Delta \psi(0)+N_2(0)\big]\\
=&(\partial_{tt}-\Delta\partial_t-\Delta)K(t)
\big[\Delta v(0)-\partial_y n(0)-\Delta \psi(0)\big]+(\partial_{tt}-\Delta\partial_t-\Delta)K(t)\big[N_2(0)\big]\\
&+\lambda(\partial_{tt}-\Delta\partial_t-\Delta)K(t)\big[\partial_{xy} u(0)+\partial_{yy}v(0)\big];\\
\eqref{Lv-3}=&
-\Delta K(t)\big[(\pp_{tt}-\Delta\pp_t-\Delta)v(0)\big]
+\partial_t K(t)\big[(\pp_{tt}-\Delta\pp_t-\Delta)v(0)\big]\\
=&-\Delta K(t)\big[(\pp_{tt}-\Delta\pp_t-\Delta)v(0)\big]+\pp_{yy}\pp_tK(t)[v(0)]\\
&+\pp_tK(t)\big[\partial_{xy}u(0)-\pp_yN_0(0)+\partial_tN_2(0)-\Delta N_3(0)+\lambda\pp_t(\pp_{xy} u(0)+\pp_{yy} v(0))\big].
\end{align*}
Then collecting the estimates above, we have
\begin{align*}
&L_v(t;n_0,\vec u_0, \vec b_0)\notag\\
=&
K(t)\big[(\pp_{tt}-\Delta\pp_t-\Delta)N_2(0)\big]+(\partial_{tt}-\Delta\partial_t-\Delta)K(t)\big[N_2(0)\big]\\
&
+K(t)\big[-\pp_t\partial_yN_0+\partial_y\Delta N_0(0)+\pp_{xy}N_1(0)+(\Delta+\pp_{yy}) N_2(0)-(\partial_t-\Delta)\Delta N_3(0)\big]\\
&+\pp_tK(t)\big[-\pp_yN_0(0)+\partial_tN_2(0)-\Delta N_3(0)\big]+(\partial_{tt}-\Delta\partial_t-\Delta)K(t)[\Delta v(0)]\\
&
+\pp_{xy}\pp_tK(t)[u_0]-(\partial_{tt}-\Delta\partial_t)K(t)
\big[\partial_y n_0\big]-(\partial_{tt}-\Delta\partial_t-\pp_{xx})K(t)
\big[\Delta\psi_0\big]\\%
%
%-K(t)\big[\partial_{yy}\Delta\psi(0)+\pp_y\Delta n(0)\big]+\pp_{xy}\pp_tK(t)[u(0)]\\
%&+(\partial_{tt}-\Delta\partial_t-\Delta)K(t)
%\big[\Delta v(0)-\partial_y n(0)-\Delta \psi(0)\big]
&+\pp_{yy}\pp_tK(t)[v_0]+\lambda K(t)\big[(\pp_{tt}-\Delta\pp_t)(\pp_{xy}u(0)+\pp_{yy}v(0))\big]\\
&
+\lambda(\partial_{tt}-\Delta\partial_t)K(t)\big[\partial_{xy} u(0)+\partial_{yy}v(0)\big]+\lambda\pp_tK(t)\big[\pp_t(\pp_{xy} u(0)+\pp_{yy} v(0))\big]\\
&-\frac12\Delta\sqrt{\Delta\partial_{yy}} K(t)[v_0]+K_1(t)[v_0].
\end{align*}

\subsubsection{$B_v(t;n_0,\vec u_0, \vec b_0)$} Now we consider the boundary term $B_v(t;n_0,\vec u_0, \vec b_0)$. By integration by parts, and arguing similarly as in Section \ref{sec:Bn} we have
\begin{align*}
-\int_0^t K(t-s)&\big[ \partial_y(\pp_{ss}-\Delta\pp_s)N_0(s)\big]\,ds\notag\\
=&K(t)\big[\partial_y(\pp_{t}-\Delta)N_0(0)\big]+\pp_t K(t)\big[\partial_yN_0(0)\big]\\
&\quad -\int_0^t \partial_y(\pp_{tt}-\Delta\pp_t)K(t-s)N_0(s)\,ds;\\
\int_0^t K(t-s)&\big[\partial_s\partial_{xy}N_1(s)\big]\,ds\notag\\
=&-K(t)\big[\partial_{xy}N_1(0)\big]+\int_0^t \partial_t\partial_{xy}K(t-s)N_1(s);\\
\int_0^t K(t-s)&\big[ \partial_s(\pp_{ss}-\Delta\pp_s-\partial_{xx})N_2(s)\big]\,ds\notag\\
=&-K(t)\big[(\pp_{tt}-\Delta\pp_t-\partial_{xx})N_2(0)\big]-\pp_tK(t)\big[(\pp_{t}-\Delta)N_2(0)\big]-\pp_{tt}K(t)\big[N_2(0)\big]\\
&\quad+\int_0^t \partial_t(\pp_{tt}-\Delta\pp_t-\partial_{xx})K(t-s) N_2(s)\,ds;\\
-\int_0^t K(t-s)&\big[(\partial_{ss}-\Delta\partial_s-\partial_{xx}) \Delta N_3(s)\big]\,ds\notag\\
=&K(t)\big[(\partial_{t}-\Delta)\Delta N_3(0)\big]+\pp_tK(t)\big[\Delta N_3(0)\big]\\
&\quad-\int_0^t \Delta (\partial_{tt}-\Delta\partial_t-\partial_{xx})K(t-s)N_3(s)\,ds;\notag\\
 \lambda\int_0^t K(t-s)&\big[(\pp_{ss}-\Delta\partial_{s})\partial_{s}\big(\partial_{xy}u(s)+\partial_{yy}v(s)\big)\big]\,ds\notag\\
=&-\lambda K(t)\big[(\pp_{tt}-\Delta\partial_{t})\big(\partial_{xy}u(0)+\partial_{yy}v(0)\big)\big]\\
&\quad-\lambda \partial_{t}K(t)\big[(\pp_{t}-\Delta)\big(\partial_{xy}u(0)+\partial_{yy}v(0)\big)\big]
-\lambda \partial_{tt}K(t)\big[\partial_{xy}u(0)+\partial_{yy}v(0)\big]\\
&\quad+\lambda\int_0^t \partial_{t}(\pp_{tt}-\Delta\partial_{t})K(t-s)\big[\partial_{xy}u(s)+\partial_{yy}v(s)\big]\,ds.
\end{align*}
Therefore, we obtain the boundary term $B_v(t;n_0,\vec u_0, \vec b_0)$ as
\begin{align*}
&B_v(t;n_0,\vec u_0, \vec b_0)\\
=&-K(t)\big[(\pp_{tt}-\Delta\pp_t-\partial_{xx})N_2(0)\big]\\
&+K(t)\big[\pp_t\pp_y N_0(0)-\pp_y\Delta N_0(0)-\partial_{xy}N_1(0)+\pp_t\Delta N_3(0)-\Delta^2N_3(0)\big]\\
&+\pp_t K(t)\big[\pp_y N_0(0)-\pp_tN_2(0)+\Delta N_2(0)+\Delta N_3(0)\big]-\pp_{tt}K(t)\big[N_2(0)\big]\\
&-\lambda K(t)\big[(\pp_{tt}-\Delta\partial_{t})\big(\partial_{xy}u(0)+\partial_{yy}v(0)\big)\big]\\
&-\lambda \partial_{t}K(t)\big[(\pp_{t}-\Delta)\big(\partial_{xy}u(0)+\partial_{yy}v(0)\big)\big]
-\lambda \partial_{tt}K(t)\big[\partial_{xy}u(0)+\partial_{yy}v(0)\big].
\end{align*}

Now combining with the result obtained in Section \ref{sec:Lv}, and by the same argument in \eqref{eq:LBu-r}, we have \eqref{eq:LBv}.

Again,
we split into the following two subsection to consider the linear parts and nonlinear parts separately.
Most of the terms are in the same level as the corresponding ones in Section \ref{sec:n}, and can be treated similarly, so we only give the
sketch of the proof.

\vskip .1in
\subsection{Estimates on the linear parts $L_v+B_v$}
In this subsection, we shall prove that
\begin{lemma}\label{lem:v-linear}
\begin{align}
\big\|(L_v+B_v)(t;n_0,\vec u_0, \vec b_0 )\big\|_{L^2_{xy}}
\lesssim&
\langle t\rangle^{-\frac12}\|U_0\|_{X_0};\label{est:v-linear-L2}\\
\big\|\langle \nabla\rangle(L_v+B_v)(t;n_0,\vec u_0, \vec b_0 )\big\|_{L^\infty_{xy}}
\lesssim&
\langle t\rangle^{-1}\|U_0\|_{X_0};\label{est:v-linear-Linfty}\\
\big\|\langle \nabla\rangle\nabla(L_v+B_v)(t;n_0,\vec u_0, \vec b_0 )\big\|_{L^2_{xy}}
\lesssim&
\langle t\rangle^{-1}\|U_0\|_{X_0}.\label{est:v-linear-nax2}
\end{align}
\end{lemma}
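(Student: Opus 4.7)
The plan is to estimate the six terms in the formula \eqref{eq:LBv} for $L_v+B_v$ individually, matching each operator to one of the kernel $L^p$-estimates established in Section \ref{KernelProperty} (and packaged in Propositions \ref{lem:Kn2-L}--\ref{lem:K1-L}). The pattern is identical to what was done in Section \ref{sec:LBu} for $L_u+B_u$: move derivatives from the data onto the kernel whenever it improves the $L^1_{xy}\to L^2_{xy}$ or $L^1_{xy}\to L^\infty_{xy}$ bound on the symbol, split into low and high frequency by a Littlewood--Paley decomposition, and finally absorb the resulting derivative loss into the weights in $\|U_0\|_{X_0}$, which carry $\la\na\ra^5$ in $L^1_{xy}$ and $\la\na\ra^M$ in $L^2_{xy}$ by \eqref{X0}.

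Concretely, the first term $\pp_tK(t)[\pp_{xy}u_0+\pp_{yy}v_0]$ can be rewritten as $\nabla\pp_y\pp_tK(t)\,\vec u_0$ and falls under Proposition \ref{lem:Kn2-L}(i)(ii), yielding $\langle t\rangle^{-\beta/2}$ decay in $L^2_{xy}$ with as many derivatives as we need. The second term $(\partial_{tt}-\Delta\partial_t-\pp_{xx})K(t)[\Delta\psi_0]$ is exactly the operator analysed in Proposition \ref{lem:Kn11-L}, and parts (i)--(iv) of that proposition deliver all three bounds after writing $\Delta\psi_0=\nabla\cdot\nabla\psi_0$ so that the data is measured via $\nabla\psi_0$ as required by the $X_0$ norm. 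The third term $(\partial_{tt}-\Delta\partial_t-\Delta)K(t)[\Delta v_0]$ is handled by distributing the $\Delta$ as $\pp_x^2 + \pp_y^2$ and invoking Propositions \ref{lem:Kn3-L} (for the $\pp_x$ piece) and \ref{lem:Kn4-L} (for the $\pp_y$ piece); in particular the $\langle\nabla\rangle\nabla\cdot L^2_{xy}$ estimate uses Proposition \ref{lem:Kn8-L}(ii) (respectively \ref{lem:Kn4-L}(ii)) with $\beta=1$. The fourth term $(\partial_{tt}-\Delta\partial_t)K(t)[\pp_y n_0]$ is exactly $\pp_y(\partial_{tt}-\Delta\pp_t)K(t)n_0$ and is controlled by Proposition \ref{lem:Kn4-L}. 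The fifth term $\Delta(\Delta+\sqrt{\Delta\pp_{yy}})K(t)v_0$ is the symbol studied in Proposition \ref{lem:Ku3-L}, which gives the full range of $L^p$ bounds $\langle t\rangle^{-1/p'-\beta/2}$. Finally, the sixth term $\pp_t(\partial_{tt}-\Delta\partial_t-\pp_{xx})K(t)v_0$ obeys the same pointwise kernel estimates as $\pp_t(\partial_{tt}-\Delta\partial_t-\Delta)K(t)$ (the two symbols differ only by $\pp_{yy}$, which is itself dominated by $A^2$), so Proposition \ref{lem:Kn5-L} with $\beta=0,1,2$ provides all of \eqref{est:v-linear-L2}--\eqref{est:v-linear-nax2}.

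Adding these six bounds and recalling that each requires only finitely many $\la\na\ra$'s on the data, all of which are absorbed by the $\la\na\ra^5$ weight in the $L^1_{xy}$ part of $\|U_0\|_{X_0}$, one obtains the three claimed estimates. No term is a genuine obstacle; the main bookkeeping issue is to confirm that the worst term in the $\langle\nabla\rangle\nabla\cdot L^2_{xy}$ estimate, namely $\Delta(\Delta+\sqrt{\Delta\pp_{yy}})K(t)v_0$, still closes at the rate $\langle t\rangle^{-1}$. For this one applies Proposition \ref{lem:Ku3-L} with $p=\infty$ and $\beta=0$ on the low-frequency piece (paired with $\|P_{\le1}v_0\|_{L^2_{xy}}$) and with $p=\infty$, $\beta=1$ together with the high-frequency Bernstein bound $\|P_{\ge1}\la\na\ra\nabla v_0\|_{L^2_{xy}}\lesssim \|U_0\|_{X_0}$ on the high-frequency piece, exactly as in the derivation of Proposition \ref{lem:Ku1''-L}(ii). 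This yields the desired $\langle t\rangle^{-1}\|U_0\|_{X_0}$ bound and completes the proof.
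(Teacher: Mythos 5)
Your proposal correctly identifies that the formula \eqref{eq:LBv} contains a term $(\partial_{tt}-\Delta\partial_t-\Delta)K(t)[\Delta v_0]$ that the paper's own proof never mentions, so something has to be said about it. But the treatment you offer for it does not work, and this is a genuine gap.

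You propose to split $\Delta = \partial_{xx}+\partial_{yy}$ and handle the two pieces via Propositions \ref{lem:Kn3-L}, \ref{lem:Kn8-L} and \ref{lem:Kn4-L}. The $\partial_{xx}$ piece $\partial_{xx}(\partial_{tt}-\Delta\partial_t-\Delta)K(t)v_0$ is indeed covered by Proposition \ref{lem:Kn8-L}. But for the $\partial_{yy}$ piece $\partial_{yy}(\partial_{tt}-\Delta\partial_t-\Delta)K(t)v_0$ you invoke Proposition \ref{lem:Kn4-L}, which bounds the operator $\partial_y(\partial_{tt}-\Delta\partial_t)K(t)$: a single $\partial_y$, and \emph{without} the $-\Delta$ inside the parenthesis. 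The operator you actually need to bound has symbol $\eta^2(\partial_{tt}+A^2\partial_t+A^2)\widehat K$, which is a different object. Moreover, no rearrangement rescues the rate: since $|\eta|\sim A$ on the critical low-frequency region $\{|\xi|\lesssim A^2,\, A\le 1\}$, the symbol $\eta^2(\partial_{tt}+A^2\partial_t+A^2)\widehat K$ has the same size there as $A^2(\partial_{tt}+A^2\partial_t+A^2)\widehat K$, namely $\chi_{|\xi|\lesssim A^2}e^{-\xi^2 t/(2A^2)}$ by \eqref{est:K-tt2t2}, and by Lemma \ref{lem:Kn6} the $L^2_{\xi\eta}$ norm of this on $A\le 1$ decays only like $\langle t\rangle^{-\frac14}$. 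The extra $\xi^2/A^2$ factor that rescues the $\partial_{xx}$ piece is absent for $\partial_{yy}$, so the $L^2_{xy}$ bound for this piece cannot beat $\langle t\rangle^{-\frac14}\|U_0\|_{X_0}$, falling short of the claimed $\langle t\rangle^{-\frac12}$.

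The underlying point, as the derivation of \eqref{eq:LBu-r} via \eqref{eq:cx1} makes clear, is that the isolated operator $(\partial_{tt}-\Delta\partial_t-\Delta)K(t)\Delta$ is \emph{not} controllable at the required rate; what is controllable is its combination with $-\frac12\Delta\sqrt{\Delta\partial_{yy}}K(t)$ and $K_1(t)$, which produces the three tame pieces $\frac12\Delta(\partial_{tt}-\Delta\partial_t)K$, $-\frac12\Delta(\Delta+\sqrt{\Delta\partial_{yy}})K$ and $\partial_t(\partial_{tt}-\Delta\partial_t-\Delta)K$. The paper's proof of Lemma \ref{lem:v-linear} actually cites estimates for $\partial_t(\partial_{tt}-\Delta\partial_t-\Delta)K(t)v_0$, not the $\partial_t(\partial_{tt}-\Delta\partial_t-\partial_{xx})K(t)v_0$ appearing in \eqref{eq:LBv}, which is a further indication that the stated form of \eqref{eq:LBv} has not been fully simplified. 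Taking the six-term expression at face value and splitting $\Delta$ term-by-term, as your proposal does, destroys exactly the cancellation on which the estimate depends. You should instead redo the simplification step analogous to \eqref{eq:cx1}, replacing $(\partial_{tt}-\Delta\partial_t-\Delta)K(t)\Delta v_0$ together with its companions by the three better-behaved operators above, after which the estimates close exactly as in the $u$-case of Section \ref{sec:LBu}.

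The remaining terms are handled correctly. Your choice of Proposition \ref{lem:Kn4-L} for $(\partial_{tt}-\Delta\partial_t)K(t)[\partial_y n_0]$ in place of the paper's Proposition \ref{lem:Ku2-L} is equally valid and arguably more direct, since Kn4-L bounds the operator $\partial_y(\partial_{tt}-\Delta\partial_t)K(t)$ verbatim. Your observation that $\partial_t(\partial_{tt}-\Delta\partial_t-\partial_{xx})K$ differs from $\partial_t(\partial_{tt}-\Delta\partial_t-\Delta)K$ by $\partial_{yy}\partial_t K$ is also fine here, because $|\eta^2\partial_t\widehat K|\le |A\eta\partial_t\widehat K|$, and Lemma \ref{lem:Kn2} \eqref{est:kn2-2} gives the required $\langle t\rangle^{-1/2}$ decay for the latter in $L^2_{\xi\eta}(A\le 1)$; this is a sharper reason than "$\partial_{yy}$ is dominated by $A^2$", which by itself is exactly the estimate that fails for term 3.
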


Now we show that the each in  \eqref{eq:LBv} obeys  the estimates in \eqref{est:v-linear-L2}--\eqref{est:v-linear-nax2}.

The estimates on the term $\pp_tK(t)[\pp_{xy}u_0+\pp_{yy}v_0]$ can be obtained
by Proposition \ref{lem:Kn2-L}. Also, for the term $(\partial_{tt}-\Delta\partial_t)K(t)\big[\partial_y n_0\big]$, we use Proposition \ref{lem:Ku2-L} to obtain the desirable estimates.
For the term $(\partial_{tt}-\Delta\partial_t-\pp_{xx})K(t)\big[\Delta\psi_0\big]$, we use Proposition \ref{lem:Kn11-L} to obtain  the desirable estimates.
The estimates on the terms $\frac12\Delta\big(\Delta+\sqrt{\Delta\pp_{yy}}\big)K(t)\big[v_0\big]$
and $\pp_t (\partial_{tt}-\Delta\partial_t-\Delta)K(t)\big[v_0\big]$ are obtained by Propositions \ref{lem:Ku3-L} and  \ref{lem:Kn5-L} (i) (ii) respectively.

\subsection{Estimates on the nonlinear parts $\mathcal{N}_v$}
In this subsection, we shall prove that
\begin{lemma}\label{lem:v-nonlinear}
\begin{align}
\big\|\mathcal N_v(t;n_0,\vec u_0, \vec b_0 )\big\|_{L^2_{xy}}
\lesssim&
\langle t\rangle^{-\frac12}\|U_0\|_{X_0};\label{est:v-nonlinear-L2}\\
\big\|\langle \nabla\rangle\mathcal N_v(t;n_0,\vec u_0, \vec b_0 )\big\|_{L^\infty_{xy}}
\lesssim&
\langle t\rangle^{-1}\|U_0\|_{X_0};\label{est:v-nonlinear-Linfty}\\
\big\|\langle \nabla\rangle\nabla(\mathcal N_v)(t;n_0,\vec u_0, \vec b_0 )\big\|_{L^2_{xy}}
\lesssim&
\langle t\rangle^{-1}\|U_0\|_{X_0}.\label{est:v-nonlinear-nax2}
\end{align}
\end{lemma}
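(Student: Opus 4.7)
\smallskip

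The plan is to estimate each of the five summands of $\mathcal{N}_v$ in \eqref{v-N} separately, closely paralleling the arguments already carried out for $\mathcal{N}_n$ in Section \ref{sect:n} and for $\mathcal{N}_u$ in Section \ref{sec:n}. As before, I first split each nonlinearity $N_j$ into its low- and high-frequency pieces via \eqref{dec:high-low}. The high-frequency pieces obey \eqref{high-03}--\eqref{high-12}, which give a time-integrable decay $\langle s\rangle^{-1.03}$; combined with the relevant $L^p$ kernel bounds from Section \ref{KernelProperty} (Propositions \ref{lem:Kn2-L}, \ref{lem:Kn4-L}, \ref{lem:Kn5-L}, \ref{lem:Kn8-L}, \ref{lem:Kn11-L}, \ref{lem:Ku2-L}) the high-frequency contribution is routine. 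So the real work is on the low-frequency pieces.

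The first, second and fourth summands in \eqref{v-N} are at the same operator level as terms that were already treated. Indeed, via $\eta(\partial_{tt}+A^2\partial_t)\sim \tfrac{\xi}{A}\xi(\partial_{tt}+A^2\partial_t+A^2)$ the first summand $\int_0^t\partial_y(\pp_{tt}-\Delta\pp_t)K(t-s)N_0(s)\,ds$ is handled exactly like $\int_0^t\pp_x(\pp_{tt}-\Delta\pp_t-\Delta)K(t-s)N_0(s)\,ds$ in Section \ref{sec:u-N0-1}: decompose $N_0$ as in \eqref{re-N0}, treat the easy pieces directly, and for the borderline piece $\partial_y(n\,\partial_s\psi)$ integrate by parts in $s$ to land $\partial_t$ on the kernel, then use Propositions \ref{lem:Kn5-L} and \ref{lem:Ku6-L} together with the frequency-localization identity \eqref{90}. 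The second summand $\int_0^t\partial_t\partial_{xy}K(t-s)N_1(s)\,ds$ is the direct $x\leftrightarrow y$ analogue of $\int_0^t\partial_{xy}\partial_tK(t-s)N_2(s)\,ds$ treated in Section \ref{sec:Nu-N2}, so I apply the decomposition \eqref{dec:N1} and the same integration-by-parts/structure arguments. Finally, since $\Delta(\partial_{tt}-\Delta\partial_t-\partial_{xx})K$ has the same decay as $\Delta(\partial_t-\Delta)K$ modulo lower order, and $\nabla N_3\sim N_0$, the fourth summand is handled by the method of Section \ref{sec:n-N3} via \eqref{e2.7}.

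The main obstacle is the third summand $\int_0^t\partial_t(\pp_{tt}-\Delta\pp_t-\partial_{xx})K(t-s)N_2(s)\,ds$, which is the precise analogue of the troublesome term treated in Section \ref{sec:Nu-N1}. Because $\|\partial_y\psi\,\Delta\psi\|_{L^1}\lesssim \langle s\rangle^{-3/4}$ is far from time-integrable, direct estimation fails. I follow the strategy of Section \ref{sec:Nu-N1}: rewrite $N_2$ using the first equation of \eqref{e2.1} to isolate a perfect time derivative, yielding the decomposition \eqref{dec:N2} with $N_{23},N_{24},N_{25}$. For $N_{23}$ an $L^1$ bound of the form $\langle s\rangle^{-1.2}Q(\|U\|_X)$ (analogue of \eqref{1044}) together with Proposition \ref{lem:Kn5-L} is enough. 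For $N_{24}=-\partial_t(nv)$ I integrate by parts in $s$ and invoke Proposition \ref{lem:Kn9-L} on $\partial_{tt}(\pp_{tt}-\Delta\pp_t-\Delta-\pp_{yy})K$. For the critical piece $N_{25}=-\nabla\cdot(\partial_y\psi\nabla\psi)+\partial_y(\tfrac12|\nabla\psi|^2-n^2)$, I either decompose the product by frequency (mimicking the split into \eqref{1510-1} and \eqref{1510-2} with Proposition \ref{lem:Ku6-L}), or swap the outer $\partial_t$ with a spatial derivative via integration by parts, using that $\partial_t(\tfrac12|\nabla\psi|^2-n^2)$ has cubic-type $L^1$ estimates as in \eqref{1658}. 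The key mechanism is, once again, that after reshuffling time derivatives using \eqref{e2.1}--\eqref{e2.4}, one always trades a scaling-critical bilinear term for a cubic one or for an $L^1$-integrable boundary/time-derivative term.

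Finally, the $\lambda$-term $\lambda\int_0^t\partial_t(\pp_{tt}-\Delta\partial_t)K(t-s)(\partial_{xy}u+\partial_{yy}v)\,ds$ is linear in $U$ and thus must give the advertised $|\lambda|\,\|U\|_X$ factor (not $\|U_0\|_{X_0}$) after absorbing into the continuity argument. Following Section \ref{sec:u-lambda}, I split the integral into $[0,t/2]$ and $[t/2,t]$: the former is treated by Proposition \ref{lem:Kn5-L} using the decay of $u$ in $L^2$; for the latter, one loses a logarithm if one estimates naively, so I split $K=K_a+K_b$ with $\widehat{K_a}=\chi_{|\xi|\le A^2}\widehat K$ and $\widehat{K_b}=\chi_{|\xi|\ge A^2}\widehat K$. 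On $K_a$ the pointwise bound $|A\xi\partial_t(\pp_{tt}+A^2\pp_t+A^2)\widehat{K_a}|\lesssim\langle t\rangle^{-3/2}$ gains an extra half-power of decay; on $K_b$ the identity \eqref{K-ttt2t2-K1} reduces matters to $K_{1,b}$, which factorizes as $K_c(t-s)=K_c(t)K_c(-s)$ and is $L^2$-bounded, yielding the desired $\langle t\rangle^{-1}$ bound. I expect this last step to be the most delicate point in the proof.
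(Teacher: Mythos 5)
Your proposal is essentially correct and follows the paper's overall strategy (term-by-term estimation of the five summands in \eqref{v-N}, with the same decompositions \eqref{dec:N1} and \eqref{dec:N2} for the $N_1$- and $N_2$-terms, and the same $K=K_a+K_b$ splitting and $[0,t/2]\cup[t/2,t]$ time decomposition for the $\lambda$-term). You also correctly observe that the right-hand side of the lemma as stated is incomplete: the $\lambda$-term and the nonlinear terms necessarily produce $|\lambda|\,\|U\|_X$ and $Q(\|U\|_X)$ contributions, so the bound should match the target \eqref{est:v-L2}--\eqref{est:v-naL2} rather than just $\|U_0\|_{X_0}$.

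Where you depart from the paper is in the first and fourth summands, and there your route is noticeably heavier than necessary. For $\int_0^t\partial_y(\partial_{tt}-\Delta\partial_t)K(t-s)N_0\,ds$ you propose to replicate the full decomposition of $N_0$ via \eqref{re-N0} as in Section \ref{sec:u-N0-1}; the paper instead pulls out the divergence in $N_0=\nabla\cdot(n\vec u)$ and applies Proposition \ref{lem:Kn4-L} (ii)--(iii) directly to $(n\vec u)$, using only $\|(n\vec u)(s)\|_{L^2}\lesssim\langle s\rangle^{-1.25}$. This works here because the three norms needed for $v$ ($L^2$ at rate $\langle t\rangle^{-1/2}$, $\langle\nabla\rangle$ in $L^\infty$ and $\langle\nabla\rangle\nabla$ in $L^2$ both at rate $\langle t\rangle^{-1}$) are all reachable from the $L^\infty_{\xi\eta}$ bounds \eqref{est:kn4-4}--\eqref{est:kn4-5} for $A^{1+\beta}\eta(\partial_{tt}+A^2\partial_t)\widehat K$; by contrast, in $\mathcal{N}_u$ one also needs an $|\nabla|^\gamma$-estimate at rate $\langle t\rangle^{-3/4}$, which that direct $L^\infty$-kernel argument cannot reach, forcing the decomposition of $N_0$ there. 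Similarly, for $\int_0^t\Delta(\partial_{tt}-\Delta\partial_t-\partial_{xx})K(t-s)N_3\,ds$ you reroute through $\nabla N_3\sim N_0$ and the identity $\partial_{yy}\psi\sim-\partial_y n$ from Section \ref{sec:n-N3}, but that is unnecessary: $N_3=-\vec u\cdot\nabla\psi$ is itself already a product, so $\|N_3(s)\|_{L^2}\lesssim\|\nabla\psi\|_{L^2}\|\vec u\|_{L^\infty}\lesssim\langle s\rangle^{-1.25}$ is directly available, and since the symbol $A(\partial_{tt}+A^2\partial_t+\xi^2)$ obeys the same bounds as $\eta(\partial_{tt}+A^2\partial_t)$ (compare Lemmas \ref{lem:Kn4} and \ref{lem:Kn11}), $\Delta(\partial_{tt}-\Delta\partial_t-\partial_{xx})K\,N_3$ behaves like $\nabla\partial_y(\partial_{tt}-\Delta\partial_t)K\cdot(n\vec u)$ and the estimate of the first summand applies verbatim. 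Your comparison of $\Delta(\partial_{tt}-\Delta\partial_t-\partial_{xx})K$ with $\Delta(\partial_t-\Delta)K$ is also the less natural one, since the relevant symbol identification in the paper is with $A\cdot\eta(\partial_{tt}+A^2\partial_t)$, not with $A^2(\partial_t+A^2)$.

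In short: terms 2, 3, 5 match the paper; terms 1 and 4 are treated by you with the heavier machinery of Sections \ref{sec:u-N0-1} and \ref{sec:n-N3}, whereas the paper gets by with direct $L^2$-to-$L^2$ kernel bounds, exploiting that the $v$-norms do not include the delicate $|\nabla|^\gamma L^2$ rate. Your approach should still go through, but at the cost of unnecessary work and some care to re-derive the decay rates in the specific norms appearing in $X_{\textbf{u}}$ for $v$, which are not the ones derived in Sections \ref{sec:u-N0-1} and \ref{sec:n-N3}.
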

We estimate $\mathcal N_v(t;n,\vec u, \psi)$ in \eqref{v-N} terms by terms.

\subsubsection{$\int_0^t\pp_y(\pp_{tt}-\Delta\pp_t)K(t-s)N_0(s)\,ds$}\label{sec:Nv-N0}
The high frequency piece $N_0^h$ can be treated standard, so we only treat $N_0^l$.  Recall that $N_0=\nabla\cdot (n\vec u)$. First, we have
%Since
%$$
%\int_0^t\pp_y(\pp_{tt}-\Delta\pp_t)K(t-s)N_0(s)\,ds
%=\int_0^t\pp_y\nabla\cdot (\pp_{tt}-\Delta\pp_t)K(t-s)(n\vec u)(s)\,ds,
%$$
%and
\begin{align}\label{91}
\big\|(n\vec u)(s)\big\|_{L^2_{xy}}\lesssim \|n(s)\|_{L^2_{xy}}\|\vec u(s)\|_{L^\infty_{xy}}\lesssim \langle s\rangle^{-1.25}\|U\|_X^2.
\end{align}
Then by Proposition \ref{lem:Kn4-L} (ii), for $\beta=0,1$, and \eqref{91} we have
\begin{align*}
&\Big\||\nabla|^\beta\int_0^t\pp_y(\pp_{tt}-\Delta\pp_t)K(t-s)N_0^l(s)\,ds\Big\|_{L^2_{xy}}\\
\lesssim &
\int_0^t\Big\||\nabla|^\beta\nabla\pp_y(\pp_{tt}-\Delta\pp_t)K(t-s)\cdot (n\vec u)(s)\,ds\Big\|_{L^2_{xy}}\\
\lesssim &
\int_0^t\langle t-s\rangle^{-\frac{1+\beta}2}\big\|P_{\lesssim \langle s \rangle^{0.01}}\langle \nabla \rangle^{\beta}(n\vec u)(s)\big\|_{L^2_{xy}}\,ds\\
\lesssim &
\int_0^t\langle t-s\rangle^{-\frac{1+\beta}2} \langle s\rangle^{-1.25+0.01}\|U\|_X^2\,ds\\
\lesssim &
\langle t \rangle^{-\frac{1+\beta}2}\|U\|_X^2.
\end{align*}
Similarly, using Proposition \ref{lem:Kn4-L} (iii) instead, we have
\begin{align*}
&\Big\|\int_0^t\pp_y(\pp_{tt}-\Delta\pp_t)K(t-s)N_0^l(s)\,ds\Big\|_{L^\infty_{xy}}\\
\lesssim &
\int_0^t\langle t-s\rangle^{-1} \big\|P_{\lesssim \langle s \rangle^{0.01}}\langle \nabla \rangle^{1+}(n\vec u)(s)\big\|_{L^2_{xy}}\,ds\\
\lesssim &
\int_0^t\langle t-s\rangle^{-1} \langle s\rangle^{-1.25+0.02}\|U\|_X^2\,ds\\
\lesssim &
\langle t \rangle^{-1}\|U\|_X^2.
\end{align*}
Thus, we obtain the claimed results.

\subsubsection{$\int_0^t\partial_t\partial_{xy}K(t-s)N_1(s)\,ds$}
We use the same argument as in Section \ref{sec:Nu-N1}, and decompose $N_1$ into four parts as \eqref{dec:N1}. Then all the desirable estimates
on the terms $N_{13}, N_{14}, N_{15}, N_{16}$ can be obtained by the corresponding ways. More precisely, since
$$
\pp_t\pp_x\pp_y\sim \pp_t(\pp_{tt}-\Delta\pp_t-\Delta-\pp_{yy});\quad \nabla\pp_t\pp_x\pp_y\sim \pp_x\pp_t(\pp_{tt}-\Delta\pp_t-\Delta-\pp_{yy}),
$$
here $\sim $ means that the two operator obey the similar decaying estimates presented in Section \ref{KernelProperty},
the details are just mimicked there and so are omitted here.

\subsubsection{$\int_0^t\partial_t(\pp_{tt}-\Delta\pp_t-\partial_{xx}) K(t-s)N_2(s)\,ds$}
Also, using the argument in Section \ref{sec:Nu-N2}, and noting that
$$
 \pp_t(\pp_{tt}-\Delta\pp_t-\pp_{xx})\sim \pp_t\pp_x\pp_y;\quad \nabla \pp_t(\pp_{tt}-\Delta\pp_t-\pp_{xx})\sim \pp_x\pp_t\pp_x\pp_y
$$
(indeed, the former behaviors slightly better than the latter),
we can obtain the desirable estimates here. The details are omitted again.

\subsubsection{$\int_0^t\Delta (\partial_{tt}-\Delta\partial_t-\partial_{xx})K(t-s)N_3(s)\,ds$}
By Lemma \ref{lem:Kn4} and Lemma \ref{lem:Kn11}, we note that
$$
\nabla (\partial_{tt}-\Delta\partial_t-\partial_{xx})\sim \pp_y(\pp_{tt}-\Delta\pp_t).
$$
Moreover,
$$
\|\nabla\psi\cdot\vec u\|_{L^2_{xy}}\lesssim \|\nabla\psi\|_{L^2_{xy}}\|\vec u\|_{L^\infty_{xy}}\lesssim \langle s\rangle^{-1.25}\|U\|_X^2.
$$
Therefore, by the same way as in Section \ref{sec:Nv-N0}, we obtain the desirable estimates.

\subsubsection{$\lambda\int_0^t\partial_{t}(\pp_{tt}-\Delta\partial_{t})K(t-s)\big[\big(\partial_{xy}u(s)+\partial_{yy}v(s)\big)\big]\,ds$}
\label{sec:Nv-lambda}
To prove this term, we use the similar precess as in Section \ref{sec:u-lambda}. Indeed, we rewrite
$$
\partial_{t}(\pp_{tt}-\Delta\partial_{t})K_b(t)=\partial_{t}(\pp_{tt}-\Delta\partial_{t}-\Delta)K_b(t)+\pp_t\Delta K_b(t).
$$
Moreover, by \eqref{Kt-2}, we have
$$
\pp_tA^2\widehat{K_b}(t)=-\frac{A^4}2\widehat{ K_b}(t)+\frac{A}{|\eta|}\widehat{K_{1,b}}(t).
$$
Therefore, we have the similar structure as the first part. Then by the same way as in Section \ref{sec:Nv-N0}, we get the claimed estimates.

Collecting all the estimates in Section \ref{sec:Nv-N0}--Section \ref{sec:Nv-lambda}, we obtain Lemma \ref{lem:v-nonlinear}. Together with
Lemma \ref{lem:v-linear}, we establish \eqref{est:v-L2}--\eqref{est:v-naL2}.

\vskip .4in
\section{The estimates on $\psi$}
\label{sec:psi}

In this section, we shall prove that there exists some small constant $\epsilon>0$, such that
\begin{align}
\|\langle \nabla\rangle^4|\nabla|^\gamma\psi(t)\|_{L^2_{xy}}&\lesssim \langle t\rangle^{-\frac14}\big(\|U_0\|_{X_0}+\epsilon_0\|U\|_X+Q(\|U\|_{X})\big);\label{est:psi-L2}\\
\big\||\nabla|^{\bar \gamma}\langle \nabla\rangle \psi(t)\big\|_{L^\infty_{xy}}&\lesssim \langle t\rangle^{-\frac12}\big(\|U_0\|_{X_0}+\epsilon_0\|U\|_X+Q(\|U\|_{X})\big);\label{est:psi-Linfty}\\
\|\pp_x\psi(t)\|_{L^2_{xy}}&\lesssim \langle t\rangle^{-\frac12}\big(\|U_0\|_{X_0}+\epsilon_0\|U\|_X+Q(\|U\|_{X})\big);\label{est:psi-xL2}\\
\|\pp_x\nabla\psi(t)\|_{L^2_{xy}}&\lesssim \langle t\rangle^{-\frac34}\big(\|U_0\|_{X_0}+\epsilon_0\|U\|_X+Q(\|U\|_{X})\big).\label{est:psi-naxL2}
\end{align}
Again, it follows from Nash's inequality that for any $\frac\gamma2<\bar \gamma<1+\frac\gamma2$,
\begin{align}\label{Nash}
\big\||\nabla|^{\bar\gamma}\langle \nabla\rangle \psi(t)\big\|_{L^\infty_{xy}}\lesssim \|\langle \nabla\rangle^4|\nabla|^\gamma\psi(t)\|_{L^2_{xy}}^{\frac12}\|\nabla\pp_x\psi(t)\|_{L^2_{xy}}^{\frac12},
\end{align}
(see its proof in Appendix \ref{app5}). Thus, we only need to show \eqref{est:psi-L2}, \eqref{est:psi-xL2} and \eqref{est:psi-naxL2}.

\subsection{The reexpression of $\psi$}
Now we give the reexpression of $\psi$ and obtain
\begin{prop}
The unknown function $\psi$ obeys the formula,
\begin{align}\label{exp-psi}
\psi(t,x,y)=
(L_\psi+B_\psi)(t;n_0,\vec u_0, \vec b_0)+\mathcal{N}_\psi(t;n,\vec u, \psi),
\end{align}
where $(L_\psi+B_\psi)$ is given by
\begin{equation}
\aligned
L_\psi(t;n_0,\vec u_0, \vec b_0)&+B_\psi(t;n_0,\vec u_0, \vec b_0)
=-K(t)\big[\partial_{xy}u_0+\partial_{y}\Delta n_0\big]+\pp_{y}\pp_tK(t)[n_0]\\
&-(\partial_{tt}-\Delta\partial_t-\pp_{xx})K(t)\big[v_0\big]
-\frac12\Delta\sqrt{\Delta\partial_{yy}} K(t)[\psi_0]+K_1(t)[\psi_0], \label{eq:LBpsi}
\endaligned
\end{equation}
and
\begin{align}
\mathcal{N}_\psi(t;n,\vec u, \psi)=
&\int_0^t\partial_y\partial_tK(t-s)N_0(s)\,ds-\int_0^t\partial_y\Delta K(t-s)N_0(s)\,ds\notag\\
&\quad-\int_0^t\partial_{xy}K(t-s)N_1(s)\,ds-\int_0^t(\pp_{tt}-\Delta\pp_t-\partial_{xx})K(t-s)N_2(s)\,ds\notag\\
&\quad
-\int_0^t(\partial_{t}-\Delta) (\partial_{tt}-\Delta\partial_t-\Delta) K(t-s)N_3(s)\,ds\notag\\
&\quad
-\lambda\int_0^t(\pp_{tt}-\Delta\pp_t)K(t-s)\big(\partial_{xy}u(s)+\partial_{yy}v(s)\big)\,ds.\label{psi-N}
\end{align}
\end{prop}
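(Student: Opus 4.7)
The plan is to mirror the four-step derivation carried out for $n$, $u$, $v$ in Sections \ref{sect:n}, \ref{sec:n}, \ref{sec:v}, since the linear operator acting on $\psi$ is identical and only the source term $F_3$ and the initial data for the intermediate variables change. Concretely, I start from the diagonalized equation \eqref{e2.18}, $\big((\pp_{tt}-\Delta\pp_t-\Delta)^2-\Delta\pp_{yy}\big)\psi=F_3$, and apply Lemma \ref{lem:Formula} to get $\psi(t)=L(\Phi_{1,0},\Phi_{2,0},\Phi_{3,0},\Phi_{4,0})+\int_0^t K(t-s)F_3(s)\,ds$. Lemma \ref{fiii} then recasts the linear part as
\begin{align*}
L_\psi(t)=&K(t)\big[(\pp_{tt}-\Delta\pp_t-\Delta)\pp_t\psi(0)\big]+(\pp_{tt}-\Delta\pp_t-\Delta)K(t)[\pp_t\psi(0)]\\
&+(\pp_t-\Delta)K(t)\big[(\pp_{tt}-\Delta\pp_t-\Delta)\psi(0)\big]-\tfrac12\Delta\sqrt{\Delta\pp_{yy}}K(t)[\psi_0]+K_1(t)[\psi_0].
\end{align*}

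The next step is to eliminate the time derivatives at $t=0$ using the evolution equations. From the fourth equation in \eqref{e2.1}, $\pp_t\psi=-v+N_3$, so every occurrence of $\pp_t\psi(0)$ is replaced by $-v_0+N_3(0)$; then \eqref{e2.9} gives $(\pp_{tt}-\Delta\pp_t-\Delta)\psi(0)=\pp_y n_0+\Pi_3(0)$ where $\Pi_3(0)$ is expressed through $N_2(0),N_3(0)$ and the $\lambda$-terms via \eqref{Pi3}; and $(\pp_{tt}-\Delta\pp_t-\Delta)\pp_t\psi(0)$ is obtained by differentiating \eqref{e2.9} in $t$ and using \eqref{e2.6} to substitute $\pp_{tt}\psi(0)$. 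This produces a long expression for $L_\psi$ containing terms in $v_0$, $n_0$, $\psi_0$, the nonlinearities $N_j(0)$, and $\lambda$-contributions.

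In parallel, using the explicit $F_3$ from \eqref{F3-r} and integrating by parts in time on $\int_0^t K(t-s)F_3(s)\,ds$ (each time-derivative on a nonlinearity yields a boundary term at $s=0$ plus a time-differentiated kernel), I read off $B_\psi$. By the same mechanism that produced the cancellations in Section \ref{sect:n} and \eqref{eq:cx1}, the boundary contributions in $B_\psi$ cancel exactly the $N_j(0)$-terms in $L_\psi$ and the purely time-derivative terms collapse: $(\pp_{tt}-\Delta\pp_t-\Delta)K(t)[-v_0]$ combines with $\tfrac12\Delta(\pp_{tt}-\Delta\pp_t-\Delta)K(t)[v_0]$ and $K_1(t)$-contributions to $-(\pp_{tt}-\Delta\pp_t-\pp_{xx})K(t)[v_0]$ via the identity $(\pp_{tt}-\Delta\pp_t-\Delta)+\pp_{yy}=\pp_{tt}-\Delta\pp_t-\pp_{xx}$ together with \eqref{K-ttt2t2-K1}; and the $\pp_y n_0$, $\pp_{xy}u_0$, $\pp_y\Delta n_0$, $\pp_y\pp_t K(t)[n_0]$ pieces assemble into the first two displayed terms of \eqref{eq:LBpsi}. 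The $\lambda$-contributions are absorbed into the last term of $\mathcal{N}_\psi$ after matching with the $\lambda$-integral from \eqref{F3-r}. What remains in the time integral is precisely $\mathcal{N}_\psi$ as given in \eqref{psi-N}.

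The main obstacle will be the bookkeeping of the many cancellations: as in Sections \ref{sec:Bn}--\ref{sec:Lu}, virtually every factor has to conspire so that the $N_j(0)$-terms vanish and the $\lambda$-terms regroup cleanly, and one must carefully track signs when converting $\pp_{tt}\psi(0)$ via the $v$-equation \eqref{e2.6} (which itself involves $\Delta\psi(0)$). The compensations rely crucially on the operator identity \eqref{eq:cx1}-type computations, and I expect the $\pp_y\pp_t K(t)[n_0]$ term to be the subtle one, arising from the $(\pp_t-\Delta)K(t)[\pp_y n_0]$ contribution in $L_\psi$ minus the $\pp_y\Delta N_0$-type boundary term; a quick sanity check is that the resulting $L_\psi+B_\psi$ and $\mathcal{N}_\psi$ match the structure of \eqref{exp-n}, \eqref{exp-u}, \eqref{exp-v} modulo the replacement $F_0\leftrightarrow F_3$, which they do.
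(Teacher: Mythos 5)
Your proposal follows exactly the paper's route: apply Lemma \ref{lem:Formula} and Lemma \ref{fiii} to \eqref{e2.18}, substitute $\pp_t\psi(0)=-v_0+N_3(0)$ and $(\pp_{tt}-\Delta\pp_t-\Delta)\psi(0)$ via \eqref{e2.9}, integrate $\int_0^t K(t-s)F_3(s)\,ds$ by parts in $s$ using \eqref{F3-r}, and watch the $N_j(0)$-boundary terms cancel. One small correction to your bookkeeping: the $v_0$-term in \eqref{eq:LBpsi} does not require the \eqref{eq:cx1}-type identity or any $K_1$-contribution; it is the direct sum of $-(\pp_{tt}-\Delta\pp_t-\Delta)K(t)[v_0]$ from the $\pp_t\psi(0)$-substitution in \eqref{Lpsi-2} and the $-\pp_{yy}K(t)[v_0]$ piece that appears when $(\pp_{tt}-\Delta\pp_t-\Delta)v(0)$ is expanded via \eqref{e2.6} inside \eqref{Lpsi-1}, giving $-(\pp_{tt}-\Delta\pp_t-\Delta+\pp_{yy})K(t)[v_0]=-(\pp_{tt}-\Delta\pp_t-\pp_{xx})K(t)[v_0]$. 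The $\frac12\Delta\sqrt{\Delta\pp_{yy}}K(t)$ and $K_1(t)$ pieces stay attached to $\psi_0$ throughout (unlike the $u$ and $v$ cases, here there is no $\Delta\psi(0)$ produced), so the $\psi$-derivation is actually the cleanest of the four. Similarly the $\pp_y\pp_tK(t)[n_0]$ and $-K(t)[\pp_y\Delta n_0]$ terms arise jointly and purely from expanding $(\pp_t-\Delta)K(t)[\pp_y n_0]$ in \eqref{Lpsi-3}, with no boundary-term cancellation involved.
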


The proof of this proposition is similar as the one in the previous three sections.
Again, using \eqref{F3-r}  and integration by parts (see below for details), we have
\begin{align*}
\psi(t,x,y)=&L_\psi(t;n_0,\vec u_0, \vec b_0)+\int_0^t K(t-s)F_3(s)\,ds\\
=& L_\psi(t;n_0,\vec u_0, \vec b_0)+\int_0^t K(t-s)\Big[\partial_y\partial_sN_0(s)-\partial_y\Delta N_0(s)-\partial_{xy}N_1(s)\\
&\quad-(\pp_{ss}-\Delta\pp_s-\partial_{xx})N_2(s) +(\partial_{s}-\Delta) (\partial_{ss}-\Delta\partial_s-\Delta) N_3(s)\\
&\quad -\lambda(\pp_{ss}-\Delta\pp_s)(\partial_{xy}u(s)+\partial_{yy}v(s))\Big]\,ds\\
=&
L_\psi(t;n_0,\vec u_0, \vec b_0)+B_\psi(t;n_0,\vec u_0, \vec b_0)+\mathcal{N}_\psi(t;n,\vec u, \psi).
\end{align*}
Here according to \eqref{eq:Formula},
\begin{align}
&L_\psi(t;n_0,\vec u_0, \vec b_0)\notag\\
=&
K(t)\big[(\pp_{tt}-\Delta\pp_t-\Delta)\pp_{t}\psi(0)\big]\label{Lpsi-1}\\
&+(\partial_{tt}-\Delta\partial_t-\Delta)K(t)\big[\partial_t\psi(0)\big]\label{Lpsi-2}\\
&-\Delta K(t)\big[(\pp_{tt}-\Delta\pp_t-\Delta)\psi(0)\big]
+\partial_t K(t)\big[(\pp_{tt}-\Delta\pp_t-\Delta)\psi(0)\big]\label{Lpsi-3}\\
&-\frac12\Delta\sqrt{\Delta\partial_{yy}} K(t)[\psi_0]+K_1(t)[\psi_0]\label{Lpsi-4},%\\
%&+\frac{1}{4}\Big[
%e^{(\frac12\Delta+\sqrt{\frac14\Delta^2+\Delta+\sqrt{\Delta\partial_{yy}}})t}
%+e^{(\frac12\Delta-\sqrt{\frac14\Delta^2+\Delta+\sqrt{\Delta\partial_{yy}}})t}\notag\\
%&\qquad
%+e^{(\frac12\Delta+\sqrt{\frac14\Delta^2+\Delta-\sqrt{\Delta\partial_{yy}}})t}
%+e^{(\frac12\Delta-\sqrt{\frac14\Delta^2+\Delta-\sqrt{\Delta\partial_{yy}}})t}\Big]\psi_0,\label{Lpsi-5}
\end{align}
and $B_\psi(t;n_0,\vec u_0, \vec b_0)$ is the boundary term given below.

Next we will simply $L_\psi+B_\psi$.
To this end, we give the
explicit expressions of $L_\psi$ and $B_\psi$ respectively.

\subsubsection{$L_\psi(t;n_0,\vec u_0, \vec b_0)$}\label{sec:Lpsi}

By the equations \eqref{e2.1} and \eqref{e2.6} at $t=0$, we have
\begin{align*}
(\pp_{tt}-\Delta\pp_t-\Delta)&\pp_{t}\psi(0)
=\big[(\pp_{tt}-\Delta\pp_t-\Delta)(-v(0)+N_3(0))\big]\\
=&-\big(\pp_{yy} v(0)+\partial_{xy}u(0)-\pp_yN_0(0)+\partial_tN_2(0)-\Delta N_3(0)\\
&+\lambda\pp_t(\pp_{xy} u(0)+\pp_{yy} v(0))\big)+(\pp_{tt}-\Delta\pp_t-\Delta)N_3(0).
\end{align*}
Thus,
\begin{align*}
\eqref{Lpsi-1}=&
-K(t)\big[\pp_{yy} v(0)+\partial_{xy}u(0)-\pp_yN_0(0)+\partial_tN_2(0)-\Delta N_3(0)\\
&\quad +\lambda\pp_t(\pp_{xy} u(0)+\pp_{yy} v(0))\big] + K(t)\big[(\pp_{tt}-\Delta\pp_t-\Delta)N_3(0)\big].
\end{align*}
Now we consider \eqref{Lpsi-2} and \eqref{Lpsi-3}, similarly,
\begin{align*}
\eqref{Lpsi-2}=
&(\partial_{tt}-\Delta\partial_t-\Delta)K(t)
\big[-v(0)+N_3(0)\big]\\
=&-(\partial_{tt}-\Delta\partial_t-\Delta)K(t)\big[v(0)\big]+
(\partial_{tt}-\Delta\partial_t-\Delta)K(t)\big[N_3(0)\big];\\
\eqref{Lpsi-3}=&
\big(\partial_t -\Delta\big) K(t)\big[(\pp_{tt}-\Delta\pp_t-\Delta)\psi(0)\big]\\
=&\big(\partial_t -\Delta\big)  K(t)\big[\partial_{y}n(0)-N_2(0)+(\partial_t-\Delta) N_3(0)-\lambda(\pp_{xy} u(0)+\pp_{yy} v(0))\big].
\end{align*}
Then collecting the estimates above, we have
\begin{align*}
&L_\psi(t;n_0,\vec u_0, \vec b_0)\notag\\
=&
K(t)\big[(\pp_{tt}-\Delta\pp_t-\Delta)N_3(0)\big]+(\partial_{tt}-\Delta\partial_t-\Delta)K(t)\big[N_3(0)\big]\\
&
+K(t)\big[\pp_yN_0(0)-\partial_tN_2(0)+\Delta N_2(0)+\Delta N_3(0)-(\partial_t-\Delta)\Delta N_3(0)\big]\\
&+\pp_tK(t)\big[-N_2(0)+(\partial_t-\Delta) N_3(0)\big]\\
&
-K(t)\big[\partial_{xy}u(0)+\partial_{y}\Delta n(0)\big]+\pp_{y}\pp_tK(t)[n(0)]\\
&-(\partial_{tt}-\Delta\partial_t-\pp_{xx})K(t)\big[v(0)\big]
-\lambda K(t)\big[(\pp_{t}-\Delta)(\pp_{xy}u(0)+\pp_{yy}v(0))\big]\\
&
-\lambda\pp_tK(t)\big[\pp_{xy} u(0)+\pp_{yy} v(0)\big]+\eqref{Lpsi-4}.
\end{align*}

\subsubsection{$B_\psi(t;n_0,\vec u_0, \vec b_0)$} Now we consider the boundary term $B_\psi(t;n_0,\vec u_0, \vec b_0)$. By integration by parts, and arguing similarly as in Section \ref{sec:Bn} we have
\begin{align*}
\int_0^t K(t-s)&\big[\partial_y\partial_sN_0(s)\big]\,ds\\
=&-K(t)\big[\partial_yN_0(0)\big]+\int_0^t \partial_y\partial_tK(t-s)N_0(s)\,ds;\\
-\int_0^t K(t-s)&\big[(\pp_{ss}-\Delta\pp_s-\partial_{xx})N_2(s)\big]\,ds\\
=&K(t)\big[(\pp_{t}-\Delta)N_2(0)\big]+\pp_{t}K(t)\big[N_2(0)\big]\\
&\quad-\int_0^t (\pp_{tt}-\Delta\pp_t-\partial_{xx})K(t-s)N_2(s)\,ds;\\
\int_0^t K(t-s)&\big[(\partial_{s}-\Delta) (\partial_{ss}-\Delta\partial_s-\Delta) N_3(s)\big]\,ds\\
=&-K(t)\big[(\partial_{tt}-\Delta\partial_t-\Delta) N_3(0)\big]-\partial_tK(t)\big[(\partial_{t}-\Delta) N_3(0)\big]-\partial_{tt}K(t)\big[N_3(0)\big]\\
&\quad +K(t)\big[(\partial_{t}-\Delta)\Delta  N_3(0)\big]+\Delta \partial_{t}K(t)\big[N_3(0)\big]\\
&\quad+(\partial_{t}-\Delta) (\partial_{tt}-\Delta\partial_t-\Delta) \int_0^t K(t-s)N_3(s)\,ds;\\
-\lambda\int_0^t K(t-s)&\big[(\pp_{tt}-\Delta\pp_t)(\partial_{xy}u(s)+\partial_{yy}v(s))\big]\,ds\\
=&\lambda K(t)\big[(\pp_{t}-\Delta)(\partial_{xy}u(0)+\partial_{yy}v(0))\big]+\lambda \pp_{t}K(t)\big[\partial_{xy}u(0)+\partial_{yy}v(0)\big]\\
&\quad-\lambda\int_0^t (\pp_{tt}-\Delta\pp_t)K(t-s)(\partial_{xy}u(s)+\partial_{yy}v(s))\,ds.
\end{align*}
Therefore, we obtain the boundary term $B_\psi(t;n_0,\vec u_0, \vec b_0)$ as
\begin{align*}
&B_\psi(t;n_0,\vec u_0, \vec b_0)\\
=&-K(t)\big[(\partial_{tt}-\Delta\partial_t-\Delta) N_3(0)\big]\\
&+K(t)\big[-\pp_y N_0(0)+(\pp_{t}-\Delta)N_2(0)+(\partial_{t}-\Delta)\Delta  N_3(0)\big]\\
&+\pp_t K(t)\big[ N_2(0)-(\pp_t-\Delta) N_3(0)\big]+\Delta\pp_t  K(t)\big[N_3(0)\big]-\pp_{tt}K(t)\big[N_3(0)\big]\\
&+\lambda K(t)\big[(\pp_{t}-\Delta)(\partial_{xy}u(0)+\partial_{yy}v(0))\big]+\lambda \pp_{t}K(t)\big[\partial_{xy}u(0)+\partial_{yy}v(0)\big].
\end{align*}
Combining with the result obtained in Section \ref{sec:Lpsi}, we have \eqref{eq:LBpsi}.

Again, we split into two subsections to consider the linear parts and nonlinear parts separately.
\subsection{Estimates on the linear parts $L_\psi+B_\psi$}
In this subsection, we prove that
\begin{lemma}\label{lem:psi-linear}
For $\gamma>\frac12$,
\begin{align}
\big\|\langle \nabla\rangle^4|\nabla|^\gamma(L_\psi+B_\psi)(t;n_0,\vec u_0, \vec b_0 )\big\|_{L^2}
\lesssim&
\langle t\rangle^{-\frac14}\|U_0\|_{X_0};\label{est:psi-linear}\\
\big\|\pp_x(L_\psi+B_\psi)(t;n_0,\vec u_0, \vec b_0 )\big\|_{L^2}
\lesssim&
\langle t\rangle^{-\frac12}\|U_0\|_{X_0}.\label{est:psi-linear-x2}\\
\big\|\nabla\pp_x(L_\psi+B_\psi)(t;n_0,\vec u_0, \vec b_0 )\big\|_{L^2}
\lesssim&
\langle t\rangle^{-\frac34}\|U_0\|_{X_0}.\label{est:psi-linear-x3}
\end{align}
\end{lemma}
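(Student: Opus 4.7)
The plan is to estimate each of the five summands in \eqref{eq:LBpsi} separately and combine via the triangle inequality; each summand is a composition of one of the model operators already analyzed in Section \ref{KernelProperty} (and the accompanying Propositions of Section 4) applied to data whose norm is controlled by $\|U_0\|_{X_0}$. The strategy mirrors exactly the proofs given in Section \ref{sec:LB-n} and the corresponding subsections for $u$ and $v$: for each term, commute $\langle\nabla\rangle^4|\nabla|^\gamma$, $\partial_x$, or $\nabla\partial_x$ with the kernel (they all act as Fourier multipliers), then move as many derivatives as needed onto the initial datum so that the admissible-index constraints of the relevant proposition are satisfied, and read off the prescribed decay rate.

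The first four terms are handled by direct invocation: for $K(t)[\partial_{xy}u_0]$, Proposition \ref{lem:Ku1-L}(i) with $\beta=1/2,1,3/2$ and $\beta'=4+\gamma,1,2$ yields the three bounds $\langle t\rangle^{-1/4},\langle t\rangle^{-1/2},\langle t\rangle^{-3/4}$ against $\|\langle\nabla\rangle^{\cdot+}u_0\|_{L^1}\lesssim\|U_0\|_{X_0}$; for $K(t)[\partial_y\Delta n_0]$, one rewrites the operator as $|\nabla|^2\partial_y K(t)$ and applies Proposition \ref{lem:Kn1-L} (after moving $|\nabla|^2$ into the regularity index); for $\partial_y\partial_tK(t)[n_0]$, Proposition \ref{lem:Kn2-L}(i) with $\beta=\gamma,1,3/2$ gives decay $\langle t\rangle^{-\gamma/2},\langle t\rangle^{-1/2},\langle t\rangle^{-3/4}$ (since $\gamma>1/2$, the first rate is no worse than $\langle t\rangle^{-1/4}$) against an $L^1$-norm of $\langle\nabla\rangle^{\cdot+}n_0$; for $(\partial_{tt}-\Delta\partial_t-\partial_{xx})K(t)[v_0]$, Proposition \ref{lem:Kn11-L}(i) with $\beta_1=1/2+\gamma,1,3/2$ delivers precisely the three required rates. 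The term $\tfrac12\Delta\sqrt{\Delta\partial_{yy}}K(t)[\psi_0]$ is handled by Proposition \ref{lem:Ku3-L} with $p=2$ and appropriate $\beta$; the resulting $L^1$-norm involves $|\nabla|^\gamma\psi_0$, which since $\gamma\le 1$ may be written as $|\nabla|^{\gamma-1}\nabla\psi_0$ and absorbed into $\|\langle\nabla\rangle^5\nabla\psi_0\|_{L^1}\lesssim\|U_0\|_{X_0}$.

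The step requiring the most care is the $K_1(t)[\psi_0]$ piece, because $K_1$ does not gain a derivative the way the $K$-type kernels do, so the regularity indices must be balanced exactly against the $X_0$-norm. Applying Proposition \ref{lem:K1-L}(i) directly gives
\[
\|\langle\nabla\rangle^4|\nabla|^\gamma K_1(t)\psi_0\|_{L^2}\lesssim\langle t\rangle^{-1/4}\bl\langle\nabla\rangle^{9/2+}|\nabla|^{1/2+\gamma-}\psi_0\bl_{L^1};
\]
writing $|\nabla|^{1/2+\gamma-}=|\nabla|^{\gamma-1/2-}\cdot|\nabla|$ and using $\gamma\le 1$ to absorb the positive fractional power $|\nabla|^{\gamma-1/2-}$ into $\langle\nabla\rangle$, one obtains the upper bound $\|\langle\nabla\rangle^{5-}\nabla\psi_0\|_{L^1}\le\|U_0\|_{X_0}$. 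For \eqref{est:psi-linear-x2} and \eqref{est:psi-linear-x3} one invokes the two variants in Proposition \ref{lem:K1-L}(ii), exploiting the trade-off between an $\nabla f$-based and an $f$-based right-hand side to hit the targeted rates $\langle t\rangle^{-1/2}$ and $\langle t\rangle^{-3/4}$ respectively. The main obstacle throughout is the anticipated matching between the $L^1$-side regularity demanded by each kernel estimate and the weights built into $\|U_0\|_{X_0}$; the constraint $\gamma\in(1/2,1]$ together with $M\ge 8$ provides exactly the slack needed.
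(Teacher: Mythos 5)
Your overall framework (estimate each summand of \eqref{eq:LBpsi} separately by commuting the external multipliers with the kernel and invoking the Section~4 propositions) is the right one and matches the paper's strategy, but two of your specific proposition choices do not close the argument.

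First, for $K(t)[\partial_y\Delta n_0]$ you cite only Proposition~\ref{lem:Kn1-L}. That proposition gives the decay rate $\langle t\rangle^{-1/4}$ uniformly, regardless of $\beta$: the extra $|\nabla|$-powers are absorbed into the $\|\langle\nabla\rangle^{\beta+}f\|_{L^1}$ factor, not into the time decay. Hence it cannot yield \eqref{est:psi-linear-x2} ($\langle t\rangle^{-1/2}$) or \eqref{est:psi-linear-x3} ($\langle t\rangle^{-3/4}$). Once $\partial_x$ acts, the composite symbol is $\xi\eta A^2\widehat K$, i.e.\ a $\partial_{xy}|\nabla|^2K(t)$ structure, and the extra decay has to come from the $\partial_{xy}$-gain encoded in Proposition~\ref{lem:Ku1-L}(i) (with $\beta'=2+\beta$, $\beta\in\{1,3/2\}$). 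This is exactly what the paper does, writing $\||\nabla|^\beta\partial_x K(t)[\partial_y\Delta n_0]\|_{L^2}\lesssim\||\nabla|^{2+\beta}\partial_{xy}K(t)n_0\|_{L^2}$ and applying Proposition~\ref{lem:Ku1-L}. The same repair is needed for the $\partial_x$- and $\nabla\partial_x$-estimates of $\Delta\sqrt{\Delta\partial_{yy}}K(t)[\psi_0]$.

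Second, your choice of Proposition~\ref{lem:Ku3-L} for $\tfrac12\Delta\sqrt{\Delta\partial_{yy}}K(t)[\psi_0]$ is a symbol mismatch. Proposition~\ref{lem:Ku3-L} estimates $\Delta\bigl(\Delta+\sqrt{\Delta\partial_{yy}}\bigr)K(t)$, whose symbol $A^2(A^2-A|\eta|)\widehat K\sim A^2\xi^2\widehat K$ enjoys the quadratic $\xi$-gain responsible for the $\langle t\rangle^{-1/p'-\beta/2}$ decay there. The operator you need to bound has symbol $A^3|\eta|\widehat K$, which has no $\xi$-gain (when $|\xi|\ll|\eta|$ it is comparable to $A^4\widehat K$, \emph{not} to $A^2\xi^2\widehat K$), so the pointwise bound $|A^3|\eta|\widehat K|\lesssim|A^2(A^2-A|\eta|)\widehat K|$ fails and Proposition~\ref{lem:Ku3-L} is simply inapplicable. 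The correct observation, as in the paper, is that $|A^3|\eta|\widehat K|\leq|A^4\widehat K|$ so this term is handled exactly like $K(t)[\partial_y\Delta n_0]$ with the extra $A$ absorbed by the $\nabla\psi_0$ built into $\|U_0\|_{X_0}$: use Proposition~\ref{lem:Kn1-L} for \eqref{est:psi-linear} and Proposition~\ref{lem:Ku1-L} for \eqref{est:psi-linear-x2} and \eqref{est:psi-linear-x3}.
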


Now we estimate the terms in \eqref{eq:LBpsi}. For the term $\pp_{xy}K(t)[u_0]$, by Proposition \ref{lem:Ku1-L}  (i), we have
\begin{align*}
\big\|\langle \nabla\rangle^4|\nabla|^\gamma\pp_{xy}K(t)[u_0]\big\|_{L^2_{xy}}
= &
\big\||\nabla|^\gamma \pp_{xy}K(t)\langle \nabla\rangle^4[u_0]\big\|_{L^2_{xy}}
\lesssim
\langle t\rangle^{-\frac14}\big\|\langle \nabla\rangle^4u_0\big\|_{L^1_{xy}};\\
\big\|\pp_x\pp_{xy}K(t)[u_0]\big\|_{L^2_{xy}}
\lesssim &
\big\|\nabla\pp_{xy}K(t)[u_0]\big\|_{L^2_{xy}}
\lesssim
\langle t\rangle^{-\frac12}\big\|\langle \nabla\rangle^{0+}u_0\big\|_{L^1_{xy}};\\
\big\|\nabla\pp_x\pp_{xy}K(t)[u_0]\big\|_{L^2_{xy}}
\lesssim &
\big\||\nabla|^2\pp_{xy}K(t)[u_0]\big\|_{L^2_{xy}}
\lesssim
\langle t\rangle^{-\frac34}\big\|\langle \nabla\rangle^{1+}u_0\big\|_{L^1_{xy}}.
\end{align*}
Now we consider the term $K(t)\big[\partial_{y}\Delta n_0\big]$. By Proposition \ref{lem:Kn1-L} (i) and Proposition \ref{lem:Ku1-L} (i), for $\beta=0$ or $1$, we have
(since $\gamma>\frac12$),
\begin{align*}
\big\|\langle \nabla\rangle^4|\nabla|^\gamma K(t)\big[\partial_{y}\Delta n_0\big]\big\|_{L^2_{xy}}
\lesssim &
\big\||\nabla|^{3+\gamma} K(t)\big[\langle \nabla\rangle^4 n_0\big]\big\|_{L^2_{xy}}
\lesssim
\langle t\rangle^{-\frac14}\big\|\langle \na\rangle^5 n_0\big\|_{L^1_{xy}};\\
\big\||\nabla|^\beta\pp_x K(t)\big[\partial_{y}\Delta n_0\big]\big\|_{L^2_{xy}}
\lesssim &
\big\||\nabla|^{2+\beta}\pp_{xy} K(t)\big[ n_0\big]\big\|_{L^2_{xy}}
\lesssim
\langle t\rangle^{-\frac34}\big\|\langle \nabla\rangle^{2+}n_0\big\|_{L^1_{xy}}.
\end{align*}

Next, we consider the term $\pp_{y}\pp_tK(t)[n_0]$. Similarly, by Proposition \ref{lem:Kn2-L} (i), for $\beta=\gamma,1,2$,
\begin{align*}
\big\|\langle \nabla\rangle^4|\nabla|^\beta\pp_t  K(t)\big[\partial_{y}n_0\big]\big\|_{L^2_{xy}}
\lesssim &
\big\||\nabla|^\beta\partial_{y}\pp_t  K(t)\big[\langle \nabla\rangle^4n_0\big]\big\|_{L^2_{xy}}
\lesssim
\langle t\rangle^{-\frac\beta2}\big\|\langle \na\rangle^{4+} n_0\big\|_{L^1_{xy}}.
\end{align*}
Choosing $\beta=\gamma,1,2$, we have the claimed estimates.

Now we consider the term $(\partial_{tt}-\Delta\partial_t-\pp_{xx})K(t)\big[v_0\big]$. From Proposition \ref{lem:Kn11-L} (i),
\begin{align*}
\big\|\langle \nabla\rangle^4|\nabla|^\beta(\partial_{tt}-\Delta\partial_t-\pp_{xx})K(t)\big[v_0\big]\big\|_{L^2_{xy}}
\lesssim &
\langle t\rangle^{-\frac\beta2}\big\|\langle \na\rangle^{4+} v_0\big\|_{L^1_{xy}}.
\end{align*}
Again, letting $\beta=\gamma,1,2$, we have the claimed estimates.

At last, we consider the terms in \eqref{Lpsi-4}. The same as
the term $K(t)\big[\partial_{y}\Delta n_0\big]$,  for $\beta=0$ or $1$, we have
\begin{align*}
\big\|\langle \nabla\rangle^4|\nabla|^\gamma \Delta\sqrt{\Delta\partial_{yy}} K(t)[\psi_0]\big\|_{L^2_{xy}}
\lesssim &
\langle t\rangle^{-\frac14}\big\|\langle \na\rangle^4 \nabla\psi_0\big\|_{L^1_{xy}};\\
\big\||\nabla|^\beta\pp_x \Delta\sqrt{\Delta\partial_{yy}} K(t)[\psi_0]\big\|_{L^2_{xy}}
\lesssim &
\langle t\rangle^{-\frac34}\big\|\langle \nabla\rangle^{2}\nabla\psi_0\big\|_{L^1_{xy}}.
\end{align*}

Similarly,  using Proposition \ref{lem:K1-L}, we have
\begin{align*}
\big\|\langle \nabla\rangle^4|\nabla|^\gamma K_1(t)[\psi_0]\big\|_{L^2_{xy}}
\lesssim &
\langle t\rangle^{-\frac14}\big\|\langle \na\rangle^5 \nabla\psi_0\big\|_{L^1_{xy}};\\
\big\||\nabla|^\beta\pp_x K_1(t)[\psi_0]\big\|_{L^2_{xy}}
\lesssim &
\langle t\rangle^{-\frac12-\frac\beta4}\big\|\langle \nabla\rangle^{2}\nabla \psi_0\big\|_{L^1_{xy}}.
\end{align*}

Collecting the estimates above, we give the proof of Lemma \ref{est:psi-linear}.

\subsection{Estimates on the nonlinear parts $\mathcal {N}_\psi$}

In this subsection, we shall prove that
\begin{lemma}\label{lem:psi-nonlinear}
\begin{align}
\big\|\la\na\ra^4|\nabla|^\gamma\mathcal N_\psi(t;n_0,\vec u_0, \vec b_0 )\big\|_{L^2_{xy}}
\lesssim&
\langle t\rangle^{-\frac12}\big(\|U_0\|_{X_0}+\epsilon_0\|U\|_X+Q(\|U\|_{X})\big);\label{est:psi-nonlinear-L2}\\
\big\|\pp_x\mathcal N_\psi(t;n_0,\vec u_0, \vec b_0 )\big\|_{L^2_{xy}}
\lesssim&
\langle t\rangle^{-1}\big(\|U_0\|_{X_0}+\epsilon_0\|U\|_X+Q(\|U\|_{X})\big);\label{est:psi-nonlinear-Linfty}\\
\big\|\pp_x\nabla(\mathcal N_\psi)(t;n_0,\vec u_0, \vec b_0 )\big\|_{L^2_{xy}}
\lesssim&
\langle t\rangle^{-1}\big(\|U_0\|_{X_0}+\epsilon_0\|U\|_X+Q(\|U\|_{X})\big).\label{est:psi-nonlinear-nax2}
\end{align}
\end{lemma}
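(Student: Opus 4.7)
The plan is to control each of the six space-time integrals comprising $\mathcal{N}_\psi$ in \eqref{psi-N} by the same two-step strategy used in Sections \ref{sect:n}--\ref{sec:v}. For each nonlinearity $N_j$, I first perform the high/low frequency splitting $N_j = N_j^l + N_j^h$ from \eqref{dec:high-low}. The high frequency contributions are routine: the energy-type bounds \eqref{high-03}--\eqref{high-12} give $\|\langle\nabla\rangle^{k}N_j^h\|_{L^1_{xy}} \lesssim \langle s\rangle^{-1.03}\|U\|_X^2$, which combined with the $L^1\to L^2$ kernel bounds from Section \ref{KernelProperty} is time-integrable and yields the claimed decay rates. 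The bulk of the work is the low-frequency piece.

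For the first, third, fourth and fifth terms of \eqref{psi-N}, the analysis is essentially parallel to earlier sections. The first $\int_0^t \pp_y\pp_t K(t-s) N_0^l(s)\,ds$ is closed by Proposition \ref{lem:Kn2-L}, since $N_0 = \nabla\cdot(n\vec u)$ satisfies $\|N_0\|_{L^2_{xy}} \lesssim \langle s\rangle^{-1.25}\|U\|_X^2$. For $\int_0^t \pp_{xy}K(t-s) N_1(s)\,ds$ and $\int_0^t (\pp_{tt}-\Delta\pp_t-\pp_{xx})K(t-s) N_2(s)\,ds$, I invoke the structural decompositions \eqref{dec:N1} and \eqref{dec:N2}: the cubic/quartic remainders $N_{13}, N_{23}$ satisfy estimates of type \eqref{1044} and are harmless after Proposition \ref{lem:Ku1-L} or \ref{lem:Kn11-L}; the terms $\pp_t(nu)$, $\pp_t(nv)$ are handled by integration by parts in $s$ so that $\pp_t$ lands on $K$, producing an extra factor of decay and a harmless boundary term; the gradient-form pieces $N_{15}, N_{16}, N_{25}$ are analyzed after swapping a $\pp_x$ or $\pp_y$ onto the kernel and splitting $\nabla\psi = \nabla\psi_{\ge \langle s\rangle^{-0.05}} + \nabla\psi_{\le \langle s\rangle^{-0.05}}$ to exploit \eqref{psi-infty}. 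The fifth term $\int_0^t(\pp_t-\Delta)(\pp_{tt}-\Delta\pp_t-\Delta)K(t-s)N_3(s)\,ds$ falls under the analysis of Section \ref{sec:n-N3}, because the structural identity $\pp_{yy}\psi \sim -\pp_y n$ (modulo $\pp_t v$ and other pieces decaying faster from the third equation of \eqref{e2.1}) reduces the critical piece $v\pp_{yy}\psi$ of $\pp_y N_3$ to $v\pp_y n$, which is exactly the most delicate contribution to $N_0$.

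The hard term is the second one $\int_0^t \pp_y\Delta K(t-s) N_0(s)\,ds$, which is the $\psi$-analogue of the critical term treated in Section \ref{sec:n-N0-2}: kernel and nonlinearity together are barely non-integrable in $s$. I will mimic the recast \eqref{re-N0}, splitting $N_0$ by the frequency of $v$ and, on the dangerous band $\langle s\rangle^{-10} \lesssim |\nabla| \lesssim \langle s\rangle^{-0.05}$, replacing $v = -\pp_s\psi + N_3$ and integrating by parts in $s$ so that $\pp_s$ lands on the kernel. The resulting boundary term at $s=0$ is controlled by Proposition \ref{lem:Kn6-L}; the two new interior pieces are handled via \eqref{est:Kn8-2} together with $\pp_t n = -\nabla\cdot \vec u + N_0$ and the bound \eqref{psi-infty} on the mid-frequency part of $\psi$, closing the argument just as \eqref{nv-hard-111}--\eqref{nv-hard-13} closed it for $n$. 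Finally, the $\lambda$-linear term $\lambda\int_0^t(\pp_{tt}-\Delta\pp_t)K(t-s)(\pp_{xy}u + \pp_{yy}v)(s)\,ds$ is treated exactly as in Section \ref{sec:u-lambda}: on $[0,t/2]$ the large gap $t-s\sim t$ lets Proposition \ref{lem:Kn4-L} absorb two derivatives onto the kernel; on $[t/2,t]$ I split $K = K_a + K_b$ by whether $|\xi|\le A^2$ or $|\xi|\ge A^2$, use \eqref{K-ttt2t2-K1} to rewrite $\pp_t K_b$ in terms of $K_{1,b}$, and exploit the semigroup identity $K_c(t-s) = K_c(t)K_c(-s)$ to close the estimate with a constant proportional to $|\lambda|\,\|U\|_X \le \epsilon_0\|U\|_X$. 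The main obstacle throughout is the critical term $\int_0^t\pp_y\Delta K(t-s)N_0(s)\,ds$, for the same reason as in Section \ref{sec:n-N0-2}; everything else is a direct application of the $L^p$ kernel estimates of Section \ref{KernelProperty} combined with the frequency-cutoff and integration-by-parts techniques already developed.
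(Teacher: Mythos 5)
Your overall strategy matches the paper's: decompose $\mathcal{N}_\psi$ into the six Duhamel integrals of \eqref{psi-N}, perform the low/high frequency splitting $N_j=N_j^l+N_j^h$ from \eqref{dec:high-low} and dispatch the high-frequency piece by \eqref{high-03}--\eqref{high-12}, treat the first term via Proposition \ref{lem:Kn2-L}, recycle the structural decompositions \eqref{dec:N1}, \eqref{dec:N2} for the $N_1$- and $N_2$-integrals, reduce the $N_3$-integral to the $N_0$ analysis via the identity $\pp_{yy}\psi\sim-\pp_y n$ as in Section \ref{sec:n-N3}, and regard the second term $\int_0^t\pp_y\Delta K(t-s)N_0(s)\,ds$ as the critical one, attacked by the recast \eqref{re-N0} and time-integration by parts exactly as in Section \ref{sec:n-N0-2}. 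This is all consistent with the paper.

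The one place where your route genuinely diverges from the paper's is the $\lambda$-linear term. You propose to treat $\lambda\int_0^t(\pp_{tt}-\Delta\pp_t)K(t-s)(\pp_{xy}u+\pp_{yy}v)\,ds$ exactly as in Section \ref{sec:u-lambda}: a $[0,t/2]\cup[t/2,t]$ split combined with the $K=K_a+K_b$ decomposition by the cutoff $|\xi|\lessgtr A^2$, the algebraic identity \eqref{K-ttt2t2-K1}, and the semigroup factorization $K_c(t-s)=K_c(t)K_c(-s)$. This produces a bound proportional to $|\lambda|\|U\|_X$. The paper instead invokes Section \ref{sec:n-lambda}: it does \emph{not} split $K$, but rather exploits the slack between the kernel rate and the rate actually demanded by $X_\psi$, sets $T_0=\epsilon_0^{-6}$, and trades excess decay for a small factor $T_0^{-\delta}\lesssim\epsilon_0$, yielding a bound proportional to $\epsilon_0\|U\|_X$ — which is what the lemma as stated asserts. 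Both mechanisms close the continuity argument via \eqref{conneed}, and yours is arguably the sharper one, but your closing justification ``$|\lambda|\|U\|_X\le\epsilon_0\|U\|_X$'' is not a valid inequality: $\lambda$ is a fixed parameter of the system while $\epsilon_0$ is a smallness constant produced inside the proof by the choice of $T_0$; they are independent. You should either keep the $|\lambda|\|U\|_X$ factor and track it through \eqref{conneed} as is done for $u$ and $v$, or follow the paper's crude $T_0$-shift argument to actually produce $\epsilon_0$. A secondary, more cosmetic point: for the gradient-form piece $N_{16}$ (and $N_{25}$) the paper itself uses a $[0,t/2]\cup[t/2,t]$ time-splitting in \eqref{1412-1}--\eqref{1412-2} to close the $\pp_x$-estimate, which is not captured by your phrase ``swapping a $\pp_x$ or $\pp_y$ onto the kernel and splitting $\nabla\psi$'' — without that time-split the critical integral is logarithmically divergent.
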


We estimate $\mathcal N_\psi(t;n,\vec u, \psi)$ in \eqref{psi-N} terms by terms.

\subsubsection{$\int_0^t\pp_y\pp_tK(t-s)N_0(s)\,ds$}\label{sec:Npsi-N0-1}
The estimates on this term are much similar as the ones in Section \ref{sec:n-N0-1}.
Since the estimates on $N_0^h$ are standard, we only consider $N_0^l$. %Note that
%$$
%\big\|n(s)\vec u(s)\big\|_{L^2_{xy}}\lesssim \|n(s)\|_{L^2_{xy}}\|\vec u(s)\|_{L^\infty_{xy}}\lesssim \langle s\rangle^{-1.25}\|U\|_X^2.
%$$
Then by Proposition \ref{lem:Kn2-L} (iii), for $\beta=\gamma, 1,2$ we have
\begin{align*}
&\Big\|\la\na\ra^4|\nabla|^\beta\int_0^t\pp_y\pp_tK(t-s)N_0^l(s)\,ds\Big\|_{L^2_{xy}}\\
\lesssim &\int_0^t\Big\||\nabla|^\beta\nabla\pp_y\pp_tK(t-s)\cdot \la\na\ra^4(n_{\le \langle s\rangle^{0.01}}\vec u_{\le \langle s\rangle^{0.01}})(s)\Big\|_{L^2_{xy}}\,ds\\
\lesssim &
\int_0^t \langle s\rangle^{0.04} \langle t- s\rangle^{-\frac\beta2}\|n(s)\|_{L^2_{xy}}\|\vec u(s)\|_{L^\infty_{xy}}\,ds\\
\lesssim &
\int_0^t \langle t- s\rangle^{-\frac\beta2}\langle s\rangle^{-1.21}\|U\|_X^2\\
\lesssim &
\langle t\rangle^{-\frac\beta2}\|U\|_X^2.
\end{align*}
Letting $\beta=\gamma,1,2$, we have the claimed estimates.

\subsubsection{$\int_0^t\partial_y\Delta K(t-s)N_0(s)\,ds$}\label{sec:Npsi-N0-2}
Note that the operator $\pp_y\nabla\Delta K(t)$ has the similar estimates  with $(\pp_{tt}-\Delta\pp_t-\Delta)\Delta K(t)$.
One just repeats the precess in Section \ref{sec:n-N0-2}, to get the following estimates,
\begin{align*}
\big\|\langle\nabla\rangle^4 |\nabla|^\gamma \int_0^t\partial_y\Delta K(t-s)N_0(s)\,ds\big\|_{L^2_{xy}}
\lesssim&
\langle t\rangle^{-\frac12}\big(\|U_0\|_{X_0}+Q(\|U\|_{X})\big);\\
\big\|\pp_x\int_0^t\partial_y\Delta K(t-s)N_0(s)\,ds\big\|_{L^2_{xy}}
\lesssim&
\langle t\rangle^{-1}\big(\|U_0\|_{X_0}+Q(\|U\|_{X})\big);\\
\big\|\nabla\pp_x\int_0^t\partial_y\Delta K(t-s)N_0(s)\,ds\big\|_{L^2_{xy}}
\lesssim&
\langle t\rangle^{-1}\big(\|U_0\|_{X_0}+Q(\|U\|_{X})\big).
\end{align*}

\subsubsection{$\int_0^t\partial_{xy}K(t-s)N_1(s)\,ds$}\label{sec:Npsi-N1}
 Using \eqref{dec:N1}, we have
$$
N_1=N_{13}+N_{14}+N_{15}+N_{16}.
$$
Again, we only consider $N_{1j}^l, j=3,\cdots,6$.
$N_{13}^l$ and $N_{14}^l$ behave well, we just give the sketch of the estimation. By \eqref{1044}, $\|N_{13}^l\|_{L^1_{xy}}$ decays faster than $\langle s\rangle^{-1}$, it is easy.
By using the corresponding way in Section \ref{sec:Nu-N1}, we can get the desirable estimates.
For $N_{14}^l$, similar as \eqref{0323-1} and \eqref{0323-2}, and by the corresponding ways there,
we also easily get the  desirable estimates and thus the details are omitted here. Now we focus our attentions on the terms $N_{15}^l$ and $N_{16}^l$.
%Again, since the high frequency parts are standard, we only consider the terms $N_{15}^l$ and $N_{16}^l$.

For $N_{15}^l$, by Proposition \ref{lem:Ku1-L} (ii) ($\beta'=\gamma$) we have
\begin{align*}
\Big\|\langle\nabla\rangle^4|\nabla|^\gamma\int_0^t\partial_{xy}K(t-s)N_{15}^l(s)\,ds\Big\|_{L^2_{xy}}
\lesssim&
\int_0^t\Big\||\nabla|^\gamma\nabla\partial_{xy} K(t-s)\cdot P_{\lesssim  \langle s \rangle^{0.01}}\langle\nabla\rangle^4(\pp_x \psi \nabla\psi)(s)\,ds\Big\|_{L^2_{xy}}\,ds\\
\lesssim &
\int_0^t\langle t-s\rangle^{-\frac\gamma2} \big\|\langle\nabla\rangle^4P_{\lesssim  \langle s \rangle^{0.01}}(\pp_x \psi \nabla\psi)(s)\big\|_{L^2_{xy}}\,ds\\
\lesssim &
\int_0^t\langle t-s\rangle^{-\frac\gamma2} \langle s\rangle^{0.04}\|\pp_x \psi\|_{L^2_{xy}} \|\nabla\psi\|_{L^\infty_{xy}}\,ds\\
\lesssim &
\int_0^t\langle t-s\rangle^{-\frac\gamma2} \langle s\rangle^{0.04-1}\|U\|_X^2\,ds\\
\lesssim &
\langle t \rangle^{-\frac14}\|U\|_X^2.
\end{align*}
%where we have used the interpolation to get
%$$
%\|\langle\nabla\rangle^4\pp_x \psi\|_{L^2_{xy}}
%\lesssim \|\pp_x \psi\|_{L^2_{xy}}^{1-\frac4M}\|\langle\nabla\rangle^M\nabla\psi\|_{L^2_{xy}}^{\frac4M}
%\lesssim \langle s\rangle^{-\frac12+0.1}\|U\|_X.
%$$
%by choosing $M$ large enough and similarly,
%$$
%\|\langle\nabla\rangle^4\nabla\psi\|_{L^\infty_{xy}}
%\lesssim \langle s\rangle^{-\frac12+0.1}\|U\|_X.
%$$
Moreover, from  Proposition \ref{lem:Ku1''-L} (i),
\begin{align*}
\Big\|\pp_x\int_0^t\partial_{xy}K(t-s)N_{15}^l(s)\,ds\Big\|_{L^2_{xy}}
\lesssim&
\int_0^t\Big\|\partial_{xxy}\nabla K(t-s)\cdot(\pp_x \psi \nabla\psi)(s)\Big\|_{L^2_{xy}}\,ds\\
\lesssim &
\int_0^t \langle t-s\rangle^{-\frac34}\big\|\langle \nabla\rangle^{\frac12+}(\pp_x \psi \nabla\psi)(s)\big\|_{L^{\frac43}_{xy}}\,ds\\
\lesssim &
\int_0^t\langle t-s\rangle^{-\frac34}\|\langle\nabla\rangle\pp_x \psi\|_{L^2_{xy}} \|\nabla\psi\|_{L^2_{xy}}^{\frac12} \|\langle\nabla\rangle^4\nabla\psi\|_{L^\infty_{xy}}^{\frac12}\,ds\\
\lesssim &
\int_0^t\langle t-s\rangle^{-\frac34} \langle s\rangle^{-\frac12-\frac18-\frac14}\|U\|_X^2\,ds\\
\lesssim &
\langle t \rangle^{-\frac12}\|U\|_X^2.
\end{align*}
Similarly, by Proposition \ref{lem:Ku1''-L} (ii) instead,
\begin{align*}
\Big\|\nabla\pp_x\int_0^t\partial_{xy}K(t-s)N_{15}^l(s)\,ds\Big\|_{L^2_{xy}}
\lesssim &
\int_0^t\Big\|\partial_{xxy}\nabla\nabla K(t-s)\cdot(\pp_x \psi \nabla\psi)(s)\Big\|_{L^2_{xy}}\,ds\\
\lesssim &
\int_0^t\langle t-s\rangle^{-1} \big\|P_{\lesssim\langle s\rangle^{0.01}}\langle\nabla\rangle (\pp_x \psi \nabla\psi)(s)\big\|_{L^2_{xy}}\,ds\\
\lesssim &
\int_0^t\langle t-s\rangle^{-1} \langle s\rangle^{0.01-1}\|U\|_X^2\,ds\\
\lesssim &
\langle t \rangle^{-\frac34}\|U\|_X^2.
\end{align*}

For $N_{16}^l$, we need some special handling.  Since $\gamma >\frac12$, by Proposition \ref{lem:Ku1-L} (i) and (ii) (in low and high frequence cases respectively), and Bernstein's inequality,
\begin{align*}
&\Big\|\langle\nabla\rangle^4|\nabla|^\gamma\int_0^t\partial_{xy}K(t-s)N_{16}^l(s)\,ds\Big\|_{L^2_{xy}}\\
\lesssim &
\int_0^t\Big\|P_{\le1}|\nabla|^\gamma\partial_{xy} K(t-s)\,\,  \partial_{x}(\frac12|\nabla \psi|^2-n^2)(s)\Big\|_{L^2_{xy}}\,ds\\
&\qquad +\int_0^t\Big\|P_{\ge1}|\nabla|^{\gamma+1}\partial_{xy} K(t-s)\,\,P_{\lesssim\langle s\rangle^{0.01}} \langle\nabla\rangle^3\partial_{x}(\frac12|\nabla \psi|^2-n^2)(s)\Big\|_{L^2_{xy}}\,ds\\
\lesssim &
\int_0^t\langle t-s\rangle^{-\frac\gamma2}\big\|\partial_{x}(\frac12|\nabla \psi|^2-n^2)(s)\big\|_{L^1_{xy}}\\
&\qquad +
\int_0^t\langle t-s\rangle^{-\frac12}
\big\|P_{\lesssim\langle s\rangle^{0.01}}\langle\nabla\rangle^{4+\gamma}\partial_{x}(\frac12|\nabla \psi|^2-n^2)(s)\big\|_{L^2_{xy}}\,ds\\
\lesssim &
\int_0^t\langle t-s\rangle^{-\frac\gamma2}\big(\|\nabla\psi\|_{L^2_{xy}}\|\nabla\pp_x\psi\|_{L^2_{xy}}+\|n\|_{L^2_{xy}}\|\pp_xn\|_{L^2_{xy}}\big)\\
&\qquad +
\int_0^t\langle t-s\rangle^{-\frac12} \langle s\rangle^{0.05} \big(\|\nabla\psi\|_{L^\infty_{xy}}\|\nabla\pp_x\psi\|_{L^2_{xy}}+\|n\|_{L^\infty_{xy}}\|\pp_xn\|_{L^2_{xy}}\big)\,ds\\
\lesssim &
\int_0^t\langle t-s\rangle^{-\frac\gamma2} \langle s\rangle^{-1}\|U\|_X^2\,ds
+\int_0^t\langle t-s\rangle^{-\frac12} \langle s\rangle^{-1.25+0.05}\|U\|_X^2\,ds\\
\lesssim &
\langle t \rangle^{-\frac14}\|U\|_X^2.
\end{align*}
Further, we use the argument in Section \ref{sec:u-lambda}, and write
\begin{align}
&\Big\|\pp_x\int_0^t\partial_{xy}K(t-s)N_{16}(s)\,ds\Big\|_{L^2_{xy}}\notag\\
\le &
\Big\|\int_0^{ t/2}\partial_{xxy}K(t-s)N_{16}(s)\,ds\Big\|_{L^2_{xy}}
+\Big\|\int_{t/2}^t\partial_{xxy}K(t-s)N_{16}(s)\,ds\Big\|_{L^2_{xy}}\notag\\
= &
\Big\|\int_0^{ t/2}\partial_{xxxy}K(t-s)(\frac12|\nabla \psi|^2-n^2)(s)\,ds\Big\|_{L^2_{xy}}\label{1412-1}\\
&+\Big\|\int_{t/2}^t\partial_{xxy}K(t-s)\partial_{x}(\frac12|\nabla \psi|^2-n^2)(s)\,ds\Big\|_{L^2_{xy}}.\label{1412-2}
\end{align}
For \eqref{1412-1},  using Proposition \ref{lem:Ku1''-L} (i),
\begin{align*}
\eqref{1412-1}
\lesssim &
\int_0^{ t/2}\Big\|\nabla\partial_{xxy}K(t-s)(\frac12|\nabla \psi|^2-n^2)(s)\Big\|_{L^2_{xy}}\,ds\\
\lesssim &
\int_0^{ t/2}\langle t-s\rangle^{-1}\big\|\langle\nabla\rangle^{1+}(\frac12|\nabla \psi|^2-n^2)(s)\big\|_{L^1_{xy}}\,ds\\
\lesssim &
\int_0^{ t/2}\langle t-s\rangle^{-1} \big(\|\langle\nabla\rangle^2\nabla\psi\|_{L^2_{xy}}^2+\|\langle\nabla\rangle^2 n\|_{L^2_{xy}}^2\big)\,ds\\
\lesssim &
\langle t\rangle^{-1} \int_0^{ t/2}\langle s\rangle^{-\frac12}\|U\|_X^2\,ds\\
\lesssim &
\langle t \rangle^{-\frac12}\|U\|_X^2.
\end{align*}
While by Beinstein's inequality, and Proposition \ref{lem:Ku1-L} (i) ($\beta=1,\frac32$ in the low and high frequence cases respectively),
\begin{align*}
\eqref{1412-2}
\lesssim &
\int_{ t/2}^t\Big\|P_{\le 1}\nabla\partial_{xy}K(t-s)\,\,\partial_{x}(\frac12|\nabla \psi|^2-n^2)(s)\Big\|_{L^2_{xy}}\,ds\\
&+\int_{ t/2}^t\Big\|P_{\ge 1}|\nabla|^{\frac32}\partial_{xy}K(t-s)\,\,P_{\lesssim \langle s\rangle^{0.01}}\partial_{x}(\frac12|\nabla \psi|^2-n^2)(s)\Big\|_{L^2_{xy}}\,ds\\
\lesssim &
\int_{ t/2}^t\langle t-s\rangle^{-\frac12} \big(\|\nabla\psi\|_{L^2_{xy}}\|\nabla\pp_x\psi\|_{L^2_{xy}}+\|n\|_{L^2_{xy}}\|\pp_xn\|_{L^2_{xy}}\big)\,ds\\
&\quad+
\int_{ t/2}^t\langle t-s\rangle^{-\frac34}\langle s\rangle^{0.01}\big(\|\nabla\psi\|_{L^2_{xy}}\|\nabla\pp_x\psi\|_{L^2_{xy}}+\|n\|_{L^2_{xy}}\|\pp_xn\|_{L^2_{xy}}\big)\,ds\\
\lesssim &
\int_{ t/2}^t\langle t-s\rangle^{-\frac12}\langle s\rangle^{-1}\,ds\|U\|_X^2+
\int_{ t/2}^t\langle t-s\rangle^{-\frac34}\langle s\rangle^{0.01-1}\,ds\|U\|_X^2\\
\lesssim &
\langle t \rangle^{-\frac12}\|U\|_X^2.
\end{align*}
Thus combining the two estimates above, we get that
$$
\Big\|\pp_x\int_0^t\partial_{xy}K(t-s)N_{16}^l(s)\,ds\Big\|_{L^2_{xy}}\lesssim \langle t \rangle^{-\frac12}\|U\|_X^2.
$$
Similarly, by Proposition \ref{lem:Ku1''-L} (i),
\begin{align*}
\Big\|\nabla\pp_x\int_0^t\partial_{xy}K(t-s)N_{16}(s)\,ds\Big\|_{L^2_{xy}}
\lesssim &
\int_0^t\Big\|\nabla\partial_{xxy} K(t-s)\pp_x(\frac12|\nabla \psi|^2-n^2)(s)\Big\|_{L^2_{xy}}\,ds\\
\lesssim &
\int_0^t\langle t-s\rangle^{-1}\big\|P_{\lesssim \langle s\rangle^{0.01}}\langle\nabla\rangle^{1+}\pp_x(\frac12|\nabla \psi|^2-n^2)(s)\big\|_{L^1_{xy}}\,ds\\
\lesssim &
\int_0^t\langle t-s\rangle^{-1} \langle s\rangle^{-1+0.01}\|U\|_X^2\,ds\\
\lesssim &
\langle t \rangle^{-\frac34}\|U\|_X^2.
\end{align*}
Therefore, we complete all the estimates in this subsubsection and give the desirable results.

\subsubsection{$\int_0^t(\pp_{tt}-\Delta\pp_t-\partial_{xx})K(t-s)N_2(s)\,ds$}
Also, by \eqref{dec:N2} in Section \ref{sec:Nu-N2}, we have
$$
N_2=N_{23}+N_{24}+N_{25}.
$$
The treatments on $N_{23},N_{24},N_{25}$ are the same as the treatments on $N_{13},N_{14},N_{16}$ in Section \ref{sec:Npsi-N1},
so repeating the precess, we have the claimed result as follows,
\begin{align*}
\big\|\langle\nabla\rangle^4 |\nabla|^\gamma \int_0^t(\pp_{tt}-\Delta\pp_t-\partial_{xx})K(t-s)N_2(s)\,ds\big\|_{L^2_{xy}}
\lesssim&
\langle t\rangle^{-\frac12}\big(\|U_0\|_{X_0}+Q(\|U\|_{X})\big);\\
\big\|\pp_x\int_0^t(\pp_{tt}-\Delta\pp_t-\partial_{xx})K(t-s)N_2(s)\,ds\big\|_{L^2_{xy}}
\lesssim&
\langle t\rangle^{-1}\big(\|U_0\|_{X_0}+Q(\|U\|_{X})\big);\\
\big\|\nabla\pp_x\int_0^t(\pp_{tt}-\Delta\pp_t-\partial_{xx})K(t-s)N_2(s)\,ds\big\|_{L^2_{xy}}
\lesssim&
\langle t\rangle^{-1}\big(\|U_0\|_{X_0}+Q(\|U\|_{X})\big).
\end{align*}

\subsubsection{$\int_0^t(\partial_{t}-\Delta) (\partial_{tt}-\Delta\partial_t-\Delta) K(t-s)N_3(s)\,ds$}
As the same reason in Section \ref{sec:n-N3}, this term is in the same level as  $\int_0^t\partial_y(\pp_t-\Delta) K(t-s)N_0(s)\,ds$,
which has  been shown in Sections \ref{sec:Npsi-N0-1} and \ref{sec:Npsi-N0-2}.
Thus we have the desirable estimates by the same way and omit the details.

\subsubsection{$\lambda\int_0^t(\pp_{tt}-\Delta\pp_t)K(t-s)\big(\partial_{xy}u(s)+\partial_{yy}v(s)\big)\,ds$}
This term can be treated by the same way as in Section \ref{sec:n-lambda}, thus we omit the cumbersome details again.

Therefore, we finish all the estimates in this subsection, and obtain Lemma \ref{lem:psi-nonlinear}. Combining with Lemma \ref{lem:psi-linear},
we establish the claimed results in \eqref{est:psi-L2}--\eqref{est:psi-naxL2}.

Combining with the estimates obtained in Section \ref{sec:n}--\ref{sec:psi}, we finish the proof of \eqref{conneed}, and thus prove Theorem \ref{th1}.

\vskip .4in

\appendix

\section{}

\subsection{Proof of Lemma \ref{lem:Elem1}}\label{app2}
We denote
$$
I=\frac1c\Bigg\{\frac1{\sqrt{b+c}}\Big[e^{(-a+\sqrt{b+c})t}-e^{(-a-\sqrt{b+c})t}\Big]
-\frac1{\sqrt{b-c}}\Big[e^{(-a+\sqrt{b-c})t}-e^{(-a-\sqrt{b-c})t}\Big]\Bigg\},
$$
then
\begin{align*}
I=&e^{-at}\frac1c\int_{-t}^t \Big(e^{\sqrt{b+c}x}-e^{\sqrt{b-c}x}\Big)\,dx\\
=&e^{-at}\frac1c\int_0^t \Big(e^{\sqrt{b+c}x}-e^{\sqrt{b-c}x}\Big)\,dx+e^{-at}\frac1c\int_{-t}^0 \Big(e^{\sqrt{b+c}x}-e^{\sqrt{b-c}x}\Big)\,dx\\
=&e^{-at}\frac1c\int_0^t \Big(e^{\sqrt{b+c}x}-e^{\sqrt{b-c}x}\Big)\,dx+e^{-at}\frac1c\int_0^t \Big(e^{-\sqrt{b+c}x}-e^{-\sqrt{b-c}x}\Big)\,dx\\
:=&I_1+I_2.
\end{align*}
Note that
$$
I_1=e^{-at}\frac1c\int_0^t e^{\sqrt{b+c}x}\Big[1-e^{(\sqrt{b-c}-\sqrt{b+c})x}\Big]\,dx,
$$
and
$$
I_2=-e^{-at}\frac1c\int_0^t e^{-\sqrt{b-c}x}\Big[1-e^{(\sqrt{b-c}x-\sqrt{b+c})x}\Big]\,dx.
$$
Therefore,
\begin{align}\label{Identity-I}
I=e^{-at}\frac1c\int_0^t e^{\sqrt{b+c}x}\Big[1-e^{-(\sqrt{b-c}+\sqrt{b+c})x}\Big]\Big[1-e^{(\sqrt{b-c}-\sqrt{b+c})x}\Big]\,dx.
\end{align}

First, since $c>0$, we have
$$
\big|1-e^{-(\sqrt{b-c}+\sqrt{b+c})x}\big|, \quad \big|1-e^{(\sqrt{b-c}-\sqrt{b+c})x}\big|\lesssim 1.
$$
Hence we obtain from \eqref{Identity-I} that
if $b+c\ge 0$, then
\begin{align}
|I|\lesssim & e^{-at}\frac1c\int_0^t e^{\sqrt{b+c}x}\,dx
=  e^{-at+\sqrt{b+c}t}\frac1c\int_0^t e^{\sqrt{b+c}(x-t)}\,dx\notag\\
\lesssim & \frac1{c\sqrt{b+c}} e^{-at+\sqrt{b+c}t};\label{10.25}
\end{align}
if $b+c< 0$, then
\begin{align*}
|I|\lesssim &
\frac tc e^{-at}.
\end{align*}

Second,
using the following two inequalities,
\begin{align*}
\Big|1-e^{-(\sqrt{b-c}+\sqrt{b+c})x}\Big|&\lesssim \big|(\sqrt{b-c}+\sqrt{b+c})x\big|; \\
\Big|1-e^{(\sqrt{b-c}-\sqrt{b+c})x}\Big|&\lesssim \big|(\sqrt{b-c}-\sqrt{b+c})x\big|,
\end{align*}
%\begin{align*}
%\Big|1-e^{(\sqrt{b-c}+\sqrt{b+c})x}\Big|&\lesssim \big|(\sqrt{b-c}+\sqrt{b+c})x\big|,
%\mbox{ if } \big|(\sqrt{b-c}+\sqrt{b+c})x\big|\ll 1;\\
%\Big|1-e^{(\sqrt{b-c}-\sqrt{b+c})x}\Big|&\lesssim \big|(\sqrt{b-c}-\sqrt{b+c})x\big|,\mbox{ if } \big|(\sqrt{b-c}-\sqrt{b+c})x\big|\ll 1
%\end{align*}
we obtain that if $b+c\ge 0$, then
\begin{align*}
|I|\lesssim &
e^{-at}\frac1c\int_0^t e^{\sqrt{b+c}x}c x^2\,dx
\lesssim
e^{-at+\sqrt{b+c}t}\int_0^t e^{\sqrt{b+c}(x-t)} x^2\,dx
\lesssim
t^3e^{-at+\sqrt{b+c}t};
\end{align*}
if $b+c< 0$, then
$$
|I|\lesssim e^{-at}\frac1c\int_0^t c x^2\,dx\lesssim t^3 e^{-at}.
$$

Third, we particularly consider the case of $|c|\ll |b+c|$, then $|c|\ll |b|$ and
\begin{align*}
\Big|1-e^{-(\sqrt{b-c}+\sqrt{b+c})x}\Big|\lesssim 1; \quad
\Big|1-e^{(\sqrt{b-c}-\sqrt{b+c})x}\Big|\lesssim \frac{cx}{\sqrt{|b|}}.
\end{align*}
%\begin{align*}
%\Big|1-e^{(\sqrt{b-c}+\sqrt{b+c})x}\Big|&\lesssim \big|(\sqrt{b-c}+\sqrt{b+c})x\big|,
%\mbox{ if } \big|(\sqrt{b-c}+\sqrt{b+c})x\big|\ll 1;\\
%\Big|1-e^{(\sqrt{b-c}-\sqrt{b+c})x}\Big|&\lesssim \big|(\sqrt{b-c}-\sqrt{b+c})x\big|,\mbox{ if } \big|(\sqrt{b-c}-\sqrt{b+c})x\big|\ll 1
%\end{align*}
%we obtain that if $b+c< 0$,
%$$
%|I|\lesssim e^{-at}\frac1c\int_0^t \frac{cx}{\sqrt{|b|}}\,dx\lesssim \frac{t^2}{\sqrt{|b|}} e^{-at}\sim \frac{t^2}{\sqrt{|b+c|}} e^{-at}
%$$
If $b+c\ge 0$, then using the inequality above,
\begin{align*}
|I|\lesssim &
e^{-at}\frac1c\int_0^t e^{\sqrt{b+c}x}\frac{cx}{\sqrt{|b|}}\,dx
=
e^{-at+\sqrt{b+c}t}\int_0^t e^{\sqrt{b+c}(x-t)} \frac{x}{\sqrt{|b|}}\,dx\\
\lesssim &
\frac{t}{\sqrt{|b|}\sqrt{b+c}}e^{-at+\sqrt{b+c}t}
\sim
\frac{t}{b+c}e^{-at+\sqrt{b+c}t}.
\end{align*}
Combining with \eqref{10.25}, no matter when  $|c|\ll |b+c|$ or   $|c|\gtrsim |b+c|$, we have
\begin{align*}
|I|\lesssim
\frac{\langle t\rangle}{b+c}e^{-at+\sqrt{b+c}t}.
\end{align*}

In conclusion, we proved  that when $b+c< 0$,
$$
|I|\lesssim  \min\{t^3,\frac tc\} e^{-at};
$$
while when $b+c\ge 0$,
$$
|I|\lesssim  \min\{t^3,\frac1{c\sqrt{b+c}}, \frac{\langle t\rangle}{b+c}\}e^{-at+\sqrt{b+}t}.
$$

\subsection{Proof of \eqref{basic-1}}\label{app3}
We may assume that $x>0,y>0$, otherwise one may replace it by $|x|,|y|$, also we assume that $x\ge y$ by symmetry. Now we prove it
by splitting several cases.

Case 1: $x\ll 1, y\ll 1$, then it follows by Tayor's expansion.%:
%$$
%\big|\frac{\sin x}{x}-\frac{\sin y}{y}\big|= \frac16|x^2-y^2|+o(x^3+y^3)
%$$

Case 2: $x\gtrsim 1, x\gg y$. Then
$$
\big|\frac{\sin x}{x}-\frac{\sin y}{y}\big|\lesssim \big|\frac{\sin x}{x}\big|+\big|\frac{\sin y}{y}\big|\lesssim 1,
$$
which is less than the right-hand side of \eqref{basic-1}.

Case 3: $x\gtrsim 1, x\sim y$.  Then by the mean value theorem,
$$
\big|\frac{\sin x}{x}-\frac{\sin y}{y}\big|\lesssim  \big|\frac{\bar{x}\cos \bar{x}-\sin \bar{x}}{\bar{x}^2}\big|(x-y)
\lesssim \frac{(x-y)}{x},
$$
where $\bar x$ is middle point satisfying $\bar x\in (y,x)$.

Combining with these three cases, we prove \eqref{basic-1}.

\subsection{Proof of \eqref{17.17}}\label{app4}

For any $\xi\in\R^d$, we have
$$
\widehat{P_{\le \langle s\rangle^\beta}f}(\xi)=\chi_{\le 1}\Big(\frac{\xi}{\langle s\rangle^\beta}\Big) \hat{f}(\xi).
$$
Therefore,
$$
\pp_s\widehat{P_{\le \langle s\rangle^\beta}f}(\xi)=|\xi|\langle s\rangle^{-\beta-1}\chi_{\le 1}'\Big(\frac{\xi}{\langle s\rangle^\beta}\Big)\hat{f}(\xi).
$$
Since supp $\chi_{\le 1}'\in \{|\xi|\sim 1\}$, we have
$$
\pp_sP_{\le \langle s\rangle^\beta}f=\langle s\rangle^{-\beta-1}P_{\sim \langle s\rangle^\beta} |\nabla| f.
$$
Thus by Beinstein's inequality, for any $\beta\in \R$, $1\le q\le \infty$,
\begin{align*}
\|\pp_sP_{\le \langle s\rangle^\beta}f\|_{L^q}
\lesssim
\langle s\rangle^{-\beta-1}\|P_{\sim \langle s\rangle^\beta} |\nabla| f\|_{L^q}
\sim
\langle s\rangle^{-1}\|P_{\sim \langle s\rangle^\beta}f\|_{L^q}.
\end{align*}

\subsection{Proof of \eqref{Nash}}\label{app5}
By the Littlewood-Paley decomposition,
\begin{align*}
\Big\||\nabla|^{\bar{\gamma}} \langle\nabla\rangle\psi\Big\|_{L^\infty_{xy}}
\lesssim &
\sum\limits_{N\le 1} N^{\bar{\gamma}}\big\|P_N\psi\big\|_{L^\infty_{xy}}+\sum\limits_{N\ge 1} N^{\bar{\gamma}+1}\big\|P_N\psi\big\|_{L^\infty_{xy}},
\end{align*}
where $N$ is the dyadic number.
By Nash's inequality,
\begin{align*}
\|P_N\psi\|_{L^\infty_{xy}}
\lesssim &
\|\pp_{xy}P_N\psi\|_{L^2_{xy}}^{\frac14}
\|\pp_{x}P_N\psi\|_{L^2_{xy}}^{\frac14}
\|\pp_{y}P_N\psi\|_{L^2_{xy}}^{\frac14}
\|P_N\psi\|_{L^2_{xy}}^{\frac14}\\
\lesssim &
N^{\frac12}\|\pp_{x}P_N\psi\|_{L^2_{xy}}^{\frac12}
\|P_N\psi\|_{L^2_{xy}}^{\frac12}.
\end{align*}
Therefore, by Beinstein's inequality,
\begin{align*}
\Big\||\nabla|^{\bar{\gamma}} \langle\nabla\rangle\psi\Big\|_{L^\infty_{xy}}
\lesssim &
\sum\limits_{N\le 1} N^{\bar{\gamma}+\frac12}\|\pp_{x}P_N\psi\|_{L^2_{xy}}^{\frac12}
\|P_N\psi\|_{L^2_{xy}}^{\frac12}
+\sum\limits_{N\ge 1} N^{\bar{\gamma}+1+\frac12}\|\pp_{x}P_N\psi\|_{L^2_{xy}}^{\frac12}
\|P_N\psi\|_{L^2_{xy}}^{\frac12}\\
\lesssim &
\sum\limits_{N\le 1} N^{\bar{\gamma}-\frac\gamma2}\| P_N\pp_{x}\nabla\psi\|_{L^2_{xy}}^{\frac12}
\|P_N|\nabla|^\gamma\psi\|_{L^2_{xy}}^{\frac12}\\
&+\sum\limits_{N\ge 1} N^{\bar{\gamma}-1-\frac\gamma2}\|P_N\pp_{x}\nabla\psi\|_{L^2_{xy}}^{\frac12}
\big\|P_N\langle\nabla\rangle^4|\nabla|^\gamma\psi\big\|_{L^2_{xy}}^{\frac12}.
\end{align*}
Hence, when $\frac\gamma2<\bar \gamma<1+\frac\gamma2$, the sums are finite. So we have
\begin{align*}
\big\||\nabla|^{\bar{\gamma}} \langle\nabla\rangle\psi\big\|_{L^\infty_{xy}}
\lesssim \|\pp_{x}\nabla\psi\|_{L^2_{xy}}^{\frac12}
\big\|\langle\nabla\rangle^4|\nabla|^\gamma\psi\big\|_{L^2_{xy}}^{\frac12}.
\end{align*}

\vskip .4in
\section*{Acknowledgements}
J. Wu was partially supported by the NSF grants DMS 1209153 and DMS 1614246 and by the AT \& T Foundation at Oklahoma State University. Y. Wu was partially
supported by the NSFC (No. 11626250, No. 11571118).

\vskip .4in

\end{document}